\newtheorem{theorem}{Theorem}[section]
\newtheorem{proposition}[theorem]{Proposition}
\newtheorem{lemma}[theorem]{Lemma}
\newtheorem{corollary}[theorem]{Corollary}
\newtheorem{maintheorem}{Theorem}
\newtheorem*{AssumptionCon}{Assumption ConFib}
\newcounter{AssumptionCon}
\Crefname{AssumptionCon}{Assumption}{Assumptions}
\newtheorem*{AssumptionDil}{Assumption Dil}
\newcounter{AssumptionDil}
\Crefname{AssumptionDil}{Assumption}{Assumptions}
\newtheorem*{AssumptionStraight}{Assumption Straight}
\newcounter{AssumptionStraight}
\Crefname{AssumptionStraight}{Assumption}{Assumptions}
\theoremstyle{definition}
\newtheorem{definition}[theorem]{Definition}
\theoremstyle{remark}
\newtheorem{remark}[theorem]{Remark}
\newtheorem{example}[theorem]{Example}
\newtheoremstyle{assumption}{}{}{\itshape}{}{\bfseries}{.}{.5em}{#1 #3}
\theoremstyle{assumption}
\Crefname{assumption}{Assumption}{Assumptions}
\newcounter{diagram}  
\crefname{diagram}{diagram}{diagrams}
\crefname{diagram}{Diagram}{Diagrams}
\newcommand{\llp}{(\!(}
\newcommand{\rrp}{)\!)}
\newcommand{\llb}{\llbracket}
\newcommand{\rrb}{\rrbracket}
\title{The Moduli Stack of Breuil--Kisin Modules with Descent Data for Reductive Groups}
\author{Eivind Otto Hjelle}
\email{eohjelle@gmail.com}
\begin{document}

\begin{abstract}
    We introduce and study the moduli stack $\mathcal{Y}$ of Breuil--Kisin modules with $\widehat{G}$-structure and descent data, or Breuil--Kisin $(\Gamma,\widehat{G})$-torsors for short. Specifically, for a dominant cocharacter $\mu$, we define the moduli stack $\mathcal{Y}^{\leq \mu}$ of Breuil--Kisin $(\Gamma,\widehat{G})$-torsors with Hodge--Tate weights bounded by $\mu$. We prove that $\mathcal{Y}^{\leq \mu}$ is a $p$-adic formal algebraic stack, and show that it is smoothly equivalent to (the $p$-adic completion of) a twisted Schubert variety $\operatorname{Gr}^{\leq \mu}_{\mathcal{G}}$ in the sense of Pappas--Zhu \cite{pappas-zhu}, generalizing results of \cite{pappasPhiModulesCoefficient2009}, \cite{caraiani-levin}, \cite{local-models}, \cite{leeEmertonGeeStacks2023}, and others. This is a reformatted and lightly edited version of the author's PhD thesis, submitted to Northwestern University in August 2024.
\end{abstract}

\maketitle

\tableofcontents

\section{Introduction}

Let us first introduce some notation that will be fixed throughout this introduction. 
Let \(L \supseteq \mathbb{Q}_{p}\) be a tamely ramified Galois extension with Galois group \(\Gamma = \operatorname{Gal}(L / \mathbb{Q}_{p})\), and let \(G\) be a
connected reductive group over \(\mathbb{Q}_{p}\) which splits over \(L_{0}\), where \(L_{0}\) is the maximal unramified extension of \(\mathbb{Q}_{p}\) inside \(L\).
Let \(\Gamma_{\mathbb{Q}_{p}} = \operatorname{Gal}(\overline{\mathbb{Q}}_{p} / \mathbb{Q}_{p})\), \(\Gamma_{L} = \operatorname{Gal}(\overline{\mathbb{Q}}_{p} / L)\),  and let \(E \supseteq \mathbb{Q}_{p}\) be a (large) finite extension with ring of integers \(\mathcal{O}\), uniformizer \(\varpi \in \mathcal{O}\) and residue field \(\mathbb{F}\).
In this introduction as well as some other sections we will assume that \(E \supseteq L \). 
We form the Langlands dual group \({^L G} = \widehat{G} \rtimes \Gamma\),\footnote{For some questions in the Langlands program it is more natural to use the \(C\)-group of \cite{buzzard-gee}, but the \(C\)-group is the \(L\)-group of another \(G\), so for our purpose the \(L\)-group will suffice.}
where \(\Gamma\) acts on \(\widehat{G}\) via pinned automorphisms, encoding the non-splitness of \(G\).

\subsection{Context: The categorical \(p\)-adic Langlands program}
In recent years there has been a surge of interest in the \(p\)-adic and mod \(p\) Langlands program, which can be thought of as a \(p\)-adic deformation of the Langlands program.
Our work can be motivated by questions in this field, so we begin by giving some of the surrounding context. 
When explaining this background, we focus on the groups \(\GL_{n}\), \(n \in \mathbb{Z}_{\geq 1}\). 

\subsubsection{Langlands reciprocity conjectures}
The global Langlands reciprocity conjecture asserts that there is a (Langlands) correspondence
\[\begin{tikzcd}
    {\left\lbrace \begin{array}{c} \text{``certain'' representations}\\ \operatorname{Gal}\left( \overline{\mathbb{Q}} / \mathbb{Q} \right) \to \operatorname{GL}_{n}(\overline{\mathbb{Q}}_\ell) \end{array} \right\rbrace}
    & {\left\lbrace \begin{array}{c} {\text{``certain'' (algebraic automorphic)}} \\ {\text{representations of }}\operatorname{GL}_{n}(\mathbb{A}_{\mathbb{Q}}) \end{array} \right\rbrace} , 
	\arrow[dashed, tail reversed, from=1-1, to=1-2]
\end{tikzcd}\] 
where \(\ell\) is a prime, and \(\mathbb{A}_{\mathbb{Q}} = \mathbb{R} \times \prod_{p \text{ prime}}' \mathbb{Q}_{p}\) is the adele ring of \(\mathbb{Q}\).
We call the left hand side the ``Galois side''  and the right hand side the ``automorphic side''.
The correspondence is subject to a number of compatibilities that we will not spell out here, but that may be illustrated by the following examples:
\begin{itemize}
    \item \(n=1\). The correspondence is global class field theory. 
    \item \(n=2\). The correspondence subsumes the \emph{modularity theorem} (formerly known as the Taniyama--Shimura conjecture), namely the fact that all elliptic curves over \(\mathbb{Q}\) are modular. This was proved by \cite{wilesModularEllipticCurves1995} \cite{taylorRingTheoreticPropertiesCertain1995} \cite{breuilModularityEllipticCurves2001}, and led to the proof of Fermat's Last Theorem.
\end{itemize}

Another form of the global reciprocity conjecture states that algebraic automorphic representations correspond to \emph{motives}, which are ``linear algebra models'' of algebraic varieties carrying all cohomological information. 
In practice, explicit constructions of Galois representations attached to automorphic representations are found in the \(\ell\)-adic cohomology of Shimura varieties. 
From this point of view, the \(\ell\)-adic Galois representations on the Galois side are the \(\ell\)-adic realizations of the corresponding motives, or (parts of) the \(\ell\)-adic cohomology of certain varieties over \(\mathbb{Q}\).

What is then the role of the prime \(\ell\)?
The automorphic side is completely independent of \(\ell\), so in some sense the choice of prime \(\ell\) seems immaterial. 
If we think of the \(\ell\)-adic representations on the Galois side as \(\ell\)-adic realizations of motives corresponding to automorphic forms, it is clear that such a Galois representation is merely a choice of representative for an entire ``compatible family'' of \(\ell\)-adic Galois representations as \(\ell\) ranges over all primes (as made precise in \cite{taylorGaloisRepresentations2004}, for example).
On the other hand, the specific Galois representation one obtains certainly depends on the prime \(\ell\). 

Suppose now that we fix a prime \(\ell\),
and a Galois representation 
\[\rho : \operatorname{Gal}\left( \overline{\mathbb{Q}} / \mathbb{Q} \right) \to \GL_{n}(\overline{\mathbb{Q}}_\ell).\]
One way to understand a Galois representation such as \(\rho\) is to consider the restrictions \(\rho |_{\Gamma_{\mathbb{Q}_{p}}} : \Gamma_{\mathbb{Q}_{p}} = \operatorname{Gal}\left( \overline{\mathbb{Q}}_{p} / \mathbb{Q}_{p} \right) \to \GL_{n}(\overline{\mathbb{Q}}_\ell)\) for various primes \(p\), 
where we view \(\Gamma_{\mathbb{Q}_{p}} \leq \operatorname{Gal}(\overline{\mathbb{Q}} / \mathbb{Q})\) as the subgroup fixing a choice of prime in \(\overline{\mathbb{Q}}_{p}\) above \(p\) (the decomposition group at \(p\)).
At this point we must distinguish between the vastly different cases \(\ell \neq p\) and \(\ell = p\); it turns out that \(\rho|_{\Gamma_{\mathbb{Q}_{p}}}\) remembers much more about \(\rho\) if \(\ell = p\) than if \(\ell \neq p\).

\begin{example}
    Suppose that \(\rho\) arises as the étale cohomology of a proper smooth variety \(X\) over \(\mathbb{Q}\).
    If \(\ell \neq p\), and \(X\) has good reduction at \(p\), then \(\rho|_{\Gamma_{\mathbb{Q}_{p}}}\) is unramified (and the converse is true if \(X\) is an abelian variety, by Néron--Ogg--Shafarevich). 
    On the other hand, if \(\ell = p\), then \(\rho|_{\Gamma_{\mathbb{Q}_{p}}}\) will rarely be unramified, and the machinery of \(p\)-adic Hodge theory allows us to extract more intricate information about \(X\). 
    For example, the fact that \(X\) is smooth and proper implies that \(\rho|_{\Gamma_{\mathbb{Q}_{p}}}\) is de Rham, and remembers the Hodge filtration. 
    If \(X\) has good reduction at \(p\), then the representation \(\rho|_{\Gamma_{\mathbb{Q}_{p}}}\) is moreover \emph{crystalline}. 
\end{example}
\begin{remark}
    From a group theory perspective, the study of representations \(\Gamma_{\mathbb{Q}_{p}} \to \GL_{n}(\overline{\mathbb{Q}}_\ell)\) 
    is more complex when \(\ell = p\) than when \(\ell \neq p\), since the Galois group \(\Gamma_{\mathbb{Q}_{p}}\) contains a large pro-\(p\) subgroup (the wild inertia), 
    whereas \(\GL_{n}(\overline{\mathbb{Q}}_\ell)\) is locally pro-\(\ell\). Hence there are much fewer such representations when \(\ell \neq p\) than when \(\ell = p\). 
\end{remark}

In the context of the global Langlands reciprocity conjectures, what happens at the prime \(p = l\) detects most of the complexity of the global situation. 
This is well illustrated by:
\begin{itemize} 
    \item The Fontaine-Mazur conjecture, which is closely related the the global Langlands reciprocity conjecture. Namely, representations \(\rho\) as above are conjectured to arise as the \(\ell\)-adic realization of a motive (or more concretely, as a subquotient of the \(\ell\)-adic cohomology of a smooth proper variety over \(\mathbb{Q}\)), 
        if \(\rho|_{\Gamma_{\mathbb{Q}_{p}}}\) is unramified for all but finitely many primes \(p \neq \ell\), and \(\rho|_{\Gamma_{\mathbb{Q}_{p}}}\) is de Rham for \(p = \ell\).
    \item Automorphy lifting theorems, which to date remain the most successful approach by which to establish Langlands reciprocity. Roughly speaking, the idea is to study Galois representations in families encoding congruences between automorphic forms. In the proofs of these theorems, one studies local Galois deformation rings prime by prime, and Galois deformation rings at the prime \(p=\ell\) with \(p\)-adic Hodge theory conditions imposed play a crucial role.
\end{itemize}

Let us now say a few words about how local Langlands reciprocity fits into the global picture. 
Suppose \(\rho\) (as above) corresponds to an automorphic representation \(\pi\) via Langlands reciprocity.
By Flath's decomposition theorem we can write \(\pi = \bigotimes'_{p \text{ prime}} \pi_{p}\) as a restricted tensor product, where \(\pi_{p}\) is a smooth representation of \(\GL_{n}(\mathbb{Q}_{p})\).
The rough idea behind the local Langlands correspondence is that for each prime \(p\) there should be an intrinsic correspondence relating \(\rho |_{\Gamma_{\mathbb{Q}_{p}}}\) (or more precisely an associated Weil--Deligne representation) to \(\pi_{p}\). 
For \(\ell \neq p\) this is realized by the local Langlands correspondence for \(\GL_{n}(\mathbb{Q}_{p})\), which is a theorem due to \cite{harrisGeometryCohomologySimple2001}, with local-global compatibility due to \cite{taylorCompatibilityLocalGlobal2006}.
There is now a mismatch at the prime \(\ell = p\), because \(\pi_{p}\) only remembers a coarse shadow of \(\rho|_{\Gamma_{\mathbb{Q}_{p}}}\); specifically, \(\pi_{p}\) only remembers the Weil--Deligne representation attached to \(\rho|_{\Gamma_{\mathbb{Q}_{p}}}\), and in particular it forgets the Hodge filtration. 
This mismatch is what leads to the \emph{\(p\)-adic Langlands program}. 

\subsubsection{The categorical \(p\)-adic Langlands program}
Motivated by the above considerations, one naturally seeks the relation between the full local Galois representation \(\rho |_{\Gamma_{\mathbb{Q}_{p}}}\) (at \(l=p\)) and a \(p\)-adic deformation of \(\pi_{p}\). 
This is very tricky because any naive picture breaks down, and only recently the mystery seems to unravel. 
In short, the story at \(l=p\) should be a categorical equivalence rather than a matching of objects. 
Specifically, the \emph{categorical \(p\)-adic Langlands reciprocity conjecture}, as formulated in \cite[Conjecture 6.1.14]{emertonIntroductionCategoricalPadic2023}, states that 
there is an \(\mathcal{O}\)-linear fully faithful functor 
\begin{equation}
    \label{eq:categorical-p-adic-langlands}
    \mathfrak{A} : D_{\text{f. p.}}^\flat \left( \text{sm.} \GL_{n}(\mathbb{Q}_{p}) \right) \hookrightarrow D_{\text{coh}}^\flat(\mathcal{X}_{n}) , 
\end{equation}
where the left hand side is a certain derived category of smooth representations of \(\GL_{n}(\mathbb{Q}_{p})\), and the right hand side is a derived category of coherent sheaves on 
a geometric object \(\mathcal{X}_{n}\) called the \emph{Emerton--Gee stack of rank \(n\) étale \((\varphi, \Gamma)\)-modules}\footnote{This \(\Gamma\) is not the same as what we denote by \(\Gamma\) elsewhere in this work.} \cite{emertonModuliStacksEtale2023}.
This functor \(\mathfrak{A}\) is subject to a number of compatibilities listed in \cite[Conjecture 6.1.14]{emertonIntroductionCategoricalPadic2023}, such as the condition that certain locally algebraic representations which play a pivotal role in automorphy lifting theorems map via \(\mathfrak{A}\) to sheaves which are naturally supported on substacks characterized by \(p\)-adic Hodge theory conditions. 
The point we want to emphasize is that in the categorical \(p\)-adic Langlands program, \emph{it is paramount 
to understand the geometry of \(\mathcal{X}_{n}\), as well as the geometry of certain substacks defined by \(p\)-adic Hodge theory conditions}.

\begin{remark}
    It is expected that \eqref{eq:categorical-p-adic-langlands} can be refined into an equivalence of categories, by taking the left hand side to be a category of \(p\)-adic sheaves on \(\operatorname{Bun}_{G}\), the moduli of \(G\)-torsors on the Fargues-Fontaine curve (where \(G = \GL_{n}\) for the present discussion).
    But as of the time of writing this document, no such conjecture has been formulated.
    A large part of the difficulty is to define the appropriate category of \(p\)-adic sheaves on \(\operatorname{Bun}_{G}\). 
    See \cite[Remark 1.4.6]{emertonIntroductionCategoricalPadic2023} for additional discussion. 
\end{remark}
\begin{remark}
    The formulation of Langlands reciprocity as an equivalence of categories is standard in the geometric Langlands program, and such formulations of the local Langlands correspondence (for \(\ell \neq p\)) has been given by \cite{farguesGeometrizationLocalLanglands2024} and \cite{zhuCoherentSheavesStack2021a}.
\end{remark}

\subsubsection{The study of crystalline Emerton--Gee stacks}
\label{sec:crystalline-eg-stacks}
Informally, the stack \(\mathcal{X}_{n}\) can be thought of as a ``moduli stack 
of \(n\)-dimensional \(p\)-adic Galois representations'', and for any finite \(\mathcal{O}\)-algebra \(A\), the \(A\)-points of \(\mathcal{X}_{n}\) (or more precisely \(\operatorname{Spf}A\)-points)
can be canonically identified with the groupoid of continuous representations \(\operatorname{Gal}(\overline{\mathbb{Q}}_{p} / \mathbb{Q}_{p}) \to \GL_{n}(A)\). 
As such, there are special loci of \(\mathcal{X}_{n}\) cut out by \(p\)-adic Hodge theory conditions (defined by taking flat closure). 
In particular, for an inertial type \(\tau\) and dominant cocharacter \(\nu\), \cite{emertonModuliStacksEtale2023} constructs a substack \(\mathcal{X}_{n}^{\text{crys}, \nu, \tau} \subset \mathcal{X}_{n}\) whose \(A\)-points for a finite \emph{flat} (hence free) \(\mathcal{O}\)-algebra \(A\) 
correspond to continuous representations \(\operatorname{Gal}(\overline{\mathbb{Q}}_{p} / \mathbb{Q}_{p}) \to \GL_{n}(A)\) which are potentially crystalline with descent data described by \(\tau\), and with Hodge--Tate weights \(\nu\). 
A key feature of the stacks \(\mathcal{X}_{n}\) is that for a given representation \(\overline{\rho} : \operatorname{Gal}(\overline{\mathbb{Q}}_{p} / \mathbb{Q}_{p}) \to \GL_{n}(\overline{\mathbb{F}}_{p})\), 
the universal deformation ring \(R_{\overline{\rho}}\) is a versal ring for \(\mathcal{X}_{n}\) at the corresponding \(\overline{\mathbb{F}}_{p}\)-valued point. 
Similarly, the potentially crystalline deformation ring \(R_{\overline{\rho}}^{\text{crys},\nu,\tau}\) is a versal ring for \(\mathcal{X}_{n}^{\text{crys},\nu,\tau}\) (where we allow the possibility that \(\operatorname{Spec} R_{\overline{\rho}}^{\text{crys},\nu,\tau} = \emptyset\)). 

It is difficult to analyze \(\mathcal{X}_{n}^{\text{crys},\nu,\tau}\) directly, because it is actually defined by taking the \emph{closure} of a locus defined by \(p\)-adic Hodge theory conditions, and as such has no inherent moduli interpretation (which is related to the fact that integral \(p\)-adic Hodge theory is difficult). 
A basic tool to study these stacks is the theory of Breuil--Kisin modules. 
This was used to great effect by Kisin in \cite{kisinModuliFiniteFlat2009}, who ``resolved'' a potentially crystalline deformation ring by a moduli space of Breuil--Kisin modules, and used this to establish strong modularity lifting theorems (for \(\operatorname{GL}_{2}\)-valued Galois representations). 
At the time of Kisin's work, Emerton--Gee stacks had not been introduced yet, so the central geometric object in play was the potentially crystalline deformation ring itself.
But a natural question to ask is ``how can Kisin's ideas be adapted to the study of crystalline Emerton--Gee stacks?''

One suggestion of how Kisin's template can be applied to the study of \(\mathcal{X}_{n}^{\text{crys},\nu,\tau}\) is provided by \cite{local-models}, which is the principal inspiration for our work.
The approach of \cite{local-models} can be roughly described as a program in three steps:
\begin{enumerate}
    \item Construct and study the moduli stack of Breuil--Kisin modules. One should impose conditions corresponding to having appropriate Hodge--Tate weights and inertial type. 
    \item The moduli stack of (1) is usually too large, so we study a substack corresponding to \(\mathcal{X}_{n}^{\text{crys},\nu,\tau}\). In \cite{local-models} this substack is described by a certain ``monodromy condition''. 
    \item Find a nice ``linear algebra model'' for the moduli stack of (1) as well as the corresponding model for \(\mathcal{X}_{n}^{\text{crys},\nu,\tau}\). Use this to deduce properties of the stacks \(\mathcal{X}_{n}^{\text{crys},\nu,\tau}\). 
\end{enumerate}
A key observation in \cite{local-models} is that the moduli stack of (1) is closely related to a twisted Schubert variety in the sense of Pappas--Zhu \cite{pappas-zhu} (the linear algebra model), and one can understand how \(\mathcal{X}^{\text{crys},\nu,\tau}\) is correspondingly related to a substack of the twisted Schubert variety. 
In loc. cit. this is used to establish results on generalized Serre weight conjectures \cite{GHS} \cite{linDeligneLusztigTypeCorrespondence2023} (which generalize congruences between automorphic forms modulo \(p\)), and the geometric Breuil-Mézard conjecture \cite[Chapter 8]{emertonModuliStacksEtale2023} \cite[Section 6.1.37]{emertonIntroductionCategoricalPadic2023} (which concerns how the reduced special fiber of \(\mathcal{X}^{\text{crys},\nu,\tau}\) decomposes into irreducible components). 

\subsection{What is in this work?}
The global Langlands reciprocity conjectures in full generality are formulated for general reductive groups, so it is natural to ask how much of the story above generalize to such groups.
More precisely, consider the connected reductive group \(G\) over \(\mathbb{Q}_{p}\) which splits over the finite Galois extension \(L \supset \mathbb{Q}_{p}\).
The dual group \(\widehat{G}\) is the split group determined by the dual root datum to that of \(G\), and it is a Chevalley group which we view as defined over \(\mathbb{Q}_{p}\). 
The Galois group \(\Gamma = \operatorname{Gal}(L / \mathbb{Q}_{p})\) acts on \(\widehat{G}\) via pinned automorphisms, encoding the non-splitness of \(G\), and we form the Langlands dual group \({^L G} = \widehat{G} \rtimes \Gamma\).
The relevant objects on the Galois side of the \(p\)-adic Langlands program are \(L\)-parameters \(\Gamma_{\mathbb{Q}_{p}} = \operatorname{Gal}(\overline{\mathbb{Q}}_{p} / \mathbb{Q}_{p}) \to {^L G}(\overline{\mathbb{Q}}_{p})\), 
meaning homomorphisms for which the composition with the projection to \(\Gamma = \operatorname{Gal}(L / \mathbb{Q}_{p})\) is the canonical quotient map \(\Gamma_{\mathbb{Q}_{p}} \to \Gamma\).
The Emerton--Gee stacks for such \(L\)-parameters have been constructed in \cite{EG-tame-1}, and the crystalline stacks \(\mathcal{X}_{^L G}^{\text{crys},\nu,\tau}\) in \cite{EG-tame-II} (under the assumption that \(G\) is tamely ramified). 
It is expected that we have an analogue of \cite[Conjecture 6.1.14]{emertonIntroductionCategoricalPadic2023} in this setting, in which \eqref{eq:categorical-p-adic-langlands} is modified by replacing \(\GL_{n}\) with \(G\) and \(\mathcal{X}_{n}\) with \(\mathcal{X}_{^L G}\).
As before, the geometry of the stacks \(\mathcal{X}_{^L G}\) and \(\mathcal{X}_{^L G}^{\text{crys},\nu,\tau}\) are important in the \(p\)-adic Langlands program (for general connected reductive groups \(G\)). 

In this work, we study Breuil--Kisin modules with descent data and \(\widehat{G}\)-structure (we call them Breuil--Kisin \((\Gamma,\widehat{G})\)-torsors), which play an analogous role in the present setting as Breuil--Kisin modules with descent data (and no \(\widehat{G}\)-structure) do in the story above.
A crucial assumption is that the group \(G\) over \(\mathbb{Q}_{p}\) is \emph{tamely ramified}, i. e. splits over the tamely ramified extension \(L \supset \mathbb{Q}_{p}\). 
Following the template of \cite{local-models}, as formulated in the three steps above, we carry out step (1) and study the moduli \(\mathcal{Y}\) of such objects. 
We also carry out step (3) for this moduli stack \(\mathcal{Y}\). 
We leave to future work the question of how \(\mathcal{X}_{^L G}^{\text{crys},\nu,\tau}\) is related to \(\mathcal{Y}\), and finding the corresponding ``linear algebra model'' for \(\mathcal{X}_{^L G}^{\text{crys},\nu,\tau}\). 

\begin{remark}
    Let us mention some earlier work in a similar direction to ours -- namely, that of generalizing elements of the story above from \(\GL_{n}\) to more general groups. 
    The PhD thesis of Levin \cite{levinValuedFlatDeformations2013} \cite{levinValuedCrystallineRepresentations2015} generalized Kisin's arguments from \cite{kisinModuliFiniteFlat2009} to Galois representations with \(\widehat{G}\)-structure, working at the level of deformation rings (i. e. the completion of the Emerton--Gee stack at a point \(\overline{\rho}\)). 
    This involves the study of Breuil--Kisin modules with \(\widehat{G}\)-structure (but without descent data), so it can be seen as a precursor to the present work.
    We also mention \cite{koziolSerreWeightConjectures2022} which employs Breuil--Kisin modules with descent data for an unramified unitary group defined over an unramified extension of \(\mathbb{Q}_{p}\) and proves a version of Serre's weight conjectures in this setting, also working at the level of deformation rings. 
    The PhD thesis of Lee \cite{leeEmertonGeeStacks2023} essentially carries out all 3 steps of the program for the group \(G = \operatorname{GSp}_{4}\) defined over an unramified extension of \(\mathbb{Q}_{p}\). 
    We emphasize that \cite{levinValuedFlatDeformations2013}, \cite{levinValuedCrystallineRepresentations2015}, and \cite{koziolSerreWeightConjectures2022} work with ``local'' objects, namely deformation rings (which can be thought of as \(p\)-adic completions of the relevant Emerton--Gee stack at a \(\overline{\mathbb{F}}_{p}\)-valued point), whereas \cite{leeEmertonGeeStacks2023} as well as this thesis work with families in which the residual \(L\)-parameter \(\overline{\rho}\) is allowed to vary. 
\end{remark}

\subsection{Crystalline \(L\)-parameters and Breuil--Kisin \((\Gamma,\widehat{G})\)-torsors}
\label{sec:L-parameters}
We will now be precise and introduce some of the principal objects of study, namely Breuil--Kisin modules with descent data and \(\widehat{G}\)-structure (or just Breuil--Kisin \((\Gamma,\widehat{G})\)-torsors).
Before giving the definition, we review the notion Breuil--Kisin modules in the case of \(G = \GL_{n}\) and how they arise in the classification of lattices in crystalline representations as a consequence of Kisin's work \cite{kisinCrystallineRepresentationsFcrystals2006}.

\subsubsection{Review of Breuil--Kisin modules for \(\operatorname{GL}_{n}\)}
Let \(V\) be an \(E\)-vector space, and let \(\rho : \Gamma_{\mathbb{Q}_{p}} \to \GL_{V}(E)\) be a crystalline representation. 
A lattice in this crystalline representation is a \(\Gamma_{\mathbb{Q}_{p}}\)-stable \(\mathcal{O}\)-submodule \(\mathcal{L} \subset V\) for which \(\mathcal{L}[1/p] = V\). 
To such a lattice we can associate two semilinear algebra objects: 
\begin{enumerate}
    \item An étale \(\varphi\)-module. That is, a module \(M = D_{\et}(\mathcal{L})\) over \(\mathcal{E}_{\mathcal{O}} = \mathcal{O} \llp v \rrp^{\wedge \varpi}\) together with a Frobenius endomorphism \(\phi : \varphi^*M \isoto M \). 
    \item An isocrystal over \(E\), i. e. a \(\varphi\)-module \(N = D_{\text{crys}}(V) = (V \otimes_{\mathbb{Q}_{p}} B_{\text{crys }})^{\Gamma_{\mathbb{Q}_{p}}}\) together with a Frobenius endomorphism \(\phi : \varphi^* N \isoto N\). In fact, what one gets is the more refined structure of a filtered isocrystal. 
\end{enumerate}
The Breuil--Kisin module \(\mathfrak{M}\) attached to \(\mathcal{L}\) interpolates between these two things. 
That is, \(\mathfrak{M}\) is a module over \(\mathfrak{S}_{\mathcal{O}} := \mathcal{O}\llb v \rrb\) with Frobenius structure which specializes to (1) after inverting \(v\) and \(p\)-adically completing, 
and which specializes to (2) after inverting \(p\) and reducing modulo \(v\).
Kisin has shown that this construction gives rise to a fully faithful functor from lattices in crystalline representations to Breuil--Kisin modules \cite[Theorem 1.2.1]{kisinIntegralModelsShimura2010}. 
\begin{remark}
    \label{rem:log-poles}
    Not all Breuil--Kisin modules are attached to lattices in crystalline representations, i. e. Kisin's fully faithful functor is not essentially surjective.
    This accounts for the discrepancy between the moduli stacks \(\mathcal{X}_{n}^{\text{crys},\nu,\tau}\) and corresponding moduli stacks of Breuil--Kisin modules, as mentioned in step (2) earlier. 
    However, the essential image of Kisin's functor can be characterized in terms of having a connection with logarithmic poles, which is the starting point for the characterization of \(\mathcal{X}_{n}^{\text{crys},\nu,\tau}\) as a substack of the moduli of Breuil--Kisin modules used in \cite{local-models}. 
    This is related to how the Hodge filtration can be recovered from \(\mathfrak{M}\). 
\end{remark}
\begin{remark}
    The Hodge--Tate weights of \(V\) can be recovered in terms of elementary divisors of a matrix representing \(\phi : \varphi^* \mathfrak{M} \dashrightarrow \mathfrak{M}\). 
\end{remark}
\begin{remark}
    Similar constructions to the above work for lattices in potentially crystalline representations, which to a lattice \(\mathcal{L} \subset V\)
    attaches a Breuil--Kisin module \(\mathfrak{M}\) over \(\mathfrak{S}_{L,\mathcal{O}} = (\mathcal{O} \otimes_{\mathbb{Z}_{p}}W(l))\llb u \rrb\) with descent data for the group \(\Gamma\).
\end{remark}

\subsubsection{Breuil--Kisin \((\Gamma,\widehat{G})\)-torsors}
The idea behind a Breuil--Kisin \((\Gamma,\widehat{G})\)-torsor is that it should be to an ``integral crystalline \(L\)-parameter'' what a Breuil--Kisin module is to a lattice in a crystalline \(L\)-parameter. 
In order to understand what structure this should yield, let us first specify what we mean by an integral crystalline \(L\)-parameter. 

We say that a continuous \(L\)-parameter \(\Gamma_{\mathbb{Q}_{p}} \to {^L G}(E)\) is \emph{crystalline} if the restriction \(\Gamma_{L} \to \widehat{G}(E)\) is crystalline in the sense 
that for any (or equivalently, one faithful) algebraic representation \(\widehat{G} \hookrightarrow \GL_{V}\) over \(E\), the resulting representation \(\Gamma_{L} \to \GL_{V}(E)\) is crystalline.
Note that this is a common generalization of Galois representations which become crystalline over \(L\), and of crystalline Galois representations with \(\widehat{G}\)-structure.
An \emph{integral} crystalline \(L\)-parameter is a continuous \(L\)-parameter \(\Gamma_{\mathbb{Q}_{p}} \to {^L G}(\mathcal{O})\) for which the composition \(\Gamma_{\mathbb{Q}_{p}} \to {^L G}(\mathcal{O}) \hookrightarrow {^L G}(E)\) is a crystalline. 

To deal with a general group \(G\) one uses Tannakian formalism, which is valid over a Dedekind base by \cite[Theorem 2.5.2]{levinValuedFlatDeformations2013}.
The functors \(D_{\et}\) and \(D_{\text{crys}}\) are both exact monoidal, and hence ``compatible'' with Tannakian formalism. 
On the other hand, we warn the reader that we can not simply apply Tannakian formalism directly to Kisin's functor which attached Breuil--Kisin modules to lattices in crystalline representations, because this functor is not exact. 
From an integral crystalline \(L\)-parameter we obtain an étale \(\varphi\)-module and an isocrystal with descent data and \(\widehat{G}\)-structure via Tannkian formalism. 
The notion of a \((\Gamma,\widehat{G})\)-torsor (see \cref{sec:equivariant-torsors}) precisely encodes this notion of ``descent data and \(\widehat{G}\)-structure'', where the \(\Gamma\) may have a group-theoretic action on \(\widehat{G}\) in addition to the semilinear action over the base.
The corresponding object at the level of Breuil--Kisin modules is what we call a \emph{Breuil--Kisin \((\Gamma,\widehat{G})\)-torsor}. 

\begin{definition}
    A \emph{Breuil--Kisin \((\Gamma,\widehat{G})\)-torsor} with coefficients in an \(\mathcal{O}\)-algebra \(R\) is a pair \((\mathcal{P},\phi)\), where 
    \(\mathcal{P}\) is a \((\Gamma,\widehat{G})\)-torsor over \(\mathfrak{S}_{L,R } = (R \otimes_{\mathbb{Z}_{p}} \mathcal{O}_{L_{0}})\llb u \rrb\) and \(\phi : \varphi^* \mathcal{P} \dashrightarrow \mathcal{P}\)
    is an isomorphism of \((\Gamma,\widehat{G})\)-torsors, where the dashed arrow indicates that \(\phi \) is only defined after inverting \(E(u )\) (the Eisenstein polynomial over \(L_{0}\) for a uniformizer in \(\mathcal{O}_{L}\)).
    We also refer to Breuil--Kisin \((\Gamma,\widehat{G})\)-torsors as Breuil--Kisin modules with \(\widehat{G}\)-structure and descent data (as in the title of this work). 
\end{definition}
\begin{remark}
    Let \(\mathbb{Q}_{p} \subset K \subset L_{0}\) be an intermediate unramified extension.
    When \(G = \operatorname{Res}^K_{\mathbb{Q}_{p}} \GL_{n}\), a Breuil--Kisin \((\Gamma,\widehat{G})\)-torsor is essentially the 
    same as a Breuil--Kisin module with descent data as in \cite[Section 5]{local-models}.
    When \(L = K \), \(H\) is a split connected reductive group over \(K\) and \(G = \operatorname{Res}^K_{\mathbb{Q}_{p}}H\), then a Breuil--Kisin \((\Gamma,\widehat{G})\)-torsor 
    is essentially the same as a Breuil--Kisin module with \(\widehat{H}\)-structure in the sense of \cite{levinValuedCrystallineRepresentations2015}.
    When \(G = \operatorname{Res}^K_{\mathbb{Q}_{p}} \operatorname{GSp}_{4}\), a Breuil--Kisin \((\Gamma,\widehat{G})\)-torsor is essentially the same as a symplectic
    Breuil--Kisin module with descent data as in \cite{leeEmertonGeeStacks2023}. 
\end{remark}
\begin{remark}
    In practice, we truncate moduli stacks of Breuil--Kisin \((\Gamma,\widehat{G})\)-torsors by height, which is a generalization of the condition that the elementary divisors of a matrix representing \(\phi\) are bounded. 
    We will return to this point in \cref{sec:intro-local-models}. 
\end{remark}

\subsection{The moduli of Breuil--Kisin \((\Gamma,\widehat{G})\)-torsors}
\label{sec:moduli-of-BK-modules}
As mentioned earlier, the motivation behind this paper is to study the moduli stack of Breuil--Kisin \((\Gamma,\widehat{G})\)-torsors, with a view towards applications in the study of Galois representations.
Let \(\mathcal{Y}\) denote the moduli stack of Breuil--Kisin \((\Gamma,\widehat{G})\)-torsors over \(\operatorname{Spf} \mathcal{O}\). 
This moduli space has a natural presentation as a stack, but of an unfamiliar kind. 
More precisely, we have by \cref{res:moduli-interpretation-as-BK-modules} an equivalence of stacks over \(\operatorname{Spf}\mathcal{O}\)
\begin{equation}
    \label{res:presentation-of-moduli-stack-of-BKL-parameters}
    \mathcal{Y} \cong \left[ L \mathcal{G}_{x}^{\wedge \varpi} /^{\varphi, c} L^+ \mathcal{G}_{x}^{\wedge \varpi} \right], 
\end{equation}
where the \(\mathcal{G}_{x}\) is a certain Bruhat--Tits group scheme over \(\mathbb{A}^1_{\mathcal{O}}\), \(L \mathcal{G}_{x}(R) := \mathcal{G}_{x}(R\llp v + p \rrp)\), \(L^+\mathcal{G}_{x}(R) := \mathcal{G}_{x}(R\llb v + p \rrb)\), the superscript \(\wedge \varpi\) means \(p\)-adic completion (which can be interpreted as base change along \(\operatorname{Spf}\mathcal{O} \to \operatorname{Spec}\mathcal{O}\)), \(c \in L \mathcal{G}_{x}(\mathcal{O})\) is a certain element, and 
the fppf quotient stack on the right is formed with respect to the right action given by the formula \(X \star A = A^{-1} X c \varphi(A)c^{-1}\).

\begin{remark}
    \label{rem:moduli-of-BKL-parameters}
    Whenever \(p\) is nilpotent in an \(\mathcal{O}\)-algebra \(R\), the equivalence of \eqref{res:presentation-of-moduli-stack-of-BKL-parameters} identifies \(\mathcal{Y}(R)\) with the groupoid of Breuil--Kisin \((\Gamma,\widehat{G})\)-torsors with \(R\)-coefficients.
    However, we warn the reader that \(\mathcal{Y} (\operatorname{Spf}\mathcal{O})\) will not be identified with the groupoid of Breuil--Kisin \((\Gamma,\widehat{G})\)-torsors with \(\mathcal{O}\)-coefficients (although it can be identified with families of Breuil--Kisin \((\Gamma,\widehat{G})\)-torsors with \(\mathcal{O}/\varpi^a\)-coefficients, where \(a\) ranges through \(\mathbb{Z}_{\geq 1}\)). 
    The reason for this failure has to do with the difference between \(\operatorname{Spf}\) and \(\operatorname{Spec}\), and is essentially the somewhat subtle reason that \(\mathfrak{S}_{\mathcal{O}}\left[ (v+p)^{-1} \right] = \mathfrak{S}_{\mathcal{O}} \left[ v^{-1} \right] \neq \mathfrak{S}_{\mathcal{O}} \left[ v^{-1} \right]^{\wedge \varpi} = \mathcal{E}_{\mathcal{O}}\).
    But once we pass to a certain bounded substack \(\mathcal{Y}^{\leq \mu}\) (defined in \cref{sec:intro-local-models} or in \cref{sec:twisted-affine-grassmannian}), then an \(\operatorname{Spf}\mathcal{O}\)-valued point will correspond to an actual Breuil--Kisin \((\Gamma,\widehat{G})\)-torsor with \(\mathcal{O}\)-coefficients. 
\end{remark}

We will say some words about what lies behind \eqref{res:presentation-of-moduli-stack-of-BKL-parameters}, as this will explain what the group \(\mathcal{G}_{x}\) is and where the twisted action comes from. 

Let \(R\) be an \(\mathcal{O}\)-algebra, and consider the map \(\pi : \operatorname{Spec}\mathfrak{S}_{L,R} \to \operatorname{Spec}\mathfrak{S}_{R}\) corresponding to the map \(\mathfrak{S}_{R} := R\llb v \rrb \subset (R \otimes_{\mathbb{Z}_{p}} W(l))\llb u \rrb =: \mathfrak{S}_{L,R}\)
which is given by \(x \mapsto x \otimes 1\) on coefficients and \(v \mapsto u^e\), where \(e = [L : L_{0}]\) is the ramification index of \(L\) (which is prime to \(p\) by the tameness assumption). 
Because \(\pi\) is ramified at the origin, 
there is a non-trivial invariant of \((\Gamma,\widehat{G})\)-torsors over \(\mathfrak{S}_{L,R}\) called a type, which is described by a class \([\tau] \in H^1 \left(\Gamma, \widehat{G}(\mathfrak{S}_{L,R})\right)\) (see \cref{sec:specialized-equivariant-torsors}). 
For \(R = \mathcal{O}\) we give a building-theoretic classification of this set of cohomology classes in \cref{res:building-classification-of-types}, with respect to which a class \([\tau]\) corresponds an equivalence class of points in the extended building \(\widetilde{\mathcal{B}}(\widehat{G},\mathbb{F}\llp v \rrp)\). 
Let us now make a choice of such a point \(x\) describing \([\tau]\) according to this classification. 
It then follows by a general principle explained in \cref{sec:equivariant-torsors} (and specialized to the present situation in \cref{sec:specialized-equivariant-torsors}) that \((\Gamma,\widehat{G})\)-torsors of type \(\tau\) correspond to 
\(\mathcal{G}_{x}\)-torsors over \(\mathfrak{S}_{R}\), where \(\mathcal{G}_{x}\) is defined as the ``invariant pushforward'' (or invariants of Weil restriction) of a group defined as a ``twist'' of \(\widehat{G}\) over \(\mathfrak{S}_{L,\mathcal{O}}\).
For the detailed construction of \(\mathcal{G}_{x}\), see \cref{sec:bruhat-tits-group-schemes}. 

If we are given a Breuil--Kisin \((\Gamma,\widehat{G})\)-torsor \((\widetilde{\mathcal{P}},\phi)\) over \(\mathfrak{S}_{L,R}\) of type \(\tau\), 
then the previous paragraph shows that the \((\Gamma,\widehat{G})\)-torsor \(\mathcal{P}\) ``descends'' to a \(\mathcal{G}_{x}\)-torsor \(\mathcal{P}\) over \(\mathfrak{S}_{R}\).
On the other hand, the Frobenius structure \(\phi \) does not descend to a Frobenius structure on \(\mathcal{P}\), but to a ``\(c\)-twisted'' Frobenius structure, 
where \(c \in \widehat{G}(\mathcal{O}\llp v \rrp)\) satisfies \(c \cdot \varphi(x) = x \) (and the fact that we have such a \(c\) is a consequence of \cref{sec:frobenius-invariant-types}).
By choosing a trivialization of \(\mathcal{G}_{x}\)-torsors \(\mathcal{E}^0 \isoto \mathcal{P}\) (which can be done étale locally on \(R\)), the Frobenius structure corresponds to an element \(X \in \mathcal{G}_{x} \left( \mathfrak{S}_{R}\left[ (v+p)^{-1} \right] \right)\), 
and modifying the trivialization by left translation by an element \(A \in L^+ \mathcal{G}_{x}(\mathcal{O})\) has the effect of changing \(X\) into \(A^{-1} X c \varphi(A)c^{-1}\).
Now note that if \(p^aR = 0\) for some \(a \in \mathbb{Z}_{\geq 1}\), then \(\mathfrak{S}_{R}\left[(v+p)^{-1}\right] = R \llp v + p \rrp\) (see \cref{res:inverting-different-variables} for details), 
and consequently \(X \in L \mathcal{G}_{x}(R)\). 
The presentation of \eqref{res:presentation-of-moduli-stack-of-BKL-parameters} then follows.

Our group scheme \(\mathcal{G}_{x}\) is closely related to the group scheme for \(\widehat{G}\) attached to \(x\) in the sense of Pappas--Zhu \cite[Theorem 4.1]{pappas-zhu}. 
The only discrepancy between our \(\mathcal{G}_{x}\) and the group scheme of Pappas--Zhu is that our \(\mathcal{G}_{x}\) can have disconnected fibers, as we will see in \cref{ex:PGL3-fibers}. 
In \cref{res:connected-fibers} we give a concrete criterion that ensures \(\mathcal{G}_{x}\) has connected fibers, which under the assumption that \(G\) is unramified simplifies to the statement that the derived group \(\widehat{G}_{\text{der}}\) of \(\widehat{G}\) is simply connected.
For the remainder of this introduction, and in most of the paper, we will assume for simplicity that \(\mathcal{G}_{x}\) has connected fibers, and hence coincides with the group scheme of Pappas--Zhu. 

\begin{example}
    \label{ex:parahoric-group-scheme-for-GL2}
    Let us give a simple example illustrating that the groups \(\mathcal{G}_{x}\) often have a genuine ``parahoric'' structure at the origin. 
    Suppose \(\widehat{G} = \GL_{2}\), and \(L = \mathbb{Q}_{p}((-p)^{\frac{1}{p-1}})\). 
    Let \(\varpi_{L} = (-p)^{\frac{1}{p-1}}\) denote a choice of uniformizer in \(\mathcal{O}_{L}\), the ring of integers of \(L\). 
    Consider a type \(\tau\) given by 
    \begin{align*}
        \tau : \Gamma = I & \to \GL_{2}(\mathcal{O}) \subset \GL_{2}(\mathfrak{S}_{L,\mathcal{O}}) \\ 
        \gamma & \mapsto \begin{pmatrix} 1 & 0 \\ 0 & \omega(\gamma) \end{pmatrix} ,
    \end{align*}
    where \(\omega(\gamma) = \frac{\gamma(\varpi_{L})}{\varpi_{L}}\). 
    Then we define \(\widetilde{\mathcal{G}}_{x} := u^{(0,1)} \widehat{G} u^{-(0,1)}\) as a group scheme over \(\operatorname{Spec} \mathcal{O}[u]\), where we use the convention \(u^{(a,b)} = \begin{pmatrix} u^a & 0 \\ 0 & u^b \end{pmatrix}\). 
    (The role of \(u^{(0,1)}\) here is to trivialize \(\tau\), in the sense that \(\tau(\gamma) = u^{-(0,1)}(^\gamma u^{(0,1)})\).)
    The group scheme \(\mathcal{G}_{x}\) is then defined as the invariant pushforward \(\pi_{*}^I \widetilde{\mathcal{G}}_{x}\). Note that the \(\mathcal{O}[v]\)-points of \(\mathcal{G}_{x}\) are then given by 
    \[
    \mathcal{G}_{x} (\mathcal{O}[v]) = \begin{pmatrix}
        \mathcal{O}[u] & u^{-1} \mathcal{O}[u] \\ u \mathcal{O}[u] & \mathcal{O}[u] \end{pmatrix}^I 
        = \begin{pmatrix}
            \mathcal{O}[v] & \mathcal{O}[v] \\ v \mathcal{O}[v] & \mathcal{O}[v]      \end{pmatrix} 
    \]
    (recall that \(u^{p-1} = v\) and \(\gamma\in I\) acts on \(u\) via multiplication by \(\omega(\gamma)\)). 
    So the group scheme \(\mathcal{G}_{x}\) is a group of ``upper triangular matrices modulo \(v\)''. 
    One way to formalize this is to define \(\mathcal{G}_{x}\) as the dilation of \(\GL_{2}\) in the Borel at the origin \(v = 0\) (and this is precisely the approach \cite{local-models} uses to define their analogue of the groups \(\mathcal{G}_{x}\)).
    In \cref{rem:pappas-zhu-dilation} we will give precise conditions for when the group schemes \(\mathcal{G}_{x}\) can be described as dilations like this. 
\end{example}

\subsubsection{Stratification by height}
\label{sec:intro-local-models}
The generic fiber of \(L \mathcal{G}_{x}\) and \(L^+ \mathcal{G}_{x}\) is \(L \widehat{G}\) and \(L^+ \widehat{G}\), respectively
(see \cref{sec:special-and-generic-fibers} for additional details). 
Let \(X_{*}(\widehat{T})^{\text{dom}} \subset X_{*}(\widehat{T})\) denote the set of dominant cocharacters. 
Cartan decomposition yields 
\[
L \widehat{G}(\overline{\mathbb{Q}}_{p}) = \bigsqcup_{\nu \in X_{*}(\widehat{T})^{\text{dom}}} L^+ \widehat{G}(\overline{\mathbb{Q}}_{p}) \cdot (v+p)^{\nu} \cdot L^+ \widehat{G}(\overline{\mathbb{Q}}_{p}) . 
\]
Given \( \mu \in X_{*}(\widehat{T})^{\text{dom}}\), we then define \(L^{\leq \mu }\mathcal{G}_{x}\) as the reduced Zariski closure of 
\[
\bigsqcup_{\nu \in X_{*}(\widehat{T})^{\text{dom}}, \nu \leq \mu} L^+ \widehat{G}(\overline{\mathbb{Q}}_{p}) \cdot (v+p)^{\nu} \cdot L^+ \widehat{G}(\overline{\mathbb{Q}}_{p})
\]
in \(L \mathcal{G}_{x}\).
This is stable under both left and right translations by \(L^+ \mathcal{G}_{x}\).
We then define the quotient stack 
\[
\mathcal{Y}^{\leq \mu} := \left[ L^{\leq \mu} \mathcal{G}_{x}^{\wedge \varpi} /^{c,\varphi} L^+ \mathcal{G}_{x}^{\wedge \varpi} \right]^{\wedge \varpi} \subset \mathcal{Y} .
\]
We call \(\mathcal{Y}^{\leq \mu}\) the moduli stack of Breuil--Kisin \((\Gamma,\widehat{G})\)-torsors of \emph{height \(\leq \mu\)}. 
This is consistent with the terminology of \cite{local-models}. 
We emphasize that this is a purely group theoretic definition of height, which does not rely on choosing an embedding into \(\GL_{n}\).

\subsection{The local model theorem for \(\mathcal{Y}^{\leq \mu}\)} 
The quotient of \(L^{\leq \mu} \mathcal{G}_{x}\) by the left translation action of \(L^+ \mathcal{G}_{x}\) is the Pappas--Zhu local model 
\[
\operatorname{Gr}_{\mathcal{G}_{x}}^{\leq \mu} := \left[ L^+ \mathcal{G}_{x} \backslash L^{\leq \mu} \mathcal{G}_{x} \right] 
\]
(as explained in \cref{sec:bounded-loop-group}).
Taking the quotient of \(L^{\leq \mu}\mathcal{G}_{x}^{\wedge \varpi}\) by the two different actions, we obtain a diagram 
\[
\begin{tikzcd}
    & L^{\leq \mu} \mathcal{G}_{x}^{\wedge \varpi} \arrow[dl,twoheadrightarrow] \arrow[dr,twoheadrightarrow] & \\
    \mathcal{Y}^{\leq \mu} & & \operatorname{Gr}^{\leq \mu, \wedge \varpi}_{\mathcal{G}_{x}} , 
\end{tikzcd}
\]
where both arrows are \(L^+\mathcal{G}_{x}\)-torsors.
The above ``local model diagram'' is a direct generalization to more general groups of the local model diagrams appearing in the main results of \cite{pappasPhiModulesCoefficient2009} and \cite{caraiani-levin}. 
The only caveat is that we only deal with at most \emph{tamely} ramified groups, and as such our results don't apply to wildly ramified groups such as the Weil restriction of \(\GL_{n}\) defined over a wildly ramified extension \(K\) of \(\mathbb{Q}_{p}\).

The defect of the above diagram is that the arrows are merely formally smooth, and not smooth, because \(L^+ \mathcal{G}_{x}\) is not of finite presentation. 
But we can indeed produce a local model diagram in which both arrows are \emph{smooth}, as the following theorem shows. 
\begin{maintheorem}[{\cref{res:smooth-equivalence}}]
    \label{res:cl-local-model}
    Let \(\mu : \mathbb{G}_{m} \to \widehat{T}\) be a dominant cocharacter.
    Then \(\mathcal{Y}^{\leq \mu}\) is a \(p\)-adic formal algebraic stack, and there exists \(p\)-adic formal scheme \(\mathcal{Z}^{\wedge \varpi}\) over \(\operatorname{Spf}\mathcal{O}\) and a diagram 
    \[
    \begin{tikzcd}
    & \mathcal{Z}^{\wedge \varpi}  \arrow[dl] \arrow[dr] & \\
    \mathcal{Y}^{\leq \mu} & & \operatorname{Gr}_{\mathcal{G}}^{\leq \mu,\wedge \varpi} 
    \end{tikzcd}
    \]
    in which both maps are smooth coverings of \(p\)-adic formal algebraic stacks over \(\operatorname{Spf}\mathcal{O}\). 
\end{maintheorem}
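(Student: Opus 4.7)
The plan is to present both $\mathcal{Y}^{\leq\mu}$ and $\operatorname{Gr}^{\leq\mu,\wedge\varpi}_{\mathcal{G}_x}$ as quotients of a single finite-type $p$-adic formal scheme $\mathcal{Z}^{\wedge\varpi}$ by one and the same smooth affine finite-type group scheme, namely the jet group $L^+_n\mathcal{G}_x^{\wedge\varpi}$ with $L^+_n\mathcal{G}_x(R):=\mathcal{G}_x(R[v+p]/(v+p)^n)$, for $n=n(\mu)$ chosen sufficiently large. Concretely, let $K_n:=\ker(L^+\mathcal{G}_x\twoheadrightarrow L^+_n\mathcal{G}_x)$, which is normal in $L^+\mathcal{G}_x$, and define
\[
\mathcal{Z}:=K_n\backslash L^{\leq\mu}\mathcal{G}_x,
\]
the quotient by the left multiplication action. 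Since $L^{\leq\mu}\mathcal{G}_x\to\operatorname{Gr}^{\leq\mu}_{\mathcal{G}_x}$ is an $L^+\mathcal{G}_x$-torsor over the (quasi-)projective Pappas--Zhu local model, $\mathcal{Z}\to\operatorname{Gr}^{\leq\mu}_{\mathcal{G}_x}$ is an $L^+_n\mathcal{G}_x$-torsor; hence $\mathcal{Z}$ is a quasi-projective $\mathcal{O}$-scheme, its $\varpi$-adic completion $\mathcal{Z}^{\wedge\varpi}$ is a $p$-adic formal scheme, and the projection $\mathcal{Z}^{\wedge\varpi}\twoheadrightarrow\operatorname{Gr}^{\leq\mu,\wedge\varpi}_{\mathcal{G}_x}$ is a smooth covering automatically.

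The substantive content is to produce a smooth covering $\mathcal{Z}^{\wedge\varpi}\twoheadrightarrow\mathcal{Y}^{\leq\mu}$. Normality of $K_n$ in $L^+\mathcal{G}_x$ is enough to descend the twisted right action $X\star A=A^{-1}Xc\varphi(A)c^{-1}$ from $L^{\leq\mu}\mathcal{G}_x$ to $\mathcal{Z}$: for $A'\in K_n$, the expressions $(A'X)\star A$ and $A'(X\star A)$ differ by the commutator $[A^{-1},A']$, which lies in $K_n$. The key technical step is to show that for $n=n(\mu)$ sufficiently large, this descended action \emph{factors} through $L^+\mathcal{G}_x/K_n=L^+_n\mathcal{G}_x$; after cancelling $A^{-1}\in K_n$, this amounts to verifying that
\[
Xc\varphi(A)c^{-1}X^{-1}\in K_n\quad\text{for all }A\in K_n,\ X\in L^{\leq\mu}\mathcal{G}_x^{\wedge\varpi}.
\]
Granted this, the residual right $L^+_n\mathcal{G}_x^{\wedge\varpi}$-action on $\mathcal{Z}^{\wedge\varpi}$ has quotient exactly $\mathcal{Y}^{\leq\mu}$ (by comparing with \cref{res:presentation-of-moduli-stack-of-BKL-parameters}), and the quotient map $\mathcal{Z}^{\wedge\varpi}\twoheadrightarrow\mathcal{Y}^{\leq\mu}$ is a smooth $L^+_n\mathcal{G}_x^{\wedge\varpi}$-torsor. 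In particular $\mathcal{Y}^{\leq\mu}$ is itself a $p$-adic formal algebraic stack, settling both halves of the statement simultaneously.

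The main obstacle is establishing the factorization estimate. On the special fiber the argument is clean and rests on three boundedness facts: the Cartan decomposition $L^{\leq\mu}\mathcal{G}_x=\bigsqcup_{\nu\leq\mu}L^+\mathcal{G}_x\cdot(v+p)^\nu\cdot L^+\mathcal{G}_x$ implies that conjugation by $X\in L^{\leq\mu}\mathcal{G}_x$ shifts the $v$-adic congruence depth by at most some $N(\mu)$; conjugation by the fixed element $c$ shifts it by at most some $N(c)$; and modulo $p$ one has $\varphi((v+p)^n)=(v^p+p)^n\equiv(v+p)^{pn}$, so Frobenius multiplies depth by $p$. Combining these, $Xc\varphi(A)c^{-1}X^{-1}$ lies in a congruence subgroup of depth $pn-2N(c)-2N(\mu)$ on the special fiber, which is contained in $K_n$ as soon as $(p-1)n\geq 2N(c)+2N(\mu)$. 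The genuinely hard part is lifting this estimate to $\mathcal{O}/\varpi^a$ for $a\geq 2$: the congruence $v^p+p\equiv(v+p)^p\pmod p$ fails integrally, and one must track the $(v,p)$-mixed error terms coming from the expansion $v^p+p=(v+p)^p+p\cdot(\dots)$. Because a $p$-adic formal algebraic stack is determined by its system of $\varpi^a$-truncations, it is enough to carry out the argument level by level, allowing $n$ to depend on both $\mu$ and $a$; the technical heart of the proof consists in making this level-by-level analysis work uniformly enough to glue into the required smooth presentation of $\mathcal{Y}^{\leq\mu}$.
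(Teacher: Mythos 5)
Your overall skeleton — exhibit a single finite-type formal scheme $\mathcal{Z}^{\wedge\varpi}$ mapping smoothly and surjectively to both sides, with the $\operatorname{Gr}$-side being automatic and the $\mathcal{Y}$-side being the content — is correct and matches the paper's. You also correctly locate the obstruction (depth estimates for $A\mapsto Xc\varphi(A)c^{-1}X^{-1}$) and correctly work it out on the special fiber. The gap is at the step you yourself flag as the "technical heart", and it is genuine: there is \emph{no} uniform-in-$a$ congruence level $n$ for which $X\varphi_c(A)X^{-1}\in K_n$ holds for all $A\in K_n$, $X\in L^{\leq\mu}\mathcal{G}(R)$, $p^aR=0$. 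The paper's own (nongeneric) straightening bound, Lemma~\ref{res:straightening-nongeneric}, reads $(p-1)f - h_\mu - 2a + 2 > 0$: the required depth $f$ grows linearly in $a$. This is exactly the $v$-versus-$(v+p)$ bookkeeping you identify (cf.\ Lemma~\ref{res:comparing-congruence-subgroups-at-v-and-v-plus-p}, where the loss between $L^{+n}$ and $L^{+n}_0$ is on the order of $a$). Consequently the group $L^+_n\mathcal{G}_x$ through which you want the descended action to factor changes with $a$, and the single $\mathcal{Z}=K_n\backslash L^{\leq\mu}\mathcal{G}$ does not present $\mathcal{Y}^{\leq\mu}$ uniformly. "Carry it out level by level and glue" does not resolve this: if $n=n(a)$, the level-$a$ presentations live over \emph{different} covers, and there is no content-free way to splice them into one $\mathcal{Z}^{\wedge\varpi}$.

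The paper avoids this entirely by not asking for global straightening over $\mathcal{O}/\varpi^a$. It fixes $f$ once and for all with $(p-1)\lfloor f\rfloor > h_\mu$ (no $a$ in sight), and proves the key Lemma~\ref{res:neighborhood-lemma}: for a map $g\colon U^{\wedge\varpi}\to L^{\leq\mu}\mathcal{G}^{\wedge\varpi}$, the induced $g_G\colon U^{\wedge\varpi}\to\operatorname{Gr}^{\leq\mu,\wedge\varpi}_f$ is smooth (resp.\ étale, open immersion) iff $g_Y\colon U^{\wedge\varpi}\to\mathcal{Y}^{\leq\mu}_f$ is. The proof uses the lifting criterion for smoothness, which reduces everything to \emph{small} square-zero extensions $J\to R\to\overline R$. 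Small extensions satisfy $pJ=0$ independent of $a$, and the infinitesimal straightening Lemmas~\ref{res:infinitesimal-straightening} and \ref{res:infinitesimal-straightening-nongeneric} then require only $(p-1)\lfloor f\rfloor - h_\mu > 0$ — the $2a$ term disappears because one is working in a Lie algebra over a characteristic-$p$ module, where Lemma~\ref{res:congruence-subgroups-of-lie-algebras} makes $v$ and $v+p$ congruence filtrations agree. Combined with the special-fiber identification $\mathcal{Y}^{\leq\mu}_{f,\mathbb{F}}\cong\operatorname{Gr}^{\leq\mu}_{f,\mathbb{F}}$ (for surjectivity) and the Zariski-local splitting Corollary~\ref{res:zariski-local-splittings}, one takes $\mathcal{Z}$ to be the disjoint union of Zariski open charts $Z_i\hookrightarrow\operatorname{Gr}^{\leq\mu}_f$ admitting sections to $L^{\leq\mu}\mathcal{G}$, which by the neighborhood lemma are simultaneously open charts of $\mathcal{Y}^{\leq\mu}_f$. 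This deformation-theoretic detour — reduce smoothness to infinitesimal liftings, where the $a$-dependence vanishes — is the idea your proposal is missing, and without it the approach does not close.
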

The crux of the proof of \cref{res:cl-local-model} is the observation
that, over some fixed \(\mathbb{Z} / p^a\), 
the orbits in \(L^{\leq \mu }\mathcal{G}_{x}\) for the \(c\)-twisted \(\varphi \)-conjugation and left translation coincide once we pass to a sufficiently deep congruence subgroup of \(L^+ \mathcal{G}_{x}\). 
We refer to this property as ``straightening'', meaning that the \(c\)-twisted \(\varphi\)-conjugation action can be ``straightened''.
A detailed analysis of precisely when this can be done is carried out in \cref{sec:contraction-and-straightening}, and our bounds are often optimal.
Moreover, all of these calculations are purely group-theoretic and don't rely on choosing an embedding into \(\operatorname{GL}_{n}\). 

As a consequence of the detailed analysis of when straightening can be performed, we can produce stronger results with additional hypotheses.
The following result, which is a direct generalization of  \cite[Theorem 5.3.3]{local-models}, is one such example. 
\begin{maintheorem}[{\cref{res:explicit-smooth-equivalence}}]
    \label{res:lm-local-model}
    Assume that \(G\) is unramified, and the derived group \(\widehat{G}_{\text{der}}\) of \(\widehat{G}\) is simply connected.
    Let \(\mu : \mathbb{G}_{m} \to \widehat{T}\) be a dominant cocharacter. 
    Assume that \(x\) is lowest alcove (according to \cref{def:lowest-alcove}, a very mild assumption) and \(d\)-generic (according to \cref{def:generic-type}), where \(d > h_{\mu} := \max_{a \in \Phi(\widehat{G},\widehat{T})} \langle a , \mu \rangle\).
    Then there exist Zariski open covers \(\lbrace U_{i}^{\wedge \varpi} \hookrightarrow \operatorname{Gr}^{\leq \mu, \wedge \varpi} \rbrace_{i \in \mathcal{I}}\)
    and \(\lbrace \mathcal{Y}_{i}^{\leq \mu} \hookrightarrow \mathcal{Y}^{\leq \mu} \rbrace_{i \in \mathcal{I}}\) such that for each \(i \in \mathcal{I}\) there is a diagram over \(\operatorname{Spf}\mathcal{O}\)
    \[
    \begin{tikzcd}
    & \widetilde{U}_{i}^{\wedge \varpi} \arrow[dl] \arrow[dr] & \\ 
    \mathcal{Y}_{i}^{\leq \mu} & & U_{i}^{\wedge \varpi}
    \end{tikzcd}
    \]
    where both maps are \(\widehat{T}\)-torsors.
    Moreover, we can take
    these Zariski open charts \(U_{i} \subset \operatorname{Gr}^{\leq \mu}\) to be the explicit covers labeled by the admissible set \(\mathcal{I} = \operatorname{Adm}(\mu)\) (see \cref{sec:admissible-set} for the definition) defined in \cref{sec:negative-loop-group-charts} in terms of the negative loop group \(L^{--}\mathcal{G}\) from \cref{sec:negative-loop-group}. 
\end{maintheorem}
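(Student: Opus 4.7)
The plan is to refine the smooth equivalence of \cref{res:cl-local-model} by exploiting the explicit Zariski open cover of $\operatorname{Gr}^{\leq \mu}_{\mathcal{G}}$ provided by the negative loop group $L^{--}\mathcal{G}$, and then to upgrade the resulting $L^+\mathcal{G}_x^{\wedge \varpi}$-torsor structure to a $\widehat{T}$-torsor structure using the genericity and lowest alcove hypotheses on $x$. For each $w \in \operatorname{Adm}(\mu)$, the multiplication map produces a Zariski open subscheme $U_w \subset \operatorname{Gr}^{\leq \mu}_{\mathcal{G}}$ of the form $L^{--}\mathcal{G} \cdot w$, and as $w$ ranges over the admissible set these charts cover $\operatorname{Gr}^{\leq \mu}_{\mathcal{G}}$. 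For the lift, I would take $\widetilde{U}_w$ to be the $p$-adic completion of $L^{--}\mathcal{G} \cdot w \cdot \widehat{T}$, which naturally sits over $U_w^{\wedge \varpi}$ as a $\widehat{T}$-torsor via the quotient map killing the right $\widehat{T}$-factor.

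The main content of the proof is then to show that the other projection, to $\mathcal{Y}^{\leq \mu}_w$ (defined as the image of $\widetilde{U}_w$ in $\mathcal{Y}^{\leq \mu}$), is likewise a $\widehat{T}$-torsor. This is where the genericity and lowest alcove hypotheses are decisive. By the contraction and straightening estimates of \cref{sec:contraction-and-straightening}, the assumption that $x$ is lowest alcove and $d$-generic with $d > h_\mu$ ensures that the $c$-twisted $\varphi$-conjugation action of $L^+\mathcal{G}_x^{\wedge \varpi}$ on $\widetilde{U}_w$ is straightened, i.e. coincides modulo a sufficiently deep congruence subgroup with left translation. On the reductive quotient of $L^+\mathcal{G}_x$, the genericity further guarantees that the only subgroup whose twisted $\varphi$-conjugation stabilizes a point of $\widetilde{U}_w$ is a translate of $\widehat{T}$; the pro-unipotent part together with the unipotent radicals of the reductive quotient act freely, so after descent only the $\widehat{T}$-action remains.

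Finally, one glues: the $\widetilde{U}_w$ are stable under the right $\widehat{T}$-action, and the $L^+\mathcal{G}_x^{\wedge \varpi}$-saturations of the $\widetilde{U}_w$ provide a Zariski open cover of $L^{\leq \mu}\mathcal{G}_x^{\wedge \varpi}$, so the images $\mathcal{Y}^{\leq \mu}_w$ form a Zariski open cover of $\mathcal{Y}^{\leq \mu}$. The main obstacle I anticipate is the precise verification that the stabilizer of the twisted $\varphi$-conjugation action on $\widetilde{U}_w$ is exactly $\widehat{T}$ and no larger subgroup: this requires carefully tracking how the congruence filtration on $L^+\mathcal{G}_x$ interacts with the affine root decomposition of $L^{\leq \mu}\mathcal{G}_x^{\wedge \varpi}$ root-by-root, using the genericity to rule out spurious fixed points coming from the unipotent directions. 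This is essentially the analogue for general unramified $\widehat{G}$ with simply connected derived subgroup of the explicit calculation carried out in the proof of \cite[Theorem 5.3.3]{local-models} for $\GL_{n}$, and is the main place where the sharp bound $d > h_\mu$ gets used.
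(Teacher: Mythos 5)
Your high-level plan is in the right direction — use the negative loop group charts indexed by the admissible set, and upgrade to $\widehat{T}$-torsors using genericity — but the implementation you sketch has a genuine gap and misses the structural move the paper relies on. You construct $\widetilde{U}_w = L^{--}\mathcal{G}\cdot w\cdot\widehat{T}$ directly as a subscheme of $L^{\leq\mu}\mathcal{G}$ and then propose to show that the projection $\widetilde{U}_w \to \mathcal{Y}^{\leq\mu}$ is a $\widehat{T}$-torsor by checking that the twisted $\varphi$-conjugation stabilizer of a point of $\widetilde{U}_w$ is exactly $\widehat{T}$. This is not the right framing: the map $\widetilde{U}_w \to \mathcal{Y}^{\leq\mu}$ has fibers cut out by the full twisted $L^+\mathcal{G}_x$-action, not by $\widehat{T}$ alone, so you would have to show that the intersection of a full twisted $L^+\mathcal{G}_x$-orbit with $\widetilde{U}_w$ is a single $\widehat{T}$-orbit. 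Your $\widetilde{U}_w$ is not stable under (even left translation by) $L^+\mathcal{G}_{x,0^+}$, so this intersection need not equal a $\widehat{T}$-orbit, and straightening — which converts twisted $L^+\mathcal{G}_{x,0^+}$-orbits into left translation orbits — does not immediately confine those orbits back inside $\widetilde{U}_w$. The "root-by-root stabilizer check" you flag as the main obstacle is a symptom of this: you would be rediscovering, by hand, facts the paper's setup handles automatically.

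The paper instead passes to the intermediate covers $\operatorname{Gr}_{0^+}^{\leq\mu} = [L^+\mathcal{G}_{x,0^+}\backslash L^{\leq\mu}\mathcal{G}]$ and $\mathcal{Y}_{0^+}^{\leq\mu} = [L^{\leq\mu}\mathcal{G}^{\wedge\varpi}/^{\varphi,c}L^+\mathcal{G}_{x,0^+}^{\wedge\varpi}]$, where the pro-unipotent level is already killed. The point is that $L^+\mathcal{G}_x/L^+\mathcal{G}_{x,0^+} \cong \widehat{T}$, so each of $\operatorname{Gr}_{0^+}^{\leq\mu}\to\operatorname{Gr}^{\leq\mu}$ and $\mathcal{Y}_{0^+}^{\leq\mu}\to\mathcal{Y}^{\leq\mu}$ is already, by construction, a $\widehat{T}$-torsor — no stabilizer computation required. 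The transfer of open immersions between $\operatorname{Gr}_{0^+}^{\leq\mu}$ and $\mathcal{Y}_{0^+}^{\leq\mu}$ is then handled by Lemma~\ref{res:neighborhood-lemma} (which is where the infinitesimal straightening for $f=0^+$, hence $d>h_\mu$, actually enters), and the only thing left to verify is that the negative loop group charts, pulled back to $\operatorname{Gr}_{0^+}^{\leq\mu}$, are stable under the $c$-twisted $\widehat{T}$-action so that they descend to open substacks of $\mathcal{Y}^{\leq\mu}$. That stability is immediate from the explicit description of $\widehat{T}\cdot L^{--}\mathcal{G}_x$ in Lemma~\ref{res:explicit-negative-loop-group}, which shows it is stable under both left and right translation by $\widehat{T}$ — an ingredient missing from your sketch. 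So before carrying out any affine-root analysis, you should reframe your argument through the quotient $L^+\mathcal{G}_x/L^+\mathcal{G}_{x,0^+}\cong\widehat{T}$ and the intermediate covers at level $0^+$, and use the explicit form of the negative loop group to get $\widehat{T}$-bistability of the charts.
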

\begin{remark}
    The assumption that \(\widehat{G}_{\text{der}}\) is simply connected is equivalent to the assertion that \(\pi_{1}(\widehat{G})\) is torsion-free by \cref{res:simply-connected-derived-group}. 
    This assumption is used to ensure that \(\mathcal{G}_{x}\) has connected fibers, but it is not always necessary. 
    In addition, it implies that \(p \nmid \pi_{1}(\widehat{G})\), which we use (citing \cite{pappas-zhu}) to identify the special fiber of \(\operatorname{Gr}_{\mathcal{G}}^{\leq \mu}\). 
\end{remark}
A crucial part of the proof of both \cref{res:cl-local-model} and \cref{res:lm-local-model} is the construction of good Zariski open charts for \(\operatorname{Gr}^{\leq \mu}_{\mathcal{G}_{x}}\), or more accurately for certain 
covers \(\operatorname{Gr}_{f}^{\leq \mu} := \left[ L^+ \mathcal{G}_{x,f} \backslash L^{\leq \mu} \mathcal{G}_{x} \right] \twoheadrightarrow \operatorname{Gr}^{\leq \mu}_{\mathcal{G}_{x}}\),
where \(L^+ \mathcal{G}_{x,f} \subset L^+ \mathcal{G}_{x}\) is a congruence subgroup if \(f = n \in \mathbb{Z}_{\geq 1}\) (and \(L^+ \mathcal{G}_{x,f}\) is defined more generally for a concave (i. e. subadditive) function \(f \geq 0\) on the root system \(\widehat{\Phi}(\widehat{G},\widehat{T})\) in the sense of \cite[Definition 7.3.1]{kaletha-prasad}). 
In the case of \cref{res:cl-local-model} it suffices to show that the \(L^+ \mathcal{G}_{x,f}\)-torsors \(L^{\leq \mu} \mathcal{G} \to \operatorname{Gr}_{\mathcal{G}_{x},f}^{\leq \mu}\) split Zariski locally on \(\operatorname{Gr}_{\mathcal{G}_{x},f}^{\leq \mu}\) when \(f > 0\), 
which we show in \cref{res:zariski-local-splittings} by giving a moduli interpretation of \(\operatorname{Gr}_{\mathcal{G}_{x},f}\) and identifying the special fibers of the group schemes \(\mathcal{G}_{x,f}\). 
In the case of \cref{res:lm-local-model}, we also need to know that these charts are stable under both actions of \(\widehat{T}\), which we accomplish by giving explicit constructions of charts in terms of a negative loop group \(L^{--}\mathcal{G}_{x}\) that we define in \cref{sec:negative-loop-group}. 
The existence of the negative loop group also implies that the \(L^+\mathcal{G}_{x}\)-torsor \(L^{\leq \mu} \mathcal{G} \twoheadrightarrow \operatorname{Gr}_{\mathcal{G}_{x}}^{\leq \mu}\) splits Zariski locally. 
\begin{remark}
    We remind the reader about our standing assumption that \(\mathcal{G}_{x}\) has connected fibers, since our proofs of both \cref{res:cl-local-model} and \cref{res:lm-local-model} are under this assumption.
    When \(\widehat{G}\) is unramified and the derived subgroup \(\widehat{G}_{\text{der}}\) is simply connected, this condition is automatic by \cref{res:connected-fibers}, but there are also cases where \(\mathcal{G}_{x}\) does not have connected fibers as we show in \cref{ex:PGL3-fibers}. 
    The reader is referred to \cref{sec:pappas-zhu-identification} for a more detailed discussion. 
\end{remark}
\subsection{Other results}
In addition to the results above, we develop some tools that may be of independent interest:
\begin{itemize}
    \item In \cref{sec:equivariant-torsors}, we discuss \((\Gamma,H)\)-torsors in high generality and prove a generalization of \cite[Theorem 4.1.6]{balajiModuliParahoricMathcal2014} (whose statement needs to be modified as explained in \cref{rem:balaji-seshadri-error}).
    \item In \cref{sec:galois-types}, we give a building theoretic interpretation of Galois types, following \cite{pappas-rapoport-tamely-ramified-bundles}.
    \item In \cref{res:pappas-zhu-concave-function} we extend \cite[Theorem 4.1]{pappas-zhu} to accomodate group schemes of the form \(\mathcal{G}_{x,f}\), where \(f \geq 0\) is a concave function on the root system \(\widehat{\Phi}(\widehat{G},\widehat{T})\) in the sense of \cite[Definition 7.3.1]{kaletha-prasad}.
    \item In \cref{sec:negative-loop-group-charts} and \cref{sec:moduli-interpretation}, we show that some affine Grassmannians are Zariski sheaves or even just presheaf quotients. This extends some results of \cite{cesnaviciusAffineGrassmannianPresheaf2024} in a special case. See in particular Remark 2.6 of loc. cit.. 
\end{itemize}

\subsection{Future work}
It would be interesting to work out the monodromy condition in order to develop local models for the Emerton--Gee stacks constructed by Lin \cite{EG-tame-1} \cite{EG-tame-II}, 
thus carrying out the second step formulated in \cref{sec:crystalline-eg-stacks}. 
As mentioned there, this would lead the way to many interesting applications. 

We mention that all of our results generalize in a straightforward manner to a connected reductive group \(G\) defined over an unramified extension \(K\) over \(\mathbb{Q}_{p}\).
This corresponds to replacing \(G\) by \(\operatorname{Res}^K_{\mathbb{Q}_{p}} G\), which is now a connected reductive group over \(\mathbb{Q}_{p}\). 
In fact, we did not consider more general \(K\) because we subscribe to the philosophy of encoding any added complexity of a more general field into a more general group over \(\mathbb{Q}_{p}\), which makes certain statements seem more natural, like the fact that \(\mu\) as it appears above is a cocharacter of \(\widehat{G}\) (and not of an associated Weil restriction or product defined in terms of \(\widehat{G}\)). 

\subsection*{Acknowledgements}
The existence of this work owes a lot to the brilliant and generous guidance of my advisor Bao Le Hung, for which I am grateful.
I would also like to thank Daniel Le, Brandon Levin, Zhongyipan Lin, and Stefano Morra for illuminating conversations, and Stephan Snegirov for helpful comments on a draft of my thesis. 
Beyond that, I'm grateful to many family members, friends, and members of the math department at Northwestern University, for all their support during grad school.
A longer (but still incomplete) list of acknowledgements can be found in my thesis \cite{thesis}.

\section{Preliminaries}
\label{sec:notation-and-preliminaries}
In this chapter we will introduce notation that will be used throughout, and cover various topics that will be needed. 

\subsection{Fields and rings}
\label{sec:fields-and-rings}
Let \(L\) be a finite tamely ramified Galois extension of \(\mathbb{Q}_{p}\), with ring of integers \(\mathcal{O}_{L}\) and residue field \(\mathbb{F}_{q}\), where \(q = p^r\).
The maximal unramified extension of \(\mathbb{Q}_{p}\) inside \(L\) is \(L_{0} = \mathbb{Q}_{q}\), which is of degree \(r\) over \(\mathbb{Q}_{p}\), and with ring of integers \(\mathcal{O}_{L_{0}} = \mathbb{Z}_{q}\).
Let \(e = [L : L_{0}]\) be the ramification index, which is not divisible by \(p\) by the tameness assumption.
We assume that \(\# \mu_{e}(\mathbb{F}_{q}) = e\) (i. e. \(e\) divides \(q - 1\)), which can always be arranged by replacing \(L\) by an unramified extension of \(L\). 
Then we may choose a uniformizer \(\varpi_{L} = (-p)^{1/e}\) of \(\mathcal{O}_{L}\), in which case \(L = \mathbb{Q}_{q}(\varpi_{L})\).
Note that the minimal polynomial for \(\varpi_{L}\) over \(L_{0}\) is \(u^e + p\). 

Let \(I = \operatorname{Gal}(L / L_{0})\), \(\Gamma = \operatorname{Gal}(L / \mathbb{Q}_{p})\), and \(\Gamma_{0} = \operatorname{Gal}(L_{0} / \mathbb{Q}_{p})\).
We identify \(\Gamma_{0}\) with \(\operatorname{Gal}(\mathbb{F}_{q} / \mathbb{F})\) via the natural isomorphism \(\Gamma_{0} \isoto \operatorname{Gal}(\mathbb{F}_{q} / \mathbb{F})\), 
with respect to which the arithmetic Frobenius \(\sigma : \mathbb{F}_{q} \to \mathbb{F}_{q}\), \(x \mapsto x^p\), corresponds to an element of \(\Gamma_{0}\) which we also denote by \(\sigma\).
The element \(\sigma \in \Gamma_{0}\) lifts to a unique element of \(\operatorname{Gal}(L / \mathbb{Q}_{p}(\varpi_{L})) \subset \operatorname{Gal}(L / \mathbb{Q}_{p}) = \Gamma\), which we also denote by \(\sigma\), and which provides a splitting of the map \(\Gamma \to \Gamma_{0}\). 
Note that \(\sigma(\varpi_{L}) = \varpi_{L}\). 
We have a natural isomorphism 
\begin{align*}
    \omega : I & \isoto \mu_{e} \left( \mathbb{Z}_{q} \right) \\
    \theta & \mapsto \frac{\theta(\varpi_{L})}{\varpi_{L}} ,
\end{align*}
and we will sometimes identify the image of \(\omega\) with \(\mu_{e}(\mathbb{F}_{q}) \cong \mu_{e}(\mathbb{Z}_{q})\).
Via \(\omega\), a choice of primitive \(e\)'th root of unity of \(\mathbb{F}_{q}\) corresponds to a cyclic generator of \(I\), which we will denote by \(\gamma\). 
We will sometimes conflate \(\omega\) with \(\omega(\gamma)\) when it seems easier to state things in terms of generators and relations, although any such statement 
has a more natural interpretation purely in terms of \(\omega\) (where natural means depending on fewer choices). 

With respect to the above choices we have a presentation 
\[
\Gamma = \left\lbrace \gamma, \sigma : \gamma^e = 1, \sigma^r = 1, \sigma \gamma \sigma^{-1} = \gamma^q \right\rbrace = I \rtimes \Gamma_{0} . 
\]

\subsubsection{Coefficients fields}
Let \(E\) be a finite extension of \(\mathbb{Q}_{p}\) with ring of integers \(\mathcal{O}\) and residue field \(\mathbb{F}\). 
Then \(\mathcal{O} \otimes_{\mathbb{Z}_{p}} \mathbb{Z}_{q}\) is an étale \(\mathcal{O}\)-algebra of degree \(r\) over \(\mathcal{O}\), so 
we have an isomorphism 
\[
\mathcal{O} \otimes_{\mathbb{Z}_{p}} \mathbb{Z}_{q} \cong \prod_{j \in \mathcal{J}} \mathcal{O}_{j} ,
\]
where each \(\mathcal{O}_{j}\) is a \textit{local} finite étale algebra over \(\mathcal{O}\). By inverting \(p\) and reducing modulo \(p\), respectively, we obtain 
\[
E \otimes_{\mathbb{Q}_{p}} \mathbb{Q}_{q} \cong \prod_{j \in \mathcal{J}} E_{j}, \,\,\,\,\, \mathbb{F} \otimes_{\mathbb{F}_{p}} \mathbb{F}_{q} \cong \prod_{j \in \mathcal{J}} \mathbb{F}_{j} .
\]
We can identify \(\mathcal{J}\) with the set of isomorphism classes of composita of \(E\) and \(\mathbb{Q}_{q}\) in \(\overline{\mathbb{Q}}_{p}\), or of \(\mathbb{F}\) and \(\mathbb{F}_{q}\) in \(\overline{\mathbb{F}}_{p}\).

We will employ the notation 
\[
\mathcal{O}^\mathcal{J} = \prod_{j \in \mathcal{J}} \mathcal{O}_{j} , \,\,\, E^\mathcal{J} = \prod_{j \in \mathcal{J}} E_{j}, \,\,\, \mathbb{F}^\mathcal{J} = \prod_{j \in \mathcal{J}} \mathbb{F}_{j} . 
\]

We have the following two examples in mind: 
\begin{enumerate}
    \item If \(E\) contains \(\mathbb{Q}_{q}\), then we can identify \(\mathcal{J}\) with the set of \(\mathbb{Q}_{p}\)-embeddings \(\mathbb{Q}_{q} \hookrightarrow E\), or of \(\mathbb{F}_{p}\)-embeddings \(\mathbb{F}_{q} \hookrightarrow \mathbb{F}\).
        In this case, we have for each \(j \in \mathcal{J}\), \(\mathcal{O}_{j} = \mathcal{O}\), \(E_{j} = E\), \(\mathbb{F}_{j} = \mathbb{F}\). 
    \item If \(E = \mathbb{Q}_{p}\), then we have \(\mathcal{O}^\mathcal{J} = \mathbb{Z}_{p}, E^\mathcal{J} = \mathbb{Q}_{p}, \mathbb{F}^\mathcal{J} = \mathbb{F}_{p}\). 
\end{enumerate}
Although situation (2) seems more natural in some respects, it is often necessary to work with a larger coefficient field in order to resolve rationality issues. 
But our notation is sufficiently general to handle both cases. 

The group \(\Gamma\) acts on \(\mathcal{O} \otimes_{\mathbb{Z}_{p}} \mathbb{Z}_{q}\) through the action of the quotient \(\Gamma \twoheadrightarrow \Gamma_{0}\), 
which is the trivial one on \(\mathcal{O}\) and the natural one on \(\mathbb{Z}_{q}\).
Similarly, \(\Gamma\) acts on \(E \otimes_{\mathbb{Q}_{p}} \mathbb{Q}_{q}\) and \(\mathbb{F} \otimes_{\mathbb{F}_{p}} \mathbb{F}_{q}\).

We can identify \(\mathcal{J}\) with the set of idempotents in the ring \(\mathcal{O} \otimes_{\mathbb{Z}_{p}} \mathbb{Z}_{q}\). 
Via this identification, we obtain an action of \(\Gamma\) on \(\mathcal{J}\). 
If the idempotent corresponding to \(j \in \mathcal{J}\) is denoted by \(e_{j}\), then this action is characterized by the relation \(\theta(e_{j}) = e_{\theta(j)}\) for \(\theta \in \Gamma\) and \(j \in \mathcal{J}\).
Since multiplication by \(e_{j}\) corresponds to projection onto \(\mathcal{O}_{j} = e_{j}(\mathcal{O} \otimes_{\mathbb{Z}_{p}} \mathbb{Z}_{q})\), we use the notation \(z_{j} := e_{j} z\) for \(j \in \mathcal{J}\) and \(z \in \mathcal{O} \otimes_{\mathbb{Z}_{p}} \mathbb{Z}_{q}\). 
The action of \(\theta \in \Gamma\) yields a map \(\mathcal{O}_{\theta^{-1}j} = e_{\theta^{-1}j} (\mathcal{O} \otimes_{\mathbb{Z}_{p}} \mathbb{Z}_{q}) \isoto e_{j}(\mathcal{O} \otimes_{\mathbb{Z}_{p}} \mathbb{Z}_{q}) = \mathcal{O}_{j}\) for each \(j \in \mathcal{J}\), and we have the formula 
\begin{equation}
    \label{eq:characterization-of-action-on-J}
    \left( \theta(z) \right)_{j} = \theta\left(z_{\theta^{-1} j}\right) \,\,\, \text{ for } j \in \mathcal{J}, z \in \mathcal{O} \otimes_{\mathbb{Z}_{p}} \mathbb{Z}_{q} .
\end{equation} 
\begin{example}
    \label{ex:large-coefficient-field}
    Assume that \(\mathbb{F} \supset \mathbb{F}_{q}\), and we identify \(\mathcal{J}\) with the set of embeddings \(\mathbb{Z}_{q} \hookrightarrow \mathcal{O}\). 
    Then the isomorphism \(\mathcal{O} \otimes_{\mathbb{Z}_{p}} \mathbb{Z}_{q} \cong \prod_{j \in \mathcal{J}} \mathcal{O}\) is characterized by \((x \otimes y)_{j} = x j(y)\).
    Hence \((x \otimes \theta(y))_{j} = x j(\theta(y)) = (x \otimes y)_{j \theta}\), so by comparing with \eqref{eq:characterization-of-action-on-J} we see that \(\theta^{-1}j = j \theta\) in this case, 
    and moreover the map \(\theta : \mathcal{O} = \mathcal{O}_{\theta^{-1}j} \isoto \mathcal{O}_{j} = \mathcal{O}\) is the identity (so \(\theta\) acts by permuting the factors).
    Even more concretely we can upon a choice of embedding \(\iota : \mathbb{Z}_{q} \hookrightarrow \mathcal{O}\) identify \(\mathcal{J} = \lbrace \iota, \iota\varphi^{-1}, \dots, \iota \varphi^{-r + 1} \rbrace\),
    and then \(\iota \varphi^{-j} \sigma = \iota \varphi^{-j+1}\) (since both \(\varphi\) and \(\sigma\) denote the same automorphism of \(\mathbb{Z}_{q}\) in this case).
\end{example}
Note that the \(\Gamma\)-action on \(\mathcal{J}\) factors through \(\Gamma_{0}\).
Moreover, \(\Gamma_{0}\) acts transitively on \(\mathcal{J}\) because \(\bigsqcup_{j \in \mathcal{J}} \operatorname{Spec}\mathcal{O}_{j} \to \mathcal{O}\)
is an étale \(\Gamma_{0}\)-torsor, whose \(\Gamma_{0}\) action is therefore transitive on fibers. 
For a fixed \(j \in \mathcal{J}\) let \(\Gamma_{0,\mathcal{J}}\) denote the stabilizer of \(j\) in \(\Gamma_{0,\mathcal{J}}\). 
Since \(\Gamma_{0}\) is abelian and acts trasitively on \(\mathcal{J}\), this stabilizer is independent of the choice of \(j\). 
Let \(\Gamma_{0}' = \Gamma_{0} / \Gamma_{0,\mathcal{J}}\), in which case the étale \(\Gamma_{0}\)-torsor factors as 
\[
    \bigsqcup_{j \in \mathcal{J}} \operatorname{Spec}\mathcal{O}_{j} \to \bigsqcup_{j \in \mathcal{J}} \operatorname{Spec} \mathcal{O} \to \operatorname{Spec} \mathcal{O} ,
\]
the first map being an étale \(\Gamma_{0,\mathcal{J}}\)-torsor where \(\Gamma_{0,\mathcal{J}}\) preserves each component, and the second an étale \(\Gamma_{0}'\)-torsor where \(\Gamma_{0}'\) permutes the factors.

\subsubsection{The variables \(u\) and \(v\)}
The relation between the formal variables \(u\) and \(v\) is always that \(u^e = v\).
For any field \(F\), we view \(F\llp v \rrp\) as a discretely valued field where \(v\) has valuation 1. 
With respect to the extension of this valuation to \(F\llp u \rrp \supset F \llp v \rrp\), the variable \(u\) has valuation \(1/e\).

\subsubsection{Rings from \(p\)-adic Hodge theory}
\label{sec:rings-from-p-adic-hodge-theory}
For any \(\mathbb{Z}_{p}\)-algebra \(R\), define 
\[
\mathfrak{S}_{R} = R \llb v \rrb , \,\,\,\, \mathfrak{S}_{L,R} = \left( R \otimes_{\mathbb{Z}_{p}} \mathbb{Z}_{q} \right) \llb u \rrb . 
\]
When \(R\) is an \(\mathcal{O}\)-algebra, we can identify \(\mathfrak{S}_{L,R} = R^\mathcal{J} \llb u \rrb\), where \(R^\mathcal{J} = \prod_{j \in \mathcal{J}} R_{j}\) and \(R_{j} = R \otimes_{\mathcal{O}} \mathcal{O}_{j}\).
We also define 
\[
\mathcal{E}_{R} = R \llp v \rrp^{\wedge \varpi}, \,\,\,\, \mathcal{E}_{L,R} = \left( R \otimes_{\mathbb{Z}_{p}} \mathbb{Z}_{q} \right) \llp u \rrp^{\wedge \varpi} , 
\]
where the superscript \(\wedge \varpi\) denotes \(p\)-adic completion, i. e. \(A^{\wedge \varpi} := \clim A / p^a A\). 
Similarly to the above, we can when \(R\) is an \(\mathcal{O}\)-algebra identify \(\mathcal{E}_{L,R} = R^\mathcal{J} \llp u \rrp^{\wedge \varpi}\). 
Note that the \(p\)-adic completion is redundant when \(p\) is nilpotent in \(R\).

The group \(\Gamma\) acts on \(\mathfrak{S}_{L,R}\) and \(\mathcal{E}_{L,R}\) as summarized by the following table.
\begin{table}
    \centering 
    \begin{center}
        \begin{tabular}{| l | c | c | c |}
            \hline & \(R\) & \(\mathbb{Z}_{q}\) & \(u\) \\ \hline 
            \(\gamma\) & \(1\) & \(1\) & scale by \(\omega(\gamma) \in \mathbb{Z}_{q}^\times\) (not in \(R\)!) \\ \hline 
            \(\sigma\) & \(1\) & \(\varphi_{\mathbb{Z}_{q}}\) & 1 \\ \hline 
        \end{tabular}
    \end{center}
    \caption{
        How \(\Gamma = \lbrace \gamma, \sigma | \gamma^e = 1, \sigma^r = 1, \sigma\gamma\sigma^{-1} = \gamma^q \rbrace\) acts on \(\mathfrak{S}_{L,R} = \left( R \otimes_{\mathbb{Z}_{p}} \mathbb{Z}_{q} \right)\llb u \rrb\) and various other rings.
        Here, \(\varphi_{\mathbb{Z}_{q}} : \mathbb{Z}_{q} \to \mathbb{Z}_{q}\) denotes the (arithmetic) Witt vector Frobenius automorphism. 
    }
    \label{tab:action}
\end{table}
The \(\Gamma\)-actions on \(\mathfrak{S}_{L,R}\) and \(\mathcal{E}_{L,R}\) induce left actions on \(\operatorname{Spec}\mathfrak{S}_{L,R}\) and \(\operatorname{Spec}\mathcal{E}_{L,R}\) as follows: 
If \(\theta \in \Gamma\) acts on for example \(\mathfrak{S}_{L,R}\) via \(a_{\theta} : \mathfrak{S}_{L,R} \to \mathfrak{S}_{L,R}\), 
then \(\theta\) acts on \(\operatorname{Spec}\mathfrak{S}_{L,R}\) via \((a_{\theta}^{-1})^* : \operatorname{Spec} \mathfrak{S}_{L,R} \to \operatorname{Spec}\mathfrak{S}_{L,R}\).

For some constructions it is natural to work over the affine line rather than the power series ring. 
For any \(\mathbb{Z}_{p}\)-algebra \(R\) we define 
\[
\mathbb{A}^1_{R} = \operatorname{Spec} R [v] , 
\,\,\,\, \breve{\mathbb{A}}^{1}_{R} = \operatorname{Spec} \left( R \otimes_{\mathbb{Z}_{p}} \mathbb{Z}_{q} \right)[v] , 
\,\,\,\, \widetilde{\mathbb{A}}^1_{R} = \operatorname{Spec} \left( R \otimes_{\mathbb{Z}_{p}} \mathbb{Z}_{q} \right)[u] . 
\]
Of course, \(\breve{\mathbb{A}}^1_{R} \cong \widetilde{\mathbb{A}}^1_{R}\) as abstract schemes, but they serve different purposes for us. 
The group \(\Gamma\) acts on \(\widetilde{\mathbb{A}}^1_{R}\) in the same way as on \(\operatorname{Spec}\mathfrak{S}_{L,R}\), and \(\Gamma_{0}\) acts on \(\widetilde{\mathbb{A}}^1_{0,R}\) via its action on \(\mathbb{Z}_{q}\).
We have maps 
\[
\begin{tikzcd}
\pi : \widetilde{\mathbb{A}}^1_{R} \arrow[r,"\pi_{I}"] & \breve{\mathbb{A}}^1_{R} \arrow[r,"\pi_{0}"] & \mathbb{A}^1_{R} ,
\end{tikzcd}
\]
corresponding to the inclusion \(R[v] \subset \left(R \otimes_{\mathbb{Z}_{p}} \mathbb{Z}_{q} \right)[v]\) and the map \(\left(R \otimes_{\mathbb{Z}_{p}} \mathbb{Z}_{q} \right)[v] \hookrightarrow \left(R \otimes_{\mathbb{Z}_{p}} \mathbb{Z}_{q} \right)[u]\) given by \(v \mapsto u^e\).
We remind the reader that the zero section \(v = 0 : \operatorname{Spec}R \hookrightarrow \mathbb{A}^1_{R}\) is a closed subscheme which we will denote by \(\lbrace 0 \rbrace\), and the complement \(\mathbb{A}^1_{R} \setminus \lbrace 0 \rbrace \subset \mathbb{A}^1_{R}\) is therefore an open subscheme. 
Similarly, we have open subschemes \(\breve{\mathbb{A}}^1_{R} \setminus \lbrace 0 \rbrace \subset \breve{\mathbb{A}}^1_{R}\) and \(\widetilde{\mathbb{A}}^1_{R} \setminus \lbrace 0 \rbrace \subset \widetilde{\mathbb{A}}^1_{R}\). 
Note that \(\pi\) is finite flat and \(\Gamma\)-equivariant for the trivial action on \(\mathbb{A}^1_{R}\), and in fact restricts to an étale \(\Gamma\)-torsor \(\widetilde{\mathbb{A}}^1_{R} \setminus \lbrace 0 \rbrace \to \mathbb{A}^1_{R}\setminus \lbrace 0 \rbrace\). 
In addition, \(\pi_{0}\) is an étale \(\Gamma_{0}\)-torsor, \(\pi_{I}\) is finite flat, and \(\pi_{I}\) restricts to an étale \(I\)-torsor \(\widetilde{\mathbb{A}}^1_{R} \setminus \lbrace 0 \rbrace \to\breve{\mathbb{A}}^1_{R} \setminus \lbrace 0 \rbrace \).
\begin{remark}
    \label{rem:cartesian-diagram}
    Note that the diagram 
    \[
    \begin{tikzcd}
        \operatorname{Spec} R \otimes_{\mathbb{Z}_{p}} \mathcal{O}_{L} \arrow[r,hookrightarrow,"u=\varpi_{L}"] \arrow[d] & \widetilde{\mathbb{A}}^1_{R} \arrow[d,"\pi"] \\ 
        \operatorname{Spec} R \arrow[r, hookrightarrow, "v=-p"] & \mathbb{A}^1_{R}
    \end{tikzcd}
    \]
    is cartesian. Indeed, since base change preserves cartesian diagrams it suffices to know this for \(R = \mathcal{O} = \mathbb{Z}_{p}\), 
    in which case it is a restatement of the fact that \(\mathbb{Z}_{q}[u] / (u^e + p) \cong \mathcal{O}_{L}\) via the map sending \(u\) to \(\varpi_{L} = (-p)^{1/e}\). 
\end{remark}

\subsubsection{The variables \(u\) and \(v\) versus \(u^e+p\) and \(v+p\)}
\label{sec:E-vs-v}
For any commutative ring \(A\) and ideal \(I \subset A\) we define \(A^{\wedge I} := \clim A / I^n\).
Given \(f(t) \in A[t]\) we define \(A \llb f(t) \rrb := R[t]^{\wedge (f(t))}\) and \(A \llp f(t) \rrp := R \llb f(t) \rrb\left[ f(t)^{-1} \right]\).
This notation could be misleading if the map \(A[t] \to A[t]\), \(t \mapsto f(t)\) is not an isomorphism, but we will only use it in the following lemma. 

\begin{lemma}
    Let \(A\) be a \(\mathbb{Z}_{p}\)-algebra and \(E(t) \in \mathbb{Z}_{p}[t]\) an Eisenstein polynomial. Then:
    \begin{enumerate}
        \item \(A \llb E(t) \rrb^{\wedge \varpi} = A [t]^{\wedge (p, E(t))} = A[t]^{\wedge (p, t)} = A \llb t \rrb^{\wedge \varpi} = \left( A^{\wedge \varpi} \right) \llb t \rrb\), 
        \item \(A \llp E(t) \rrp^{\wedge \varpi} = A \llp t \rrp^{\wedge \varpi}\). 
    \end{enumerate}
\end{lemma}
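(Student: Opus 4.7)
The plan is to prove (1) by identifying each ring in the chain with the completion of $A[t]$ at an appropriate ideal, and then derive (2) by a unit-interchange argument. The key input is the Eisenstein hypothesis: writing $E(t) = t^n + a_{n-1} t^{n-1} + \dots + a_0$ with each $a_i \in p\mathbb{Z}_p$, we have $E(t) \equiv t^n \pmod p$ in $A[t]$, equivalently $E(t) = t^n + p\, h(t)$ for some $h(t) \in \mathbb{Z}_p[t]$ of degree $< n$.

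For part (1), the middle equality $A[t]^{\wedge(p,E(t))} = A[t]^{\wedge(p,t)}$ follows because $(p, E(t)) = (p, t^n)$ as ideals in $A[t]$, and $(p, t^n)$ defines the same linear topology as $(p,t)$ via the sandwich $(p,t)^n \subseteq (p, t^n) \subseteq (p,t)$. For the outer equalities I would use cofinality of filtrations: any $(a+n)$-fold product of elements of $(p, E(t))$ has at least $a$ factors of $p$ or at least $n$ factors of $E(t)$, so $(p, E(t))^{a+n} \subseteq (p^a, E(t)^n)$, while trivially $(p^a, E(t)^n) \subseteq (p, E(t))^{\min(a,n)}$. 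This shows $\clim_{a,n} A[t]/(p^a, E(t)^n) = A[t]^{\wedge(p, E(t))}$. To identify this iterated limit with $A\llb E(t) \rrb^{\wedge \varpi}$, one takes the short exact sequences $0 \to p^a A[t]/E(t)^n \to A[t]/E(t)^n \to A[t]/(p^a, E(t)^n) \to 0$ and applies $\clim_n$; the $\clim^1$ term vanishes because the transition maps are surjective (Mittag--Leffler), yielding $A\llb E(t) \rrb / p^a \cong \clim_n A[t]/(p^a, E(t)^n)$, and then $\clim_a$ gives the desired equality. The same argument with $t$ in place of $E(t)$ handles $A\llb t \rrb^{\wedge \varpi}$, and the final identification $A\llb t \rrb^{\wedge \varpi} = (A^{\wedge \varpi})\llb t \rrb$ is Fubini for iterated inverse limits, using that $(A/p^a)[t]/t^n$ is free of rank $n$ over $A/p^a$ with basis $1, t, \dots, t^{n-1}$.

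For part (2), set $R := A\llb t \rrb^{\wedge \varpi}$, which equals $A\llb E(t) \rrb^{\wedge \varpi}$ by part (1). Since $p$-adic completion commutes with localization at a single element (because $S/p^a \cong S^{\wedge \varpi}/p^a$ is preserved by inverting $s$, and this passes to the limit), we have $A\llp E(t) \rrp^{\wedge \varpi} = R[E(t)^{-1}]^{\wedge \varpi}$ and $A\llp t \rrp^{\wedge \varpi} = R[t^{-1}]^{\wedge \varpi}$. In $R[t^{-1}]^{\wedge \varpi}$, the factorization $E(t) = t^n(1 + p\, t^{-n} h(t))$ makes $E(t)$ a unit, since $1 + p\, t^{-n} h(t)$ is invertible by a convergent geometric series (as $p$ is topologically nilpotent). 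Symmetrically, in $R[E(t)^{-1}]^{\wedge \varpi}$, writing $t^n = E(t)(1 - p\, h(t) E(t)^{-1})$ shows $t^n$, and therefore $t$, is a unit. Thus both $R[t^{-1}]^{\wedge \varpi}$ and $R[E(t)^{-1}]^{\wedge \varpi}$ coincide with the $p$-adic completion of $R[t^{-1}, E(t)^{-1}]$.

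The main obstacle is careful bookkeeping of when modding out by $p^a$, localizing, and passing to inverse limits commute; all of this rests on Mittag--Leffler properties that must be verified rather than waved at. The geometric content is minimal, being forced entirely by the single identity $E(t) \equiv t^n \pmod p$, so any monic $E(t) \in A[t]$ with this property would suffice.
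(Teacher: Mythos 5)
Your proof is correct and follows essentially the same strategy as the paper's: for part (1), the heart of the matter is the comparison of the $(p,E(t))$-adic and $(p,t)$-adic topologies via the ideal sandwich $(p,t)^n \subseteq (p,t^n) = (p,E(t)) \subseteq (p,t)$, which is exactly the paper's inclusion $(p,t)^e \subset (p,E(t))$; and for part (2), you use the same factorization $E(t) = t^n(1 + p\,t^{-n}h(t))$ and the convergent geometric series (the paper phrases this as $p$ lying in the Jacobson radical of a $p$-adically complete ring). The paper is more terse, dismissing the outer identifications of (1) as trivial, whereas you supply the cofinality/Mittag--Leffler bookkeeping and the commutation of $p$-adic completion with localization at one element; those are legitimate things to check, but they add no new ideas beyond what the paper's proof already relies on implicitly.
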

\begin{proof}
    Let \(e = \deg E(t)\).

    We first prove (1). 
    The only non-trivial identification is \(A[t]^{\wedge (p, E(t))} = A[t]^{\wedge (p,t)}\). 
    This in turn is a consequence of the observations that \(E(t) \in (p, t)\) and 
    \[
    (p,t)^e = (p^e, p^{e-1}t, \dots, p t^{e-1}, t^e) \subset (p, E(t)) . 
    \]

    It remains to prove (2). 
    In view of (1) we can set $B = A \llb t \rrb^{\wedge \varpi} = A \llb E(t) \rrb^{\wedge \varpi}$ to simplify notation.
    It suffices to prove that $E(t)^{-1} \in B[t^{-1}]^{\wedge \varpi}$ and $t^{-1} \in B[E(t)^{-1}]^{\wedge \varpi}$.
    This is a consequence of the fact that \(E(t) \equiv t^e \mod p\) and that \(p\) is in the Jacobson radical of any \(p\)-adically complete ring.
    For example, if we write \(E(t) = t^e + p x\), where \(x \in \mathbb{Z}_{p}[t]\), then \(E(t) = t^e(1+pxt^{-e})\) in \(B[t^{-1}]^{\wedge \varpi}\), and \(E(t)^{-1} = t^{-e}(1+pxt^{-e})^{-1} = t^{-e}\sum_{n=0}^\infty (-1)^n p^n x^n t^{-en}\). 
\end{proof}
\begin{example}
    When \(E(t) = t + p\), then \((t+p)^{-1} = t^{-1} \sum_{n=0}^\infty (-1)^n t^{-n}p^n\). 
\end{example}
The following is immediate. 
\begin{lemma}
    \label{res:inverting-different-variables}
    Let \(R\) be a \(\mathbb{Z}_{p}\)-algebra.
    \begin{enumerate}
        \item If \(R\) is \(p\)-adically complete, then \(\mathfrak{S}_{R} = R \llb v + p \rrb\), and \(\mathfrak{S}_{L,R} = \left( R \otimes_{\mathbb{Z}_{p}} \mathbb{Z}_{q} \right)[u]^{\wedge (u^e + p)}\). 
        \item If there exists \(a \in \mathbb{Z}_{\geq 1}\) such that \(p^a R = 0\), then \(\mathfrak{S}_{R} \left[ (v + p)^{-1} \right] = \mathcal{E}_{R} = R \llp v \rrp = R \llp v + p \rrp\) and \(\mathfrak{S}_{L,R} \left[ (u^e + p)^{-1} \right] = \mathcal{E}_{L,R} = \left( R \otimes_{\mathbb{Z}_{p}} \mathbb{Z}_{q} \right)\llp u \rrp\).
    \end{enumerate}
\end{lemma}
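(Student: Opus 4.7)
The plan is to deduce both parts directly from the preceding lemma, applied with $A$ chosen to match our setting and $E(t)$ the relevant Eisenstein polynomial. The only work beyond that citation is verifying that certain $p$-adic completions are redundant under our hypotheses.

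First I would handle (1). Apply the preceding lemma with $A = R$ and $E(t) = v+p$, which is Eisenstein of degree $1$. The lemma gives
\[
R\llb v+p \rrb^{\wedge \varpi} \;=\; (R^{\wedge \varpi})\llb v \rrb \;=\; R\llb v \rrb \;=\; \mathfrak{S}_R,
\]
using $R = R^{\wedge \varpi}$. To drop the $p$-adic completion on the left, I would verify directly that for $p$-adically complete $R$ the substitution $v \mapsto (v+p)-p$ gives an isomorphism of $R$-algebras $R\llb v \rrb \cong R\llb v+p \rrb$: both directions are defined by expanding $v^n$ and $(v+p)^n$ via the binomial theorem, and the resulting coefficient series converge in $R$ because $R$ is $p$-adically complete. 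Thus $R\llb v+p \rrb$ coincides with $\mathfrak{S}_R$ on the nose. The second half of (1) is identical after replacing $R$ by $A = R \otimes_{\mathbb{Z}_p} \mathbb{Z}_q$ and $v+p$ by the Eisenstein polynomial $u^e+p$, using that $A$ is $p$-adically complete because $\mathbb{Z}_q$ is finite free over $\mathbb{Z}_p$.

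For (2), since $p^a R = 0$, the $p$-adic completion is trivial on any $R$-module and on any $(R \otimes_{\mathbb{Z}_p} \mathbb{Z}_q)$-module; thus $\mathcal{E}_R = R\llp v \rrp$ and $\mathcal{E}_{L,R} = (R\otimes_{\mathbb{Z}_p}\mathbb{Z}_q)\llp u \rrp$ unconditionally. Part (2) of the preceding lemma, applied as in the previous paragraph, then gives the additional identifications $R\llp v+p \rrp = R\llp v \rrp$ and $(R\otimes_{\mathbb{Z}_p}\mathbb{Z}_q)\llp u^e+p \rrp = (R\otimes_{\mathbb{Z}_p}\mathbb{Z}_q)\llp u \rrp$. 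It remains to see $\mathfrak{S}_R[(v+p)^{-1}] = R\llp v+p \rrp$ (and its $L$-analogue). Since $p^a R = 0$ implies $R$ is $p$-adically complete, part (1) of the present lemma gives $\mathfrak{S}_R = R\llb v+p \rrb$, so inverting $v+p$ on both sides yields the claim by the very definition $R\llp v+p \rrp := R\llb v+p \rrb[(v+p)^{-1}]$.

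No step is a genuine obstacle; the content is bookkeeping once the preceding lemma is in hand. The closest thing to a subtle point is the observation that when $R$ is $p$-adically complete, the change of variable $v \leftrightarrow v+p$ really is an isomorphism of rings $R\llb v \rrb \cong R\llb v+p \rrb$ (not merely after $p$-adic completion), which is what lets the two descriptions of $\mathfrak{S}_R$ agree without any further operation.
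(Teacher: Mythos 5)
Your proof is correct and amounts to the same reasoning the paper has in mind when it says "The following is immediate." — everything reduces to the preceding lemma about $A\llb E(t)\rrb^{\wedge\varpi}$, applied with $E(t) = t+p$ (resp.\ $u^e+p$). The one point you usefully make explicit, which the paper leaves silent, is why the $p$-adic completion can be dropped from $R\llb v+p\rrb^{\wedge\varpi}$ in part (1): the preceding lemma only gives $R\llb v+p\rrb^{\wedge\varpi} = R\llb v\rrb$ for $p$-adically complete $R$, so one must still observe that $R\llb v+p\rrb$ is itself already $p$-adically complete. Your change-of-variable argument (expanding $v^n$ and $(v+p)^n$ by the binomial theorem, with the coefficient series converging because $R$ is $p$-adically complete) does establish this, by identifying $R\llb v+p\rrb$ with the visibly $p$-adically complete ring $R\llb v\rrb$; an equivalent direct check is that a $p$-adically compatible system of truncated power series in $v+p$ over $R/p^m$ lifts coefficientwise to $R$ by completeness of $R$.
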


\subsection{Dual groups}
\label{sec:dual-groups}
Let \((\widehat{G}, \widehat{B}, \widehat{T}, e)\) be a Chevalley group over \(\mathbb{Z}\).
For any scheme \(X\), let \(\widehat{G}_{X}, \widehat{B}_{X}, \widehat{T}_{X}\) denote the base change to \(X\) of \(\widehat{G}, \widehat{B}, \widehat{T}\), respectively. 

We assume that \(\Gamma\) acts on \(\widehat{G}\) via pinned automorphism, and hence preserve the subgroups \(\widehat{B}\) and \(\widehat{T}\). 
We denote the automorphism of \(\widehat{G}\) by \(\sigma\) by 
\[
\psi : \widehat{G} \to \widehat{G} .
\]
If \(X\) is a scheme with a \(\Gamma\)-action, then we view \(\widehat{G}_{X}, \widehat{B}_{X}, \widehat{T}_{X}\) as having the ``diagonal'' \(\Gamma\)-action.
Writing \(\widehat{G}_{X} = \widehat{G} \times X\), this means that \(\sigma\) acts by \(\psi \times \sigma\).
If \(I\) acts trivially on \(\widehat{G}\), then \(\gamma\) acts on \(\widehat{G} \times X\) by \(1 \times \gamma\), but in general \(\gamma\) may also act through a group automorphism of \(\widehat{G}\). 

If \(\widetilde{X} \to X\) is an étale \(\Gamma\)-torsor, then the \(\Gamma\)-action on \(\widehat{G}_{\widetilde{X}}, \widehat{B}_{\widetilde{X}}, \widehat{T}_{\widetilde{X}}\)
describe descent data for the étale covering. 
Hence the groups \(\widehat{G}_{\widetilde{X}}, \widehat{B}_{\widetilde{X}}, \widehat{T}_{\widetilde{X}}\) define certain groups over \(X\) by étale descent. 
We apply this to the map \(\pi : \widetilde{\mathbb{A}}^1_{\mathcal{O}} \setminus \lbrace 0 \rbrace \to \mathbb{A}^1_{\mathcal{O}} \setminus \lbrace 0 \rbrace\), and 
denote the resulting groups by \(G^*, B^*, T^*\). 
Via base change we obtain the following groups:
\begin{enumerate}
    \item The groups \(G^*_{E}, B^*_{E}, T^*_{E}\) via base change along the map \(v = -p : \operatorname{Spec} E \hookrightarrow \widetilde{\mathbb{A}}^1_{\mathcal{O}} \setminus \lbrace 0 \rbrace\). 
    \item The groups \(G^*_{\mathbb{F}\llp v \rrp}, B^*_{\mathbb{F}\llp v \rrp}, T^*_{\mathbb{F}\llp v \rrp}\) via base change along the map \(\mathcal{O}[v^{\pm 1}] \to \mathbb{F}\llp v \rrp\) given by reduction modulo \(\varpi\). 
    \item The groups \(G^*_{E\llp v \rrp}, B^*_{E\llp v \rrp}, T^*_{E\llp v \rrp}\) via base change along the map \(\mathcal{O}[v^{\pm 1}] \hookrightarrow E\llp v \rrp\). 
\end{enumerate}
\begin{remark}
    It follows from \cref{rem:cartesian-diagram} that the diagram 
    \[
    \begin{tikzcd}
    \operatorname{Spec} E \otimes_{\mathbb{Q}_{p}} L \arrow[r,hookrightarrow,"u=\varpi_{L}"] \arrow[d] & \widetilde{\mathbb{A}}^1_{\mathcal{O}} \setminus \lbrace 0 \rbrace \arrow[d,"\pi"] \\ 
    \operatorname{Spec} E \arrow[r,hookrightarrow,"v = -p"] & \mathbb{A}^1_{\mathcal{O}} \setminus \lbrace 0 \rbrace 
    \end{tikzcd}
    \]
    is cartesian. As a consequence, the groups \(G^*_{E}, B^*_{E}, T^*_{E}\) are isomorphic to those defined from \(\widehat{G}_{E \otimes_{\mathbb{Q}_{p}} L}, \widehat{B}_{E \otimes_{\mathbb{Q}_{p}} L}, \widehat{T}_{E \otimes_{\mathbb{Q}_{p}} L}\) via étale descent.
    Similarly, the groups \(G^*_{\mathbb{F}\llp v \rrp}, B^*_{\mathbb{F}\llp v \rrp}, T^*_{\mathbb{F}\llp v \rrp}\) are isomorphic to those defined from \(\widehat{G}_{\mathbb{F}^\mathcal{J}\llp u \rrp}, \widehat{B}_{\mathbb{F}^\mathcal{J}\llp u \rrp}, \widehat{T}_{\mathbb{F}^\mathcal{J}\llp u \rrp}\) via étale descent, 
    and the groups \(G^*_{E\llp v \rrp}, B^*_{E\llp v \rrp}, T^*_{E\llp v \rrp}\) are isomorphic to those defined from \(\widehat{G}_{E^\mathcal{J}\llp u \rrp}, \widehat{B}_{E^\mathcal{J}\llp u \rrp}, \widehat{T}_{E^\mathcal{J}\llp u \rrp}\) via étale descent. 
\end{remark}
\begin{remark}
    \label{rem:G-star-is-G-hat}
    If \(\mathbb{F} \supset \mathbb{F}_{q}\), then there is no ``twisting'' in the unramified direction. 
    To make this precise, note that we can identify each factor of \(\mathcal{O}^\mathcal{J} = \prod_{j \in \mathcal{J}} \mathcal{O}_{j}\) as \(\mathcal{O}_{j} =\mathcal{O}\) in this case. 
    So we can identify the composition \(\operatorname{Spec} \mathcal{O}_{j}[v^{\pm 1}] \stackrel{j}{\hookrightarrow} \breve{\mathbb{A}}^1_{\mathcal{O}} \stackrel{\pi_{0}}{\twoheadrightarrow} \mathbb{A}^1_{\mathcal{O}} \setminus \lbrace 0 \rbrace\) with the identity map, 
    and consequently, when we pull back \(G^*\) along this map, we can identify the result with \(G^*\).
    Since \(G^* \times_{\mathbb{A}^1_{\mathcal{O}}\setminus \lbrace 0 \rbrace} (\breve{\mathbb{A}}^1_{\mathcal{O}}\setminus \lbrace 0 \rbrace) \cong (\pi_{I})_{*}^I \widehat{G}_{\mathbb{A}^1_{\mathcal{O}}\setminus \lbrace 0 \rbrace}\),
    we can therefore identify \(G^*\) with 
    \[
        G^* = \tilde{\pi}_{*}^I \left( \widehat{G}_{\mathcal{O}[u^{\pm 1}]} \right), \,\,\,\,\, \text{ where } \,\,\, \tilde{\pi} : \operatorname{Spec} \mathcal{O}[u^{\pm 1}] \to \operatorname{Spec}\mathcal{O}[v^{\pm 1}]  \text{ identifies } u^e = v . 
    \]
    Similarly, we can identify \(B^*\) and \(T^*\) with the invariant pushforwards of \(\widehat{B}\) and \(\widehat{T}\) along \(\tilde{\pi}\). 
    In particular, if in addition \(I\) acts trivially on \(\widehat{G}\), then we can identify \(G^* = \widehat{G}_{\mathcal{O}[v^{\pm 1}]}\), \(B^* = \widehat{B}_{\mathcal{O}[v^{\pm 1}]}\), and \(T^* = \widehat{T}_{\mathcal{O}[v^{\pm 1}]}\). 
    This picture is clearly compatible with the various base changes (1), (2), (3) above. 
\end{remark}

\subsubsection{Normalizers and the Iwahori--Weyl group}
\label{sec:iwahori-weyl}
We define \(\widehat{N}\) as the normalizer of \(\widehat{T}\) in \(\widehat{G}\) (as in \cite[Section 2.1]{Conrad2014ReductiveGS}, for example). 
Similarly, \(N^*\) denotes the normalizer of \(T^*\) in \(G^*\).
We consider the \emph{Iwahori--Weyl group} 
\[
    \widetilde{W}^\mathcal{J} := \widehat{N}(\mathbb{F}^\mathcal{J}\llp u \rrp) / \widehat{T}(\mathbb{F}^\mathcal{J}\llb u \rrb) = \prod_{j \in \mathcal{J}} \widehat{N}(\mathbb{F}_{j}\llp u \rrp) / \widehat{T}(\mathbb{F}_{j}\llb u \rrb) . 
\]
As noted in \cite[Section 4.1.1]{pappas-zhu}, we can also identify \(\widetilde{W}^\mathcal{J}\) with the quotients \(\widehat{N}(E^\mathcal{J}\llp u \rrp) / \widehat{T}(E^\mathcal{J}\llb u \rrb)\) and \(\widehat{N}(E^\mathcal{J}) / \widehat{T}(\mathcal{O}_{E}^\mathcal{J})\).
Similarly, we let \(\widetilde{W}^* := N^* (\mathbb{F}\llp v \rrp) / T^* (\mathbb{F}\llp v \rrp)^0\) denote the Iwahori--Weyl group for \(G^*\), 
where \(T^*(\mathbb{F}\llp v \rrp)^0 \subset T^*(\mathbb{F}\llp v \rrp)\) is the image of the \(I\)-norm map \(\widehat{T}(\mathbb{F}\llb u \rrb) \to T^*(\mathbb{F}\llp v \rrp)\) (this is consistent with \cite[Definition 2.5.13]{kaletha-prasad}). 
We can also identify \(\widetilde{W}^*\) with the quotient \(N^*(E\llp v \rrp)/T^*(E\llp v \rrp)^0\) by \cite[Section 9.2.2]{pappas-zhu}.
\begin{lemma}
    \label{res:iwahori-weyl-group-as-translations-rx-W}
    We have semidirect product decompositions
    \begin{enumerate}
        \item \(\widetilde{W}^\mathcal{J} \cong X_{*}(\widehat{T})^\mathcal{J} \rtimes W^\mathcal{J}\), where \(X_{*}(\widehat{T})^\mathcal{J} = \prod_{j \in \mathcal{J}} X_{*}(\widehat{T})\)
            and \(W^\mathcal{J} = \widehat{N}(\mathbb{F}^\mathcal{J}\llp u \rrp) / \widehat{T}(\mathbb{F}^\mathcal{J}\llp u \rrp) = \prod_{j \in \mathcal{J}} \widehat{N}(\mathbb{F}_{j}\llp u \rrp) / \widehat{T}(\mathbb{F}_{j}\llp u \rrp) \) is the finite Weyl group.
        \item \(\widetilde{W}^* \cong X_{*}(T^*)_{I} \rtimes \widetilde{W}^*\), where \(X_{*}(T^*)_{I}\) denote the \(I\)-coinvariants of \(X_{*}(T^*)\). 
    \end{enumerate}
\end{lemma}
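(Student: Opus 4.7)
The plan is to treat (1) as the standard split case factor-by-factor and then deduce (2) by taking $I$-invariants of the construction in (1) appropriately, referencing the facts already cited from \cite{pappas-zhu} and \cite{kaletha-prasad}.

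For (1), I would work one factor $j \in \mathcal{J}$ at a time. Since $\widehat{G}$ is a split Chevalley group and $\mathbb{F}_j\llp u \rrp$ is a complete discretely valued field with ring of integers $\mathbb{F}_j\llb u \rrb$, this is standard. The key exact sequence is
\[
1 \to \widehat{T}(\mathbb{F}_j\llp u \rrp)/\widehat{T}(\mathbb{F}_j\llb u \rrb) \to \widehat{N}(\mathbb{F}_j\llp u \rrp)/\widehat{T}(\mathbb{F}_j\llb u \rrb) \to \widehat{N}(\mathbb{F}_j\llp u \rrp)/\widehat{T}(\mathbb{F}_j\llp u \rrp) \to 1.
\]
Since $\widehat{T}$ is split, $\widehat{T}(\mathbb{F}_j\llp u \rrp) \cong X_*(\widehat{T}) \otimes_\mathbb{Z} \mathbb{F}_j\llp u \rrp^\times$, and the $u$-adic valuation yields an isomorphism of the left-hand term with $X_*(\widehat{T})$, concretely sending $\nu \in X_*(\widehat{T})$ to the class of $\nu(u)$. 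The splitting $W \hookrightarrow \widetilde{W}$ comes from lifting Weyl representatives to $\widehat{N}(\mathbb{Z}) \subset \widehat{N}(\mathbb{F}_j\llb u \rrb)$, available because $\widehat{N} \to \widehat{N}/\widehat{T} = W$ admits integral set-theoretic sections over the Chevalley $\mathbb{Z}$-form. Taking the product over $j \in \mathcal{J}$ gives the claimed decomposition with $X_*(\widehat{T})^\mathcal{J}$ and $W^\mathcal{J}$.

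For (2), I would derive the decomposition from the split-case picture over $\mathbb{F}^\mathcal{J}\llp u \rrp$ using the $I$-action (recalling that $T^*$ and $N^*$ are obtained by étale descent from $\widehat{T}_{\widetilde{\mathbb{A}}^1 \setminus \lbrace 0 \rbrace}$ and $\widehat{N}_{\widetilde{\mathbb{A}}^1 \setminus \lbrace 0 \rbrace}$ as in the previous subsection). Concretely, $T^*(\mathbb{F}\llp v \rrp) = \widehat{T}(\mathbb{F}^\mathcal{J}\llp u \rrp)^I$ and similarly for $N^*$. By definition the subgroup $T^*(\mathbb{F}\llp v \rrp)^0$ is the image of the $I$-norm $\widehat{T}(\mathbb{F}^\mathcal{J}\llb u \rrb) \to T^*(\mathbb{F}\llp v \rrp)$, so using the split-case identification $\widehat{T}(\mathbb{F}^\mathcal{J}\llp u \rrp)/\widehat{T}(\mathbb{F}^\mathcal{J}\llb u \rrb) \cong X_*(\widehat{T})^\mathcal{J}$ together with the standard identification $X_*(\widehat{T})^\mathcal{J}_I = X_*(T^*)_I$ (since $T^*$ is built from $\widehat{T}$ by descent along an $I$-torsor), one obtains a short exact sequence
\[
1 \to X_*(T^*)_I \to \widetilde{W}^* \to W^* \to 1,
\]
where $W^* = N^*(\mathbb{F}\llp v \rrp)/T^*(\mathbb{F}\llp v \rrp) = (W^\mathcal{J})^I$. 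The splitting is obtained by taking $I$-invariants of the Chevalley splitting from part (1): since Weyl representatives can be chosen $I$-equivariantly (the $\Gamma$-action on $\widehat{G}$ is by pinned automorphisms, hence preserves a set of representatives compatible with the pinning), the section $W^\mathcal{J} \hookrightarrow \widetilde{W}^\mathcal{J}$ restricts to $W^* \hookrightarrow \widetilde{W}^*$.

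The main subtlety is the non-split case (2): one must verify that the short exact sequence defining $\widetilde{W}^*$ is exactly the one above, which amounts to identifying $X_*(T^*)_I$ with $T^*(\mathbb{F}\llp v \rrp)/T^*(\mathbb{F}\llp v \rrp)^0$ and to checking that Weyl representatives can be chosen $I$-equivariantly so as to produce a genuine splitting after taking $I$-invariants. Both points are worked out in \cite[Section 9.2.2]{pappas-zhu} and \cite[Chapter 2]{kaletha-prasad}, so in practice I expect the proof to reduce to carefully aligning definitions and invoking these references, with the split-group argument of (1) doing the substantive work.
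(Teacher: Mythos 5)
Your treatment of part (1) is essentially the paper's argument: the short exact sequence
\(1 \to X_*(\widehat{T}) \to \widetilde{W}_j \to W_j \to 1\),
with the first map given by \(\lambda \mapsto u^\lambda\) (inverse the valuation), and a section of the surjection to \(W_j\) coming from integral Weyl representatives. The only cosmetic difference is that the paper invokes the constancy of \(\underline{W} = \widehat{N}/\widehat{T}\) over \(\operatorname{Spec}\mathbb{Z}\) to identify \(W \cong \widehat{N}(\mathbb{F}_j\llb u \rrb)/\widehat{T}(\mathbb{F}_j\llb u \rrb)\), rather than directly lifting representatives into \(\widehat{N}(\mathbb{Z})\); both produce the same section.

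For part (2), however, your sketch contains a genuine gap. You claim that the short exact sequence for \(\widetilde{W}^*\) is obtained by \emph{taking \(I\)-invariants} of the split-case picture together with ``the standard identification \(X_*(\widehat{T})^\mathcal{J}_I = X_*(T^*)_I\).'' But the content of the left-hand term of the sequence is not a lattice identification: you need the isomorphism
\[
T^*(\mathbb{F}\llp v \rrp)\,/\,T^*(\mathbb{F}\llp v \rrp)^0 \;\cong\; X_*(T^*)_I ,
\]
which is the Kottwitz isomorphism for the (possibly non-split) torus \(T^*\). Since \(T^*(\mathbb{F}\llp v \rrp)^0\) is the image of the norm map \(N_I\colon \widehat{T}(\mathbb{F}^\mathcal{J}\llb u \rrb) \to T^*(\mathbb{F}\llp v \rrp)\), the left-hand side is \(\widehat{T}(\mathbb{F}^\mathcal{J}\llp u \rrp)^I / N_I(\widehat{T}(\mathbb{F}^\mathcal{J}\llb u \rrb))\), and identifying this with the \(I\)-coinvariants of the cocharacter lattice is not a formal consequence of \(I\)-equivariance of the valuation map: it requires a computation (e.g.\ reduction to induced tori, or a tame-case Tate-cohomology argument as sketched in \cref{res:types-via-tate-cohomology}). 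Similarly, the surjectivity of \(N^*(\mathbb{F}\llp v \rrp) \twoheadrightarrow (W^\mathcal{J})^I\) does not follow merely from ``taking \(I\)-invariants'' of the split exact sequence — the obstruction lives in \(H^1(I, \widehat{T}(\mathbb{F}^\mathcal{J}\llp u \rrp))\), whose vanishing needs an argument. You do gesture at \cite{pappas-zhu} and \cite{kaletha-prasad} for these points, and the paper likewise simply cites \cite[Proposition 13]{hainesAppendixParahoricSubgroups2008} for both parts without proof; so your sketch is comparable in rigor to the paper's. But as written, the claim that the sequence ``is obtained by taking \(I\)-invariants'' is misleading and would not stand as a self-contained argument. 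It would be better to say explicitly that the left-hand term is identified via the Kottwitz homomorphism, and that the splitting survives because the Tits representatives in \(\widehat{N}(\mathbb{Z}^\mathcal{J})\) are stable under pinned automorphisms and the relevant \(H^1\) obstruction vanishes.
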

\begin{proof}
    This is \cite[Proposition 13]{hainesAppendixParahoricSubgroups2008}.
    We provide a short argument in case (1). Namely, we have a short exact sequence 
    \begin{equation}
        \label{eq:iwahori-weyl-group-ses}
        1 \to X_{*}(\widehat{T}) \to \widetilde{W}_{j} \to W_{j} \to 1 , 
    \end{equation}
    in which the first map is the given by the isomorphism \(X_{*}(\widehat{T}) \isoto \widehat{T}(\mathbb{F}_{j}\llp u \rrp) / \widehat{T}(\mathbb{F}_{j}\llb u \rrb)\) which maps \(\lambda \mapsto u^\lambda\) (whose inverse is given by the valuation), 
    and \(W_{j} = \widehat{N}(\mathbb{F}_{j}\llp u \rrp) / \widehat{T}(\mathbb{F}_{j}\llp u \rrb)\) is the finite Weyl group. 
    Since \(\underline{W} := \widehat{N} / \widehat{T}\) is a constant group scheme over \(\operatorname{Spec}\mathbb{Z}\) by \cite[Proposition 5.1.6]{Conrad2014ReductiveGS}, we can identify \(W \cong \widehat{N}(\mathbb{F}_{j}\llb u \rrb) / \widehat{T}(\mathbb{F}_{j}\llb u \rrb)\) (note that \(\underline{W}(R) = \widehat{N}(R) / \widehat{T}(R)\) whenever \(H^1_{\et}(\operatorname{Spec} R, \widehat{T}) = 0\)).
    This gives a section of the second map in \eqref{eq:iwahori-weyl-group-ses} and the semidirect product decomposition follows. 
\end{proof}

\begin{remark}
    We remark that \(\widetilde{W}^*\) also admits a semidirect product decomposition of the affine Weyl group with the stabilizer of an alcove in the building \cite[Lemma 14]{hainesAppendixParahoricSubgroups2008},
    which can be used to equip \(\widetilde{W}^*\) with a quasi-Coxeter structure. 
    We will return to this point in \cref{sec:admissible-set}. 
\end{remark}

\subsubsection{The algebraic fundamental group}
\label{sec:algebraic-fundamental-group}
We recall that the algebraic fundamental group of \(\widehat{G}\) is  
\[
    \pi_{1}(\widehat{G}) := X_{*}(\widehat{T}) / \Phi^{\vee} (\widehat{G},\widehat{T})\mathbb{Z} . 
\]
Note that this definition is purely combinatorial, and as such does not depend on the field of definition. 
An alternative (but equivalent) definition can be found in \cite[Definition 11.3.2]{kaletha-prasad}. 

Since \(\Gamma\) already acts on \(\pi_{1}(\widehat{G})\) via pinned automorphism, we define \(\pi_{1}(G^*) := \pi_{1}(\widehat{G})\)
viewed as a \(\Gamma\)-group. 
This action will be used to form the coinvariants \(\pi_{1}(G^*)_{I}\). 
Note that if \(I\) acts trivially on \(\widehat{G}\), then \(\pi_{1}(G^*)_{I} = \pi_{1}(\widehat{G})\).

\begin{remark}
    The group \(\pi_{1}(G^*)_{I}\) has various other incarnations. 
    It can be identified with:
    \begin{enumerate}
        \item \(X^*(Z(G)^I)\), where \(Z(G)\) is the center of \(G\), the (split form of the) dual group of \(\widehat{G}\) \cite[196]{hainesAppendixParahoricSubgroups2008}.
        \item The stabilizer \(\widetilde{W}^*_{\mathcal{C}}\) of the ``base alcove'' \(\mathcal{C}\) in the Iwahori--Weyl group \(\widetilde{W}^*\) \cite[Lemma 14]{hainesAppendixParahoricSubgroups2008}. 
        \item \(\pi_{0}(L_{0}G^*)\), where \(L_{0}G^*(R) = G^*(R\llp v \rrp)\) \cite[Theorem 0.1]{pappasTwistedLoopGroups2008}. By loc. cit. we can also identify \(\pi_{1}(G^*)_{I}\) the connected components of the affine flag variety for \(G^*(\mathbb{F}\llp v \rrp)\) with respect to any parahoric subgroup. In particular, we can identify \(\pi_{1}(G^*)_{I} = \pi_{0}\left( \operatorname{Gr}_{\mathcal{G},\mathbb{F}} \right)\), where \(\operatorname{Gr}_{\mathcal{G}}\) is the affine Grassmannian defined in \cref{sec:affine-grassmannian} (we will not use this result). 
    \end{enumerate} 
\end{remark}

We will encounter the condition that \(\pi_{1}(G^*)_{I}\) is torsion-free. 
This condition admits alternative characterizations: 
\begin{lemma}
    \label{res:simply-connected-derived-group}
    \begin{enumerate}
        \item If \(\widehat{G}\) is semisimple, then \(\pi_{1}(\widehat{G})\) is a finite group. 
        \item We can identify \(\pi_{1}(\widehat{G})_{\text{tor}} \cong \pi_{1}(\widehat{G}_{\text{der}})\), where \(\widehat{G}_{\text{der}}\) is the derived group of \(\widehat{G}\). 
        \item If \(I\) acts trivially on \(\widehat{G}\), then \(\pi_{1}(G^*)_{I} = \pi_{1}(\widehat{G})\) is torsion-free if and only if \(\widehat{G}_{\text{der}}\) is simply connected. 
    \end{enumerate}
\end{lemma}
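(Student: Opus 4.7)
The plan is to prove the three parts in order, with (1) as a direct computation, (2) via a snake-lemma style diagram chase, and (3) as an immediate consequence.

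For part (1), I would use that if $\widehat{G}$ is semisimple, then the coroots $\Phi^\vee(\widehat{G},\widehat{T})$ span $X_*(\widehat{T})\otimes\mathbb{Q}$, because a reductive group is semisimple precisely when its center is finite, equivalently when its root lattice (resp.\ coroot lattice) has the same rank as $X^*(\widehat{T})$ (resp.\ $X_*(\widehat{T})$). Hence $\mathbb{Z}\Phi^\vee$ is a finite-index sublattice of $X_*(\widehat{T})$, so $\pi_1(\widehat{G})=X_*(\widehat{T})/\mathbb{Z}\Phi^\vee$ is a finite group.

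For part (2), I would assemble the following commutative diagram with exact rows,
\begin{equation*}
\begin{tikzcd}
0 \ar[r] & \mathbb{Z}\Phi^\vee(\widehat{G}_{\mathrm{der}},\widehat{T}_{\mathrm{der}}) \ar[r] \ar[d,equal] & X_*(\widehat{T}_{\mathrm{der}}) \ar[r] \ar[d,hookrightarrow] & \pi_1(\widehat{G}_{\mathrm{der}}) \ar[r] \ar[d] & 0 \\
0 \ar[r] & \mathbb{Z}\Phi^\vee(\widehat{G},\widehat{T}) \ar[r] & X_*(\widehat{T}) \ar[r] & \pi_1(\widehat{G}) \ar[r] & 0.
\end{tikzcd}
\end{equation*}
The leftmost vertical arrow is an equality because the coroots of $\widehat{G}$ lie in $\widehat{G}_{\mathrm{der}}$ and generate the same lattice in either description. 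The middle vertical arrow is the inclusion coming from $\widehat{T}_{\mathrm{der}}=\widehat{T}\cap\widehat{G}_{\mathrm{der}}$, with cokernel the cocharacter lattice $X_*(\widehat{G}^{\mathrm{ab}})$ of the abelianized torus $\widehat{G}^{\mathrm{ab}}=\widehat{G}/\widehat{G}_{\mathrm{der}}$. Applying the snake lemma yields a short exact sequence
\begin{equation*}
0\to \pi_1(\widehat{G}_{\mathrm{der}})\to \pi_1(\widehat{G})\to X_*(\widehat{G}^{\mathrm{ab}})\to 0.
\end{equation*}
Since $X_*(\widehat{G}^{\mathrm{ab}})$ is a free abelian group, this sequence splits, so $\pi_1(\widehat{G})\cong \pi_1(\widehat{G}_{\mathrm{der}})\oplus X_*(\widehat{G}^{\mathrm{ab}})$. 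The torsion subgroup of the right-hand side equals $\pi_1(\widehat{G}_{\mathrm{der}})$, which is all of $\pi_1(\widehat{G}_{\mathrm{der}})$ by part (1) applied to the semisimple group $\widehat{G}_{\mathrm{der}}$. This gives the identification $\pi_1(\widehat{G})_{\mathrm{tor}}\cong \pi_1(\widehat{G}_{\mathrm{der}})$.

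Part (3) is then immediate: when $I$ acts trivially on $\widehat{G}$, the $I$-coinvariants functor does nothing, so $\pi_1(G^*)_I=\pi_1(\widehat{G})$. By part (2) this is torsion-free precisely when $\pi_1(\widehat{G}_{\mathrm{der}})=0$, i.e.\ when $\widehat{G}_{\mathrm{der}}$ is simply connected (using the standard fact that for semisimple groups, $\pi_1$ vanishes iff the group is simply connected in the algebraic sense).

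The main (minor) obstacle is verifying that the middle vertical map has cokernel $X_*(\widehat{G}^{\mathrm{ab}})$ with no torsion, which requires observing that $\widehat{T}_{\mathrm{der}}\hookrightarrow \widehat{T}$ is a closed immersion of tori whose quotient is again a torus. Everything else is a standard lattice-theoretic computation.
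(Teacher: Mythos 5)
Your proof is correct and takes essentially the same approach as the paper. The paper derives the short exact sequence $0\to\pi_1(\widehat{G}_{\mathrm{der}})\to\pi_1(\widehat{G})\to X_*(\widehat{T}/\widehat{T}_{\mathrm{der}})\to 0$ directly from the three-term filtration $X_*(\widehat{T}_{\mathrm{sc}})\subset X_*(\widehat{T}_{\mathrm{der}})\subset X_*(\widehat{T})$ (with $X_*(\widehat{T}_{\mathrm{sc}})=\mathbb{Z}\Phi^\vee$), whereas you package the same sequence as the cokernel row in a snake-lemma diagram; the two are formally equivalent, and your extra step of splitting the sequence is harmless but not needed — it suffices that the quotient $X_*(\widehat{G}^{\mathrm{ab}})$ is torsion-free, so the torsion of $\pi_1(\widehat{G})$ must sit inside the image of the finite group $\pi_1(\widehat{G}_{\mathrm{der}})$.
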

\begin{proof}
    (1) is well known (see for example \cite[Proposition 21.48]{milneAlgebraicGroupsTheory2017}).
    (3) is an immediate consequence of (2), so it remains to show (2).
    This follows from the proof of \cite[Lemma 11.5.2]{kaletha-prasad}, which we now recall. 
    Let \(\widehat{G}_{\text{der}} \subset \widehat{G}\) denote the derived subgroup of \(\widehat{G}\), and let \(p : \widehat{G}_{\text{sc}} \to \widehat{G}_{\text{der}}\) denote the simply connected covering.
    Let \(\widehat{T}_{\text{der}} = \widehat{T} \cap \widehat{G}_{\text{der}}\) and let \(\widehat{T}_{\text{sc}} = p^{-1} \widehat{T}_{\text{der}}\). 
    Then we have a short exact sequence 
    \[
        0 \to X_{*}(\widehat{T}_{\text{der}}) / X_{*}(\widehat{T}_{\text{sc}}) \to X_{*}(\widehat{T}) / X_{*}(\widehat{T}_{\text{sc}}) \to X_{*}(\widehat{T}) / X_{*}(\widehat{T}_{\text{der}}) \to 0 . 
    \]
    We can identify the leftmost group with \(\pi_{1}(\widehat{G}_{\text{der}})\), the middle group with \(\pi_{1}(\widehat{G})\), and the rightmost group with \(X_{*}(\widehat{T} / \widehat{T}_{\text{der}})\).
    The claim follows.  
\end{proof}

\subsection{Apartments and buildings}
\label{sec:buildings}
In this section we recall some facts from the theory of Bruhat--Tits buildings. 
This theory organizes the structure of bounded subgroups of a group over a hensellian local field by exhibiting a geometric object, called the Bruhat--Tits building, on which 
the group acts, and for which bounded subgroups correspond to bounded subsets by taking stabilizers. 
We refer the reader to \cite{kaletha-prasad} for additional details about Bruhat--Tits theory. 

Let \(\widehat{G}, \widehat{B}, \widehat{T}\) be as in \cref{sec:dual-groups}.
In this section we will for \(j \in \mathcal{J}\) let \(\widehat{G}_{j} = \widehat{G}|_{\mathbb{F}_{j}\llp u \rrp}\), \(\widehat{B}_{j} = \widehat{G}|_{\mathbb{F}_{j}\llp u \rrp}\), and \(\widehat{T}_{j} = T|_{\mathbb{F}_{j\llp u \rrp}}\). 

\subsubsection{The apartment}
Let \(j \in \mathcal{J}\). 
Then the enlarged apartment \(\widetilde{\mathcal{A}}\left( \widehat{G}_{j}, \mathbb{F}_{j}\llp u \rrp \right)\), as defined in \cite[Definition 6.1.27]{kaletha-prasad}, is a certain affine space under \(X_{*}(\widehat{T}_{j})_{\mathbb{R}} := X_{*}(\widehat{T}_{j}) \otimes_{\mathbb{Z}} \mathbb{R}\). 
The pinning of \(\widehat{G}\) gives for each \(j \in \mathcal{J}\) a distinguished point \(o_{j} \in \widetilde{\mathcal{A}}\left( \widehat{G}_{j}, \mathbb{F}_{j} \llp u \rrp \right)\) called a Chevalley valuation \cite[Definition 6.1.9]{kaletha-prasad},\footnote{
    The Chevalley valuation as defined in loc. cit. is really a point of the reduced apartment, so this is a slight abuse of notation. 
    We give further justification for this abuse of notation in \cref{sec:reduced-apartment} below. 
}
and this gives rise to an isomorphism \cite[Section 4.3.4]{kaletha-prasad}
\begin{align}
    \label{eq:coroots-apartment-iso-j}
    X_{*}(\widehat{T}_{j})_{\mathbb{R}} & \isoto \widetilde{\mathcal{A}}\left( \widehat{T}_{j}, \mathbb{F}_{j} \llp u \rrp \right) \\
    \lambda & \mapsto u^\lambda \cdot o_{j} = o_{j} - \frac{1}{e} \lambda ,  \nonumber 
\end{align}
which will have the property that the (hyperspecial) parahoric subgroup \(u^\lambda \widehat{G}(\mathbb{F}_{j}\llb u \rrb) u^{-\lambda} \subset \widehat{G}(\mathbb{F}_{j}\llp u \rrp)\)
corresponds to \(u^\lambda \cdot o_{j}\) (see \cref{sec:valuations-of-the-root-datum}).
\begin{remark}
    We note that the apartment \(\widetilde{\mathcal{A}}\left( \widehat{T}_{j}, \mathbb{F}_{j} \llp u \rrp \right)\) depends on the discrete valuation on \(\mathbb{F}_{j}\llp u \rrp\) (although any choice of discrete valuation yield apartments, and more generally buildings, which are abstractly isomorphic). 
    With respect to our normalizations, \(v\) has valuation \(1\) and \(u\) has valuation \(1/e\), and this is why there is a factor of \(1/e\) in \eqref{eq:coroots-apartment-iso-j}. 
\end{remark}

Define the enlarged apartment of \(\widehat{G}\) corresponding to \(\widehat{T}\) by
\[
\widetilde{\mathcal{A}}\left( \widehat{T}, \mathbb{F}^{\mathcal{J}}\llp u \rrp \right) = \prod_{j \in \mathcal{J}} \widetilde{\mathcal{A}}\left( \widehat{T}_{j}, \mathbb{F}_{j} \llp u \rrp \right) . 
\]
Let us denote \(X_{*}(\widehat{T})^\mathcal{J}_{\mathbb{R}} := \prod_{j \in \mathcal{J}}X_{*}(\widehat{T}_{j})_{\mathbb{R}} = \operatorname{Hom}_{\mathbb{F}^\mathcal{J}\llp u \rrp \text{-groups}} \left( \mathbb{G}_{m}, \widehat{T} \right)\). 
Then by taking products, \eqref{eq:coroots-apartment-iso-j} yields another isomorphism
\begin{equation}
    \label{eq:coroots-apartment}
    X_{*}(\widehat{T})^\mathcal{J}_{\mathbb{R}} \isoto \widetilde{\mathcal{A}}\left( \widehat{T}, \mathbb{F}^\mathcal{J}\llp u \rrp \right) . 
\end{equation}
The point \(0\) on the left hand side corresponds to \(o = (o_{j})_{j \in \mathcal{J}}\) on the right hand side.

\begin{example}
    When \(\mathcal{O} = \mathbb{Z}_{p}\), the apartment \(\widetilde{\mathcal{A}}\left( \widehat{T}, \mathbb{F}^\mathcal{J}\llp u \rrp \right) = \widetilde{\mathcal{A}}\left( \widehat{T}, \mathbb{F}_{q}\llp u \rrp \right)\) is exactly the enlarged apartment as described in \cite[Definition 6.1.27]{kaletha-prasad}. 
\end{example}

\subsubsection{The reduced apartment and simplicial structure}
\label{sec:reduced-apartment}
We will also need the (reduced) apartments \(\mathcal{A}\left( \widehat{T}_{j}, \mathbb{F}_{j}\llp u \rrp \right)\) (for \(j \in \mathcal{J}\)) and \(\mathcal{A}\left(\widehat{T}, \mathbb{F}^\mathcal{J}\llp u \rrp \right)\) (defined similarly to the enlarged apartment \(\widetilde{\mathcal{A}}\left(\widehat{T},\mathbb{F}^\mathcal{J}\llp u \rrp \right)\) in terms of the components labeled by \(j \in \mathcal{J}\)).
Let \(j \in \mathcal{J}\). 
By definition, the apartment \(\mathcal{A}\left( \widehat{T}_{j}, \mathbb{F}_{j}\llp u \rrp \right)\) is the enlarged apartment as defined above for the derived subgroup \(\widehat{G}_{j,\text{der}} \subset \widehat{G}_{j}\), 
and the corresponding maximal torus \(\widehat{T}_{j,\text{der}} \subset \widehat{G}_{j,\text{der}}\). 
That is,
\[
    \mathcal{A}\left( \widehat{T}_{j}, \mathbb{F}_{j}\llp u \rrp \right) := \widetilde{\mathcal{A}}\left( \widehat{T}_{j,\text{der}}, \mathbb{F}_{j}\llp u \rrp \right) = \mathcal{A}\left( \widehat{T}_{j,\text{der}}, \mathbb{F}_{j} \llp u \rrp \right) . 
\]
Now let \(\widehat{T}_{j,\text{ad}} \subset \widehat{G}_{j,\text{ad}}\) denote the maximal torus of the adjoint group which is the image of \(\widehat{T}_{j}\). 
By noting that the map \(X_{*}(\widehat{T}_{j,\text{der}})_{\mathbb{R}} \isoto X_{*}(\widehat{T}_{j,\text{ad}})_{\mathbb{R}}\) is an isomorphism (although it is not before tensoring with \(\mathbb{R}\)), 
it follows that the map \(\mathcal{A}\left( \widehat{T}_{j,\text{der}},\mathbb{F}_{j} \llp u \rrp \right) \isoto \mathcal{A}\left( \widehat{T}_{j,\text{ad}}, \mathbb{F}_{j}\llp u \rrp \right)\) is an isomorphism, 
which provides a canonical splitting
\begin{equation}
    \label{eq:enlarged-apartment-splitting}
    \widetilde{\mathcal{A}}\left( \widehat{T}_{j}, \mathbb{F}_{j}\llp u \rrp \right) = \mathcal{A}\left( \widehat{T}_{j}, \mathbb{F}_{j}\llp u \rrp \right) \oplus X_{*}(\widehat{A}_{j}, \mathbb{F}_{j}\llp u \rrp)_{\mathbb{R}} ,    
\end{equation}
where \(\widehat{A}_{j} \subset \widehat{G}_{j}\) denotes the center.

The reduced apartment \(\mathcal{A}\left(\widehat{T}_{j}, \mathbb{F}_{j} \llp u \rrp \right)\) carries a simplicial structure, which is described in \cite[Section 6.3.18]{kaletha-prasad}. 
We remark that loc. cit. calls maximal simplices by the name \emph{chambers}, whereas we call them \emph{alcoves}.
The Chevalley valuation \(o_{j}\) is then a vertex in \(\mathcal{A}\left( \widehat{T}_{j}, \mathbb{F}_{j} \llp u \rrp \right)\); in fact it is a \emph{hyperspecial} vertex.
Via the natural inclusion \(\mathcal{A}\left( \widehat{T}_{j}, \mathbb{F}_{j}\llp u \rrp \right) \subset \widetilde{\mathcal{A}}\left( \widehat{T}_{j}, \mathbb{F}_{j}\llp u \rrp \right)\) (corresponding to \(0 \in X_{*}(\widehat{A}_{j}, \mathbb{F}_{j}\llp u \rrp)_{\mathbb{R}}\) via \eqref{eq:enlarged-apartment-splitting}),
we may then view \(o_{j}\) as a point of \(\widetilde{\mathcal{A}}\left( \widehat{T}_{j}, \mathbb{F}_{j}\llp u \rrp \right)\) as we did above. 

Finally, we define 
\[
    \mathcal{A}\left( \widehat{T}, \mathbb{F}^\mathcal{J} \llp u \rrp \right) := \prod_{j \in J} \mathcal{A}\left( \widehat{T}_{j}, \mathbb{F}_{j}\llp u \rrp \right) . 
\]
We view this as carrying the simplicial structure of the product, so a vertex is a product of vertices, an alcove is a product of alcoves, and so on. 
The splitting \eqref{eq:enlarged-apartment-splitting} induces a canonical splitting 
\[
    \widetilde{\mathcal{A}} \left( \widehat{T}, \mathbb{F}^\mathcal{J}\llp u \rrp \right) = \mathcal{A}\left( \widehat{T}, \mathbb{F}^\mathcal{J}\llp u \rrp \right) \oplus \prod_{j \in \mathcal{J}} X_{*}(\widehat{A}_{j})_{\mathbb{R}} . 
\]
Abusively, we will say that a point \(\widetilde{y} \in \widetilde{\mathcal{A}}\left( \widehat{T}, \mathbb{F}^\mathcal{J}\llp u \rrp \right)\) is a vertex its projection \(y \in \mathcal{A}\left( \widehat{T},\mathbb{F}^\mathcal{J}\llp u \rrp \right)\) is a vertex.
Similarly, we will say that \(\widetilde{y}\) is in a certain alcove if its projection \(y\) is, and so on.

\subsubsection{Valuations of the root datum and parahoric subgroups}
\label{sec:valuations-of-the-root-datum}
In \cite[Chapter 6]{kaletha-prasad}, the points of the (reduced) apartment are defined as valuations of the root datum.\footnote{
    More precisely, the points of the apartment are defined as \emph{equipollence classes} of valuations of the root datum. 
    Since the role of equipollence classes is to have a definition which doesn't depend on a choice of pinning, we simply ignore this, and note that 
    the valuations of root datum we discuss are all with respect to the chosen Chevalley valuation \(o\). 
}
We will now give a brief explanation of how a point \(\widetilde{y} \in \widetilde{\mathcal{A}}\left( \widehat{T}, \mathbb{F}^\mathcal{J}\llp u \rrp \right)\) gives rise to a valuation of the root datum, 
and how this is used to describe parahoric subgroups of \(\widehat{G}(\mathbb{F}^\mathcal{J}\llp u \rrp) = \prod_{j \in \mathcal{J}} \widehat{G}_{j}(\mathbb{F}_{j}\llp u \rrp)\). 
For simplicity, we fix \(j \in \mathcal{J}\) and focus on a single factor.

Let \(\widetilde{y} \in \widetilde{\mathcal{A}}\left( \widehat{T}_{j}, \mathbb{F}_{j}\llp u \rrp \right)\), and let \(y \in \mathcal{A}\left( \widehat{T}_{j}, \mathbb{F}_{j}\llp u \rrp \right)\)
denote the projection of \(\widetilde{y}\) via \eqref{eq:enlarged-apartment-splitting}. 
The ``valuation of the root datum'' (with respect to \(o_{j}\)) described by \(\widetilde{y}\) only depends on \(y\), and it is given as follows. 
For \(a \in \Phi(\widehat{G},\widehat{T})\) a root and \(U_{a} \hookrightarrow \widehat{G}\) the corresponding root group,
the pinning of \(\widehat{G}\) provides an isomorphism \(u_{a} : \mathbb{G}_{a} \isoto U_{a}\), and we can 
define 
\begin{align*}
    & \varphi_{o,a} : U_{a} \left( \mathbb{F}_{j}\llp u \rrp \right) \stackrel{u_{a}^{-1}}{\longrightarrow} \mathbb{F}_{j}\llp u \rrp \stackrel{\operatorname{val}}{\longrightarrow} \frac{1}{e} \mathbb{Z} , & \text{where }\operatorname{val}\text{ is the discrete valuation,} \\
    & \varphi_{y,a}(u) := \varphi_{o,a}(u) + \langle a, y - o \rangle . &
\end{align*}
The collection \(\varphi_{y} = \lbrace \varphi_{y,a} \rbrace_{a \in \Phi(\widehat{G},\widehat{T})} \) is called a \emph{valuation of the root datum}.
\begin{remark}
    Note that the notation ``\(\varphi\)'' here has nothing to do with Frobenius, but we use it to be consistent with \cite[Chapter 6]{kaletha-prasad}.
    We only use \(\varphi\) to denote valuations of the root datum in this subsection; elsewhere in this work \(\varphi\) refers to a Frobenius morphism. 
\end{remark}

We can use \(\varphi_{y}\) to describe subgroups of the root groups. Namely, for \(a \in \Phi(\widehat{G},\widehat{T})\) let 
\begin{equation}
    \label{eq:filtrations-of-root-groups}
    U_{a,y} := \varphi_{y,a}^{-1}\left( [0, \infty] \right) = u_{a} \left( u^{\lceil - e\langle a, y - o \rangle \rceil} \right) \subset U_{a} \left( \mathbb{F}_{j}\llp u \rrp \right) . 
\end{equation}
Here, the second equality depends on the fact that \(u\) has valuation \(1/e\).
The parahoric subgroup 
\[
    P_{y} := \widehat{G}(\mathbb{F}_{j}\llp u \rrp)_{y}^0 \subset \widehat{G}(\mathbb{F}_{j}\llp u \rrp)
\]
attached to \(y\), as defined in \cite[Definition 7.3.3]{kaletha-prasad}, is the subgroup generated by the subgroups \(\widehat{T}(\mathbb{F}_{j}\llb u \rrb)\) and \(U_{a,x}\), \(a \in \Phi(\widehat{G},\widehat{T})\).

More generally, we define for any \(r \in \mathbb{R}\) (and even more generally, \(r \in \widetilde{\mathbb{R}}\) with \(\widetilde{\mathbb{R}}\) as in \cite[Section 1.6]{kaletha-prasad})
\[
    U_{a,y,r} := \varphi_{y,a}^{-1}\left( [r,\infty] \right) = u_{a}\left( u^{\lceil - e \langle a, y - o \rangle + re \rceil} \right) \subset U_{a}\left( \mathbb{F}_{j}\llp u \rrp \right) . 
\]
If \(r \geq 0\) we can also define \(\mathbb{G}_{m,r}(\mathbb{F}_{j}\llp u \rrp) = \left( u^{\lceil e r \rceil} \mathbb{F}_{j} \llp u \rrp \right)^\times\), and by taking products, we obtain for the split torus \(\widehat{T}\) a subgroup \(\widehat{T}_{r} \subset \widehat{T}(\mathbb{F}_{j}\llp u \rrp)\).
We note that \(\widehat{T}_{r}\) also admits a more ``canonical'' description in terms of a valuation morphism \(\operatorname{val} : \widehat{T}(\mathbb{F}_{j}\llp u \rrp) \to \frac{1}{e} \mathbb{Z}\) \cite[Section 7.2]{kaletha-prasad}.\footnote{In the first edition of \cite{kaletha-prasad}, there is a sign error in the definition of this valuation morphism in \cite[Section 2.5.b]{kaletha-prasad}, as pointed out in the errata available on Kaletha's homepage.}
Given \(r \geq 0\), the subgroup \(\widehat{G}(\mathbb{F}_{j}\llp u \rrp)_{y,r}^0 \subset \widehat{G}(\mathbb{F}_{j}\llp u \rrp)\) is by definition generated by \(U_{a,y,r}\) and \(\widehat{T}_{r}\). 

Even more generally, let \(f : \widehat{\Phi}(\widehat{G},\widehat{T}) \to \widetilde{\mathbb{R}}\) denote a concave function in the sense of \cite[Definition 7.3.1]{kaletha-prasad}, where \(\widehat{\Phi}(\widehat{G},\widehat{T}) = \Phi(\widehat{G},\widehat{T}) \cup \lbrace 0 \rbrace\). 
Then the subgroup \(\widehat{G}(\mathbb{F}_{j}\llp u \rrp)_{y,f}^0\) of \cite[Definition 7.3.3]{kaletha-prasad} is by definition generated by \(\widehat{T}_{f(0)}\) and \(U_{a,y,f(a)}\), \(a \in \Phi(\widehat{G},\widehat{T})\).
When \(f = r\) is constant we recover the groups \(\widehat{G}(\mathbb{F}_{j}\llp u \rrp)_{y,r}^0\) and when \(f = 0\) we recover the parahoric subgroups \(\widehat{G}(\mathbb{F}_{j}\llp u \rrp)_{y,0}^0\). 

As shown in \cite[Theorem 8.5.2]{kaletha-prasad}, there exists for \(y\) and \(f\) a concave function as above a smooth group scheme \(\mathcal{G}_{y,f}\) over \(\operatorname{Spec}\mathbb{F}_{j}\llb u \rrb\) with connected fibers, whose generic fiber is \(\widehat{G}\) and where the subgroup \(\mathcal{G}_{y,f}(\mathbb{F}_{j}\llb u \rrb) \subset \mathcal{G}_{y,f}(\mathbb{F}_{j}\llp u \rrp) = \widehat{G}(\mathbb{F}_{j}\llp u \rrp)\) equals \(\widehat{G}(\mathbb{F}_{j}\llp u \rrp)_{y,f}^0\).

\begin{example}
    \label{ex:valuation-of-the-root-datum}
    Suppose \(\widehat{G} = \GL_{3}\), \(\widehat{T} \subset \widehat{G}\) is the diagonal torus,
    and we identify \(X_{*}(\widehat{T}) = \mathbb{Z}^3\) in the usual way. 
    Let \(\lambda = - \frac{1}{3}(1,0,-1)\) and \(y = u^\lambda \cdot o = o + \frac{1}{3e}(1,0,-1)\).
    Then the parahoric subgroup \(P_{y} = \widehat{G}(\mathbb{F}_{j}\llp u \rrp)_{y}^0 \subset \widehat{G}(\mathbb{F}_{j}\llp u \rrp)\) is generated by \(\widehat{T}(\mathbb{F}_{j}\llb u \rrb)\) and \(U_{a,y}\), \(a \in \Phi(\widehat{G},\widehat{T})\). 
    The positive roots are \(\alpha_{12} = (1,-1,0)\), \(\alpha_{23} = (0,1,-1)\), and \(\alpha_{13} = (1,0,-1)\). 
    Note that \(\langle \alpha_{12},y - o \rangle = \langle \alpha_{23},y-o \rangle = 1/3e\) and \(\langle \alpha_{13},y-o \rangle = 2/3e\), so by \eqref{eq:filtrations-of-root-groups} we can see that 
    \begin{align*}
        & U_{\alpha_{12}} = u_{\alpha_{12}}(\mathbb{F}_{j}\llp u \rrp), & U_{-\alpha_{12}} = u_{-\alpha_{12}}(u \mathbb{F}_{j}\llp u \rrp) , \\ 
        & U_{\alpha_{23}} = u_{\alpha_{23}}(\mathbb{F}_{j}\llp u \rrp), & U_{-\alpha_{23}} = u_{-\alpha_{23}}(u \mathbb{F}_{j}\llp u \rrp) , \\
        & U_{\alpha_{13}} = u_{\alpha_{13}}(\mathbb{F}_{j}\llp u \rrp), & U_{-\alpha_{13}} = u_{-\alpha_{13}}(u \mathbb{F}_{j}\llp u \rrp) . 
    \end{align*}
    To summarize, we have 
    \[
        P_{y} = \left\lbrace \begin{pmatrix}
            \mathbb{F}_{j}\llp u \rrp^\times & \mathbb{F}_{j} \llp u \rrp & \mathbb{F}_{j}\llp u \rrp \\
            u \mathbb{F}_{j}\llp u \rrp & \mathbb{F}_{j}\llp u \rrp^\times & \mathbb{F}_{j}\llp u \rrp \\
            u \mathbb{F}_{j}\llp u \rrp & u \mathbb{F}_{j}\llp u \rrp & \mathbb{F}_{j}\llp u \rrp^\times 
        \end{pmatrix} \right\rbrace . 
    \]
\end{example}

\subsubsection{Tamely ramified descent}
\label{sec:tamely-ramified-descent}
Assume that the group \(\Gamma\) acts on \(\widehat{G}\) via pinned automorphisms, which in particular restrict to an action on \(\widehat{T}\).
Then \(\Gamma\) acts by ``conjugation'' on \(X_{*}(\widehat{T})^\mathcal{J}_{\mathbb{R}} = \prod_{j \in \mathcal{J}} X_{*}(\widehat{T}_{j})_{\mathbb{R}}\), meaning that if \(x = (x_{j})_{j \in \mathcal{J}}\) is an element, then 
\begin{equation}
    \label{eq:conjugation-action-on-cocharacters}
    \theta (x)_{j} = \theta (x_{\theta^{-1}j})  ,
\end{equation}
where \(\theta (y)\) for \(y \in X_{*}(\widehat{T}_{j})_{\mathbb{R}}\) is the action via pinned automorphisms. 
We let \(\Gamma\) act on \(\widetilde{\mathcal{A}}\left( \widehat{T}, \mathbb{F}^\mathcal{J}\llp u \rrp \right)\) via \eqref{eq:coroots-apartment}. 

For any \(n \in \widehat{N}(\mathbb{F}^\mathcal{J}\llp u \rrp)\) and \(x \in \widetilde{\mathcal{A}}\left( \widehat{T}, \mathbb{F}^\mathcal{J}\llp u \rrp \right)\), we have
\[
\theta(n \cdot x) = {^\theta n} \cdot \theta(x) \,\,\,\,\, \text{ for } \theta \in \Gamma . 
\]
Indeed, this shows why the above \(\Gamma\)-action on the apartment is the ``correct'' one. 

Recall that the torus \(\widehat{T}\) descends to a torus \(T^*_{\mathbb{F}\llp v \rrp}\) of \(G^*_{\mathbb{F}\llp v \rrp}\) over \(\mathbb{F}\llp v \rrp\) by étale descent. 
It follows from tamely ramified descent for Bruhat--Tits buildings \cite[Chapter 12]{kaletha-prasad} that we have an identification
\[
\widetilde{\mathcal{A}}\left( T^*, \mathbb{F}\llp v \rrp \right) = \widetilde{\mathcal{A}}\left( \widehat{T}, \mathbb{F}^\mathcal{J} \llp u \rrp \right)^\Gamma . 
\]
Let us briefly explain why this is true. 
By tamely ramified descent as in loc. cit., we can identify \(\widetilde{\mathcal{A}} \left( T^*, \mathbb{F}^\mathcal{J} \llp v \rrp \right) = \widetilde{\mathcal{A}} \left( \widehat{T}, \mathbb{F}^\mathcal{J}\llp u \rrp \right)^I\),
using the fact that \(I\) acts diagonally. 
Then we can perform unramified descent in two steps, first perform ``diagonal'' unramified descent \cite[Chapter 9]{kaletha-prasad} with respect to the stabilizer group \(\Gamma_{0,\mathcal{J}}\),
and finally take fixed points with respect to the quotient \(\Gamma_{0}'\) which permutes the factors (in a possibly twisted way).

\begin{remark}
    Even when \(I\) acts trivially on \(\widehat{G}\), the identification \(\widetilde{\mathcal{A}}\left( \widehat{T}, \mathbb{F}^\mathcal{J}\llp v \rrp \right) = \widetilde{\mathcal{A}}\left( \widehat{T}, \mathbb{F}^\mathcal{J}\llp u \rrp \right)^I = \mathcal{A}\left( \widehat{T}, \mathbb{F}^\mathcal{J}\llp u \rrp \right)\)
    typically does not identify the simplicial structures (as one typically \emph{defines} the simplicial structure on the \(I\)-fixed points so that the second equality identifies the simplicial structures). 
    If we consider the points of the apartment as being valuations of root data, the walls are determined by where there are jumps in the filtration of a root subgroup. 
    But since the valuation of \(u\) is \(1/e\) and the valuation of \(v\) is \(1\), these jumps occur at different stages.
    See \cref{fig:alcoves-inside-alcoves} on page \pageref{fig:alcoves-inside-alcoves} for an illustration of what this might look like. 
\end{remark}
\begin{figure}[h!]
    \centering
    \resizebox{\textwidth}{!}{
    \begin{tikzpicture}[scale = 12, rotate = 180]
        \pgfmathsetmacro{\numSegments}{6}

        \newcommand{\dividedTriangle}{
            \coordinate (A) at (0, 0);
            \coordinate (B) at (1, 0);
            \coordinate (C) at (0.5, 0.866); 

            \foreach \i in {0,...,\numSegments} {
                \coordinate (P) at ($(A)!\i/\numSegments!(B)$);
                \coordinate (Q) at ($(A)!\i/\numSegments!(C)$);
                \coordinate (R) at ($(B)!\i/\numSegments!(C)$);
                \coordinate (S) at ($(C)!\i/\numSegments!(B)$);
                
                \draw[line width=0.05pt,color=gray] (P) -- (Q);
                \draw[line width=0.05pt,color=gray] (P) -- (S);
                \draw[line width=0.05pt,color=gray] (Q) -- (R);
            }

            \draw[thick] (A) -- (B) -- (C) -- cycle;
        }

        \clip (-0.2,-0.173) rectangle (1.2,1.039);

        \foreach \x / \y in {-0.5/-0.866, 0.5/-0.866, -1/0, 0/0, 1/0, -0.5/0.866, 0.5/0.866} {
            \begin{scope}[shift={(\x,\y)}]
                \dividedTriangle
            \end{scope}
        }

        \foreach \x / \y in {-0.5/-0.866, 0.5/-0.866, -1/0, 0/0, 1/0, 0/-1.732} {
            \begin{scope}[yscale=-1,shift={(\x,\y)}]
                \dividedTriangle
            \end{scope}
        }

        \node[circle, fill=black, inner sep=1.5pt, label={[fill=white, fill opacity=0.7, text opacity=1, rounded corners] above:{$o$}}] at (0.5,0.866) {};
        \node[circle, fill=black, inner sep=1.5pt, label={[fill=white, fill opacity=0.7, text opacity=1, rounded corners] above:{$o + (1,0,0)$}}] at (0,0) {};
        \node[circle, fill=black, inner sep=1.5pt, label={[fill=white, fill opacity=0.7, text opacity=1, rounded corners] above:{$o + (1,1,0)$}}] at (1,0) {};
    \end{tikzpicture}}
    \caption{
        A snapshot of the apartment \(\widetilde{\mathcal{A}}\left( \widehat{T}, \mathbb{F}\llp v \rrp \right) = \widetilde{\mathcal{A}}\left( \widehat{T}, \mathbb{F}\llp u \rrp \right)^I\), when \(\widehat{T} \subset \widehat{G} = \GL_{3}\) is the diagonal torus and \(e = 6\).
        The small triangles bounded by think gray lines are the alcoves of \(\widetilde{\mathcal{A}}\left( \widehat{T}, \mathbb{F}\llp u \rrp \right)\), whereas the large triangles bounded by thick black lines are the alcoves of \(\widetilde{\mathcal{A}}\left( \widehat{T}, \mathbb{F}\llp v \rrp \right)\). 
    }
    \label{fig:alcoves-inside-alcoves}
\end{figure}
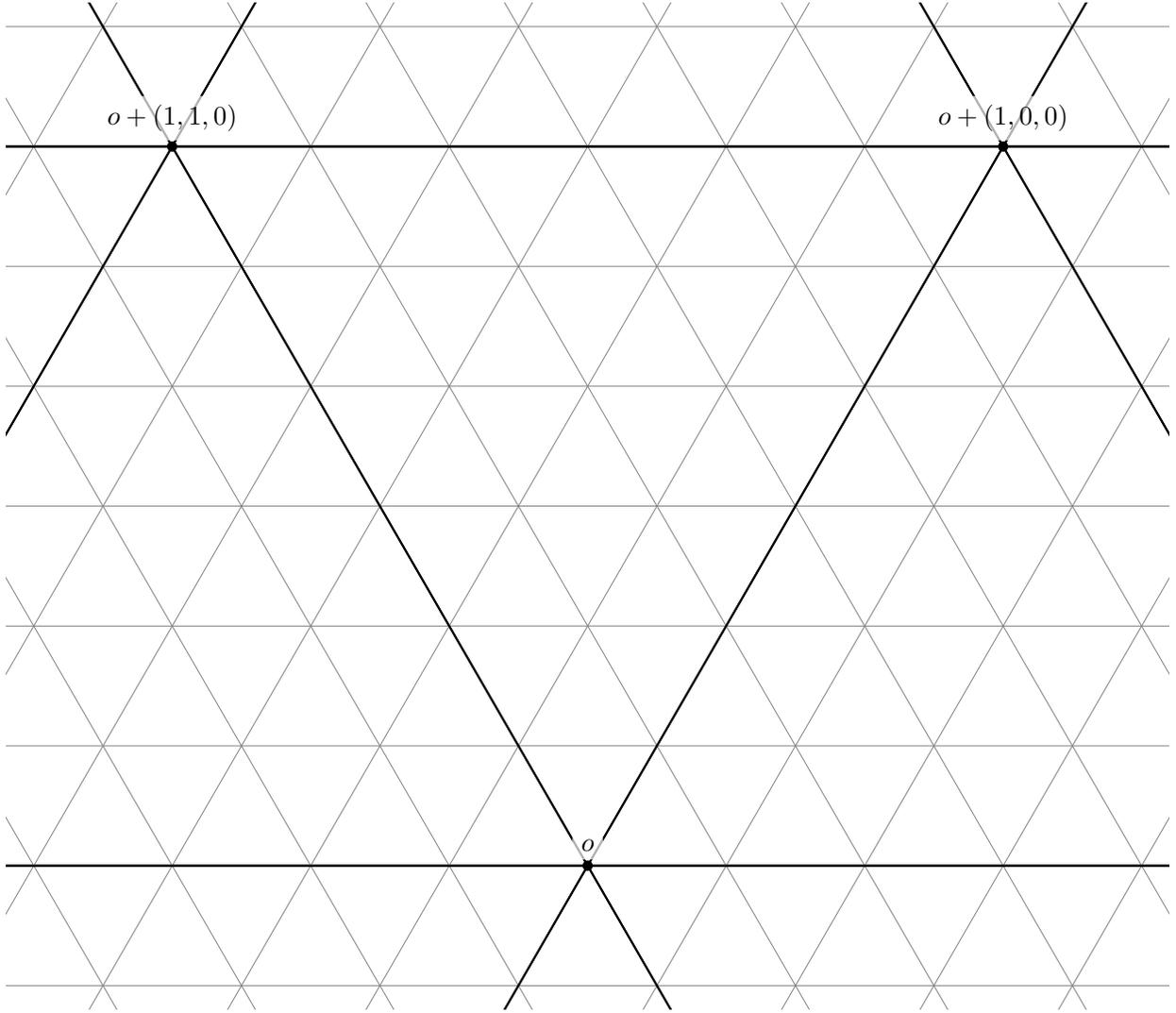

\subsubsection{The building}
\label{sec:the-building}
The enlarged building \(\widetilde{\mathcal{B}}\left( \widehat{G}, \mathbb{F}^\mathcal{J}\llp u \rrp \right)\) is the quotient of \(\widehat{G}(\mathbb{F}^\mathcal{J}\llp u \rrp) \times \widetilde{\mathcal{A}}\left( \widehat{T}, \mathbb{F}^\mathcal{J}\llp u \rrp \right)\)
by the equivalence relation \((g,x) \sim (g',x')\) if there exists \(n \in \widehat{N}(\mathbb{F}^\mathcal{J}\llp u \rrp)\) such that \(x' = n \cdot x\) and \(g' = n g n^{-1}\).
This will satisfy tamely ramified descent for the same reason as above. 

One can also define the (reduced) building \(\mathcal{B}\left( \widehat{G}, \mathbb{F}^\mathcal{J}\llp u \rrp \right)\), and as in the case of the apartment we have a canonical splitting
\[
    \widetilde{\mathcal{B}}\left( \widehat{G}, \mathbb{F}^\mathcal{J}\llp u \rrp \right) = \mathcal{B}\left( \widehat{G}, \mathbb{F}^\mathcal{J}\llp u \rrp \right) \oplus \prod_{j \in J} X_{*}(\widehat{A}_{j})_{\mathbb{R}} , 
\]
similarly to \eqref{eq:enlarged-apartment-splitting}.
Abusively we will use the simplicial structure of the building to talk about the enlarged building, so for example we will call a point \(\widetilde{y} \in \widetilde{\mathcal{B}}\left( \widehat{G}, \mathbb{F}^\mathcal{J}\llp u \rrp \right)\)
a vertex if its projection \(y \in \mathcal{B}\left( \widehat{G}, \mathbb{F}^\mathcal{J}\llp u \rrp \right)\) is a vertex. 

\subsection{Various Frobenius endomorphisms}
\label{sec:frobenius}
We warn the reader that the notation \(\varphi\) will be severely overloaded.
First of all we will use \(\varphi\) to denote the usual (arithmetic, \(x \mapsto x^{p}\) modulo \(p\)) Frobenius automorphism of \(\mathbb{F}_{p^k}\), \(\mathbb{Z}_{p^k}\), and \(\mathbb{Q}_{p^k}\).
For any \(\mathcal{O}\)-algebra \(R\) we have the Frobenius endomorphism \(\varphi: \left( R \otimes_{\mathbb{Z}_{p}} \mathbb{Z}_{q} \right) [u] \to \left( R \otimes_{\mathbb{Z}_{p}} \mathbb{Z}_{q} \right)[u]\), 
which acts as the usual on \(\mathbb{Z}_{q}\) and \(u \mapsto u^{p}\).
We similarly define endomorphisms of \(\left(R \otimes_{\mathbb{Z}_{p}} \mathbb{Z}_{q} \right)[v]\), \(R[v]\), \(\mathfrak{S}_{L,R}\), \(\mathfrak{S}_{R}\), \(\mathcal{E}_{L,R}\), \(\mathcal{E}_{R}\), \(E^\mathcal{J}\llp u \rrp\), \(\mathbb{F}^\mathcal{J}\llp u \rrp\), and we denote all of them by \(\varphi\). 
There are induced endomorphisms of the schemes \(\widetilde{\mathbb{A}}^1_{R}\), \(\breve{\mathbb{A}}^1_{R}\), and \(\mathbb{A}^1_{R}\), which we also denote by \(\varphi\). 

Let \(H\) be a group scheme defined over \(\mathcal{O}\) (we will apply this to \(H = \widehat{G}\)). 
Let \(\widetilde{H} := H \times_{\mathcal{O}} \widetilde{\mathbb{A}}^1_{\mathcal{O}}\).
Then we have an endomorphism \(\varphi = 1 \otimes \varphi : H \times_{\mathcal{O}} \widetilde{\mathbb{A}}^1_{\mathcal{O}} \to H \times_{\mathcal{O}} \widetilde{\mathbb{A}}^1_{\mathcal{O}}\), 
which corresponds to an identification \(\varphi^* \widetilde{H} \cong \widetilde{H}\).

Given an affine \(\widetilde{\mathbb{A}}^1_{\mathcal{O}}\) scheme \(\operatorname{Spec}A\), we also let \(\varphi\) denote the effect of pulling back along \(\varphi\), i. e. 
\[
\varphi : \widetilde{H}(A) \to \varphi^* \widetilde{H} \left( \varphi^* A \right) . 
\]
Typically, \(A\) is equipped with its own Frobenius endomorphism, giving a natural map \(A \to \varphi^* A\). 
(For example, we could take \(A = \mathfrak{S}_{L,R}\).) 
In this case, we also identify \(\varphi\) with the composite 
\[
\varphi : \widetilde{H}(A) \to \varphi^* \widetilde{H} \left( \varphi^* A \right) \cong \widetilde{H}\left(\varphi^* A \right) \to \widetilde{H}(A) . 
\]
\begin{example}
    If \(H = \mathbb{G}_{a}\) and \(A\) has a Frobenius endomorphism \(\varphi : A \to A\), then \(\varphi : \widetilde{H}(A) \to \widetilde{H}(A)\) is identified with the usual Frobenius \(\varphi : A \to A\). 
\end{example}
\begin{remark}
    If \(f : H_{1} \to H_{2}\) is a homomorphism of group schemes over \(\mathcal{O}\), then the induced homomorphism \(\widetilde{f} : \widetilde{H}_{1} \to \widetilde{H}_{2}\)
    commutes with \(\varphi\). For example, we will apply this to the case when \(\mathbb{G}_{a} \isoto U_{a} \hookrightarrow \widehat{G}\) is the inclusion of a root group. 
\end{remark}

\subsubsection{Equivariance}
The \(\Gamma\)-action on \(\widetilde{H}\) commutes with \(\varphi\), because the \(\Gamma\)-action on \(\widetilde{\mathbb{A}}^1_{\mathcal{O}}\) commutes with \(\varphi\). 
This is also true when \(H\) has an action of \(\Gamma\) defined over \(\mathcal{O}\), and \(\widetilde{H}\) is given the ``diagonal'' action.

The resulting map on points will then also be \(\Gamma\)-equivariant. 
For example, the map on global sections \(\varphi : \widetilde{H}(\mathfrak{S}_{L,R}) \to \widetilde{H}(\mathfrak{S}_{L,R})\) is \(\Gamma\)-equivariant.
In particular, there is an induced map \(\varphi : H^1 \left( \Gamma, \widetilde{H}(\mathfrak{S}_{L,R})\right) \to H^1 \left( \Gamma, \widetilde{H}(\mathfrak{S}_{L,R})\right)\).

\subsubsection{Endomorphism of the building}
\label{sec:frobenius-on-building}
We follow \cite[Section 6.b.2]{pappasPhiModulesCoefficient2009} in giving \(\varphi\) as an endomorphism of the apartments and buildings we considered in \cref{sec:buildings}.
For simplicity we assume that \(\mathbb{F} \supseteq \mathbb{F}_{q}\), so that we are in the situation of \cref{ex:large-coefficient-field}, and we identify \(\mathcal{J}\) with embeddings \(\mathbb{F}_{q} \hookrightarrow \mathbb{F}\). 

We first define an endomorphism \(\varphi\) of the apartment \(\widetilde{\mathcal{A}}\left( \widehat{T}, \mathbb{F}^\mathcal{J} \llp u \rrp \right)\) by 
\[
\varphi(x)_{j} = o_{j \varphi} + p (x_{j \varphi} - o_{j \varphi}) . 
\]
That is, \(\varphi\) acts by cyclically permutating the factors and scaling by \(p\). 
Note that if \(\lambda \in X_{*}(\widehat{T})^\mathcal{J}\), then 
\[
\varphi( u^\lambda \cdot x)_{j} = \varphi\left(x - \frac{1}{e}\lambda\right)_{j}
= o_{j \varphi} + p(x_{j\varphi} - o_{j \varphi}) - \frac{1}{e} p \lambda_{j \varphi} = \varphi(u^\lambda)_{j} \cdot \varphi(x)_{j} . 
\]
In fact, \(\varphi(n \cdot x) = \varphi(n) \cdot \varphi(x)\) for all \(n \in \widehat{N}(\mathbb{F}^\mathcal{J}\llp u \rrp)\) and \(x \in \widetilde{\mathcal{A}}\left( \widehat{T}, \mathbb{F}^\mathcal{J}\llp u \rrp \right)\). 

Note that \(\varphi\) commutes with the \(\Gamma\)-action, and in particular yields an endomorphism of \(\widetilde{\mathcal{A}}\left( T^*, \mathbb{F}\llp v \rrp \right) = \widetilde{\mathcal{A}}\left( \widehat{T}, \mathbb{F}^\mathcal{J}\llp u \rrp \right)^\Gamma\).
This is precisely the endomorphism described in \cite[Section 6.b.2]{pappasPhiModulesCoefficient2009}. 

Since the diagonal \(\varphi\) respects the equivalence relation used to define the enlarged building, there is an induced endomorphism of the enlarged building. 

\subsection{Galois types}
\label{sec:galois-types}
In this section we will discuss \emph{Galois types}, by which we mean 1-cocycles 
\[
\tau : \Gamma \to \widehat{G}(\mathbb{F}^\mathcal{J}) . 
\]
Here, we view \(\Gamma\) as acting on \(\widehat{G}(\mathbb{F}^\mathcal{J})\) simultaneously via group automorphisms on \(\widehat{G}\) and on the coefficients \(\mathbb{F}^\mathcal{J}\) (as when studying Galois cohomology).
We use the term ``Galois type'' loosely, in that we will refer to closely related 1-cocycles taking values in related groups such as \(\widehat{G}(\mathbb{F}^\mathcal{J}\llb u \rrb)\) by the same name. 

\begin{remark}
    The terminology ``Galois type'' is not used in \cite{local-models}, which works with \emph{inertial types} as classified by \cite[Section 9]{GHS}, 
    but it is actually the Galois type which dictates descent data for Breuil--Kisin modules. 
    A key difference between our notion of Galois types and inertial types is that \(\Gamma\) acts non-trivially on the coefficients (e. g. \(\mathbb{F}^\mathcal{J}\) or \(\mathbb{F}^\mathcal{J}\llb u \rrb\)) in the Galois type situation, whereas in the case of inertial types there is no non-trivial action on the coefficients.
    In \cref{sec:inertial-types} we will elucidate the relation between Galois types and inertial types, and in \cref{sec:weil-restriction-of-GL3} we give an explicit example of how an inertial type gives rise to a Galois type. 
\end{remark}

\subsubsection{Group cohomology}
The following results are well known. 
\begin{lemma}[Shapiro's Lemma]
    \label{res:shapiros-lemma}
    Assume that \(\mathbb{F} \supset \mathbb{F}_{q}\).
    Fix a factor \(\mathbb{F}_{i} \subset \prod_{j \in \mathcal{J}} \mathbb{F}_{j} \cong \mathbb{F}^\mathcal{J}\). 
    Then the projection \(\mathbb{F}^\mathcal{J} \to \mathbb{F}_{i}\) induces bijections 
    \begin{align*}
        \widehat{G} \left( \mathbb{F}^\mathcal{J} \right)^\Gamma & \cong \widehat{G}(\mathbb{F}_{i})^I \\ 
        H^1 \left( \Gamma, \widehat{G} \left( \mathbb{F}^\mathcal{J} \right) \right) & \cong H^1 \left( I, \widehat{G}(\mathbb{F}_{i}) \right) . 
    \end{align*}
\end{lemma}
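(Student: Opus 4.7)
The plan is to identify $\widehat{G}(\mathbb{F}^\mathcal{J})$ as the coinduced $\Gamma$-group from the $I$-group $\widehat{G}(\mathbb{F}_i)$, after which the lemma reduces to Shapiro's lemma for non-abelian cohomology in degree $\leq 1$.

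First I would unpack the $\Gamma$-structure on $\mathcal{J}$ under the assumption $\mathbb{F} \supseteq \mathbb{F}_q$. Since $\Gamma$ acts on $\mathcal{J}$ through the quotient $\Gamma_0$, and (as noted in the body of the paper) $\Gamma_0$ acts simply transitively on $\mathcal{J}$ in this setup (with $\Gamma_{0,\mathcal{J}} = 1$), the stabilizer in $\Gamma$ of the chosen $i \in \mathcal{J}$ is exactly $I$. Thus the orbit map $\theta \mapsto \theta \cdot i$ induces a $\Gamma$-equivariant bijection $\Gamma / I \isoto \mathcal{J}$. Since moreover each $\mathbb{F}_j = \mathbb{F}$ and $\Gamma$ acts trivially on $\mathbb{F}$, we obtain an isomorphism of $\Gamma$-schemes $\operatorname{Spec}(\mathbb{F}^\mathcal{J}) \cong (\Gamma/I) \times \operatorname{Spec}(\mathbb{F})$.

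Applying $\widehat{G}$-points (and tracking the pinned-automorphism action of $\Gamma$ on $\widehat{G}$) yields a $\Gamma$-equivariant isomorphism of groups
\[
\widehat{G}(\mathbb{F}^\mathcal{J}) \;\cong\; \operatorname{Map}_I\bigl(\Gamma,\, \widehat{G}(\mathbb{F}_i)\bigr) \;=:\; \operatorname{Coind}_I^\Gamma \widehat{G}(\mathbb{F}_i),
\]
where $\operatorname{Map}_I$ denotes maps equivariant for left multiplication by $I$ on $\Gamma$, the $I$-action on $\widehat{G}(\mathbb{F}_i)$ is by pinned automorphisms, and $\Gamma$ acts on the right by $(\theta \cdot f)(x) = f(x\theta)$. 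Under this identification, the projection $\mathbb{F}^\mathcal{J} \to \mathbb{F}_i$ corresponds to evaluation at $1 \in \Gamma$.

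The conclusion is then the non-abelian Shapiro lemma. For $H^0$: a $\Gamma$-invariant element of $\operatorname{Map}_I(\Gamma, \widehat{G}(\mathbb{F}_i))$ is a constant map whose (unique) value lies in $\widehat{G}(\mathbb{F}_i)^I$, and every such value defines a $\Gamma$-invariant map, giving the bijection $\widehat{G}(\mathbb{F}^\mathcal{J})^\Gamma \cong \widehat{G}(\mathbb{F}_i)^I$. For $H^1$: the map is the composition of restriction $H^1(\Gamma, \operatorname{Coind}_I^\Gamma M) \to H^1(I, \operatorname{Coind}_I^\Gamma M)$ with evaluation at $1$. One produces an explicit inverse by fixing a set-theoretic section $s: \Gamma/I \to \Gamma$ with $s(\bar{1}) = 1$ and, given a cocycle $c : I \to \widehat{G}(\mathbb{F}_i)$, defining $\tilde{c}(\theta) \in \operatorname{Map}_I(\Gamma, \widehat{G}(\mathbb{F}_i))$ in terms of the factorization $x\theta = h \cdot s(\overline{x\theta})$; the cocycle identity $\tilde{c}(\theta_1\theta_2)(x) = \tilde{c}(\theta_1)(x) \cdot \tilde{c}(\theta_2)(x\theta_1)$ then reduces to the cocycle identity for $c$, and conjugation by a $\Gamma$-coboundary on the coinduced side matches an $I$-coboundary on the target. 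The main (routine) obstacle is book-keeping the section-dependent formula so that it gives a well-defined inverse on cohomology classes; I would verify this by checking independence of the chosen section via an explicit coboundary.
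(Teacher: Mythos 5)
Your approach is essentially the paper's: identify $\widehat{G}(\mathbb{F}^\mathcal{J})$ as the (co)induced $\Gamma$-group on $\widehat{G}(\mathbb{F}_i)$ and invoke non-abelian Shapiro. The paper defines the same $\operatorname{Ind}_I^\Gamma$, writes the explicit twisted isomorphism $f \mapsto g$, $g_j = {}^{\sigma^j}f(\sigma^{-j})$, verifies $\Gamma$-equivariance by a short calculation, and then cites Serre's non-abelian Shapiro lemma rather than re-deriving it as you propose. The one spot where your sketch leaves the real work implicit is the phrase "tracking the pinned-automorphism action": the naive identification $\prod_j \widehat{G}(\mathbb{F}_j) \cong \operatorname{Map}(\Gamma/I, \widehat{G}(\mathbb{F}))$ is \emph{not} $\Gamma$-equivariant when $\Gamma$ acts on $\widehat{G}$ nontrivially, and you must build in exactly the twist the paper writes (e.g.\ $(g_j)_j \mapsto f$ with $f(x) = x\bigl(g_{x^{-1}\cdot i}\bigr)$, then check $I$-equivariance and the right-translation action). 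Your $H^0$ verification is correct, and your plan to re-prove the $H^1$ half via an explicit section-dependent inverse would work, though it is more labor than citing the reference as the paper does.
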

\begin{proof}
    Recall the \(\Gamma\)-group 
    \[
        \operatorname{Ind}_{I}^\Gamma \widehat{G}(\mathbb{F}) = \left\lbrace f : \Gamma \to \widehat{G}(\mathbb{F}) | f(\delta \theta) = {^\delta f(\theta)} \text{ for all } \delta \in I, \, \theta \in \Gamma \right\rbrace , 
    \]
    with group operation given by pointwise multiplication, and 
    with \(\Gamma\)-action given by right translations, i. e. \((\eta f)(\theta) = f(\theta \eta)\) for \(\eta \in \Gamma\), \(f \in \operatorname{Ind}_{I}^\Gamma \widehat{G}(\mathbb{F})\). 
    By the non-abelian version of Shaprio's Lemma \cite[Section I.5.8b]{galois-cohomology}, 
    it suffices to show that \(\widehat{G}\left( \mathbb{F}^\mathcal{J}\right) \cong \operatorname{Ind}_{I}^{\Gamma} \widehat{G}(\mathbb{F})\)
    as \(\Gamma\)-groups.
    Recall from \cref{ex:large-coefficient-field} that we can identify \(\mathcal{J} = \lbrace \iota, \iota \varphi^{-1}, \dots, \iota \varphi^{-r+1} \rbrace\) for some fixed embedding \(\iota : \mathbb{F}_{q} \hookrightarrow \mathbb{F}\). 
    Let us furthermore identify the embedding \(\iota \varphi^{-j}\) with the integer \(j\), so we can write \(\widehat{G}\left(\mathbb{F}^\mathcal{J}\right) = \prod_{j=0}^{r-1} \widehat{G}(\mathbb{F})\). 
    Consider the map
    \begin{align*}
        \operatorname{Ind}_{I}^\Gamma \widehat{G}(\mathbb{F}) & \to \prod_{j=0}^{r-1} \widehat{G}(\mathbb{F})  \\ 
        f & \mapsto g , \,\, \text{ where } \,\, g_{j} = {^{\sigma^j} f(\sigma^{-j})} , 
    \end{align*}
    where we remind the reader that \(\sigma \in \Gamma\) is the Frobenius element. 
    This map is clearly bijective, since any \(f \in \operatorname{Ind}_{I}^\Gamma \widehat{G}(\mathbb{F})\) is uniquely specified by the values \(f(1), f(\sigma^{-1}), \dots, f(\sigma^{-r+1})\) by the equivariance property.
    It is also easy to see that this map is a group homomorphism. 
    So we need only check that this map is equivariant: 
    \begin{align*}
        & {^{\sigma^j}(\gamma f)(\sigma^{-j})} = {^{\sigma^j}f(\sigma^{-j}\gamma)} = {^{\sigma^j}f(\sigma^{-j} \gamma \sigma^j \sigma^{-j})}
        = {^{\sigma^j \sigma^{-j} \gamma \sigma^j}f(\sigma^{-j})} = {^{\gamma \sigma^{j}} f(\sigma^{-j})} = {^\gamma g_{j}}  \\ 
        & {^{\sigma^j}(\sigma f)(\sigma^{-j})} = {^{\sigma \sigma^{j-1}}f(\sigma^{-j+1})} = {^\sigma(g_{\sigma^{-1}j})} = (^\sigma g)_{j} . 
    \end{align*}
    This completes the proof.
\end{proof}
\begin{remark}
    It is a general fact that \(\operatorname{Ind}_{I}^\Gamma \widehat{G}(\mathbb{F}) \cong \operatorname{Hom}_{\text{Sets}}\left( \Gamma / I, \widehat{G}(\mathbb{F}) \right)\) (see for example \cite[Page 59]{neukirchCohomologyNumberFields2008})
    and the right hand side is easily seen to be isomorphic to \(\prod_{j=0}^{r-1} \widehat{G}(\mathbb{F})\) as a \(\Gamma\)-group. The proof above makes this explicit.
\end{remark}
\begin{lemma}
    \label{res:deforming-galois-types}
    If \(\mathbb{F} \supset \mathbb{F}_{q}\), then the reduction modulo \(u\) map
    \(H^1 \left( \Gamma, \widehat{G}(\mathbb{F}^\mathcal{J}\llb u \rrb) \right) \isoto H^1 \left( \Gamma, \widehat{G}(\mathbb{F}^\mathcal{J}) \right)\)
    is a bijection. 
\end{lemma}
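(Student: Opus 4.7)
The plan is to reduce via Shapiro's lemma to the corresponding statement for $I$-cohomology, then exploit that $|I| = e$ is prime to $p$ (by tameness) so that $H^{i}(I,-)$ vanishes on $\mathbb{F}$-vector spaces for $i \geq 1$, and finally run a successive approximation argument along the $u$-adic filtration of $\widehat{G}(\mathbb{F}\llb u \rrb)$.

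Applying \cref{res:shapiros-lemma} to both sides (fixing a factor $\mathbb{F} \subset \mathbb{F}^{\mathcal{J}}$), it suffices to show that $H^{1}(I, \widehat{G}(\mathbb{F}\llb u \rrb)) \to H^{1}(I, \widehat{G}(\mathbb{F}))$ is a bijection. Surjectivity is immediate: the inclusion $\mathbb{F} \hookrightarrow \mathbb{F}\llb u \rrb$ is $I$-equivariant (the coefficient action of $I$ is trivial on $\mathbb{F}$, and the pinned-automorphism action matches), so applying $\widehat{G}$ produces an $I$-equivariant group-theoretic section of the reduction $\widehat{G}(\mathbb{F}\llb u \rrb) \twoheadrightarrow \widehat{G}(\mathbb{F})$, which descends to a section on $H^{1}$.

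For injectivity, set $K_{n} = \ker(\widehat{G}(\mathbb{F}\llb u \rrb) \to \widehat{G}(\mathbb{F}[u]/u^{n}))$, so the kernel $K$ of reduction equals $K_{1}$. Smoothness of $\widehat{G}$ identifies $K_{n}/K_{n+1}$ with $\Lie(\widehat{G})_{\mathbb{F}} \otimes_{\mathbb{F}} u^{n}\mathbb{F}$, and this subquotient is central in $K/K_{n+1}$ by the standard commutator bound on the $u$-adic filtration. Since $I$ acts $\mathbb{F}$-linearly on $K_{n}/K_{n+1}$ (through the adjoint action combined with a character of $u$), and $|I| = e$ is invertible in $\mathbb{F}$, we obtain $H^{i}(I, K_{n}/K_{n+1}) = 0$ for all $i \geq 1$ and $n \geq 1$. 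Now, given cocycles $\tau_{1}, \tau_{2} \colon I \to \widehat{G}(\mathbb{F}\llb u \rrb)$ with cohomologous reductions, the section constructed above lets us assume $\tau_{1}$ and $\tau_{2}$ have the same reduction mod $u$, in which case $\eta(\delta) := \tau_{1}(\delta)^{-1}\tau_{2}(\delta)$ is a cocycle in $K$ for the inner twist of the $I$-action by $\tau_{1}$. This twist preserves the filtration $\{K_{n}\}$ and acts trivially on each central subquotient $K_{n}/K_{n+1}$, so the vanishing above is preserved. One builds, inductively in $n$, elements $g_{n} \in K/K_{n}$ cobounding $\eta$ modulo $K_{n}$: given $g_{n}$, any lift $\tilde{g}_{n}$ to $K/K_{n+1}$ fails to cobound by a cocycle with values in the central piece $K_{n}/K_{n+1}$, which is itself a coboundary since $H^{1}(I, K_{n}/K_{n+1}) = 0$; passing to the inverse limit produces the required $g \in K$.

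The main obstacle is the non-abelian inverse-limit step in the injectivity argument, but the centrality of the graded pieces together with the simultaneous vanishing of both $H^{1}$ and $H^{2}$ on them makes each stage of the successive approximation unobstructed and routine.
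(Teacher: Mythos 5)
Your proof is correct and takes essentially the same approach as the paper: both reduce via Shapiro's lemma to $I$-cohomology, get surjectivity from the section $\mathbb{F} \hookrightarrow \mathbb{F}\llb u \rrb$, and prove injectivity by exploiting that $e = \#I$ is invertible in $\mathbb{F}$ so that $H^1$ of the twisted Lie-algebra graded pieces vanishes. The packaging differs slightly: the paper works level by level, showing $H^1(I, \widehat{G}(\mathbb{F}[u]/u^{n+1})) \to H^1(I, \widehat{G}(\mathbb{F}[u]/u^n))$ is injective for each $n$ by citing Serre's twisted exact sequence of pointed sets for a square-zero extension, whereas you run the successive approximation directly in $\widehat{G}(\mathbb{F}\llb u \rrb)$ by hand, twisting the $I$-action explicitly and correcting lifts stage by stage. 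These are the same argument in slightly different notation. One small inaccuracy: your closing remark invokes vanishing of $H^2(I, K_n/K_{n+1})$, but the successive approximation as you set it up only uses $H^1 = 0$ on the graded pieces (to absorb the defect of a lifted cobounding element); $H^2$ plays no role here and you should drop that mention to avoid confusion.
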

\begin{proof}
    We have a commutative diagram 
    \[
    \begin{tikzcd}
        H^1 \left( \Gamma, \widehat{G}(\mathbb{F}^\mathcal{J}\llb u \rrb) \right) \arrow[r] \arrow[d,"\sim"] & H^1 \left( \Gamma, \widehat{G}(\mathbb{F}^\mathcal{J}) \right) \arrow[d,"\sim"] \\ 
        H^1\left( I , \widehat{G}(\mathbb{F}\llb u \rrb) \right) \arrow[r] & H^1 \left( I, \widehat{G}(\mathbb{F}) \right) ,
    \end{tikzcd}
    \]
    where the horizontal maps are given by reduction modulo \(u\) and the vertical maps are the bijections from Shapiro's \cref{res:shapiros-lemma}.
    So it suffices to show that the bottom horizontal map is a bijection, which will follow from deformation theory and the fact that \(\# I = e\) is prime to \(p\). 
    Note that the reduction modulo \(u\) map \(H^1\left( I, \widehat{G}(\mathbb{F}\llb u \rrb)\right) \to H^1\left( I, \widehat{G}(\mathbb{F})\right)\) is surjective, since it admits a section induced by the inclusion of constants \(\mathbb{F} \subset \mathbb{F}\llb u \rrb\), so we need only check injectivity.
    Since \(H^1\left( I ,\widehat{G}(\mathbb{F}\llb u \rrb)\right) = \clim_{n} H^1\left( I,  \widehat{G}(\mathbb{F}[u]/u^n) \right)\), it suffices to show that \(H^1\left( I,  \widehat{G}(\mathbb{F}[u]/u^{n+1}) \right) \to H^1\left( I, \widehat{G}(\mathbb{F}[u]/u^n)\right)\) is injective for \(n \in \mathbb{Z}_{\geq 0}\). 
    The square zero extension \(J := u^n \mathbb{F}[u] / u^{n+1} \hookrightarrow \mathbb{F}[u]/u^{n+1} \twoheadrightarrow \mathbb{F}[u]/u^n\) yields a short exact sequence of \(I\)-groups \cite[Theorem II.4.3.5]{demazureIntroductionAlgebraicGeometry1980}
    \[
        0 \to \operatorname{Lie}\widehat{G}(J) \hookrightarrow \widehat{G}(\mathbb{F}[u] / u^{n+1}) \twoheadrightarrow \widehat{G}(\mathbb{F}[u]/u^n) \to 1 . 
    \]
    For any \([\rho] \in H^1\left( I, \widehat{G}(\mathbb{F}[u]/u^{n+1})\right)\), let \({_{\rho} \operatorname{Lie}\widehat{G}(J)}\) denote \(\operatorname{Lie}\widehat{G}(J)\) with \(\rho\)-twisted \(I\)-action \(\gamma\cdot v = \rho(\gamma)(^\gamma v )\rho(\gamma)^{-1}\). 
    Then by \cite[Section I.5.5]{galois-cohomology} there is an exact sequence of pointed sets 
    \[ H^1 \left( I, {_{\rho} \operatorname{Lie}\widehat{G}(J)} \right) \to H^1\left( I, \widehat{G}(\mathbb{F}[u]/u^{n+1}) \right) \to H^1\left( I , \widehat{G}(\mathbb{F}[u]/u^n)\right) ,\] 
    where \([\rho]\) is the basepoint of the middle set (exactness then means that any class in the middle having mapping to the same class as \([\rho]\) is in the image of the left map). 
    The point is now that \({_{\rho} \operatorname{Lie}\widehat{G}(J)}\) is an \(\mathbb{F}\)-module, and since \(e = \# I\) is invertible in \(\mathbb{F}\), 
    multiplication by \(e\) acts as an automorphism of \({_{\rho} \operatorname{Lie}\widehat{G}(J)}\). 
    It then follows that \(H^1 \left( I, {_{\rho} \operatorname{Lie}\widehat{G}(J)} \right) = 0\) by \cite[Section I.2.4]{galois-cohomology}, and since \(\rho\) was arbitrary, that \(H^1\left( I,  \widehat{G}(\mathbb{F}[u]/u^{n+1}) \right) \hookrightarrow H^1\left( I, \widehat{G}(\mathbb{F}[u]/u^n)\right)\) is injective. 
\end{proof}

\subsubsection{Building theoretic classification of Galois types}
\label{sec:description-of-types}
Following \cite[Proposition 5.4]{pappas-rapoport-tamely-ramified-bundles}, we give a classification of types in terms of the enlarged Bruhat--Tits building from \cref{sec:the-building}. 

\begin{proposition}
    \label{res:building-classification-of-types}
    There is a bijection 
    \[
    H^1 \left( \Gamma, \widehat{G}(\overline{\mathbb{F}}^\mathcal{J} \llbracket u \rrbracket) \right) \isoto G^*(\overline{\mathbb{F}}\llp v \rrp) \backslash \left( \widehat{G}(\overline{\mathbb{F}}^\mathcal{J} \llp u \rrp) \cdot o \cap \widetilde{\mathcal{B}} \left( G^*, \overline{\mathbb{F}}\llp v \rrp \right) \right) ,
    \]
    where the intersection takes place in \(\widetilde{\mathcal{B}}\left( \widehat{G}, \overline{\mathbb{F}}^\mathcal{J} \llp u \rrp \right)\), using tamely ramified descent for Bruhat--Tits buildings to identify \(\widetilde{\mathcal{B}} \left( G^*,\overline{\mathbb{F}} \llp v \rrp \right) = \widetilde{\mathcal{B}}\left( \widehat{G}, \overline{\mathbb{F}}^\mathcal{J} \llp u \rrp \right)^\Gamma\). 
\end{proposition}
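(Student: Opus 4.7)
The plan is to realize the bijection by sending a cocycle $\tau$ to the $G^*(\overline{\mathbb{F}}\llp v \rrp)$-orbit of $g \cdot o$, where $g \in \widehat{G}(\overline{\mathbb{F}}^\mathcal{J}\llp u \rrp)$ is any trivialization of $\tau$ in the larger Laurent series group, i.e.\ $\tau(\theta) = g^{-1}\,{^\theta g}$ for all $\theta \in \Gamma$. This is the standard dictionary translating torsors into points of the associated Bruhat--Tits building; the two non-formal inputs are (a) the vanishing $H^1(\Gamma, \widehat{G}(\overline{\mathbb{F}}^\mathcal{J}\llp u \rrp)) = 1$, which produces the trivialization $g$, and (b) the identification of the stabilizer of $o$ in $\widehat{G}(\overline{\mathbb{F}}^\mathcal{J}\llp u \rrp)$ with the hyperspecial parahoric $\widehat{G}(\overline{\mathbb{F}}^\mathcal{J}\llb u \rrb)$.

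For (a), the same Shapiro-type argument as in \cref{res:shapiros-lemma} (which applies verbatim since $\overline{\mathbb{F}} \supset \mathbb{F}_{q}$) reduces the vanishing to $H^1(I, \widehat{G}(\overline{\mathbb{F}}\llp u \rrp)) = 1$. Since $e = \#I$ is prime to $p$ and $\overline{\mathbb{F}}\llp u \rrp / \overline{\mathbb{F}}\llp v \rrp$ is a tame Galois $I$-extension, this is a standard vanishing for Galois cohomology of split connected reductive groups over the complete local field $\overline{\mathbb{F}}\llp v \rrp$, whose Brauer group is trivial. The input (b) is standard Bruhat--Tits theory over an algebraically closed residue field. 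Given both, for any cocycle $\tau$ we obtain $g$, and $g \cdot o$ is $\Gamma$-fixed: $o$ is $\Gamma$-fixed (as a product of Chevalley valuations under the pinned action), and $\tau(\theta) \in \widehat{G}(\overline{\mathbb{F}}^\mathcal{J}\llb u \rrb) = \operatorname{Stab}(o)$, so ${^\theta(g \cdot o)} = {^\theta g} \cdot o = g\,\tau(\theta) \cdot o = g \cdot o$. Hence $g \cdot o$ lies in $\widehat{G}(\overline{\mathbb{F}}^\mathcal{J}\llp u \rrp) \cdot o \,\cap\, \widetilde{\mathcal{B}}(G^*, \overline{\mathbb{F}}\llp v \rrp)$.

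The well-definedness then reduces to a direct manipulation of the coboundary relation. If $\tau$ and $\tau'$ are cohomologous via $h \in \widehat{G}(\overline{\mathbb{F}}^\mathcal{J}\llb u \rrb)$ and $g, g'$ trivialize them, then $(g')^{-1}{^\theta g'} = (gh)^{-1}{^\theta(gh)}$ forces $g' = \xi g h$ with $\xi \in \widehat{G}(\overline{\mathbb{F}}^\mathcal{J}\llp u \rrp)^\Gamma = G^*(\overline{\mathbb{F}}\llp v \rrp)$, and since $h \in \operatorname{Stab}(o)$ this gives $g' \cdot o = \xi g \cdot o$ in the same $G^*$-orbit. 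Conversely, any $\Gamma$-fixed point of the orbit of $o$ has the form $g \cdot o$, and $\tau(\theta) := g^{-1}{^\theta g}$ defines a cocycle landing in $\operatorname{Stab}(o) = \widehat{G}(\overline{\mathbb{F}}^\mathcal{J}\llb u \rrb)$ by $\Gamma$-fixity; changing $g$ to $\xi g h$ with $\xi \in G^*(\overline{\mathbb{F}}\llp v \rrp)$ and $h \in \operatorname{Stab}(o)$ preserves the orbit and alters $\tau$ only by a coboundary by $h$. These constructions are mutually inverse. The main obstacle is (a): once this vanishing is established everything else is formal manipulation of cocycles together with the $\Gamma$-equivariance of the $\widehat{G}(\overline{\mathbb{F}}^\mathcal{J}\llp u \rrp)$-action on the enlarged building, following the approach of \cite{pappas-rapoport-tamely-ramified-bundles}.
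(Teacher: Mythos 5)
Your proposal is correct and follows essentially the same path as the paper's proof: reduce via Shapiro's lemma, invoke the vanishing of $H^1(I,\widehat{G}(\overline{\mathbb{F}}\llp u \rrp))$ (your Brauer-group phrasing is the standard way to see $\overline{\mathbb{F}}\llp v\rrp$ has cohomological dimension $\leq 1$, so this is Steinberg's theorem, which is what the paper cites), and identify $\widehat{G}(\overline{\mathbb{F}}^\mathcal{J}\llb u\rrb)$ as the stabilizer of $o$. The one small difference is presentational: where you unwind the bijection by explicit cocycle manipulation, the paper cites the non-abelian long exact sequence of pointed sets from Serre's \emph{Galois cohomology} (I.5.4 Corollary 1), which packages exactly the computation you carried out by hand.
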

\begin{proof}
    Note that \(H^1 \left( \Gamma, \widehat{G}(\overline{\mathbb{F}}^\mathcal{J}\llp u \rrp) \right) \cong H^1 \left( I, \widehat{G}(\overline{\mathbb{F}}\llp u \rrp) \right)\) by Shapiro's \cref{res:shapiros-lemma}, 
    and \(H^1 \left( I , \widehat{G}(\overline{\mathbb{F}}\llp u \rrp) \right)\) classifies \(G^*\)-torsors over \(\overline{\mathbb{F}}\llp v \rrp\) which trivialize over \(\overline{\mathbb{F}}\llp u \rrp\).
    By Steinberg's theorem \cite[Theorem 2.3.3]{kaletha-prasad} we do in fact have \(H^1 \left( I , \widehat{G}(\overline{\mathbb{F}}\llp u \rrp) \right) = 0\). 
    
    Applying \cite[I.5.4 Corollary 1]{galois-cohomology} (non-abelian version of the long exact sequence in group cohomology induced by a short exact sequence) to the subgroup \(\widehat{G}(\overline{\mathbb{F}}^\mathcal{J}\llb u \rrb) \subset \widehat{G}(\overline{\mathbb{F}}^\mathcal{J}\llp u \rrp)\) yields
    \[
    H^1 \left( \Gamma, \widehat{G}(\overline{\mathbb{F}}^\mathcal{J}\llb u \rrb) \right) \cong G^*(\overline{\mathbb{F}}\llp v \rrp) \backslash \left( \widehat{G} (\overline{\mathbb{F}}^\mathcal{J}\llp u \rrp) /\widehat{G}(\overline{\mathbb{F}}^\mathcal{J}\llb u \rrb) \right)^{\Gamma} .
    \]
    The subgroup \(\widehat{G}(\overline{\mathbb{F}}^\mathcal{J}\llb u \rrb) \subset \widehat{G}(\overline{\mathbb{F}}^\mathcal{J}\llp u \rrp)\) is the stabilizer of \(o\) by \cite[Lemma 4.3.3, Lemma 7.7.10, Lemma 7.7.11, Proposition 8.3.16]{kaletha-prasad},
    and the result follows.
\end{proof}
\begin{remark}
    \label{rem:explicit-type-building-correspondence}
    From the proof and unpacking of \cite[I.5.4 Corollary 1]{galois-cohomology}, the bijection of \cref{res:building-classification-of-types} can be made explicit as follows.  
    Let \(\tau : \Gamma \to \widehat{G}(\overline{\mathbb{F}}^\mathcal{J} \llb  u  \rrb)\) be a 1-cocycle.
    Choose \(b \in \widehat{G}(\overline{\mathbb{F}}^\mathcal{J} \llp u \rrp)\) such that \(\tau(\theta) = b^{-1} (^\theta b)\) for all \(\theta \in \Gamma\) (and such a coboundary always exists).
    Then \(\tau\) is mapped to the \(G^*(\overline{\mathbb{F}}\llp v \rrp)\)-orbit of \(b \cdot o\).
\end{remark}
\begin{remark}
    In practice, we work with \(\tau : \Gamma \to \widehat{G}(\mathbb{F}^\mathcal{J}\llb u \rrb)\), where \(\mathbb{F}\) is finite but large enough to trivialize \(\tau\) in the 
    sense that there exists a coboundary \(b \in \widehat{G}(\mathbb{F}^\mathcal{J}\llp u \rrp)\) such that \(\tau(\theta) = b^{-1}(^\theta b)\) for all \(\theta \in \Gamma\). 
    Via \cref{res:building-classification-of-types}, \(\tau\) can then be described as the \(G^*(\mathbb{F}\llp v \rrp)\)-orbit of a point \(\widetilde{x} \in \widehat{G}(\mathbb{F}^\mathcal{J}\llp u \rrp) \cdot o \cap \mathcal{B}(G^*, \mathbb{F}\llp v \rrp)\). 
\end{remark}
\begin{remark}
    For any cohomology class \([\tau] \in H^1 \left( \Gamma, \widehat{G}(\overline{\mathbb{F}}^\mathcal{J}\llb u \rrb)\right)\)
    and any alcove \(C \subset \widetilde{\mathcal{B}}\left( G^*, \overline{\mathbb{F}}\llp v \rrp \right)\),
    \cref{res:building-classification-of-types} together with the fact that \(G^*(\overline{\mathbb{F}}\llp v \rrp)\) acts transitively on alcoves
    implies that there exists \(g \in \widehat{G}(\overline{\mathbb{F}}^\mathcal{J}\llp u \rrp)\) 
    for which the point \(x = g \cdot o\) lies in the closure of \(C\) and \([\tau]\) is represented by \(\tau(\theta) = g^{-1}(^\theta g)\).
    Moreover, if \(C \subset \widetilde{\mathcal{A}}\left( T^*, \overline{\mathbb{F}} \llp v \rrp \right)\), we will see in \cref{res:types-in-the-normalizer} below 
    that we can take \(g \in \widehat{N}(\overline{\mathbb{F}}^\mathcal{J}\llp u \rrp)\).
\end{remark}
\begin{remark}
    \label{rem:building-classification-is-varphi-equivariant}
    The bijection of \cref{res:building-classification-of-types} is \(\varphi\)-equivariant: 
    If \(g \cdot o\) corresponds to \(\tau(\theta) = g^{-1}(^\theta g)\), 
    then \(\varphi(g \cdot o) = \varphi(g) \cdot o\) corresponds to \(\varphi \tau(\theta) = \varphi(g^{-1}(^\theta g)) = \varphi(g)^{-1} (^\theta\varphi(g))\).
\end{remark}
As a part of the above \cref{res:building-classification-of-types}, we have that any Galois type \([\tau] \in H^1\left( \Gamma, \widehat{G}(\overline{\mathbb{F}}^\mathcal{J}\llb u \rrb) \right)\) is represented by 
some \(\tau\) of the form \(\tau(\theta) = g^{-1}(^\theta g)\), where \(g \in \widehat{G}(\overline{\mathbb{F}}^\mathcal{J}\llp u \rrp)\).
The following result shows that we may even take \(g \in \widehat{N}(\overline{\mathbb{F}}^\mathcal{J}\llp u \rrp)\). 
\begin{lemma}
    \label{res:N-orbits-vs-G-orbits}
    The natural map 
    \[
        N^*(\overline{\mathbb{F}}\llp v \rrp) \backslash \left( \widehat{N}(\overline{\mathbb{F}}^\mathcal{J}\llp u \rrp) \cdot o \cap \widetilde{\mathcal{A}}(T^*,\overline{\mathbb{F}}\llp v \rrp) \right)
        \isoto 
        G^*(\overline{\mathbb{F}}\llp v \rrp) \backslash \left( \widehat{G}(\overline{\mathbb{F}}^\mathcal{J} \llp u \rrp) \cdot o \cap \widetilde{\mathcal{B}} \left( G^*, \overline{\mathbb{F}}\llp v \rrp \right) \right)
    \]
    is bijective.
\end{lemma}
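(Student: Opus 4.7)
The plan is to reduce both directions to two standard axioms of Bruhat--Tits buildings: (a) transitivity of the group on apartments, together with the fact that the stabilizer of an apartment equals the normalizer of the corresponding maximal split torus; and (b) for any two apartments sharing a common point $x$, there is an element of the group fixing $x$ and carrying one apartment onto the other. I will apply these separately to the $G^*$-building over $\overline{\mathbb{F}}\llp v \rrp$ and to the $\widehat{G}$-building over $\overline{\mathbb{F}}^\mathcal{J}\llp u \rrp$, and freely use tamely ramified descent from \cref{sec:tamely-ramified-descent} to regard $\widetilde{\mathcal{A}}(T^*,\overline{\mathbb{F}}\llp v \rrp)$ inside $\widetilde{\mathcal{A}}(\widehat{T},\overline{\mathbb{F}}^\mathcal{J}\llp u \rrp)$ and similarly for the enlarged buildings.

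For surjectivity, starting from $\widetilde{x} \in \widehat{G}(\overline{\mathbb{F}}^\mathcal{J}\llp u \rrp) \cdot o \cap \widetilde{\mathcal{B}}(G^*,\overline{\mathbb{F}}\llp v \rrp)$, I first invoke (a) for $G^*$ to produce $g \in G^*(\overline{\mathbb{F}}\llp v \rrp)$ moving $\widetilde{x}$ into $\widetilde{\mathcal{A}}(T^*,\overline{\mathbb{F}}\llp v \rrp)$, which by descent sits inside $\widetilde{\mathcal{A}}(\widehat{T},\overline{\mathbb{F}}^\mathcal{J}\llp u \rrp)$; after replacing $\widetilde{x}$ by $g \cdot \widetilde{x}$ I may assume it already lies in this $\widehat{T}$-apartment. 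Writing $\widetilde{x} = h \cdot o$ with $h \in \widehat{G}(\overline{\mathbb{F}}^\mathcal{J}\llp u \rrp)$, both $\widetilde{\mathcal{A}}(\widehat{T})$ and $h^{-1}\widetilde{\mathcal{A}}(\widehat{T})$ are apartments of the $\widehat{G}$-building containing $o$; axiom (b) for $\widehat{G}$ then furnishes $k \in \widehat{G}(\overline{\mathbb{F}}^\mathcal{J}\llb u \rrb)$ (the stabilizer of $o$) with $kh^{-1}$ stabilizing $\widetilde{\mathcal{A}}(\widehat{T})$, so by (a) $kh^{-1} \in \widehat{N}(\overline{\mathbb{F}}^\mathcal{J}\llp u \rrp)$, whence $\widetilde{x} = (hk^{-1}) \cdot o$ lies in $\widehat{N}(\overline{\mathbb{F}}^\mathcal{J}\llp u \rrp) \cdot o$, as required.

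For injectivity I will run the dual argument entirely inside the $G^*$-building: if $g \in G^*(\overline{\mathbb{F}}\llp v \rrp)$ sends $n_1 \cdot o$ to $n_2 \cdot o$ with both points in $\widetilde{\mathcal{A}}(T^*,\overline{\mathbb{F}}\llp v \rrp)$, then $\widetilde{\mathcal{A}}(T^*,\overline{\mathbb{F}}\llp v \rrp)$ and $g\widetilde{\mathcal{A}}(T^*,\overline{\mathbb{F}}\llp v \rrp)$ are two apartments of $\widetilde{\mathcal{B}}(G^*,\overline{\mathbb{F}}\llp v \rrp)$ sharing the point $n_2 \cdot o$; axiom (b) for $G^*$ provides $h \in G^*(\overline{\mathbb{F}}\llp v \rrp)$ fixing $n_2 \cdot o$ with $hg\widetilde{\mathcal{A}}(T^*,\overline{\mathbb{F}}\llp v \rrp) = \widetilde{\mathcal{A}}(T^*,\overline{\mathbb{F}}\llp v \rrp)$, so $hg \in N^*(\overline{\mathbb{F}}\llp v \rrp)$ by (a), and this element sends $n_1 \cdot o$ to $n_2 \cdot o$, witnessing that the two lie in a single $N^*$-orbit.

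The main obstacle I anticipate is pinning down precise references for axioms (a) and (b) in our equal-characteristic setting, in the exact form used above (in particular, that the apartment stabilizer coincides with the group-theoretic normalizer, and the ``common point'' refinement of the axiom of apartments). Both $G^*$ over $\overline{\mathbb{F}}\llp v \rrp$ and $\widehat{G}$ over $\overline{\mathbb{F}}^\mathcal{J}\llp u \rrp$ are connected reductive over complete discretely valued fields with algebraically closed residue field (and $\widehat{G}$ is moreover split), so Bruhat--Tits theory in the form of \cite{kaletha-prasad} supplies both ingredients; once these are cited correctly, the above is routine juggling of apartment stabilizers.
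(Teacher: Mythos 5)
Your proof is correct and takes essentially the same route as the paper: move the point into the standard apartment, then upgrade group equivalence of points in that apartment to equivalence under the normalizer. The only difference is one of granularity: where you derive the normalizer refinement from the building axioms (an element fixing the common point of two apartments carries one to the other, combined with the fact that the apartment stabilizer is the normalizer), the paper simply cites this packaged as \cite[Corollary 4.2.25]{kaletha-prasad}, applied once for injectivity in $\widetilde{\mathcal{B}}(G^*,\overline{\mathbb{F}}\llp v\rrp)$ and once for surjectivity (after moving into $\widetilde{\mathcal{A}}(T^*,\overline{\mathbb{F}}\llp v\rrp)$ via transitivity of $G^*(\overline{\mathbb{F}}\llp v\rrp)$ on alcoves rather than on apartments).
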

\begin{proof}
    We start by proving injectivity.
    This follows from the fact \cite[Corollary 4.2.25]{kaletha-prasad} that if \(x, x' \in \widetilde{\mathcal{A}}\left( T^*, \mathbb{F}\llp v \rrp \right)\) and there exists \(g \in G^*(\overline{\mathbb{F}}\llp v \rrp)\) such that \(g \cdot x' = x\), then there exists \(n \in N^*(\overline{\mathbb{F}}\llp v \rrp)\) such that \(n \cdot x' = x\).

    It remains to show surjectivity. 
    Let \(x = b \cdot o\) be fixed by \(\Gamma\), where \(b \in \widehat{G}(\overline{\mathbb{F}}^\mathcal{J}\llp u \rrp)\).
    Since \(G^*(\overline{\mathbb{F}}\llp v \rrp)\) acts transitively on the alcoves of \(\widetilde{\mathcal{B}}(G^*, \overline{\mathbb{F}} \llp v \rrp)\), 
    there is some \(g \in G^*(\overline{\mathbb{F}}\llp v \rrp)\) such that \(g \cdot x\) is in the closure of any given alcove, and in particular we can ensure that \(x' = g\cdot x = gb \cdot o \in \widetilde{\mathcal{A}}(T^*, \overline{\mathbb{F}}\llp v \rrp) = \widetilde{\mathcal{A}}(\widehat{T},\overline{\mathbb{F}}^\mathcal{J}\llp u \rrp)^\Gamma\).
    By \cite[Corollary 4.2.25]{kaletha-prasad} again there exists \(n \in N^*(\overline{\mathbb{F}}\llp v \rrp)\) such that \(n \cdot o = gb \cdot o = x'\). 
\end{proof}
\begin{proposition}
    \label{res:types-in-the-normalizer}
    Consider the commutative diagram 
    \[
    \begin{tikzcd}
        N^*(\overline{\mathbb{F}}\llp v \rrp) \backslash \left( \widehat{N}(\overline{\mathbb{F}}^\mathcal{J}\llp u \rrp) \cdot o \cap \widetilde{\mathcal{A}}(T^*,\overline{\mathbb{F}}\llp v \rrp) \right) \arrow[r,"\sim"] \arrow[d,hookrightarrow] & 
        G^*(\overline{\mathbb{F}}\llp v \rrp) \backslash \left( \widehat{G}(\overline{\mathbb{F}}^\mathcal{J} \llp u \rrp) \cdot o \cap \widetilde{\mathcal{B}} \left( G^*, \overline{\mathbb{F}}\llp v \rrp \right) \right) \arrow[d,"\sim"] \\
        H^1 \left( \Gamma, \widehat{N}(\overline{\mathbb{F}}^\mathcal{J} \llb u \rrb) \right) \arrow[r,twoheadrightarrow] & H^1 \left( \Gamma, \widehat{G}(\overline{\mathbb{F}}^\mathcal{J}\llb u \rrb)\right) ,
    \end{tikzcd}
    \]
    where the vertical maps are given by sending the class of \(g \cdot o\) to \(\tau(\theta) = g^{-1}(^\theta g)\).
    Then the arrows marked \(\sim\) are bijective, the left vertical arrow is injective and the bottom horizontal arrow is surjective. 
\end{proposition}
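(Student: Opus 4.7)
The plan is to first check well-definedness and commutativity (essentially tautological), and then reduce the two nontrivial claims (injectivity of the left vertical map and surjectivity of the bottom horizontal map) to the bijections already established in \cref{res:N-orbits-vs-G-orbits} and \cref{res:building-classification-of-types}, together with the tamely ramified descent identification $\widehat{N}(\overline{\mathbb{F}}^\mathcal{J}\llp u \rrp)^\Gamma = N^*(\overline{\mathbb{F}}\llp v \rrp)$.

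First I would verify that the vertical arrows are well-defined. If $g \cdot o$ is fixed by $\Gamma$ with $g \in \widehat{G}(\overline{\mathbb{F}}^\mathcal{J}\llp u \rrp)$, then $g^{-1}(^\theta g)\cdot o = o$, so $g^{-1}(^\theta g)$ lies in the stabilizer $\widehat{G}(\overline{\mathbb{F}}^\mathcal{J}\llb u \rrb)$ of $o$ (as used in \cref{res:building-classification-of-types}); a routine calculation shows $\theta \mapsto g^{-1}(^\theta g)$ is a 1-cocycle, and one checks that replacing $g$ by $n g h$ for $n \in N^*(\overline{\mathbb{F}}\llp v \rrp)$ (which is $\Gamma$-fixed) and $h$ in the stabilizer of $o$ in $\widehat{N}$ only changes the cocycle by a coboundary. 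If $g \in \widehat{N}(\overline{\mathbb{F}}^\mathcal{J}\llp u \rrp)$, then $g^{-1}(^\theta g) \in \widehat{N}(\overline{\mathbb{F}}^\mathcal{J}\llp u \rrp) \cap \widehat{G}(\overline{\mathbb{F}}^\mathcal{J}\llb u \rrb) = \widehat{N}(\overline{\mathbb{F}}^\mathcal{J}\llb u \rrb)$, so the left vertical map lands in the correct $H^1$. Commutativity of the square is immediate from the definitions.

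Next I would prove injectivity of the left vertical arrow. Suppose $g\cdot o$ and $g'\cdot o$ yield cohomologous cocycles in $\widehat{N}(\overline{\mathbb{F}}^\mathcal{J}\llb u \rrb)$, so there exists $h \in \widehat{N}(\overline{\mathbb{F}}^\mathcal{J}\llb u \rrb)$ with $g'^{-1}(^\theta g') = h^{-1} g^{-1}(^\theta g)(^\theta h)$ for all $\theta \in \Gamma$. Setting $n := g h g'^{-1}$ and rearranging gives $n = {}^\theta n$, hence $n \in \widehat{N}(\overline{\mathbb{F}}^\mathcal{J}\llp u \rrp)^\Gamma = N^*(\overline{\mathbb{F}}\llp v \rrp)$ (the descent identification for $\widehat{N}$, which works for the same reason it does for $\widehat{G}$ and $\widehat{T}$ in \cref{sec:dual-groups}). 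Then $g\cdot o = n g' h^{-1} \cdot o = n g' \cdot o$ since $h^{-1}$ stabilizes $o$, so $g\cdot o$ and $g'\cdot o$ lie in the same $N^*(\overline{\mathbb{F}}\llp v \rrp)$-orbit.

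Finally, surjectivity of the bottom arrow falls out of the two bijections on the top row and right column of the diagram. Given a class in $H^1(\Gamma, \widehat{G}(\overline{\mathbb{F}}^\mathcal{J}\llb u \rrb))$, apply the inverse of the right vertical bijection to produce a $G^*(\overline{\mathbb{F}}\llp v \rrp)$-orbit, then use the bijection of \cref{res:N-orbits-vs-G-orbits} to find a representative of the form $g \cdot o$ with $g \in \widehat{N}(\overline{\mathbb{F}}^\mathcal{J}\llp u \rrp)$ and $g\cdot o \in \widetilde{\mathcal{A}}(T^*,\overline{\mathbb{F}}\llp v \rrp)$; the associated cocycle $\theta\mapsto g^{-1}(^\theta g)$ automatically lands in $\widehat{N}(\overline{\mathbb{F}}^\mathcal{J}\llb u \rrb)$ by the intersection argument above, and by commutativity of the square it maps back to the given class. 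The only mildly subtle input is the descent identification $\widehat{N}(\overline{\mathbb{F}}^\mathcal{J}\llp u \rrp)^\Gamma = N^*(\overline{\mathbb{F}}\llp v \rrp)$ used in the injectivity step; everything else is pure cocycle bookkeeping.
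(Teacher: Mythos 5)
Your proof is correct and, modulo the fact that the paper dispatches this as ``immediate from'' \cref{res:building-classification-of-types} and \cref{res:N-orbits-vs-G-orbits}, it fills in precisely the bookkeeping those two results are meant to supply. The surjectivity argument is the standard diagram chase (bottom $\circ$ left $=$ right $\circ$ top is a composite of bijections, hence bottom is surjective), which is what the paper intends. For injectivity of the left vertical arrow you give a direct cocycle calculation instead: from $g'^{-1}(^\theta g') = h^{-1} g^{-1}(^\theta g)(^\theta h)$ you extract $n := g h g'^{-1}$ with ${}^\theta n = n$, so $n \in N^*(\overline{\mathbb{F}}\llp v \rrp)$ by descent, and $g \cdot o = n g' \cdot o$. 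This is valid, but note the same conclusion also falls out of the diagram chase for free, since bottom $\circ$ left is a bijection and hence left is injective without any computation; the paper's one-line proof presumably has this in mind, so both the injectivity and surjectivity assertions genuinely are ``immediate'' once you accept commutativity. Your direct argument has the modest advantage of making the role of the descent identification $\widehat{N}(\overline{\mathbb{F}}^\mathcal{J}\llp u \rrp)^\Gamma = N^*(\overline{\mathbb{F}}\llp v \rrp)$ explicit, which is indeed a tacit input.
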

\begin{proof}
    Immediate from \cref{res:building-classification-of-types} and \cref{res:N-orbits-vs-G-orbits}. 
\end{proof}
The above result implies that any Galois type \([\tau] \in H^1 \left( \Gamma, \widehat{G}(\overline{\mathbb{F}}^\mathcal{J}\llb u \rrb) \right)\) is represented by a \(1\)-cocycle \(\tau\) of the form \(\tau(\theta) = n^{-1}(^\theta n)\), where \(n \in \widehat{N}(\overline{\mathbb{F}}^\mathcal{J}\llp u \rrp)\). 
It will be convenient to have an even more control over what \(n\) can be. 
\begin{proposition}
    \label{res:types-in-smaller-normalizer}
    Let \(\dot{N} \subset \widehat{N}(\overline{\mathbb{F}}^\mathcal{J}\llp u \rrp)\) be a subgroup which is stable under the action of \(\Gamma\), and let \(\dot{N}^0 := \dot{N} \cap \widehat{N}(\overline{\mathbb{F}}^\mathcal{J}\llb u \rrb)\). 
    Consider the map 
    \begin{equation}
        \label{eq:smaller-N}
        \dot{N}^\Gamma \backslash \left( \dot{N} \cdot o \cap \widetilde{\mathcal{A}}(T^*,\overline{\mathbb{F}}\llp v \rrp) \right) 
        \to 
        N^*(\overline{\mathbb{F}}\llp v \rrp) \backslash \left( \widehat{N}(\overline{\mathbb{F}}^\mathcal{J}\llp u \rrp) \cdot o \cap \widetilde{\mathcal{A}}(T^*,\overline{\mathbb{F}}\llp v \rrp) \right) . 
    \end{equation}
    Then we have the following:
    \begin{enumerate}[(A)]
        \item If \(\dot{N}\) contains the image of the map \(X_{*}(\widehat{T})^I \cong \left( X_{*}(\widehat{T})^\mathcal{J} \right)^\Gamma \subset X_{*}(\widehat{T})^\mathcal{J} \to \widehat{T}(\overline{\mathbb{F}}^\mathcal{J}\llp u \rrp)\)\footnote{The identification \(X_{*}(\widehat{T})^I \cong \left( X_{*}(\widehat{T})^\mathcal{J} \right)^\Gamma\) depends on a choice of \(j \in \mathcal{J}\), just like in \cref{res:shapiros-lemma}.} given by \(\lambda \mapsto u^\lambda\) (where the action of \(\Gamma\) on \(X_{*}(\widehat{T})^\mathcal{J}\) given by \eqref{eq:conjugation-action-on-cocharacters}), then \eqref{eq:smaller-N} is surjective. 
        \item If the map \(\dot{N}^\Gamma \subset \widehat{N}(\overline{\mathbb{F}}^\mathcal{J}\llp u \rrp)^\Gamma = N^*(\overline{\mathbb{F}}\llp v \rrp) \to \widetilde{W}^*\) is surjective (where \(\widetilde{W}^*\) denotes the Iwahori--Weyl group for \(G^*(\overline{\mathbb{F}}\llp v \rrp)\) from \cref{sec:iwahori-weyl}), then \eqref{eq:smaller-N} is injective. 
    \end{enumerate}
    Consequently, if the assumptions in both (A) and (B) are satisfied, so that \eqref{eq:smaller-N} is a bijection, 
    then the map \(H^1\left( \Gamma, \dot{N}^0 \right) \twoheadrightarrow H^1 \left( \Gamma, \widehat{G}(\overline{\mathbb{F}}^\mathcal{J}\llb u \rrb) \right)\) is surjective and admits a canonical section. 
    In concrete terms, given any \(1\)-cocycle \(\tau : \Gamma \to \widehat{G}(\overline{\mathbb{F}}^\mathcal{J}\llb u \rrb)\), there exists \(n \in \dot{N}\) for which \(\tau\) is coboundarous to the \(1\)-cocycle \(\tau' : \Gamma \to \dot{N}^0\) given by \(\tau'(\theta) = n^{-1}(^\theta n)\). 
    Moreover, if \(n_{1}, n_{2} \in \dot{N}\) and \(\tau_{1}(\theta) = n_{1}^{-1}(^\theta n_{1})\), \(\tau_{2}(\theta) = n_{2}^{-1}(^\theta n_{2})\), then \(\tau_{1}\) and \(\tau_{2}\) are coboundarous by an element of \(\widehat{G}(\overline{\mathbb{F}}^\mathcal{J}\llb u \rrb)\) if and only if \(\tau_{1}\) and \(\tau_{2}\) are coboundarous by an element of \(\dot{N}^0\). 
\end{proposition}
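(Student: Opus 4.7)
The plan is to parametrize the source and target of \eqref{eq:smaller-N} explicitly using translations in the apartment, verify (A) and (B) directly, and then extract the cohomological consequences by combining the resulting bijectivity of \eqref{eq:smaller-N} with the commutative diagram of \cref{res:types-in-the-normalizer}. I expect the subtlest ingredient to be claim (B), where one needs to recognize that the kernel $T^*(\overline{\mathbb{F}}\llp v \rrp)^0$ of $N^*(\overline{\mathbb{F}}\llp v \rrp) \twoheadrightarrow \widetilde{W}^*$ fixes the $\Gamma$-fixed apartment pointwise, so that lifting the image of $g$ in $\widetilde{W}^*$ to $\dot{N}^\Gamma$ is sufficient to recover an element of $\dot{N}^\Gamma$ acting the same way as $g$ on the relevant point.

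Using the semidirect product decomposition $\widetilde{W}^\mathcal{J} \cong X_{*}(\widehat{T})^\mathcal{J} \rtimes W^\mathcal{J}$ from \cref{res:iwahori-weyl-group-as-translations-rx-W} together with the fact that $W^\mathcal{J}$ admits a lift to $\widehat{N}(\overline{\mathbb{F}}^\mathcal{J}\llb u \rrb)$ (and thus stabilizes $o$), every element of $\widehat{N}(\overline{\mathbb{F}}^\mathcal{J}\llp u \rrp) \cdot o$ is uniquely of the form $u^\lambda \cdot o = o - \tfrac{1}{e}\lambda$ for some $\lambda \in X_{*}(\widehat{T})^\mathcal{J}$ via \eqref{eq:coroots-apartment}. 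The $\Gamma$-action on the apartment corresponds under \eqref{eq:coroots-apartment} to the conjugation action \eqref{eq:conjugation-action-on-cocharacters} on $X_{*}(\widehat{T})^\mathcal{J}$, so $u^\lambda \cdot o$ lies in $\widetilde{\mathcal{A}}(T^*,\overline{\mathbb{F}}\llp v \rrp) = \widetilde{\mathcal{A}}(\widehat{T},\overline{\mathbb{F}}^\mathcal{J}\llp u \rrp)^\Gamma$ exactly when $\lambda \in (X_{*}(\widehat{T})^\mathcal{J})^\Gamma$, which \cref{res:shapiros-lemma} identifies with $X_{*}(\widehat{T})^I$. Claim (A) is now immediate: by hypothesis $u^\lambda \in \dot{N}$ for every such $\lambda$, so every element of $\widehat{N}(\overline{\mathbb{F}}^\mathcal{J}\llp u \rrp) \cdot o \cap \widetilde{\mathcal{A}}(T^*,\overline{\mathbb{F}}\llp v \rrp)$ already lies in $\dot{N}\cdot o$. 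For (B), suppose $m_1,m_2 \in \dot{N}$ with each $m_i\cdot o$ in the $\Gamma$-fixed apartment and $g\cdot(m_1\cdot o) = m_2\cdot o$ for some $g \in N^*(\overline{\mathbb{F}}\llp v \rrp)$. Since $T^*(\overline{\mathbb{F}}\llp v \rrp)^0$ is the parahoric subgroup of $T^*$, it fixes $\widetilde{\mathcal{A}}(T^*,\overline{\mathbb{F}}\llp v \rrp)$ pointwise, so the action of $g$ on this apartment depends only on its image $\bar g \in \widetilde{W}^*$; lifting $\bar g$ to some $n \in \dot{N}^\Gamma$ via the hypothesis of (B) then yields $n\cdot(m_1\cdot o) = \bar g\cdot(m_1\cdot o) = m_2\cdot o$, proving injectivity of \eqref{eq:smaller-N}.

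Assuming (A) and (B), so that \eqref{eq:smaller-N} is bijective, I would define the map $n\cdot o \mapsto [\theta \mapsto n^{-1}({^\theta n})]$ from the source of \eqref{eq:smaller-N} to $H^1(\Gamma,\dot{N}^0)$. It is well-defined: $n^{-1}({^\theta n})$ stabilizes $o$ whenever $n\cdot o$ is $\Gamma$-fixed and so lies in $\dot{N}\cap \widehat{N}(\overline{\mathbb{F}}^\mathcal{J}\llb u \rrb) = \dot{N}^0$, and two representatives of the same $\dot{N}^\Gamma$-orbit produce cohomologous cocycles (differing by conjugation by an element of $\dot{N}^0$). Its composition with $H^1(\Gamma,\dot{N}^0) \to H^1(\Gamma,\widehat{G}(\overline{\mathbb{F}}^\mathcal{J}\llb u \rrb))$ recovers, through \cref{rem:explicit-type-building-correspondence} and \cref{res:types-in-the-normalizer}, the bijection from the source of \eqref{eq:smaller-N} to $H^1(\Gamma,\widehat{G}(\overline{\mathbb{F}}^\mathcal{J}\llb u \rrb))$. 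This forces $H^1(\Gamma,\dot{N}^0) \to H^1(\Gamma,\widehat{G}(\overline{\mathbb{F}}^\mathcal{J}\llb u \rrb))$ to be surjective and equipped with the described section, and forces the first leg into $H^1(\Gamma,\dot{N}^0)$ to be injective; the \emph{moreover} clause follows since the cocycles $\tau_1,\tau_2$ both lie in the image of this injective leg, and so are separated by their classes in $H^1(\Gamma,\widehat{G}(\overline{\mathbb{F}}^\mathcal{J}\llb u \rrb))$.
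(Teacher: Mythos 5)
Your proposal is correct and follows essentially the same route as the paper: (A) is proved by passing to the Iwahori–Weyl group, writing the orbit point as $u^\lambda\cdot o$, and using injectivity and $\Gamma$-equivariance of $\lambda\mapsto u^\lambda\cdot o$ to conclude $\lambda\in(X_*(\widehat{T})^\mathcal{J})^\Gamma\cong X_*(\widehat{T})^I$; (B) is proved by lifting the image of $g$ in $\widetilde{W}^*$ to $\dot{N}^\Gamma$ and using that the $N^*$-action on the apartment factors through $\widetilde{W}^*$; and the cohomological consequences come from replaying the commutative diagram of \cref{res:types-in-the-normalizer} with $\dot{N},\dot{N}^0,\dot{N}^\Gamma$ in place of $\widehat{N}(\overline{\mathbb{F}}^\mathcal{J}\llp u\rrp)$, $\widehat{N}(\overline{\mathbb{F}}^\mathcal{J}\llb u\rrb)$, $N^*(\overline{\mathbb{F}}\llp v\rrp)$. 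You spell out the ``moreover'' clause and the reason the $N^*$-action factors through $\widetilde{W}^*$ a bit more explicitly than the paper, but this is additional detail, not a different argument.
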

\begin{proof}
    We first show (A). 
    So let \(n \in \widehat{N}(\overline{\mathbb{F}}^\mathcal{J}\llp u \rrp)\) and assume that the point \(x = n \cdot o\) is fixed by \(\Gamma\). 
    By \cref{res:iwahori-weyl-group-as-translations-rx-W} there exists \(\lambda \in X_{*}(\widehat{T})^\mathcal{J}\) and \(w \in W^\mathcal{J}\) such that the image of \(n\) in the Iwahori--Weyl group \(\widetilde{W}^\mathcal{J} = X_{*}(\widehat{T})^\mathcal{J} \rtimes W^\mathcal{J}\) is \(u^\lambda w\). 
    Since the action of \(\widehat{N}(\overline{\mathbb{F}}^\mathcal{J}\llp u \rrp)\) on \(\widetilde{\mathcal{A}}\left( \widehat{T}, \overline{\mathbb{F}}^\mathcal{J}\llp u \rrp \right)\) factors through \(\widetilde{W}^\mathcal{J}\), 
    we see that
    \[ 
        x = n \cdot o = u^\lambda w \cdot o = u^\lambda \cdot o = o - \lambda ,
    \]
    using \(w \cdot o = o\). 
    Now, recall from \cref{sec:tamely-ramified-descent} that action map \(X_{*}(\widehat{T})^\mathcal{J} \to \widetilde{\mathcal{A}}\left( \widehat{T}, \overline{\mathbb{F}}^\mathcal{J}\llp u \rrp \right)\) given by \(\nu \mapsto u^\nu \cdot o = o - \nu\)
    is injective and \(\Gamma\)-equivariant with respect to the action of \(\Gamma\) on \(X_{*}(\widehat{T})^\mathcal{J}\) given by \eqref{eq:conjugation-action-on-cocharacters}. 
    The condition that \(x\) is fixed by \(\Gamma\) is therefore equivalent to the condition that \(\lambda\) is fixed by \(\Gamma\).
    In sum, we have shown that \(x = u^\lambda \cdot o\) for some \(\lambda \in X_{*}(\widehat{T})^I\), which under the assumption of (A) implies that \eqref{eq:smaller-N} is surjective. 

    We now turn to (B). 
    That is, suppose \(n_{1} ,n_{2} \in \dot{N}\) and there is \(g \in N^*(\overline{\mathbb{F}}\llp v \rrp)\) such that \(n_{1} \cdot o = g n_{2} \cdot o\). 
    The assumption of (B) implies that we can find \(h \in \dot{N}^\Gamma\) such that the images of \(g,h\) in \(\widetilde{W}^*\) concide. 
    Since the action of \(N^*(\overline{\mathbb{F}}\llp v \rrp)\) factors through \(\widetilde{W}^*\), it follows that \(n_{1} \cdot o = h n_{2} \cdot o\). 
    This shows that \eqref{eq:smaller-N} is injective.

    The additional statements are immediate from the fact that we now have a diagram as in \cref{res:types-in-the-normalizer} with the respective roles of \(\widehat{N}(\overline{\mathbb{F}}^\mathcal{J}\llp u \rrp)\), \(\widehat{N}(\overline{\mathbb{F}}^\mathcal{J}\llb u \rrb)\), and \(N^*(\overline{\mathbb{F}}\llp v \rrp)\) being replaced with \(\dot{N}\), \(\dot{N}^0\), and \(\dot{N}^\Gamma\).
\end{proof}
\begin{remark}
    \label{rem:types-in-normalizer-simple}
    The above \cref{res:types-in-smaller-normalizer} shows that all Galois types are represented by \(1\)-cocycles of the form \(\tau(\theta) = u^{-\lambda}(^\theta u^\lambda)\) for some \(\lambda \in \left(X_{*}(\widehat{T})^\mathcal{J}\right)^\Gamma\).
    In particular, \(\tau(\sigma) = 1\) and \(\tau(\gamma) = \omega(\gamma)^{\lambda}\), where \(\omega^{\lambda}\) denotes the composition 
    \[
        \begin{tikzcd}
            I \arrow[r,"\omega"] & \mathbb{F}_{q}^\times \arrow[r,"a \mapsto 1 \otimes a",hookrightarrow] & \left( \mathbb{F} \otimes_{\mathbb{F}_{p}} \mathbb{F}_{q} \right)^\times 
            \arrow[r,equals] & \prod_{j \in \mathcal{J}} \mathbb{F}_{j} = \mathbb{F}^\mathcal{J} \arrow[r,"\lambda"] & 
            \widehat{T}(\mathbb{F}^\mathcal{J}) . 
        \end{tikzcd}
    \]
    If \(j \in \mathcal{J}\) is identified with an embedding \(\iota_{j} : \mathbb{F}_{q}\hookrightarrow \overline{\mathbb{F}}\), then
    the projection of \(\omega^{\lambda}\) onto the \(j\)'th factor \(\widehat{G}(\overline{\mathbb{F}}_{j}) = \widehat{G}(\overline{\mathbb{F}})\) is \(\iota_{j}(\omega)^{\lambda_{j}}\) (because \({^\gamma u} = \iota_{j}(\omega(\gamma))u\) in \(\overline{\mathbb{F}}_{j}\llp u \rrp\)). 
    For generic \(\lambda\), this depends on the value \(j\). 
    As we will see in \cref{res:strictly-frobenius-invariant-coboundary-in-normalizer}, we can often make the value of the projection of \(\tau(\gamma)\) onto the \(j\)'th factor independent of \(j\) at the cost of changing \(\tau(\sigma)\) into a non-trivial element. 
\end{remark}
\begin{example}
    What is a minimal \(\dot{N} \subset \widehat{N}(\overline{\mathbb{F}}^\mathcal{J}\llp u \rrp)\) satisfying the conditions (A) and (B) in \cref{res:types-in-smaller-normalizer}?
    An obvious choice is to let \(\dot{N}\) be the subgroup generated by the image of \(\left(X_{*}(\widehat{T})^\mathcal{J}\right)^\Gamma\) and \(\widehat{N}(\mathbb{Z}^\mathcal{J})\), because \(\widehat{N}(\mathbb{Z})\) maps surjectively onto \(W^\mathcal{J}\) (the finite Weyl group) as we saw in the proof of \cref{res:iwahori-weyl-group-as-translations-rx-W}. 
    But more optimally, we can replace \(\widehat{N}(\mathbb{Z}^\mathcal{J})\) by another minimal subgroup which maps surjectively onto \(W^\mathcal{J}\).
    For example, if \(\widehat{G} = \GL_{n}\) or \(\operatorname{PGL}_{n}\), then we can embed \(W^\mathcal{J} \subset \widehat{N}(\mathbb{Z}^\mathcal{J})\), and consequently also \(\widetilde{W}^\mathcal{J} \subset \widehat{N}(\overline{\mathbb{F}}^\mathcal{J}\llp u \rrp)\), but this is not possible if \(\widehat{G} = \operatorname{SL}_{n}\). 
\end{example}

We are not going to use the following result, but include it here for completeness. It gives an upper bound on the number of Galois types. 
\begin{proposition}
    \label{res:types-via-tate-cohomology}
    The map \(X_{*}(\widehat{T})^I \subset X_{*}(\widehat{T})^\mathcal{J} \to H^1 \left( \Gamma, \widehat{G}(\overline{\mathbb{F}}^\mathcal{J}\llb u \rrb ) \right)\) given by sending \(\lambda\) to the class represented by the \(1\)-cocycle \(\tau(\theta) = u^{-\lambda}(^\theta u^\lambda)\) is surjective. 
    Moreover, this map factors through the Tate cohomology group \(\widehat{H}^0\left( I, X_{*}(\widehat{T}) \right) = N \left( X_{*}(\widehat{T})_{I} \right)\backslash X_{*}(\widehat{T})^I\), where \(N : X_{*}(\widehat{T})_{I} \to X_{*}(\widehat{T})^I\) is the norm map. 
    In particular, if \(I\) acts trivially on \(\widehat{T}\), we have a surjection \(e X_{*}(\widehat{T}) \backslash X_{*}(\widehat{T}) \twoheadrightarrow H^1 \left( \Gamma, \widehat{G}(\overline{\mathbb{F}}^\mathcal{J}\llb u \rrb ) \right)\).
\end{proposition}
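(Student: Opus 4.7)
The plan has two parts: establish surjectivity by invoking \cref{res:types-in-smaller-normalizer}(A), then establish the factoring through Tate cohomology by constructing an explicit coboundary for any $\lambda_{0}$ of the form $N\mu_{0}$.

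For surjectivity, I would take $\dot{N}$ in \cref{res:types-in-smaller-normalizer}(A) to be any $\Gamma$-stable subgroup of $\widehat{N}(\overline{\mathbb{F}}^\mathcal{J}\llp u \rrp)$ containing the image of $\lambda \mapsto u^\lambda$ from $(X_{*}(\widehat{T})^\mathcal{J})^\Gamma$. The analysis in the proof of (A) shows that every $\Gamma$-fixed point in $\widehat{N}(\overline{\mathbb{F}}^\mathcal{J}\llp u \rrp) \cdot o \cap \widetilde{\mathcal{A}}(T^*, \overline{\mathbb{F}}\llp v \rrp)$ has the form $u^\lambda \cdot o$ for some $\lambda \in (X_{*}(\widehat{T})^\mathcal{J})^\Gamma$; combining with the explicit description in \cref{rem:explicit-type-building-correspondence} (via \cref{res:types-in-the-normalizer}), every class in $H^1(\Gamma, \widehat{G}(\overline{\mathbb{F}}^\mathcal{J}\llb u \rrb))$ is represented by a cocycle $\tau(\theta) = u^{-\lambda}({^\theta u^\lambda})$ with $\lambda$ identified (via Shapiro) with an element of $X_{*}(\widehat{T})^I$. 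This is essentially the observation already made in \cref{rem:types-in-normalizer-simple}.

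For the factoring claim, suppose $\lambda_{0} = N\mu_{0} = \sum_{\gamma' \in I} \gamma'\mu_{0}$ for some $\mu_{0} \in X_{*}(\widehat{T})$. By Shapiro's \cref{res:shapiros-lemma}, it suffices to show that the corresponding single-factor cocycle $\widetilde{\tau}: I \to \widehat{T}(\overline{\mathbb{F}}\llb u \rrb)$, given by $\gamma \mapsto \omega(\gamma)^{\lambda_{0}}$ (as a direct computation using pinned $I$-invariance of $\lambda_{0}$ shows), is a coboundary. My construction is to form the $I$-norm $N_{I}(u^{\mu_{0}}) := \prod_{\gamma' \in I} \gamma'(u^{\mu_{0}}) \in \widehat{T}(\overline{\mathbb{F}}\llp u \rrp)^I$. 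Expanding each factor using $\gamma'(u^{\mu_{0}}) = \omega(\gamma')^{\gamma'\mu_{0}} u^{\gamma'\mu_{0}}$ and using commutativity of the torus yields $N_{I}(u^{\mu_{0}}) = c \cdot u^{\lambda_{0}}$, where $c := \prod_{\gamma' \in I} \omega(\gamma')^{\gamma'\mu_{0}} \in \widehat{T}(\overline{\mathbb{F}})$. Writing $u^{\lambda_{0}} = c^{-1} N_{I}(u^{\mu_{0}})$ and using $I$-invariance of $N_{I}(u^{\mu_{0}})$, a short computation gives $\widetilde{\tau}(\gamma) = c \gamma(c)^{-1}$, exhibiting $\widetilde{\tau}$ as the coboundary of $c^{-1} \in \widehat{T}(\overline{\mathbb{F}}) \subset \widehat{G}(\overline{\mathbb{F}}\llb u \rrb)$.

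The trivial $I$-action case is then an immediate specialization: when $I$ acts trivially on $\widehat{T}$, the norm $N: X_{*}(\widehat{T}) \to X_{*}(\widehat{T})$ is just multiplication by $e = \#I$, so $\widehat{H}^0(I, X_{*}(\widehat{T})) = X_{*}(\widehat{T})/eX_{*}(\widehat{T})$. The main source of potential confusion in executing the plan is the bookkeeping between the $\mathcal{J}$-indexed and single-factor pictures, since $\Gamma$ simultaneously acts via pinned automorphisms on $\widehat{G}$, by permutation on $\mathcal{J}$, on $\overline{\mathbb{F}}$ by Frobenius, and on $u$ via $\omega$; but once a basepoint $j_{0} \in \mathcal{J}$ is fixed so that Shapiro's identification is unambiguous, everything reduces to formal manipulations in the abelian torus.
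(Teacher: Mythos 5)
Your argument is correct, and for the factoring claim it takes a genuinely different route from the paper. Both proofs handle surjectivity identically, by invoking \cref{res:types-in-smaller-normalizer} (or equivalently \cref{rem:types-in-normalizer-simple}). For the factoring, however, the paper applies the long exact sequence in $I$-cohomology to the valuation sequence $0 \to \widehat{T}(\overline{\mathbb{F}}\llb u \rrb) \to \widehat{T}(\overline{\mathbb{F}}\llp u \rrp) \to X_{*}(\widehat{T}) \to 0$, uses Steinberg's theorem for the vanishing of $H^1\left( I, \widehat{T}(\overline{\mathbb{F}}\llp u \rrp) \right)$, and then cites a result of Kaletha--Prasad identifying the image of $e\cdot\operatorname{val}$ on $T^*(\overline{\mathbb{F}}\llp v \rrp)$ with the image of the norm map; this actually establishes the stronger fact that $\delta$ induces an \emph{isomorphism} $\widehat{H}^0(I, X_{*}(\widehat{T})) \isoto H^1\left(I, \widehat{T}(\overline{\mathbb{F}}\llb u \rrb)\right)$. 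Your proof instead gives an explicit coboundary: for $\lambda_{0} = N\mu_{0}$, the element $c = \prod_{\gamma' \in I}\omega(\gamma')^{\gamma'\mu_{0}} \in \widehat{T}(\overline{\mathbb{F}})$ satisfies $c\,\gamma(c)^{-1} = \omega(\gamma)^{\lambda_{0}}$ (using commutativity of the torus, $I$-invariance of $N_{I}(u^{\mu_{0}})$, and the substitution $\gamma'' = \gamma\gamma'$ together with the identity $\omega(\gamma'')\omega(\gamma^{-1}\gamma'')^{-1} = \omega(\gamma)$ for abelian $I$). I checked this computation and it is correct. The tradeoff: the paper's argument is shorter on the page but leans on external machinery (Steinberg, Kaletha--Prasad) and delivers more (the isomorphism); yours is self-contained and elementary but proves only the inclusion $N(X_{*}(\widehat{T})_{I}) \subseteq \ker\delta$, which is exactly what the stated proposition requires.
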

\begin{proof}
    The first statement (about surjectivity) is immediate from \cref{res:types-in-smaller-normalizer}.
    Using Shapiro's \cref{res:shapiros-lemma} we make an identification \(H^1\left( I, \widehat{T}(\overline{\mathbb{F}}\llb u \rrb) \right) \cong H^1 \left( \Gamma, \widehat{T}(\overline{\mathbb{F}}^\mathcal{J}\llb u \rrb) \right)\) which depends on a choice of \(j \in \mathcal{J}\). 
    We then see that the given map factors through the map \(\delta : X_{*}(\widehat{T})^I \to H^1\left( I, \widehat{T}(\overline{\mathbb{F}}\llb u \rrb) \right)\), which maps \(\lambda \in X_{*}(\widehat{T})^I\) to the class \(\delta(\lambda)\) represented by the \(1\)-cocycle \(\tau(\gamma) = u^{-\lambda}(^\gamma u^\lambda)\).
    Therefore, we need only show that \(\delta\) factors through \(\widehat{H}^0\left( I, X_{*}(\widehat{T}) \right)\). 
    Recall that the discrete valuation of \(\overline{\mathbb{F}}\llp u \rrp\) yields a short exact sequence 
    \[ 
        0 \to \widehat{T}(\overline{\mathbb{F}}\llb u \rrb) \to \widehat{T}(\overline{\mathbb{F}}\llp u \rrp) \stackrel{e \cdot \operatorname{val}}{\longrightarrow} X_{*}(\widehat{T}) \to 0 , 
    \]
    where we need to scale the valuation by \(e\) because \(\operatorname{val}(u) = 1/e\).
    The long exact sequence in \(I\)-group cohomology then gives the exact sequence
    \[
        \widehat{T}(\overline{\mathbb{F}}\llp u \rrp)^I = T^*(\overline{\mathbb{F}}\llp v \rrp) \stackrel{e \cdot \operatorname{val}}{\longrightarrow} X_{*}(\widehat{T})^I \stackrel{\delta}{\longrightarrow} H^1\left( I, \widehat{T}(\overline{\mathbb{F}}\llb u \rrb) \right) \to H^1 \left( I, \widehat{T}(\overline{\mathbb{F}}\llp u \rrp) \right) = 0 , 
    \]
    where the last equality is due to Steinberg's Theorem (or just Hilbert 90 if \(I\) acts trivially on \(\widehat{T}\)). 
    It now follows from \cite[Corollary 11.7.3]{kaletha-prasad} that \(e \cdot \operatorname{val}(T^*(\overline{\mathbb{F}}\llp v \rrp)) \subset X_{*}(\widehat{T})^I\) coincides with the image of the norm map \(N : X_{*}(\widehat{T})_{I} \to X_{*}(\widehat{T})^I\), 
    so indeed \(\delta\) yields an isomorphism 
    \[
        N(X_{*}(\widehat{T})_{I}) \backslash X_*(\widehat{T})^I = \widehat{H}^0\left( I , X_{*}(\widehat{T}) \right) \isoto H^1\left( I, \widehat{T}(\overline{\mathbb{F}}\llb u \rrb) \right) . 
    \]
    Finally, note that if \(I\) acts trivially on \(\widehat{T}\), then \(X_{*}(\widehat{T})^I = X_{*}(\widehat{T})\) and \(N(X_{*}(\widehat{T})) = e X_{*}(\widehat{T})\) since \(e = \# I\). 
\end{proof}

\subsubsection{Frobenius invariant types}
\label{sec:frobenius-invariant-types}
Galois types arising from Breuil--Kisin modules with descent data are not completely arbitrary,
since they have the property of being ``Frobenius invariant'' in the following sense. 
\begin{definition}
    We say that a cohomology class \([ \tau ] \in H^1 \left(\Gamma, \widehat{G}(\mathbb{F}^\mathcal{J}\llb u \rrb) \right)\) is 
    \textit{Frobenius invariant} if \(\varphi([\tau]) = [\varphi\tau] = [\tau]\). We say that a \(1\)-cocycle \(\tau \in Z^1 \left(\Gamma, \widehat{G}(\mathbb{F}^\mathcal{J}\llb u \rrb) \right)\)
    is \emph{strictly Frobenius invariant} if \(\varphi \tau = \tau\) on the level of \(1\)-cocycles. 
    (This definition also works with other rings such as \(\mathcal{O}\) or \(\overline{\mathbb{F}}\) in place of \(\mathbb{F}\).)
\end{definition}

The following result shows that any type arising from Breuil--Kisin modules with descent data indeed is Frobenius invariant.
The classical case of this result is discussed in \cite[Remark 3.3.3]{CEGS}. 
\begin{proposition}
    \label{res:galois-types-are-frobenius-invariant}
    Assume that \(I\) acts trivially on \(\widehat{G}\). 
    Then the map 
    \[
        H^1 \left( \Gamma, \widehat{G}(\mathcal{O}^{\text{ur},\mathcal{J}} \llb u \rrb) \right) \hookrightarrow H^1 \left( \Gamma, \widehat{G}(\mathcal{O}^{\text{ur},\mathcal{J}} \llb u \rrb[(u^e+p)^{-1}]) \right)
    \]
    is injective, where \(\mathcal{O}^{\text{ur}} \subset E^{\text{ur}}\) is the ring of integers in the maximal unramified extension of \(E\). 
\end{proposition}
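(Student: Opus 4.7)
The plan is to first reduce via Shapiro's Lemma to an injectivity statement for $I$-cohomology, and then produce the needed coboundary by a Beauville--Laszlo gluing argument combined with Hensel lifting. Since $\mathcal{O}^{\mathrm{ur}}$ contains $\mathbb{Z}_q$, the argument of Lemma \ref{res:shapiros-lemma} adapts to yield natural bijections $H^1(\Gamma, \widehat{G}(A^\mathcal{J})) \cong H^1(I, \widehat{G}(A))$ for $A = \mathcal{O}^{\mathrm{ur}}\llb u \rrb$ and for $A = \mathcal{O}^{\mathrm{ur}}\llb u \rrb[(u^e+p)^{-1}]$, so it suffices to prove the corresponding injectivity of $I$-cohomology. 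Concretely, given $1$-cocycles $\tau_1,\tau_2 \colon I \to \widehat{G}(R)$ with $R = \mathcal{O}^{\mathrm{ur}}\llb u \rrb$ and $g \in \widehat{G}(R[f^{-1}])$ (where $f = u^e + p$) satisfying $\tau_2(\gamma) = g^{-1}\tau_1(\gamma){}^\gamma g$ for all $\gamma \in I$, the task is to produce $g' \in \widehat{G}(R)$ with the same property.

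The set-valued functor of conjugators, sending an $R$-algebra $S$ (with induced $I$-action) to $\{h \in \widehat{G}(S) : \tau_1(\gamma){}^\gamma h = h\,\tau_2(\gamma)\ \forall \gamma\}$, is represented by a closed subscheme $\mathcal{Z} \subset \widehat{G}_R$ which is a right torsor under the $\tau_2$-twisted centralizer $Z^{\tau_2}$. Because $I$ has order $e$ coprime to $p$ and $\widehat{G}$ is smooth, the standard smoothness-of-tame-fixed-points result ensures that both $Z^{\tau_2}$ and $\mathcal{Z}$ are smooth affine. The hypothesis yields $\mathcal{Z}(R[f^{-1}]) \ne \varnothing$ and the goal is to produce a point in $\mathcal{Z}(R)$. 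Since $f$ is a non-zero-divisor in $R$ and $\mathcal{Z}$ is smooth affine, Beauville--Laszlo gluing reduces this to producing a section of $\mathcal{Z}$ over the $(f)$-adic completion $\widehat{R}$ that matches the given one on $\widehat{R}[f^{-1}]$. The completion $\widehat{R}$ is $(f)$-adically complete with residue ring $\widehat{R}/(f) \cong \mathcal{O}_L^{\mathrm{ur}}$, a strictly henselian DVR whose residue field is separably closed. By smoothness of $\mathcal{Z}$ and iterated Hensel lifting, producing a section over $\widehat{R}$ amounts to producing one over $\mathcal{O}_L^{\mathrm{ur}}$; such a section exists because $Z^{\tau_2}$-torsors over strictly henselian local rings are trivial (Lang's theorem over the separably closed residue field plus Hensel lifting, using smoothness and connectedness of the neutral component of $Z^{\tau_2}$).

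The \emph{main obstacle} is the matching step: the Hensel-lifted section on $\widehat{R}$ must be arranged to agree, on the overlap $\widehat{R}[f^{-1}]$, with the given section on $R[f^{-1}]$. Any two candidate sections differ by an element of $Z^{\tau_2}(\widehat{R}[f^{-1}])$, and one must show that the discrepancy between the Hensel lift and the given section lies in the image of $Z^{\tau_2}(\widehat{R}) \to Z^{\tau_2}(\widehat{R}[f^{-1}])$. This is essentially the same injectivity statement applied to the smooth affine group scheme $Z^{\tau_2}$ in place of $\widehat{G}$, and is handled by the same Hensel/Lang machinery, ultimately relying on the tameness of $I$ to keep all relevant fixed-point and centralizer schemes smooth with connected reductive fibers.
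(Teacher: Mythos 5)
There is a genuine gap in your argument. Your proposal reduces the existence of a section of the transporter scheme $\mathcal{Z}$ over the $(f)$-adic completion $\widehat{R}$ (with $f = u^e+p$) to producing one over the closed fiber $\widehat{R}/(f)$, and you then claim this exists ``because $Z^{\tau_2}$-torsors over strictly henselian local rings are trivial.'' But that reasoning presupposes that $\mathcal{Z}$ is already a torsor, i.e.\ fppf-locally nonempty, over $\widehat{R}/(f)$. You only know $\mathcal{Z}(R[f^{-1}]) \neq \varnothing$; nothing you have said implies that $\tau_1$ and $\tau_2$ become coboundarous at the special fiber. A smooth pseudotorsor over $\operatorname{Spec}\mathcal{O}^{\mathrm{ur}}\llbracket v\rrbracket$ can perfectly well have empty fiber over the closed point while being nonempty on the dense open $D(f)$ (smoothness forces the image to be open, but not to contain the closed point). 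Establishing that $\mathcal{Z}$ is nonempty over $\widehat{R}/(f)$ is precisely the hard content of the proposition, so you are assuming what needs to be proved. The circularity is compounded in your ``matching step,'' where you openly acknowledge that the discrepancy must be killed by ``the same injectivity statement'' for $Z^{\tau_2}$, with no termination argument.

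The clearest way to see the argument cannot be made to work as stated is the following consistency check, which the paper itself flags in the remark immediately after the proposition: the geometric Beauville--Laszlo/Hensel machinery you invoke is insensitive to replacing $\mathcal{O}^{\mathrm{ur}}$ by $\overline{\mathbb{F}}$ (where $f = u^e + p$ degenerates to $u^e$), yet the resulting statement $H^1(I, \widehat{G}(\overline{\mathbb{F}}\llbracket u\rrbracket)) \hookrightarrow H^1(I, \widehat{G}(\overline{\mathbb{F}}\llparenthesis u \rrparenthesis))$ is \emph{false}, since the target vanishes by Steinberg's theorem while the source does not. So any correct proof must use an arithmetic ingredient available over $\mathcal{O}^{\mathrm{ur}}$ but not over $\overline{\mathbb{F}}$. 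The paper's proof locates this ingredient in the ``reduce mod $u$'' map $\mathcal{O}^{\mathrm{ur}}\llbracket u\rrbracket[(u^e+p)^{-1}] \to E^{\mathrm{ur}} \subset \overline{E}$, which exists precisely because $\mathcal{O}^{\mathrm{ur}}$ is not $p$-adically complete, together with Lafforgue's pseudo-character theory and Thorne's conjugacy theorem to bridge from ``conjugate over $\overline{E}$'' to ``conjugate over $\overline{\mathbb{F}}$.'' Your proposal contains no analogue of this bridge. (Two smaller issues: the twisted-conjugator functor is not a scheme over $R = \mathcal{O}^{\mathrm{ur}}\llbracket u\rrbracket$ since $I$ acts nontrivially on $u$; it is a scheme over $R^I$ after invariant pushforward. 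And Lang's theorem is a statement over finite fields, not separably closed fields --- what you want over a separably closed field is simply the triviality of torsors for smooth groups, which needs no connectedness hypothesis.)
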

\begin{proof}
    By Shapiro's \cref{res:shapiros-lemma} we may replace \(\Gamma\) by \(I\) and \(\mathcal{O}^{\text{ur},\mathcal{J}}\) by \(\mathcal{O}^{\text{ur}}\). 
    Consider the composition 
    \[
    H^1 \left( I, \widehat{G}(\mathcal{O}^{\text{ur}}) \right) \isoto H^1 \left( I, \widehat{G}(\mathcal{O}^{\text{ur}} \llb u \rrb) \right) \to H^1 \left( I, \widehat{G}(\mathcal{O}^{\text{ur}} \llb u \rrb[(u^e+p)^{-1}]) \right)
    \stackrel{\mod u}{\longrightarrow} H^1 \left( I, \widehat{G}(\overline{E}) \right) . 
    \]
    The first isomorphism is a consequence of deformation theory and the fact that \(e = \# I\) is prime to \(p\) (more precisely, by the argument in the proof of \cref{res:deforming-galois-types}). 
    The map \(H^1 \left( I, \widehat{G}(\mathcal{O}^{\text{ur}} \llb u \rrb) \right) \to H^1 \left( I, \widehat{G}(\mathcal{O}^{\text{ur}} \llb u \rrb[(u^e+p)^{-1}]) \right)\) is injective provided that the composition is injective, which is what we will show. 

    Let \(\rho_{1}, \rho_{2} : I \to \widehat{G}(\mathcal{O}^{\text{ur}})\) be two 1-cocycles for which the composites \(\rho_{1},\rho_{2} : I \to \widehat{G}(\overline{E})\) are coboundarous. 
    Since \(I\) acts trivially on \(\widehat{G}(\mathcal{O}^{\text{ur}})\), \(\rho_{1},\rho_{2}\) are just homomorphisms, 
    and being coboundarous means being conjugate. 
    The question is then whether the two elements \(\rho_{1}(\gamma), \rho_{2}(\gamma) \in \widehat{G}(\mathcal{O}^{\text{ur}})\), which are conjugate by an element of \(\widehat{G}(\overline{E})\), must be conjugate by an element of \(\widehat{G}(\mathcal{O}^{\text{ur}})\). 
    
    This is where we invoke Lafforgue's theory of pseudo-characters.
    As explained in \cite[Section 3]{thorne-potential-automorphy}, one can construct certain functions \(\operatorname{tr}\rho_{1}\) and \(\operatorname{tr}\rho_{2}\) (analogous to the trace), 
    and the fact that \(\rho_{1}\) and \(\rho_{2}\) are conjugate by an element of \(\widehat{G}(\overline{E})\) implies that \(\operatorname{tr}\rho_{1} = \operatorname{tr}\rho_{2}\) (where we use that \(\mathcal{O}^\text{ur} \subset \overline{E}\)). 
    On the other hand, we will show in a moment that reduction modulo \(\varpi\) yields a bijection 
    \begin{equation}
        \label{eq:deformation-for-lafforgue}
        H^1\left( I, \widehat{G}(\mathcal{O}^{\text{ur}})\right)\cong H^1\left(I, \widehat{G}(\overline{\mathbb{F}}) \right) . 
    \end{equation}
    If we let \(\overline{\rho}_{1}, \overline{\rho}_{2} : I \to \widehat{G}(\overline{\mathbb{F}})\) denote the reductions modulo \(\varpi\) of \(\rho_{1}, \rho_{2}\), 
    then the equality \(\operatorname{tr}\rho_{1} = \operatorname{tr}\rho_{2}\) implies that \(\operatorname{tr}\overline{\rho}_{1} = \operatorname{tr}\overline{\rho}_{2}\)
    by universality of the construction of these pseudo-characters. 
    By \cite[Theorem 3.3]{thorne-potential-automorphy} applied to homomorphisms \(I \to \widehat{G}(\overline{\mathbb{F}})\), it then follows that \(\overline{\rho}_{1}\) and \(\overline{\rho}_{2}\) are coboundarous. 
    By \eqref{eq:deformation-for-lafforgue}, so are \(\rho_{1}\) and \(\rho_{2}\). 

    It remains to show the bijection \eqref{eq:deformation-for-lafforgue}.
    This is again a consequence of deformation theory and the fact that \(e = \#I\) is prime to \(p\). 
    However, since \(\mathcal{O}^{\text{ur}}\) is not complete, the argument from the proof of \cref{res:deforming-galois-types} is not directly applicable. 
    But for each finite extension \(\mathbb{F}' \supset \mathbb{F}\) and cooresponding extension \(\mathcal{O}' \supset \mathcal{O}\), we have a bijection \(H^1\left( I, \widehat{G}(\mathcal{O}')\right) \cong H^1\left(I, \widehat{G}(\mathbb{F}')\right)\), 
    and by \cite[Section I.2.2]{galois-cohomology} we then obtain \(H^1\left( I, \widehat{G}(\mathcal{O}^{\text{ur}})\right) \cong \colim H^1\left( I, \widehat{G}(\mathcal{O}')\right) \cong \colim H^1\left(I, \widehat{G}(\mathbb{F}')\right) \cong H^1\left(I, \widehat{G}(\overline{\mathbb{F}}) \right)\).
\end{proof}
\begin{remark}
    Whereas the above shows that \(H^1 \left( I, \widehat{G}(\mathcal{O}^{\text{ur}} \llb u \rrb) \right) \hookrightarrow H^1 \left( I, \widehat{G}(\mathcal{O}^{\text{ur}} \llb u \rrb[(u^e+p)^{-1}]) \right)\)
    is injective, it is not true that \(H^1 \left( I, \widehat{G}(\overline{\mathbb{F}} \llb u \rrb) \right) \to H^1 \left( I, \widehat{G}(\overline{\mathbb{F}} \llb u \rrb[(u^e+p)^{-1}]) \right)\) is injective.
    The reasons are that (1) \(\overline{\mathbb{F}} \llb u \rrb[(u^e+p)^{-1}] = \overline{\mathbb{F}}\llp u \rrp\), so there is no ``reduction modulo \(u\)'' map, (2) \cite[Theorem 3.3]{thorne-potential-automorphy} doesn't apply to \(1\)-cocycles \(I \to \widehat{G}(\overline{\mathbb{F}}\llp u \rrp)\) because \(I\) acts non-trivially on \(\overline{\mathbb{F}}\llp u \rrp\) and (3) because \(\overline{\mathbb{F}}\) is not algebraically closed. 
    In fact, \(H^1\left( I, \widehat{G}(\overline{\mathbb{F}}\llp u \rrp)\right) = 0\) by Steinberg's Theorem \cite[Theorem 2.3.3]{kaletha-prasad}, 
    which implies that it's possible for Breuil--Kisin \((\Gamma,\widehat{G})\)-torsors over \(\overline{\mathbb{F}}\) to have any ``mixed'' type in the sense of \cite{CEGS}. 
    An explicit example illustrating this fact can be found in \cite[Remark 3.7]{caraiani-levin}.
\end{remark}

We can interpret Frobenius invariance of Galois types in terms of our classification given by \cref{res:building-classification-of-types}. 
\begin{lemma}
    \label{res:frobenius-invariant-type-as-x}
    Let \([\tau] \in H^1\left( \Gamma, \widehat{G}(\overline{\mathbb{F}}^\mathcal{J}\llb u \rrb) \right)\) be a Galois type which via the bijection of 
    \cref{res:building-classification-of-types} is represented by a point \(x = g \cdot o\), where \(g \in \widehat{G}(\overline{\mathbb{F}}^\mathcal{J}\llp u \rrp)\). 
    \begin{enumerate}
        \item Then \([\tau]\) is Frobenius invariant if and only if there exists \(c \in G^*(\overline{\mathbb{F}}\llp v \rrp)\) such that \(c \cdot \varphi(x) = x\).
        \item After passing to a finite unramified extension, there exists a representative \(\tau\) for the class \([\tau]\) which is \emph{strictly} Frobenius invariant, i. e. satisfies \(\varphi \tau = \tau\) on the level of \(1\)-cocycles. 
            More precisely, for a finite unramified extension \(L' \supset L\), let \(\Gamma' = \operatorname{Gal}(L' / \mathbb{Q}_{p})\), \(q'\) be the order of the residue field of \(L'\), and let \([\tau]' \in H^1\left( \Gamma', \widehat{G}(\left( \overline{\mathbb{F}} \otimes_{\mathbb{F}_{p}} \mathbb{F}_{q'}\right)\llb u \rrb)\right)\) denote the image of \([\tau]\) (induced by the map on \(1\)-cocycles which maps a representative \(1\)-cocycle \(\tau : \Gamma \to \widehat{G}((\overline{\mathbb{F}} \otimes_{\mathbb{F}_{p}} \mathbb{F}_{q})\llb u \rrb)\) to the composite \(\Gamma' \twoheadrightarrow \Gamma \stackrel{\tau}{\longrightarrow} \widehat{G}((\overline{\mathbb{F}} \otimes_{\mathbb{F}_{p}} \mathbb{F}_{q})\llb u \rrb) \subset \widehat{G}((\overline{\mathbb{F}} \otimes_{\mathbb{F}_{p}} \mathbb{F}_{q'})\llb u \rrb)\)).
            Then there exists such an extension \(L' \supset L\) and a representative \(1\)-cocycle \(\tau' : \Gamma' \to \widehat{G}(\overline{\mathbb{F}}^\mathcal{J}\llb u \rrb)\) for the class \([\tau]'\) which is strictly Frobenius invariant. 
    \end{enumerate}
\end{lemma}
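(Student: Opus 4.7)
Part (1) follows immediately from the $\varphi$-equivariance of the bijection in \cref{res:building-classification-of-types}, recorded in \cref{rem:building-classification-is-varphi-equivariant}: if $[\tau]$ corresponds to the $G^*(\overline{\mathbb{F}}\llp v \rrp)$-orbit of $x = g \cdot o$, then $[\varphi \tau]$ corresponds to the orbit of $\varphi(x) = \varphi(g)\cdot o$, so the two classes agree iff the orbits coincide, iff there exists $c \in G^*(\overline{\mathbb{F}}\llp v \rrp)$ with $c\cdot\varphi(x) = x$.

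For part (2), the plan is to reduce strict Frobenius invariance to a Lang-type equation. Given any representative $\tau$ of $[\tau]$ and any $h \in \widehat{G}(\overline{\mathbb{F}}^\mathcal{J}\llb u \rrb)$, the cohomologous cocycle $\tau_{h}(\theta) := h^{-1}\tau(\theta)(^\theta h)$ is strictly Frobenius invariant if and only if the element $\beta := h\varphi(h)^{-1}$ satisfies
\[
\beta^{-1}\tau(\theta)(^\theta \beta) = \varphi(\tau(\theta)) \quad \text{for all } \theta \in \Gamma,
\]
as one checks by multiplying out, using that $\varphi$ commutes with the $\Gamma$-action. This is exactly the condition that $\beta$ exhibits the coboundary relation between $\varphi\tau$ and $\tau$, and such $\beta \in \widehat{G}(\overline{\mathbb{F}}^\mathcal{J}\llb u \rrb)$ exists by the hypothesized Frobenius invariance of $[\tau]$. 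The problem thus reduces to solving the Lang-type equation $h\varphi(h)^{-1} = \beta$.

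To solve this equation I would invoke Lang's theorem in a pro-algebraic setting. Writing $\widehat{G}(\overline{\mathbb{F}}^\mathcal{J}\llb u \rrb) = \varprojlim_{n}\widehat{G}(\overline{\mathbb{F}}^\mathcal{J}[u]/u^n)$ as an inverse limit of connected smooth algebraic groups over $\overline{\mathbb{F}}$, one sees that $\varphi$ (cycling the factors and acting via the Frobenius on $\overline{\mathbb{F}}$ together with $u \mapsto u^p$) has trivial differential at the identity, so the Lang map $h \mapsto h\varphi(h)^{-1}$ is étale there. A direct computation identifies the fixed subgroup as the finite group $\widehat{G}(\mathbb{F}_{q})$ diagonally embedded, so standard Lang arguments yield surjectivity of the Lang map at each finite level; coherence of the finite fibers then permits passage to the inverse limit to obtain $h \in \widehat{G}(\overline{\mathbb{F}}^\mathcal{J}\llb u \rrb)$ solving the equation. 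Passing to a sufficiently large finite unramified extension $L' \supset L$ then recasts the resulting $\tau_{h}$ as a strictly Frobenius-invariant $\Gamma'$-cocycle representing the inflated class $[\tau]'$, as required.

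The main obstacle will be carrying out the pro-algebraic Lang argument carefully: verifying étaleness level by level, checking that the Frobenius-fixed subgroups are really finite, and showing that the inverse-limit solution exists. The reformulation step and the existence of the coboundary $\beta$ are essentially mechanical once the right equation is identified, and the role of the unramified extension $L'\supset L$ is mostly bookkeeping to align the Galois group $\Gamma'$ and the coefficient ring in the statement.
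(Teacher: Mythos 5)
Part (1) is correct and matches the paper's argument.

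Part (2) starts on the right track — the reduction to a Lang-type equation $h\varphi(h)^{-1}=\beta$ is exactly the right reformulation, and the paper does the same. But the pro-algebraic Lang argument you sketch contains a genuine gap, and I think it traces to a misreading of the Frobenius $\varphi$. On $\widehat{G}(\overline{\mathbb{F}}^{\mathcal{J}}\llb u\rrb)$, the endomorphism $\varphi$ acts \emph{trivially} on the coefficient ring $R=\overline{\mathbb{F}}$; it only permutes the factors of $\mathcal{J}$ (via the Frobenius on $\mathbb{Z}_q$) and sends $u\mapsto u^p$. Your description has $\varphi$ ``acting via the Frobenius on $\overline{\mathbb{F}}$,'' and the two Lang-theoretic consequences you draw from this are both false for the actual $\varphi$: the differential of $\varphi$ at the identity is not trivial (a cyclic permutation acts nontrivially on the constant-term Lie algebra), and the $\varphi$-fixed subgroup is the diagonal copy of $\widehat{G}(\overline{\mathbb{F}})$, which is infinite, not the finite group $\widehat{G}(\mathbb{F}_q)$. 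Consequently, $h\mapsto h\varphi(h)^{-1}$ is not the Lang map of any connected algebraic group over $\overline{\mathbb{F}}_p$ and is genuinely not surjective on $\widehat{G}(\overline{\mathbb{F}}^{\mathcal{J}}\llb u\rrb)$: the obstruction is precisely the product $b_0 b_1\cdots b_{r-1}$ of the constant-term components of $\beta$ being nontrivial (as worked out in \cref{rem:strictly-frobenius-invariant-explicit}). This is the whole reason the statement passes to a finite unramified extension $L'\supset L$ — it is not bookkeeping but exactly what enlarges the Galois group so that a solution exists.

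There is a second problem with the proposed level-by-level inverse-limit argument: the substitution $u\mapsto u^p$ is highly degenerate on $\widehat{G}(\overline{\mathbb{F}}^{\mathcal{J}}[u]/u^n)$ (it kills $u^k$ for $pk\ge n$ and only depends on the reduction mod $u^{\lceil n/p\rceil}$), so at each finite truncation the map $\varphi$ is not an isogeny of algebraic groups and the standard Lang étaleness/surjectivity argument does not apply. Note, though, that the $u$-adic contraction of $u\mapsto u^p$ is actually what lets one get rid of the $u$-adic tail: this is exactly the content of \cref{res:deforming-galois-types} (a deformation argument showing $H^1(\Gamma,\widehat{G}(\overline{\mathbb{F}}^{\mathcal{J}}\llb u\rrb))\cong H^1(\Gamma,\widehat{G}(\overline{\mathbb{F}}^{\mathcal{J}}))$, using that $e=\#I$ is prime to $p$). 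The paper first uses this to reduce the coefficients from $\overline{\mathbb{F}}^{\mathcal{J}}\llb u\rrb$ to $\mathbb{F}^{\mathcal{J}}$ — so the Lang equation only needs to be solved in $\widehat{G}(\mathbb{F}^{\mathcal{J}})$ — and then identifies $c\mapsto c^{-1}\varphi(c)$ with the honest Lang isogeny of $H=\operatorname{Res}^{\mathbb{F}}_{\mathbb{F}_p}\widehat{G}$ applied to $\mathbb{F}_q$-points, where Lang's theorem applies; the resulting $c$ lives in $\widehat{G}(\mathbb{F}\otimes_{\mathbb{F}_p}\mathbb{F}_{q'})$ for some finite $\mathbb{F}_{q'}\supset\mathbb{F}_q$, which is what necessitates the extension $L'$. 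To repair your write-up, replace the pro-algebraic Lang sketch by these two clean steps (reduce mod $u$ by the contraction/deformation argument, then apply genuine Lang's theorem to the Weil restriction), and emphasize that $L'$ is where the Lang solution lives, not a formality.
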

\begin{proof}
    (1) is obvious from \cref{rem:building-classification-is-varphi-equivariant}, so let us show (2).
    Let \(\tilde{\tau}\) be some representative for the class \([\tau]\).
    Note that by \cref{res:deforming-galois-types} and \cref{rem:types-in-normalizer-simple} we may assume that \(\tilde{\tau}\) takes values in \(\widehat{G}(\mathbb{F}^\mathcal{J})\) as long as \(\mathbb{F} \supset \mathbb{F}_{q}\).
    The assumption that \(\tilde{\tau}\) is Frobenius invariant means that there exists a coboundary \(b \in \widehat{G}(\mathbb{F}^\mathcal{J}\llb u \rrb)\) such that \(b \varphi(\tilde{\tau}(\theta))(^\theta b)^{-1} = \tilde{\tau}(\theta)\) for all \(\theta \in \Gamma\).
    Note that if \(H = \operatorname{Res}^{\mathbb{F}}_{\mathbb{F}_{p}} \widehat{G}\), then we can identify \(\varphi : \widehat{G}(\mathbb{F}^\mathcal{J}) \to \widehat{G}(\mathbb{F}^\mathcal{J})\) with the relative Frobenius of \(H\) applied to \(\mathbb{F}_{q}\)-points.
    Hence the map \(\widehat{G}(\mathbb{F}^\mathcal{J}) \to \widehat{G}(\mathbb{F}^\mathcal{J})\) given by \(c \mapsto c^{-1} \varphi(c)\) can be identified with the Lang isogeny of \(H\), which must be surjective by Lang's theorem \cite[Theorem 17.96]{milneAlgebraicGroupsTheory2017}. 
    Therefore, there exists some finite extension \(\mathbb{F}_{q'} \supset \mathbb{F}_{q}\) and \(c \in \widehat{G}(\mathbb{F} \otimes_{\mathbb{F}_{p}} \mathbb{F}_{q'})\) such that \(b = c^{-1}\varphi(c)\). 
    Our strictly Frobenius invariant \(1\)-cocycle is then \(\tau'(\theta) := c \tilde{\tau}(\theta)(^\theta c)^{-1}\) for \(\theta \in \Gamma\). 
    Indeed, we now have 
    \[
        \varphi \tau'(\theta) = \varphi(c) \varphi \tilde{\tau}(\theta) (^\theta \varphi(c))^{-1} = c b \varphi \tilde{\tau}(\theta)(^\theta b)^{-1}(^\theta c)^{-1} = c \tilde{\tau}(\theta)(^\theta c)^{-1} = \tau'(\theta) 
    \]
    for all \(\theta \in \Gamma\). 
\end{proof}
\begin{remark}
    \label{rem:strictly-frobenius-invariant-explicit}
    The proof above relied on Lang's theorem to find a solution \(c\) of the equation \(b = c^{-1} \varphi(c)\).
    We now give an explicit alternative method. 
    Namely, choose \(\iota_{0} \in \mathcal{J}\), in which case we can identify \(\mathcal{J} = \lbrace \iota_{0}, \iota_{0} \varphi^{-1}, \dots, \iota_{0}\varphi^{-r+1} \rbrace\) as in \cref{ex:large-coefficient-field}. 
    Given an element \(g \in \widehat{G}(\mathbb{F}^\mathcal{J})\) and \(j \in \lbrace 0, 1, \dots, r-1 \rbrace\) let \(g_{j} \in \widehat{G}(\mathbb{F}_{\iota_{0}\varphi^{-j}})\) denote its projection onto the factor corresponding to \(\iota_{0}\varphi^{-j}\).
    We then have \(\varphi(g)_{j} = g_{j-1}\). 
    We find a solution by choosing \(c_{0}\) arbitrarily, and recursively defining \(c_{j} = b_{j}^{-1}c_{j-1}\) for \(j =1,2,\dots,r-1\), i. e. \(c_{j} = b_{j}^{-1} b_{j-1}^{-1} \cdots b_{1}^{-1}c_{0}\). 
    Then \(c\) solves \(b = c^{-1}\varphi(c)\) if and only if \(c_{0} = b_{0}^{-1}c_{r-1}\), i. e. if and only if \(b_{0} b_{1} \cdots b_{r-1} = 1\). 
    Now the point is that even if \(\tilde{b} := b_{0} b_{1} \cdots b_{r-1} \neq 1\), this is an element of a finite group, hence \(\tilde{b}^s = 1\) for some \(s \in \mathbb{Z}_{\geq 0}\).
    Therefore we can solve the equation by passing from coefficients in \(\mathbb{F}^\mathcal{J} = \mathbb{F} \otimes_{\mathbb{F}_{p}} \mathbb{F}_{q}\) to \(\mathbb{F} \otimes_{\mathbb{F}_{p}} \mathbb{F}_{q^s}\) (and further extending \(\mathbb{F}\) if needed). 
\end{remark}

\begin{lemma}
    \label{res:strictly-frobenius-invariant-coboundary-in-normalizer}
    Let \([\tau] \in H^1\left( \Gamma, \widehat{G}(\mathbb{F}^\mathcal{J}\llb u \rrb) \right)\) be a Frobenius invariant Galois type.
    After passing to a finite unramified extension (in the sense of \cref{res:frobenius-invariant-type-as-x} (2)), there exists \(\lambda \in X_{*}(\widehat{T})^I \cong \left(X_{*}(\widehat{T})^\mathcal{J}\right)^\Gamma\) and \(w \in \widehat{N}(\mathbb{Z}^\mathcal{J})\)
    such that if we set \(n := w^{-1} u^\lambda\), then the \(1\)-cocycle \(\tau(\theta) = n^{-1}(^\theta n)\) represents the class \([\tau]\) and is \emph{strictly} Frobenius invariant (i. e. \(\varphi \tau = \tau\)).
\end{lemma}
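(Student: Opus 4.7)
The plan is to begin from the representative $\tilde\tau(\theta) = u^{-\lambda}({^\theta u^\lambda})$ produced by \cref{rem:types-in-normalizer-simple}, and then modify it by a finite Weyl element $w$ so as to promote Frobenius invariance of $[\tau]$ into strict Frobenius invariance of the modified cocycle, in the spirit of \cref{res:frobenius-invariant-type-as-x}(2) and \cref{rem:strictly-frobenius-invariant-explicit}.

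First I would invoke \cref{rem:types-in-normalizer-simple} to fix $\lambda \in (X_*(\widehat{T})^\mathcal{J})^\Gamma \cong X_*(\widehat{T})^I$ with $\tilde\tau(\theta) = u^{-\lambda}({^\theta u^\lambda})$ representing $[\tau]$. For any $w \in \widehat{N}(\mathbb{Z}^\mathcal{J})$, set $c := u^{-\lambda}wu^\lambda$ and $n := w^{-1}u^\lambda$. A direct computation (using that the $\Gamma$-action commutes with the inclusion $\widehat{T} \hookrightarrow \widehat{N}$ and that conjugation by $u^{\pm\lambda}$ is absorbed correctly) gives
\[
n^{-1}({^\theta n}) = c\,\tilde\tau(\theta)\,({^\theta c})^{-1},
\]
so the cocycle $\tau(\theta) := n^{-1}({^\theta n})$ is cohomologous to $\tilde\tau$ and hence represents $[\tau]$ for every choice of $w$. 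Strict Frobenius invariance $\varphi\tau = \tau$ then rearranges to the Lang-type equation
\[
c^{-1}\varphi(c) = b,
\]
where $b \in \widehat{G}(\mathbb{F}^\mathcal{J}\llb u \rrb)$ is any coboundary witnessing $\varphi\tilde\tau \sim \tilde\tau$; such a $b$ exists precisely because $[\tau]$ is Frobenius invariant.

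Next I would show that $b$ can be chosen, up to a torus correction, inside the normalizer. Since $b$ must conjugate the semisimple element $\tilde\tau(\gamma) = \omega(\gamma)^\lambda \in \widehat{T}(\mathbb{F}^\mathcal{J})$ to $\varphi\tilde\tau(\gamma) = \omega(\gamma)^{p\lambda}$, it lies in $\widehat{N}(\mathbb{F}^\mathcal{J}\llb u \rrb)$ modulo $\widehat{T}(\mathbb{F}^\mathcal{J}\llb u \rrb)$. Using \cref{res:deforming-galois-types} (Hilbert~90 for the torus together with the fact that $e = \#I$ is invertible) the residual torus ambiguity can be absorbed into a Frobenius-compatible shift of $\lambda$, reducing $c^{-1}\varphi(c) = b$ to a Lang equation for the Frobenius action on the \emph{finite} Weyl group $W^\mathcal{J} = \widehat{N}(\mathbb{Z}^\mathcal{J})/\widehat{T}(\mathbb{Z}^\mathcal{J})$. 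This equation can always be solved after a finite unramified extension, exactly as in \cref{rem:strictly-frobenius-invariant-explicit}, and any solution lifts to $\widehat{N}(\mathbb{Z}^\mathcal{J})$ via the standard surjection $\widehat{N}(\mathbb{Z}^\mathcal{J}) \twoheadrightarrow W^\mathcal{J}$, yielding the desired $w$.

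The main obstacle is the ``torus absorption'' step: verifying that the $\widehat{T}$-component of $b$ can be eliminated by a cohomologous modification of $\lambda$ without leaving $X_*(\widehat{T})^I$. This relies on combining the semidirect product decomposition $\widetilde{W}^\mathcal{J} \cong X_*(\widehat{T})^\mathcal{J} \rtimes W^\mathcal{J}$ of \cref{res:iwahori-weyl-group-as-translations-rx-W} with the $\varphi$-equivariance of the building classification from \cref{rem:building-classification-is-varphi-equivariant}, together with Hilbert~90 for $\widehat{T}$ to trivialize the residual torus cocycle after passing to a finite unramified extension.
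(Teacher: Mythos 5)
Your overall framework — start from the torus representative of \cref{rem:types-in-normalizer-simple}, write the desired cocycle as $c\,\tilde\tau(\theta)\,({^\theta c})^{-1}$ with $c = u^{-\lambda}wu^\lambda$, and reduce strict Frobenius invariance to a Lang-type equation $c^{-1}\varphi(c)=b$ solved after a finite unramified extension — is in the right spirit and matches the shape of the paper's argument. But there is a genuine gap in the step where you claim $b$ lies in $\widehat{N}(\mathbb{F}^\mathcal{J}\llb u \rrb)$ modulo $\widehat{T}(\mathbb{F}^\mathcal{J}\llb u \rrb)$ ``since $b$ must conjugate the semisimple element $\tilde\tau(\gamma)=\omega(\gamma)^\lambda$ to $\omega(\gamma)^{p\lambda}$.'' Conjugating one semisimple element of $\widehat{T}$ to another only forces membership in the normalizer when that element is \emph{regular} semisimple, i.e.\ its centralizer is exactly $\widehat{T}$. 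The lemma makes no genericity assumption on $\lambda$: if (for instance) $\lambda=0$, or more generally whenever two entries of $\lambda$ coincide modulo $e$ along a given root, the centralizer of $\omega(\gamma)^\lambda$ strictly contains $\widehat{T}$ and your conclusion fails. (There is also a secondary issue: your argument implicitly uses ${^\gamma b}=b$ for the $b$ reduced modulo $u$, which presupposes that $I$ acts trivially on $\widehat{G}$; the lemma does not assume this.)

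The paper sidesteps all of this by citing \cref{res:types-in-smaller-normalizer} directly. The ``Moreover'' clause there says precisely that two cocycles of the form $n_i^{-1}(^\theta n_i)$ with $n_i\in\dot N$ (the subgroup generated by $u^\lambda$'s and $\widehat{N}(\mathbb{Z}^\mathcal{J})$) that are coboundarous in $\widehat{G}(\overline{\mathbb{F}}^\mathcal{J}\llb u\rrb)$ are already coboundarous by an element of $\dot N^0$. Applied to $\tau'$ and $\varphi\tau'$ this produces $b\in\dot N^0$ with no regularity hypothesis, because the proof of that proposition is building-theoretic (it rests on \cref{res:N-orbits-vs-G-orbits} and the simplicial structure of the apartment, not on centralizer computations). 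You gesture toward the right ingredients in your final paragraph — the Iwahori--Weyl decomposition of \cref{res:iwahori-weyl-group-as-translations-rx-W}, the $\varphi$-equivariance from \cref{rem:building-classification-is-varphi-equivariant}, Hilbert~90 — but the ``torus absorption'' step that you flag as the main obstacle is where you would actually need to reprove the content of \cref{res:types-in-smaller-normalizer}; as written, the semisimple-conjugation shortcut does not substitute for it. Concretely: invoke \cref{res:types-in-smaller-normalizer} to get both the representative and the Frobenius coboundary inside $\dot N$, then your Lang-equation argument goes through exactly as in \cref{rem:strictly-frobenius-invariant-explicit}.
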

\begin{proof}
    Let \(\dot{N} \subset \widehat{N}(\overline{\mathbb{F}}^\mathcal{J}\llp u \rrp)\) be the subgroup generated by the image of \(X_{*}(\widehat{T})^I\) and \(\widehat{N}(\mathbb{Z}^\mathcal{J})\). 
    By \cref{res:types-in-smaller-normalizer} it follows that there exists \(n' = w'^{-1} u^{\lambda'} \in \dot{N}\) such that \(\tau'(\theta) = n'^{-1}(^\theta n')\) is a representative for the class \([\tau]\), 
    and also that there exists \(b \in \dot{N}^0 = \dot{N} \cap \widehat{N}(\overline{\mathbb{F}}^\mathcal{J}\llb u \rrb)\) such that \(b \varphi \tau' (\theta) (^\theta b)^{-1} = \tau'(\theta)\) for all \(\theta \in \Gamma\).
    As in the proof of \cref{res:frobenius-invariant-type-as-x} it suffices to find \(c \in \dot{N}\) such that \(b = c^{-1} \varphi(c)\). 
    Although Lang's theorem doesn't apply, we can still use the explicit method of \cref{rem:strictly-frobenius-invariant-explicit}.
\end{proof}
\begin{remark}
    \label{rem:strictly-frobenius-invariant-vs-fixed}
    If \(n \in \widehat{N}(\overline{\mathbb{F}}^\mathcal{J}\llp u \rrp)\) and \(\tau(\theta) = n^{-1}(^\theta n)\) is the associated \(1\)-cocycle, then \(\varphi\) is strictly Frobenius invariant (i. e. \(\varphi \tau = \tau\)) if and only if \(n \varphi(n)^{-1} \in N^*(\overline{\mathbb{F}}\llp v \rrp)\) (i. e. \(n \varphi(n)^{-1}\) is fixed by \(\Gamma\)). 
\end{remark}

\begin{example}
    \label{ex:SL2}
    Suppose \(L = \mathbb{Q}_{p^2}\left( (-p)^{1/e} \right)\) and \(\widehat{G} = \operatorname{SL}_{2}\). Since \(\operatorname{SL}_{2}\) is semisimple and simply connected, \(\operatorname{SL}_{2}(\mathbb{F}\llp v \rrp)\) acts simply transitively on the chambers of \(\mathcal{A}\left( \widehat{T}, \mathbb{F}\llp v \rrp \right)\), where \(\widehat{T} \subset \operatorname{SL}_{2}\) is the diagonal torus. 
    If \(\mathcal{C} \subset \mathcal{A}\left( \widehat{T}, \mathbb{F}\llp v \rrp \right)\) denotes the base alcove, it follows that Galois types \([\tau] \in H^1 \left(\Gamma, \widehat{G}(\mathbb{F}^\mathcal{J}\llb u \rrb) \right)\)
    are classified by vertices \(\operatorname{SL}_{2}\left( \mathbb{F}\llp u \rrp \right)\cdot o \cap \mathcal{C}\). For \(p = 7\) and \(e = 24\) this is illustrated by the colored nodes in \cref{fig:SL2-alcove}.

    Consider \(x = u^{(n,-n)} \cdot o = o - \frac{1}{e}(n,-n)\), classifying a type \([\tau]\). 
    For \(p = 7\) and \(e = 24\) the colored nodes in \cref{fig:SL2-alcove} correspond to such \(x\) where \(n \in \lbrace 0, -1, \dots, -12 \rbrace\).
    The point \(x\) classified the type \([\tau]\) represented by \(\tau(\sigma) = u^{-(n,-n)}(^\sigma u^{(n,-n)}) = 1\) and \(\tau(\gamma) = u^{-(n,-n)}(^\gamma u^{(n,-n)}) = \omega(\gamma)^{(n,-n)}\). 
    Here, \(u^{(n,-n)}\) represents the element of \(\operatorname{SL}_{2}(\mathbb{F}^\mathcal{J}\llp u \rrp) = \operatorname{SL}_{2}(\mathbb{F}_{0}\llp u \rrp) \times \operatorname{SL}_{2}(\mathbb{F}_{1}\llp u \rrp)\) for which both factors is what is typically denoted \(u^{(n,-n)}\). 
    In particular, note that \(\omega(\gamma)^{(n,-n)} = \left( \iota_{0}(\omega(\gamma))^{(n,-n)}, \iota_{1} (\omega(\gamma))^{(n,-n)} \right) = \left( \iota_{0}(\omega(\gamma))^{(n,-n)}, \iota_{0}(\omega(\gamma))^{(pn, -pn)} \right)\), where \(\iota_{0},\iota_{1} : \mathbb{F}_{p^2} \hookrightarrow \mathbb{F}\) are the two embeddings. 

    \begin{figure}[h]
        \centering
        \begin{tikzpicture}[scale=10]
            \def\e{24}
            \def\p{7}
        
            \clip (-0.2,-0.2) rectangle (1.2,0.2);
        
            \draw (0,0) -- (1,0);
        
            \foreach \n in {1,...,\e} {
                \pgfmathsetmacro{\x}{\n / \e}
                \pgfmathparse{
                    ifthenelse(mod(\n, 2) == 1, "white",
                    ifthenelse(mod(floor(\p*\n / \e), 2) == 0 && mod((\p-1)*\n, \e) == 0, "green",
                    ifthenelse(mod(floor(\p*\n / \e), 2) == 1 && mod((\p+1)*\n, \e) == 0, "green",
                    "red")))
                }
                \edef\color{\pgfmathresult}
                \draw[fill = \color] (\x,0) circle (0.1mm);
        
                \ifnum\n=\e
                    \xdef\lastcolor{\color}
                \fi
            }
        
            \node[draw, circle, fill=green, minimum size=0.15cm, label={above:{$o$}}] at (0,0) {};
            \node[draw, circle, fill=\lastcolor, minimum size=0.15cm, label={above:{$o + (\frac{1}{2},-\frac{1}{2})$}}] at (1,0) {};
        \end{tikzpicture}
        \caption{
            The base alcove of \(\mathcal{A}\left( \widehat{T}, \mathbb{F}\llp v \rrp \right)\) where \(\widehat{T} \subset \operatorname{SL}_{2}\) is the diagonal torus, for \(p = 7\) and \(e = 24\).
            The large nodes are vertices of \(\mathcal{A}\left( \widehat{T}, \mathbb{F} \llp v \rrp \right)\) and the small nodes are vertices of \(\mathcal{A}\left( \widehat{T}, \mathbb{F}\llp u \rrp \right)\). 
            The white nodes are vertices of \(\mathcal{A}\left( \widehat{T}, \mathbb{F}\llp u \rrp \right)\) which are not in the orbit of \(o\).
            The green nodes classify types which are Frobenius invariant, and the red nodes classify types which are not Frobenius invariant.
            Since the base alcove is a fundamental domain for the \(\operatorname{SL}_{2}(\mathbb{F}\llp v \rrp)\) action, we see that there are (under the chosen parameters) exactly 13 classes of Galois types \(\Gamma \to \widehat{G}(\overline{\mathbb{F}}^\mathcal{J}\llb u \rrb)\), of which 7 are Frobenius invariant.
        }
        \label{fig:SL2-alcove}
    \end{figure}

    What does it mean for \(x = u^{(n,-n)}\cdot o \in \mathcal{A}(\widehat{T}, \mathbb{F}\llp v \rrp)\) to represent a Frobenius invariant type? As we have meantioned, this is equivalent to \(x\) and \(\varphi(x) = u^{(pn, -pn)}\cdot o\) being in the same \(\operatorname{SL}_{2}(\mathbb{F}\llp v \rrp)\) orbit.
    There are two ways this can happen:
    \begin{enumerate}
        \item If there exists \(m \in \mathbb{Z}\) such that \(v^{(m,-m)} \cdot \varphi(x) = x\), i. e. if \((p-1)n \equiv 0 \mod e\). 
        \item If there exists \(m \in \mathbb{Z}\) such that \(w_{0}v^{(m,-m)} \cdot \varphi(x) = x\), where \(w_{0}\) is the reflection across \(o\) (corresponding to the unique element of the finite Weyl group of \(\operatorname{SL}_{2}\)). 
            This is equivalent to \((p+1)n \equiv 0 \mod e\). 
    \end{enumerate}
    This is how one can determine the coloring of the nodes in \cref{fig:SL2-alcove}. 
    
    As a concrete example, consider \(x = u^{(-3,3)}\cdot o\) for \(p = 7\) and \(e = 24\), corresponding to the second green node from the left (including \(o\)) in \cref{fig:SL2-alcove}.
    Then since \((7+1)(-3) \equiv 0 \mod 24\), the type classified by \(x\), which is represented by \(\tau(\theta) = u^{-(-3,3)}(^\theta u^{(-3,3)})\) for \(\theta \in \Gamma\), is Frobenius invariant. 
    To make this even more explicit, note that \(\tau(\sigma) = 1\) and \(\tau(\gamma) = \left( \iota_{0}(\omega(\gamma))^{(-3,3)}, \iota_{0}(\omega(\gamma))^{(-21,21)} \right)\). 
    Using the observation that \(\varphi\) acts on \(\operatorname{SL}_{2}(\mathbb{F}_{0}) \times \operatorname{SL}_{2}(\mathbb{F}_{1})\) by swapping the factors, we find that 
    \(\varphi \tau(\sigma) = 1 = \tau(\sigma)\) and \(\varphi \tau(\gamma) = \left( \iota_{0}(\omega(\gamma))^{(-21,21)}, \iota_{0}(\omega(\gamma))^{(-3,3)}\right)\).
    Note that \(\iota_{0}(\omega(\gamma))^{(-21,21)} = \iota_{0}(\omega(\gamma))^{(3,-3)}\) since \(\gamma^e = 1\) and \(-21 \equiv 3 \mod e = 24\). 
    Therefore, if we let \(b = \left( \begin{pmatrix} 0 & 1 \\ -1 & 0 \end{pmatrix}, \begin{pmatrix} 0 & 1 \\ -1 & 0 \end{pmatrix} \right)\), then \(b \varphi \tau(\theta) (^\theta b^{-1}) = \tau(\theta)\) for all \(\theta \in \Gamma\), showing that \([\tau]\) is Frobenius invariant. 

    Note that our representative \(\tau\) for the class \([\tau]\) corresponding to \(x\) is Frobenius invariant, but not strictly so. 
    However, we can follow the method of \cref{rem:strictly-frobenius-invariant-explicit} to produce a representative which is strictly Frobenius invariant.
    Since \(b_{0} b_{1} =  \begin{pmatrix} 0 & 1 \\ -1 & 0 \end{pmatrix}^2 = \begin{pmatrix} -1 & 0 \\ 0 & -1 \end{pmatrix}\) has order \(2\), we should according to \cref{rem:strictly-frobenius-invariant-explicit} pass to an unramified extension of order \(2\). 
    Let us first see what goes wrong if we don't:
    Define \(g = (g_{0},g_{1}) \in \operatorname{SL}_{2}(\mathbb{F}^\mathcal{J}\llp u \rrp)\) by choosing \(g_{0} = 1 \in \operatorname{SL}_{2}(\mathbb{F}_{0}\llp u \rrp)\) and let \(g_{1} = b_{1}^{-1} g_{0} = \begin{pmatrix} 0 & 1 \\ -1 & 0 \end{pmatrix}\).
    Consider now the Galois type \(\tilde{\tau} : \Gamma \to \operatorname{SL}_{2}(\mathbb{F}^\mathcal{J}\llp u \rrp)\) with \(\tilde{\tau}(\theta) = g \tau(\theta)(^\theta g)^{-1}\) for \(\theta \in \Gamma\).
    Then we have \(\tilde{\tau}(\gamma) = \left( \iota_{0}(\omega(\gamma))^{(-3,3)}, \iota_{0}(\omega(\gamma))^{(21,-21)} \right) = \varphi\tilde{\tau}(\gamma)\) (since \(-3 \equiv 21 \mod 24\)), but \(\tilde{\tau}(\sigma) = \left( \begin{pmatrix} 0 & -1 \\ 1 & 0 \end{pmatrix}, \begin{pmatrix} 0 & 1 \\ -1 & 0 \end{pmatrix} \right) \neq \left( \begin{pmatrix} 0 & 1 \\ -1 & 0 \end{pmatrix}, \begin{pmatrix} 0 & -1 \\ 1 & 0 \end{pmatrix} \right) = \varphi \tilde{\tau}(\sigma)\).
    So let us now pass to \(L' = \mathbb{Q}_{p^4}\left( (-p)^{1/e} \right)\), in which case the type \(\tau\) gives rise to 
    \(\tau' : \Gamma' := \operatorname{Gal}(L' / \mathbb{Q}_{p}) \to \operatorname{SL}_{2}\left( \mathbb{F}_{0} \right) \times \operatorname{SL}_{2}(\mathbb{F}_{1}) \times \operatorname{SL}_{2}(\mathbb{F}_{2}) \times \operatorname{SL}_{2}(\mathbb{F}_{3})\)
    with \(\tau'(\gamma) = \left( \iota_{0}(\omega(\gamma))^{(-3,3)}, \iota_{0}(\omega(\gamma))^{(3,-3)}, \iota_{0}(\omega(\gamma))^{(-3,3)}, \iota_{0}(\omega(\gamma))^{(3,-3)} \right)\)
    and \(\tau'(\sigma) = 1\). 
    Then \(b = \left( \begin{pmatrix} 0 & 1 \\ -1 & 0 \end{pmatrix}, \begin{pmatrix} 0 & 1 \\ -1 & 0 \end{pmatrix}, \begin{pmatrix} 0 & 1 \\ -1 & 0 \end{pmatrix}, \begin{pmatrix} 0 & 1 \\ -1 & 0 \end{pmatrix} \right)\)
    gives a coboundary \(b : \varphi \tau' \isoto \tau'\). 
    But now \(b_{0}b_{1}b_{2}b_{3} = (b_{0}b_{1})^2 = 1\), so according to \cref{rem:strictly-frobenius-invariant-explicit}, we can define 
    \(c_{0} = 1\), \(c_{1} = b_{1}^{-1}c_{0} = \begin{pmatrix} 0 & -1 \\ 1 & 0 \end{pmatrix}\), \(c_{2} = b_{2}^{-1} c_{1} = \begin{pmatrix} 0 & -1 \\ 1 & 0 \end{pmatrix}^2 = \begin{pmatrix} -1 & 0 \\ 0 & -1 \end{pmatrix}\), \(c_{3} = b_{3}^{-1}c_{2} = \begin{pmatrix} 0 & 1 \\ -1 & 0 \end{pmatrix}\), in which case we also have \(b_{0}^{-1} c_{3} = 1 = c_{0}\). 
    Then \(\tilde{\tau}(\theta) = c \tau'(\theta)(^\theta c)^{-1}\) is strictly Frobenius invariant, because \(\tilde{\tau}(\gamma) = \left( \iota_{0}(\omega(\gamma))^{(-3,3)}, \iota_{0}(\omega(\gamma))^{(-3,3)}, \iota_{0}(\omega(\gamma))^{(-3,3)}, \iota_{0}(\omega(\gamma))^{(-3,3)} \right)\)
    and \(\tilde{\tau}(\sigma) = \left( c_{0}c_{3}^{-1}, c_{1}c_{0}^{-1}, c_{2}c_{1}^{-1}, c_{3}c_{2}^{-1} \right) = \left( \begin{pmatrix} 0 & 1 \\ -1 & 0 \end{pmatrix}, \begin{pmatrix} 0 & 1 \\ -1 & 0 \end{pmatrix}, \begin{pmatrix} 0 & 1 \\ -1 & 0 \end{pmatrix}, \begin{pmatrix} 0 & 1 \\ -1 & 0 \end{pmatrix} \right)\).
\end{example}

\subsubsection{Inertial types vs. Galois types}
\label{sec:inertial-types}
In the literature (e.g. \cite{GHS}, \cite{local-models}, \cite{CEGS}), the descent datum for a Breuil--Kisin module is usually specified in terms on an inertial type.
We spell out how this is related to our Galois type. 
For this it is convenient to use the following enlargement of \(\Gamma\), namely 
\[
\Gamma_{W} := I \rtimes \mathbb{Z} = \lbrace \gamma, \sigma : \gamma^e = 1, \sigma \gamma \sigma^{-1} = \gamma^p \rbrace . 
\]
The following is consistent with \cite[Section 9]{GHS}. 
\begin{definition}
    An \emph{inertial type} is a homomorphism \(\tau' : I \to \widehat{G}(\mathbb{F})\) which extends to a 1-cocycle \(\Gamma_{W} \to \widehat{G}(\mathbb{F})\).
    Here, we view \(\mathbb{F}\) as having trivial \(\Gamma_{W}\)-action, so \(\Gamma_{W}\) acts on \(\widehat{G}(\mathbb{F})\) purely via group automorphisms on \(\widehat{G}\).
\end{definition}
If \(\tau'\) is an inertial type, note that the extension \(\Gamma_{W} \to \widehat{G}(\mathbb{F})\) necessarily factors through a finite quotient \(\Gamma'\) in which \(\sigma^{r'} = 1 \), 
and conversely, any 1-cocycle \(\Gamma' \to \widehat{G}(\mathbb{F})\) yields by restriction an inertial type. 
In principle one can first classify all inertial types, and let the splitting field \(L\) be dictated by the inertial type. 
Note also that since Breuil--Kisin modules and the property of being crystalline satisfy unramified descent, it is harmless to modify \(L\) by an unramified extension.

There is a discrepancy between 1-cocycles \(\tau' : \Gamma_{W} \to \widehat{G}(\mathbb{F})\)
(which come from inertial types) and \(\tau :\Gamma_{W} \to \widehat{G}(\mathbb{F}^\mathcal{J})\) (which correspond to our notion of Galois types).
Although \(\widehat{G}(\mathbb{F}^\mathcal{J}) \cong \prod_{j \in \mathcal{J}} \widehat{G}(\mathbb{F}_{j})\), the issue is that \(\sigma\) has a permuting effect on the factors. 
However, under the assumption of Frobenius invariance, one can pass between these notions by using the isomorphism \(\varphi \tau \to \varphi\) to ``linearize'' the Galois type, yielding an inertial type. 
This is akin to the usual construction of a Weil--Deligne representation from the semilinear descent data attached to a potentially crystalline representation. 
We record this construction in the following lemma. 
\begin{lemma}
    \label{res:linearizing-galois-types}
    Assume that \(I\) acts trivially on \(\widehat{G}\). 
    Let \(\mathbb{F}^{\mathcal{J},\text{triv}}\) denote \(\mathbb{F}^{\mathcal{J}} = \mathbb{F} \otimes_{\mathbb{F}_{p}} \mathbb{F}_{q}\) equipped with the trivial \(\Gamma\)-action. 
    Then for any choice of \(j \in \mathcal{J}\) we have equivalences of groupoids
    \begin{align*}
        Z^1\left( \Gamma_{W}, \widehat{G}\left( \mathbb{F}^\mathcal{J} \right) \right)^{\varphi = 1} & \cong Z^1 \left( \Gamma_{W}, \widehat{G}\left( \mathbb{F}^{\mathcal{J},\text{triv}} \right) \right)^{\varphi = 1} \cong Z^1 \left( \Gamma_{W}, \widehat{G}(\mathbb{F}_{j}) \right)  \\ 
        \left( \tau, b : \varphi \tau \isoto \tau \right) & \mapsto \,\,\,\,\, \left( \tau', b : \varphi \tau' \isoto \tau' \right) \,\,\,\,\, \mapsto \tau'_{j}
    \end{align*}
    where \(\tau'(\gamma) = \tau(\gamma)\) and \(\tau'(\sigma) = \varphi^{-1}(b^{-1} \tau(\sigma)) = \tau(\sigma) \psi(b)^{-1}\), and we view \(\mathbb{F}_{j}\) as having trivial \(\Gamma\)-action.
    Here, we recall that \(\psi : \widehat{G} \isoto \widehat{G}\) denotes the group automorphism of \(\widehat{G}\) determined by \(\sigma\), 
    and the superscript \(\varphi = 1\) denotes homotopy fixed points of groupoids (so elements of for example \(Z^1\left( \Gamma_{W}, \widehat{G}\left( \mathbb{F}^\mathcal{J} \right) \right)^{\varphi = 1}\)
    are pairs \((\tau, b)\), where \(\tau : \Gamma_{W} \to \widehat{G}(\mathbb{F}^\mathcal{J})\) is a \(1\)-cocycle and \(b : \varphi \tau \isoto \tau\) is a coboundary). 
\end{lemma}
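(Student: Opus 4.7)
The plan is to prove both equivalences by constructing explicit inverse functors and verifying by direct computation that the cocycle and coboundary conditions transform correctly between groupoids.

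For the first equivalence, I would first reconcile the two formulas for $\tau'(\sigma)$: using the coboundary relation $\tau(\sigma) = b \cdot \varphi\tau(\sigma) \cdot ({}^\sigma b)^{-1}$ together with ${}^\sigma b = \psi(\varphi(b))$ (which holds because in the original groupoid $\sigma$ acts on $\widehat{G}(\mathbb{F}^\mathcal{J})$ via $\psi$ on the group scheme combined with $\varphi$ on the coefficients), a short rearrangement yields $\varphi^{-1}(b^{-1}\tau(\sigma)) = \tau(\sigma)\psi(b)^{-1}$. Next I would verify that $(\tau', b)$ is a valid object of the new groupoid, where $\Gamma_W$ acts on $\widehat{G}(\mathbb{F}^{\mathcal{J},\mathrm{triv}})$ only through the pinned automorphism $\psi$ in the $\sigma$-direction. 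The cocycle condition $\tau'(\sigma)\psi(\tau'(\gamma))\tau'(\sigma)^{-1} = \tau'(\gamma)^p$ reduces to the original cocycle condition for $\tau$ after substituting the $\gamma$-coboundary relation $\tau(\gamma) = b\varphi(\tau(\gamma))b^{-1}$ to rewrite $\psi(b^{-1}\tau(\gamma)b) = \psi(\varphi(\tau(\gamma))) = {}^\sigma\tau(\gamma)$. The coboundary condition in the new groupoid at $\sigma$ reads $\tau'(\sigma) = b\varphi(\tau'(\sigma))\psi(b)^{-1}$, which collapses to an identity once one substitutes $\varphi(\tau'(\sigma)) = b^{-1}\tau(\sigma)$. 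The explicit inverse functor $(\tau', b) \mapsto (\tau, b)$ with $\tau(\sigma) = \tau'(\sigma)\psi(b)$ is checked to be mutually inverse on the nose, and morphisms (which satisfy the compatibility $b_2 = cb_1\varphi(c)^{-1}$) transport across the equivalence via the same underlying element $c$.

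For the second equivalence I would use the product decomposition $\widehat{G}(\mathbb{F}^{\mathcal{J},\mathrm{triv}}) = \prod_{k \in \mathcal{J}} \widehat{G}(\mathbb{F}_k)$. Since the $\Gamma_W$-action now preserves each factor (acting only via $\psi$), the cocycle groupoid decomposes as $\prod_k Z^1(\Gamma_W,\widehat{G}(\mathbb{F}_k))$, and $\varphi$ cyclically permutes the factors according to $\varphi(g)_k = g_{k\varphi}$. Projection to the $j$-factor is the stated functor. The inverse sends $\rho$ to the diagonal lift $(\mathrm{diag}(\rho), 1)$, where $\mathrm{diag}(\rho)_k = \rho$ for all $k$ and $b = 1$ witnesses $\varphi(\mathrm{diag}(\rho)) = \mathrm{diag}(\rho)$. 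Essential surjectivity is then immediate. For full faithfulness, given a morphism $c_j$ in the target, the condition $b_2 = cb_1\varphi(c)^{-1}$ read componentwise as $c_{k\varphi} = b_{2,k}^{-1}c_k b_{1,k}$ uniquely propagates $c_j$ to all $c_k$ around the $\varphi$-cycle; the critical point is that cocycle compatibility at $k\varphi$ follows automatically from cocycle compatibility at $k$ together with the Frobenius relations $\tau'_{i,k}(\sigma) = b_{i,k}\tau'_{i,k\varphi}(\sigma)\psi(b_{i,k})^{-1}$, so no separate consistency condition around the cycle needs to be imposed.

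The main obstacle is careful bookkeeping: tracking the two distinct $\Gamma$-actions on $\widehat{G}(\mathbb{F}^\mathcal{J})$ (natural versus trivial-on-coefficients), the convention that $b : \varphi\tau \isoto \tau$ means $\tau(\theta) = b\cdot\varphi\tau(\theta)\cdot({}^\theta b)^{-1}$, and the direction in which $\varphi$ permutes the index set $\mathcal{J}$. Unlike the Lang-theorem argument in \cref{res:frobenius-invariant-type-as-x}(2), this lemma does not require enlarging coefficients, because the datum $b$ is already part of an object of the homotopy fixed-point groupoid, so the ``Lang equation'' $b = c^{-1}\varphi(c)$ is absorbed into the groupoid morphisms rather than needing to be solved on the nose.
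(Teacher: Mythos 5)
Your proof is correct and takes essentially the same approach as the paper: both encode a $1$-cocycle on $\Gamma_W$ (for a $\Gamma_W$-group on which $I$ acts trivially) by the pair $(\tau(\gamma),\tau(\sigma))$ subject to $x^e=1$ and $y({}^\sigma x)y^{-1}=x^p$, verify the cocycle and coboundary conditions for $(\tau',b)$ by direct manipulation using the coboundary relations $b\varphi(x)b^{-1}=x$ and ${}^\sigma b=\psi(\varphi(b))$, and treat the second equivalence via the product decomposition over $\mathcal{J}$ with $\varphi$ acting as a cyclic shift. You actually supply the verifications (the coboundary check for $b:\varphi\tau'\isoto\tau'$, and essential surjectivity and full faithfulness of the projection) that the paper leaves to the reader, and you use the coboundary convention consistently with the lemma's stated identity $\varphi^{-1}(b^{-1}\tau(\sigma))=\tau(\sigma)\psi(b)^{-1}$, which is slightly cleaner than the sign as printed in the paper's proof.
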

\begin{proof}
    Let \(H\) be any \(\Gamma_{W}\)-group on which \(I \subset \Gamma_{W}\) acts trivially. 
    Then we have a bijection 
    \begin{align*}
        Z^1\left( \Gamma_{W}, H \right) & \isoto \lbrace (x,y) \in H \times H | x^e = 1, y(^\sigma x)y^{-1} = x^p \rbrace \\ 
        \rho & \mapsto (\rho(\gamma), \rho(\sigma)) . 
    \end{align*}
    We will apply this to \(H = \widehat{G} \left( \mathbb{F}^\mathcal{J} \right), \widehat{G}(\mathbb{F}^{\mathcal{J},\text{triv}}), \widehat{G}(\mathbb{F}_{j})\). 
    Note that when \(H = \widehat{G}\left( \mathbb{F}^\mathcal{J} \right)\), we have \({^\sigma x} = \varphi \psi (x) = \psi \varphi (x)\), 
    but when \(H = \widehat{G}(\mathbb{F}^{\mathcal{J},\text{triv}})\) or \(\widehat{G}(\mathbb{F}_{j})\) we have \({^\sigma x} = \psi(x)\). 
    
    Now let \(\left(\tau, b \right) \in Z^1\left( \Gamma_{W}, \widehat{G}\left( \mathbb{F}^\mathcal{J} \right) \right)^{\varphi = 1}\) and let \(\tau'\) be as in the statement of the lemma. 
    We first show that \(\tau'\) is a \(1\)-cocycle. 
    Let \(x = \tau(\gamma) = \tau'(\gamma)\), \(y = \tau(\sigma)\), and \(y' = \varphi^{-1}(b^{-1}y)\).
    Note that \(b : \varphi \tau \isoto \tau\) being a coboundary means that \(b \varphi(x) b^{-1} = x\) and \(b \varphi(y) = y (^{\sigma} b)^{-1} = y \psi (\varphi(b))^{-1}\). 
    We have 
    \begin{align*}
        y' \psi(x) y'^{-1} & = \varphi^{-1}(b^{-1} y ) \psi(x) \varphi^{-1}(b^{-1}y)^{-1} \\
        & = \varphi^{-1}\left( b^{-1} y \varphi \psi(x) y^{-1} b \right) \\ 
        & = \varphi^{-1} \left( b^{-1} x^p b \right) \\
        & = \varphi^{-1} \left( \varphi(x)^p \right) \\
        & = x^p ,
    \end{align*}
    which shows that \(\tau'\) is indeed a \(1\)-cocycle.
    We leave it to the reader to show that \(b : \varphi \tau' \isoto \tau'\) is a \(1\)-coboundary, and therefore that \((\tau,b)\mapsto (\tau',b)\) is well-defined. 
    Once the map is seen to be well-defined, it is easily seen to be an equivalence, since we can solve for \(y\) in terms of \(y'\) to define the inverse map. 

    It remains to see that projection onto the \(j\)'th factor yields an equivalence \(Z^1 \left( \Gamma_{W}, \widehat{G}\left( \mathbb{F}^{\mathcal{J},\text{triv}} \right) \right)^{\varphi = 1} \cong Z^1 \left( \Gamma_{W}, \widehat{G}(\mathbb{F}_{j}) \right)\).
    The point is now that since \(\Gamma_{W}\) acts trivially on \(\widehat{G}(\mathbb{F}^{\mathcal{J},\text{triv}}) = \prod_{j \in \mathcal{J}}\widehat{G}(\mathbb{F}_{j})\), 
    a \(1\)-cocycle \(\tau' : \Gamma_{W} \to \widehat{G}(\mathbb{F}^{\mathcal{J},\text{triv}})\) corresponds to a collection of \(1\)-cocycles \(\lbrace \tau_{j}' : \Gamma_{W} \to \widehat{G}(\mathbb{F}_{j}) \rbrace_{j \in \mathcal{J}}\), 
    and the coboundary \(b\) gives equivalences between all the different \(1\)-cocycles \(\tau_{j}'\), \(j \in \mathcal{J}\).
    We leave the remaining details to the reader. 
\end{proof}
\begin{remark}
    The above \cref{res:linearizing-galois-types} together with \cref{res:galois-types-are-frobenius-invariant} makes precise the statement that 
    a Galois type arising as descent data for Breuil--Kisin modules have the same inertial type ``in each factor'', i. e. is ``unmixed'' in 
    the terminology of \cite{CEGS}. 
\end{remark}
\begin{remark}
    \label{rem:lm-inertial-type-vs-galois-type}
    In \cite{local-models} they use the parametrization of tame inertial types from \cite[Section 9]{GHS} giving 1-cocycle \(\Gamma_{W} \to \widehat{G}(\mathbb{F})\). 
    The corresponding Galois type (they do not use this terminology) is then the corresponding element on the left hand side in \cref{res:linearizing-galois-types}. 
\end{remark}

\subsubsection{Genericity}
According to \cref{sec:description-of-types}, we describe our inertial type \(\tau\) as a point \(x\) in the apartment. 
Our notion of \textit{genericity} of \(\tau\) will be given in terms of a notion of genericity of \(x\).

\begin{definition}
    \label{def:generic-type}
    Let \(y \in \widetilde{\mathcal{A}}(T^*, \mathbb{F}\llp v \rrp)\) and \(d \in \mathbb{R}\). 
    We say that \(y\) is \textit{\(d\)-generic} if
    for all \(a \in \Phi\left( G^*,T^* \right)^{+}\) there exists
    \(n_{a} \in \mathbb{Z}\) such that
    \[ n_{a} + \frac{d}{p} < \langle a, y - o \rangle < n_{a} + 1 - \frac{d}{p} . \]
\end{definition}
\begin{remark}
    \label{rem:generic-type-inequality}
    The above definition depends on the prime \(p\), and it is
    only possible to be \(d\)-deep if \(d < \frac{p}{2}\). This bound is not sharp: For example, if the root system is of type \(A_{2}\), then it 
    is only possible to be \(d\)-deep if \(d < \frac{p}{3}\) (see \cref{fig:alcove-genericity} on page \pageref{fig:alcove-genericity}).
\end{remark}
\begin{remark}
    When \(d < 0\), it is vacous to say that \(y\) is \(d\)-generic. 
    However, it will be convient to allow negative values of \(d\) in order to have uniform statements, as for example in \cref{res:contraction}.
\end{remark}
\begin{remark}
    \label{rem:genericity-for-explicit-coboundary}
    If \(x = w^{-1} u^\lambda \cdot o\), where \(w \in \widehat{N}(\mathbb{F}^\mathcal{J})\) and \(\lambda \in X_{*}(\widehat{T})^\mathcal{J}\), 
    then \( x - o = - \frac{1}{e} w^{-1}\lambda\), so \(\langle a, x - o \rangle = - \frac{1}{e} \langle w a , \lambda \rangle\). 
    So we see that \(x\) is \(d\)-generic if and only if for each \(a \in \Phi(G^*, T^*)\) there exists an integer \(n_{a} \in \mathbb{Z}\)
    such that \(n_{a} + \frac{d}{p} < \frac{1}{e}\langle a, \lambda \rangle < n_{a} + 1 - \frac{d}{p}\).
\end{remark}

We will relate our notion of genericity to that of \cite{local-models}, specializing to the case where \(\widehat{G} = \prod_{j = 0}^{f-1} \GL_{n}\) is the dual group of 
\(G = \operatorname{Res}_{\mathbb{Q}_{p}}^{K} \GL_{n}\) is the Weil restriction of \(\GL_{n}\), where \(\mathbb{Q}_{p} \subset K\) is an unramified extension of degree \(f\). 
According to Definition 2.1.10 in loc. cit., 
a cocharacter \(\mu \in X_{*}(\widehat{T}) = X^*(T)\) is said to be \emph{\(d\)-deep} if for each positive root \(a \in \Phi(\widehat{G},\widehat{T})^+\) there exists \(n_{a} \in \mathbb{Z}\) such that 
\begin{equation}
    \label{eq:lm-deep}
    p n_{a} + d < \langle a, \mu + \eta \rangle < p (n_{a}+1) - d , 
\end{equation}
where \(\eta\) is the sum of of the fundamental dominant coweights \(\omega_{a}^\vee\), satisfying \(\langle a, \omega_{a'}^\vee \rangle = \delta_{a a'}\) (Kronecker \(\delta\)). 
Given \(\mu \in X_{*}(\widehat{T})\) and \(s \in W\) (the finite Weyl group for \(\widehat{G}\)), 
one associates an inertial type \(\tau(s,\mu + \eta) : I \to \widehat{G}(\mathbb{F})\) as in \cite[Example 2.4.1]{local-models}.
Following \cref{rem:lm-inertial-type-vs-galois-type} and \cref{res:deforming-galois-types} this gives rise to a Galois type \( \widetilde{\tau} : \Gamma \to \widehat{G}(\mathbb{F}^\mathcal{J}\llb u \rrb)\), 
which assuming \(\mathbb{F}\) is sufficiently large is classified by a point \(\widetilde{x} = n \cdot o \in \widetilde{\mathcal{A}}\left( T^*, \mathbb{F}\llp v \rrp \right)\) using \cref{res:types-in-the-normalizer}.
We then have the following. 
\begin{lemma}
    Assume that \(\mu\) is \((d+1)\)-deep according to \eqref{eq:lm-deep} as well as ``lowest alcove'', meaning that 
    \[
        d + 1 < \langle a, \mu + \eta \rangle < p - d - 1
    \]
    for all positive roots \(a \in \Phi(\widehat{G},\widehat{T})^+\). 
    Then \(\widetilde{x}\) is \(d\)-generic in the sense of \cref{def:generic-type}.
\end{lemma}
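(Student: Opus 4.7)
The plan is to compute $\widetilde{x}$ explicitly in terms of $(s, \mu+\eta)$, then apply the explicit form of genericity from \cref{rem:genericity-for-explicit-coboundary} and carry out a numerical comparison between the two inequalities.

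First, by \cref{rem:lm-inertial-type-vs-galois-type} and \cref{res:strictly-frobenius-invariant-coboundary-in-normalizer}, after possibly passing to a finite unramified extension, the Galois type associated to $\tau(s,\mu+\eta)$ admits a strictly Frobenius-invariant representative of the form $\widetilde{\tau}(\theta) = n^{-1}({}^\theta n)$ with $n = w^{-1} u^\lambda$, where $w \in \widehat{N}(\mathbb{Z}^\mathcal{J})$ lifts the Weyl element associated to $s$ and $\lambda \in X_*(\widehat{T})^\mathcal{J}$ is built from $\mu + \eta$ according to the recipe in Example 2.4.1 of \cite{local-models}. By \cref{res:building-classification-of-types} combined with \cref{res:types-in-the-normalizer}, the classifying point is then $\widetilde{x} = w^{-1} u^\lambda \cdot o$.

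Next, by \cref{rem:genericity-for-explicit-coboundary}, $d$-genericity of $\widetilde{x}$ amounts to the existence, for each positive root $a$ of $G^*$, of an integer $n_a$ with
$$n_a + \tfrac{d}{p} < \tfrac{1}{e}\langle wa, \lambda\rangle < n_a + 1 - \tfrac{d}{p}.$$
Unpacking the identification of $\lambda$ with $\mu + \eta$ (using that $\omega = \omega_f^{(p^f-1)/(p-1)}$ and that Frobenius cyclically permutes the components of $\mathcal{J}$) yields
$$\tfrac{1}{e}\langle wa, \lambda\rangle \equiv \tfrac{1}{p-1}\langle \bar{a}, \mu + \eta\rangle \pmod{\mathbb{Z}},$$
where $\bar{a} \in \Phi(\widehat{G}, \widehat{T})^+$ is the positive root of $\widehat{G}$ corresponding to $a$ under the Weil-restriction description of the roots of $G^*$.

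Finally, dividing the lowest alcove inequality $d+1 < \langle \bar{a}, \mu + \eta\rangle < p - d - 1$ by $p - 1$ gives $\frac{d+1}{p-1} < \frac{1}{p-1}\langle \bar a, \mu+\eta\rangle < 1 - \frac{d}{p-1}$. Since $d \geq 0$ and $p - 1 < p$, we have $\frac{d+1}{p-1} > \frac{d}{p}$ and $\frac{d}{p-1} > \frac{d}{p}$, so the $d$-genericity bound with $n_a = 0$ follows immediately. The $(d+1)$-deep hypothesis is already subsumed by lowest alcove for this conclusion (it forces $n_a = 0$ in \eqref{eq:lm-deep}), and is presumably listed in the statement to match conventions used elsewhere. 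The main obstacle is the first step: pinning down the precise relation between $\lambda$ and $\mu + \eta$. This requires carefully matching LLHLM's parameterization via the niveau-$f$ character $\omega_f$ with the building-theoretic parameterization via the valuation of $u$, tracking both the Weyl-restriction factorization of $\widehat{G} = \prod_{j=0}^{f-1} \GL_n$ and the Frobenius cyclic permutation of the components of $\mathcal{J}$; once this identification is made explicit, the numerical comparison is immediate.
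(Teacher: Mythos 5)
The central step in your argument is the claimed identity
\[
\tfrac{1}{e}\langle wa, \lambda\rangle \equiv \tfrac{1}{p-1}\langle \bar{a}, \mu + \eta\rangle \pmod{\mathbb{Z}},
\]
and this is false in general. As in the paper's proof, $\lambda_i$ is a $p$-adic digit expansion $\lambda_i = \sum_{j=0}^{r-1}\beta_j p^j$ where each digit $\beta_j = s_j'(\mu_{j'} + \eta_{j'})$ is a Weyl conjugate of one of the embedding components of $\mu + \eta$, with the Weyl element $s_j'$ and the embedding $j'$ varying with $j$. Dividing by $e = p^r-1$ therefore gives
\[
\tfrac{1}{e}\langle a, \lambda_i\rangle = \tfrac{1}{p^r-1}\sum_{j=0}^{r-1}\langle a, \beta_j\rangle p^j ,
\]
and this collapses to $\tfrac{1}{p-1}\langle a, \beta\rangle$ modulo $\mathbb{Z}$ only when all the $\beta_j$ coincide, which is a degenerate situation. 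You can test this concretely against \cref{sec:galois-type-in-the-manner-of-lm}: with $p = 19$, $\mu + \eta = ((18,12,7),(6,3,1))$, $e = p^4 - 1 = 130320$, the value $\tfrac{1}{e}\langle\alpha_{12},\lambda_0\rangle = -36151/130320 \approx -0.2774$ (whose fractional part is $\approx 0.7226$) is not congruent modulo $\mathbb{Z}$ to $\tfrac{1}{p-1}\langle\bar a,\mu+\eta\rangle$ for any positive root $\bar a$ and either embedding component, the candidate values being $6/18$, $5/18$, $11/18$, $3/18$, $2/18$. Your lower bound $\frac{d+1}{p-1} > \frac{d}{p}$ and upper bound $1 - \frac{d}{p-1} < 1 - \frac{d}{p}$ are correct numerics, but they are applied to a quantity that simply isn't $\tfrac{1}{e}\langle a,\lambda\rangle$ modulo $\mathbb{Z}$.

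The paper's proof avoids any such collapse. It keeps the full sum $\sum_j\langle a,\beta_j\rangle p^j$ and uses the lowest-alcove hypothesis to bound each $\langle a,\beta_j\rangle$ into one of the two intervals $(d+1, p-d-1)$ or $(-p+d+1, -d-1)$, then estimates the weighted sum directly. The crucial observation is the identity $(p-d-1)\sum_{j=0}^{r-1}p^j = (1 - \tfrac{d}{p-1})(p^r-1)$, which lets one bound $\tfrac{1}{e}\langle a,\lambda_i\rangle$ into $(\tfrac{d}{p-1} + \tfrac{1}{e},\, 1 - \tfrac{d}{p-1})$ even when the individual digits $\beta_j$ belong to different intervals; the worst case is precisely when all $j < r-1$ fall in the negative interval and $j = r-1$ falls in the positive one (or vice versa). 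Your reduction to a single term loses this, and the result cannot be recovered by a modulo-$\mathbb{Z}$ comparison. If you want to salvage the plan, you must replace the claimed congruence with the paper's summation estimate; there is no shortcut that avoids tracking the digit-by-digit bounds.
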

\begin{proof}
    We will follow \cite[Example 2.4.1]{local-models}, noting that what their \(fr\) is our \(r = [\mathbb{Q}_{q} : \mathbb{Q}_{p}] = [L_{0} : \mathbb{Q}_{p}]\).
    Following the conventions of loc. cit. we have \(e = p^r - 1\). 
    We can for some fixed embedding \(\iota : \mathbb{F}_{q} \hookrightarrow \mathbb{F}\) write \(\tau(s,\mu + \eta)|_{I} = \iota(\omega)^\lambda\), where \(\lambda \in X_{*}(\widehat{T})\) (this \(\lambda\) corresponds to ``\(\sum_{k=0}^{fr-1} (F^* \circ s^{-1})^j(\mu + \eta)\)'' in loc. cit.). 
    For any fixed \(0 \leq i \leq f -1\), we can moreover write 
    \[
        \lambda_{i} = \sum_{j=0}^{r-1} \beta_{j}p^j ,
    \]
    where each \(\beta_{j} = s'_{j} (\mu_{j'} + \eta_{j'})\) for some \(0 \leq j' \leq f -1\) and \(s'_{j} \in W\). 
    The assumption that \(\mu + \eta\) is \((d+1)\)-deep lowest alcove implies that for each \(0 \leq j \leq r-1\) and \(a \in \Phi^\vee(\GL_{n},T)^{+}\), we have 
    \begin{align*}
        d + 1 < \langle a, \beta_{j} \rangle < p - d - 1 \,\,\,\,\,\,\, \text{ or } \,\,\,\,\,\,\, - p + d + 1 < \langle a , \beta_{j} \rangle < - d - 1 . 
    \end{align*}
    Take now any \(a \in \Phi^\vee(\GL_{n},T)^{+}\); by \cref{rem:genericity-for-explicit-coboundary} we have to show that there exists \(n_{a} \in \mathbb{Z}\) such that \(n_{a} + \frac{d}{p} < \frac{1}{e} \langle a, \lambda_{i} \rangle < n_{a} + 1 - \frac{d}{p}\). 
    We consider two cases:
    \begin{enumerate}
        \item \(d + 1 < \langle a, \beta_{r-1} \rangle < p - d - 1\), 
        \item \(- p + d + 1 < \langle a, \beta_{r-1} \rangle < - d - 1\). 
    \end{enumerate}
    Assume we are in case (1).
    It will be useful to observe that 
    \[
        (p-d-1) \sum_{j=0}^{r-1}p^j = (p-d-1)\frac{p^r-1}{p-1} = \left( 1 - \frac{d}{p-1}\right)(p^r - 1) . 
    \]
    We have 
    \begin{align*}
        \langle a, \lambda_{i} \rangle & = \sum_{j=0}^{r-1} \langle a, \beta_{j} \rangle p^j \\
        & < (p-d-1)\sum_{j=0}^{r-1}p^j \\ 
        & = \left( 1 - \frac{d}{p-1}\right) (p^r - 1) . 
    \end{align*}
    Dividing by \(e = p^r - 1\), it follows that 
    \[
        \frac{1}{e} \langle a , \lambda_{i} \rangle < 1 - \frac{d}{p-1} < 1 - \frac{d}{p} . 
    \]
    On the other hand, we have 
    \begin{align*}
        \langle a, \lambda_{i} \rangle & = \sum_{j=0}^{r-1} \langle a, \beta_{j} \rangle p^j \\ 
        & > (- p + d + 1) \sum_{j=0}^{r-2} p^j + (d+1)p^{r-1} \\
        & = -(p - d - 1)\sum_{j=0}^{r-1}p^j + p^r \\
        & = - \left( 1 - \frac{d}{p-1}\right)(p^r - 1) + p^r \\
        & = 1 + (p^r - 1)\frac{d}{p-1} . 
    \end{align*}
    Dividing by \(e = p^r - 1\) again, we obtain 
    \[
        \frac{1}{e}\langle a, \lambda_{i} \rangle > \frac{1}{e} + \frac{d}{p-1} > \frac{d}{p} . 
    \]
    This completes the proof in case (1), and case (2) is similar. 
\end{proof}
\begin{figure}[h!]
    \centering
    \resizebox{\textwidth}{!}{
    \begin{tikzpicture}[scale = 12, rotate = 180]
        \pgfmathsetmacro{\numSegments}{36}
        \pgfmathsetmacro{\p}{19}
        \pgfmathsetmacro{\kbound}{floor(\p/3)}

        \newcommand{\dividedTriangle}{
            \coordinate (A) at (0, 0);
            \coordinate (B) at (1, 0);
            \coordinate (C) at (0.5, 0.866); 

            \foreach \k in {0,...,\kbound} {
                \pgfmathparse{1.5*\k/\p}
                \let\xi\pgfmathresult
                \pgfmathparse{0.866*\k/\p}
                \let\yi\pgfmathresult
                \pgfmathparse{1 - (1.5*\k/\p)}
                \let\xii\pgfmathresult
                \pgfmathparse{0.866*(1 - (2*\k/\p))}
                \let\yiii\pgfmathresult
                \fill[color=red,fill opacity=0.15] (\xi,\yi) -- (\xii,\yi) -- (0.5,\yiii) -- cycle;
            }

            \foreach \i in {0,...,\numSegments} {
                \coordinate (P) at ($(A)!\i/\numSegments!(B)$);
                \coordinate (Q) at ($(A)!\i/\numSegments!(C)$);
                \coordinate (R) at ($(B)!\i/\numSegments!(C)$);
                \coordinate (S) at ($(C)!\i/\numSegments!(B)$);
                
                \draw[line width=0.05pt,color=gray] (P) -- (Q);
                \draw[line width=0.05pt,color=gray] (P) -- (S);
                \draw[line width=0.05pt,color=gray] (Q) -- (R);
            }

            \draw[thick] (A) -- (B) -- (C) -- cycle;
        }

        \clip (-0.2,-0.173) rectangle (1.2,1.039);

        \foreach \x / \y in {-0.5/-0.866, 0.5/-0.866, -1/0, 0/0, 1/0, -0.5/0.866, 0.5/0.866} {
            \begin{scope}[shift={(\x,\y)}]
                \dividedTriangle
            \end{scope}
        }

        \foreach \x / \y in {-0.5/-0.866, 0.5/-0.866, -1/0, 0/0, 1/0, 0/-1.732} {
            \begin{scope}[yscale=-1,shift={(\x,\y)}]
                \dividedTriangle
            \end{scope}
        }

        \node[circle, fill=black, inner sep=1pt, label={[fill=white, fill opacity=0.5, text opacity=1, rounded corners] above:{$x$}}] at (0.319, 0.217) {};

        \node[circle, fill=black, inner sep=1.5pt, label={[fill=white, fill opacity=0.7, text opacity=1, rounded corners] above:{$o$}}] at (0.5,0.866) {};
        \node[circle, fill=black, inner sep=1.5pt, label={[fill=white, fill opacity=0.7, text opacity=1, rounded corners] above:{$o + (1,0,0)$}}] at (0,0) {};
        \node[circle, fill=black, inner sep=1.5pt, label={[fill=white, fill opacity=0.7, text opacity=1, rounded corners] above:{$o + (1,1,0)$}}] at (1,0) {};
    \end{tikzpicture}}
    \caption{
        A snapshot of the apartment \(\widetilde{\mathcal{A}}\left( \widehat{T}, \mathbb{F}\llp v \rrp \right)\), where \(\widehat{T} \subset \GL_{3}\) is the diagonal torus (the same one that we saw in \cref{fig:alcoves-inside-alcoves}). 
        We have chosen (arbitrary) parameters \(p = 19\) and \(e = 36\).
        The genericity is represented by the shade of red, where darker red means more generic. 
        We have marked an (arbitrary) \(2\)-generic point \(x\) which classifies a Galois type via \cref{res:building-classification-of-types}, and in general any Galois type \(\tau\) will be classified by a special vertex for the apartment \(\widetilde{\mathcal{A}}\left( \widehat{T}, \mathbb{F}\llp u \rrp \right)\) (i. e. at a point where the thin gray lines intersect).
    }
    \label{fig:alcove-genericity}
\end{figure}
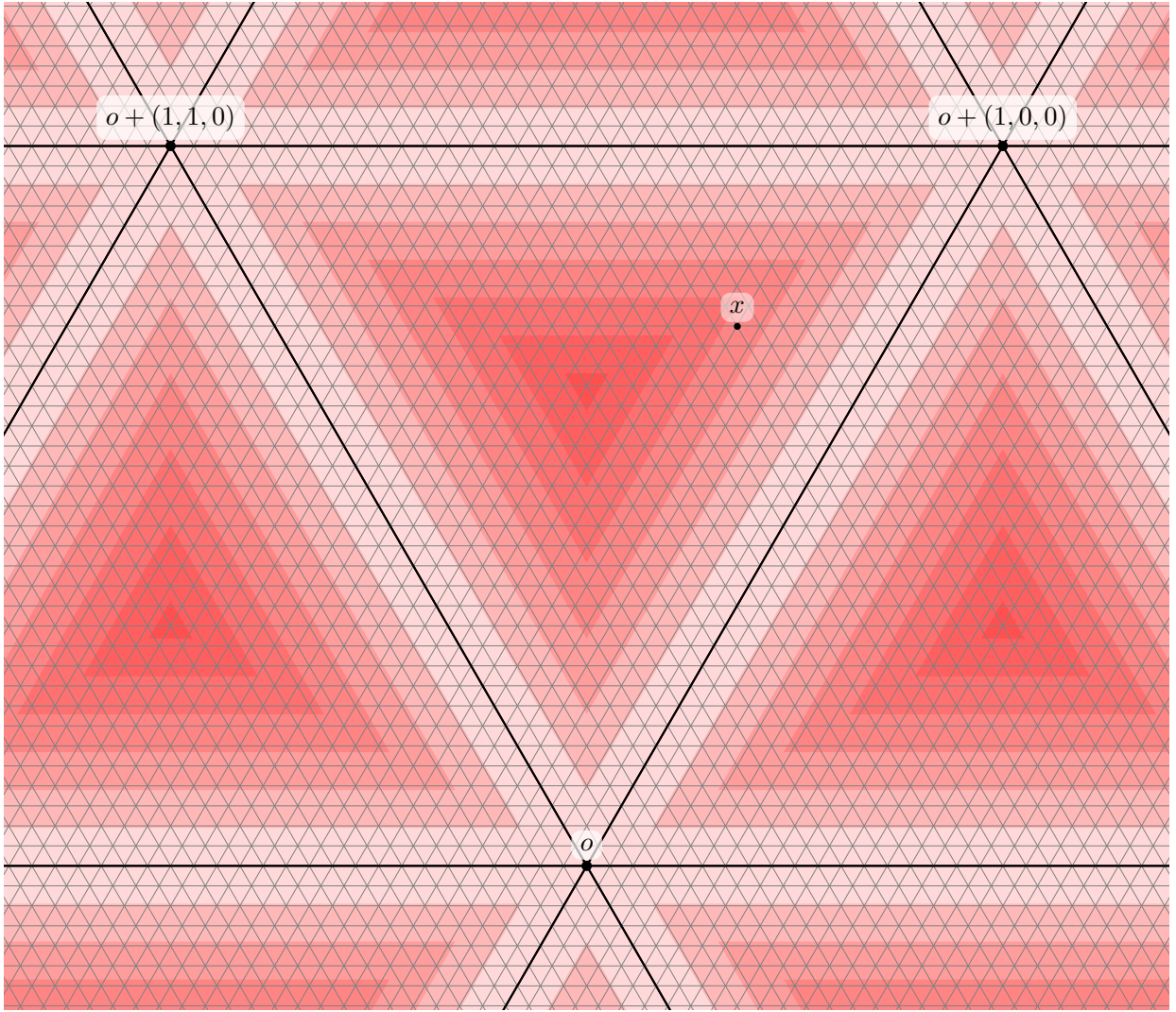

\subsection{Invariant pushforward and \((\Gamma,\widehat{G})\)-torsors}
\label{sec:specialized-equivariant-torsors}
In this section we specialize some of the discussion of \cref{sec:equivariant-torsors}, and in particular \cref{sec:equivariant-torsors-in-algebraic-geometry}, 
to the situation of \((\Gamma,\widehat{G})\)-torsors over \(\widetilde{\mathbb{A}}^1_{\mathcal{O}}\).
For the definition of \((\Gamma,\widehat{G})\)-torsors, we refer the reader to \cref{sec:equivariant-torsors}. 
We will use the notation and assumptions from \cref{sec:rings-from-p-adic-hodge-theory} and \cref{sec:dual-groups}. 

\begin{proposition}
    \label{res:invariant-pushforward-is-smooth}
    Let \(X\) be a \(\Gamma\)-equivariant smooth affine scheme over \(\widetilde{\mathbb{A}}^1_{\mathcal{O}}\). 
    Then \((\pi_{I})_{*}^I X\) is a smooth affine group scheme over \(\breve{\mathbb{A}}^1_{\mathcal{O}}\), 
    and \(\pi_{*}^{\Gamma} X \cong (\pi_{0})_{*}^{\Gamma_{0}}\left( (\pi_{I})_{*}^I X \right)\) is a smooth affine group scheme over \(\mathbb{A}^1_{\mathcal{O}}\).
\end{proposition}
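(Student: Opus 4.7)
The plan is to prove the two assertions in stages matching the factorization $\pi = \pi_0 \circ \pi_I$: first handle the ramified piece $\pi_I$, then the unramified étale piece $\pi_0$. Affineness is immediate at each stage because Weil restriction along a finite affine map sends affine schemes to affine schemes, and the locus of fixed points of a finite group acting on an affine scheme is an affine closed subscheme (cut out as an equalizer of finitely many morphisms); the group-scheme structure, if present, is transported formally since invariant pushforward is a right-adjoint construction. Hence the entire content is the smoothness statement.

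For smoothness of $(\pi_I)_{*}^I X$ over $\breve{\mathbb{A}}^1_{\mathcal{O}}$, I would first restrict to the open locus $\breve{\mathbb{A}}^1_{\mathcal{O}}\setminus\{0\}$, where $\pi_I$ is an étale $I$-torsor; there $(\pi_I)_*^I X$ is the étale descent of the smooth affine scheme $X$ along an étale torsor, hence smooth affine. The substance is smoothness at the closed stratum $v=0$. I would reduce to the completed local question over $\operatorname{Spec}\mathbb{Z}_q\llb v \rrb$ covered by $\operatorname{Spec}\mathbb{Z}_q\llb u \rrb$ with $u^e = v$ and $I = \mu_e$ acting via $\gamma \cdot u = \omega(\gamma)u$, and invoke a tame-ramification smoothness criterion for invariant pushforwards of the Edixhoven/Pappas--Zhu type (e.g.\ in the spirit of \cite[Section 4]{pappas-zhu}). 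The tameness hypothesis $p \nmid e$ is what powers this step: the averaging operator $\tfrac{1}{e}\sum_{\gamma \in I}\gamma$ is available over $\mathcal{O}$, which allows one to split any $I$-equivariant short exact sequence of tangent spaces and to lift $I$-equivariantly along square-zero thickenings, so the formal smoothness of $X$ descends to formal smoothness of $(\pi_I)_*^I X$.

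For smoothness of $\pi_*^{\Gamma}X$ over $\mathbb{A}^1_{\mathcal{O}}$, I would first justify the identification $\pi_*^{\Gamma}X \cong (\pi_0)_*^{\Gamma_0}\bigl((\pi_I)_*^I X\bigr)$ formally, using $\Gamma = I \rtimes \Gamma_0$ together with the fact that the $\Gamma_0$-action on $\widetilde{\mathbb{A}}^1_{\mathcal{O}}$ commutes with $\pi_I$ and descends to the natural $\Gamma_0$-action on $\breve{\mathbb{A}}^1_{\mathcal{O}}$. Then, since $\pi_0 : \breve{\mathbb{A}}^1_{\mathcal{O}} \to \mathbb{A}^1_{\mathcal{O}}$ is a globally étale $\Gamma_0$-torsor (no ramification), the invariant pushforward along $\pi_0$ is plain étale descent of a $\Gamma_0$-equivariant smooth affine scheme, and so preserves both smoothness and affineness.

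The main obstacle is the smoothness of $(\pi_I)_*^I X$ at the ramification point $v=0$; everything else is either étale descent or a formal manipulation of Weil restriction and invariants. The tameness hypothesis enters precisely here, and without it the averaging argument collapses and smoothness at the origin genuinely fails in general.
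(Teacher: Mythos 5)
Your proposal is correct, and its essential mechanism (averaging over $I$ because $p\nmid e$; étale descent for the unramified piece) is the right one, but it differs in structure from the paper's proof, which is a single citation of \cref{res:smooth-invariant-pushforward-in-general}. That appendix lemma reduces everything to \cite[Proposition 3.1, 3.4]{edixhovenNeronModelsTame1992} and \cite[Propositions A.5.2, A.8.10]{CGP}. Your two-stage route through $\pi_I$ and $\pi_0$ is arguably the more careful way to organize the argument. In particular, you correctly stress that the $\pi_0$ stage is plain étale descent along a $\Gamma_0$-torsor and therefore preserves smoothness and affineness with no invertibility hypothesis on $\#\Gamma_0$. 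This matters: \cref{res:smooth-invariant-pushforward-in-general}(3), as stated, requires $\#\Gamma$ invertible over the base, so applying it to the composite $\pi$ (or to the $\pi_0$ stage alone) would demand that $r = [L_0:\mathbb{Q}_p]$ be invertible in $\mathcal{O}$; the paper's standing hypothesis is only $p\nmid e$, and nothing excludes $p\mid r$. The intended reading is presumably to apply the appendix lemma only to the $\pi_I$ stage (where $\#I = e$ is indeed invertible) and to treat $\pi_0$ by descent, which is exactly what you spell out. On the ramified stage, your reduction to the completed local ring at $v=0$ and the averaging operator $\tfrac{1}{e}\sum_{\gamma\in I}\gamma$ used to lift $I$-equivariantly along square-zero extensions is precisely the mechanism in Edixhoven's smoothness criterion that the paper cites. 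One small observation you also make, and which the paper's statement silently assumes, is that the group structure on the pushforward is only meaningful when $X$ is given as a group scheme; in the paper's application $X = \widetilde{\mathcal{G}}_x$ is a group scheme and the invariant pushforward preserves products, so the group structure transports automatically.
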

\begin{proof}
    This follows from \cref{res:smooth-invariant-pushforward-in-general}. 
\end{proof}
Note that by descent for affine morphisms, any \((\Gamma,\widehat{G})\)-torsor over \(\widetilde{\mathbb{A}}^1_{R}\) in the fpqc topology is representable by a smooth affine scheme over \(\widetilde{\mathbb{A}}^1_{R}\). 
The following lemma shows that all reasonable definitions of \emph{type} for \((\Gamma,\widehat{G})\)-torsors over \(\widetilde{\mathbb{A}}^1_{R}\) are equivalent and coincide with \cref{def:torsor-type}. 
\begin{proposition}
    \label{res:equivalent-definitions-of-type}
    Let \(R\) be an \(\mathcal{O}\)-algebra, \(\widehat{G}\) a \(\Gamma\)-equivariant smooth affine group scheme over \(\widetilde{\mathbb{A}}^1_{R}\), and let \(\mathcal{P}\) and \(\mathcal{Q}\) be \((\Gamma,\widehat{G})\)-torsors over \(\widetilde{\mathbb{A}}^1_{R}\). 
    Then the following are equivalent:
    \begin{enumerate}
        \item There exists an fpqc covering \(\lbrace V_{i} \to \mathbb{A}^1_{R} \rbrace_{i \in \mathcal{I}}\) and isomorphisms of \((\Gamma,\widehat{G})\)-torsors \(\mathcal{P}_{V_{i} \times_{\mathbb{A}^1_{R}} \widetilde{\mathbb{A}}^1_{R}} \isoto \mathcal{Q}_{V_{i} \times_{\mathbb{A}^1_{R}} \widetilde{\mathbb{A}}^1_{R}}\). 
        \item There exists an étale covering \(\lbrace V_{i} \to \mathbb{A}^1_{R} \rbrace_{i \in \mathcal{I}}\) and isomorphisms of \((\Gamma,\widehat{G})\)-torsors \(\mathcal{P}_{V_{i} \times_{\mathbb{A}^1_{R}} \widetilde{\mathbb{A}}^1_{R}} \isoto \mathcal{Q}_{V_{i} \times_{\mathbb{A}^1_{R}} \widetilde{\mathbb{A}}^1_{R}}\). 
        \item For every geometric point \(\overline{x} : \operatorname{Spec} \overline{F} \to \mathbb{A}^1_{R}\) there exists an isomorphism of \((\Gamma,\widehat{G})\)-torsors \(\mathcal{P}_{\overline{x} \times_{\mathbb{A}^1_{R}} \widetilde{\mathbb{A}}^1_{R}} \isoto \mathcal{Q}_{\overline{x} \times_{\mathbb{A}^1_{R}} \widetilde{\mathbb{A}}^1_{R}}\). 
        \item For every geometric point \(\overline{x} : \operatorname{Spec} \overline{F} \to \operatorname{Spec}(R ) \hookrightarrow \mathbb{A}^1_{R}\) (the zero section) there exists an isomorphism of \((\Gamma,\widehat{G})\)-torsors \(\mathcal{P}_{\overline{x} \times_{\mathbb{A}^1_{R}} \widetilde{\mathbb{A}}^1_{R}} \isoto \mathcal{Q}_{\overline{x} \times_{\mathbb{A}^1_{R}} \widetilde{\mathbb{A}}^1_{R}}\).
        \item There exists an étale covering \(R \to R'\) and isomorphisms of \((\Gamma,\widehat{G})\)-torsors \(\mathcal{P}_{R' \otimes_{\mathbb{Z}_{p}} \mathbb{Z}_{q}} \isoto \mathcal{Q}_{R' \otimes_{\mathbb{Z}_{p}} \mathbb{Z}_{q}}\), where subscript \(R' \otimes_{\mathbb{Z}_{p}} \mathbb{Z}_{q}\) means base change to the zero section \(R \otimes_{\mathbb{Z}_{p}} \mathbb{Z}_{q}\) then to \(R' \otimes_{\mathbb{Z}_{p}} \mathbb{Z}_{q}\). 
        \item (Assuming \(\mathbb{F} \supseteq \mathbb{F}_{q}\).) There exists an étale covering \(R \to R'\), \(j \in \mathcal{J}\), and isomorphisms of \((I,\widehat{G})\)-torsors \(\mathcal{P}_{R'_{j}} \isoto \mathcal{Q}_{R'_{j}}\), where subscript \(R'_{j}\) denotes base change to \(R' \otimes_{\mathbb{Z}_{p}} \mathbb{Z}_{q} = \prod_{j \in \mathcal{J}} R_{j}'\) as in the previous point and then to the \(j\)'th component. 
    \end{enumerate}
\end{proposition}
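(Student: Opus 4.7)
The strategy is to package all six conditions through a single Isom-scheme. Set $\mathcal{I} := \pi_*^{\Gamma}\,\mathrm{Isom}_{\widehat{G}}(\mathcal{P},\mathcal{Q})$, a scheme over $\mathbb{A}^1_R$. Since $\mathrm{Isom}_{\widehat{G}}(\mathcal{P},\mathcal{Q})$ is a $\widehat{G}$-torsor, hence smooth affine, over $\widetilde{\mathbb{A}}^1_R$, \cref{res:invariant-pushforward-is-smooth} makes $\mathcal{I}$ smooth affine over $\mathbb{A}^1_R$. By the adjunction for invariant pushforward, sections of $\mathcal{I}$ over $V \to \mathbb{A}^1_R$ correspond to isomorphisms of $(\Gamma,\widehat{G})$-torsors between the pullbacks of $\mathcal{P}$ and $\mathcal{Q}$ to $V \times_{\mathbb{A}^1_R} \widetilde{\mathbb{A}}^1_R$. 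Under this translation each of (1)--(6) becomes an existence statement for sections of (an appropriate pullback of) $\mathcal{I}$.

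With this dictionary, the equivalences (1)$\Leftrightarrow$(2)$\Leftrightarrow$(3) are standard smoothness yoga: a smooth affine morphism admits a section fpqc locally iff étale locally iff it has a point in every geometric fibre. The step (3)$\Rightarrow$(4) is trivial, and (5)$\Leftrightarrow$(6) follows from Shapiro's Lemma (\cref{res:shapiros-lemma}): when $\mathbb{F}\supseteq\mathbb{F}_q$ we have $R'\otimes_{\mathbb{Z}_p}\mathbb{Z}_q = \prod_{j \in \mathcal{J}} R'_j$ with $\Gamma$ permuting factors through $\Gamma/I$, so a $(\Gamma,\widehat{G})$-torsor over the product is the same datum as an $(I,\widehat{G})$-torsor over any one fixed factor. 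For (4)$\Rightarrow$(5) I restrict $\mathcal{I}$ to the zero section $\mathrm{Spec}(R)\hookrightarrow\mathbb{A}^1_R$: the restriction is smooth affine over $\mathrm{Spec}(R)$ with a point in every geometric fibre, hence admits a section after an étale cover $R\to R'$, and unwinding $\pi^{-1}(\{v=0\}) = \mathrm{Spec}(R\otimes_{\mathbb{Z}_p}\mathbb{Z}_q)$ produces (5).

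The subtle direction that closes the circle is (5)$\Rightarrow$(4): I must show that a section over the zero section already forces a point in every geometric fibre of $\mathcal{I}\to\mathbb{A}^1_R$. My plan is to split across the decomposition $\mathbb{A}^1_R = \{v=0\}\cup(\mathbb{A}^1_R\setminus\{0\})$. At $\{v=0\}$, condition (5) supplies the required points. Over $\mathbb{A}^1_R\setminus\{0\}$ the map $\pi$ is an étale $\Gamma$-torsor, so by $\Gamma$-descent $(\Gamma,\widehat{G})$-torsors over $\widetilde{\mathbb{A}}^1_R\setminus\{0\}$ correspond to $G^\ast$-torsors over $\mathbb{A}^1_R\setminus\{0\}$, and $\mathcal{I}|_{\mathbb{A}^1_R\setminus\{0\}}$ becomes a torsor under the corresponding form of $\widehat{G}$. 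At any geometric point of $\mathbb{A}^1_R\setminus\{0\}$ this is a torsor under a connected reductive group over an algebraically closed field, and hence trivial; so the required point exists in every such fibre. Combining the two loci delivers (4), and the circle closes. The technical point I expect to spend real care on is verifying the descent step over $\mathbb{A}^1_R\setminus\{0\}$ and the compatibility of the étale $\Gamma$-torsor description there with the $(\Gamma,\widehat{G})$-torsor formalism; this should be subsumed by the general theory of \cref{sec:equivariant-torsors}.
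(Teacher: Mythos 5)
Your overall strategy — reduce everything to surjectivity of the smooth affine scheme $\mathcal{I} = \pi_*^{\Gamma}\,\mathrm{Isom}_{\widehat{G}}(\mathcal{P},\mathcal{Q})$ over $\mathbb{A}^1_R$ — is exactly what the paper does: \cref{res:equivalent-definitions-of-type} is deduced from the general \cref{res:equivalent-definitions-of-type-in-general}, whose proof introduces the same Isom-scheme. The equivalences (1) $\Leftrightarrow$ (2) $\Leftrightarrow$ (3), the easy implication to (4), the Shapiro step (5) $\Leftrightarrow$ (6), and the observation that away from $v=0$ the map $\pi$ is an étale $\Gamma$-torsor so that every geometric fibre of $\mathcal{I}$ there is automatically non-empty, are all correct and well organised.

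There is, however, a real gap in your closing step, and it is signalled by the misstatement that $\pi^{-1}(\{v=0\}) = \operatorname{Spec}(R\otimes_{\mathbb{Z}_p}\mathbb{Z}_q)$. The scheme-theoretic preimage of $\{v=0\}$ is $\operatorname{Spec}\bigl((R\otimes_{\mathbb{Z}_p}\mathbb{Z}_q)[u]/(u^e)\bigr)$, a non-reduced thickening of the zero section $\{u=0\}=\operatorname{Spec}(R\otimes_{\mathbb{Z}_p}\mathbb{Z}_q)$. Condition (5) provides an isomorphism of $(\Gamma,\widehat{G})$-torsors only over $\{u=0\}$, whereas a geometric point of $\mathcal{I}$ over $\overline{x}\in\{v=0\}$ is by definition a $\Gamma$-equivariant isomorphism over $\overline{F}^{\mathcal{J}}[u]/(u^e)$. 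So in the direction you call "(5) $\Rightarrow$ (4)" the sentence ``at $\{v=0\}$, condition (5) supplies the required points'' is not yet justified: you must lift the isomorphism from the reduced fibre $\overline{F}^{\mathcal{J}}$ to the thickened fibre $\overline{F}^{\mathcal{J}}[u]/(u^e)$, and this lift has to be $I$-equivariant ($I$ scales $u$). That is a genuine deformation-theoretic step: the stepwise obstruction lies in $H^1\bigl(I, \operatorname{Lie}\cdot(u^k/u^{k+1})\bigr)$, which vanishes because $\#I=e$ is prime to $p$ — the same ingredient used in \cref{res:deforming-galois-types} and in the smoothness of the invariant pushforward (\cref{res:smooth-invariant-pushforward-in-general}(3)). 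Equivalently, one shows that the restriction map from $\mathcal{I}|_{\{v=0\}}$ to $(\pi|_{\{u=0\}})_*^{\Gamma}\mathrm{Isom}|_{\{u=0\}}$ is smooth (hence surjective on geometric points) by this prime-to-$p$ argument. Your (4) $\Rightarrow$ (5) direction is unaffected — the thickening restricts to the reduced section, so the identification error is harmless there — but the converse requires the lifting and should be spelled out.
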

\begin{proof}
    This is a consequence of \cref{res:equivalent-definitions-of-type-in-general}.
\end{proof}
\begin{remark}
    The analogue of \cref{res:equivalent-definitions-of-type} for \(\mathfrak{S}_{L,R} \to \mathfrak{S}_{R}\) in place of \(\widetilde{\mathbb{A}}^1_{R} \to \mathbb{A}^1_{R}\) is also true, with the same proof.
\end{remark}

We explain how \cref{res:equivalent-definitions-of-type} relates to the study of Breuil--Kisin \((\Gamma,\widehat{G})\)-torsors.
We fix a Galois type \([\tau] \in H^1(\Gamma,\widehat{G}(\mathbb{F}^\mathcal{J})) \cong H^1 (\Gamma,\widehat{G}(\mathcal{O}^\mathcal{J})) \cong H^1(\Gamma,\widehat{G}(\mathfrak{S}_{L,\mathcal{O}}))\), 
and as in \cref{res:equivariant-torsors-and-group-cohomology} this gives rise to a \((\Gamma,\widehat{G})\)-torsor \({^\tau \mathcal{E}^0}\) over \(\mathfrak{S}_{L,R}\) whose underlying \(\widehat{G}\)-torsor is trivial. 
For any \(\mathcal{O}\)-algebra \(R\), the \((\Gamma,\widehat{G})\)-torsors on \(\mathfrak{S}_{L,R}\) of \emph{type \(\tau\)} are those which are equivalent to (the base change of) \({^\tau \mathcal{E}^0}\) in the sense of any of the equivalent conditions of \cref{res:equivalent-definitions-of-type}.
As a consequence of \cref{res:twisted-fundamental-result-about-equivariant-torsors}, the groupoid of \((\Gamma,\widehat{G})\)-torsors over \(\mathfrak{S}_{L,R}\) of type \(\tau\) is equivalent to the groupoid of
\(\mathcal{G}\)-torsors over \(\mathfrak{S}_{R}\), where \(\mathcal{G} = \pi_{*}^\Gamma {_{\tau} \widehat{G}}\) is the invariant pushforward. 
In \cref{sec:bruhat-tits-group-schemes} we will identify \(\mathcal{G}\) with a Bruhat--Tits group scheme over \(\mathfrak{S}_{R}\) in the sense of Pappas--Zhu \cite{pappas-zhu}. 
The Bruhat--Tits group scheme \(\mathcal{G}\) will depend on a choice of \(x\) corresponding to \(\tau\) via \cref{res:building-classification-of-types}.  
\subsection{Assumptions and fixed choices}
\label{sec:setup}
In this section we summarize some of the assumptions and fixed choices of parameters that will be used in the remainder of the paper, excluding the appendices.

Throughout, we will rely on the notation introduced in \cref{sec:notation-and-preliminaries}, in particular \cref{sec:fields-and-rings} and \cref{sec:dual-groups}.
We emphasize that we only deal with \emph{tamely} ramified extensions over \(\mathbb{Q}_{p}\) (no wildly ramified extensions of \(\mathbb{Q}_{p}\) occur in this paper). 

We allow the possibility that \(I\) acts non-trivially on \(\widehat{G}\) (corresponding to the case when the dual group \(G\) over \(\mathbb{Q}_{p}\) is not unramified), 
and we will state whenever we make the assumption that \(I\) acts trivially on \(\widehat{G}\) (corresponding to the case when \(G\) is unramified). 

At various points we need to invoke certain technical assumptions on the group \(\mathcal{G}_{x}\) that we construct in \cref{sec:bruhat-tits-group-scheme-as-invariant-pushforward}. 
From \cref{sec:root-groups} onwards we invoke \cref{ass:connected-fibers-assumptions} which is that \(\mathcal{G}_{x}\) has connected fibers (which is automatic by \cref{res:connected-fibers} when \(\pi_{1}(G^*)_{I}\) from \cref{sec:algebraic-fundamental-group} is torsion-free). 
For any result relying on the negative loop group \(L^{--}\mathcal{G}_{x}\) (introduced in \cref{sec:negative-loop-group}) we invoke \cref{ass:pappas-zhu-groups-as-dilations} (including in  \cref{res:explicit-smooth-equivalence}, which is one of our main results).

\subsubsection{Fixed choices}
\label{sec:galois-type-setup}
We fix a \textit{Galois type}, by which we mean a class \([\tau] \in H^1 \left( \Gamma , \widehat{G}(\mathcal{O}^{\mathcal{J}}\llb u \rrb)\right)\).
We assume that \(\tau\) is \textit{Frobenius invariant}, meaning that \(\varphi[\tau] \sim [\tau]\). 
This is a condition which is satisfied by all Galois types which describe descent data for Breuil--Kisin modules in characteristic zero by \cref{res:galois-types-are-frobenius-invariant} (at least when \(I\) acts trivially). 

We let \(w \in \widehat{N}(\mathbb{Z}^\mathcal{J})\) and \(\lambda \in \left( X_{*}(\widehat{T})^\mathcal{J} \right)^\Gamma \cong X_{*}(\widehat{T})^I\) (where \(\Gamma\) acts on \(X_{*}(\widehat{T})^\mathcal{J}\) as in \eqref{eq:conjugation-action-on-cocharacters})
be elements such that \(n := w^{-1}u^\lambda \in \widehat{N}(\mathbb{Z}^\mathcal{J}[u^{\pm 1}])\) trivializes \(\tau\) in the sense that
\(\tau(\theta) = n^{-1}(^\theta n)\) for all \(\theta \in \Gamma\). 
Up to isomorphism, such \(w,\lambda\) always exists by \cref{res:strictly-frobenius-invariant-coboundary-in-normalizer} (after enlarging \(E\) if necessary).
Moreover, we may assume that \(\tau\) is \emph{strictly} Frobenius invariant (i. e. \(\varphi \tau = \tau\) on the level of \(1\)-cocycles), or equivalently that \(n \varphi(n)^{-1}\) is fixed by \(\Gamma\) (see \cref{rem:strictly-frobenius-invariant-vs-fixed}). 
Note that if the action of \(I\) fixes \(w\), then \(\tau|_{I} = \omega^\lambda\) as explained in \cref{rem:types-in-normalizer-simple}. 

Recall the Chevalley valuation \(o \in \widetilde{\mathcal{A}}\left( \widehat{T}, \mathbb{F}^{\mathcal{J}}\llp u \rrp \right)\), and let 
\[
x := n \cdot o = o - \frac{1}{e} w^{-1} \lambda . 
\]
Note that \(x\) is fixed by \(\Gamma\), since \(\theta(x) = (^\theta n) \cdot \theta(o) = n \tau(\theta) \cdot o = n \cdot o\), where we use that \(\tau(\theta) \in \widehat{G}(\mathbb{F}^\mathcal{J}\llb u \rrb)\) stabilizes \(o\). 

\begin{remark}
    Note that \(n\) being in the normalizer of \(\widehat{T}\) implies that \(\widehat{T} = n \widehat{T} n^{-1}\) is \(\Gamma\)-stable. 
\end{remark}
\begin{remark}
    For readers familiar with \cite[Section 5.1]{local-models}, it might be instructive to consult some of the explicit examples, in particular \cref{sec:weil-restriction-of-GL3}. 
    As pointed out in \cref{rem:local-model-notation-in-weil-restriction-of-GL3}, our \(\lambda\) corresponds to \(\mathbf{a}'\) in \cite[Section 5.1]{local-models}, 
    and similarly our \(w\) corresponds to \(s_{\text{or}}'\) in loc. cit.. 
\end{remark}

The assumption \(\varphi [\tau] = [\tau]\) implies by \cref{res:frobenius-invariant-type-as-x} 
that there exists \(c \in G^*(\mathcal{O}[v^{\pm 1}])\) such that 
\[ c \cdot \varphi(x) = x . \] 
Since our choice of \(\tau\) is strictly Frobenius invariant we may (and will) take \(c = n \varphi(n)^{-1}\). 

\begin{remark}
    Almost no results rely on the fact that \(c = n \varphi(n)^{-1}\), except 
    for \cref{res:moduli-interpretation-as-BK-modules}, which gives the interpretation of the quotient stack \(\mathcal{Y} := \left[ L \mathcal{G}_{x}^{\wedge \varpi} /^{c,\varphi} L^+ \mathcal{G}_{x}^{\wedge \varpi} \right]\) as the moduli stack of Breuil--Kisin \((\Gamma,\widehat{G})\)-torsors of type \(\tau\). 
\end{remark}

We also fix a dominant cocharacter \(\mu \in X_{*}(T^*_{E})^{\text{dom}}\).

\cref{tab:choices} summarizes the most important choices that are fixed throughout the remainder of this work, excluding appendices. 

\begin{table}[h]
    \centering
    \begin{tabular}{| c | c | c |} 
        \hline 
        \(n \in \widehat{N}(\mathcal{O}^\mathcal{J}[u^{\pm 1}])\) & \(n = w^{-1} u^\lambda\) \\
        \hline 
        \(\tau : \Gamma \to \widehat{G}(\mathcal{O}^\mathcal{J}[ u ])\) & \(\tau(\theta) = n^{-1} (^\theta n)\) \\
        \hline 
        \(x \in \widetilde{\mathcal{A}}\left( T^*, \mathbb{F}\llp v \rrp \right)\) & \(x = n \cdot o = o - \frac{1}{e} w^{-1}\lambda\) \\
        \hline 
        \(c \in G^*(\mathcal{O}[v^{\pm 1}])\) & \(c = n \varphi(n)^{-1}\), \(c \cdot \varphi(x) = x\) \\
        \hline 
    \end{tabular}
    \caption{Choices that are fixed for the reminder of the paper. Note that the choice of \(n\) controls all other entries of the table.}
    \label{tab:choices}
\end{table}

\section{Bruhat--Tits Group Schemes and Loop Groups}
\label{sec:bruhat-tits-group-schemes-and-loop-groups}
The purpose of this chapter is to introduce the various Bruhat--Tits group schemes and loop groups which play a role in the study of the stack \(\mathcal{Y}\). 

\subsection{Bruhat--Tits group schemes over \(\mathbb{A}^1_{\mathcal{O}}\)}
\label{sec:bruhat-tits-group-schemes}
In this section we introduce the group scheme \(\mathcal{G}_{x}\) over \(\mathbb{A}^1_{\mathcal{O}}\) and certain related group schemes.
We provide three different ways of thinking about \(\mathcal{G}_{x}\):
\begin{enumerate}
    \item In \cref{sec:bruhat-tits-group-scheme-as-invariant-pushforward} we define \(\mathcal{G}_{x}\) as a certain invariant pushforward, making the connection to Breuil--Kisin \((\Gamma,\widehat{G})\)-torsors transparent (in view of \cref{sec:moduli-of-BK-modules}, as well as \cref{sec:specialized-equivariant-torsors} and \cref{sec:equivariant-torsors}).
    \item In \cref{sec:pappas-zhu-identification}, we identify \(\mathcal{G}_{x}\) with the group schemes of Pappas--Zhu \cite{pappas-zhu}, under the assumption that \(\pi_{1}(\widehat{G})\) is torsion-free. 
    \item In \cref{sec:bruhat-tits-group-scheme-as-dilation}, we show that we can identify \(\mathcal{G}_{x}\) with a dilation of \(\widehat{G}\) in some cases. 
\end{enumerate}
We carefully describe root groups in \cref{sec:root-groups}, as this will be important in later computations. 
In addition, we extend the construction of \cite[Theorem 4.1]{pappas-zhu} in \cref{res:pappas-zhu-concave-function} to obtain groups of the form \(\mathcal{G}_{x,f}\) where \(f\) is a concave function on the root system. 

\begin{remark}
    We mention some related work that we will not address further. 
    In addition to \cite{pappas-zhu}, the paper \cite{levinLocalModelsWeilrestricted2016} constructs the groups \(\mathcal{G}_{x}\) for possibly wildly ramified Weil restrictions, 
    and related Pappas--Zhu local models.  
    As the group scheme of Pappas--Zhu is a Bruhat--Tits group scheme over a 2-dimensional base, the paper \cite{balajiBruhatTitsTheoryHigher2024} also seems relevant, as it develops a theory of Bruhat--Tits group schemes over higher dimensional bases (although they focus on the case of multivariate power series rings over fields). 
\end{remark}

\subsubsection{\(\mathcal{G}\) as invariant pushforward}
\label{sec:bruhat-tits-group-scheme-as-invariant-pushforward}
View \(\widehat{G}, \widehat{B}, \widehat{T}, e\) as a Chevalley group
scheme over \(\widetilde{\mathbb{A}}^1_{\mathcal{O}}\) with the diagonal action
of the group \(\Gamma\). 
Let \(n = w^{-1}u^\lambda \in \widehat{N}(\mathcal{O}^\mathcal{J}[u^{\pm 1}])\), \(\tau\), and \(x\) be as in \cref{sec:galois-type-setup}.
Consider the groups over \(\widetilde{\mathbb{A}}^1_{\mathcal{O}}\) given by\footnote{
    Suppose \(\widehat{G} = \operatorname{Spec} \Lambda\) and \(\widehat{G}[u^{-1}] = \operatorname{Spec}\Lambda[u^{-1}]\), where \(\widehat{G}[u^{-1}]:= \widehat{G}_{\widetilde{\mathbb{A}}^1_{\mathcal{O}} \setminus \lbrace 0 \rbrace}\) and \(\Lambda[u^{-1}] := \Lambda \otimes_{\mathcal{O}^\mathcal{J}[u]} \mathcal{O}^\mathcal{J}[u^{\pm 1}]\). 
    Then the automorphism \(\operatorname{Ad}_{n} : \widehat{G}[u^{-1}] \isoto \widehat{G}[u^{-1}]\) corresponds to an automorphism \(\operatorname{Ad}_{n}^* : \Lambda[u^{-1}] \isoto \Lambda[u^{-1}]\). 
    What we mean by \(n \widehat{G}n^{-1}\) is then the spectrum of the subring \(\operatorname{Ad}_{n}^* (\Lambda) \subset \Lambda[u^{-1}]\).
    Similar remarks apply to \(n \widehat{B} n^{-1}\) and \(n \widehat{T} n^{-1}\). 
} 
\begin{align*}
\widetilde{\mathcal{G}}_{x} & := n \widehat{G} n^{-1}, \\
\widetilde{\mathcal{B}}_{x} & := n \widehat{B} n^{-1} , \\
\widetilde{\mathcal{T}}_{x} & := n \widehat{T} n^{-1} = \widehat{T}.
\end{align*}
We will often suppress the subscript \(x\). 
Over \(\widetilde{\mathbb{A}}^1_{\mathcal{O}}\) we have an isomorphism of
\(\Gamma\)-equivariant group schemes
\begin{align}
    \label{eq:tau-twisted-equivariant-group-scheme}
    \operatorname{Ad}_{n} : {_{\tau}}\widehat{G} & \isoto \widetilde{\mathcal{G}}, \\
    g & \mapsto n g n^{-1} \nonumber 
\end{align}
where \({_{\tau}}\widehat{G}\) denotes the group scheme \(\widehat{G}\) with the \(\tau\)-twisted
\(\Gamma\)-action
\(\theta \cdot g = \tau(\theta) (^{\theta}g)\tau(\theta)^{-1}\).
We have a natural identification
\(\widetilde{\mathcal{G}}[u^{-1}] \cong \widehat{G}[u^{-1}]\) over
\(\widetilde{\mathbb{A}}^1_{\mathcal{O}}\setminus \left\{ 0 \right\}\), via
which we can identify the \(\mathcal{O}^\mathcal{J}[u]\)-points of
\(\widetilde{\mathcal{G}}\) as 
\[
\widetilde{\mathcal{G}}\left( \mathcal{O}^\mathcal{J}[u] \right) = n \widehat{G}\left( \mathcal{O}^\mathcal{J}[u] \right) n^{-1} \subset \widehat{G} \left( \mathcal{O}^\mathcal{J}[u^{\pm 1}] \right) = \widetilde{\mathcal{G}} \left( \mathcal{O}^\mathcal{J}[u^{\pm 1}] \right) .
\] 
More generally, we have similar identifications on \(R\)-points,
where \(R\) is any \(\mathcal{O}^\mathcal{J}[u]\)-algebra in which \(u\) is not a zerodivisor.

Note that 
\(\widetilde{\mathcal{G}}, \widetilde{\mathcal{B}}, \widetilde{\mathcal{T}}\)
are all stable under the \(I\)-action since \(\tau(\gamma) = \omega(\gamma)^\lambda\). 
We define
\begin{align*}
\breve{\mathcal{G}}_{x} & := (\pi_{I})_{*}^{I} \widetilde{\mathcal{G}} , \\
\breve{\mathcal{B}}_{x} & := (\pi_{I})_{*}^{I} \widetilde{\mathcal{B}} , \\
\breve{\mathcal{T}} & := (\pi_{I})_{*}^{I} \widetilde{\mathcal{T}} .
\end{align*}
These are smooth affine group schemes over \(\breve{\mathbb{A}}^1_{\mathcal{O}}\) by \cref{res:invariant-pushforward-is-smooth}.

The group schemes \(\breve{\mathcal{G}}_{x}\) and \(\breve{\mathcal{T}}\) have residual actions of
\(\Gamma_{0} = \Gamma / I\) which are equivariant over the base
\(\breve{\mathbb{A}}^{1}_{\mathcal{O}}\). 
We can then let
\begin{align*}
\mathcal{G}_{x} & := (\pi_0)_{*}^{\Gamma_{0}} \breve{\mathcal{G}}_{x} \cong \pi_{*}^{\Gamma} \widetilde{\mathcal{G}} , \\
\mathcal{T} & := (\pi_0)_{*}^{\Gamma_{0}}\breve{\mathcal{T}} \cong \pi_{*}^{\Gamma}\widetilde{\mathcal{T}} .
\end{align*}
Again, these are smooth affine group schemes over \(\mathbb{A}^1_{\mathcal{O}}\) by \cref{res:invariant-pushforward-is-smooth}. 

As a consequence of \eqref{eq:tau-twisted-equivariant-group-scheme} and \cref{res:twisted-fundamental-result-about-equivariant-torsors}, we obtain: 
\begin{corollary}
    \label{res:G-as-invariant-pushforward-corollary}
    For any \(\mathcal{O}\)-algebra \(R\) we have equivalences of groupoids  
    \begin{align*} 
        \lbrace (\Gamma,\widehat{G})\text{-torsors over }\widetilde{\mathbb{A}}^1_{R}\text{ of type }\tau \rbrace 
        \simeq & 
        \lbrace (\Gamma, \widetilde{\mathcal{G}}_{x})\text{-torsors over }\widetilde{\mathbb{A}}^1_{R}\text{ of trivial type}\rbrace \\
        \simeq &
        \lbrace (\Gamma_{0},\breve{\mathcal{G}}_{x})\text{-torsors over }\breve{\mathbb{A}}^1_{R} \rbrace \\
        \simeq &
        \lbrace \mathcal{G}_{x}\text{-torsors over }\mathbb{A}^1_{R} \rbrace ,
    \end{align*}
    where being of type \(\tau\) can be interpreted in terms of \cref{res:equivalent-definitions-of-type}.
\end{corollary}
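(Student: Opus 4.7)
The plan is to deduce the corollary as a direct, three-step unwinding of the general equivariant-torsor machinery established in Section~\ref{sec:equivariant-torsors}, together with the group-scheme isomorphism \eqref{eq:tau-twisted-equivariant-group-scheme}. Each of the three listed equivalences corresponds to one structural input: a change of structure group via $\operatorname{Ad}_n$, an $I$-equivariant descent along $\pi_I$, and a $\Gamma_0$-equivariant descent along $\pi_0$.

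First I would handle the top equivalence. By \cref{res:equivalent-definitions-of-type}, a $(\Gamma,\widehat{G})$-torsor over $\widetilde{\mathbb{A}}^1_R$ of type $\tau$ is one that is étale-locally on $R$ isomorphic to $^\tau\mathcal{E}^0$; as in \cref{res:equivariant-torsors-and-group-cohomology} the $\Gamma$-equivariant automorphism group scheme of $^\tau\mathcal{E}^0$ is $_\tau\widehat{G}$, the scheme $\widehat{G}$ equipped with the $\tau$-twisted $\Gamma$-action $\theta\cdot g=\tau(\theta)(^\theta g)\tau(\theta)^{-1}$. Hence twisting by $^\tau\mathcal{E}^0$ yields an equivalence between $(\Gamma,\widehat{G})$-torsors of type $\tau$ and $(\Gamma,{_\tau}\widehat{G})$-torsors of trivial type. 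Transporting along the $\Gamma$-equivariant isomorphism $\operatorname{Ad}_n:{_\tau}\widehat{G}\isoto\widetilde{\mathcal{G}}_x$ of \eqref{eq:tau-twisted-equivariant-group-scheme} then identifies the latter groupoid with $(\Gamma,\widetilde{\mathcal{G}}_x)$-torsors of trivial type over $\widetilde{\mathbb{A}}^1_R$.

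Next I would apply \cref{res:twisted-fundamental-result-about-equivariant-torsors} in two stages, factoring $\pi=\pi_0\circ\pi_I$ as a composite whose first factor $\pi_I:\widetilde{\mathbb{A}}^1_R\to\breve{\mathbb{A}}^1_R$ is a finite flat cover that is an étale $I$-torsor away from $\lbrace 0\rbrace$ (and $\Gamma_0$-equivariant), and whose second factor $\pi_0:\breve{\mathbb{A}}^1_R\to\mathbb{A}^1_R$ is a finite étale $\Gamma_0$-torsor. The invariant pushforward $(\pi_I)_*^I$ sends the $\Gamma$-equivariant group $\widetilde{\mathcal{G}}_x$ to the $\Gamma_0$-equivariant group $\breve{\mathcal{G}}_x$ (smooth and affine by \cref{res:invariant-pushforward-is-smooth}), and the first application of \cref{res:twisted-fundamental-result-about-equivariant-torsors} produces the equivalence between $(\Gamma,\widetilde{\mathcal{G}}_x)$-torsors of trivial type on $\widetilde{\mathbb{A}}^1_R$ and $(\Gamma_0,\breve{\mathcal{G}}_x)$-torsors on $\breve{\mathbb{A}}^1_R$. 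A second application of the same theorem along $\pi_0$, with the group $\mathcal{G}_x=(\pi_0)_*^{\Gamma_0}\breve{\mathcal{G}}_x$, yields the final equivalence with $\mathcal{G}_x$-torsors on $\mathbb{A}^1_R$.

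The only delicate point I anticipate is ensuring that \cref{res:twisted-fundamental-result-about-equivariant-torsors} applies at the origin $v=0$, where $\pi$ ramifies and is not a $\Gamma$-torsor. This is not actually an obstacle, because that theorem was formulated precisely to cover invariant pushforward along ramified $\Gamma$-covers between smooth affine bases provided the acting groups are tame (which $\Gamma$ is, and $I\subseteq\Gamma$ has order $e$ prime to $p$): the equivalence is established by constructing an explicit quasi-inverse via the functor sending a $\mathcal{G}_x$-torsor $\mathcal{P}$ to its base change $\pi^*\mathcal{P}$ equipped with the canonical $\Gamma$-descent datum. I would verify, using the smoothness of all group schemes involved (\cref{res:invariant-pushforward-is-smooth}) and the flat base change properties of the invariant pushforward, that these quasi-inverses compose correctly at the origin and on $R$-algebras of characteristic $p$, which is the case not covered by plain étale descent.
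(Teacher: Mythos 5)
Your approach is essentially the paper's: the paper proves this corollary by citing \eqref{eq:tau-twisted-equivariant-group-scheme} together with \cref{res:twisted-fundamental-result-about-equivariant-torsors}, and your three steps (twist by $_\tau\mathcal{E}^0$ and transport along $\operatorname{Ad}_n$, then factor the invariant pushforward as $(\pi_0)_*^{\Gamma_0}\circ(\pi_I)_*^I$) are exactly the unwinding of those two ingredients that the paper leaves implicit. So the route is the same; the only things worth tightening are in your closing paragraph.

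First, tameness is \emph{not} a hypothesis of \cref{res:twisted-fundamental-result-about-equivariant-torsors}: that corollary (and \cref{res:fundamental-result-about-equivariant-torsors}) is stated and proved in an arbitrary topos with no hypothesis on $\Gamma$ whatsoever, and in particular holds at the ramified point $v=0$ for free. Where tameness (i.e.\ $e=\#I$ prime to $p$) enters is in \cref{res:smooth-invariant-pushforward-in-general} and \cref{res:invariant-pushforward-is-smooth}, to guarantee that the invariant pushforward of a \emph{smooth} group scheme is again smooth affine; this smoothness is what makes the type a well-behaved invariant and gives the string of equivalent formulations in \cref{res:equivalent-definitions-of-type}, but it is not what makes the equivalence of groupoids work. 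Second, the quasi-inverse to $p_*^\Gamma$ is not ``base change $\pi^*\mathcal{P}$ with the canonical descent datum'': $\pi^*\mathcal{P}$ is a torsor for $\pi^*\mathcal{G}_x=\pi^*\pi_*^\Gamma\widetilde{\mathcal{G}}_x$, which at the ramified locus is a strictly \emph{smaller} group than $\widetilde{\mathcal{G}}_x$ (the adjunction counit $\pi^*\pi_*^\Gamma\widetilde{\mathcal{G}}_x\to\widetilde{\mathcal{G}}_x$ is not an isomorphism), and one must take the associated bundle $\pi^*\mathcal{P}\times^{\pi^*\pi_*^\Gamma\widetilde{\mathcal{G}}_x}\widetilde{\mathcal{G}}_x$ as in the proof of \cref{res:fundamental-result-about-equivariant-torsors}. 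Finally, your intermediate step applies the pushforward equivalence with acting group $I$ only, so to land in $(\Gamma_0,\breve{\mathcal{G}}_x)$-torsors you need the observation that $(\pi_I)_*^I$ is naturally $\Gamma_0$-equivariant; this is implicit in the paper's $\mathcal{G}_x=(\pi_0)_*^{\Gamma_0}\breve{\mathcal{G}}_x\cong\pi_*^\Gamma\widetilde{\mathcal{G}}_x$ but deserves a sentence if you are writing the proof out.
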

\begin{remark}
    These equivalences can be made explicit, by inspecting the ingredients going into \cref{res:twisted-fundamental-result-about-equivariant-torsors}. 
    The first equivalence is given by the associated bundle construction using \({_{\tau}\mathcal{E}^0}\), the trivial \(\widehat{G}\) torsor with \(\tau\)-twisted \(\Gamma\)-action, 
    and the remaining equivalences are given by invariant pushforward. 
    Note that these equivalences are functorial in \(R\). 
\end{remark}
\begin{remark}
    There is an analogous result of the above in which \(\widetilde{\mathbb{A}}^1_{R}\) is replaced with \(\mathfrak{S}_{L,R}\), \(\breve{\mathbb{A}}^1_{R}\) is replaced with \(\operatorname{Spec}\left( R \otimes_{\mathbb{Z}_{p}} \mathbb{Z}_{q}\right)\llb v \rrb\), and \(\mathbb{A}^1_{R}\) is replaced with \(\mathfrak{S}_{R}\).
    This is the case of interest in the study of Breuil--Kisin \((\Gamma,\widehat{G})\)-torsors, as in \cref{sec:moduli-of-BK-modules}. 
\end{remark}

\subsubsection{\(\mathcal{G}\) as the group scheme of Pappas--Zhu}
\label{sec:pappas-zhu-identification}
The goal of this section is to identify \(\mathcal{G}_{x}\) as defined above with the group schemes of \cite[Theorem 4.1]{pappas-zhu}, or more precisely give a criterion for when we can do so. 
That is, we will prove the following result: 
\begin{proposition}
    \label{res:pappas-zhu-identification-basic}
    Assume that \(\pi_{1}(G^*)_{I}\) is torsion-free. 
    Then \(\mathcal{G}_{x}\) can be identified with the Bruhat--Tits group scheme for \(G^*\) over \(\mathbb{A}^1_{\mathcal{O}}\) attached to \(x\) in the sense of \cite[Theorem 4.1]{pappas-zhu}.
    More precisely, we have first of all identifications of buildings 
    \[
        \widetilde{\mathcal{B}}\left( G^*_{\mathbb{F}\llp v \rrp}, \mathbb{F}\llp v \rrp \right) = \widetilde{\mathcal{B}}\left( G^*_{E\llp v \rrp}, E\llp v \rrp \right) .
    \]
    with respect to which we will view \(x\) interchangeably as a member of any one of the buildings.  
    Now \(\mathcal{G}_{x}\) is a smooth affine group scheme over \(\mathbb{A}^1_{\mathcal{O}}\) with connected fibers, satisfying the following properties:
    \begin{enumerate}
        \item \(\mathcal{G}_{x}[v^{-1}] = G^*\);
        \item The base change of \(\mathcal{G}_{x}\) to \(E\llb v \rrb\) or \(\mathbb{F}\llb v \rrb\) (along the obvious maps) is the parahoric group scheme for \(G^*_{E\llp v \rrp}\) or \(G^*_{\mathbb{F}\llp v \rrp}\) (respectively) attached to \(x\). 
     \end{enumerate}
     Moreover, \(\mathcal{G}_{x}\) is the unique smooth affine group scheme over \(\mathbb{A}^1_{\mathcal{O}}\) with connected fibers for which \(\mathcal{G}_{x}[v^{-1}] = G^*\) and \(\mathcal{G}_{x}(E\llb v \rrb) \subset G^*(E\llp v \rrp)\) is the parahoric subgroup determined by \(x\). 
\end{proposition}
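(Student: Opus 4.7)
The plan is to verify that the smooth affine group scheme $\mathcal{G}_x$ constructed as the invariant pushforward satisfies the four properties characterizing the Pappas--Zhu group scheme, then invoke the uniqueness clause of \cite[Theorem 4.1]{pappas-zhu}. Smoothness and affineness are already in \cref{res:invariant-pushforward-is-smooth}, so what remains is (a) the identification of the generic fiber with $G^*$, (b) the identification of the base change to $E\llb v \rrb$ and $\mathbb{F}\llb v \rrb$ with the parahoric subgroups attached to $x$, and (c) connectedness of the fibers, which is the place where the hypothesis $\pi_1(G^*)_I$ torsion-free enters decisively.

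For (a), over $\mathbb{A}^1_{\mathcal{O}} \setminus \{0\}$ the map $\pi$ is an étale $\Gamma$-torsor, so $\pi_*^\Gamma$ is just étale descent. The Chevalley conjugation isomorphism $\operatorname{Ad}_n : {_{\tau}}\widehat{G} \isoto \widetilde{\mathcal{G}}$ of \eqref{eq:tau-twisted-equivariant-group-scheme} identifies $\widetilde{\mathcal{G}}[u^{-1}]$ with $\widehat{G}[u^{-1}]$ equipped with the $\tau$-twisted $\Gamma$-action, which is precisely the descent datum defining $G^*$ (cf.\ \cref{sec:dual-groups}). For (b), I would use the building-theoretic description of parahoric subgroups: by construction $\widetilde{\mathcal{G}}(\mathbb{F}^\mathcal{J}\llb u \rrb) = n\,\widehat{G}(\mathbb{F}^\mathcal{J}\llb u \rrb)\,n^{-1}$ is the stabilizer of $x = n \cdot o$ in $\widehat{G}(\mathbb{F}^\mathcal{J}\llp u \rrp)$, which is hyperspecial (hence connected). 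Taking $\Gamma$-invariants and combining with tamely ramified descent of buildings (\cref{sec:tamely-ramified-descent}) together with the stabilizer description \cite[Proposition 8.3.16]{kaletha-prasad} identifies this with the full stabilizer $G^*(\mathbb{F}\llp v \rrp)_x$ of $x$ in $G^*(\mathbb{F}\llp v \rrp)$. The analogous argument over $E\llb v \rrb$ (where $G^*$ is already split up to an unramified twist controlled by $\Gamma_0$) is formally identical.

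The main obstacle is (c). In general the full stabilizer $G^*(\mathbb{F}\llp v \rrp)_x$ can be strictly larger than the parahoric $G^*(\mathbb{F}\llp v \rrp)_x^0$, and the discrepancy is measured exactly by $\pi_1(G^*)_I$ via the Kottwitz-type map described in \cref{sec:algebraic-fundamental-group} and \cite{pappasTwistedLoopGroups2008}. The plan is to show that when $\pi_1(G^*)_I$ is torsion-free, the invariant pushforward's special fiber is automatically connected: concretely, one controls the component group of the special fiber of $\mathcal{G}_x$ by restricting the exact sequence coming from the Kottwitz map to the stabilizer at $x$, and argues that torsion-freeness forces the component group to embed into a torsion-free lattice which is trivial for a connected-component argument (for a group scheme of finite type over a field, the component group is finite, so a torsion-free finite group is trivial). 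With connectedness of fibers established, $\mathcal{G}_x$ satisfies the hypotheses of \cite[Theorem 4.1]{pappas-zhu}, and the uniqueness clause there yields the identification, proving also the final uniqueness statement of the proposition.
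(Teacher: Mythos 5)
Your proposal follows essentially the same route as the paper, which factors the statement into two lemmas: \cref{res:connected-fibers} (torsion-freeness of $\pi_1(G^*)_I$ forces connectedness of the fibers of the invariant pushforward) and \cref{res:pappas-zhu-identification} (once connectedness of fibers is known, the invariant pushforward agrees with the Pappas--Zhu group scheme by its characterizing properties and the uniqueness clause of \cite[Theorem 4.1]{pappas-zhu}). Your parts (a), (b), (c), and the appeal to uniqueness cover exactly this content.

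Two small imprecisions are worth flagging. First, in (a) you say the $\tau$-twisted $\Gamma$-action on $\widehat{G}[u^{-1}]$ is ``precisely the descent datum defining $G^*$''; strictly speaking the defining descent datum is the \emph{untwisted} one, and the two are merely cohomologous (the isomorphism being $\operatorname{Ad}_n$ away from the origin, using that $\tau(\theta) = n^{-1}({}^\theta n)$ is a coboundary in $\widehat{G}(\mathcal{O}^\mathcal{J}[u^{\pm 1}])$). Second, in (b)--(c) it helps to record that the stabilizer of the enlarged-building point $x$ in $G^*(\mathbb{F}\llp v\rrp)$ automatically lies in the bounded subgroup $G^*(\mathbb{F}\llp v\rrp)^1$ (since elements outside $G^1$ act by nontrivial translations in the center direction of the enlarged building), so that the component group of the special fiber is a priori a subgroup of $\pi_1(G^*)_{I,\text{tor}}$ specifically; your finite-group-in-a-torsion-free-lattice argument is then a valid variant of the paper's appeal to \cite[Corollary 11.6.2]{kaletha-prasad} that $G^*(\mathbb{F}\llp v\rrp)^0 = G^*(\mathbb{F}\llp v\rrp)^1$. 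Finally, you should note (as the paper does via \cite[\href{https://stacks.math.columbia.edu/tag/055J}{Lemma 055J}]{stacks-project}) that connectedness of the $\mathbb{F}$-fiber over $v=0$ suffices for connectedness of all fibers over the zero section of $\mathbb{A}^1_{\mathcal{O}}$, and that the fibers away from $v=0$ are reductive hence automatically connected. These are minor completions rather than gaps; the argument is sound.
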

The proof of this result will be given at the end of the subsection, and as we will see the assumption on \(\pi_{1}(G^*)_{I}\) is used to ensure connectedness of fibers of \(\mathcal{G}_{x}\).
The claim about identifications of buildings is covered in \cite{pappas-zhu}, so we will not provide a complete proof of this fact. 
We will use the terminology ``Bruhat--Tits group scheme in the sense of Pappas--Zhu'' multiple times in the remainder of this subsection without spelling out all the required properties as we have done in \cref{res:pappas-zhu-identification-basic}.

We will begin our discussion over \(\widetilde{\mathbb{A}}^1_{\mathcal{O}}\), then over \(\breve{\mathbb{A}}^1_{\mathcal{O}}\), and finally over \(\mathbb{A}^1_{\mathcal{O}}\).
In this subsection, we will for \(j \in \mathcal{J}\) let \(\widetilde{\mathcal{G}}_{j}, \widetilde{\mathcal{B}}_{j}, \widetilde{\mathcal{T}}_{j}\)
denote the base changes of \(\widetilde{\mathcal{G}}, \widetilde{\mathcal{B}}, \widetilde{\mathcal{T}}\) to \(\mathcal{O}_{j}[u]\), 
and let \(\breve{\mathcal{G}}_{j}, \breve{\mathcal{B}}_{j}, \breve{\mathcal{T}}_{j}\) denote the base changes of \(\breve{\mathcal{G}}, \breve{\mathcal{B}}, \breve{\mathcal{T}}\) to \(\mathcal{O}_{j}[v]\). 

As a first step towards understanding \cref{res:pappas-zhu-identification-basic}, we begin by discussing identifications of the apartments from \cref{sec:buildings} with analogously defined apartments over \(E^\mathcal{J}\llp u \rrp\). 
Following \cite[Section 4.1]{pappas-zhu} we can for each \(j \in \mathcal{J}\) identify \(\widetilde{\mathcal{A}}\left( \widehat{T}, \mathbb{F}_{j}\llp u \rrp \right) = \widetilde{\mathcal{A}}\left( \widehat{T}, E_{j}\llp u \rrp \right)\), and therefore identify 
\[
    \widetilde{\mathcal{A}}\left( \widehat{T}, \mathbb{F}^\mathcal{J}\llp u \rrp \right) = \widetilde{\mathcal{A}}\left( \widehat{T}, E^\mathcal{J}\llp u \rrp \right) . 
\]
This identifies the Chevalley valuation \(o\) on the left hand side with the obvious Chevalley valuation \(o\) on the right hand side, 
and it is compatible with the obvious actions of \(\widehat{N}(\mathcal{O}^\mathcal{J}\llp u \rrp)\) on the left and right hand sides, 
as well as the \(\Gamma\)-actions.
By taking \(\Gamma\)-fixed points, we have a resulting identification \(\widetilde{\mathcal{A}}\left( T^*_{\mathbb{F}\llp v \rrp}, \mathbb{F}\llp v \rrp \right) = \widetilde{\mathcal{A}}\left( T^*_{E\llp v \rrp}, E \llp v \rrp \right)\).

Note that the point \(x \in \widetilde{\mathcal{A}}\left( \widehat{T}, \mathbb{F}^\mathcal{J} \llp u \rrp \right)\) can be interpreted as a point of \(\widetilde{\mathcal{A}}\left( \widehat{T}, E^\mathcal{J}\llp u \rrp \right)\), 
and \(x_{j} \in \widetilde{\mathcal{A}} \left( \widehat{T}, \mathbb{F}_{j} \llp u \rrp \right)\) as a point of \(\widetilde{\mathcal{A}}\left( \widehat{T}, E_{j} \llp u \rrp \right)\). 

\begin{proposition}
    Let \(j \in \mathcal{J}\). 
    Then \(\widetilde{\mathcal{G}}_{j}\) is the Bruhat--Tits group scheme for \(\widehat{G}_{j}\) attached to \(x_{j}\) in the sense of Pappas--Zhu. 
\end{proposition}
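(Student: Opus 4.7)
The plan is to verify that $\widetilde{\mathcal{G}}_j$ satisfies the defining properties of the Bruhat–Tits group scheme of Pappas–Zhu attached to $x_j$, and then invoke uniqueness. The required properties are: (a) $\widetilde{\mathcal{G}}_j$ is smooth affine over $\mathcal{O}_j[u]$ with connected fibers; (b) its restriction to $\operatorname{Spec}\mathcal{O}_j[u^{\pm 1}]$ is identified with $\widehat{G}_j$; (c) its base change to $E_j\llb u \rrb$ (resp.\ $\mathbb{F}_j\llb u \rrb$) is the parahoric group scheme for $\widehat{G}_j$ attached to $x_j$.

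Properties (b) and (c) are the most direct. Property (b) is essentially the definition: since $n\in\widehat{N}(\mathcal{O}_j[u^{\pm 1}])$, the conjugation map $\operatorname{Ad}_n$ gives an isomorphism $\widehat{G}_j|_{\mathcal{O}_j[u^{\pm 1}]} \isoto \widetilde{\mathcal{G}}_j|_{\mathcal{O}_j[u^{\pm 1}]}$. For (c), I would compute on points: $\widetilde{\mathcal{G}}_j(\mathbb{F}_j\llb u \rrb) = n\widehat{G}(\mathbb{F}_j\llb u \rrb)n^{-1}$, viewed inside $\widehat{G}(\mathbb{F}_j\llp u \rrp)$. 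As recalled in \cref{sec:valuations-of-the-root-datum}, $\widehat{G}(\mathbb{F}_j\llb u \rrb)$ is the hyperspecial parahoric subgroup at the Chevalley valuation $o_j$. Since the action of $\widehat{G}(\mathbb{F}_j\llp u \rrp)$ on the building intertwines the stabilizer-of-a-point construction, and $n \cdot o_j = x_j$ by definition, the conjugate $n\widehat{G}(\mathbb{F}_j\llb u \rrb)n^{-1}$ is the parahoric subgroup at $x_j$. The same argument works verbatim over $E_j\llb u \rrb$.

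For (a), I would argue by constructing an explicit open cell. For each root $a \in \Phi(\widehat{G},\widehat{T})$ a direct computation using the pinning gives
\[
    n\, u_a(x)\, n^{-1} \;=\; u_{w^{-1}a}\!\left(\pm\, u^{\langle a,\lambda\rangle} x\right),
\]
so the ``$b$-root subgroup'' of $\widetilde{\mathcal{G}}_j$ (for $b = w^{-1}a$) is the closed $\mathcal{O}_j[u]$-subgroup $u_b\!\left(u^{\langle b, w^{-1}\lambda\rangle}\,\mathbb{G}_a\right) \cong \mathbb{G}_a$. Together with the torus $\widetilde{\mathcal{T}}_j = \widehat{T}_j$, multiplication on these yields an open cell that is manifestly smooth affine over $\mathcal{O}_j[u]$ with connected fibers; translating by elements of $\widetilde{\mathcal{G}}_j(\mathcal{O}_j[u])$ covers $\widetilde{\mathcal{G}}_j$, so $\widetilde{\mathcal{G}}_j$ itself is smooth affine with connected fibers (connectedness follows because any fiber contains a translate of the open cell as a dense open connected subset, and its generic fiber $\widehat{G}_j$ is connected). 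Moreover, the matching between root groups in the open cell and the filtrations $U_{b,x_j} = u_b(u^{\lceil -e\langle b, x_j - o_j\rangle\rceil}\mathbb{F}_j\llb u \rrb) = u_b(u^{\langle b, w^{-1}\lambda\rangle}\mathbb{F}_j\llb u \rrb)$ of \eqref{eq:filtrations-of-root-groups} furnishes an alternative, more explicit verification of (c).

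The main obstacle is the passage from ``spectrum of a subring'' in the definition of $\widetilde{\mathcal{G}}_j = n\widehat{G}_j n^{-1}$ to concrete geometric properties (smoothness, connected fibers) over $\mathcal{O}_j[u]$; the root-group open cell bridges this gap cleanly. Once the three properties are established, uniqueness of the Bruhat–Tits group scheme of Pappas–Zhu \cite[Theorem 4.1]{pappas-zhu} gives the identification.
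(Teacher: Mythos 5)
Your core strategy---verify that $\widetilde{\mathcal{G}}_j(E_j\llb u \rrb)$ is the parahoric subgroup at $x_j$ via the building action, then invoke the uniqueness part of Pappas--Zhu \cite[Theorem 4.1]{pappas-zhu}---is the same as the paper's, and your building-theoretic verification of the parahoric property matches the paper essentially word for word (the paper only checks the $E_j\llb u\rrb$-fiber, since that is what the uniqueness characterization requires; the $\mathbb{F}_j\llb u\rrb$ check is redundant).

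The one place you part ways with the paper is in your treatment of property (a), which you flag as ``the main obstacle'' and attack by constructing an open cell. This is unnecessary and slightly fragile. The point is that $\operatorname{Ad}_n^*\colon \Lambda[u^{-1}]\to\Lambda[u^{-1}]$ is an $\mathcal{O}_j[u^{\pm 1}]$-algebra (indeed Hopf algebra) automorphism, so its restriction to $\Lambda$ is an $\mathcal{O}_j[u]$-algebra isomorphism $\Lambda\isoto\operatorname{Ad}_n^*(\Lambda)$ intertwining the comultiplications. Hence $\widetilde{\mathcal{G}}_j\cong\widehat{G}_j$ as \emph{abstract} group schemes over $\mathcal{O}_j[u]$ (the only thing that differs is the embedding into $\widehat{G}_j[u^{-1}]$), so smoothness, affineness, and connectedness of fibers come for free from the Chevalley group $\widehat{G}_j$. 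Your open-cell route, by contrast, requires knowing that $\widetilde{\mathcal{V}}_{x,j}\hookrightarrow\widetilde{\mathcal{G}}_j$ is open and fiberwise dense---a Bruhat--Tits input cited in the paper's \cref{sec:open-cell}---and your claim that ``translating by elements of $\widetilde{\mathcal{G}}_j(\mathcal{O}_j[u])$ covers $\widetilde{\mathcal{G}}_j$'' is not justified: fiberwise density only gives translates by $\overline{\kappa}$-points of a fiber, not by $\mathcal{O}_j[u]$-points, so extra work is needed to promote that to an open cover exhibiting smoothness. Replacing this paragraph with the one-line abstract-isomorphism observation closes the gap and matches how the paper silently treats (a).
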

\begin{proof}
    We note that \(\widehat{G}(\mathcal{O}_{j}\llp u \rrp)\) acts on the building \(\widetilde{\mathcal{B}}\left( \widehat{G}, E_{j}\llp u \rrp \right)\)
    by restricting the action of \(\widehat{G}(E_{j}\llp u \rrp)\) via the natural map \(\widetilde{\mathcal{G}}_{j}(\mathcal{O}_{j}\llp u \rrp) = \widehat{G}(\mathcal{O}_{j}\llp u \rrp) \hookrightarrow \widehat{G}(E_{j}\llp u \rrp)\). 
    Since \(\widetilde{\mathcal{G}}_{j}(E_{j}\llb u \rrb) = n_{j}\widehat{G}(E_{j}\llb u \rrb)n_{j}^{-1}\) and the action of \(\widehat{G}(E_{j}\llp u \rrp)\) on the building is compatible with the conjugation on subgroups,
    it follows that \(\widetilde{\mathcal{G}}_{j} \times_{\mathcal{O}_{j}[u]} \operatorname{Spec}E_{j}\llb u \rrb\) is the parahoric group scheme attached to the hyperspecial vertex \(x_{j} = n_{j} \cdot o_{j}\). 
    The claim then follows from the uniqueness part of the proof of \cite[Theorem 4.1]{pappas-zhu} (in Section 4.2.1).
\end{proof}

Since the \(I\)-covering 
\(\pi_{I} : \widetilde{\mathbb{A}}^1_{\mathcal{O}} \to \breve{\mathbb{A}}^{1}_{\mathcal{O}}\) restricts to an 
étale \(I\)-torsor away from 0, we see that the fibers of
\(\breve{G} := \breve{\mathcal{G}}_{x}[v^{-1}]\) are connected reductive groups. On
the other hand, the fibers of \(\breve{\mathcal{G}}\) and \(\breve{\mathcal{T}}\)
lying over the zero section \(v=0\) may be disconnected.
This turns out to be the only obstruction to identifying \(\breve{\mathcal{G}}\) with the group schemes of Pappas--Zhu. 
Before proceeding, we give a criterion for connectedness of fibers.
\begin{proposition}
    \label{res:connected-fibers}
    If \(\pi_{1}(G^*)_{I}\) (see \cref{sec:algebraic-fundamental-group} for the definition) is torsion-free, then \(\breve{\mathcal{G}}_{x}\) has connected fibers.
\end{proposition}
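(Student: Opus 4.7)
The strategy is to reduce to showing that the single fiber at $v = 0$ over $\operatorname{Spec} \mathbb{F}$ is connected, and then to identify that fiber using Bruhat--Tits theory and the Kottwitz homomorphism. The map $\pi_I : \widetilde{\mathbb{A}}^1_\mathcal{O} \to \breve{\mathbb{A}}^1_\mathcal{O}$, given in coordinates by $v \mapsto u^e$, has Jacobian $eu^{e-1}$, which is a unit away from $u = 0$ since $e$ is prime to $p$. Hence $\pi_I$ is étale outside the closed subscheme $\{u = 0\}$, whose image in $\breve{\mathbb{A}}^1_\mathcal{O}$ is $\{v = 0\} \subset \breve{\mathbb{A}}^1_\mathbb{F}$. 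On this étale locus $\breve{\mathcal{G}}_x = (\pi_I)_*^I \widetilde{\mathcal{G}}_x$ is obtained from $\widetilde{\mathcal{G}}_x = n\widehat{G}n^{-1}$ by étale descent along an $I$-torsor, so its geometric fibers are forms of the connected group $\widehat{G}$ and hence connected. Using the product decomposition $\breve{\mathcal{G}}_x = \prod_{j \in \mathcal{J}} \breve{\mathcal{G}}_{x,j}$, we are reduced, for each $j$, to showing connectedness of the fiber of $\breve{\mathcal{G}}_{x,j}$ at $v = 0$ over $\mathbb{F}_j$.

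To handle that fiber, I would compute the $\mathbb{F}_j\llb v \rrb$-points of $\breve{\mathcal{G}}_{x,j}$. Using the identification of $\widetilde{\mathcal{G}}_{x,j}|_{\mathbb{F}_j\llb u \rrb}$ as the Bruhat--Tits parahoric group scheme for the split group $\widehat{G}$ at the hyperspecial point $x_j = n_j \cdot o_j$ (established in the discussion just above the proposition), taking $I$-invariants and using the descent identification $G^*(\mathbb{F}_j\llp v \rrp) = \widehat{G}(\mathbb{F}_j\llp u \rrp)^I$ yields
\[
\breve{\mathcal{G}}_{x,j}(\mathbb{F}_j\llb v \rrb) \;=\; \Stab_{G^*(\mathbb{F}_j\llp v \rrp)}(x_j),
\]
i.e.\ the \emph{full} stabilizer of $x_j$. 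Let $\mathcal{G}^{\mathrm{PZ}}_{x_j}$ denote the Pappas--Zhu parahoric group scheme of $G^*$ at $x_j$: a smooth affine model of $G^*_{\mathbb{F}_j\llp v \rrp}$ with connected geometric fibers whose $\mathbb{F}_j\llb v \rrb$-points are the connected parahoric $P^0_{x_j} = \Stab(x_j) \cap \ker(\kappa_{G^*})$, where $\kappa_{G^*}$ is the Kottwitz homomorphism. A standard flat-closure argument then produces a canonical open immersion $\mathcal{G}^{\mathrm{PZ}}_{x_j} \hookrightarrow \breve{\mathcal{G}}_{x,j}|_{\mathbb{F}_j\llb v \rrb}$ identifying the former with the identity component of the latter.

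Finally, the component group of the special fiber of $\breve{\mathcal{G}}_{x,j}|_{\mathbb{F}_j\llb v \rrb}$ is, by the Kottwitz dictionary (see e.g.\ Kaletha--Prasad Section~11.6 or \cite{pappasTwistedLoopGroups2008}), the finite quotient $\Stab(x_j)/P^0_{x_j}$, which embeds into $\pi_1(G^*)_I$. The torsion-freeness hypothesis forces this finite subgroup to be trivial, so $\breve{\mathcal{G}}_{x,j}|_{\mathbb{F}_j\llb v \rrb}$ equals $\mathcal{G}^{\mathrm{PZ}}_{x_j}$, which by construction has connected fibers; in particular, its fiber at $v = 0$ is connected, as required. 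The principal technical point is the Kottwitz-dictionary step---verifying that $\Stab(x_j)/P^0_{x_j}$ really is finite and embeds into $\pi_1(G^*)_I$ in this tamely ramified setting---which is standard but requires piecing together tame descent of the Kottwitz map, finiteness of the Kottwitz image of any bounded subgroup, and the identification of $G^*$ as a form of the Weil restriction of $\widehat{G}$ along the tame cover.
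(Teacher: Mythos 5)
Your proof follows essentially the same route as the paper's: reduce to the $v = 0$ fiber over $\mathbb{F}_{j}$, identify the $\mathbb{F}_{j}\llb v \rrb$-points with a bounded Bruhat--Tits stabilizer, and conclude via the Kottwitz map from torsion-freeness of $\pi_{1}(G^*)_{I}$. The one technical point you flag as needing verification is exactly what the paper supplies: the $\mathbb{F}_{j}\llb v\rrb$-points are identified with $G^*(\mathbb{F}_{j}\llp v\rrp)^1_{x'_j}$ --- the stabilizer of the reduced-building projection $x'_j$ inside the bounded subgroup $G^*(\mathbb{F}_{j}\llp v\rrp)^1$, via Kaletha--Prasad's Lemma~2.6.16(4), Lemma~7.7.10, and Proposition~7.7.11 --- and then their Corollary~11.6.2 gives $G^*(\mathbb{F}_{j}\llp v\rrp)^1 = G^*(\mathbb{F}_{j}\llp v\rrp)^0$ directly from the torsion-freeness hypothesis, which is a slightly cleaner framing than your embedding of the component group into $\pi_{1}(G^*)_{I}$ since it makes explicit that the component group automatically lands in the torsion subgroup $\pi_1(G^*)_{I,\mathrm{tor}}$.
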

\begin{proof}
    Since \(I\) acts diagonally on \(\mathcal{O}^\mathcal{J} [u] = \prod_{j \in \mathcal{J}} \mathcal{O}_{j}[u]\) we may without loss of generality assume that \(\# \mathcal{J} = 1\) and ignore the \(\mathcal{J}\) in what follows. 
    By the observations above, it suffices to show that the fibers of \(\breve{\mathcal{G}}_{x}\) over \(v = 0 : \operatorname{Spec}\mathcal{O} \hookrightarrow \breve{\mathbb{A}}^1_{\mathcal{O}}\) are connected, 
    which by \cite[\href{https://stacks.math.columbia.edu/tag/055J}{Lemma 055J}]{stacks-project} are connected if and only if the special fiber over \(\operatorname{Spec}\mathbb{F} \hookrightarrow \operatorname{Spec}\mathcal{O}\) is connected.

    Let \(\widetilde{\mathcal{H}} = \widetilde{\mathcal{G}}_{x} \times \operatorname{Spec} \mathbb{F} \llb u \rrb\), which is a parahoric group scheme for \(\widehat{G}_{\mathbb{F}\llp u \rrp}\) attached to \(x\). 
    Let \(\pi' : \operatorname{Spec} \mathbb{F}\llb u \rrb \to \operatorname{Spec} \mathbb{F}\llb v \rrb\) sending \(v\) to \(u^e\), and let \(\mathcal{H} = (\pi')_{*}^I \widetilde{\mathcal{H}}\).
    Note that \(\mathcal{H}\) is identified with the fiber of \(\breve{\mathcal{G}}_{x}\) over \(\mathbb{F} \llb v \rrb\) since \(\mathbb{F}\llb u \rrb \cong \mathcal{O}[u] \otimes_{\mathcal{O}[v]}\mathbb{F}\llb v \rrb\), 
    so we have to show that the special fiber of \(\mathcal{H}\) is connected.
    For this question, we may and will replace \(\mathbb{F}\) by its algebraic closure. 
    In particular, we can then identify \(\breve{G}_{\mathbb{F}\llp v \rrp} = G^*_{\mathbb{F}\llp v \rrp}\). 

    We will now use the notation \(\widehat{G}(\mathbb{F}\llp u \rrp)^1, \widehat{G}(\mathbb{F}\llp u \rrp)^0, G^*(\mathbb{F}\llp v \rrp)^1, G^*(\mathbb{F}\llp v \rrp)^0\) as in \cite{kaletha-prasad}. 
    It follows from \cite[Lemma 2.6.16 (4)]{kaletha-prasad} that 
    \begin{equation}
        \label{eq:descent-for-superscript-1}
        (\widehat{G}(\mathbb{F}\llp u \rrp)^1)^I = \widehat{G}(\mathbb{F}\llp u \rrp)^1 \cap \widehat{G}(\mathbb{F}\llp u \rrp)^I = G^*(\mathbb{F}\llp v \rrp)^1 . 
    \end{equation}

    Let \(x'\) denote the projection of \(x\) (which is a point of the enlarged building) to the restricted building \(\mathcal{B}(\widehat{G},\mathbb{F}\llp v \rrp) \subset \widetilde{\mathcal{B}}(\widehat{G}, \mathbb{F}\llp u \rrp)\). 
    By \cite[Lemma 7.7.10, Proposition 7.7.11]{kaletha-prasad} the subgroup \(\widetilde{\mathcal{H}}(\mathbb{F}\llb u \rrb) \subset \widehat{G}(\mathbb{F}\llp u \rrp)\) is the stabilizer of \(x'\) in \(\widehat{G}(\mathbb{F}\llp u \rrp)^1\).
    So by \eqref{eq:descent-for-superscript-1}, it follows that 
    \[
        \mathcal{H}(\mathbb{F}\llb v \rrb) = \widetilde{\mathcal{H}}(\mathbb{F}\llb u \rrb)^I = \widetilde{\mathcal{H}}(\mathbb{F}\llb u \rrb) \cap G^*(\mathbb{F}\llp v \rrp)^1
    \]
    is the stabilizer of \(x'\) in \(G^*(\mathbb{F}\llp v \rrp)^1\), i. e. \(\mathcal{H}(\mathbb{F}\llb v \rrb) = G^*(\mathbb{F}\llp v \rrp)^1_{x'}\).
    Since \(\mathcal{H}\) is a smooth integral model for \(G^*_{\mathbb{F}\llp v \rrp}\), it follows by \cite[Corollary 2.10.11, Axiom 4.1.20]{kaletha-prasad}
    that \(\mathcal{H}\) has connected fibers if and only if \(G^*(\mathbb{F}\llp v \rrp)^1_{x'} = G^*(\mathbb{F}\llp v \rrp)^0_{x'}\).

    The assumption that \(\pi_{1}(G^*)_{I}\) is torsion free implies that \(G^*(\mathbb{F}\llp v \rrp)^0 = G^*(\mathbb{F} \llp v \rrp)^1\) by \cite[Corollary 11.6.2]{kaletha-prasad}. 
    In particular, it is implied that we have an equality of stabilizers \(G^*(\mathbb{F}\llp v \rrp)^1_{x'} = G^*(\mathbb{F}\llp v \rrp)^0_{x'}\), 
    so \(\mathcal{H}\) has connected fibers.
\end{proof}
\begin{remark}
    It is natural to ask whether \cref{res:connected-fibers} has a converse. 
    Let us say a few word about this in the case that \(x\) is \(0\)-generic.
    We will use the assumptions and notation from the proof of \cref{res:connected-fibers} without comment. 

    Let \(\mathcal{C}\) denote the alcove (called chamber in \cite{kaletha-prasad}) containing \(x'\), and let \(G^*(\mathbb{F}\llp v \rrp)_{\mathcal{C}}\) denote the stabilizer of \(\mathcal{C}\) in \(G^*(\mathbb{F}\llp v \rrp)\) (but not necessarily pointwise stabilizer). 
    Define \(G^*(\mathbb{F}\llp v \rrp)^\dag_{\mathcal{C}} := G^*(\mathbb{F}\llp v \rrp)_{\mathcal{C}} \cap G^*(\mathbb{F}\llp v \rrp)^1\) (which is consistent with \cite[Section 7.7]{kaletha-prasad}), 
    and \(G^*(\mathbb{F}\llp v \rrp)^0_{\mathcal{C}} := G^*(\mathbb{F}\llp v \rrp)_{\mathcal{C}} \cap G^*(\mathbb{F}\llp v \rrp)^0\).
    Since the action of \(G^*(\mathbb{F}\llp v \rrp)\) on \(\mathcal{B}(G^*, \mathbb{F} \llp v \rrp)\) is via simplicial automorphisms, we have 
    \(G^*(\mathbb{F}\llp v \rrp)^1_{x'} \subseteq G^*(\mathbb{F}\llp v \rrp)^\dag_{\mathcal{C}}\) (this is where we use that \(x'\) is \(0\)-generic).
    On the other hand we have \(G^*(\mathbb{F} \llp v \rrp)^0_{x'} = G^*(\mathbb{F}\llp v \rrp)^0_{\mathcal{C}}\) by \cite[Lemma 7.4.4, Proposition 7.6.4]{kaletha-prasad} (see also \cite[Remark 7.7.7]{kaletha-prasad}). 
    It follows from \cite[Proposition 11.6.1, Corollary 11.6.2]{kaletha-prasad} that the Kottwitz homomorphism induces an isomorphism 
    \(G^*(\mathbb{F}\llp v \rrp)^\dag_{\mathcal{C}} / G^*(\mathbb{F}\llp v \rrp)^0_{\mathcal{C}} \cong \pi_{1}(G^*)_{I,\text{tor}}\). 
    Combining the above, we obtain a short exact sequence 
    \begin{equation}
        \label{eq:connected-fibers-SES}
        1 \to G^*(\mathbb{F}\llp v \rrp)^1_{x'} / G^*(\mathbb{F}\llp v \rrp)^0_{x'} \to \pi_{1}(G^*)_{I,\text{tor}} \to G^*(\mathbb{F}\llp v \rrp)^\dag_{\mathcal{C}} / G^*(\mathbb{F}\llp v \rrp)^1_{x'} \to 1 . 
    \end{equation}
    As in the proof of \cref{res:connected-fibers}, connectivity of the fibers of \(\breve{\mathcal{G}}_{x}\) is equivalent to having an equality \(G^*(\mathbb{F}\llp v \rrp)^1_{x'} = G^*(\mathbb{F}\llp v \rrp)^0_{x'}\).
    The exact sequence \eqref{eq:connected-fibers-SES} shows that \(\breve{\mathcal{G}}_{x}\) may have connected fibers even if \(\pi_{1}(G^*)_{I,\text{tor}} \neq 0\), but this discrepancy is accounted for 
    by the existence of elements \(g \in G^*(\mathbb{F}\llp v \rrp)^1\) which stabilize \(\mathcal{C}\) but not \(x'\). 
\end{remark}
\begin{example}
    Suppose \(\widehat{G} = \GL_{n}\) with trivial \(\Gamma\)-action, in which case we have \(G^* = \widehat{G} = \GL_{n}\).
    In this case, \cref{res:connected-fibers} applies. 
    We can see this by noting that \(\pi_{1}(\widehat{G})_{I} = \pi_{1}(\GL_{n}) = \mathbb{Z}\). 
    Alternatively, by \cref{res:simply-connected-derived-group} \(\pi_{1}(\widehat{G})_{I}\) being torsion free is equivalent to \(\widehat{G}_{\text{der}} = \operatorname{SL}_{n}\) being simply connected. 
\end{example}
\begin{example}
    \label{ex:PGL3-fibers}
    Let \(\widehat{G} = \operatorname{PGL}_{3}\), with \(\Gamma\) acting trivially on \(\widehat{G}\), so again \(G^* = \widehat{G} = \operatorname{PGL}_{3}\). 
    Then we have \(\pi_{1}(\widehat{G})_{I} = \pi_{1}(\widehat{G}) = \mathbb{Z} / 3\), and therefore \cref{res:connected-fibers} does not apply. 
    For the convenience of the reader we briefly recall the computation \(\pi_{1}(\widehat{G}) = \mathbb{Z} / 3\). 
    According to \cite[Definition 11.3.2]{kaletha-prasad}, \(\pi_{1}(\widehat{G}) = X_{*}(\widehat{T}) / X_{*}(\widehat{T}')\), where \(\widehat{T} \subset \operatorname{PGL}_{3}\) is the diagonal torus, and \(\widehat{T}' \subset \operatorname{SL}_{3}\) is the diagonal torus.
    Corresponding to the identification of the cocharacters of the diagonal torus of \(\GL_{3}\) with \(\mathbb{Z}^3\), we have \(X_{*}(\widehat{T}') = \mathbb{Z}^3 / \mathbb{Z}\) (the quotient by the ``diagonal'' \(\mathbb{Z}\)), and \(X_{*}(\widehat{T}') = \lbrace (a_{1},a_{2},a_{3}) \in \mathbb{Z}^3 | a_{1} + a_{2} + a_{3} = 0 \rbrace \subset \mathbb{Z}^3\). 
    It is now straightforward to verify that the map \(X_{*}(\widehat{T}) / X_{*}(\widehat{T}') \isoto \mathbb{Z}/ 3\) given by the sum, i. e. \((a_{1},a_{2},a_{3}) \mapsto a_{1} + a_{2} + a_{3}\), is an isomorphism. 
    
    Although we can't use \cref{res:connected-fibers}, we can still reason about connectivity of fibers of \(\breve{\mathcal{G}}_{x}\) for various possible \(x\). 
    We will do this shortly. 
    As in the proof of \cref{res:connected-fibers}, we work over a fixed connected component corresponding to a choice of \(j \in \mathcal{J}\).

    We remark that the \emph{Kottwitz homomorphism} \(\kappa : \operatorname{PGL}_{3}(\mathbb{F}\llp v \rrp) \to \pi_{1}(\operatorname{PGL}_{3}) = \mathbb{Z} / 3\) is realized as the composition of the determinant \(\operatorname{det} : \operatorname{PGL}_{3}(\mathbb{F}\llp v \rrp) \to \mathbb{F}\llp v \rrp^\times / \left( \mathbb{F} \llp v \rrp^\times \right)^3\) followed by the valuation map \(\operatorname{val} : \mathbb{F}\llp v \rrp^\times / \left( \mathbb{F} \llp v \rrp^\times \right)^3 \to \mathbb{Z} / 3\mathbb{Z}\).
    Since \(\operatorname{PGL}_{3}(\mathbb{F}\llp v \rrp)^0 = \ker \kappa\) by \cite[Proposition 11.5.4]{kaletha-prasad}, we have a fairly explicit description of \(\operatorname{PGL}_{3}(\mathbb{F}\llp v \rrp)^0\);
    for example, \(\operatorname{diag}(1,v,v) \notin \operatorname{PGL}_{3}(\mathbb{F}\llp v \rrp)^0\), because \(\kappa(g) = 2\).
    We also have \(\operatorname{PGL}_{3}(\mathbb{F}\llp v \rrp)^1 = \kappa^{-1}\left( \pi_{1}(\operatorname{PGL}_{3})_{I,\text{tor}} \right) = \operatorname{PGL}_{3}(\mathbb{F}\llp v \rrp)\) by \cite[Lemma 11.5.2]{kaletha-prasad}.

    Following the proof of \cref{res:connected-fibers}, connectivity of the fibers of \(\breve{\mathcal{G}}_{x}\) boils down to whether 
    \begin{equation}
        \label{eq:PGL3-equation}
        \operatorname{PGL}_{3}(\mathbb{F}\llp v \rrp)^0_{x} = \operatorname{PGL}_{3}(\mathbb{F}\llp v \rrp)^1_{x} = \operatorname{PGL}_{3}(\mathbb{F}\llp v \rrp)_{x}
    \end{equation}
    We consider a few values of \(x\):
    \begin{enumerate}
        \item \(x = o\) the given Chevalley valuation. Then \eqref{eq:PGL3-equation} is satisfied by \cite[Lemma 7.7.10, Proposition 7.7.11]{kaletha-prasad}, so the fibers of \(\breve{\mathcal{G}}_{x}\) are connected. 
        \item \(x = o + (1/3, 0, -1/3)\). Now \(x\) is the barycenter of the alcove \(\mathcal{C}\) containing it. Therefore, any \(g \in \operatorname{PGL}_{3}(\mathbb{F} \llp v \rrp)\) which stabilizes \(\mathcal{C}\) also stabilizes \(x\).
            So \(\operatorname{PGL}_{3}(\mathbb{F}\llp v \rrp)^1_{x} = \operatorname{PGL}_{3}(\mathbb{F}\llp v \rrp)_{x} = \operatorname{PGL}_{3}(\mathbb{F}\llp v \rrp)_{\mathcal{C}} = \operatorname{PGL}_{3}(\mathbb{F}\llp v \rrp)_{\mathcal{C}}^\dag\). 
            Using the short exact sequence \eqref{eq:connected-fibers-SES} (which holds by the same proof since \(x\) is \(0\)-generic) it follows that \(\operatorname{PGL}_{3}(\mathbb{F}\llp v \rrp)^1_{x} / \operatorname{PGL}_{3}(\mathbb{F}\llp v \rrp)^0_{x} \cong \mathbb{Z} / 3\), so in particular \eqref{eq:PGL3-equation} is not satisfied, and \(\breve{\mathcal{G}}_{x}\) has disconnected fibers. 
        \item Let \(\mathcal{C}\) be as in (2), and let \(x\) be any point of \(\mathcal{C}\) which is different from the barycenter.
            Consider the element 
            \[
                g = \begin{pmatrix} 0 & 0 & v^{-1} \\ 1 & 0 & 0 \\ 0 & 1 & 0 \end{pmatrix} \in \operatorname{PGL}_{3}(\mathbb{F}\llp v \rrp) . 
            \]
            Then the action of \(g\) on \(\mathcal{A}(\operatorname{PGL}_{3}, \mathbb{F}\llp v \rrp)\) is a rotation through \(2 \pi / 3\) centered at the barycenter of \(\mathcal{C}\). 
            Therefore, \(g \in \operatorname{PGL}_{3}(\mathbb{F}\llp v \rrp)_{\mathcal{C}} = \operatorname{PGL}_{3}(\mathbb{F}\llp v \rrp)_{\mathcal{C}}^\dag\), but since \(x\) is not the barycenter, \(g \notin \operatorname{PGL}_{3}(\mathbb{F}\llp v \rrp)^1_{x}\). 
            In fact, \(g\) generates a subgroup of \(\operatorname{PGL}_{3}(\mathbb{F}\llp v \rrp)^\dag_{\mathcal{C}} / \operatorname{PGL}_{3}(\mathbb{F}\llp v \rrp)^1_{x}\) of order 3, 
            so the map \(\pi_{1}(\operatorname{PGL}_{3}) \to \operatorname{PGL}_{3}(\mathbb{F}\llp v \rrp)^\dag_{\mathcal{C}} / \operatorname{PGL}_{3}(\mathbb{F}\llp v \rrp)^1_{x}\) from \eqref{eq:connected-fibers-SES} must be an isomorphism,
            and therefore \(\operatorname{PGL}_{3}(\mathbb{F}\llp v \rrp)^0 = \operatorname{PGL}_{3}(\mathbb{F}\llp v \rrp)^1\).
            So in this case, the fibers of \(\breve{\mathcal{G}}_{x}\) are connected.
    \end{enumerate}
    Observe that with \(x\) as in (2), the element \(g\) from (3) generates \(\operatorname{PGL}_{3}(\mathbb{F}\llp v \rrp)_{x} / \operatorname{PGL}_{3}(\mathbb{F}\llp v \rrp)^0_{x}\).
\end{example}

Having discussed connectedness of fibers at length, we return to the issue of relating our group schemes to those of Pappas--Zhu.
Recall that \(\breve{G} = (\pi_{I})_{*}^I \widehat{G}\) is a connected reductive group scheme over \(\breve{\mathbb{A}}^1_{\mathcal{O}}\),
For \(j \in \mathcal{J}\) let \(\breve{G}_{j}\) denote the base change to the \(j\)'th component \(\mathcal{O}_{j}[v^{\pm 1}]\), 
and recall that \(\breve{\mathcal{G}}_{j}\) denotes the base change to the \(j\)'th component \(\mathcal{O}_{j}[v]\). 

\begin{proposition}
    \label{res:pappas-zhu-identification}
    Assume that \(\breve{\mathcal{G}}\) has connected fibers (which holds if \(\pi_{1}(G^*)_{I}\) is torsion-free by \cref{res:connected-fibers}).
    \begin{enumerate}
        \item Then for each \(j\in \mathcal{J}\) the group scheme \(\breve{\mathcal{G}}_{j}\) is the Bruhat--Tits group scheme for \(G^*_{j}\) over \(\mathcal{O}_{j}[v]\) attached to \(x_{j} \in \widetilde{\mathcal{B}}\left( G^*_{j}, \mathbb{F}_{j}\llp v \rrp \right)\) in the sense of Pappas--Zhu.
        \item \(\mathcal{G}_{x}\) is the Bruhat--Tits group scheme for \(G^*\) over \(\mathbb{A}^1_{\mathcal{O}}\) attached to \(x\) in the sense of Pappas--Zhu.
    \end{enumerate}
\end{proposition}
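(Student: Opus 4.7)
The strategy is to verify that $\breve{\mathcal{G}}_x$ (respectively $\mathcal{G}_x$) satisfies the axiomatic characterization of the Pappas--Zhu group scheme and then invoke the uniqueness part of \cite[Theorem 4.1]{pappas-zhu}. Recall that uniqueness only requires the data of: a smooth affine group scheme over the given base with connected fibers, agreement with $G^*$ away from $v = 0$, and agreement with the parahoric attached to $x$ after base change to $E\llb v \rrb$.

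\medskip

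\textbf{Part (1).} Fix $j \in \mathcal{J}$. Smoothness and affineness of $\breve{\mathcal{G}}_j$ follow from \cref{res:invariant-pushforward-is-smooth}, and connectedness of fibers is the standing hypothesis. For the generic fiber, I would use that $\pi_I$ restricts to an étale $I$-torsor over $\breve{\mathbb{A}}^1_{\mathcal{O}} \setminus \{0\}$, so that invariant pushforward coincides with ordinary étale descent there; combined with the identification $\widetilde{\mathcal{G}}[u^{-1}] \cong \widehat{G}[u^{-1}]$ as $\Gamma$-equivariant group schemes, this gives $\breve{\mathcal{G}}_j[v^{-1}] = G^*_j$. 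The main technical step is to identify the base changes to $E_j\llb v \rrb$ and $\mathbb{F}_j\llb v \rrb$ with the corresponding parahoric group schemes attached to $x_j$. For this, the preceding proposition shows that $\widetilde{\mathcal{G}}_j$ is the Pappas--Zhu group scheme attached to $x_j \in \widetilde{\mathcal{B}}(\widehat{G}, \mathbb{F}_j\llp u \rrp)$, so in particular its base change along $\mathcal{O}_j[u] \to E_j\llb u \rrb$ is the hyperspecial parahoric group scheme for $\widehat{G}_{E_j\llp u \rrp}$ attached to $x_j = n_j \cdot o_j$. Then I would invoke tamely ramified descent for parahoric subgroups (as in \cite[Chapter 12]{kaletha-prasad}, or equivalently as carried out inside the proof of \cite[Theorem 4.1]{pappas-zhu}) to conclude that the $I$-invariant pushforward along the totally ramified map $\operatorname{Spec} E_j\llb u \rrb \to \operatorname{Spec} E_j\llb v \rrb$ yields the parahoric for $G^*_j$ attached to $x_j \in \widetilde{\mathcal{A}}(T^*_j, E_j\llp v \rrp) = \widetilde{\mathcal{A}}(\widehat{T}, E_j\llp u \rrp)^I$, with the assumption of connected fibers ensuring we land in the parahoric (as opposed to its stabilizer). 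The argument for the residue field is entirely analogous. Having verified all the axioms, uniqueness gives the desired identification.

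\medskip

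\textbf{Part (2).} Once (1) is in hand, I would descend from $\breve{\mathbb{A}}^1_{\mathcal{O}}$ to $\mathbb{A}^1_{\mathcal{O}}$ along the étale $\Gamma_0$-torsor $\pi_0$, using the factorization
\[
\breve{\mathbb{A}}^1_{\mathcal{O}} \to \bigsqcup_{j \in \mathcal{J}} \mathbb{A}^1_{\mathcal{O}} \to \mathbb{A}^1_{\mathcal{O}}
\]
described in \cref{sec:fields-and-rings}, where the first map is the $\Gamma_{0,\mathcal{J}}$-torsor preserving components and the second is the $\Gamma_0'$-torsor permuting them. Since smoothness, affineness, connectedness of fibers, and the generic fiber identification $G^* = (\pi_0)_*^{\Gamma_0} \breve{G}$ are all preserved by étale descent, the only thing to check is the parahoric condition after base change to $E\llb v \rrb$. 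This is where I would appeal again to tamely ramified descent for parahoric subgroups, now in the unramified direction, using the identification $\widetilde{\mathcal{A}}(T^*, E\llp v \rrp) = \widetilde{\mathcal{A}}(\widehat{T}, E^{\mathcal{J}}\llp u \rrp)^{\Gamma}$. One final application of the uniqueness part of \cite[Theorem 4.1]{pappas-zhu} then completes the proof.

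\medskip

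\textbf{Main obstacle.} The key technical point is showing that invariant pushforward commutes with the formation of parahoric group schemes under tamely ramified base change, and correctly identifies the resulting group with the Pappas--Zhu parahoric attached to the descended point $x_j$ (and then $x$). Essentially one needs that both Bruhat--Tits descent and invariant pushforward realize the same étale/tamely ramified descent datum. The connectedness hypothesis is what makes the descended parahoric agree with the fiber of $\breve{\mathcal{G}}_x$ (rather than some disconnected enlargement), and this is exactly why the proposition is stated under that assumption.
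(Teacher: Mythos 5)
Your proposal is correct, but for part (1) it takes a more laborious route than the paper. The paper's proof of (1) is essentially a one-liner: it observes that the Pappas--Zhu construction in the proof of \cite[Theorem~4.1]{pappas-zhu} is \emph{precisely} the neutral connected component of $\breve{\mathcal{G}} = (\pi_I)_*^I \widetilde{\mathcal{G}}$, so under the standing hypothesis that $\breve{\mathcal{G}}$ has connected fibers it coincides with that neutral component outright. You instead re-verify the uniqueness-characterizing properties from scratch (generic fiber, parahoric $E_j\llb v\rrb$-points via the preceding proposition plus tamely ramified Bruhat--Tits descent, connected fibers) and then invoke uniqueness as a black box. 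This is valid and arguably more self-contained — you don't need to unwind the internals of the Pappas--Zhu proof, only its uniqueness statement — but it is longer, and it quietly relies on carrying out the $E$-fiber analogue of the computation that \cref{res:connected-fibers} does for the $\mathbb{F}$-fiber (i.e., that $(\widehat{G}(E_j\llp u \rrp)^1_{x_j})^I = G^*(E_j\llp v\rrp)^1_{x_j}$, which collapses to the parahoric under connectedness). You should make that dependence explicit if you go this route. Your part (2) matches the paper's argument: descend along the étale $\Gamma_0$-torsor $\pi_0$, check the $E\llb v\rrb$-parahoric condition from part (1), and apply uniqueness.
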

\begin{proof}
    By the proof of \cite[Theorem 4.1]{pappas-zhu}, the group scheme over \(\mathcal{O}_{j}[v]\) for \(\widehat{G}_{E_{j}}\) attached to \(x_{j}\) is the netural connected component of \(\breve{\mathcal{G}}\). 
    Claim (1) is then obvious.
    Since \(\pi_{0} : \breve{\mathbb{A}}^1_{\mathcal{O}} \to \mathbb{A}^1_{\mathcal{O}}\) is an étale \(\Gamma_{0}\)-torsor, \(\mathcal{G}_{x} = (\pi_{0})_{*}^{\Gamma_{0}} \breve{\mathcal{G}}_{x}\) is a smooth affine group scheme with connected fibers. 
    Moreover, \(\mathcal{G}_{x}[v^{-1}] = G^*\) by construction and \(\mathcal{G}_{x}(E\llb v \rrb) = \breve{\mathcal{G}}_{x}(E^\mathcal{J}\llb v \rrb)^{\Gamma_{0}} = \breve{\mathcal{G}}_{x}(E^\mathcal{J}\llb v \rrb) \cap G^*(\mathbb{F}\llp v \rrp) \subset \breve{G}(\mathbb{F}^\mathcal{J}\llp v \rrp)\) is 
    the parahoric subgroup attached to \(x\) by part (1). 
    Claim (2) then follows from the uniqueness part of the proof of \cite[Theorem 4.1]{pappas-zhu}. 
\end{proof}
\begin{proof}[{Proof of \cref{res:pappas-zhu-identification-basic}}]
    Immediate from \cref{res:connected-fibers} and \cref{res:pappas-zhu-identification}. 
\end{proof}

\subsubsection{\(\mathcal{G}\) as dilations}
\label{sec:bruhat-tits-group-scheme-as-dilation}
In some cases, the group schemes of Pappas--Zhu coincide with certain dilations, see e. g. \cite[Example 3.3]{mayeuxNeronBlowupsLowdegree2023}.
The approach via dilations also used in \cite{local-models} and \cite{leeEmertonGeeStacks2023}. 
In \cref{rem:pappas-zhu-dilation} below we describe conditions ensuring that the group schemes from \cref{sec:bruhat-tits-group-scheme-as-invariant-pushforward} are dilations.

We need the following definition. 
\begin{definition}
    \label{def:lowest-alcove}
    A point \(x \in \widetilde{\mathcal{A}}\left( T^*, \mathbb{F}\llp v \rrp \right)\) is said to be \emph{lowest alcove} if 
    \[
        0 \leq \langle a , x - o \rangle < 1 
    \]
    for all \(a \in \Phi(G^*,T^*)^+\), or equivalently, if via \eqref{eq:coroots-apartment-iso-j} \(x\) corresponds to an element \(\beta \in X_{*}(T^*)\) with \(-1 < \langle a, \beta \rangle \leq 0 \rbrace\). 
\end{definition}
\begin{remark}
    Note that \(x\) is \(0\)-generic lowest alcove if and only if \(0 < \langle a, x - o \rangle < 1\) for all \(a \in \Phi(G^*,T^*)^+\). 
\end{remark}
\begin{remark}
    In \cref{def:lowest-alcove}, we have non-strict inequality \(0 \leq \langle a, x - o \rangle\) but strict inequality \(\langle a, x - o \rangle < 1\). 
    This is intentional, because it is what we need in \cref{rem:pappas-zhu-dilation}.
    Note that there is a unique vertex which is lowest alcove, namely the Chevalley valuation \(o\). 
\end{remark}
\begin{remark}
    \label{rem:lowest-alcove-assumption-is-mild}
    Although the group scheme \(\mathcal{G}_{x}\) depends on \(x\) in absolute terms, for our purposes (studying the stack \(\mathcal{Y}\) which does not depend on the choice of \(x\)) we are free to replace \(x\) by any point in its \(G^*(\mathbb{F}\llp v \rrp)\)-orbit by \cref{res:building-classification-of-types}. 
    Since \(G^*(\mathbb{F}\llp v \rrp)\) acts transitively on alcoves, it is often the case that the orbit of \(x\) intersects the lowest alcove. 
    For example, this is always true when \(x\) is \(0\)-generic. 
    On the other hand, this is not true if \(x \neq o\) is a vertex in the closure of the lowest alcove and \(\widehat{G}\) is a semisimple and simply connected group such as \(\operatorname{SL}_{n}\). 
\end{remark}

\begin{proposition}
    \label{rem:pappas-zhu-dilation}
    Assume that \(I\) acts trivially on \(\widehat{G}\), that \(\mathbb{F} \supseteq \mathbb{F}_{q}\), \(\mathcal{G}_{x}\) has connected fibers (e. g. if \(\pi_{1}(\widehat{G})\) is torsion-free by \cref{res:connected-fibers}), and \(x\) is lowest alcove. 
    Then \(\mathcal{G}_{x}\) is isomorphic to the dilation of \(\widehat{G}\) (as a group scheme over \(\mathbb{A}^1_{\mathcal{O}}\)) in a parabolic subgroup \(\widehat{P}_{x} \subset \widehat{G}\) along the zero section \(v = 0\). 
    This parabolic \(\widehat{P}_{x}\) is generated by \(\widehat{T}\) and root groups \(U_{a}\) where \(\langle a, x-o \rangle > 0\).\footnote{
        This \(\widehat{P}_{x}\) can also be described using the dynamic method as \(\underline{P}_{\widehat{G}}(x - o)\) in the notation of \cite[Theorem 4.1.7]{Conrad2014ReductiveGS}. 
    }
    In particular, if \(x\) is \(0\)-generic, then \(\widehat{P}_{x} = \widehat{B}\) is the Borel subgroup.
\end{proposition}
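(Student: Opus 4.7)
The plan is to apply the uniqueness characterization established in \cref{res:pappas-zhu-identification-basic}. Under the hypotheses that $I$ acts trivially on $\widehat{G}$ and $\mathbb{F} \supseteq \mathbb{F}_{q}$, \cref{rem:G-star-is-G-hat} identifies $G^* = \widehat{G}_{\mathcal{O}[v^{\pm 1}]}$. Consequently $\mathcal{G}_{x}$ is characterized as the unique smooth affine $\mathbb{A}^1_{\mathcal{O}}$-group scheme with connected fibers such that $\mathcal{G}_{x}[v^{-1}] = \widehat{G}$ and $\mathcal{G}_{x}(E\llb v \rrb) \subset \widehat{G}(E\llp v \rrp)$ equals the parahoric subgroup $P_{x}$ attached to $x$. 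It therefore suffices to verify that the N\'eron dilation $\mathcal{G}'$ of $\widehat{G}_{\mathbb{A}^1_{\mathcal{O}}}$ in the smooth closed parabolic subgroup $\widehat{P}_{x} \subset \widehat{G}_{\mathcal{O}}$ along the divisor $v = 0$ satisfies these two properties.

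By the standard theory of N\'eron dilations in smooth closed subgroups, $\mathcal{G}'$ is smooth affine over $\mathbb{A}^1_{\mathcal{O}}$ with connected fibers (using that both $\widehat{G}$ and $\widehat{P}_{x}$ have connected fibers), is generically equal to $\widehat{G}$, and satisfies the universal property that for any $\mathbb{A}^1_{\mathcal{O}}$-algebra $R$ in which $v$ is a non-zero-divisor,
\[
\mathcal{G}'(R) = \{\, g \in \widehat{G}(R) : (g \bmod v) \in \widehat{P}_{x}(R/vR) \,\}.
\]
Connectedness of fibers and the condition $\mathcal{G}'[v^{-1}] = \widehat{G}$ are then immediate.

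The crux is identifying $\mathcal{G}'(E\llb v \rrb)$ with the parahoric $P_{x}$. Using the root-group filtration from \cref{sec:valuations-of-the-root-datum} (now with $v$ playing the role of uniformizer, so that $U_{a,x} = u_{a}(v^{\lceil - \langle a, x-o \rangle \rceil} E\llb v \rrb)$), the lowest alcove condition $0 \leq \langle a, x-o \rangle < 1$ for all positive roots forces $U_{a,x} = u_{a}(E\llb v \rrb)$ whenever $\langle a, x-o \rangle \geq 0$, and $U_{a,x} = u_{a}(v E\llb v \rrb)$ whenever $\langle a, x-o \rangle < 0$. Since $P_{x}$ is generated by $\widehat{T}(E\llb v \rrb)$ together with the $U_{a,x}$, every such generator visibly reduces modulo $v$ into $\widehat{P}_{x}(E)$, yielding the inclusion $P_{x} \subseteq \mathcal{G}'(E\llb v \rrb)$.

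The main technical obstacle is the reverse inclusion $\mathcal{G}'(E\llb v \rrb) \subseteq P_{x}$: showing that every $g \in \widehat{G}(E\llb v \rrb)$ with $g \bmod v \in \widehat{P}_{x}(E)$ actually lies in the subgroup generated by $\widehat{T}(E\llb v \rrb)$ and the $U_{a,x}$. I would handle this with an Iwahori-type factorization. After multiplying by a lift in $\widehat{P}_{x}(E\llb v \rrb) \subset P_{x}$, we may assume $g \equiv 1 \pmod{v}$; then iterated application of the big-cell open immersion $\widehat{U}^{-} \times \widehat{T} \times \widehat{U}^{+} \hookrightarrow \widehat{G}$ modulo successive powers of $v$ writes $g$ as a $v$-adically convergent product of factors in the various $U_{a,x}$ and in $\widehat{T}(E\llb v \rrb)$, all of which lie in $P_{x}$. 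Once this equality of subgroups is established, the uniqueness in \cref{res:pappas-zhu-identification-basic} delivers $\mathcal{G}' \cong \mathcal{G}_{x}$.
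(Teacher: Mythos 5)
Your proof is correct and follows the same strategy as the paper's: characterize $\mathcal{G}_x$ by the uniqueness portion of the Pappas--Zhu identification, then verify that the dilation $\mathcal{G}'$ of $\widehat{G}$ in $\widehat{P}_x$ satisfies the two defining properties (generic fiber $\widehat{G}$, parahoric integral points). Your main contribution is to spell out the step the paper dismisses as "straightforward to check," namely the equality $\mathcal{G}'(E\llb v\rrb) = P_x$; the forward inclusion via reduction of generators mod $v$ and the reverse inclusion via a $v$-adically convergent big-cell factorization are both sound, and the latter is exactly what is packaged in the Iwahori decomposition result of Kaletha--Prasad that the paper invokes elsewhere. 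One small remark: your argument implicitly treats $\widehat{P}_x$ as the dynamic parabolic $\underline{P}_{\widehat{G}}(x-o)$, containing $U_a$ for all $a$ with $\langle a,x-o\rangle \geq 0$ (not just $>0$); this is the correct reading and matches the paper's footnote, though the inequality in the proposition statement is written with strict $>$, which omits the root groups in the Levi factor. Without the $\geq$ convention, the forward inclusion $P_x \subseteq \mathcal{G}'(E\llb v\rrb)$ would actually fail for roots with $\langle a,x-o\rangle = 0$, so it is worth being aware that you are using the dynamic-method interpretation.
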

\begin{proof}
    Since \(I\) acts trivially on \(\widehat{G}\) and \(\mathbb{F} \supseteq \mathbb{F}_{q}\), we have by \cref{rem:G-star-is-G-hat} that \(G^* = \widehat{G}\), 
    and since \(\mathcal{G}_{x}\) has connected fibers, \(\mathcal{G}_{x}\) is the group scheme of Pappas--Zhu by \cref{res:pappas-zhu-identification}.
    Now let \(\widehat{P}_{x}\) be as in the statement of the proposition, and define \(\mathcal{G}_{x}'\) as the dilation of \(\widehat{G}\) in \(\widehat{P}_{x}\) along \(v = 0\).
    Then \(\mathcal{G}_{x}'\) is a smooth affine group scheme over \(\mathbb{A}^1_{\mathcal{O}}\) with connected fibers by \cite[Theorem 3.2]{mayeuxNeronBlowupsLowdegree2023}, 
    and \(\mathcal{G}_{x}'(E \llb v \rrb) \subset \widehat{G}(E \llp v \rrp)\) consists of elements which belong to \(\widehat{P}_{x}\) modulo \(v\). 
    By interpreting \(x\) as a valuation of the root datum as in \cref{sec:valuations-of-the-root-datum}, it is straightforward to check that \(\mathcal{G}_{x}'(E \llb v \rrb)\)
    is exactly the parahoric subgroup attached to \(x\) (and this is where the lowest alcove assumption is used). 
    The result then follows from the proof of uniqueness in \cite[Theorem 4.1]{pappas-zhu}.
\end{proof}
\begin{remark}
    The use of the term ``parabolic'' in \cref{rem:pappas-zhu-dilation} is slightly abusive, because we allow the possibility that \(x = o\) in which case \(\widehat{G} = \widehat{P}_{x}\).
\end{remark}

\subsubsection{Some assumptions}
By default, our \(\mathcal{G}_{x}\) is defined via an invariant pushforward as in \cref{sec:bruhat-tits-group-scheme-as-invariant-pushforward}. 
In most of what follows, we impose assumptions on \(\mathcal{G}_{x}\) to ensure that we can identify \(\mathcal{G}_{x}\) with the group schemes of Pappas--Zhu or as a dilation.
Let us state these assumptions precisely. 
\begin{AssumptionCon}\refstepcounter{AssumptionCon}
    \label{ass:connected-fibers-assumptions}
    We assume that \(\mathcal{G}_{x}\) has connected fibers.
\end{AssumptionCon}
Unless otherwise stated, we will assume \cref{ass:connected-fibers-assumptions} from now on. 

It follows from the discussion of \cref{sec:pappas-zhu-identification}
that \(\mathcal{G}_{x}\) has connected fibers if and only if \(\breve{\mathcal{G}}_{x}\) has connected fibers, 
which is always satisfied when \(\pi_{1}(G^*)_{I}\) is torsion-free. 

We will also have the following assumption, which is stronger than \cref{ass:connected-fibers-assumptions}. 
\begin{AssumptionDil}\refstepcounter{AssumptionDil}
    \label{ass:pappas-zhu-groups-as-dilations}
    We assume that we are in the situation of \cref{rem:pappas-zhu-dilation}. 
    That is, there is an explicitly defined parabolic \(\widehat{P}_{x} \subset \widehat{G}\) (where we also allow \(\widehat{P}_{x} = \widehat{G}\))
    such that \(\mathcal{G}_{x}\) is the dilation of \(\widehat{G}\) in \(\widehat{P}_{x}\) along the zero section \(v = 0\).
\end{AssumptionDil}
By \cref{rem:pappas-zhu-dilation}, this assumption is satisfied when \(I\) acts trivially on \(\widehat{G}\) (corresponding to the assumption that \(G\) is unramified), 
\(\mathbb{F} \supset \mathbb{F}_{q}\), \(\mathcal{G}_{x}\) has connected fibers, and \(x\) is lowest alcove in the sense of \cref{def:lowest-alcove} (which is actually mild, see \cref{rem:lowest-alcove-assumption-is-mild}).
Under \cref{ass:pappas-zhu-groups-as-dilations}, we will write \(\widehat{P} = \widehat{P}_{x}\), 
we will write \(\widehat{P} = \widehat{M} \widehat{U}\) for the Levi decomposition, and \(\widehat{U}^{\text{op}} \subset \widehat{G}\) for the unipotent subgroup opposite to \(\widehat{U}\) (all defined over \(\mathcal{O}\)). 

The above \cref{ass:pappas-zhu-groups-as-dilations} will be used in the construction of the negative loop group in \cref{sec:negative-loop-group}, 
and consequently any result using the negative loop group will also impose this assumption. 

\subsubsection{Root groups}
\label{sec:root-groups}
We are now going to describe certain root groups of \(\widetilde{\mathcal{G}}\) which are defined over \(\widetilde{\mathbb{A}}^1_{\mathcal{O}}\). 
Under the assumption that \(I\) acts trivially on \(\widehat{G}\) we will also describe root groups of \(\breve{\mathcal{G}}\) which are defined over \(\breve{\mathbb{A}}^1_{\mathcal{O}}\). 

For each root \(a \in \Phi \left( \widehat{G}, \widehat{T} \right)\) we 
have a root group \(\widetilde{U}_{a} \subset \widehat{G}\), which we view as being over \(\widetilde{\mathbb{A}}^1_{\mathcal{O}}\). 
The pinning of \(\widehat{G}\) means that
we have a chosen identification
\(\widetilde{U}_{a} \cong \mathbb{G}_{a} = \operatorname{Spec} \mathcal{O}^\mathcal{J}[u,t]\),
and for any \(n \in \mathbb{Z}\) it is then natural to identify 
\[
u^{n} \widetilde{U}_{a} = \operatorname{Spec} \mathcal{O}^\mathcal{J}[u, u^{-n} t] ,
\] where
\(\mathcal{O}^\mathcal{J}[u, u^{-n} t] \subset \operatorname{Frac}\left( \mathcal{O}^\mathcal{J}[u,t] \right)\)
is a subring. 
More generally, we allow \(n \in \mathbb{Z}^\mathcal{J}\), in which case we allow the following abuse of notation:
\[
    \mathcal{O}^\mathcal{J}[u, u^{-n}t] := \prod_{j \in \mathcal{J}} \mathcal{O}_{j}[u, u^{-n_{j}}t] ,
\]
and consequently 
\[u^n \widetilde{U}_{a} = \operatorname{Spec} \mathcal{O}^\mathcal{J}[u, u^{-n}t] = \bigsqcup_{j \in \mathcal{J}} \operatorname{Spec} \mathcal{O}_{j}[u, u^{-n_{j}}t] .\]
The same remarks apply to \(\breve{U}_{a}\), for dilations with respect to powers of \(v\). 

The root group \(\widetilde{U}_{a} \subset \widehat{G}\) yields a root group 
\[
\widetilde{\mathcal{U}}_{a,x} := n \widetilde{U}_{wa} n^{-1} = u^{w^{-1}\lambda} \widetilde{U}_{a} u^{- w^{-1}\lambda} = u^{\langle a, w^{-1} \lambda \rangle} \widetilde{U}_{a} = u^{\langle w a, \lambda \rangle} \widetilde{U}_{a} 
\]
for \(\widetilde{\mathcal{G}}_{x}\).

Now assume that \(I\) acts trivially on \(\widehat{G}\), in which case we identify \(\breve{G}:= (\pi_{I})_{*}^I \widehat{G}_{\widetilde{\mathbb{A}}^1_{\mathcal{O}}\setminus \lbrace 0\rbrace} = \breve{\mathcal{G}}_{x}[v^{-1}]\) with \(\widehat{G}_{\breve{\mathbb{A}}^1_{\mathcal{O}}\setminus \lbrace 0 \rbrace}\). 
Hence \(\breve{G}\) admits a canonical extension to \(\breve{\mathbb{A}}^1_{\mathcal{O}}\), namely \(\widehat{G}_{\breve{\mathbb{A}}^1_{\mathcal{O}}}\). 
The root group \(U_{a} \subset \widehat{G}\) then yields a root group \(\breve{U}_{a} \subset \widehat{G}_{\breve{\mathbb{A}}^1_{\mathcal{O}}}\) over \(\breve{\mathbb{A}}^1_{\mathcal{O}}\). 
For each \(a \in \Phi(\widehat{G},\widehat{T})\), we define 
\[
\breve{\mathcal{U}}_{a,x} := (\pi_{I})_{*}^I \left( \widetilde{\mathcal{U}}_{a,x} \right) . 
\]
We then have 
\[
\breve{\mathcal{U}}_{a,x} = \left( u^{\langle a,w^{-1}\lambda \rangle} \widetilde{U}_{a} \right)^I = v^{\lceil \frac{1}{e} \langle a, w^{-1}\lambda \rangle \rceil} \breve{U}_{a} = v^{\lceil - \langle a, x - o \rangle \rceil} \breve{U}_{a} .
\]
\begin{remark}
    One can define ``root groups'' for \(\breve{\mathcal{G}}_{x}\) more generally when 
    \(I\) does not act trivially on \(\widehat{G}\), analogously to the description of root groups for quasi-split connected reductive groups. 
    See for example \cite[Section 5.1]{zhuCoherenceConjecturePappas2014}. 
\end{remark}

\subsubsection{The open cell}
\label{sec:open-cell}
The map 
\begin{equation}
    \label{eq:open-cell}
    \widetilde{\mathcal{V}}_{x} := \prod_{a \in \Phi^{-}} \widetilde{\mathcal{U}}_{a,x} \times \widetilde{\mathcal{T}} \times \prod_{a \in \Phi^{+}} \widetilde{\mathcal{U}}_{a,x} \hookrightarrow \widetilde{\mathcal{G}}_{x}
\end{equation}
is an open immersion, and
\(\widetilde{\mathcal{V}}_{x} \subset \widetilde{\mathcal{G}}_{x}\) is
fiberwise dense \cite[2.2.10, 3.9.4]{bruhat-tits-II} (see also \cite[Proof of Theorem 4.1]{pappas-zhu}).
If \(I\) acts trivially on \(\widehat{G}\), we define 
\[
\breve{\mathcal{V}}_{x} := \widetilde{\pi}_{*}^{I} \left( \widetilde{\mathcal{V}}_{x} \right) = \prod_{a \in \Phi^{-}} \breve{\mathcal{U}}_{a,x} \times \breve{\mathcal{T}} \times \prod_{a \in \Phi^{+}} \breve{\mathcal{U}}_{a,x} .
\] 
By \cite[Proposition A.5.2, Proposition A.8.10]{CGP} (see also \cref{res:invariant-pushforward-is-smooth})
it follows from \eqref{eq:open-cell} that the natural map \(\breve{\mathcal{V}}_{x} \hookrightarrow \breve{\mathcal{G}}_{x}\) is also an open immersion. 

\subsubsection{Root groups attached to concave functions}
\label{sec:root-groups-attached-to-concave-functions}

For any \(m \in \widetilde{\mathbb{R}}\) (notation as in \cite[Section 1.6]{kaletha-prasad})
we also have the root group 
\[
\widetilde{\mathcal{U}}_{a,x,m} := u^{\lceil - e \langle a, x - o \rangle + em  \rceil} \widetilde{U}_{a} = u^{\langle w a, \lambda \rangle + \lceil e m \rceil} \widetilde{U}_{a} .
\] Clearly
\(\widetilde{\mathcal{U}}_{a,x,0} = \widetilde{\mathcal{U}}_{a,x}\).

Let
\(f : \widehat{\Phi}\left( \widehat{G},\widehat{T} \right) \to \widetilde{\mathbb{R}}\) be
a concave function \cite[Definition 7.3.1]{kaletha-prasad}.
We allow the possibility \(f(0) > 0\) which corresponds to a valuation
condition on the torus, which we now describe. 
Given \(m' \in \widetilde{\mathbb{R}}_{\geq 0}\), we define 
\[
    \mathbb{G}_{m, m'} := \operatorname{Spec} \mathcal{O}^\mathcal{J}\left[ u, t^{-1}, u^{- \lceil e m' \rceil}(t-1) \right] ,
\]
and for the split torus
\(\widetilde{\mathcal{T}} = \mathbb{G}_{m}^{r}\) we define
\(\widetilde{\mathcal{T}}_{m'} = \mathbb{G}_{m,m'}^r\). 
Going back to the concave function \(f\), we define 
\[
    \widetilde{\mathcal{V}}_{x,f} := \prod_{a \in \Phi^{-}} \widetilde{\mathcal{U}}_{a,x,f(a)} \times \widetilde{\mathcal{T}}_{f(0)} \times \prod_{{a} \in \Phi ^{+} } \widetilde{\mathcal{U}}_{a,x,f(a)} .
\]

Now assume that \(I\) acts trivially on \(\widehat{G}\).
Then we define 
\[
\breve{\mathcal{U}}_{a,x,f} := (\pi_{I})_{*}^{I} \left( \widetilde{\mathcal{U}}_{a,x, f(a)} \right) = v^{\lceil - \langle a, x- o \rangle + f(a) \rceil } \breve{U}_{a} . 
\]
We also define 
\[
\breve{\mathcal{T}}_{m} := \widetilde{\pi}_{*}^{I}\left( \widetilde{\mathcal{T}}_{m} \right)  .
\] 
Again, since \(I\) acts trivially we can actually identify \(\breve{\mathcal{T}}_{m'} = \mathbb{G}_{m, \lceil m' \rceil}^r\), where \(r\) is the rank of \(\widehat{T}\). 
Indeed, we can verify this as follows when \(\widehat{T} = \mathbb{G}_{m}\):
\[ \breve{\mathcal{T}}_{m}\left( \mathcal{O}^{\mathcal{J}}\llb v \rrb \right) = \left( \widetilde{\mathcal{T}}_{m}\left( \mathcal{O}^{\mathcal{J}}\llb u \rrb \right)  \right)^{I} = \left( 1 + u^{\lceil e m \rceil} \mathcal{O}^{\mathcal{J}}\llb u \rrb \right)^{I} = 1 + v^{\lceil m \rceil}\mathcal{O}^{\mathcal{J}}\llb v \rrb . \]
Finally, we define 
\begin{align*}
\breve{\mathcal{V}}_{x,f} & := (\pi_{I})_{*}^{I} \left( \widetilde{\mathcal{V}}_{x,f} \right) = \prod_{a \in \Phi^{-}} \breve{\mathcal{U}}_{a,x,f} \times \breve{\mathcal{T}}_{f(0)} \times \prod_{a \in \Phi^{+}} \breve{\mathcal{U}}_{a,x,f} .
\end{align*}

\subsubsection{The group schemes \(\mathcal{G}_{x,f}\)}
We can construct group schemes \(\mathcal{G}_{x,f}\) over \(\mathbb{A}^1_{\mathcal{O}}\) for a concave function \(f\), extending \cite[Theorem 4.1]{pappas-zhu}
in our situation. 

\begin{proposition}
    \label{res:pappas-zhu-concave-function}
    Assume that \(f \geq 0\). Then we can construct a smooth affine group scheme \(\mathcal{G}_{x,f}\) over \(\mathbb{A}^1_{\mathcal{O}}\) with connected fibers with the following properties:
    \begin{enumerate}
        \item The group scheme \(\mathcal{G}_{x,f}|_{\mathbb{A}^1_{\mathcal{O}} \setminus \lbrace 0 \rbrace}\) is the group scheme \(G^*\). 
        \item The base change of \(\mathcal{G}_{x,f}\) to \(\mathbb{F}\llb v \rrb\) or \(E \llb v \rrb\) is the smooth integral model of \(G^*_{\mathbb{F}\llb v \rrb}\) or 
            \(G^*_{E\llb v \rrb}\) (respectively) attached to \(x\) and \(f\) (in the sense of \cref{sec:valuations-of-the-root-datum} and \cite[Theorem 8.5.2]{kaletha-prasad}). 
    \end{enumerate}
    Moreover, \(\mathcal{G}_{x,f}\) is the unique smooth affine group scheme over \(\mathbb{A}^1_{\mathcal{O}}\) with connected fibers such that \(\mathcal{G}_{x,f}[v^{-1}] = G^*\) 
    and \(\mathcal{G}_{x,f}(E\llb v \rrb) \subset G^*(E\llp v \rrp)\) is the subgroup which is denoted \(G^*(E\llp v \rrp)_{x,f}\) in \cite[Definition 7.3.3]{kaletha-prasad}. 
\end{proposition}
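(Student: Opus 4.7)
The plan is to follow the template used to construct $\mathcal{G}_x$ in \cref{sec:bruhat-tits-group-scheme-as-invariant-pushforward}: first build a suitable $\Gamma$-equivariant smooth affine group scheme $\widetilde{\mathcal{G}}_{x,f}$ over $\widetilde{\mathbb{A}}^1_{\mathcal{O}}$, then set $\mathcal{G}_{x,f} := \pi_*^\Gamma \widetilde{\mathcal{G}}_{x,f}$. By \cref{res:invariant-pushforward-is-smooth} this will automatically be smooth affine over $\mathbb{A}^1_{\mathcal{O}}$, and because the construction will only modify $\widetilde{\mathcal{G}}_x$ in the fiber at $u=0$, its restriction to $\mathbb{A}^1_{\mathcal{O}} \setminus \lbrace 0 \rbrace$ will coincide with $\pi_*^\Gamma \widetilde{\mathcal{G}}_x[u^{-1}] = G^*$, giving property (1) for free.

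To construct $\widetilde{\mathcal{G}}_{x,f}$ I would realize it as an iterated dilation of $\widetilde{\mathcal{G}}_x$ along a descending chain of smooth closed subgroups of the special fiber over $u=0$. Since $f$ takes only finitely many values on the finite root system $\widehat{\Phi}(\widehat{G}, \widehat{T})$, only finitely many dilation steps are required, and at each step the closed subgroup to dilate in is generated by the root groups $\widetilde{U}_a$ (together with a torus subgroup dictated by $f(0)$) indexed by the appropriate level set of $f$. Smoothness and connected fibers of the resulting scheme follow from the theory of N\'eron blowups, citing \cite{mayeuxNeronBlowupsLowdegree2023} exactly as in \cref{rem:pappas-zhu-dilation}: dilation along a smooth connected closed subgroup produces a smooth group scheme whose fiber is an extension of that subgroup by a vector group. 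One verifies that the open cell $\widetilde{\mathcal{V}}_{x,f}$ from \cref{sec:root-groups-attached-to-concave-functions} embeds as a fiberwise dense open subscheme of $\widetilde{\mathcal{G}}_{x,f}$ with the product structure dictated by $f$, which is the combinatorial content of the construction.

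Property (2) then follows from compatibility of dilation with flat base change. Pulling back $\widetilde{\mathcal{G}}_{x,f}$ along the flat map $\widetilde{\mathbb{A}}^1_{\mathcal{O}} \to \operatorname{Spec} \mathbb{F}^\mathcal{J}\llb u \rrb$ (respectively to $E^\mathcal{J}\llb u \rrb$) produces a smooth affine group scheme over the indicated DVR which is identified, by comparing open cells, with the Kaletha--Prasad group scheme attached to $x$ and $f$ in \cite[Theorem 8.5.2]{kaletha-prasad}. Invariant pushforward commutes with this base change because $\pi$ is \'etale away from the zero locus where the dilation takes place, so after taking $\Gamma$-invariants one recovers the parahoric group schemes for $G^*_{\mathbb{F}\llp v \rrp}$ and $G^*_{E\llp v \rrp}$ attached to $x$ and $f$ as in \cref{res:pappas-zhu-identification}. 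Connected fibers of $\mathcal{G}_{x,f}$ then follow from \cref{ass:connected-fibers-assumptions} via the argument in \cref{res:connected-fibers}, because the dilation preserves the relevant connectedness input.

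For uniqueness, the argument in the proof of \cite[Theorem 4.1]{pappas-zhu} reproduced in \cref{res:pappas-zhu-identification} adapts verbatim: any smooth affine group scheme $\mathcal{G}'$ over $\mathbb{A}^1_{\mathcal{O}}$ with connected fibers, $\mathcal{G}'[v^{-1}] = G^*$, and the prescribed $E\llb v \rrb$-points, admits a canonical isomorphism to $\mathcal{G}_{x,f}$ by the N\'eron mapping property applied at every codimension-one point of $\mathbb{A}^1_{\mathcal{O}}$ where a modification could occur, combined with density of the generic fiber. The main obstacle I anticipate is the bookkeeping in the dilation step: matching the ceilings $\lceil -\langle a, x - o \rangle + f(a) \rceil$ that appear in the definition of $\widetilde{\mathcal{U}}_{a,x,f(a)}$ with the actual root group filtrations produced by the iterated dilation, uniformly in $j \in \mathcal{J}$. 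This requires writing out the dilation steps carefully enough to see that the open cell of each intermediate scheme has the prescribed product structure, rather than a nearby approximation thereof.
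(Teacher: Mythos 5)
Your proposal takes the same overall route as the paper: construct the group by iterated dilations (N\'eron blowups) along $\{u=0\}$ (or $\{v=0\}$) and descend via invariant pushforward; smoothness, affineness and the identification away from $0$ then come for free. The paper carries this out over $\breve{\mathbb{A}}^1_{\mathcal{O}}$ under the running simplifying assumption that $I$ acts trivially, and notes that the general case goes through the $\widetilde{\mathcal{G}}_{x,f}$ version you propose; so the choice of which affine line to work on is a matter of presentation.

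Where your sketch falls short of the paper's argument is at the heart of the induction. You dilate ``along a descending chain of smooth closed subgroups of the special fiber'' indexed by level sets of $f$, and assert that smoothness and connected fibers of the result follow from the N\'eron blowup theorem. But the N\'eron blowup machinery takes as \emph{input} a flat (in practice smooth, connected) closed subgroup of the special fiber, and the non-trivial content of the proof is precisely establishing that the candidate subgroup is such a thing. The paper proceeds one step of depth at a time ($0 \leq f \leq f' \leq f+1$), defines $Q$ as the set-theoretic image of $\overline{\mathcal{V}}_{x,f'}$ in $\overline{\mathcal{G}}_{x,f}$, and then (i) verifies that the underlying points are closed under the group law using the Moy--Prasad group structure from \cite[Proposition 7.3.12]{kaletha-prasad} (valid because $f'(0)>0$), (ii) shows each $Q_a$ and the product $Q'$ are closed, and (iii) when $f(0)=0$ — so that $\overline{\mathcal{V}}_{x,f}$ is open but not the whole of $\overline{\mathcal{G}}_{x,f}$ — runs a separate argument about the closure $\widetilde{Q}$ having geometrically connected fibers to conclude $Q$ is closed. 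Your ``one verifies that the open cell embeds as a fiberwise dense open subscheme with the product structure dictated by $f$'' defers exactly to this, but the hardest case ($f(0)=0$, $\overline{\mathcal{V}}_{x,f}\subsetneq\overline{\mathcal{G}}_{x,f}$) is the one that requires a genuine idea, not just bookkeeping.

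A second, smaller inaccuracy: for property (2) you appeal to ``compatibility of dilation with flat base change,'' but the map $\mathcal{O}[v]\to\mathbb{F}\llbracket v\rrbracket$ factors through the non-flat reduction $\mathcal{O}[v]\to\mathbb{F}[v]$, and the \'etaleness of $\pi$ away from $\{0\}$ is irrelevant because these base changes pass through the ramification locus. The paper sidesteps this by using the universal property of dilations to compute $\breve{\mathcal{G}}_{x,f'}(R)=\breve{\mathcal{V}}_{x,f'}(R)$ for $v$-adically complete $R$ without $v$-torsion, and then identifying the resulting $E\llbracket v\rrbracket$- and $\mathbb{F}\llbracket v\rrbracket$-points directly via Iwahori decomposition from \cite[Proposition 7.3.12]{kaletha-prasad}. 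A torsion-free base change compatibility for N\'eron blowups could also make your phrasing rigorous, but you would need to invoke that explicitly rather than flatness.

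The skeleton of your argument (induction, dilations, invariant pushforward, uniqueness via Pappas--Zhu) is the right one; the gap is that the closedness and smoothness of the subgroup you dilate in is asserted rather than proved, and that is where nearly all of the paper's proof lives.
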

\begin{proof}
    The uniqueness follows from the proof of uniqueness in \cite[Theorem 4.1]{pappas-zhu}. 
    As the proof there shows, \(\mathcal{G}_{x,f}\) is already uniquely characterized by (1) and the \(E\) part of (2).

    For the remainder of the proof, we show the analogous statement for \(\breve{\mathcal{G}}_{x,f}\) over \(\breve{\mathbb{A}}^1_{\mathcal{O}}\), from which the statement will follow by taking invariant pushforward.
    To simplify notation we assume that \(\# \mathcal{J}= 1\) and ignore the \(\mathcal{J}\).
    We also carry out the proof under the assumption that \(I\) acts trivially on \(\widehat{G}\), so that the root groups of \(\breve{\mathcal{G}}_{x,f}\) are defined. 
    But as in \cite[Theorem 4.1]{pappas-zhu}, the general case follows by arguing instead with \(\widetilde{\mathcal{G}}_{x,f}\) and passing to the netural connected component invariant pushforward (although the passage to neutral connected components is likely redundant by \cref{ass:connected-fibers-assumptions}). 

    By the induction argument used in the proof of \cite[Theorem 4.1]{pappas-zhu}, based on \cite{Yu2002SMOOTHMA},\footnote{
        In fact, the motivation of \cite{Yu2002SMOOTHMA} appears to have been the construction of smooth integral models like \(\mathcal{G}_{x,f}\), where we allow \(f\geq 0\) to be a concave function. 
        The proof of \cite[Theorem 8.5.2]{kaletha-prasad} is also based on Yu's method. 
    }
    it suffices to show that if \(0 \leq f \leq f' \leq f+1\) then we can describe \(\breve{\mathcal{G}}_{x,f'}\) as a dilation of \(\breve{\mathcal{G}}_{x,f}\). 
    More precisely, our induction hypothesis is that:
    \begin{enumerate}[(A)]
        \item \(\breve{\mathcal{G}}_{x,f}\) satisfies the properties of the proposition, 
        \item \(\breve{\mathcal{V}}_{x,f} \hookrightarrow \breve{\mathcal{G}}_{x,f}\) is open, and dense in geometric fibers, 
    \end{enumerate}
    The base case with \(f(0) = 0\) is taken care of by the proof of \cite[Theorem 4.1]{pappas-zhu}, so we will assume that \(f'(0) > 0\).

    Let \(\overline{\mathcal{G}}_{x,f}\) denote the base change of \(\breve{\mathcal{G}}_{x,f}\) along the section \(v = 0 : \operatorname{Spec}\mathcal{O} \hookrightarrow \breve{\mathbb{A}}^1_{\mathcal{O}}\), and similarly for other related schemes over \(\breve{\mathbb{A}}^1_{\mathcal{O}}\). 
    Let \(Q\) be the set theoretic image of \(\overline{\mathcal{V}}_{x,f'} \to \overline{\mathcal{G}}_{x,f}\). 
    The crux of the proof is to show that \(Q \subset \overline{\mathcal{G}}_{x,f}\) is a closed subgroup, from which it will follow similarly to the argument in \cite[177-178]{pappas-zhu}
    that \(\mathcal{G}_{x,f'}\) is the dilation of \(\mathcal{G}_{x,f}\) in \(Q\) along \(v=0\). 

    Let \(F \in \lbrace \overline{E}, \overline{\mathbb{F}} \rbrace\), and note that we have a commutative diagram 
    \[
        \begin{tikzcd}
        \breve{\mathcal{V}}_{x,f'}(F \llb v \rrb) \arrow[r,twoheadrightarrow] \arrow[d,hookrightarrow] & \overline{\mathcal{V}}_{x,f'}(F) \arrow[d] \\ 
        \breve{\mathcal{G}}_{x,f}(F \llb v \rrb) \arrow[r,twoheadrightarrow] & \overline{\mathcal{G}}_{x,f}(F) ,
        \end{tikzcd}
    \]
    where the hooked arrows are injective and the two-headed arrows are surjective (by smoothness). 
    The assumption \(f'(0) > 0\) implies that \(\breve{\mathcal{V}}_{x,f'}(F\llb v \rrb) \subset \breve{\mathcal{G}}_{x,f}(F\llb v \rrb)\) is a subgroup by \cite[Proposition 7.3.12]{kaletha-prasad}. 
    Therefore the points underlying \(Q\) are closed under the group operations. 

    Observe that \(\breve{\mathcal{U}}_{a,x,f'} \hookrightarrow \breve{\mathcal{U}}_{a,x,f}\) is a dilation (or Néron blowup \cite[Definition 3.1]{mayeuxNeronBlowupsLowdegree2023}) for each \(a \in \widehat{\Phi}(\widehat{G},\widehat{T})\),\footnote{
        The hat over \(\widehat{\Phi}\) means that we allow \(a = 0\), in which case \(\breve{\mathcal{U}}_{a,x,m} = \breve{\mathcal{T}}_{m}\). 
    } 
    using the explicit descriptions from \cref{sec:root-groups-attached-to-concave-functions} and the assumption \(f' \leq f + 1\). 
    More precisely, it is the dilation in the identity section along \(v = 0\) if there occurs a jump in the filtration at \(a\), and the trivial dilation otherwise. 
    Since dilations are compatible with products it follows that \(\breve{\mathcal{V}}_{x,f'} \subset \breve{\mathcal{V}}_{x,f}\) is also a dilation.
    In fact, it is precisely the dilation of \(\breve{\mathcal{V}}_{x,f}\) in \(Q\) along \(v=0\). 

    By making the previous paragraph explicit we can see that the image \(Q_{a}\) of \(\overline{\mathcal{U}}_{a,x,f'} \to \overline{\mathcal{U}}_{a,x,f}\) is closed for each \(a \in \widehat{\Phi}(\widehat{G},\widehat{T})\).
    It follows that \(Q' = \prod_{a< 0} Q_{a} \times Q_{0} \times \prod_{a>0} Q_{a} \subseteq \overline{\mathcal{V}}_{x,f}\) is also closed.
    Note that \(Q\) is now the image of the multiplication map \(Q' \subset \overline{\mathcal{V}}_{x,f} \hookrightarrow \overline{\mathcal{G}}_{x,f}\).
    If \(f(0) > 0\), then \(\overline{\mathcal{V}}_{x,f} \cong \overline{\mathcal{G}}_{x,f}\) by (B) and \cite[Proposition 7.3.12]{kaletha-prasad}, so \(Q\) is closed in \(\overline{\mathcal{G}}_{x,f}\) by the above. 
    But if \(f(0) = 0\) we need an additional argument.
    
    Let \(\widetilde{Q}\) be the closure of \(Q\) in \(\overline{\mathcal{G}}_{x,f}\).
    Then \(Q_{E} = \widetilde{Q}_{E}\) and \(Q_{\mathbb{F}}\) is the neutral connected component of \(\widetilde{Q}_{\mathbb{F}}\), since 
    the fibers of \(Q\) are closed by \cite[I Proposition 1.3]{borelLinearAlgebraicGroups1991}, and \(Q\) is open in \(\widetilde{Q}\) since \(Q\) is closed in \(\overline{\mathcal{V}}_{x,f}\)
    (which is open in \(\overline{\mathcal{G}}_{x,f}\) by (B)). 
    It therefore suffices to show that the fibers of \(\widetilde{Q}\) are geometrically connected, but this follows from the argument in \cite[177]{pappas-zhu}.
    This completes the proof that \(Q\) is a closed subgroup of \(\overline{\mathcal{G}}_{x,f}\). 
    
    Now define \(\breve{\mathcal{G}}_{x,f'}\) as the dilation of \(\breve{\mathcal{G}}_{x,f}\) in \(Q\) along \(v=0\). 
    We have to check that \(\breve{\mathcal{G}}_{x,f'}\) satisfies properties (A), (B). 
    General properties of Néron blowups \cite[Theorem 3.2]{mayeuxNeronBlowupsLowdegree2023} dictate that \(\breve{\mathcal{G}}_{x,f'}\) is a smooth affine group scheme over \(\breve{\mathbb{A}}^1_{\mathcal{O}}\) with connected fibers, and \(\breve{\mathcal{G}}_{x,f'}[v^{-1}] = \breve{\mathcal{G}}_{x,f}[v^{-1}] = G^*\).
    Note that \(\breve{\mathcal{V}}_{x,f'}\) is the dilation of \(\breve{\mathcal{V}}_{x,f}\) in \(Q\) along \(v=0\). 
    Since \(\breve{\mathcal{V}}_{x,f} \hookrightarrow \breve{\mathcal{G}}_{x,f}\) is open and dense in geometric fibers, it follows by \cite[Corollary 2.8]{mayeuxNeronBlowupsLowdegree2023}
    that so is \(\breve{\mathcal{V}}_{x,f'} \hookrightarrow \breve{\mathcal{G}}_{x,f'}\), verifying property (B). 
    In fact, the universal property of dilations \cite[Proposition 2.6]{mayeuxNeronBlowupsLowdegree2023} and the fact that \(Q \subset \overline{\mathcal{V}}_{x,f}\) imply that if \(R\) is a \(\mathcal{O}[v]\)-algebra in which \(v\) is not a zerodivisor and which is \(v\)-adically complete,\footnote{
        We can relax the assumption that \(R\) is \(v\)-adically complete -- it suffices that (the image of) \(v\) is contained in the Jacobson radical of \(R\). 
        The crucial part is that the only open subset of \(\operatorname{Spec}R\) containing \(\operatorname{Spec}R / vR\) is \(\operatorname{Spec}R\) itself.
        Since \(\breve{\mathcal{V}}_{x,f} \subset \breve{\mathcal{G}}_{x,f}\) is open by (B), it then follows that \(\operatorname{Spec}R \to \breve{\mathcal{G}}_{x,f}\) factors through \(\breve{\mathcal{V}}_{x,f}\) 
        if and only if \(\operatorname{Spec}R / vR \to \operatorname{Spec}R \to \breve{\mathcal{G}}_{x,f}\) factors through \(\breve{\mathcal{V}}_{x,f}\). Consequently, we have \(\breve{\mathcal{V}}_{x,f}(R) = \overline{\mathcal{V}}_{x,f}(R/vR) \times_{\overline{\mathcal{G}}_{x,f}(R/vR)} \breve{\mathcal{G}}_{x,f}(R)\), so 
        \(\breve{\mathcal{V}}_{x,f'}(R) = Q(R/vR) \times_{\overline{\mathcal{V}}_{x,f}(R/vR)} \breve{\mathcal{V}}_{x,f}(R) = Q(R/vR) \times_{\overline{\mathcal{G}}_{x,f}(R/vR)} \breve{\mathcal{G}}_{x,f}(R) = \breve{\mathcal{G}}_{x,f'}(R)\).
    }
    then \(\breve{\mathcal{V}}_{x,f'}(R) = \breve{\mathcal{G}}_{x,f'}(R)\).
    In particular, for \(F \in \lbrace E, \mathbb{F} \rbrace\) we have by definition of \(\breve{\mathcal{V}}_{x,f'}\) and \cite[Proposition 7.3.12]{kaletha-prasad} that \(\breve{\mathcal{V}}_{x,f'}(F\llb v \rrb) \subset \widehat{G}(F\llp v \rrp)\) is the subgroup attached to \(x\) and \(f\). 
    This shows that property (2) holds, finalizing the proof of (A). 
\end{proof}
\begin{remark}
    We will use the notation \(\breve{\mathcal{G}}_{x,f}\) and \(\widetilde{\mathcal{G}}_{x,f}\) which are to \(\breve{\mathcal{G}}_{x}\) and \(\widetilde{\mathcal{G}}_{x}\), respectively, what \(\mathcal{G}_{x,f}\) is to \(\mathcal{G}_{x}\). 
    In fact, \(\breve{\mathcal{G}}_{x,f}\) is exactly as in the proof above. 
\end{remark}
We point out a few special cases of \cref{res:pappas-zhu-concave-function}.
In fact, these are more or less the only cases we will need. 
\begin{example}
    \label{ex:congruence-pappas-zhu-group-scheme}
    Suppose \(f = n\). For any \(\mathcal{O}[v]\)-algebra \(R\) we then have 
    \[
        \mathcal{G}_{x,n}(R) = \ker \left( \mathcal{G}_{x}(R) \to \mathcal{G}_{x}(R / v^n R) \right) . 
    \]
    To see this, note that the right hand side is a smooth affine group scheme over \(\mathbb{A}^1_{\mathcal{O}}\) with connected fibers by the proof of \cref{res:congruence-loop-group-lemma}. 
    It has the required property over \(E \llb v \rrb\) by \cite[Proposition 8.5.16]{kaletha-prasad}, so by the uniqueness in \cref{res:pappas-zhu-concave-function} it must agree with \(\mathcal{G}_{x,n}\).
\end{example}
\begin{example}
    \label{ex:dilation-in-unipotent-radical}
    Impose \cref{ass:pappas-zhu-groups-as-dilations}. 
    Recall that under this assumption, the group scheme \(\mathcal{G}_{x} = \mathcal{G}_{x,0}\) is the dilation of \(\widehat{G}\) in \(\widehat{P}_{x}\) along \(v = 0\). Then for \(f = 0^+\), the group scheme \(\mathcal{G}_{x,0^+}\) is the dilation of \(\widehat{G}\) in \(\widehat{U}\) along \(v=0\). 
\end{example}
When \(f > 0\), the group schemes \(\mathcal{G}_{x,f}\) has unipotent special fiber:
\begin{proposition}
    \label{res:unipotent-special-fiber}
    Let \(f > 0\) be a concave function. 
    Then \(\mathcal{G}_{x,f} \times_{v=0} \operatorname{Spec} \mathbb{F}\) is a split unipotent group scheme over \(\mathbb{F}\).\footnote{
        Recall that being \emph{split unipotent} means being an iterated extension of \(\mathbb{G}_{a}\). 
    } 
\end{proposition}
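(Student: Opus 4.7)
The plan is to show that, when $f > 0$, the open immersion $\mathcal{V}_{x,f} \hookrightarrow \mathcal{G}_{x,f}$ becomes an isomorphism after base change to $v = 0$, and then compute the special fiber of the open cell directly.

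First I would use the identity $\breve{\mathcal{V}}_{x,f}(R) = \breve{\mathcal{G}}_{x,f}(R)$ established in the proof of \cref{res:pappas-zhu-concave-function} for $v$-adically complete $\mathcal{O}[v]$-algebras $R$ in which $v$ is a non-zerodivisor, together with its $\mathbb{A}^1_\mathcal{O}$-analogue obtained by $\Gamma_0$-invariance. Applied to $R = F\llb v \rrb$ for an algebraic closure $F$ of $\mathbb{F}$, and combined with smoothness of $\mathcal{G}_{x,f}$ used to lift any $F$-point of $\overline{\mathcal{G}}_{x,f} := \mathcal{G}_{x,f} \times_{v=0} \operatorname{Spec} \mathbb{F}$ to an $F\llb v \rrb$-point of $\mathcal{G}_{x,f}$, this shows that the open immersion $\overline{\mathcal{V}}_{x,f} \hookrightarrow \overline{\mathcal{G}}_{x,f}$ is surjective on geometric points and hence is an isomorphism of schemes.

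Second I would compute $\overline{\mathcal{V}}_{x,f}$ using the explicit descriptions from \cref{sec:root-groups-attached-to-concave-functions}. Each root group $\widetilde{\mathcal{U}}_{a,x,f(a)}$ is affine space over $\widetilde{\mathbb{A}}^1_\mathcal{O}$ in the rescaled coordinate $s_a = u^{-k_a} t_a$, with reduction modulo $u$ a copy of $\mathbb{G}_a$; and, when $f(0) > 0$, the torus factor $\widetilde{\mathcal{T}}_{f(0)}$ reduces modulo $u$ to a vector group via the substitution $s = u^{-\lceil e f(0)\rceil}(t-1)$. Taking invariant pushforwards under the tame group $I$ and then under $\Gamma_0$, and reducing modulo $\varpi$, keeps each factor a form of affine space over the perfect field $\mathbb{F}$, hence split as a power of $\mathbb{G}_a$. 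Since $f > 0$, root-group commutators in $\overline{\mathcal{V}}_{x,f}$ land in root groups at strictly deeper levels of the filtration, which become trivial modulo $v$; thus the inherited group structure is an iterated extension of vector groups, making $\overline{\mathcal{G}}_{x,f}$ split unipotent.

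The main obstacle is the first step: the point-level identity from \cref{res:pappas-zhu-concave-function} requires $v$ to be a non-zerodivisor in $R$, which fails for $R = \mathbb{F}$ at $v = 0$. The workaround of lifting via smoothness to an $F \llb v \rrb$-point (where the identity does apply) and then reducing is standard but must be stated carefully, since we need to pass from the agreement of $F$-points of the two special fibers to the agreement of the schemes themselves, which is legitimate because one is an open immersion into the other. Alternatively, one can short-circuit both steps by invoking property (2) of \cref{res:pappas-zhu-concave-function}: the base change of $\mathcal{G}_{x,f}$ to $\mathbb{F}\llb v \rrb$ is the classical Bruhat--Tits integral model for $G^*_{\mathbb{F}\llp v \rrp}$ attached to $(x,f)$ in the sense of \cite[Theorem 8.5.2]{kaletha-prasad}, whose special fiber at $v = 0$ is well known to be split unipotent whenever $f > 0$.
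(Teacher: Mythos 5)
Your closing ``short-circuit'' is exactly the paper's proof: the paper uses \cref{res:pappas-zhu-concave-function}~(2) to identify $\mathcal{G}_{x,f}\times\operatorname{Spec}\mathbb{F}\llb v \rrb$ with the Bruhat--Tits smooth integral model of \cite[Theorem~8.5.2]{kaletha-prasad}, then cites \cite[Corollary~8.5.12]{kaletha-prasad} for unipotence of the special fiber and \cite[Corollary~15.5.ii]{borelLinearAlgebraicGroups1991} to upgrade to split unipotence since $\mathbb{F}$ is perfect.

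Your main plan, by contrast, has a genuine gap in step~2. The assertion that ``root-group commutators in $\overline{\mathcal{V}}_{x,f}$ land in root groups at strictly deeper levels of the filtration, which become trivial modulo $v$'' is false in general. The Chevalley commutator of $u_a(v^{n_a}x)$ and $u_b(v^{n_b}y)$ contributes to the $U_{ia+jb}$-factor a term of $v$-adic valuation $in_a+jn_b$; after dividing by $v^{n_{ia+jb}}$ to pass to the rescaled coordinate, this need not vanish mod $v$, since concavity combined with ceilings gives only $n_{ia+jb}\leq n_a+n_b$, not strict inequality. Concretely, in $\operatorname{SL}_3$ with $x=o$ take a concave $f>0$ with $f(\alpha_1)=f(\alpha_2)=0.9$, $f(\alpha_1+\alpha_2)=1.8$, and $f=0.1$ on $0$ and the negative roots; then $n_{\alpha_1}=n_{\alpha_2}=1$ and $n_{\alpha_1+\alpha_2}=2$, and the commutator contributes the nonzero term $\pm x_1x_2$ to the rescaled $U_{\alpha_1+\alpha_2}$-coordinate of the special fiber, which is therefore a nonabelian (Heisenberg-type) unipotent group. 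Note also the internal inconsistency: if commutators really vanished mod $v$, the special fiber would be abelian, which is stronger than your stated conclusion of an ``iterated extension of vector groups.'' The correct statement is that the Moy--Prasad filtration induces a finite filtration by normal subgroups with vector-group quotients, which is exactly the content of \cite[Corollary~8.5.12]{kaletha-prasad}. Step~1 of your plan is sound: an open immersion of finite-type $\mathbb{F}$-schemes surjective on $\overline{\mathbb{F}}$-points is an isomorphism by the Jacobson property, and the lift-via-smoothness argument does give surjectivity.
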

\begin{proof}
    By construction, \(\mathcal{G}_{x,f} \times \operatorname{Spec} \mathbb{F}\llb v \rrb\) is the smooth integral model for \(G^*_{\mathbb{F}\llp v \rrp}\) attached to \(x\) and \(f\)
    in the sense of \cite[Theorem 8.5.2]{kaletha-prasad}
    The special fiber of this group scheme, which coincides with \(\mathcal{G}_{x,f} \times_{v=0} \operatorname{Spec} \mathbb{F}\), is unipotent by \cite[Corollary 8.5.12]{kaletha-prasad}, 
    and in fact split unipotent by \cite[Corollary 15.5.ii]{borelLinearAlgebraicGroups1991} since \(\mathbb{F}\) is finite (and in particular perfect). 
\end{proof}
\begin{proposition}
    \label{res:unipotent-group-has-trivial-torsors}
    Let \(f > 0\) be a concave function. 
    Let \(R\) be a \(p\)-adically complete \(\mathcal{O}\)-algebra. 
    Then all \(\mathcal{G}_{x,f}\)-torsors on \(R\llb v \rrb \) are trivial.
    Moreover, if \(\overline{\mathcal{G}}_{x,f} := \mathcal{G}_{x,f} \times_{v=0} \operatorname{Spec}\mathcal{O}\), then all \(\overline{G}_{x,f}\)-torsors on \(R\) are trivial. 
\end{proposition}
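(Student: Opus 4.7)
The plan is a two-step dévissage: handle the second assertion (triviality of $\overline{\mathcal{G}}_{x,f}$-torsors on $R$) first, and then bootstrap the first (triviality of $\mathcal{G}_{x,f}$-torsors on $R\llb v \rrb$) from it via a $v$-adic argument. The decisive input in both steps is the split unipotence of $\overline{\mathcal{G}}_{x,f} \times_{\mathcal{O}} \operatorname{Spec}\mathbb{F}$ furnished by \cref{res:unipotent-special-fiber}, combined with the vanishing of $H^1$ of any vector group on an affine base.

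First I would prove the statement for $\overline{\mathcal{G}}_{x,f}$-torsors on $R$. Since $\overline{\mathcal{G}}_{x,f}$ is smooth affine over $\mathcal{O}$, standard smoothness arguments give surjective reduction maps $\overline{\mathcal{G}}_{x,f}(R/p^{n+1}) \twoheadrightarrow \overline{\mathcal{G}}_{x,f}(R/p^n)$ whose kernels are vector groups over $\operatorname{Spec}(R/p)$, and the associated long exact cohomology sequence together with the vanishing of $H^1$ of these vector groups on affine bases inductively yields compatible trivializations of a given torsor modulo each $p^n$. By $p$-adic completeness of $R$ and affineness of the torsor, these glue to a genuine trivialization, provided the base case holds. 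The base case asks for triviality of $(\overline{\mathcal{G}}_{x,f} \times_{\mathcal{O}} \mathbb{F})$-torsors on $R/p$; by \cref{res:unipotent-special-fiber} this group admits a composition series with graded pieces isomorphic to $\mathbb{G}_a$, and induction on the length of this filtration, using $H^1_{\operatorname{fppf}}(\operatorname{Spec}(R/p), \mathbb{G}_a) = H^1_{\operatorname{Zar}}(R/p, \mathcal{O}) = 0$, completes this step.

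For the first assertion I would carry out the analogous dévissage in the $v$-variable: smoothness of $\mathcal{G}_{x,f}$ over $\mathbb{A}^1_{\mathcal{O}}$ gives surjective reductions $\mathcal{G}_{x,f}(R\llb v \rrb/v^{n+1}) \twoheadrightarrow \mathcal{G}_{x,f}(R\llb v \rrb/v^n)$ with vector-group kernels over $\operatorname{Spec}R$, and the obstruction to lifting trivializations lies in the $H^1$ of these vector groups, handled by applying the $p$-adic dévissage once more. By $v$-adic (and $p$-adic) completeness of $R\llb v \rrb$, compatible trivializations modulo $v^n$ assemble to a global trivialization, reducing the first assertion modulo $v$ to the second. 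The main obstacle is bookkeeping: interleaving the two filtrations so that the vanishing of $H^1$ for vector groups on affine schemes can be invoked at each stage, and ensuring compatibility of trivializations passes through the inverse limits. The essential structural input, namely that the mod-$p$ special fiber at $v=0$ is an iterated $\mathbb{G}_a$-extension, is already recorded in \cref{res:unipotent-special-fiber}.
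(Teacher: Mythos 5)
Your proposal is correct and follows essentially the same strategy as the paper: reduce via smoothness and ($v$-adic, $p$-adic) completeness to the special fiber $\mathcal{G}_{x,f} \times_{v=0} \operatorname{Spec}\mathbb{F}$, which is split unipotent by \cref{res:unipotent-special-fiber}, and then use $H^1_{\text{\'et}}(\operatorname{Spec}(R/\varpi R), \mathbb{G}_a) = 0$. The paper collapses your two dévissages into a single one-shot reduction (a trivialization is just a section of the smooth affine scheme $\mathcal{P}$, which lifts along the square-zero tower directly, no cohomology long exact sequence needed), but the underlying ideas and key inputs are identical.
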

\begin{proof}
    Let \(\mathcal{P}\) be a \(\mathcal{G}_{x,f}\)-torsor over \(\operatorname{Spec}R\llb v \rrb\).
    Then \(\mathcal{P}\) is smooth affine by descent, so \(\mathcal{P}(R\llb v \rrb) \neq \emptyset\)
    only if  \(\mathcal{P}(R) \neq \emptyset\) only if \(\mathcal{P}(R / \varpi R) \neq \emptyset\) (by \(p\)-adic completeness of \(R\)).
    Note that \(\mathcal{P}(R / \varpi R)\) is also the set of sections of \(\mathcal{P} \times_{v=0} \operatorname{Spec} \mathbb{F}\), which is a torsor for the group \(C := \mathcal{G}_{x,f} \times_{v=0} \operatorname{Spec}\mathbb{F}\). 
    This set of sections is non-empty since \(C\) is unipotent by \cref{res:unipotent-special-fiber}, and all torsors for a unipotent group scheme over an affine space are trivial.
    This fact is proved in \cite[Propopsition 1]{grothendieckTorsionHomologiqueSections1958}, but for convenience of the reader we give a short argument. 
    The vanishing of \(H^1_{\et}\left( \operatorname{Spec}R / \varpi R,C \right)\) follows from the vanishing of \(H^1_{\et}\left( \operatorname{Spec}R / \varpi R, \mathbb{G}_{a} \right) = H^1 \left( \operatorname{Spec}R / \varpi R, \mathcal{O}_{R/ \varpi R} \right) = 0\), 
    where the first equality is due to \cite[\href{https://stacks.math.columbia.edu/tag/03DW}{Proposition 03DW}]{stacks-project}, and 
    the second due to Serre vanishing \cite[\href{https://stacks.math.columbia.edu/tag/01XB}{Lemma 01XB}]{stacks-project}.
\end{proof}

\subsection{Loop groups}
\label{sec:loop-groups}
We will now discuss various loop groups for the groups constructed in \cref{sec:bruhat-tits-group-schemes}. 
Some general properties of loop groups are also recalled in \cref{sec:loop-groups-generalities}.

\subsubsection{Positive loop groups}
\label{sec:positive-loop-groups}
Given a scheme \(X\) over \(\mathbb{A}^1_{\mathcal{O}}\), we define two functors on \(\mathcal{O}\)-algebras 
\begin{align*}
    L^+_{0} X (R) & = X (R \llb v \rrb) , \\ 
    L^+ X (R) & = X(R \llb v + p \rrb) , 
\end{align*}
where \(R \llb v + p \rrb\) is viewed as an \(\mathcal{O}[v]\)-algebra via \(v \mapsto v\). 
These loop schemes are the specializations of the global loop scheme \cite[Section 6.2.4]{pappas-zhu} at \(v = 0\) and \(v= -p\), respectively. 

Note that if \(R\) is \(p\)-adically complete,
then \(R\llb v \rrb = R \llb v+p \rrb\) by \cref{sec:E-vs-v}, 
and consequently \(L^+_{0}X(R) = L^+ X(R)\). 
But this property notably fails for \(R = E\) (and more generally if \(p \in R^\times\)).

Similarly, we define positive loop groups for schemes over \(\breve{\mathbb{A}}^1_{\mathcal{O}}\) and \(\widetilde{\mathbb{A}}^1_{\mathcal{O}}\) as well. 
That is, if \(X\) is defined over \(\breve{\mathbb{A}}^1_{\mathcal{O}}\), we set 
\begin{align*}
    L^+_{0} X (R) & = X\left( \left( R \otimes_{\mathbb{Z}_{p}} \mathbb{Z}_{q} \right) \llb v \rrb \right) , \\
    L^+ X (R) & = X\left( \left( R \otimes_{\mathbb{Z}_{p}} \mathbb{Z}_{q} \right) \llb v + p \rrb \right) . 
\end{align*}
Similarly, if \(X\) is defined over \(\widetilde{\mathbb{A}}^1_{\mathcal{O}}\), we set 
\begin{align*}
    L^+_{0} X (R) & = X\left( \left( R \otimes_{\mathbb{Z}_{p}} \mathbb{Z}_{q} \right) \llb u \rrb \right) , \\
    L^+ X (R) & = X\left( \left( R \otimes_{\mathbb{Z}_{p}} \mathbb{Z}_{q} \right) \llb u^e + p \rrb \right) , 
\end{align*}
where we remind the reader that \(\left( R \otimes_{\mathbb{Z}_{p}} \mathbb{Z}_{q} \right) \llb u^e + p \rrb := \left( R \otimes_{\mathbb{Z}_{p}} \mathbb{Z}_{q} \right) [u]^{\wedge (u^e + p)}\).
Note that when \(R\) is \(p\)-adically complete, we have \(\left( R \otimes_{\mathbb{Z}_{p}} \mathbb{Z}_{q} \right) \llb u^e + p \rrb = \left( R \otimes_{\mathbb{Z}_{p}} \mathbb{Z}_{q} \right) \llb u \rrb\) by \cref{res:inverting-different-variables}. 

\subsubsection{Congruence loop groups}
\label{sec:congruence-loop-groups}
If \(H\) is a smooth affine group scheme over \(\mathbb{A}^1_{\mathcal{O}}\), we define for any \(n \in \mathbb{Z}_{\geq 0}\) the jet groups 
\begin{align*}
    L^n_{0}H(R) & = H(R[v]/v^n) , \\
    L^nH(R) & = H(R[v+p]/(v+p)^n) . 
\end{align*} 
We then define the congruence loop groups \(L^{+n}_{0}H\) and \(L^{+n}H\) as the kernels of the short exact sequences 
\begin{align}
    1 \to L^{+n}_{0}H \to L^+_{0} H \to L^n_{0} H \to 1 , \label{eq:congruence-loop-group-exact-sequence} \\
    1 \to L^{+n}H \to L^+ H \to L^n H \to 1 , \nonumber
\end{align}
where we note that these sequences are right exact by smoothness of \(H\).

If \(H\) is defined over \(\breve{\mathbb{A}}^1_{\mathcal{O}}\), we define \(L^n_{0}H\) and \(L^n H\) similarly as above. 
But if \(H\) is defined over \(\widetilde{\mathbb{A}}^1_{\mathcal{O}}\), we define
\begin{align*}
    L^n_{0} H(R) & = H \left( \left( R \otimes_{\mathbb{Z}_{p}} \mathbb{Z}_{q} \right)[u]/u^{en} \right) , \\
    L^n H(R) & = H \left( \left( R \otimes_{\mathbb{Z}_{p}} \mathbb{Z}_{q} \right)[u]/(u^e+p)^n \right) . 
\end{align*}
In both cases (\(H\) defined over \(\breve{\mathbb{A}}^1_{\mathcal{O}}\) or \(\widetilde{\mathbb{A}}^1_{\mathcal{O}}\)), we define \(L^{+n}_{0}H\) and \(L^{+}H\) as kernels as above. 

Even when \(R \llb v \rrb = R \llb v + p \rrb\), we typically have \(L^{+n}_{0}H (R) \neq L^{+n}H (R) \), for the simple reason that \(R \llb v \rrb / v^n \neq R \llb v \rrb / (v+p)^n\). 
The following lemma enables us to compare these congruence subgroups when \(p^a R = 0\). 
\begin{lemma}
    \label{res:comparing-congruence-subgroups-at-v-and-v-plus-p}
    Let \(R\) be a \(\mathbb{Z} / p^{a}\)-algebra where
    \(a \in \mathbb{Z}_{\geq 1}\), and let \(n \in \mathbb{Z}_{\geq 1}\). If
    \(n \geq a\), then
    \begin{align*}
        L^{+n}H(R) & \subset L^{+n-a+1}_{0} H (R) , \\ 
        L^{+n}_{0} H(R) & \subset L^{+n - a + 1}H(R) .
    \end{align*}
    For any \(n \in \mathbb{Z}_{\geq 1}\), we also have
    \[ L^{+p^{a-1}n}_{0} H(R) = L^{+p^{a-1}n}H(R) . \]
\end{lemma}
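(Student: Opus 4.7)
The claim is essentially a statement about ideals in the coefficient ring, combined with functoriality of $H$. Concretely, since $L^{+n}H(R)$ and $L^{+n}_0H(R)$ are the kernels of the reduction maps induced by the quotients $R\llbracket v \rrbracket \twoheadrightarrow R\llbracket v \rrbracket/(v+p)^n$ and $R\llbracket v \rrbracket \twoheadrightarrow R\llbracket v \rrbracket/v^n$ respectively, an inclusion of ideals $(v+p)^n R\llbracket v \rrbracket \subseteq v^{n-a+1} R\llbracket v \rrbracket$ yields (by functoriality of $H$ applied to the induced quotient $R\llbracket v \rrbracket/(v+p)^n \twoheadrightarrow R\llbracket v \rrbracket/v^{n-a+1}$) the inclusion $L^{+n}H(R) \subseteq L^{+n-a+1}_0 H(R)$, and symmetrically for the other direction.

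\textbf{Step 1 (the key ideal computations).} Assuming $p^a R = 0$, I would expand by the binomial theorem
\[
(v+p)^n = \sum_{k=0}^{n} \binom{n}{k} v^{n-k} p^k = \sum_{k=0}^{a-1} \binom{n}{k} v^{n-k} p^k,
\]
the upper bound being $a-1$ because $p^a = 0$ in $R$. When $n \geq a$, every surviving summand is divisible by $v^{n-a+1}$, and so $(v+p)^n \in v^{n-a+1} R\llbracket v \rrbracket$. Reversing the roles by writing $v = (v+p) - p$ and expanding $v^n = ((v+p)-p)^n$ gives symmetrically $v^n \in (v+p)^{n-a+1} R\llbracket v \rrbracket$ when $n \geq a$. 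The two inclusions of the lemma then follow from functoriality of $H$ as indicated above.

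\textbf{Step 2 (the equality at $n = p^{a-1}m$).} The extra ingredient is a $p$-adic valuation computation. I would verify that in a $\mathbb{Z}/p^a$-algebra one has
\[
(v+p)^{p^{a-1}} = v^{p^{a-1}}
\]
by showing that every cross term $\binom{p^{a-1}}{k} v^{p^{a-1}-k} p^k$ with $1 \leq k \leq p^{a-1}$ vanishes mod $p^a$: by Kummer's theorem $v_p\bigl(\binom{p^{a-1}}{k}\bigr) = (a-1) - v_p(k)$, so the term has $p$-adic valuation $(a-1) - v_p(k) + k \geq a$ (using $k \geq v_p(k)+1$ for $k \geq 1$ when $p \geq 2$), and the constant term $p^{p^{a-1}}$ vanishes because $p^{a-1} \geq a$ for all $a \geq 1$ when $p \geq 2$. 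Raising to the $m$-th power then yields $(v+p)^{p^{a-1}m} = v^{p^{a-1}m}$, so the ideals $(v+p)^{p^{a-1}m}R\llbracket v \rrbracket$ and $v^{p^{a-1}m}R\llbracket v \rrbracket$ coincide, and the two congruence subgroups agree.

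\textbf{Step 3 (uniform treatment of the three base schemes).} For $H$ over $\mathbb{A}^1_\mathcal{O}$ or $\breve{\mathbb{A}}^1_\mathcal{O}$ the above is literally what we need. For $H$ over $\widetilde{\mathbb{A}}^1_\mathcal{O}$, the same binomial manipulations apply with $u^e$ playing the role of $v$: the relevant ideals in $(R \otimes_{\mathbb{Z}_p}\mathbb{Z}_q)\llbracket u \rrbracket$ are $(u^e+p)^n$ and $u^{en}$, and the argument of Step 1 gives $(u^e+p)^n \in u^{e(n-a+1)}\cdot(\text{ring})$ and vice versa, while Step 2 gives $(u^e+p)^{p^{a-1}m} = u^{ep^{a-1}m}$. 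I do not foresee any genuine obstacle here --- the lemma is essentially a bookkeeping exercise in binomial expansions, and the only mildly subtle point is the $p$-adic valuation estimate for $\binom{p^{a-1}}{k}$ in Step 2, which is standard.
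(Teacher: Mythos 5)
Your proof is correct and follows the same route as the paper, which also deduces both assertions from the binomial theorem: the paper states the ideal inclusions $(v+p)^n \in v^{n-a+1}R[v]$, $v^n \in (v+p)^{n-a+1}R[v]$, and the congruence $(v+p)^{p^{a-1}} \equiv v^{p^{a-1}} \bmod p^a$ without spelling out the valuation estimates, whereas you supply them explicitly (your Kummer-theorem computation in Step 2 could be replaced by the shorter iterated freshman's-dream argument, namely that $x \equiv y \bmod p^k$ implies $x^p \equiv y^p \bmod p^{k+1}$, but your version is also fine).
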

\begin{proof}
    The first statement follows from the fact that
    \((v+p)^{n} \in v^{n-a+1}R[v]\) and \(v^{n} \in (v+p)^{n-a+1}R[v]\).
    Both of these facts are consequences of the binomial theorem. Another
    consequence of the binomial theorem is the identity
    \((v+p)^{p^{a-1}} \equiv v^{p^{a-1}} \mod p^{a}\), which implies the
    second statement.
\end{proof}

We will often use the following result without comment. 
\begin{lemma}
    \label{res:loop-of-pappas-zhu-dilation-as-congruence-loop-group}
    Let \(n \in \mathbb{Z}_{\geq 0}\) and let \(\mathcal{G}_{x,n}\) denote the group scheme from \cref{res:pappas-zhu-concave-function}. 
    Then \(L^+_{0}\mathcal{G}_{x,n} = L^{+n}_{0} \mathcal{G}_{x}\).
    Similarly, \(L^+_{0}\breve{\mathcal{G}}_{x,n} = L^{+n}_{0}\breve{\mathcal{G}}_{x}\) and \(L^+_{0}\widetilde{\mathcal{G}}_{x,n} = L^{+n}_{0}\widetilde{\mathcal{G}}_{x}\). 
\end{lemma}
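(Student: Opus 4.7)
The proof plan is to reduce everything to \cref{ex:congruence-pappas-zhu-group-scheme}, which already describes the $R$-points of $\mathcal{G}_{x,n}$ (for any $\mathcal{O}[v]$-algebra $R$) as the kernel of the reduction map $\mathcal{G}_x(R) \to \mathcal{G}_x(R/v^n R)$. This description immediately matches the definition of the congruence loop group once the ``test ring'' $R$ is taken to be $R\llb v \rrb$.

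More precisely, for any $\mathcal{O}$-algebra $R$, I would first apply \cref{ex:congruence-pappas-zhu-group-scheme} with the $\mathcal{O}[v]$-algebra $R\llb v \rrb$ to obtain
\[
L^+_0 \mathcal{G}_{x,n}(R) \;=\; \mathcal{G}_{x,n}(R\llb v \rrb) \;=\; \ker\!\bigl(\mathcal{G}_x(R\llb v \rrb) \to \mathcal{G}_x(R\llb v \rrb / v^n R\llb v \rrb)\bigr).
\]
Then I would identify $R\llb v \rrb / v^n R\llb v \rrb$ with $R[v]/v^n$, so the right-hand side becomes $\ker(L^+_0 \mathcal{G}_x(R) \to L^n_0 \mathcal{G}_x(R))$, which is exactly $L^{+n}_0 \mathcal{G}_x(R)$ by the defining short exact sequence \eqref{eq:congruence-loop-group-exact-sequence}. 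This gives the desired equality of functors on $\mathcal{O}$-algebras.

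The analogous statements for $\breve{\mathcal{G}}_{x,n}$ and $\widetilde{\mathcal{G}}_{x,n}$ would follow by exactly the same argument, using the analogues of \cref{ex:congruence-pappas-zhu-group-scheme} that hold over the base rings $\mathcal{O}^{\mathcal{J}}[v]$ and $\mathcal{O}^{\mathcal{J}}[u]$ respectively (the proof of \cref{ex:congruence-pappas-zhu-group-scheme} goes through verbatim, since smoothness, connected fibers, generic fiber, and the $E\llb v \rrb$-description that feed into the uniqueness part of \cref{res:pappas-zhu-concave-function} all transfer to the two other settings). For $\widetilde{\mathcal{G}}$, one uses $\left(R\otimes_{\mathbb{Z}_p}\mathbb{Z}_q\right)\llb u \rrb / u^{en} \cong \left(R\otimes_{\mathbb{Z}_p}\mathbb{Z}_q\right)[u]/u^{en}$ to match the definition of $L^n_0 \widetilde{\mathcal{G}}_x$.

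There is essentially no obstacle here: the substance of the lemma is already packaged in \cref{ex:congruence-pappas-zhu-group-scheme}, and what remains is only a bookkeeping step matching the functorial definitions of the congruence loop groups on both sides. The only minor care needed is to notice that the statement of \cref{ex:congruence-pappas-zhu-group-scheme} applies to \emph{any} $\mathcal{O}[v]$-algebra (not just $\mathcal{O}$-algebras viewed as constants), which is exactly what is required to take the test ring to be $R\llb v \rrb$.
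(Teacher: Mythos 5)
Your proposal is correct and follows the same route the paper takes: the paper's proof is simply ``Immediate from \cref{ex:congruence-pappas-zhu-group-scheme}, \cref{res:congruence-loop-group-lemma} and the definitions,'' and your write-up is exactly what unpacking those references amounts to, including the observation that \cref{ex:congruence-pappas-zhu-group-scheme} applies to the test ring $R\llb v\rrb$ and the correct depth $u^{en}$ in the $\widetilde{\mathcal{G}}$-variant.
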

\begin{proof}
    Immediate from \cref{ex:congruence-pappas-zhu-group-scheme}, \cref{res:congruence-loop-group-lemma} and the definitions. 
\end{proof}

\subsubsection{Lie algebras of congruence loop groups}
We also make some remarks about the Lie algebras of congruence loop groups.
Let \(J \to R \to \overline{R}\) be a square zero extension of
\(\mathcal{O}\)-algebras. The are two natural ways to interpret the
Lie algebra \(\operatorname{Lie}L^{+n}\mathcal{G}(J)\), either as the
Lie algebra of the congruence loop group \(L^{+n}\mathcal{G}\), or as
the kernel of
\(\operatorname{Lie}L^{+}\mathcal{G}(J) \to \operatorname{Lie}L^{n}\mathcal{G}(J)\).
In fact these two definitions agree by commutativity of the following
diagram: 
\[\begin{CD}
\operatorname{Lie}L^{+n}\mathcal{G}(J) @>>> L^{+n}\mathcal{G}(R) @>>> L^{+n}\mathcal{G}(\overline{R}) \\
@VVV @VVV @VVV \\
\operatorname{Lie} L^{+}\mathcal{G}(J) @>>> L^{+}\mathcal{G}(R) @>>> L^{+}\mathcal{G}(\overline{R}) \\
@VVV @VVV @VVV \\
\operatorname{Lie}L^{n} \mathcal{G}(J) @>>> L^{n}\mathcal{G}(R) @>>> L^{n}\mathcal{G}(\overline{R}) .
\end{CD}\] 
Indeed, by the 9-lemma the topmost horizontal row is exact if and only if the leftmost vertical column is exact. 

It is clear that the above remarks also apply to \(\operatorname{Lie}L^{+n}_{0}\mathcal{G}(J)\). 

\begin{lemma}
    \label{res:congruence-subgroups-of-lie-algebras}
    Let \(J \to R \to \overline{R}\) be a square zero extension, where \(pJ = 0\).
    Assume that \(R\) and \(\overline{R}\) are \(p\)-adically complete so that \(L^+\mathcal{G}(R) = L^+_{0}\mathcal{G}(R)\) as explained above.
    Then there is a natural identification
    \(\operatorname{Lie}L^{+n}\mathcal{G}(J) \cong \operatorname{Lie}L^{+n}_{0}\mathcal{G}(J)\)
    and in fact these are equal as subgroups of \(L^{+}\mathcal{G}(R)\).
\end{lemma}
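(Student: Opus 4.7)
The plan is to leverage smoothness of $\mathcal{G}$ together with the hypothesis $pJ=0$ to reduce the comparison of $\operatorname{Lie}L^{+n}\mathcal{G}(J)$ with $\operatorname{Lie}L^{+n}_{0}\mathcal{G}(J)$ to a purely algebraic comparison of the $v$-adic and $(v+p)$-adic filtrations on $J\llb v \rrb$, which coincide because $pJ = 0$.

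First I would record the key elementary fact: since $pJ=0$, the binomial expansion $(v+p)^k = \sum_i \binom{k}{i} v^{k-i} p^i$ implies $(v+p)^k \cdot a = v^k \cdot a$ for every $a \in J\llb v \rrb$, and hence
\[
(v+p)^k J\llb v \rrb \;=\; v^k J\llb v \rrb \qquad \text{for all } k \geq 0.
\]
In particular the $v$-adic and $(v+p)$-adic topologies on $J\llb v \rrb$ agree; in what follows I identify $J\llb v \rrb = J\llb v+p \rrb$ (both as modules and with their common filtration).

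Next, I would invoke smoothness of $\mathcal{G}$ to obtain, for the given square-zero extension with kernel $J$, canonical identifications
\[
\operatorname{Lie}L^{+}\mathcal{G}(J) \;=\; \operatorname{Lie}\mathcal{G} \otimes_{\mathcal{O}[v]} J\llb v+p \rrb, \qquad \operatorname{Lie}L^{+}_{0}\mathcal{G}(J) \;=\; \operatorname{Lie}\mathcal{G} \otimes_{\mathcal{O}[v]} J\llb v \rrb,
\]
and analogous identifications for the jet groups $L^n\mathcal{G}$ and $L^n_0\mathcal{G}$. Taking $\operatorname{Lie}(-)(J)$ of the defining short exact sequences of the congruence loop groups from \cref{sec:congruence-loop-groups} then yields
\[
\operatorname{Lie}L^{+n}\mathcal{G}(J) \;=\; \operatorname{Lie}\mathcal{G} \otimes_{\mathcal{O}[v]} (v+p)^n J\llb v+p \rrb, \qquad \operatorname{Lie}L^{+n}_{0}\mathcal{G}(J) \;=\; \operatorname{Lie}\mathcal{G} \otimes_{\mathcal{O}[v]} v^n J\llb v \rrb.
\]
The first step identifies these two $\operatorname{Lie}\mathcal{G}$-submodules, and the resulting equality inside $\operatorname{Lie}L^{+}\mathcal{G}(J) = \operatorname{Lie}L^{+}_{0}\mathcal{G}(J) \subseteq L^{+}\mathcal{G}(R)$ is the desired conclusion.

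There is no serious obstacle; the only mildly delicate step is making the smoothness-based identifications of Lie algebras with tensor products precise. If one prefers to avoid any appeal to abstract first-order deformation theory, one can argue directly via the open cell $\widetilde{\mathcal{V}}_x \hookrightarrow \widetilde{\mathcal{G}}_x$ of \cref{sec:open-cell}: an element of $\operatorname{Lie}L^{+}\mathcal{G}(J) \subseteq L^{+}\mathcal{G}(R)$ is congruent to the identity modulo $J$, hence falls into the open cell and is described by a tuple of root and torus coordinates valued in $J\llb v \rrb$; belonging to $L^{+n}\mathcal{G}$ (resp.\ $L^{+n}_{0}\mathcal{G}$) amounts to each coordinate being divisible by $(v+p)^n$ (resp.\ $v^n$), and these conditions coincide by the opening observation.
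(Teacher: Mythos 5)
Your argument is correct and follows essentially the same route as the paper's. Both proofs reduce to the single observation that the hypothesis $pJ=0$ collapses the $v$-adic and $(v+p)$-adic filtrations on $J\llb v \rrb$; the paper phrases this as the identity $J[v]/(v+p)^n = J[v]/v^n$ of quotient modules and then compares the two kernels in $\operatorname{Lie}\mathcal{G}(J\llb v \rrb)$, while you phrase it as $(v+p)^nJ\llb v\rrb = v^nJ\llb v\rrb$ and compare the corresponding submodules after writing $\operatorname{Lie}L^+\mathcal{G}(J)$ as $\operatorname{Lie}\mathcal{G}\otimes_{\mathcal{O}[v]}J\llb v\rrb$. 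This is a cosmetic repackaging rather than a different argument, and both versions rely on smoothness of $\mathcal{G}$ for the Lie-algebra identification; your alternative remark via the open cell would also work but is unnecessary here.
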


Intuitively this is the fact that \(v + p = v\) when \(p=0\).

\begin{proof}
    The map
    \(\operatorname{Lie}L^{+}\mathcal{G}(J) \to \operatorname{Lie}L^{n}\mathcal{G}(J)\)
    corresponds to the map
    \(\operatorname{Lie}\mathcal{G}(J\llb v\rrb) \to \operatorname{Lie}\mathcal{G}(J[v]/(v+p)^{n})\)
    given by functoriality of \(\operatorname{Lie}\mathcal{G}\) in square
    zero extensions. When \(pJ = 0\), then \(J[v]/(v+p)^{n} = J \otimes_{\mathcal{O}} \mathcal{O}[v]/(v+p)^n = J \otimes_{\mathcal{O}} \mathcal{O}[v]/v^n = J[v]/v^{n}\),
    so \[
    1 \to \operatorname{Lie}L^{+n}\mathcal{G}(J) \to \operatorname{Lie}\mathcal{G}(J\llb v \rrb)  \to \operatorname{Lie}\mathcal{G}(J[v]/v^{n}) \to 1
    \] is exact, implying that \(\operatorname{Lie}L^{+n}\mathcal{G}(J)\) is
    also the congruence subgroup with respect to \(v^{n}\).
\end{proof}

\subsubsection{Iwahori decomposition}
\label{sec:iwahori-decomposition}
We will need Iwahori decomposition in the following form. 
\begin{proposition}
    \label{res:iwahori-decomposition}
    Let \(f \geq 0\) be a concave function (possibly zero). 
    The map \(L^+_{0}\widetilde{\mathcal{V}}_{x,f} \hookrightarrow L^+_{0} \widetilde{\mathcal{G}}_{x,f}\) is an open immersion, 
    and if \(f>0\) or \(x\) is \(0\)-generic, then it is an isomorphism. 
    Similarly, if \(I\) acts trivially on \(\widehat{G}\), then the map \(L^+_{0}\breve{\mathcal{V}}_{x,f} \hookrightarrow L^+_{0} \breve{\mathcal{G}}_{x,f}\) is an open immersion,
    and if \(f > 0\) or \(x\) is \(0\)-generic, then it is an isomorphism. 
\end{proposition}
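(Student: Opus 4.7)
The plan is to establish openness in general and then upgrade to an isomorphism under the extra hypotheses.

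\emph{Openness.} The open immersion $\widetilde{\mathcal{V}}_{x,f} \hookrightarrow \widetilde{\mathcal{G}}_{x,f}$ from \cref{sec:open-cell} and \cref{sec:root-groups-attached-to-concave-functions}, being preserved by base change along $u = 0$, restricts to an open immersion $\overline{\mathcal{V}}_{x,f} \hookrightarrow \overline{\mathcal{G}}_{x,f}$ on the special fibers. The central claim is the identity of functors
\[
L^+_0 \widetilde{\mathcal{V}}_{x,f} \,=\, L^+_0 \widetilde{\mathcal{G}}_{x,f} \times_{\overline{\mathcal{G}}_{x,f}} \overline{\mathcal{V}}_{x,f},
\]
where the map on the right is reduction modulo $u$. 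The inclusion $\subseteq$ is clear; for $\supseteq$, take an $R$-point $g\colon \operatorname{Spec}(R \otimes_{\mathbb{Z}_p} \mathbb{Z}_q)\llb u \rrb \to \widetilde{\mathcal{G}}_{x,f}$ whose reduction modulo $u$ lands in $\overline{\mathcal{V}}_{x,f}$. The preimage $g^{-1}(\widetilde{\mathcal{V}}_{x,f})$ is an open subscheme of $\operatorname{Spec}(R \otimes_{\mathbb{Z}_p} \mathbb{Z}_q)\llb u \rrb$ containing the closed subscheme $\{u=0\}$. Since $u$ lies in the Jacobson radical of $(R \otimes_{\mathbb{Z}_p} \mathbb{Z}_q)\llb u \rrb$ (the $u$-adically convergent series $\sum_{n \geq 0}(-ua)^n$ inverts $1 + ua$), any open subset containing the vanishing locus of $u$ must be the whole space, so $g$ factors through $\widetilde{\mathcal{V}}_{x,f}$. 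Openness of $L^+_0 \widetilde{\mathcal{V}}_{x,f}$ in $L^+_0 \widetilde{\mathcal{G}}_{x,f}$ then follows formally from the openness of $\overline{\mathcal{V}}_{x,f} \hookrightarrow \overline{\mathcal{G}}_{x,f}$.

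\emph{Isomorphism.} By the fiber product description, it suffices to prove $\overline{\mathcal{V}}_{x,f} = \overline{\mathcal{G}}_{x,f}$ as schemes under either hypothesis. The case $f(0) > 0$ is precisely the identification invoked in the proof of \cref{res:pappas-zhu-concave-function} via \cite[Proposition 7.3.12]{kaletha-prasad}. For $f = 0$ and $x$ $0$-generic, the condition $\langle a, x - o \rangle \notin \mathbb{Z}$ for every root $a$ forces each ceiling $\lceil -\langle a, x - o \rangle \rceil$ to be either $\leq 0$ (so that $\overline{\mathcal{U}}_{a,x,0}$ has full special fiber $\mathbb{G}_a$) or $\geq 1$ (so that its special fiber is trivial). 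With no partial filtration appearing on any root group on the special fiber, the same Kaletha--Prasad proposition together with the Bruhat--Tits description of parahoric special fibers identifies $\overline{\mathcal{G}}_{x,0}$ with the multiplication image of the open cell, namely $\overline{\mathcal{V}}_{x,0}$. The statement for $\breve{\mathcal{V}}_{x,f} \hookrightarrow \breve{\mathcal{G}}_{x,f}$ under trivial $I$-action is proved by the identical argument with $v$ in place of $u$.

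The main technical delicacy is upgrading the pointwise statement \cite[Proposition 7.3.12]{kaletha-prasad} to an equality of schemes in the case $f = 0$, $x$ generic; this is resolved by observing that both $\overline{\mathcal{V}}_{x,0}$ and $\overline{\mathcal{G}}_{x,0}$ are smooth of the same dimension, so that an open immersion between them which is surjective on geometric points is necessarily an isomorphism. Everything else reduces to the Jacobson-radical trick in the positive loop group and the Bruhat--Tits structure theory already cited in the paper.
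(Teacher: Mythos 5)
Your proof reaches the same conclusions by a more careful route for openness and a parallel reduction for the isomorphism part; the overall structure matches the paper's, but two issues need to be flagged.

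For openness the paper appeals directly to the fact (proved in \cref{res:loop-group-lemma}(5)) that $L^+$ preserves open immersions. You instead establish the fiber-product identity $L^+_0\widetilde{\mathcal{V}}_{x,f} = L^+_0\widetilde{\mathcal{G}}_{x,f}\times_{\overline{\mathcal{G}}_{x,f}}\overline{\mathcal{V}}_{x,f}$ via the Jacobson-radical argument, which is correct and in fact stronger: it exhibits $L^+_0\widetilde{\mathcal{V}}_{x,f}$ as the preimage, under reduction mod $u$, of a finite-type open. That extra strength is not cosmetic. The paper asserts that surjectivity of the loop-group open immersion ``can be checked on $\overline{E}$- and $\overline{\mathbb{F}}$-points,'' but for an affine scheme not of finite type this is not automatic (residue fields of maximal ideals need not be algebraic over $E$ or $\mathbb{F}$). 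Your fiber-product observation is exactly what legitimizes the reduction to the finite-type scheme $\overline{\mathcal{G}}_{x,f}$, where the claim \emph{can} be checked on geometric points. So your approach makes rigorous something the paper's proof glosses.

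Two errors of detail, one of them substantive. First, the parenthetical ``(so that its special fiber is trivial)'' for ceilings $\geq 1$ is wrong: $v^n\breve{U}_a\cong\mathbb{G}_a$ as a scheme over $\breve{\mathbb{A}}^1_{\mathcal{O}}$ for every $n$, so the special fiber of each root group is always $\mathbb{G}_a$. What changes with the ceiling is the embedding of that $\mathbb{G}_a$ into $\overline{\mathcal{G}}_{x,f}$, not whether it is trivial. Second, and this is the real gap: your assertion that ``the statement for $\breve{\mathcal{V}}_{x,f}\hookrightarrow\breve{\mathcal{G}}_{x,f}$ \dots is proved by the identical argument with $v$ in place of $u$'' conceals a genuine asymmetry when $f=0$. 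The point $x = n\cdot o$ with $n\in\widehat{N}(\mathcal{O}^\mathcal{J}[u^{\pm1}])$ is automatically a hyperspecial vertex in the $u$-apartment, so $\widetilde{\mathcal{G}}_{x,0}\cong{}_{\tau}\widehat{G}$ has reductive special fiber at $u=0$, and $\overline{\mathcal{V}}_{x,0}$ is the \emph{proper} big-Bruhat-cell open, not all of $\overline{\mathcal{G}}_{x,0}$. Concretely, for $\widehat{G}=\GL_2$, $\lambda=(0,1)$, $n=\mathrm{diag}(1,u)$, and $w_0$ the antidiagonal permutation matrix, $nw_0n^{-1}$ lies in $L^+_0\widetilde{\mathcal{G}}_{x,0}(\overline{\mathbb{F}})$ but not in $L^+_0\widetilde{\mathcal{V}}_{x,0}(\overline{\mathbb{F}})$, since its $(1,1)$-entry vanishes mod $u$, even though $x$ is $0$-generic in the $v$-apartment. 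Genericity of $x$ is a statement about the $v$-apartment and has no force over $\widetilde{\mathbb{A}}^1_{\mathcal{O}}$. So your claimed identity $\overline{\mathcal{V}}_{x,0}=\overline{\mathcal{G}}_{x,0}$ fails on the $\widetilde{\,}$ side. (This reproduces an overclaim already present in the proposition's statement; since every downstream use of the $\widetilde{\,}$ clause in the paper takes $f>0$ or passes to $\Gamma$-invariants, nothing in the paper actually breaks, but your proof should have caught that the $u$- and $v$-pictures are not interchangeable rather than asserting they are.)
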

\begin{proof}
    We will prove the latter statement under the assumption that \(I\) acts trivially on \(\widehat{G}\), and the former statement can be proved similarly. 
    If \(f = 0\) then the map \(\breve{\mathcal{V}}_{x,f} \hookrightarrow \breve{\mathcal{G}}_{x,f}\) is an open immersion by \cref{sec:open-cell}, 
    and in general \(\breve{\mathcal{V}}_{x,f} \hookrightarrow \breve{\mathcal{G}}_{x,f}\) is an open immersion by the proof of \cref{res:pappas-zhu-concave-function}. 
    By \cref{res:loop-group-lemma} it follows that \(L^+_{0}\breve{\mathcal{V}}_{x,f} \hookrightarrow L^+_{0} \breve{\mathcal{G}}_{x,f}\) is an open immersion.

    Now assume that \(f(0) > 0\) or \(x\) is \(0\)-generic. 
    To check if the open immersion \(L^+_{0}\breve{\mathcal{V}}_{x,f} \hookrightarrow L^+_{0} \breve{\mathcal{G}}_{x,f}\) is an isomorphism, 
    it suffices to show surjectivity on \(\overline{\mathbb{F}}\) and \(\overline{E}\)-points, where it is a consequence of \cite[Proposition 7.3.12]{kaletha-prasad} (Iwahori decomposition).
\end{proof}
\begin{corollary}
    \label{res:iwahori-decomposition-corollary}
    Let \(f \geq 0\) be a concave function. 
    Assume that \(f>0\) or \(x\) is \(0\)-generic.
    Then \(L^+_{0}\mathcal{G}_{x,f} = \left( L^+_{0}\widetilde{\mathcal{V}}_{x,f} \right)^\Gamma\). 
    If moreover \(I\) acts trivially on \(\widehat{G}\), 
    then \(L^+_{0}\mathcal{G}_{x,f} = (L^+_{0}\breve{\mathcal{V}}_{x,f})^{\Gamma_{0}}\). 
\end{corollary}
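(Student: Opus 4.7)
The plan is to deduce this corollary almost formally from Proposition \ref{res:iwahori-decomposition} together with the definition of $\mathcal{G}_{x,f}$ as an invariant pushforward. The only thing to verify is that taking the positive loop group $L^+_0$ commutes appropriately with taking $\Gamma$-invariants (and $\Gamma_0$-invariants), and then the stated equalities follow by applying the proposition before passing to invariants.

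First, I would establish the compatibility of $L^+_0$ with invariant pushforward. By definition, for an $\mathcal{O}$-algebra $R$,
\[
L^+_0 \mathcal{G}_{x,f}(R) = \mathcal{G}_{x,f}(R\llb v \rrb) = \bigl(\pi_*^\Gamma \widetilde{\mathcal{G}}_{x,f}\bigr)(R\llb v \rrb) = \widetilde{\mathcal{G}}_{x,f}\bigl((R \otimes_{\mathbb{Z}_p} \mathbb{Z}_q)\llb u \rrb\bigr)^\Gamma = \bigl(L^+_0 \widetilde{\mathcal{G}}_{x,f}\bigr)^\Gamma(R),
\]
where the second-to-last equality uses that $\pi \colon \widetilde{\mathbb{A}}^1_{R\llb v \rrb} \to \mathbb{A}^1_{R\llb v \rrb}$ is obtained by base change from $\pi$ over $\mathcal{O}$, and invariant pushforward is compatible with flat base change. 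The same reasoning gives $L^+_0 \mathcal{G}_{x,f} = (L^+_0 \breve{\mathcal{G}}_{x,f})^{\Gamma_0}$ via the factorization $\pi = \pi_0 \circ \pi_I$ when $I$ acts trivially on $\widehat{G}$.

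Second, under either hypothesis ($f > 0$ or $x$ is $0$-generic), Proposition \ref{res:iwahori-decomposition} gives equalities $L^+_0 \widetilde{\mathcal{V}}_{x,f} = L^+_0 \widetilde{\mathcal{G}}_{x,f}$ and, when $I$ acts trivially, $L^+_0 \breve{\mathcal{V}}_{x,f} = L^+_0 \breve{\mathcal{G}}_{x,f}$. Combining with the previous step and noting that the open cell $\widetilde{\mathcal{V}}_{x,f}$ is $\Gamma$-stable (since the set of roots and the concave function are preserved by the $\Gamma$-action, and $\widetilde{\mathcal{T}}$ is $\Gamma$-stable by construction), we obtain
\[
L^+_0 \mathcal{G}_{x,f} = \bigl(L^+_0 \widetilde{\mathcal{G}}_{x,f}\bigr)^\Gamma = \bigl(L^+_0 \widetilde{\mathcal{V}}_{x,f}\bigr)^\Gamma,
\]
and analogously for the $\Gamma_0$-statement in the unramified case.

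There is no real obstacle here; the substantive content lives in the Iwahori decomposition of Proposition \ref{res:iwahori-decomposition}. The only minor point to check is that $\widetilde{\mathcal{V}}_{x,f}$ (and $\breve{\mathcal{V}}_{x,f}$) are $\Gamma$-stable as subschemes of $\widetilde{\mathcal{G}}_{x,f}$ so that taking invariants makes sense. This follows from the explicit product description of $\widetilde{\mathcal{V}}_{x,f}$ in terms of root groups $\widetilde{\mathcal{U}}_{a,x,f(a)}$ and $\widetilde{\mathcal{T}}_{f(0)}$ from Section \ref{sec:root-groups-attached-to-concave-functions}, combined with the fact that the $\Gamma$-action on $\widehat{G}$ is by pinned automorphisms, so it permutes root groups and preserves the concave function and the point $x$ (which was chosen $\Gamma$-fixed in Section \ref{sec:galois-type-setup}).
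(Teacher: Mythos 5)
Your proof is correct and is precisely the argument the paper leaves implicit (the corollary is stated without proof): under the hypotheses, Proposition~\ref{res:iwahori-decomposition} upgrades the open immersion $L^+_0\widetilde{\mathcal{V}}_{x,f} \hookrightarrow L^+_0\widetilde{\mathcal{G}}_{x,f}$ to an equality of functors, and taking $\Gamma$- (resp.\ $\Gamma_0$-)invariants together with $L^+_0\mathcal{G}_{x,f} = (L^+_0\widetilde{\mathcal{G}}_{x,f})^\Gamma$ gives the statement. One small remark on the justification: the key identity $L^+_0\mathcal{G}_{x,f}(R) = \widetilde{\mathcal{G}}_{x,f}\bigl((R\otimes_{\mathbb{Z}_p}\mathbb{Z}_q)\llb u\rrb\bigr)^\Gamma$ is not really a flat base change statement but simply the defining universal property of $\pi_*^\Gamma$ evaluated at $T = \operatorname{Spec}R\llb v\rrb$, once one computes $T \times_{\mathbb{A}^1_\mathcal{O}} \widetilde{\mathbb{A}}^1_\mathcal{O} = \operatorname{Spec}(R\otimes_{\mathbb{Z}_p}\mathbb{Z}_q)\llb u\rrb$; likewise, the $\Gamma$-stability of $\widetilde{\mathcal{V}}_{x,f}$ that you verify at the end is automatic once the inclusion into $\widetilde{\mathcal{G}}_{x,f}$ is an equality, though it is a reasonable sanity check.
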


\subsubsection{Quotient loop groups}  
\label{sec:quotient-loop-groups}
We will identify the quotient \(L^+_{0}\mathcal{G}_{x} / L_{0}^+ \mathcal{G}_{x,f}\) in the following cases:
\begin{enumerate}[(a)]
    \item \(f = n \in \mathbb{Z}_{\geq 1}\), 
    \item \(f = 0^+\) and \(x\) is 0-generic.  
\end{enumerate}
For case (a), we have by \eqref{eq:congruence-loop-group-exact-sequence} and \cref{res:loop-of-pappas-zhu-dilation-as-congruence-loop-group}
\[
    L^+_{0}\mathcal{G}_{x} / L_{0}^+ \mathcal{G}_{x,n} \cong L^+_{0} \mathcal{G}_{x} / L^{+n}_{0} \mathcal{G}_{x} \cong L^n \mathcal{G}_{x} , 
\]
where \(L^n \mathcal{G}_{x} = \operatorname{Res}^{\mathcal{O}[v]/v^n}_{\mathcal{O}} \mathcal{G}_{x}\), and in particular this is a smooth affine group scheme. 

For case (b), consider the torus \(\overline{\mathcal{T}}\) which is the restriction of \(\mathcal{T}\) to the fiber \(v = 0\). 
Let \(h : L^+_{0} \mathcal{G}_{x} \to \overline{\mathcal{T}}\) be given as the composite 
\[
\begin{tikzcd}
    h : L^+_{0} \mathcal{G}_{x} \arrow[r,equal] 
    & \left( \prod_{a < 0} L^+_{0}\widetilde{\mathcal{U}}_{a,x} \times L_{0}^+\widetilde{\mathcal{T}} \times \prod_{a > 0} L_{0}^+\widetilde{\mathcal{U}}_{a,x} \right)^{\Gamma} \arrow[r, "\operatorname{pr}"]
    & L_{0}^+\widetilde{\mathcal{T}}^\Gamma \arrow[r,equal]
    & L_{0}^+\mathcal{T} \arrow[rr,"\mod v"]
    & & \overline{\mathcal{T}} ,
\end{tikzcd}
\]
where the first identification is a consequence of \cref{res:iwahori-decomposition-corollary} and the fact that \(x\) is \(0\)-generic.
Note that \(\operatorname{pr}\) is a group homomorphism because the conjugation action of the torus normalize the root groups, and consequently so is \(h\). 
The kernel of \(h\) is seen to be \(L^+_{0} \mathcal{G}_{x,0^+}\) by \cref{res:iwahori-decomposition-corollary} again,
and surjectivity of the reduction mod \(v\) map follows from smoothness of \(\mathcal{T}\).
To summarize, we have have an isomorphism 
\begin{equation}
    \label{eq:quotient-torus}
    L^+_{0} \mathcal{G}_{x} / L_{0}^+ \mathcal{G}_{x,0^+} \cong \overline{\mathcal{T}} .  
\end{equation}
Moreover, the map \(L^+_{0}\mathcal{G}_{x} \twoheadrightarrow \overline{\mathcal{T}}\) has a canonical section, which ultimately comes from the section \(\mathcal{O} \subset \mathcal{O}\llb v \rrb\)
of the reduction mod \(v\) map. 

\begin{remark}
    If \(I\) acts trivially on \(\widehat{G}\) and \(\mathbb{F} \supseteq \mathbb{F}_{q}\), then we identify \(\overline{\mathcal{T}} = \widehat{T}\). 
\end{remark}

\begin{remark}
    For a random concave function \(f \geq 0\), it is not always the case that \(L^+ \mathcal{G}_{x,f}\) is a normal subgroup of \(L^+ \mathcal{G}_{x}\), but 
    this is true whenever \(f\) is constant by \cite[Proposition 7.3.17]{kaletha-prasad}. 
\end{remark}

\subsubsection{Loop groups}
\label{sec:actual-loop-groups}
Recall that \(\mathcal{G}_{x}[v^{-1}] = G^*\) and \(\widetilde{\mathcal{G}}_{x}[u^{-1}] = \widehat{G}\).
We define a functors on \(\mathcal{O}\)-algebras
\begin{align*}
    & L_{0} \mathcal{G}_{x}(R)   = G^* \left( R \llp v \rrp \right) , \\
    & L \mathcal{G}_{x} (R) = \mathcal{G}_{x} \left( R \llp v + p \rrp \right), \,\,\,\,\, \text{where } R \llp v + p \rrp = R [v]^{\wedge (v+p)}\left[ (v+p)^{-1} \right] , \\ 
    & L_{0} \widetilde{\mathcal{G}}_{x}(R) = \widehat{G} \left( \left( R \otimes_{\mathbb{Z}_{p}} \mathbb{Z}_{q} \right) \llp u \rrp \right) . 
\end{align*}
Note that \(L_{0} \widetilde{\mathcal{G}}_{x} (R)\) has an obvious \(\Gamma\)-action, and \(L_{0}\mathcal{G}_{x}(R) = L_{0} \widetilde{\mathcal{G}}_{x}(R)^\Gamma\).

\begin{remark}
    \label{rem:loop-group-at-0-or-p}
    If \(p^a R = 0\) for some \(a \in \mathbb{Z}_{\geq 1}\), then \(R \llp v \rrp = R \llp v + p \rrp\) by \cref{res:inverting-different-variables}, and consequently \(L \mathcal{G}_{x}(R) = L_{0}\mathcal{G}_{x}(R) = L_{0} G^* (R)\). 
\end{remark}

\subsection{The negative loop group}
\label{sec:negative-loop-group}
In this section we introduce the negative loop group \(L^{--} \mathcal{G}_{x}\), which is in some sense ``complementary'' to the positive loop group \(L^+ \mathcal{G}_{x}\). 
We will use it to construct convenient open charts for the affine Grassmannian \(\operatorname{Gr}_{\mathcal{G}}\) that we will introduce later on.
For reasons that will soon become apparent, we impose \cref{ass:pappas-zhu-groups-as-dilations} throughout this section. 

For the constant group \(\widehat{G}\), one can define the negative loop group as 
\begin{equation}
    \label{eq:negative-loop-group-for-constant-group}
    L^{--}\widehat{G}(R) = \ker \left( \widehat{G}\left( R\left[ \frac{1}{v+p} \right] \right) \stackrel{v = \infty}{\longrightarrow} \widehat{G}(R) \right) ,
\end{equation}
where \(R\left[ \frac{1}{v+p} \right] \subset R \llp v + p \rrp \) is the subring of polynomials in the variable \(\frac{1}{v+p}\). 
Then the multiplication map \(L^+ \widehat{G} \times L^{--}\widehat{G}\to L \widehat{G}\) is an open immersion, as shown for example in \cite[Lemma 2.3.5]{zhu-affine-grassmannians}. 
This fact is useful, as it implies that \(L^{--}\widehat{G}\)-translates give open charts of the affine Grassmannian \(L^+ \widehat{G} \backslash L \widehat{G}\). 

In the present case, we want a negative loop group \(L^{--}\mathcal{G} \subset L \mathcal{G}\) such that the multiplication map \(L^+ \mathcal{G} \times L^{--}\mathcal{G} \hookrightarrow L \mathcal{G}\) is an open immersion.
But we can not use \eqref{eq:negative-loop-group-for-constant-group} since the symbol \(\mathcal{G}\left( R\left[ \frac{1}{v+p} \right] \right)\) is not a priori defined.
One possibility of making sense of this symbol is to make an appropriate extension of \(\mathcal{G}\) to \(\mathbb{P}^1_{\mathcal{O}}\). 
Even still, it is not clear which extension of \(\mathcal{G}\) to \(\mathbb{P}^1_{\mathcal{O}}\) one should choose; roughly speaking, such an extension corresponds to a choice of parahoric structure at \(v = \infty\). 
Our idea is to choose the parahoric structure at \(v=\infty\) which is perfectly complementary to the parahoric structure at \(v = 0\). 

The point \(x \in \widetilde{\mathcal{A}}(\widehat{T}, \mathbb{F}\llp v \rrp)\) gives a valuation of root datum as we explained in \cref{sec:valuations-of-the-root-datum}.
That is, for each root \(a \in \Phi(\widehat{G},\widehat{T})\), the point \(x\) describes \emph{lower bounds} for the valuations of elements in the root group \(U_{a}(\mathbb{F}\llp v \rrp) \subset \widehat{G}(\mathbb{F}\llp v \rrp)\), and the parahoric subgroup corresponding to \(x\) is generated by elements satisfying this bound. 
We can then think of this same point \(x\) as describing complementary \emph{upper bounds} for the ``antivaluations''\footnote{
    If \(\operatorname{val} : \mathbb{F}\llp v^{-1} \rrp \to \mathbb{Z}\) is the discrete valuation with \(\operatorname{val}(v^{-1}) = 1\), then the ``antivaluation'' is \(- \operatorname{val}\).}
of elements in the root group \(U_{a}(\mathbb{F}\llp v^{-1} \rrp) \subset \widehat{G}(\mathbb{F}\llp v^{-1} \rrp)\), giving a parahoric subgroup of \(\widehat{G}(\mathbb{F}\llp v^{-1} \rrp)\).
If \(\eta := x - o \in X(\widehat{T})_{\mathbb{R}}\), so that \(x_{0} := x = o + \eta \in \widetilde{\mathcal{A}}(\widehat{T},\mathbb{F}\llp v \rrp)\), 
then the above parahoric subgroup of \(\widehat{G}(\mathbb{F}\llp v^{-1} \rrp)\) is described by the point \(x_{\infty} := o - \eta \in \widetilde{\mathcal{A}}\left( \widehat{T}, \mathbb{F}\llp v^{-1} \rrp \right)\).

We define \(\mathcal{G}_{x}\) over \(\mathbb{P}^1_{\mathcal{O}}\) by gluing the group scheme \(\mathcal{G}_{x} = \mathcal{G}_{x_{0}}\) over \(\mathbb{A}^1_{\mathcal{O}} = \operatorname{Spec}\mathcal{O}[v]\) with 
the ``complemenary'' group scheme \(\mathcal{G}_{x_{\infty},0^+}\) over \(\mathbb{A}^1_{\mathcal{O},\infty} = \operatorname{Spec} \mathcal{O}[v^{-1}]\).
Because of \cref{ass:pappas-zhu-groups-as-dilations}, the group scheme \(\mathcal{G}_{x}\) over \(\mathbb{P}^1_{\mathcal{O}}\) is the dilation of the constant group scheme \(\widehat{G}\) over \(\mathbb{P}^1_{\mathcal{O}}\) in \(\widehat{P}\) at \(v = 0\) and in \(\widehat{U}^\text{op}\) at \(v = \infty\).

For any \(\mathcal{O}\)-algebra \(R\) we set 
\[
    L^{--}\mathcal{G}_{x} (R) := \mathcal{G}_{x}\left( R \left[ \frac{1}{v+p} \right] \right) . 
\]
Here, we think of \(\operatorname{Spec}R\left[ \frac{1}{v+p} \right]\) as \(\mathbb{P}^1_{R} \setminus \lbrace - p \rbrace\), 
and we will discuss this identification in more detail in the proof of \cref{res:explicit-negative-loop-group}.
The functor \(L^{--}\mathcal{G}_{x}\) is an Ind-scheme over \(\mathcal{O}\).
\begin{example}
    When \(x = o\) is the Chevalley valuation, \(\mathcal{G}_{x} = \widehat{G}\) as a group scheme over \(\mathbb{A}^1_{\mathcal{O}}\). 
    Over \(\mathbb{P}^1_{\mathcal{O}}\), we see that \(\mathcal{G}_{x}\) is the dilation of \(\widehat{G}\) in the identity section along \(v = \infty\). 
    By the universal property of dilations, \(L^{--}\mathcal{G}_{x} = L^{--}\widehat{G}\) is then compatible with \eqref{eq:negative-loop-group-for-constant-group}. 
\end{example}
\begin{remark}
    \label{rem:dilation-at-infty-doesnt-affect-loop-groups}
    Although we have now defined \(\mathcal{G}_{x}\) as a group scheme over \(\mathbb{P}^1_{\mathcal{O}}\), we remark that for any \(\mathcal{O}\)-algebra \(R\), 
    the maps from \(\operatorname{Spec}R\llb v \rrb\), \(\operatorname{Spec}R\llp v \rrp\), \(\operatorname{Spec}R\llb v + p \rrb\), or \(\operatorname{Spec}R \llp v + p \rrp\) into \(\mathbb{P}^1_{\mathcal{O}}\) factor through \(\mathbb{A}^1_{\mathcal{O}}\). 
    Hence the symbols \(L_{0}^+\mathcal{G}_{x}\), \(L_{0}\mathcal{G}_{x}\), \(L^+ \mathcal{G}_{x}\), and \(L \mathcal{G}_{x}\) are not ambiguous, i. e. the extension of \(\mathcal{G}_{x}\) to \(\mathbb{P}^1_{\mathcal{O}}\) does not affect these (loop group) functors in any way. 
\end{remark}
The following result shows that our definition is compatible with \cite[Definition 3.2.1]{local-models}.
\begin{lemma}
    \label{res:explicit-negative-loop-group}
    For any \(\mathcal{O}\)-algebra \(R\) we have\footnote{Note that \(\frac{v}{v+p} = 1 - \frac{p}{v+p} \in R \left[ \frac{1}{v+p} \right]\), and if we set \(\tilde{t} = \frac{1}{v+p}\), then \(R\left[\frac{1}{v+p}\right] / \frac{v}{v+p}R \left[\frac{1}{v+p}\right] = R [\tilde{t}] / (1 - p\tilde{t}) = R\left[\frac{1}{p}\right]\).}
    \begin{align*} 
        \mathcal{G}_{x} \left( R \left[ \frac{1}{v+p} \right] \right) 
        & = \left\lbrace A \in \widehat{G}\left( R \left[ \frac{1}{v+p} \right] \right) : A \mod \frac{1}{v+p} \in \widehat{U}^\text{op}(R) \,\,\text{ and }\,\, A \mod \frac{v}{v+p} \in \widehat{P}\left( R \left[ \frac{1}{p} \right] \right) \right\rbrace .
    \end{align*}
    In particular, if \(p\) is nilpotent in \(R\), we have
    \[
        \mathcal{G}_{x} \left( R \left[ \frac{1}{v+p} \right] \right)
        = \left\lbrace A \in \widehat{G}\left(R \left[\frac{1}{v+p}\right]\right) : A \mod \frac{1}{v+p} \in \widehat{U}^\text{op}(R) \right\rbrace . 
    \]
\end{lemma}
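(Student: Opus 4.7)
The plan is to treat $\operatorname{Spec} R\bigl[\tfrac{1}{v+p}\bigr]$ as a Zariski open of $\mathbb{P}^1_R$ and apply the universal property of the two dilations used to build $\mathcal{G}_x$ over $\mathbb{P}^1_\mathcal{O}$.

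Setting $w = \tfrac{1}{v+p}$, I would first identify $\operatorname{Spec} R[w]$ with the open subscheme $\mathbb{P}^1_R \setminus \{v = -p\}$: the closed subscheme $\{v = \infty\}$ becomes $V(w)$, while $\{v = 0\}$ becomes $V(1 - pw)$. This open is Zariski-covered by the two standard affines $U_1 := \operatorname{Spec} R[v,\tfrac{1}{v+p}]$ (the complement of $\{v = \infty\}$) and $U_2 := \operatorname{Spec} R[v^{-1}]$ (the complement of $\{v = 0\}$), whose intersection is $\operatorname{Spec} R[v^{\pm 1}, \tfrac{1}{v+p}]$.

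Next, \cref{ass:pappas-zhu-groups-as-dilations} says that over $\mathbb{A}^1_\mathcal{O}$ the scheme $\mathcal{G}_x$ is the dilation of $\widehat{G}$ in $\widehat{P}$ along $\{v=0\}$, and over $\mathbb{A}^1_{\mathcal{O},\infty}$ it is the dilation of $\widehat{G}$ in $\widehat{U}^{\text{op}}$ along $\{v = \infty\}$. Since $v$ is a non-zero-divisor in $R[v,\tfrac{1}{v+p}]$ (it already is in $R[v]$, and localization preserves this), and likewise $v^{-1}$ is a non-zero-divisor in $R[v^{-1}]$, the universal property of Néron blowups (e.g. \cite[Proposition 2.6]{mayeuxNeronBlowupsLowdegree2023}) yields
\begin{align*}
\mathcal{G}_x(U_1) & = \bigl\{A_1 \in \widehat{G}(R[v,\tfrac{1}{v+p}]) : A_1 \bmod v \in \widehat{P}(R[\tfrac{1}{p}])\bigr\}, \\
\mathcal{G}_x(U_2) & = \bigl\{A_2 \in \widehat{G}(R[v^{-1}]) : A_2 \bmod v^{-1} \in \widehat{U}^{\text{op}}(R)\bigr\}.
\end{align*}

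Finally, I would invoke Zariski descent for the schemes $\widehat{G}$ and $\mathcal{G}_x$ along the cover $\{U_1, U_2\}$ of $\operatorname{Spec} R[w]$: an element of $\mathcal{G}_x(R[w])$ is precisely an element of $\widehat{G}(R[w])$ whose restrictions to $U_1$ and $U_2$ satisfy the two conditions above. Under the identifications $R[v,\tfrac{1}{v+p}]/(v) \cong R[\tfrac{1}{p}] \cong R[w]/(1-pw)$ and $R[v^{-1}]/(v^{-1}) \cong R \cong R[w]/(w)$, these conditions translate directly into $A \bmod \tfrac{v}{v+p} \in \widehat{P}(R[\tfrac{1}{p}])$ and $A \bmod \tfrac{1}{v+p} \in \widehat{U}^{\text{op}}(R)$, giving the displayed characterization. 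The ``in particular'' clause then follows because $R[\tfrac{1}{p}] = 0$ whenever $p$ is nilpotent in $R$, so the first condition becomes vacuous. The main obstacle here is not deep: it amounts to careful bookkeeping of charts and reductions, with the only genuine algebraic check being that $v$ remains a non-zero-divisor after inverting $v+p$ so that the universal property of the dilation applies cleanly on $U_1$.
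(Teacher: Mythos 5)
Your strategy — cover $\operatorname{Spec}R\bigl[\tfrac{1}{v+p}\bigr]$ by affine charts of $\mathbb{P}^1_R$ and apply the universal property of the two dilations on each chart, then glue — is exactly the paper's, and your target formula is correct. But the cover you propose is not a cover: the chart $U_2 = \operatorname{Spec}R[v^{-1}] = \mathbb{P}^1_R \setminus \{0\}$ is not contained in $\operatorname{Spec}R\bigl[\tfrac{1}{v+p}\bigr] = \mathbb{P}^1_R \setminus \{-p\}$, because $\{-p\}$ meets $U_2$ whenever $p$ is not nilpotent. Concretely, $U_1 \cup U_2 = \mathbb{P}^1_R \setminus(\{0\}\cap\{-p\})$, which is strictly larger than $\operatorname{Spec}R\bigl[\tfrac{1}{v+p}\bigr]$ in general (they coincide only when $V(v+p)=V(v,p)$, i.e.\ when $p$ is nilpotent). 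In particular, ``the restriction of $A \in \widehat{G}\bigl(R\bigl[\tfrac{1}{v+p}\bigr]\bigr)$ to $U_2$'' is undefined, so the final gluing step does not parse; and a gluing over $U_1 \cup U_2$ would effectively compute $\mathcal{G}_x(\mathbb{P}^1_R)$, which collapses to $\{1\}$ by \cref{res:negative-loop-group-intersection} — clearly not what you want.

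To repair the argument, replace $U_2$ by $U_2' := U_2 \cap \operatorname{Spec}R\bigl[\tfrac{1}{v+p}\bigr] = \mathbb{P}^1_R \setminus\{0,-p\} = \operatorname{Spec}R\bigl[\tfrac{1}{v},\tfrac{1}{v+p}\bigr]$, which is what the paper uses. Then the dilation computation must be redone over this chart, which requires (i) identifying the ideal of $\{\infty\}$ in $R\bigl[\tfrac{1}{v},\tfrac{1}{v+p}\bigr]$ (the paper shows $\bigl(\tfrac{1}{v}\bigr) = \bigl(\tfrac{1}{v+p}\bigr) = \bigl(\tfrac{1}{v},\tfrac{1}{v+p}\bigr)$ there), (ii) checking that this generator is a non-zero-divisor, and (iii) checking that the residue ring is $R$. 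Finally, to state the glued answer in the form given by the lemma, you also need to verify that the ideal of $\{0\}$ in $R\bigl[\tfrac{1}{v+p}\bigr]$ is exactly $\tfrac{v}{v+p}R\bigl[\tfrac{1}{v+p}\bigr]$; the paper proves this with a short degree argument around \eqref{eq:ideal-v}, which your write-up asserts implicitly but does not justify.
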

\begin{proof}
    The proof proceeds by computing sections in various affine charts of \(\mathbb{P}^1_{R}\).
    We have closed subschemes \(v = 0, v = -p, v = \infty : \operatorname{Spec} R \hookrightarrow \mathbb{P}^1_{R}\) which we will just denote \(\lbrace 0 \rbrace, \lbrace -p \rbrace, \lbrace \infty \rbrace\), respectively.
    Consider the two coordinates on the affine line \(\mathbb{A}^1_{R} = \mathbb{P}^1_{R} \setminus \lbrace \infty \rbrace\) given by \(v\) and \(t = v + p\) centered at \(0\) and \(-p\) (respectively).
    We can then identify: 
    \begin{align*}
        & \mathbb{P}^1_{R} \setminus \lbrace \infty \rbrace = \operatorname{Spec} R[v] = \operatorname{Spec}R[v+p] , \\
        & \mathbb{P}^1_{R} \setminus \lbrace 0 \rbrace = \operatorname{Spec}R \left[ \frac{1}{v} \right] , \\
        & \mathbb{P}^1_{R} \setminus \lbrace -p \rbrace = \operatorname{Spec} R \left[ \frac{1}{t} \right] =  \operatorname{Spec}R \left[ \frac{1}{v+p} \right] , \\
        & \mathbb{P}^1_{R} \setminus \lbrace \infty, -p \rbrace = \operatorname{Spec}R \left[ v+p, \frac{1}{v+p} \right] = \operatorname{Spec} R\left[ v, \frac{1}{v+p} \right] , \\
        & \mathbb{P}^1_{R} \setminus \lbrace \infty, 0, -p \rbrace = \operatorname{Spec} R\left[ v, \frac{1}{v}, \frac{1}{v+p} \right] , \\ 
        & \mathbb{P}^1_{R} \setminus \lbrace 0, -p \rbrace = \operatorname{Spec} R\left[ \frac{1}{v+p}, \frac{v+p}{v} \right] = \operatorname{Spec} R\left[ \frac{1}{v}, \frac{v}{v+p} \right] = \operatorname{Spec}R \left[ \frac{1}{v}, \frac{1}{v+p} \right] . 
    \end{align*}
    Note that the closed subscheme \(\lbrace 0 \rbrace \subset \mathbb{P}^1_{R} \setminus \lbrace -p \rbrace\) corresponds to the ideal \(\frac{v}{v+p}R\left[ \frac{1}{v+p}\right] \subset R \left[ \frac{1}{v+p}\right]\). 
    Indeed, the intersection \(R\left[ \frac{1}{v+p} \right] \cap v R \left[v, \frac{1}{v+p} \right] \subset R \left[v, \frac{1}{v+p} \right]\) is the principal ideal \(\left( \frac{v}{v+p} \right) = \left( 1 - \frac{p}{v+p} \right)\), 
    and we can see this as follows. 
    Suppose that \(a_{0},\dots, a_{n} \in R\) and \(a_{0} + \frac{a_{1}}{v+p} + \cdots + \frac{a_{n}}{(v+p)^n} \in v R \left[v, \frac{1}{v+p} \right]\), where we may assume that \(a_{n} \neq 0\). 
    Then we can write 
    \begin{equation}
        \label{eq:ideal-v}
        a_{0} + \frac{a_{1}}{v+p} + \cdots + \frac{a_{n}}{(v+p)^n} = \frac{v g(v)}{(v+p)^k} 
    \end{equation}
    for some \(k \in \mathbb{Z}\) and polynomial \(g(v) \in R[v] = R[v+p]\), where \(g(-p) \neq 0\).
    By rewriting the right hand side as a Laurent polynomial in \(v+p\), we find that \(n = k\). 
    Multiplying \eqref{eq:ideal-v} by \((v+p)^n\) and setting \(v = 0\) we obtain \(p^n a_{0} + p^{n-1}a_{1} + \cdots + a_{n} = 0\), and therefore 
    \[a_{0} + \frac{a_{1}}{v+p} + \cdots + \frac{a_{n}}{(v+p)^n} = \left(1 - \frac{p}{v+p}\right) \left( a_{0} + \frac{pa_{0} + a_{1}}{v+p} + \cdots + \frac{p^{n-1}a_{0} + p^{n-2}a_{1} + \cdots + a_{n-1}}{(v+p)^{n-1}} \right).\]

    We now compute sections of \(\mathcal{G}_{x}\), first in the affine chart \(\mathbb{P}^1_{R} \setminus \lbrace \infty, -p \rbrace = \operatorname{Spec}R\left[ (v+p)^{\pm 1} \right]\). 
    Since \(v\) is a nonzerodivisor in \(R \left[(v+p)^{\pm 1} \right] = R\left[v, (v+p)^{-1} \right]\), we have by the universal property of dilations that 
    \[
        \mathcal{G}_{x}\left( R \left[(v+p)^{\pm 1} \right] \right) = \left\lbrace A \in \widehat{G}\left( R \left[ (v+p)^{\pm 1} \right] \right) : A \mod v \in \widehat{P}\left( R \left[ \frac{1}{p} \right] \right) \right\rbrace . 
    \]
    Next, we compute sections in the affine chart \(\mathbb{P}^1_{R} \setminus \lbrace 0, -p \rbrace = \operatorname{Spec} R\left[ \frac{1}{v+p}, \frac{1}{v} \right]\). 
    Using the universal property of dilations again we see that 
    \[
        \mathcal{G}_{x} \left( R \left[ \frac{1}{v+p}, \frac{1}{v} \right] \right) = \left\lbrace A \in \widehat{G} \left( R \left[ \frac{1}{v+p}, \frac{1}{v} \right] \right) : A \mod \frac{1}{v+p} \in \widehat{U}^\text{op}\left( R \right) \right\rbrace , 
    \]
    where we note that \(\left( \frac{1}{v+p} \right) = \left( \frac{1}{v} \right) = \left( \frac{1}{v+p},\frac{1}{v} \right) \subset R \left[ \frac{1}{v+p}, \frac{1}{v} \right]\) (the ideal corresponding to \(v = \infty\)), since \(\frac{1}{v}\left( 1 - \frac{p}{v+p}\right) = \frac{1}{v+p}\) and \(\frac{1}{v} = \frac{1}{v+p}\left( 1 + \frac{p}{v}\right)\).
    By using the Zariski open cover \(\operatorname{Spec} R \left[ (v+p)^{\pm 1} \right] \cup \operatorname{Spec} R \left[ \frac{1}{v+p}, \frac{v+p}{v} \right] = \operatorname{Spec}R \left[ \frac{1}{v+p}\right]\), 
    we see that 
    \begin{align*} 
        \mathcal{G}_{x} \left( R \left[ \frac{1}{v+p} \right] \right) = \mathcal{G}_{x} \left( R \left[(v+p)^{\pm 1} \right] \right) \cap \mathcal{G}_{x} \left( R \left[ \frac{1}{v+p}, \frac{v+p}{v} \right] \right) ,
    \end{align*}
    where the intersection takes place in \(\mathcal{G}_{x} \left( R\left[ v, \frac{1}{v+p}, \frac{1}{v} \right] \right)\).
    Combining the above, we see that the sections of \(\mathcal{G}_{x}\) take the described form. 
\end{proof}

Our next goal is to show that \(L^{--} \mathcal{G}_{x}\) indeed has the expected property that \(L^{--}\mathcal{G} \times L^+ \mathcal{G} \hookrightarrow L \mathcal{G}\) is an open immersion. 
Following the strategy of \cite[Lemma 2.3.5]{zhu-affine-grassmannians}, we prove that (1) \(L^{--}\mathcal{G}\cap L^{+}\mathcal{G} = \lbrace 1 \rbrace\), with the intersection taking place in \(L \mathcal{G}\), 
and (2) \(\operatorname{Lie}L^{--}\mathcal{G} + \operatorname{Lie} L^+ \mathcal{G} = \operatorname{Lie}L \mathcal{G}\). 
\begin{lemma}
    \label{res:negative-loop-group-intersection}
    We have \(L^+\mathcal{G} \cap L^{--}\mathcal{G} = \lbrace 1 \rbrace\) on noetherian \(\mathcal{O}\)-algebras, i. e. \(L^+\mathcal{G}(R) \cap L^{--}\mathcal{G}(R) = \lbrace 1 \rbrace\) for any noetherian \(\mathcal{O}\)-algebra \(R\). 
\end{lemma}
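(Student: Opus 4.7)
The plan is to identify an element of $L^+\mathcal{G}_x(R) \cap L^{--}\mathcal{G}_x(R)$ with a global section $s : \mathbb{P}^1_R \to \mathcal{G}_x$ of the group scheme $\mathcal{G}_x$ over $\mathbb{P}^1_R$, and then show that such a global section must be trivial. The first step is a Beauville--Laszlo gluing: since $L^+\mathcal{G}_x(R)$ is the groupoid of sections of $\mathcal{G}_x$ over $\operatorname{Spec} R\llb v+p \rrb$ (the formal neighborhood of $v=-p$) and $L^{--}\mathcal{G}_x(R)$ is the groupoid of sections over $\mathbb{P}^1_R \setminus \lbrace -p \rbrace$, and these two agree on $\operatorname{Spec}R\llp v+p \rrp$ (the condition for being in the intersection inside $L\mathcal{G}_x$), faithfully flat descent along $R[(v+p)^{\pm 1}] \oplus R\llb v+p \rrb \to R\llp v+p \rrp$ (valid for $R$ noetherian, cf.\ the Beauville--Laszlo theorem and affineness of $\mathcal{G}_x$) produces a unique global section $s$.

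Next, compose $s$ with the canonical map $\mathcal{G}_x \to \widehat{G} \times_{\operatorname{Spec}\mathcal{O}} \mathbb{P}^1_\mathcal{O}$ coming from the dilation structure under \cref{ass:pappas-zhu-groups-as-dilations}. This produces a morphism $\bar s : \mathbb{P}^1_R \to \widehat{G}$. Since $\widehat{G}$ is affine over $\operatorname{Spec}\mathbb{Z}$ and $\Gamma(\mathbb{P}^1_R, \mathcal{O}_{\mathbb{P}^1_R}) = R$, we get
\[
    \operatorname{Hom}(\mathbb{P}^1_R, \widehat{G}) = \operatorname{Hom}_{\mathbb{Z}}\bigl(\mathcal{O}(\widehat{G}), R\bigr) = \widehat{G}(R),
\]
so $\bar s$ is constant equal to some fixed $g \in \widehat{G}(R)$.

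The final step uses the two dilation conditions. Restricting $s$ to $v=0$ and then applying $\mathcal{G}_x \to \widehat{G}$ must, by the universal property of the dilation of $\widehat{G}$ in $\widehat{P}$ along $v=0$, factor through $\widehat{P}(R) \subset \widehat{G}(R)$; hence $g \in \widehat{P}(R)$. Similarly, restricting to $v=\infty$ and using the dilation in $\widehat{U}^{\mathrm{op}}$ along $v=\infty$ forces $g \in \widehat{U}^{\mathrm{op}}(R)$. But $\widehat{P} \cap \widehat{U}^{\mathrm{op}} = \lbrace 1 \rbrace$ as subgroup schemes of $\widehat{G}$: indeed, using the Levi decomposition $\widehat{P} = \widehat{M}\widehat{U}$, the uniqueness of the Bruhat factorization on the big cell $\widehat{U}^{\mathrm{op}}\widehat{M}\widehat{U}$ gives $\widehat{U}^{\mathrm{op}} \cap \widehat{M}\widehat{U} = 1$ as a scheme-theoretic intersection. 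Thus $g = 1$, whence $s$ is the identity section, as desired.

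The main obstacle is the Beauville--Laszlo gluing step, in that one must confirm the standard descent for affine schemes along $R\llb v+p\rrb \to R\llp v+p \rrp$ applies in the required generality (for which the noetherian hypothesis on $R$ is invoked). All other ingredients are formal: properness of $\mathbb{P}^1$ plus affineness of $\widehat{G}$ for constancy, and the universal property of dilations together with the Bruhat decomposition for the final vanishing.
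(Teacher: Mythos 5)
Your proof is correct and takes essentially the same approach as the paper: both arguments use faithfully flat descent to identify the intersection $L^+\mathcal{G}_x(R) \cap L^{--}\mathcal{G}_x(R)$ with global sections $\mathcal{G}_x(\mathbb{P}^1_R)$, reduce to constant sections of $\widehat{G}$ using $\Gamma(\mathbb{P}^1_R,\mathcal{O}_{\mathbb{P}^1_R}) = R$ and affineness of $\widehat{G}$, and then invoke the two dilation conditions (at $v=0$ and $v=\infty$) together with $\widehat{P} \cap \widehat{U}^{\mathrm{op}} = \{1\}$. The only cosmetic difference is in the order of operations: the paper first computes $\mathcal{G}_x(\mathbb{P}^1_R) = \{1\}$ directly via the fpqc covering $\operatorname{Spec}R\llb v \rrb \sqcup \operatorname{Spec}R[1/v] \to \mathbb{P}^1_R$ (which makes the dilation data at $0$ and $\infty$ manifest), and then separately identifies the intersection with $\mathcal{G}_x(\mathbb{P}^1_R)$ using the covering based at $\{-p,\infty\}$; you instead glue along $\{-p,\infty\}$ first to produce the global section $s$ and then restrict $s$ to $v=0$ and $v=\infty$ to extract the dilation constraints. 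Your Beauville--Laszlo packaging and the paper's explicit equalizer computation are interchangeable here, and in both cases the noetherian hypothesis enters to guarantee flatness of the formal completion (so that the two-chart covering of $\mathbb{P}^1_R$ is fpqc).

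One small point worth making explicit: to conclude $s=1$ from $\bar s = 1$ you need that the canonical map $\mathcal{G}_x \to \widehat{G}_{\mathbb{P}^1_{\mathcal{O}}}$ from the dilation is a monomorphism, which it is (dilations are defined as spectra of subrings), so this step is fine but deserves a word.
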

\begin{proof}
    Let \(R\) be a noetherian \(\mathcal{O}\)-algebra.
    Then \(\operatorname{Spec} R\llb v \rrb \cup \operatorname{Spec} R\left[ \frac{1}{v} \right] \twoheadrightarrow \mathbb{P}^1_{R}\) is an fpqc covering by \cite[\href{https://stacks.math.columbia.edu/tag/00MB}{Lemma 00MB}]{stacks-project}, so we get a diagram 
    \[\begin{tikzcd}
        {\mathcal{G}_x(\mathbb{P}^1_R)} & {\mathcal{G}_{x}\left(R\left[\frac{1}{v}\right]\right) \times \mathcal{G}_x(R\llb v \rrb)} & {\mathcal{G}(R\llp v \rrp)} \\
        {\widehat{G}(R)} & {\widehat{G}\left(R\left[\frac{1}{v}\right]\right) \times \widehat{G}(R\llb v \rrb)} & {\widehat{G}(R\llp v \rrp)}
        \arrow[hook, from=1-1, to=1-2]
        \arrow[shift left, from=1-2, to=1-3]
        \arrow[shift right, from=1-2, to=1-3]
        \arrow[shift left, from=2-2, to=2-3]
        \arrow[shift right, from=2-2, to=2-3]
        \arrow[equal, from=1-3, to=2-3]
        \arrow[hook, from=1-2, to=2-2]
        \arrow[hook, from=2-1, to=2-2]
        \arrow[hook, dashed, from=1-1, to=2-1]
    \end{tikzcd}\]
    in which both rows are equalizers. Note that \(\widehat{G}(\mathbb{P}^1_{R}) = \widehat{G}(R)\) since \(\widehat{G}\) is a constant affine scheme. 
    By construction, \(\widehat{G}(R) \cap \mathcal{G}\left(R\left[\frac{1}{v}\right]\right) = \widehat{U}^\text{op}(R)\) and \(\widehat{G}(R) \cap \mathcal{G}_x(R\llb v \rrb) = \widehat{P}(R)\), 
    so \(\mathcal{G}_{x}(\mathbb{P}^1_{R}) = \widehat{U}^\text{op}(R) \cap \widehat{P}(R) = \lbrace 1 \rbrace\).
    Using the fpqc covering \(\operatorname{Spec}R \llb v + p \rrb \cup \operatorname{Spec}R \left[ \frac{1}{v+p}\right]\) instead, we find that 
    \[
        \mathcal{G}_{x}\left( R\llb v + p \rrb \right) \cap \mathcal{G}_{x} \left( R\left[\frac{1}{v+p}\right]\right) = \mathcal{G}_{x}(\mathbb{P}^1_{R}) = \lbrace 1 \rbrace . 
    \]
\end{proof}
We will now discuss the Lie algebra. 
Let \(J \to R \to \overline{R}\) be a square zero extension.
We define \(J \llp v + p \rrp \to R \llp v + p \rrp \to \overline{R} \llp v + p \rrp\) as the kernel, and similarly \(J \left[ \frac{1}{v+p} \right]\) and \(J \llb v + p \rrb\). 
Since \(J\llp v +p \rrp = \frac{1}{v+p}J\left[\frac{1}{v+p}\right] \oplus J\llb v + p \rrb\) and \(vJ\llp v + p \rrp = \frac{v}{v+p}J \left[ \frac{1}{v+p} \right] \oplus v J \llb v + p \rrb\), we have for any constant group \(H\) over \(\mathbb{P}^1_{R}\) the identities
\begin{align}
    \label{eq:lie-algebra-identity-1}
    \operatorname{Lie}H \left( J \llp v + p \rrp \right) & = \frac{1}{v+p}\operatorname{Lie} H \left( J \left[ \frac{1}{v+p} \right] \right) \oplus \operatorname{Lie} H \left( J \llb v + p \rrb \right) , \\
    \label{eq:lie-algebra-identity-2}
    v\operatorname{Lie}H \left( J \llp v + p \rrp \right) & = \frac{v}{v+p}\operatorname{Lie} H \left( J \left[ \frac{1}{v+p} \right] \right) \oplus v\operatorname{Lie} H \left( J \llb v + p \rrb \right) .
\end{align}
We will in particular apply these identities with \(H = \widehat{G}, \widehat{U}^\text{op}, \widehat{P}\). 
Note that the decomposition 
\begin{equation}
    \label{eq:lie-algebra-of-G-hat-decomposition}
    \operatorname{Lie}\widehat{G} = \operatorname{Lie}\widehat{U}^\text{op} \oplus \operatorname{Lie}\widehat{P}
\end{equation}
is compatible with \eqref{eq:lie-algebra-identity-1} and \eqref{eq:lie-algebra-identity-2}. 
\begin{lemma}
    \label{res:negative-loop-group-lie-algebra}
    Let \(J \to R \to \overline{R}\) be a square zero extension of noetherian \(\mathcal{O}\)-algebras. Then 
    \[
        \operatorname{Lie}\mathcal{G}_{x}\left(J \left[ \frac{1}{v+p} \right] \right) \oplus \operatorname{Lie}\mathcal{G}_{x}\left( J \llb v+p \rrb \right) = \operatorname{Lie}\mathcal{G}_{x}\left( J \llp v + p \rrp \right) . 
    \]
\end{lemma}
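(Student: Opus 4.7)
The plan is to exploit \cref{ass:pappas-zhu-groups-as-dilations}, which realizes \(\mathcal{G}_x\) on \(\mathbb{P}^1_{\mathcal{O}}\) as the dilation of the constant group \(\widehat{G}\) in the parabolic \(\widehat{P}\) at \(\{v=0\}\) and in the opposite unipotent \(\widehat{U}^{\text{op}}\) at \(\{v=\infty\}\). Combining the universal property of the dilation with the decomposition \(\operatorname{Lie}\widehat{G} = \operatorname{Lie}\widehat{P} \oplus \operatorname{Lie}\widehat{U}^{\text{op}}\) (which is compatible with \eqref{eq:lie-algebra-identity-1} and \eqref{eq:lie-algebra-identity-2}), the strategy is to identify each of the three Lie algebras in the lemma as an explicit submodule of \(\operatorname{Lie}\widehat{G}\) tensored with the appropriate ring, and then verify the direct sum component by component.

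For the terms \(J\llb v+p\rrb\) and \(J\llp v+p\rrp\), only the dilation at \(\{v=0\}\) contributes (the other chart lies outside \(\operatorname{Spec}R\llb v+p\rrb\)), and the condition ``reduction mod \(v\) lands in \(\operatorname{Lie}\widehat{P}\)'' gives immediately
\begin{align*}
  \operatorname{Lie}\mathcal{G}_x(J\llb v+p\rrb) & = \operatorname{Lie}\widehat{P}(J\llb v+p\rrb) \oplus v\operatorname{Lie}\widehat{U}^{\text{op}}(J\llb v+p\rrb), \\
  \operatorname{Lie}\mathcal{G}_x(J\llp v+p\rrp) & = \operatorname{Lie}\widehat{P}(J\llp v+p\rrp) \oplus v\operatorname{Lie}\widehat{U}^{\text{op}}(J\llp v+p\rrp).
\end{align*}
For the negative-loop-group term, both dilation loci lie in \(\operatorname{Spec}R[\tfrac{1}{v+p}] = \mathbb{P}^1_R \setminus \{v=-p\}\), so by the Lie algebra analog of \cref{res:explicit-negative-loop-group} (obtained by repeating its proof with \(R \oplus J\) in place of \(R\)) an element \(X = X^P + X^U\) must independently satisfy \(X^P|_{v=\infty} = 0\) in \(\operatorname{Lie}\widehat{P}(J)\) and \(X^U|_{v=0} = 0\) in \(\operatorname{Lie}\widehat{U}^{\text{op}}(J[1/p])\). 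Writing everything as polynomials in \(\tilde{w} = (v+p)^{-1}\), these translate into
\[ \operatorname{Lie}\mathcal{G}_x(J[\tfrac{1}{v+p}]) = \tfrac{1}{v+p}\operatorname{Lie}\widehat{P}(J[\tfrac{1}{v+p}]) \oplus \tfrac{v}{v+p}\operatorname{Lie}\widehat{U}^{\text{op}}(J[\tfrac{1}{v+p}]). \]

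The nontrivial step is verifying that every \(X^U = \sum_n X_n^U \tilde{w}^n\) satisfying \(\sum_n X_n^U p^{-n} = 0\) in \(J[1/p]\) factors as \(\tfrac{v}{v+p}Y\) for some polynomial \(Y \in \operatorname{Lie}\widehat{U}^{\text{op}}(J[\tfrac{1}{v+p}])\). Running the recursion \(Y_n = X_n^U + pY_{n-1}\) gives \(Y_n = p^{n-N}Z\) for \(n \geq N := \deg X^U\), where \(Z = \sum_{k=0}^N p^{N-k}X_k^U\); the hypothesis forces \(Z\) to be \(p^\infty\)-torsion in \(J\), so \(Y\) has finite support. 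With the three identifications in hand, the direct sum claimed in the lemma is obtained by applying \eqref{eq:lie-algebra-identity-1} to \(H = \widehat{P}\) and \eqref{eq:lie-algebra-identity-2} to \(H = \widehat{U}^{\text{op}}\) and assembling the two resulting decompositions. The main obstacle is the \(p^\infty\)-torsion factorization argument above; once that is done, the rest is straightforward bookkeeping with the already-established Laurent-series identities.
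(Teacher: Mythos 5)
Your proposal follows essentially the same route as the paper: use Assumption Dil to describe $\mathcal{G}_x$ on $\mathbb{P}^1$ as a dilation, decompose $\operatorname{Lie}\widehat{G} = \operatorname{Lie}\widehat{P}\oplus\operatorname{Lie}\widehat{U}^{\text{op}}$ to identify each of the three Lie algebras explicitly, and then assemble via \eqref{eq:lie-algebra-identity-1} and \eqref{eq:lie-algebra-identity-2}. The only difference is that the paper dispatches the negative-loop term by citing \cref{res:explicit-negative-loop-group}, whereas you spell out directly the kernel identification $\ker\left(J[\tfrac{1}{v+p}]\to J[\tfrac{1}{p}]\right) = \tfrac{v}{v+p}J[\tfrac{1}{v+p}]$ via the recursion argument; that verification is correct and simply makes explicit a detail the paper leaves implicit.
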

\begin{proof}
    The assumption that \(R\) is noetherian ensures that \(v\) is a non-zerodivisor in \(R\llb v + p \rrb\) \cite{fieldsZeroDivisorsNilpotent1971} (this property fails if \(R\) has infinite \(p\)-torsion), so \(v\) is also a non-zerodivisor in \(R \llp v + p \rrp\). 
    By \cref{rem:dilation-at-infty-doesnt-affect-loop-groups}, \cref{ass:pappas-zhu-groups-as-dilations}, and the universal property of dilations, we then have
    \[
        \mathcal{G}_{x}\left( R\llp v + p \rrp \right) = \left\lbrace A \in \widehat{G}\left( R \llp v + p \rrp \right) | A \mod v \in \widehat{P}\left( R\llp v + p \rrp / v R \llp v + p \rrp \right) \right\rbrace , 
    \]
    and we may describe \(\mathcal{G}_{x}\left( \overline{R}\llp v + p \rrp \right)\) similarly. 
    By the decomposition \eqref{eq:lie-algebra-of-G-hat-decomposition}, it follows that we can identify \(\operatorname{Lie}\mathcal{G}_{x}\left( J \llp v + p \rrp \right) \subset \operatorname{Lie} \widehat{G} \left( J \llp v + p \rrp \right)\) as the sub-Lie algebra
    \begin{equation}
        \label{eq:lie-algebra-of-the-group} 
        \operatorname{Lie}\mathcal{G}_{x}\left( J \llp v + p \rrp \right) = v \operatorname{Lie}\widehat{U}^\text{op}\left( J \llp v + p \rrp \right) 
        \oplus \operatorname{Lie} \widehat{P} \left( J \llp v + p \rrp \right) .
    \end{equation}
    We can also show (similarly to the above for the first identification, using \cref{res:explicit-negative-loop-group} for the second identification)
    \begin{align*}
        \operatorname{Lie}\mathcal{G}_{x}\left( J \llb v + p \rrb \right) 
        & = v \operatorname{Lie} \widehat{U}^\text{op} \left( J \llb v + p \rrb \right) + \operatorname{Lie} \widehat{P} \left( J \llb v + p \rrb \right) , \\ 
        \operatorname{Lie}\mathcal{G}_{x} \left( J \left[ \frac{1}{v+p} \right] \right) 
        & = \frac{v}{v+p} \operatorname{Lie} \widehat{U}^\text{op}\left( J \left[ \frac{1}{v+p} \right] \right) + \frac{1}{v+p} \operatorname{Lie} \widehat{P}\left( J \left[ \frac{1}{v+p} \right] \right) , \\ 
    \end{align*}
    The sum of these two equations is seen to be \eqref{eq:lie-algebra-of-the-group} using \eqref{eq:lie-algebra-identity-1} with \(H = \widehat{P}\) and \eqref{eq:lie-algebra-identity-2} with \(H = \widehat{U}^\text{op}\). 
\end{proof}
\begin{proposition}
    \label{res:formally-etale-monomorphism}
    The multiplication map \(m : L^+ \mathcal{G}_{x} \times L^{--} \mathcal{G}_{x} \hookrightarrow L \mathcal{G}_{x}\) is a formally étale monomorphism on noetherian \(\mathcal{O}\)-algebras.\footnote{
        By ``formally étale monomorphism on noetherian \(\mathcal{O}\)-algebras'' we mean that when we restrict these loop group functors to noetherian \(\mathcal{O}\)-algebras, the map is a monomorphism and satisfies the definition of being formally étale with respect to square zero extensions of noetherian \(\mathcal{O}\)-algebras.
    }
\end{proposition}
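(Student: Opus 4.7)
The plan is to follow the strategy laid out before \cref{res:negative-loop-group-intersection} and \cref{res:negative-loop-group-lie-algebra}, mimicking \cite[Lemma 2.3.5]{zhu-affine-grassmannians}: show that $m$ is a monomorphism, and then check formal étaleness on square-zero extensions using the two preceding lemmas. The monomorphism property is immediate: if $(a,b), (a',b') \in L^+\mathcal{G}_x(R) \times L^{--}\mathcal{G}_x(R)$ satisfy $ab = a'b'$, then $(a')^{-1}a = b'b^{-1}$ lies in $L^+\mathcal{G}_x(R) \cap L^{--}\mathcal{G}_x(R) = \{1\}$ by \cref{res:negative-loop-group-intersection}, so $(a,b) = (a',b')$.

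For formal étaleness, let $J \to R \to \bar R$ be a square-zero extension of noetherian $\mathcal{O}$-algebras, let $(\bar a,\bar b) \in L^+\mathcal{G}_x(\bar R) \times L^{--}\mathcal{G}_x(\bar R)$, and let $c \in L\mathcal{G}_x(R)$ with $c \equiv \bar a\bar b \pmod{J}$. Uniqueness of a lift $(a,b) \in L^+\mathcal{G}_x(R) \times L^{--}\mathcal{G}_x(R)$ is automatic from the monomorphism property. For existence, I would first invoke formal smoothness of $L^+\mathcal{G}_x$ and $L^{--}\mathcal{G}_x$ on noetherian $\mathcal{O}$-algebras---which follows from smoothness of $\mathcal{G}_x$ as a group scheme over $\mathbb{P}^1_\mathcal{O}$ applied to the induced square-zero extensions of $R\llb v+p\rrb$ and $R[\tfrac{1}{v+p}]$---to lift $\bar a$ to $a' \in L^+\mathcal{G}_x(R)$ and $\bar b$ to $b' \in L^{--}\mathcal{G}_x(R)$. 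Then $\delta := (a')^{-1} c (b')^{-1}$ reduces to the identity modulo $J$, so it lies in $\ker(L\mathcal{G}_x(R) \to L\mathcal{G}_x(\bar R)) = \operatorname{Lie}\mathcal{G}_x(J\llp v+p\rrp)$. By \cref{res:negative-loop-group-lie-algebra}, I can write $\delta = \alpha + \beta$ with $\alpha \in \operatorname{Lie}\mathcal{G}_x(J\llb v+p\rrb) = \ker(L^+\mathcal{G}_x(R) \to L^+\mathcal{G}_x(\bar R))$ and $\beta \in \operatorname{Lie}\mathcal{G}_x(J[\tfrac{1}{v+p}]) = \ker(L^{--}\mathcal{G}_x(R) \to L^{--}\mathcal{G}_x(\bar R))$. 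Since $J^2 = 0$, the kernel of $L\mathcal{G}_x(R) \to L\mathcal{G}_x(\bar R)$ is abelian and the additive identity $\delta = \alpha + \beta$ translates to the multiplicative identity $\delta = \alpha\beta$ in the group. Setting $a := a'\alpha$ and $b := \beta b'$ then produces a lift with $ab = a'\alpha\beta b' = a'\delta b' = c$, as desired.

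The main effort has already been absorbed into the two preceding lemmas: \cref{res:negative-loop-group-intersection} via an fpqc-gluing argument on $\mathbb{P}^1_R$, and \cref{res:negative-loop-group-lie-algebra} via the compatibility of the dilations at $v=0$ and $v=\infty$ with the Levi decomposition $\widehat{P} = \widehat{M}\widehat{U}$ built into \cref{ass:pappas-zhu-groups-as-dilations}. With those in hand, the remaining argument is the direct deformation-theoretic computation above, and no further obstacle arises.
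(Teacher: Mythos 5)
Your proposal is correct and follows essentially the same route as the paper's own proof: monomorphism from the intersection lemma, then lift $\bar a, \bar b$ arbitrarily, show the discrepancy lies in the Lie algebra, and split it using \cref{res:negative-loop-group-lie-algebra}. If anything you are slightly more careful than the paper, which writes $(x')^{-1}g(y')^{-1} = ab$ with $a,b$ in the two Lie-algebra summands without explicitly remarking that the additive direct sum of \cref{res:negative-loop-group-lie-algebra} becomes a multiplicative factorization because the kernel of $L\mathcal{G}_x(R) \to L\mathcal{G}_x(\bar R)$ is abelian; your explicit remark on that point is correct and welcome.
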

\begin{proof}
    This is just like \cite[Lemma 3.2.6]{local-models}. We reproduce the proof here for the convenience of the reader. 

    We first show that \(m\) is a monomorphism. Let \(R\) be an \(\mathcal{O}\)-algebra and \(g_{1},g_{2} \in L^+ \mathcal{G}_{x}(R)\), \(h_{1},h_{2} \in L^{--}\mathcal{G}_{x}(R)\). 
    If \(g_{1}h_{1} = g_{2}h_{2}\), then \(g_{2}^{-1}g_{1} = h_{2}h_{1}^{-1}\), so it is immediate from \cref{res:negative-loop-group-intersection} that \(g_{1} = g_{2}\) and \(h_{1} = h_{2}\).

    It remains to verify the formally étale property.
    Let \(J \to R \to \overline{R}\) be a square zero extension of \(\mathcal{O}\)-algebras, and consider the lifting problem 
    \[\begin{tikzcd}
        {\operatorname{Spec}\overline{R}} & {L^+\mathcal{G} \times L^{--} \mathcal{G} } \\
        {\operatorname{Spec}R} & {L \mathcal{G}} . 
        \arrow[from=1-1, to=1-2, "{(\overline{x},\overline{y})}"]
        \arrow[from=1-2, to=2-2, "m"]
        \arrow[from=2-1, to=2-2, "g"]
        \arrow[from=1-1, to=2-1]
        \arrow[dashed, from=2-1, to=1-2]
    \end{tikzcd}\]
    That is, we have an element \(g \in L \mathcal{G}(R)\) and a decomposition \(\overline{g} = \overline{x}\overline{y} \in L\mathcal{G}(\overline{R})\), with \(\overline{x} \in L^{+}\mathcal{G}(\overline{R})\) and \(\overline{y} \in L^{--}\mathcal{G}(\overline{R})\). 
    The solution to the lifting problem is given by lifts \(x \in L^+\mathcal{G}(R)\) of \(\overline{x}\) and \(y \in L^{--}\mathcal{G}(R)\) of \(\overline{y}\) such that \(g = xy\).
    Note that since \(m\) is a monomorphism, the solution is necessarily unique, so it suffices to show that one exists. 

    Since both \(L^+ \mathcal{G}\) and \(L^{--}\mathcal{G}\) are formally smooth (by smoothness of \(\mathcal{G}\)), we can choose \emph{some} lifts \(x' \in L^+\mathcal{G}(R)\) of \(\overline{x}\) and \(y' \in L^{--}\mathcal{G}(R)\) of \(\overline{y}\). 
    Then \((x')^{-1}g (y')^{-1} \in L\mathcal{G}(R)\) reduces to \(\overline{x}^{-1} \overline{g} \overline{y}^{-1} = \overline{x}^{-1}\overline{x}\overline{y}\overline{y}^{-1} = 1\) in \(L\mathcal{G}(\overline{R})\), 
    so \((x')^{-1}g(y')^{-1} \in \operatorname{Lie}L\mathcal{G}(J)\).
    By \cref{res:negative-loop-group-lie-algebra} there exists \(a \in \operatorname{Lie}L^{+}\mathcal{G}(J)\) and \(b \in \operatorname{Lie}L^{--}\mathcal{G}(J)\) such that 
    \((x')^{-1} g (y')^{-1} = ab\), and then \(g = (x'a)(by')\), so \(x = x'a\) and \(y = by'\) solves the lifting problem. 
\end{proof}

\subsection{Frobenius}
\label{sec:frobenius-on-loop-group}
Just as we constructed \(\mathcal{G}_{x}\) in \cref{sec:bruhat-tits-group-schemes}, we may construct \(\mathcal{G}_{\varphi(x)}\) in the same way.
From \cref{sec:frobenius-on-building} we see that \(\varphi(x) = \varphi(n) \cdot o\).
Note that 
\begin{equation}
    \label{eq:frobenius-pullback-of-G-tilde}
    \varphi^* \widetilde{\mathcal{G}}_{x} = \varphi^* (n \widehat{G} n^{-1}) \cong \varphi(n) \widehat{G} \varphi(n)^{-1} = \widetilde{\mathcal{G}}_{\varphi(x)} . 
\end{equation}
However, it does not follow trivially that \(\varphi^* \breve{\mathcal{G}}_{x} \cong \breve{\mathcal{G}}_{\varphi(x)}\), because the commutative diagram 
\[
\begin{tikzcd}
\widetilde{\mathbb{A}}^1_{\mathcal{O}} \arrow[r,"\varphi"] \arrow[d,"\pi_{I}"] & \widetilde{\mathbb{A}}^1_{\mathcal{O}} \arrow[d,"\pi_{I}"] \\
\breve{\mathbb{A}}^1_{\mathcal{O}} \arrow[r,"\varphi"] & \breve{\mathbb{A}}^1_{\mathcal{O}}
\end{tikzcd}
\]
is not cartesian.\footnote{However, note that the corresponding diagram away from the origin \(\lbrace 0 \rbrace\) (i. e. after inverting \(v\)) is cartesian. So is the diagram with \(\pi_{0} : \breve{\mathbb{A}}^1_{\mathcal{O}} \to \mathbb{A}^1_{\mathcal{O}}\) in place of \(\pi_{I}\).} 
There is a comparison map 
\[
\varphi^* \breve{\mathcal{G}}_{x} = \varphi^* (\pi_{I})_{*}^I \widetilde{\mathcal{G}}_{x} \to (\pi_{I})_{*}^I \varphi^* \widetilde{\mathcal{G}}_{x} = \breve{\mathcal{G}}_{\varphi(x)} ,
\]
but it is not necessarily an isomorphism, as the following (typical) example shows. 
\begin{example}
    \label{ex:frobenius-pullback-example}
    Consider the case that \(L = \mathbb{Q}_{p}\left( (-p)^{1 / (p-1)} \right)\) (where \(p > 2\)), \(\widehat{G} = \GL_{2}\), and \(x = u^{(0,1)} \cdot o\).
    Note that \(e = p-1\), \(\# \mathcal{J} = 1\), and \(\breve{\mathcal{G}}_{x} = \mathcal{G}_{x}\) in this case. 
    Let \(R\) be an \(\mathcal{O}[u]\)-algebra in which \(u\) is a non-zerodivisor.
    Then \(\widetilde{\mathcal{G}}_{x}(R) = u^{(0,1)} \GL_{2}(R) u^{-(0,1)}(R) = \begin{pmatrix} R & u^{-1} R \\ u R & R \end{pmatrix}\), 
    and \(\varphi^* \widetilde{\mathcal{G}}_{x}(R) = u^{(0,p)} \GL_{2}(R) u^{- (0, p)} = \begin{pmatrix} R & u^{-p} R \\ u^p R & R \end{pmatrix} = \widetilde{\mathcal{G}}_{\varphi(x)}\) (this notation is slightly abusive since \(\GL_{2}\) only consists of invertible matrices).
    On the other hand, if \(R\) is an \(\mathcal{O}[v]\)-algebra in which \(v = u^{p-1}\) is a non-zerodivisor, then 
    \(\mathcal{G}_{x}(R) = \begin{pmatrix} R & R \\ v R & R \end{pmatrix}\) and \(\varphi^* \mathcal{G}_{x}(R) = \begin{pmatrix} R & R \\ v^p R & R \end{pmatrix}\), 
    but \(\mathcal{G}_{\varphi(x)}(R) = \begin{pmatrix} R & v^{-1} R \\ v^2 R & R \end{pmatrix}\). 
\end{example}
\begin{remark}
    It is actually \(\breve{\mathcal{G}}_{\varphi(x)} = (\pi_{I})_{*}^I \varphi^* \widetilde{\mathcal{G}}_{x}\) as well as \(\mathcal{G}_{\varphi(x)} = \pi_{*}^\Gamma \varphi^* \widetilde{\mathcal{G}_{x}}\) that is relevant for our purposes, and not \(\varphi^* \breve{\mathcal{G}}_{x}\) and \(\varphi^* \mathcal{G}_{x}\).
    This is important for the presentation \eqref{res:presentation-of-moduli-stack-of-BKL-parameters} of the moduli stack of Breuil--Kisin \((\Gamma,\widehat{G})\)-torsors of fixed type. 
    We will see this again in \cref{sec:moduli-interpretation-of-Y}. 
\end{remark}
\begin{remark}
    Let us also remark that \(\varphi^* \widetilde{\mathcal{U}}_{a,x} = \widetilde{\mathcal{U}}_{a,\varphi(x)}\), as a special case of \cref{res:frobenius-pullback-of-root-groups-with-congruence}. 
    This implies that \(\varphi^* \widetilde{\mathcal{V}}_{x} = \widetilde{\mathcal{V}}_{\varphi(x)}\), which provides an alternative way to see the isomorphism \eqref{eq:frobenius-pullback-of-G-tilde}, by evaluating at \(E^\mathcal{J}\llb u \rrb\)-points and using the uniqueness part of the proof of \cite[Theorem 4.1]{pappas-zhu} again.
    One may similarly justify the description of \(\varphi^* \mathcal{G}_{x}\) in \cref{ex:frobenius-pullback-example}. 
\end{remark}

\subsubsection{Comparing \(\mathcal{G}_{\varphi(x)}\) and \(\mathcal{G}_{x}\)}
Recall from \cref{sec:galois-type-setup} that we have made a choice of \(c \in G^*(\mathcal{O}[v^{\pm 1}])\) such that \(c \cdot \varphi(x) = x\).
It is then clear that we have an isomorphism 
\[\operatorname{Ad}_{c} : \widetilde{\mathcal{G}}_{\varphi(x)} \isoto \widetilde{\mathcal{G}}_{x} \] 
of group schemes over \(\widetilde{\mathbb{A}}^1_{\mathcal{O}}\). 
This is \(\Gamma\)-equivariant since \(c\) is fixed by \(\Gamma\), so by functoriality of invariant pushforward we obtain isomorphisms 
\begin{align*}
    \operatorname{Ad}_{c} : \breve{\mathcal{G}}_{\varphi(x)} & \isoto \breve{\mathcal{G}}_{x} , \\
    \operatorname{Ad}_{c} : \mathcal{G}_{\varphi(x)} & \isoto \mathcal{G}_{x} . 
\end{align*}
\begin{example}
    In the situation of \cref{ex:frobenius-pullback-example}, the element \(c = v^{(0, -1)}\) satisfies \(c \cdot \varphi(x) = x\), since \(v^{(0,-1)}\begin{pmatrix} R & v^{-1} R \\ v^2 R & R \end{pmatrix}v^{-(0,-1)} = \begin{pmatrix} R & R \\ v R & R \end{pmatrix}\). 
\end{example}

\subsubsection{Frobenius endomorphism of loop groups}
\label{sec:frobenius-endomorphism-of-loop-groups}
We now turn to the Frobenius endomorphisms of loop groups.
The construction in \cref{sec:frobenius} yields a \(\Gamma\)-equivariant endomorphism \(\varphi : L_{0} \widehat{G} \to L_{0} \widehat{G}\), where \(L_{0}\widehat{G}(R) = \widehat{G}\left( (R \otimes_{\mathbb{Z}_{p}} \mathbb{Z}_{q})\llp u \rrp\right)\).
Taking \(\Gamma\)-fixed points we obtain an endomorphism
\[
\varphi : L_{0} G^* \to L_{0} G^* , 
\]
where \(L_{0}G^*(R) = G^* \left( R \llp v \rrp \right)\). 
Moreover, we can view \(L_{0}^+\mathcal{G}_{x} \subset L_{0}G^*\) as a subfunctor. 
We will need the operator 
\begin{align}
    \label{eq:varphi-c}
    \varphi_{c} : L_{0}G^* & \to L_{0} G^* \\
    A & \mapsto c \varphi(A) c^{-1} . \nonumber
\end{align}
\begin{lemma}
    \label{res:frobenius-on-parahoric}
    We have \(\varphi_{c} \left( L_{0}^+ \mathcal{G}_{x} \right) \subset L_{0}^+ \mathcal{G}_{x}\). 
\end{lemma}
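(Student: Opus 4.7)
The plan is to reduce the assertion to the corresponding statement for $\widetilde{\mathcal{G}}_x = n\widehat{G}n^{-1}$ over $\widetilde{\mathbb{A}}^1_{\mathcal{O}}$ and then descend via invariant pushforward. The key input is the explicit formula $c = n\varphi(n)^{-1}$ from \cref{sec:galois-type-setup}, which makes the conjugation by $c$ cancel precisely the discrepancy between $\widetilde{\mathcal{G}}_x$ and $\widetilde{\mathcal{G}}_{\varphi(x)} = \varphi^*\widetilde{\mathcal{G}}_x$.

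First, I would observe that the Frobenius on $\widetilde{\mathbb{A}}^1_{\mathcal{O}}$ preserves the closed subscheme $u = 0$, so pullback by $\varphi$ sends sections of $\widetilde{\mathcal{G}}_x$ over $R^{\mathcal{J}}\llb u \rrb$ to sections of $\varphi^*\widetilde{\mathcal{G}}_x = \widetilde{\mathcal{G}}_{\varphi(x)}$ over $R^{\mathcal{J}}\llb u \rrb$. Under the identification $\widetilde{\mathcal{G}}_{\varphi(x)} = \varphi(n)\widehat{G}\varphi(n)^{-1}$ from \eqref{eq:frobenius-pullback-of-G-tilde}, the map $\mathrm{Ad}_c$ is an isomorphism $\widetilde{\mathcal{G}}_{\varphi(x)} \isoto \widetilde{\mathcal{G}}_x$ of group schemes over $\widetilde{\mathbb{A}}^1_{\mathcal{O}}$, hence induces an isomorphism on $L_{0}^{+}$. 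Composing, $\varphi_c$ restricts to an endomorphism of $L_{0}^{+}\widetilde{\mathcal{G}}_x$.

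Concretely, any $A \in L_{0}^{+}\widetilde{\mathcal{G}}_x(R)$ can be written as $A = nBn^{-1}$ with $B \in \widehat{G}(R^{\mathcal{J}}\llb u \rrb)$, and then
\[
c\,\varphi(A)\,c^{-1} = n\varphi(n)^{-1}\cdot \varphi(n)\varphi(B)\varphi(n)^{-1}\cdot \varphi(n)n^{-1} = n\,\varphi(B)\,n^{-1},
\]
which lies in $n\,\widehat{G}(R^{\mathcal{J}}\llb u \rrb)\,n^{-1} = \widetilde{\mathcal{G}}_x(R^{\mathcal{J}}\llb u \rrb) = L_{0}^{+}\widetilde{\mathcal{G}}_x(R)$ because $\varphi$ preserves $R^{\mathcal{J}}\llb u \rrb$. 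This is the main content; no obstacle arises here since the computation is forced by $c = n\varphi(n)^{-1}$.

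Finally, to conclude that $\varphi_c(L_{0}^{+}\mathcal{G}_x) \subseteq L_{0}^{+}\mathcal{G}_x$, I would pass to $\Gamma$-invariants. The element $c \in G^*(\mathcal{O}[v^{\pm 1}])$ is $\Gamma$-fixed by construction (this is where strict Frobenius invariance of $\tau$ enters, see \cref{rem:strictly-frobenius-invariant-vs-fixed}), and $\varphi$ is $\Gamma$-equivariant as noted in \cref{sec:frobenius}. Therefore $\varphi_c$ commutes with the $\Gamma$-action on $L_{0}^{+}\widetilde{\mathcal{G}}_x$, and taking $\Gamma$-fixed points yields the desired endomorphism of $L_{0}^{+}\mathcal{G}_x = (L_{0}^{+}\widetilde{\mathcal{G}}_x)^{\Gamma}$.
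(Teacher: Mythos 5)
Your proof is correct and reaches the conclusion by the same two-step route the paper uses: pass to $\widetilde{\mathcal{G}}_x$ over $\widetilde{\mathbb{A}}^1_{\mathcal{O}}$, show $\varphi_c$ preserves $L_0^+\widetilde{\mathcal{G}}_x$, then descend by taking $\Gamma$-fixed points. The one non-cosmetic difference is in the middle step. The paper treats it abstractly: from \eqref{eq:frobenius-pullback-of-G-tilde} one has $\varphi(L_0^+\widetilde{\mathcal{G}}_x)\subset L_0^+\widetilde{\mathcal{G}}_{\varphi(x)}$, and then $c\cdot\varphi(x)=x$ gives $c\,L_0^+\widetilde{\mathcal{G}}_{\varphi(x)}\,c^{-1}=L_0^+\widetilde{\mathcal{G}}_x$ because conjugation matches stabilizers of corresponding points. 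You instead substitute the explicit formula $c=n\varphi(n)^{-1}$, write $A=nBn^{-1}$, and cancel to get $c\varphi(A)c^{-1}=n\varphi(B)n^{-1}$. Both are valid, and your computation is a pleasant sanity check, but note that the paper deliberately avoids invoking $c=n\varphi(n)^{-1}$ here — it only needs the weaker condition $c\cdot\varphi(x)=x$, and in fact remarks that essentially the only place the explicit formula is needed is \cref{res:moduli-interpretation-as-BK-modules}. So while your argument is correct as written (the choice $c=n\varphi(n)^{-1}$ is fixed in \cref{sec:galois-type-setup}), the paper's version is slightly more robust: it shows the lemma holds for any $c\in G^*(\mathcal{O}[v^{\pm 1}])$ satisfying $c\cdot\varphi(x)=x$, without reference to the particular cocycle normalization.
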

\begin{proof}
    It is immediate from \eqref{eq:frobenius-pullback-of-G-tilde} that \(\varphi\left( L_{0}^+ \widetilde{\mathcal{G}}_{x} \right) \subset L_{0}^+ \widetilde{\mathcal{G}}_{\varphi(x)}\), where for \(y = x, \varphi(x)\) we have \(L_{0}^+ \widetilde{\mathcal{G}}_{y}(R) = \widetilde{\mathcal{G}}_{y}\left( (R \otimes_{\mathbb{Z}_{p}} \mathbb{Z}_{q})\llp u \rrp\right)\) and we view \(L_{0}^+ \widetilde{\mathcal{G}}_{y} \subset L_{0} \widehat{G}\) as a subfunctor. 
    Since \(c \cdot \varphi(x) = x\) it then follows that \(\varphi_{c} \left( L_{0}^+ \widetilde{\mathcal{G}}_{x} \right) = c \varphi \left( L_{0}^+ \widetilde{\mathcal{G}}_{x} \right) c^{-1} \subset c L_{0}^+\widetilde{\mathcal{G}}_{\varphi(x)} c^{-1} = L_{0}^+\widetilde{\mathcal{G}}_{x}\).
    Taking \(\Gamma\)-fixed points, the result follows.
\end{proof}
\section{The affine grassmannian and the stack \(\mathcal{Y}\)}
In this section we introduce the two main geometric objects of interest.
The first object is the affine Grassmannian, as described in \cite[Section Section 6.2.6]{pappas-zhu}. 
This is the global affine Grassmannian for \(\mathcal{G}\) specialized to \(v = - p\), and we will denote it by \(\operatorname{Gr}_{\mathcal{G}}\) and refer to it simply as ``the affine Grassmannian''.
The second object is the moduli stack of Breuil--Kisin \((\Gamma,\widehat{G})\)-torsors, denoted \(\mathcal{Y}\). 
We will work with \(\mathcal{Y}\) as a quotient stack. 

We keep the notation and assumptions from \cref{sec:setup} and \cref{sec:bruhat-tits-group-schemes-and-loop-groups}.
In particular, we make \cref{ass:connected-fibers-assumptions} by default. 

\subsection{The affine Grassmannian}
\label{sec:affine-grassmannian}
We define the affine Grassmannian is the fppf quotient sheaf over \(\mathcal{O}\)
\[
\operatorname{Gr}_{\mathcal{G}} := \left[ L^+ \mathcal{G}  \backslash L \mathcal{G} \right] ,
\]
i. e. the fppf sheafification of the functor \(R \mapsto \mathcal{G}\left( R \llb v + p \rrb \right) \backslash \mathcal{G}\left(R \llp v + p \rrp \right) \).
The fppf sheaf \(\operatorname{Gr}_{\mathcal{G}}\) is a sheaf in the fpqc topology by \cite[Proposition 1.6]{cesnaviciusAffineGrassmannianPresheaf2024}, and therefore agrees with the ``fpqc-sheafification'' definition often taken in the literature. 
This is the specialization at \(v = - p\) of the global affine Grassmannian for \(\mathcal{G}\) over \(\mathbb{A}^1_{\mathcal{O}}\) \cite[Section 6.2]{pappas-zhu}. 
By \cite[Proposition 6.5]{pappas-zhu} \(\operatorname{Gr}_{\mathcal{G}}\) is a projective Ind-scheme over \(\mathcal{O}\). 

In what follows we will often use the convention \(\operatorname{Gr} := \operatorname{Gr}_{\mathcal{G}}\) to simplify notation. 

\subsubsection{The special and generic fiber} 
\label{sec:special-and-generic-fibers}
We reiterate that \(\operatorname{Gr}\) (without any subscript) is the mixed characteristic affine Grassmannian for \(\mathcal{G}\). 
Following \cite[Corollary 6.6]{pappas-zhu}, we describe its generic fiber \(\operatorname{Gr}_{E}\) and special fiber \(\operatorname{Gr}_{\mathbb{F}}\).
It will be helpful to keep in mind that over \(\mathbb{A}^1_{\mathcal{O}}\), we have \(p\) is invertible in the generic fiber \(\mathbb{A}^1_{E}\), but \(0 = -p\) in the special fiber \(\mathbb{A}^1_{\mathbb{F}}\). 

Let \(\operatorname{Gr}_{E} = \left[ L^+ \mathcal{G}_{E} \backslash L \mathcal{G}_{E} \right]\) denote the generic fiber of \(\operatorname{Gr}\) over \(E\). 
Because of the identity \cite[Section 6.2.6]{pappas-zhu}
\[
    \mathcal{G} \times_{\mathcal{O}[v]} \operatorname{Spec} E \llb v + p \rrb \cong G^*_{E} \times_{E} \operatorname{Spec} E \llb v + p \rrb ,
\]
it follows that we can identify \(\operatorname{Gr}_{E}\) with the affine Grassmannian for the connected reductive group \(G^*_{E}\) over \(E\). 

Similarly, we let \(\operatorname{Gr}_{\mathbb{F}} = \left[ L^+ \mathcal{G}_{\mathbb{F}} \backslash L \mathcal{G}_{\mathbb{F}} \right]\) denote the special fiber of \(\operatorname{Gr}\) over \(\mathbb{F}\). 
This is the affine flag variety over \(\mathbb{F}\) for the connected reductive group \(G^*_{\mathbb{F}\llp v \rrp}\) over \(\mathbb{F}\llp v \rrp\) and the parahoric group scheme \(\mathcal{G}_{\mathbb{F}\llb v \rrb}\). 

\subsubsection{Pappas--Zhu local model}
\label{sec:pappas-zhu-local-model}
We first recall the definition of the Schubert variety for \(\mu\). 
By Cartan decomposition for \(G^*_{\overline{E}} = \widehat{G}_{\overline{E}}\) we have 
\begin{equation}
	\label{eq:cartan-decomposition}
	L \mathcal{G}(\overline{E}) = \widehat{G} \left( \overline{E} \llp v + p \rrp \right) = \bigsqcup_{\nu \in X_{*}(\widehat{T})^{\text{dom}}} L^+\mathcal{G}(\overline{E}) \cdot (v+p)^\nu \cdot L^+ \mathcal{G}(\overline{E}) .
\end{equation}
The Schubert variety \(\operatorname{Gr}_{E}^{\leq \mu} \subset \operatorname{Gr}_{E}\) (as described in for example \cite[Section 2.1]{zhu-affine-grassmannians}) 
is the reduced closed subscheme with \(\overline{E}\)-points 
\begin{equation}
	\label{eq:schubert-variety}
    \operatorname{Gr}_{E}^{\leq \mu}(\overline{E}) = \bigsqcup_{\nu \in X_{*}(T^*_{E})^{\text{dom}}, \nu \leq \mu} L^+\mathcal{G}(\overline{E}) \cdot (v+p)^\nu \cdot L^+ \mathcal{G}(\overline{E}) , 
\end{equation}
where \(\nu \leq \mu\) means that \(\mu - \nu\) is a sum of positive roots. 
\begin{definition}[{\cite[Definition 7.1]{pappas-zhu}}]
    \label{def:pappas-zhu-local-model}
    The \emph{Pappas--Zhu local model} for \(\mathcal{G}\) with respect to \(\mu\), denoted \(\operatorname{Gr}^{\leq \mu} = \operatorname{Gr}_{\mathcal{G}}^{\leq \mu}\), is the reduced closure of \(\operatorname{Gr}_{E}^{\leq \mu}\) in \(\operatorname{Gr}\).
\end{definition}
\begin{remark}
    The Pappas--Zhu local model \(\operatorname{Gr}^{\leq \mu}\) is often denoted \(M_{\mathcal{G},\mu}\), for example in \cite{pappas-zhu}.
\end{remark}

Note that \(\operatorname{Gr}^{\leq \mu}\) is a projective scheme over \(\mathcal{O}\) by (Ind-)projectivity of \(\operatorname{Gr}\).
The generic fiber of \(\operatorname{Gr}^{\leq \mu}\) is (clearly) the Schubert variety \(\operatorname{Gr}_{E}^{\leq \mu}\), and the special fiber is identified by \cite[Theorem 9.3]{pappas-zhu} in terms of the admissible set \(\operatorname{Adm}(\mu)\) of \(\mu\), which we recall below. 
\subsubsection{Digression: The admissible set and the special fiber}
\label{sec:admissible-set}
We will recall the description and significance of the admissible set \(\operatorname{Adm}(\mu)\). 
By definition, the admissible set \(\operatorname{Adm}(\mu)\) consists of elements of the Iwahori--Weyl group \(\widetilde{W}^*\) that we defined in \cref{sec:iwahori-weyl}, so let us first recall some facts about the Iwahori--Weyl group. 
When \(I\) acts trivially on \(\widehat{G}\) and \(\mathbb{F} \supset \mathbb{F}_{q}\), then \(G^* = \widehat{G}\) by \cref{rem:G-star-is-G-hat} and we have \(\widetilde{W}^* = \widehat{N}(\mathbb{F}\llp v \rrp) / \widehat{T}(\mathbb{F}\llb v \rrb)\). 
Now, if \(x\) is \(0\)-generic (so the connected stabilizer of \(x\) in \(G^*(\mathbb{F}\llp v \rrp)\) is an Iwahori subgroup), then the Iwahori--Weyl group parametrizes double cosets, in the sense that the map
\begin{align}
    \widetilde{W}^* & \isoto L^+ \mathcal{G}_{x, \mathbb{F}}\backslash L \mathcal{G}_{x, \mathbb{F}} / L^+ \mathcal{G}_{x, \mathbb{F}} = \operatorname{Gr}_{\mathbb{F}}  / L^+ \mathcal{G}_{x, \mathbb{F}} \nonumber \\ 
    \widetilde{w} & \mapsto L^+ \mathcal{G}_{x}\widetilde{w} L^+ \mathcal{G}_{x} \nonumber 
\end{align}
is a bijection \cite[Theorem 7.8.1]{kaletha-prasad}. More generally, if \(x\) is not \(0\)-generic, we can define \(W_{x} \subset \widetilde{W}^*\) as the stabilizer of the facet containing \(x\) in the affine Weyl group \(W^\text{aff} \subset \widetilde{W}^*\), and by loc. cit. we have a bijection 
\begin{align}
    \label{eq:IW-double-cosets} 
    W_{x} \backslash \widetilde{W}^* / W_{x} \isoto \operatorname{Gr}_{\mathbb{F}} / L^+ \mathcal{G}_{x} . 
\end{align}

We now recall some facts from the study of affine flag varieties.
The double coset \(S^\circ(\widetilde{w}) = L^+ \mathcal{G}_{x} \widetilde{w} L^+\mathcal{G}_{x}\) is called an (open) affine Schubert cell, and if \(W_{x}\widetilde{w}W_{x} \neq W_{x}\widetilde{w}'W_{x}\), the associated affine Schubert cells \(S^\circ(\widetilde{w})\) and \(S^\circ(\widetilde{w}')\) are disjoint by \eqref{eq:IW-double-cosets}. 
The closure of \(S^\circ (\widetilde{w})\) in \(\operatorname{Gr}_{\mathbb{F}}\) is the affine Schubert variety \(S(\widetilde{w}) \subset \operatorname{Gr}_{\mathbb{F}}\). 
The Bruhat ordering on \(\widetilde{W}^*\) is then characterized by the property that 
\[
    S(\widetilde{w}) = \bigsqcup_{W_{x}\widetilde{w}'W_{x} \in W_{x} \backslash \widetilde{W}^* / W_{x}, \widetilde{w}' \leq \widetilde{w}} S^\circ (\widetilde{w}') , 
\]
where this is the disjoint union on the underlying points (not as schemes). 
The Bruhat ordering also has a combinatorial description, which we now recall. 

First consider the affine Weyl group \(W^\text{aff} \cong \Phi(\widehat{G},\widehat{T})\mathbb{Z} \rtimes W\), which is a Coxeter group and can be identified with the Iwahori--Weyl group of the simply connected cover of the derived group of \(\widehat{G}\). 
Let \(\mathcal{C}\) denote an alcove whose closure contains \(x\). 
Let \(\lbrace s_{i} \rbrace_{i \in \mathcal{I}} \subset W^{\text{aff}}\) denote the affine reflections across walls bounding \(\mathcal{C}\).
For any \(\widetilde{z} \in W^\text{aff}\) we may then write \(\widetilde{z}\) as an expression 
\begin{equation}
    \label{eq:reduced-expression}
    \widetilde{z} = s_{i_{1}} s_{i_{2}} \cdots s_{i_{l}}, 
\end{equation}
which geometrically corresponds to a ``gallery walk'' from the alcove \(\mathcal{C}\) to the alcove \(\widetilde{z} \mathcal{C}\).
We call the expression \eqref{eq:reduced-expression} \emph{reduced} if \(l = l(\widetilde{z})\) is minimal.
If \(\widetilde{z}' \in W^\text{aff}\), then \(\widetilde{z}' \leq \widetilde{z}\) in the Bruhat order if and only if there exists a reduced expression for \(\widetilde{z}\) such that a reduced expression for \(\widetilde{z}'\) can be obtained by deleting some of the \(\widetilde{s}_{i_{j}}\)'s from the right hand side of \eqref{eq:reduced-expression}.

We have a decomposition 
\[
    \widetilde{W}^* = W^\text{aff} \rtimes \widetilde{W}^*_{\mathcal{C}} , 
\]
where \(\widetilde{W}^*_{\mathcal{C}} \subset \widetilde{W}^*\) is the stabilizer of \(\mathcal{C}\) in \(\widetilde{W}^*\). 
One way to observe this is that \(W^{\text{aff}}\) acts simply transitively on the set of alcoves in \(\mathcal{A}\left( T^*, \mathbb{F}\llp v \rrp \right)\), implying that \(W^{\text{aff}} \cap \widetilde{W}^*_{\mathcal{C}} = \lbrace 1 \rbrace\) in \(\widetilde{W}^*\)
and \(W^{\text{aff}}\) acts transitively on \(\widetilde{W}^* / \widetilde{W}^*_{\mathcal{C}}\) via left translations. 
The Bruhat ordering on \(\widetilde{W}^*\) is then induced from the Bruhat ordering on \(W^\text{aff}\). 
More precisely, if \(\widetilde{w} = \widetilde{z}\widetilde{t}\) and \(\widetilde{w}' = \widetilde{z}'\widetilde{t}'\), where \(\widetilde{z},\widetilde{z}' \in W^{\text{aff}}\) and \(\widetilde{t}, \widetilde{t}' \in \widetilde{W}^*_{\mathcal{C}}\),  then \(\widetilde{w}' \leq \widetilde{w}\) if and only if \(\widetilde{t} = \widetilde{t}'\) and \(\widetilde{z}' \leq \widetilde{z}\) (and if \(\widetilde{t} \neq \widetilde{t}'\) then \(\widetilde{w}\) and \(\widetilde{w}'\) are incomparable). 

We now return to the admissible set, giving a heuristic description in the unramified case. 
Consider \(\operatorname{Gr}^{\leq \mu}(\mathcal{O})\). This contains the classes represented by \((v+p)^\mu\), as well as \((v+p)^{w \mu}\) for any \(w \in W\).
Therefore, \(S^\circ(v^{w\mu}) = L^+ \mathcal{G}_{\mathbb{F}} v^{w\mu}  L^+ \mathcal{G}_{\mathbb{F}} \subset \operatorname{Gr}^{\leq \mu}_{\mathbb{F}}\) for every \(w \in W\). 
At the same time, \(\operatorname{Gr}^{\leq \mu}_{\mathbb{F}} \subset \operatorname{Gr}_{\mathbb{F}}\) is closed, so contains the closure 
\[
    S(v^{w\mu}) = \bigsqcup_{W_{x}\widetilde{w}'W_{x} \in W_{x} \backslash \widetilde{W}^* / W_{x}, \widetilde{w}' \leq v^{w\mu}} S^\circ (\widetilde{w}') . 
\]
The union of all these \(\widetilde{w}'\) as \(w\) ranges through \(W\) is precisely the admissible set of \(\mu\). That is, 
\[
    \operatorname{Adm}(\mu) := \lbrace \widetilde{w}' \in \widetilde{W}^* | \text{there exists }w \in W \text{ such that }\widetilde{w}' \leq v^{w \mu} \rbrace . 
\]
In \cref{sec:weil-res-example-admissible-set} we give a detailed description of the admissible set in one example. If \(x\) is not \(0\)-generic, we define 
\[
    \operatorname{Adm}^x(\mu) := W_{x} \operatorname{Adm}(\mu) W_{x} 
\]
as the image of \(\operatorname{Adm}(\mu)\) in \(W_{x} \backslash \widetilde{W}^* / W_{x}\). 

We have the following result, completely characterizing the special fiber \(\operatorname{Gr}_{\mathbb{F}}^{\leq \mu}\). 
\begin{theorem}
    \label{res:special-fiber-of-pappas-zhu-local-model}
    Suppose \(p \nmid \pi_{1}(G^*)\). 
    Then \(\operatorname{Gr}^{\leq \mu}_{\mathbb{F}}\) is geometrically reduced and we have \(\operatorname{Gr}^{\leq \mu}_{\overline{\mathbb{F}}} = \bigsqcup_{W_{x}\widetilde{w}'W_{x} \in \operatorname{Adm}^x(\mu)} S^\circ(\widetilde{w}')\).
    In particular, if \(x\) is \(0\)-generic, then \(\operatorname{Gr}^{\leq \mu}_{\overline{\mathbb{F}}} = \bigsqcup_{\widetilde{w}' \in \operatorname{Adm}(\mu)} S^\circ(\widetilde{w}')\). 
\end{theorem}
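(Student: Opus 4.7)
The plan is to reduce the statement to the main theorem on special fibers of Pappas--Zhu local models in \cite[Theorem 9.3]{pappas-zhu}, using the identification of $\mathcal{G}_x$ with the Pappas--Zhu group scheme from \cref{res:pappas-zhu-identification}. Under \cref{ass:connected-fibers-assumptions} this identification is available, and since the formation of Schubert varieties and admissible sets only depends on $\mathcal{G}_x$ together with the cocharacter $\mu$ and the facet containing $x$, both sides of the claimed equality are intrinsic invariants of this data.

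First I would handle the easy inclusion. Passing to $\overline{E}$-points, the Cartan decomposition \eqref{eq:cartan-decomposition} together with the definition \eqref{eq:schubert-variety} shows that each class $(v+p)^{w\mu} \in \operatorname{Gr}^{\leq \mu}(\overline{\mathcal{O}})$ (for $w \in W$) lies in $\operatorname{Gr}^{\leq \mu}(\overline{\mathcal{O}})$, and its reduction mod $\varpi$ is $v^{w\mu} \in \operatorname{Gr}_{\overline{\mathbb{F}}}$. Since $\operatorname{Gr}^{\leq \mu}$ is a closed subscheme of $\operatorname{Gr}$ and stable under the left action of $L^+\mathcal{G}_x$, its special fiber contains each affine Schubert variety $S(v^{w\mu})$. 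By the combinatorial description of the Bruhat order recalled in \cref{sec:admissible-set}, the union $\bigcup_{w \in W} S(v^{w\mu})$ is precisely $\bigsqcup_{W_x \widetilde{w}' W_x \in \operatorname{Adm}^x(\mu)} S^\circ(\widetilde{w}')$ on underlying points, giving the set-theoretic containment $\bigsqcup_{W_x \widetilde{w}' W_x \in \operatorname{Adm}^x(\mu)} S^\circ(\widetilde{w}') \subseteq \operatorname{Gr}^{\leq \mu}_{\overline{\mathbb{F}}}$.

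The hard part—which is the content of the coherence conjecture of Pappas--Rapoport proved in \cite{pappas-zhu} (building on \cite{zhuCoherenceConjecturePappas2014})—is the reverse inclusion together with geometric reducedness. Here the hypothesis $p \nmid \pi_1(G^*)$ enters crucially: it guarantees both that $\mathcal{G}_x$ is a Pappas--Zhu group scheme with connected fibers (by \cref{res:connected-fibers}) and that the coherence conjecture applies to match dimensions of global sections of line bundles on the generic and special fibers, forcing the special fiber $\operatorname{Gr}^{\leq \mu}_{\overline{\mathbb{F}}}$ to be the admissible union without any extra components or nilpotents. I would quote \cite[Theorem 9.3]{pappas-zhu} directly to conclude geometric reducedness and the equality $\operatorname{Gr}^{\leq \mu}_{\overline{\mathbb{F}}} = \bigsqcup_{W_x \widetilde{w}' W_x \in \operatorname{Adm}^x(\mu)} S^\circ(\widetilde{w}')$.

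Finally, the specialization when $x$ is $0$-generic is immediate: then the facet of $x$ is an alcove and the stabilizer $W_x \subseteq W^{\mathrm{aff}}$ is trivial, so the double cosets $W_x \backslash \widetilde{W}^* / W_x$ are just elements of $\widetilde{W}^*$ and $\operatorname{Adm}^x(\mu) = \operatorname{Adm}(\mu)$. The main obstacle is the reverse inclusion, which is genuinely non-trivial and for which we rely on \cite{pappas-zhu}; the rest is bookkeeping translating between our invariant-pushforward construction of $\mathcal{G}_x$ and the Pappas--Zhu formalism.
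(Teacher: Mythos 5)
Your proposal is correct and follows the same approach as the paper: the result is quoted directly from \cite[Theorem 9.1, Theorem 9.3]{pappas-zhu}, after noting the easy inclusion is established by the discussion preceding the theorem. Your additional exposition of the easy inclusion and the $0$-generic specialization is accurate but matches what the paper records just before the theorem statement.
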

\begin{proof}
    This is \cite[Theorem 9.1, Theorem 9.3]{pappas-zhu}. 
    Note that the (easy) inclusion \(\supset\) is explained above.
\end{proof}
\begin{corollary}
    Suppose \(p \nmid \pi_{1}(G^*)\). 
    If \(I\) acts trivially on \(\widehat{G}\), then the irreducible components of \(\operatorname{Gr}^{\leq \mu}_{\overline{\mathbb{F}}}\) are exactly \(\lbrace S(v^{w \mu}) \rbrace_{w \in W}\).
\end{corollary}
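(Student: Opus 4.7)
The plan is to deduce this directly from \cref{res:special-fiber-of-pappas-zhu-local-model}. Under the standing hypotheses ($I$ acts trivially, so $G^* = \widehat{G}$ by \cref{rem:G-star-is-G-hat} and the Iwahori--Weyl group contains the translation elements $v^\nu$ for all $\nu \in X_*(\widehat{T})$, and $p \nmid \pi_1(G^*)$, so the theorem applies), the theorem gives the set-theoretic stratification
\[
\operatorname{Gr}^{\leq \mu}_{\overline{\mathbb{F}}} \;=\; \bigsqcup_{W_x \widetilde{w}' W_x \,\in\, \operatorname{Adm}^x(\mu)} S^\circ(\widetilde{w}').
\]
Each open affine Schubert cell $S^\circ(\widetilde{w}')$ is irreducible (being an affine space over $\overline{\mathbb{F}}$), and its closure in $\operatorname{Gr}_{\overline{\mathbb{F}}}$ is the affine Schubert variety $S(\widetilde{w}')$, with $S(\widetilde{w}'') \subseteq S(\widetilde{w}')$ iff $W_x\widetilde{w}''W_x \leq W_x\widetilde{w}'W_x$ in the Bruhat order on $W_x\backslash\widetilde{W}^*/W_x$. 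Since the stratification above exhibits $\operatorname{Gr}^{\leq\mu}_{\overline{\mathbb{F}}}$ as a finite union of irreducible closed subsets $S(\widetilde{w}')$ (with $\widetilde{w}'$ ranging over $\operatorname{Adm}^x(\mu)$), its irreducible components are exactly the $S(\widetilde{w}')$ corresponding to the Bruhat-maximal double cosets in $\operatorname{Adm}^x(\mu)$.

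The remaining task is to identify these Bruhat-maximal elements. By the very definition recalled in \cref{sec:admissible-set}, $\operatorname{Adm}(\mu) = \{\widetilde{w}' \in \widetilde{W}^* : \widetilde{w}' \leq v^{w\mu}\text{ for some }w \in W\}$, so every element of $\operatorname{Adm}(\mu)$ is dominated by some $v^{w\mu}$, and hence the maximal elements lie in $\{v^{w\mu} : w \in W\}$. Conversely, each $v^{w\mu}$ is itself maximal in $\operatorname{Adm}(\mu)$: if $v^{w\mu} \leq v^{w'\mu}$ in the Bruhat order, then comparing lengths (the length of a translation $v^\nu$ by a Weyl translate of a dominant $\mu$ depends only on the Weyl orbit, so $\ell(v^{w\mu}) = \ell(v^{w'\mu}) = \langle 2\rho, \mu\rangle$) forces equality. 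Passing to double cosets in $W_x\backslash \widetilde{W}^*/W_x$, the maximal elements of $\operatorname{Adm}^x(\mu)$ are precisely the classes $W_x v^{w\mu} W_x$ for $w \in W$, so their closures $S(v^{w\mu})$ are the irreducible components of $\operatorname{Gr}^{\leq\mu}_{\overline{\mathbb{F}}}$.

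The only subtlety to be careful about is that different Weyl translates $w\mu$ and $w'\mu$ may yield the same double coset (for instance if $w^{-1}w'$ lies in the stabilizer of $\mu$ in $W$, or if $W_x$ absorbs the difference), in which case $S(v^{w\mu}) = S(v^{w'\mu})$. The statement of the corollary should be read accordingly: the set of irreducible components equals $\{S(v^{w\mu}) : w \in W\}$, with the understanding that this list may contain repetitions. This is the only non-routine point, but it is harmless for the conclusion as stated.
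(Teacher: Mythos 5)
Your proof is correct and follows the same route the paper has in mind: the corollary is stated as an immediate consequence of \cref{res:special-fiber-of-pappas-zhu-local-model}, and the key observation — that $\operatorname{Adm}(\mu)$ is by construction the downward Bruhat closure of $\{v^{w\mu}\}_{w\in W}$ — is already spelled out in the discussion of \cref{sec:admissible-set}, so your argument is exactly the intended one. One small caution worth flagging: the closure order of Schubert cells in the partial affine flag variety is governed by the Bruhat order on double cosets $W_x\backslash\widetilde{W}^*/W_x$, and your length comparison (valid in $\widetilde{W}^*$) does not transfer verbatim to the double-coset level, since the maximal-length representatives of $W_x v^{w\mu}W_x$ for distinct $w\in W$ need not have equal length; that these double cosets nonetheless remain mutually incomparable or equal in the induced Bruhat order is a standard structural fact about $\mu$-admissible sets and is implicitly relied upon by the paper as well, so this is a matter of citation rather than a substantive gap.
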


\subsubsection{Bounded loop group}
\label{sec:bounded-loop-group}
We define a ``bounded'' subspace \(L^{\leq \mu} \mathcal{G} \subset L \mathcal{G}\) as the fiber product
\[\begin{tikzcd}
	{L^{\leq \mu} \mathcal{G}} & {L \mathcal{G}} \\
	\operatorname{Gr}^{\leq \mu} & {\operatorname{Gr}_{\mathcal{G}}} .
	\arrow[from=1-2, to=2-2]
	\arrow[from=2-1, to=2-2]
	\arrow[from=1-1, to=2-1]
	\arrow[from=1-1, to=1-2]
	\arrow["\lrcorner"{anchor=center, pos=0.125}, draw=none, from=1-1, to=2-2]
\end{tikzcd}\]
Since \(L \mathcal{G} \to \operatorname{Gr}_{\mathcal{G}}\) is a left \(L^+\mathcal{G}\)-torsor, so is \(L^{\leq \mu} \mathcal{G} \to \operatorname{Gr}^{\leq \mu}\). 
In particular, \(L^{\leq \mu}\mathcal{G}\) is representable by an affine scheme, using the fact that \(\operatorname{Gr}^{\leq \mu}\) is a projective scheme together with \cite[\href{https://stacks.math.columbia.edu/tag/01SG}{Lemma 01SG}]{stacks-project} and \cite[\href{https://stacks.math.columbia.edu/tag/0244}{Section 0244}]{stacks-project}.
This is in contrast to \(L \mathcal{G}\), which is only an Ind-scheme.

The scheme \(L^{\leq \mu}\mathcal{G}\) is reduced by \cref{res:reduced-times-positive-loop-group}. 
Note that the \(\overline{E}\)-points of \(L^{\leq \mu} \mathcal{G}\) are precisely given by \eqref{eq:schubert-variety} by Cartan decomposition, and the reduced 
closure of \(L^{\leq \mu}\mathcal{G}_{\overline{E}}\) in \(L \mathcal{G}\) is \(L^{\leq \mu} \mathcal{G}\) by \cref{res:reduced-closure-lemma}. 
Since \(L^{\leq \mu}\mathcal{G}_{\overline{E}}\) is evidently stable under both left and right translation by \(L^+ \mathcal{G}(\overline{E})\), 
its reduced closure \(L^{\leq \mu}\mathcal{G}\) must also be stable under both left and right translation by \(L^+ \mathcal{G}\). 

\subsubsection{Covers of the affine Grassmannian}
For a concave function \(f \geq 0\), we will consider the cover of the affine Grassmannian \(\operatorname{Gr}_{f}\) defined as the fppf quotient sheaf
\[
    \operatorname{Gr}_{f} := \left[ L^+ \mathcal{G}_{x,f} \backslash L \mathcal{G} \right] , 
\]
where we recall the group \(\mathcal{G}_{x,f}\) from \cref{res:pappas-zhu-concave-function}. 
The natural map \(\operatorname{Gr}_{f} \to \operatorname{Gr}\) is a \(L^+ \mathcal{G}_{x} / L^+ \mathcal{G}_{x,f}\)-torsor whenever \(L^+ \mathcal{G}_{x,f}\) is normal in \(L^+\mathcal{G}_{x}\). 

We recall the most important cases from \cref{sec:quotient-loop-groups}. If \(f = n \in \mathbb{Z}_{\geq 1}\), then \(\operatorname{Gr}_{n} \to \operatorname{Gr}\) is a \(L^n \mathcal{G}_{x}\)-torsor. 
If \(f = 0^+\) and \(x\) is \(0\)-generic, then \(\operatorname{Gr}_{0^+} \to \operatorname{Gr}\) is a \(\overline{\mathcal{T}}\)-torsor.

We also consider covers \(\operatorname{Gr}_{f}^{\leq\mu} := \left[ L^+ \mathcal{G}_{x,f} \backslash L^{\leq \mu}\mathcal{G} \right]\), and the above remarks still apply. 

\subsubsection{Affine charts in terms of the negative loop group}
\label{sec:negative-loop-group-charts}
In this subsection we invoke \cref{ass:pappas-zhu-groups-as-dilations}.
Define \(L^{--, \leq \mu}\mathcal{G}\) as the pullback 
\[\begin{tikzcd}
	{L^{--,\leq\mu}\mathcal{G}} & {L^{--}\mathcal{G}} \\
	{L^{\leq \mu}\mathcal{G}} & {L\mathcal{G}}
	\arrow[from=2-1, to=2-2]
	\arrow[from=1-2, to=2-2]
	\arrow[from=1-1, to=2-1]
	\arrow[from=1-1, to=1-2]
	\arrow["\lrcorner"{anchor=center, pos=0.125}, draw=none, from=1-1, to=2-2]
\end{tikzcd}\]

\begin{lemma}
    \label{res:negative-loop-group-is-of-finite-type}
    \(L^{--,\leq \mu}\mathcal{G}\) is a scheme of finite type over \(\mathcal{O}\). 
\end{lemma}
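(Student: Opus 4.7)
The approach is to identify $L^{--,\leq \mu}\mathcal{G}$ with an open subscheme of the projective $\mathcal{O}$-scheme $\operatorname{Gr}^{\leq \mu}$, from which finite type over $\mathcal{O}$ is automatic. The relevant map will be the composition $L^{--,\leq\mu}\mathcal{G} \hookrightarrow L^{\leq\mu}\mathcal{G} \twoheadrightarrow \operatorname{Gr}^{\leq\mu}$, or equivalently the pullback of the natural map $L^{--}\mathcal{G} \to \operatorname{Gr}$ along $\operatorname{Gr}^{\leq\mu} \hookrightarrow \operatorname{Gr}$.

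First I would establish that the composition $\pi : L^{--}\mathcal{G} \hookrightarrow L\mathcal{G} \twoheadrightarrow \operatorname{Gr}$ is a formally étale monomorphism on noetherian $\mathcal{O}$-algebras. By Proposition \ref{res:formally-etale-monomorphism} the multiplication map $m : L^+\mathcal{G} \times L^{--}\mathcal{G} \to L\mathcal{G}$ has this property on noetherian algebras, and is equivariant for the left $L^+\mathcal{G}$-action on the first factor. Since $L\mathcal{G} \twoheadrightarrow \operatorname{Gr}$ is an fppf $L^+\mathcal{G}$-torsor, descending both sides of $m$ along this free action identifies $\pi$ with the quotient of $m$ by $L^+\mathcal{G}$, which inherits the formally étale monomorphism property.

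Pulling $\pi$ back along the closed immersion $\operatorname{Gr}^{\leq \mu} \hookrightarrow \operatorname{Gr}$ and using the identity $L^{--,\leq\mu}\mathcal{G} = L^{--}\mathcal{G} \times_{L\mathcal{G}} L^{\leq\mu}\mathcal{G} = L^{--}\mathcal{G} \times_{\operatorname{Gr}} \operatorname{Gr}^{\leq\mu}$, one obtains that $L^{--,\leq\mu}\mathcal{G} \to \operatorname{Gr}^{\leq\mu}$ is a formally étale monomorphism into the noetherian (in fact projective, finite-type) target $\operatorname{Gr}^{\leq\mu}$.

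The main obstacle is upgrading this formally étale monomorphism to an open immersion, which requires local finite type on the source. For this I would exploit the natural Ind-scheme structure on $L^{--}\mathcal{G}$ coming from the explicit description in Lemma \ref{res:explicit-negative-loop-group}: elements of $\mathcal{G}_{x}(R[\tfrac{1}{v+p}])$ are stratified by the pole order of their matrix entries (after embedding $\widehat{G} \hookrightarrow \GL_{n}$) in $(v+p)^{-1}$, yielding an exhaustion $L^{--}\mathcal{G} = \operatorname{colim}_N L^{--,\leq N}\mathcal{G}$ by finite-type affine $\mathcal{O}$-schemes. A standard boundedness argument based on the Cartan decomposition \eqref{eq:cartan-decomposition}, which controls the elementary divisors (equivalently the Hodge polygon) of any point in $L^{\leq\mu}\mathcal{G}$, shows that for fixed $\mu$ there is some $N = N(\mu)$ such that $L^{--,\leq\mu}\mathcal{G}$ factors through the closed subscheme $L^{--,\leq N(\mu)}\mathcal{G} \subset L^{--}\mathcal{G}$. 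In particular, $L^{--,\leq\mu}\mathcal{G}$ is locally of finite type over $\mathcal{O}$. A formally étale morphism that is locally of finite type over a noetherian base is étale, and étale monomorphisms are open immersions, so $L^{--,\leq\mu}\mathcal{G} \to \operatorname{Gr}^{\leq\mu}$ is an open immersion, and $L^{--,\leq\mu}\mathcal{G}$ inherits finite type from the projective scheme $\operatorname{Gr}^{\leq\mu}$.
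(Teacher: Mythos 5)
Your proof is correct, and the crucial step is exactly the one the paper uses: the $\leq\mu$ condition bounds the order of the poles at $v+p$, so $L^{--,\leq\mu}\mathcal{G}$ lands inside one finite-type affine piece $L^{--,\leq N(\mu)}\mathcal{G}$ of the Ind-scheme $L^{--}\mathcal{G}$. The paper stops right there: since $L^{--,\leq\mu}\mathcal{G} \hookrightarrow L^{--}\mathcal{G}$ is a closed immersion (it is the base change of the closed immersion $L^{\leq\mu}\mathcal{G} \hookrightarrow L\mathcal{G}$), it is a closed subscheme of the finite-type affine $\mathcal{O}$-scheme $L^{--,\leq N(\mu)}\mathcal{G}$, and closed subschemes of finite-type schemes over a noetherian base are of finite type. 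Done.

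Your additional detour through the formally \'etale monomorphism $L^{--,\leq\mu}\mathcal{G} \to \operatorname{Gr}^{\leq\mu}$ and the open-immersion conclusion is not wrong, but it buys you nothing here: after your boundedness argument you already have finite type directly, and the ``then inherit finite type from $\operatorname{Gr}^{\leq\mu}$'' conclusion is redundant. Worth noting: this formally-\'etale-plus-locally-finite-type-implies-open-immersion reasoning is precisely the paper's proof of the \emph{next} proposition (that $A \mapsto AX$ is an open immersion $L^{--,\leq\mu}\mathcal{G} \hookrightarrow \operatorname{Gr}^{\leq\mu}$), which \emph{cites} the present lemma as an input. You have essentially merged the two proofs; logically fine, since you supply the local finite-type hypothesis independently via the Cartan/pole-bound argument rather than circularly, but it obscures which statement is established where and does more work than needed.
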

\begin{proof}
    With respect to an embedding \(\widehat{G}\) into affine space over \(\mathbb{A}^{1}_{\mathcal{O}}\), 
    elements of \(L^{--, \leq \mu}\mathcal{G}(R)\) (for some \(\mathcal{O}\)-algebra \(R\)) is determined by its ``entries'' corresponding to the coordinates of affine space, 
    each being an element of \(R\left[ \frac{1}{v+p}\right]\). In fact, the coefficients of these entries gives the Ind-scheme structure of \(L^{--} \mathcal{G}\). 
    The \(\leq \mu\) conditon implies that the poles will be bounded, i. e. that the coefficients of \((v+p)^{-n}\) will be zero for \(n \gg 0\). 
    Hence there are finitely many coefficients to account for, so \(L^{--,\leq \mu}\mathcal{G}\) is a scheme of finite type over \(\mathcal{O}\). 
\end{proof}

\begin{proposition}
    Let \(X \in L^{\leq \mu}\mathcal{G}(\mathcal{O})\). 
    The map 
    \begin{align*}
        L^{--, \leq \mu}\mathcal{G} & \hookrightarrow \operatorname{Gr}^{\leq \mu} \\
        A & \mapsto A X 
    \end{align*}
    is an open immersion. 
\end{proposition}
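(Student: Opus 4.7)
The strategy is to upgrade Proposition~\cref{res:formally-etale-monomorphism} --- which says that the multiplication map $m : L^+\mathcal{G} \times L^{--}\mathcal{G} \hookrightarrow L\mathcal{G}$ is a formally étale monomorphism on noetherian $\mathcal{O}$-algebras --- to an honest open immersion by restricting to bounded pieces and translating by $X$.

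First, since $X \in L\mathcal{G}(\mathcal{O})$ is a fixed $\mathcal{O}$-point, right translation $Y \mapsto YX$ is an automorphism of the Ind-scheme $L\mathcal{G}$ over $\mathcal{O}$; as it commutes with the left $L^+\mathcal{G}$-action, it descends to an automorphism of $\operatorname{Gr}$ sending $[Y]$ to $[YX]$. Composing the formally étale monomorphism of Proposition~\cref{res:formally-etale-monomorphism} with right translation by $X$ and then quotienting by the free left $L^+\mathcal{G}$-action, I obtain a map
\[
    L^{--}\mathcal{G} \longrightarrow \operatorname{Gr}, \qquad A \mapsto [AX],
\]
which remains a formally étale monomorphism on noetherian $\mathcal{O}$-algebras. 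Injectivity on $R$-points uses Lemma~\cref{res:negative-loop-group-intersection} (namely $L^+\mathcal{G}(R) \cap L^{--}\mathcal{G}(R) = \{1\}$ on noetherian $R$), and the formally étale property is preserved by the free torsor quotient.

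Second, I pass to the bounded pieces. The pullback definition of $L^{--,\leq \mu}\mathcal{G}$ ensures that the restricted map $\phi_X : L^{--,\leq \mu}\mathcal{G} \to \operatorname{Gr}^{\leq \mu}$ is well-defined. By Lemma~\cref{res:negative-loop-group-is-of-finite-type}, the source is a noetherian scheme of finite type over $\mathcal{O}$, and $\operatorname{Gr}^{\leq \mu}$ is a projective $\mathcal{O}$-scheme. Hence $\phi_X$ is a formally étale monomorphism between noetherian schemes that is locally of finite presentation, so it is étale and a monomorphism; an étale monomorphism is automatically an open immersion by \cite[\href{https://stacks.math.columbia.edu/tag/025G}{Lemma 025G}]{stacks-project}.

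The main obstacle is justifying, at the Ind-scheme level in step one, that the $L^+\mathcal{G}$-torsor $L\mathcal{G} \to \operatorname{Gr}$ effectively splits over the subfunctor $L^{--}\mathcal{G} \cdot X \subset L\mathcal{G}$, so that the formally étale monomorphism property of $m$ passes cleanly to the quotient. This uses both Proposition~\cref{res:formally-etale-monomorphism} (which controls the image of $L^+\mathcal{G} \times L^{--}\mathcal{G}$ in $L\mathcal{G}$) and Lemma~\cref{res:negative-loop-group-intersection} (which ensures that the fibres of $L^{--}\mathcal{G} \cdot X \to \operatorname{Gr}$ are trivial on noetherian test rings).
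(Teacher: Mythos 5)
Your proof is correct and follows essentially the same route as the paper's: invoke \cref{res:formally-etale-monomorphism} to get a formally étale monomorphism on noetherian test rings, combine with the finite type statement from \cref{res:negative-loop-group-is-of-finite-type} to upgrade to étale, and conclude via \cite[\href{https://stacks.math.columbia.edu/tag/025G}{Theorem 025G}]{stacks-project}. The only difference is expository: you spell out the descent from $L^+\mathcal{G}\times L^{--}\mathcal{G}\to L\mathcal{G}$ to $L^{--}\mathcal{G}\to\operatorname{Gr}$ and the role of \cref{res:negative-loop-group-intersection} in the monomorphism step, which the paper leaves implicit.
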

\begin{proof}
    It follows from \cref{res:formally-etale-monomorphism} that the map is a formally étale monomorphism on noetherian \(\mathcal{O}\)-algebras, and it is locally of finite type by \cref{res:negative-loop-group-is-of-finite-type}. 
    By \cite[\href{https://stacks.math.columbia.edu/tag/02HX}{Lemma 02HX}]{stacks-project} it follows that the map is smooth, hence an étale monomorphism, hence an open immersion by \cite[\href{https://stacks.math.columbia.edu/tag/025G}{Theorem 025G}]{stacks-project}. 
\end{proof}
Given \(z \in L^{\leq \mu}\mathcal{G}(\mathcal{O})\), we define \(U(z)\) as the image of the map \(L^{--,\leq \mu}\mathcal{G} \hookrightarrow \operatorname{Gr}^{\leq \mu}\), \(A \mapsto Az\). 
By the above proposition, \(U(z) \subset \operatorname{Gr}^{\leq \mu}\) is Zariski open. 

We now assume that \(p \nmid \pi_{1}(\widehat{G})\). 
Recall that \cref{res:special-fiber-of-pappas-zhu-local-model} identifies the special fiber of \(\operatorname{Gr}^{\leq \mu}\) in terms of the admissible set of \(\mu\), which is in particular 
a finite subset of the Iwahori--Weyl group of \(\widehat{G}\) over \(\overline{\mathbb{F}}\llp v \rrp\).
Suppose these elements are enumerated by \(z_{1}, \dots, z_{n}\), and we choose representatives in \(L^{\leq \mu}\mathcal{G}(\mathcal{O})\) representing these elements, denoted by the same symbols. 
Then \(U(z_{1})_{\mathbb{F}}, \dots, U(z_{n})_{\mathbb{F}}\) cover \(\operatorname{Gr}_{\mathbb{F}}^{\leq \mu}\), and by properness of \(\operatorname{Gr}^{\leq \mu}\) over \(\mathcal{O}\)
it follows that \(U(z_{1}), \dots, U(z_{n})\) cover \(\operatorname{Gr}^{\leq \mu}\).
\begin{corollary}
    The \(L^+\mathcal{G}\)-torsor \(L^{\leq \mu}\mathcal{G} \twoheadrightarrow \operatorname{Gr}^{\leq \mu}\) splits Zariski locally on \(\operatorname{Gr}^{\leq \mu}\). 
\end{corollary}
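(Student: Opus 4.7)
The plan is to trivialize the $L^+\mathcal{G}$-torsor over the open cover $\{U(z_i)\}_{i=1,\ldots,n}$ of $\operatorname{Gr}^{\leq\mu}$ produced immediately before the corollary. Recall that for each $z_i \in L^{\leq\mu}\mathcal{G}(\mathcal{O})$ lifting a representative in the admissible set of $\mu$, the proposition above furnishes an open immersion
\[
\iota_i \colon L^{--,\leq\mu}\mathcal{G} \hookrightarrow \operatorname{Gr}^{\leq\mu}, \quad A \mapsto [Az_i],
\]
with image $U(z_i)$, and the assumption $p\nmid \pi_1(\widehat{G})$ together with properness of $\operatorname{Gr}^{\leq\mu}$ over $\mathcal{O}$ guarantees that these opens cover $\operatorname{Gr}^{\leq\mu}$.

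The next step is to produce a section $\sigma_i \colon U(z_i) \to L^{\leq\mu}\mathcal{G}$ of the torsor projection over each $U(z_i)$. The natural candidate is $\sigma_i := \iota_i^{-1}$ followed by the lifted multiplication map
\[
L^{--,\leq\mu}\mathcal{G} \longrightarrow L\mathcal{G}, \quad A \mapsto Az_i.
\]
To verify this lands in $L^{\leq\mu}\mathcal{G}$, recall from \cref{sec:bounded-loop-group} that $L^{\leq\mu}\mathcal{G}$ is stable under the left translation action of $L^+\mathcal{G}$, so the fiber of $L^{\leq\mu}\mathcal{G} \to \operatorname{Gr}^{\leq\mu}$ over any given point coincides with the full $L^+\mathcal{G}$-orbit (via left translation) of any representative in $L\mathcal{G}$. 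Since $[Az_i] \in U(z_i) \subset \operatorname{Gr}^{\leq\mu}$, the entire $L^+\mathcal{G}$-orbit of $Az_i$ lies in $L^{\leq\mu}\mathcal{G}$; in particular, $Az_i$ itself does. Hence $\sigma_i$ is a well-defined morphism $U(z_i) \to L^{\leq\mu}\mathcal{G}$, and by construction its composition with the quotient $L^{\leq\mu}\mathcal{G} \twoheadrightarrow \operatorname{Gr}^{\leq\mu}$ is the inclusion $U(z_i) \hookrightarrow \operatorname{Gr}^{\leq\mu}$, so it is a section of the torsor over $U(z_i)$.

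There is no real obstacle here: all the nontrivial content, namely the open immersion property of translates of the negative loop group and the covering by admissible representatives, has already been established. The only careful point is the verification that $Az_i$ lies in $L^{\leq\mu}\mathcal{G}$ rather than merely in $L\mathcal{G}$, which is an immediate consequence of the left $L^+\mathcal{G}$-stability of $L^{\leq\mu}\mathcal{G}$.
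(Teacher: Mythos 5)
Your proof is correct and follows exactly the paper's approach: the paper's proof is the one-liner ``the torsor splits over each $U(z_i)$ by construction of these charts,'' and you have simply spelled out what this means — that $\iota_i^{-1}$ followed by right multiplication by $z_i$ provides the section. The only point worth noting is that your verification that $Az_i \in L^{\leq\mu}\mathcal{G}$ can be stated even more directly: $L^{\leq\mu}\mathcal{G}$ is by definition the fiber product $L\mathcal{G} \times_{\operatorname{Gr}} \operatorname{Gr}^{\leq\mu}$, so an element $Y \in L\mathcal{G}(R)$ lies in $L^{\leq\mu}\mathcal{G}(R)$ precisely when $[Y] \in \operatorname{Gr}^{\leq\mu}(R)$, and $[Az_i] \in U(z_i) \subset \operatorname{Gr}^{\leq\mu}$ holds by construction of the chart; your route through $L^+\mathcal{G}$-orbit stability is a slightly indirect but equally valid way of saying the same thing.
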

\begin{proof}
    The torsor splits over each \(U(z_{i})\) by construction of these charts. 
\end{proof}

\subsection{The stack \(\mathcal{Y}\)}
\label{sec:twisted-affine-grassmannian}
Consider the right action of \(L^+ \mathcal{G}^{\wedge \varpi}\) on \(L \mathcal{G}^{\wedge \varpi}\) given by 
\[
X \star A = A^{-1} X \varphi_{c}(A), \,\,\,\, \text{ for all }X \in L \mathcal{G}^{\wedge \varpi}(R), A \in L^+ \mathcal{G}^{\wedge \varpi}(R) . 
\]
(Recall the notation \(\varphi_{c}\) from \eqref{eq:varphi-c}.)
We call this the \(c\)-twisted \(\varphi\)-conjugation action. 

We define 
\[
\mathcal{Y} : = \left[ L \mathcal{G}^{\wedge \varpi} /^{\varphi,c} L^+ \mathcal{G}^{\wedge \varpi} \right] 
\]
as an fppf quotient stack over \(\operatorname{Spf}\mathcal{O}\).
The \((\varphi,c)\) over the slash is supposed to remind us that this quotient is with respect to the \(c\)-twisted \(\varphi\)-conjugation action. 

In contrast to \(\operatorname{Gr}_{\mathcal{G}}\), 
we note that the stack \(\mathcal{Y}\) is typically a genuine stack, and as such is not representable by an Ind-scheme. 
In fact, \(\mathcal{Y}\) is not even of finite type, although it will be Ind-algebraic as a consequence of our results concerning the ``bounded'' substacks \(\mathcal{Y}^{\leq \mu}\) defined below. 

\subsubsection{Bounded substacks of \(\mathcal{Y}\)}
Recall the subscheme \(L^{\leq \mu} \mathcal{G} \subset L \mathcal{G}\) from \cref{sec:bounded-loop-group}. 
Since \(L^{\leq \mu}\mathcal{G}\) is stable under both left and right translation by \(L^+\mathcal{G}\), we can define the fppf quotient stack
\begin{align*}
    \mathcal{Y}^{\leq \mu} & : = \left[ L^{\leq \mu} \mathcal{G}^{\wedge \varpi} /^{\varphi,c} L^+ \mathcal{G}^{\wedge \varpi} \right] . 
\end{align*}
We will show in \cref{res:algebraicity} that \(\mathcal{Y}^{\leq \mu}\) is a \(p\)-adic formal \emph{algebraic} stack over \(\operatorname{Spf}\mathcal{O}\), 
and in \cref{res:smooth-equivalence} we will show that it is smoothly equivalent to \(\operatorname{Gr}^{\leq \mu, \wedge \varpi}\). 

\begin{remark}
	Via the relation to Galois representations in \eqref{res:presentation-of-moduli-stack-of-BKL-parameters}, the substack \(\mathcal{Y}^{\leq \mu} \subset \mathcal{Y}\) classifies 
	Breuil--Kisin \((\Gamma,\widehat{G})\)-torsors with Hodge--Tate weights bounded by \(\mu\). 
\end{remark} 

\subsubsection{Covers of \(\mathcal{Y}\)}
For \(f \geq 0\) a concave function, we consider the fppf quotient stack over \(\mathcal{O}\)
\begin{align*}
    \mathcal{Y}_{f} & := \left[ L \mathcal{G}_{x}^{\wedge \varpi} /^{\varphi,c} L^+ \mathcal{G}_{x,f}^{\wedge \varpi} \right] . 
\end{align*}
Whenever \(L^+ \mathcal{G}_{x,f}\) is normal in \(L^+\mathcal{G}_{x}\), the natural map \(\mathcal{Y}_{f} \to \mathcal{Y} = \mathcal{Y}_{0}\) exhibits \(\mathcal{Y}_{f}\) as an \(L^+ \mathcal{G}_{x} / L^+\mathcal{G}_{x,f}\)-torsor over \(\mathcal{Y}\).
Again, we emphasize that \(L^+ \mathcal{G}_{x} / L^+\mathcal{G}_{x,f} = L^n \mathcal{G}_{x}\) when \(f = n \in \mathbb{Z}_{\geq 1}\) and \(L^+ \mathcal{G}_{x} / L^+\mathcal{G}_{x,f} = \overline{\mathcal{T}}\) when \(f = 0^+\) and \(x\) is \(0\)-generic, 
as explained in \cref{sec:quotient-loop-groups}.

\subsection{Moduli interpretation}
\label{sec:moduli-interpretation}
Throughout this section let \(f\geq 0\) denote a concave function. 
Recall from \cref{res:pappas-zhu-concave-function} that we have smooth affine group schemes \(\mathcal{G}_{x,f}\) over \(\mathbb{A}^1_{\mathcal{O}}\). 

\subsubsection{Moduli interpretation of the affine Grassmannian}
The stack \(\operatorname{Gr}_{f}\) has the following moduli interpretation. 
\begin{proposition}
    \label{res:moduli-affine-grassmannian}
	For any \(\mathcal{O}\)-algebra \(R\), a morphism \(\operatorname{Spec} R \to \operatorname{Gr}_f\) classifies a pair \((\mathcal{P},\beta)\), 
	where \(\mathcal{P}\) is a \(\mathcal{G}_{x,f}\)-torsor on \(R\llb v+p \rrb\), and \(\beta\) is as follows. 
	Let \(\mathcal{P}' = \mathcal{P} \times^{\mathcal{G}_{x,f}} \mathcal{G}_{x}\) denote the associated \(\mathcal{G}_{x}\)-torsor. 
	Then \(\beta : \mathcal{E}^0 \dashrightarrow \mathcal{P}'\) is an isomorphism of \(\mathcal{G}_{x}\)-torsors over \(R \llp v+p \rrp\). 
\end{proposition}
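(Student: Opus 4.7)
The plan is to exhibit a natural equivalence between $\operatorname{Gr}_f(R)$, viewed as the fppf sheafification of $R \mapsto \mathcal{G}_{x,f}(R\llb v+p\rrb) \backslash \mathcal{G}_{x}(R\llp v+p\rrp)$, and the groupoid $\mathcal{M}(R)$ of pairs $(\mathcal{P},\beta)$ described in the statement. Both assignments will be fppf stacks, so it suffices to construct a fully faithful map at the prestack level and verify essential surjectivity after an fppf base change.

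First I would define the functor presheaf-theoretically. Given $X \in L\mathcal{G}(R) = \mathcal{G}_{x}(R\llp v+p\rrp)$, assign to it the pair $(\mathcal{E}^{0}_{\mathcal{G}_{x,f}},\beta_{X})$, where $\mathcal{E}^{0}_{\mathcal{G}_{x,f}}$ is the trivial $\mathcal{G}_{x,f}$-torsor on $R\llb v+p\rrb$ and $\beta_{X} : \mathcal{E}^{0} \isoto \mathcal{E}^{0}_{\mathcal{G}_{x,f}} \times^{\mathcal{G}_{x,f}} \mathcal{G}_{x}\big|_{R\llp v+p\rrp} = \mathcal{E}^{0}_{R\llp v+p\rrp}$ is the trivialization corresponding to the automorphism of the trivial $\mathcal{G}_{x}$-torsor given by left multiplication by $X$. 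For $A \in L^{+}\mathcal{G}_{x,f}(R) = \mathcal{G}_{x,f}(R\llb v+p\rrb)$, the isomorphism $(\mathcal{E}^{0}_{\mathcal{G}_{x,f}},\beta_{X}) \isoto (\mathcal{E}^{0}_{\mathcal{G}_{x,f}},\beta_{AX})$ is realized by left multiplication by $A$ on the trivial torsor, whose induced automorphism on the associated $\mathcal{G}_{x}$-torsor restricted to $R\llp v+p\rrp$ precisely intertwines $\beta_{X}$ and $\beta_{AX}$. This gives a well-defined prestack map.

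Next I would check fully faithfulness. Morphisms $(\mathcal{E}^{0}_{\mathcal{G}_{x,f}},\beta_{X_{1}}) \to (\mathcal{E}^{0}_{\mathcal{G}_{x,f}},\beta_{X_{2}})$ in $\mathcal{M}(R)$ are exactly automorphisms of the trivial $\mathcal{G}_{x,f}$-torsor, i.e., elements $A \in \mathcal{G}_{x,f}(R\llb v+p\rrb)$, subject to $\beta_{X_{2}} = A \circ \beta_{X_{1}}$ on $R\llp v+p\rrp$. Unwinding the definition of $\beta_{X_i}$ gives exactly $X_{2} = AX_{1}$, matching the $L^{+}\mathcal{G}_{x,f}$-action defining $\operatorname{Gr}_{f}$.

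For essential surjectivity, given $(\mathcal{P},\beta) \in \mathcal{M}(R)$ I need to fppf-locally on $R$ trivialize $\mathcal{P}$, after which $\beta$ produces the desired $X \in \mathcal{G}_{x}(R\llp v+p\rrp)$. This reduces to the key claim: every $\mathcal{G}_{x,f}$-torsor on $R\llb v+p\rrb$ is fppf-locally trivial on $R$. The strategy is to pull back along the section $v = -p : \operatorname{Spec} R \hookrightarrow \operatorname{Spec} R\llb v+p\rrb$ to obtain a torsor under $\overline{\mathcal{G}}_{x,f} := \mathcal{G}_{x,f} \times_{\mathbb{A}^{1}_{\mathcal{O}},\,v=-p}\operatorname{Spec}\mathcal{O}$, which is a smooth affine group scheme over $\operatorname{Spec}\mathcal{O}$ and hence admits a section on some fppf cover $R \to R'$. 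Smoothness of $\mathcal{G}_{x,f}$ then lifts this section inductively through each quotient $R'\llb v+p\rrb/(v+p)^{n}$, and $(v+p)$-adic completeness of $R'\llb v+p\rrb$ assembles these lifts into a global section of $\mathcal{P}_{R'\llb v+p\rrb}$. The main obstacle is this fppf-local triviality step, since one must also verify that $\operatorname{Spec} R'\llb v+p\rrb \to \operatorname{Spec} R\llb v+p\rrb$ is indeed an fppf cover (which holds because $R \to R'$ is fppf and $\mathcal{G}_{x,f}$-torsors descend along it); once granted, the equivalence $\operatorname{Gr}_{f}(R) \simeq \mathcal{M}(R)$ follows immediately from the prestack construction above.
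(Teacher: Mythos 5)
Your proof is correct and follows essentially the same approach as the paper's. The paper introduces an auxiliary moduli $\widetilde{\operatorname{Gr}}$ of triples $(\mathcal{P},\beta,\alpha)$ (with $\alpha$ a trivialization of $\mathcal{P}$), identifies $L\mathcal{G} \isoto \widetilde{\operatorname{Gr}}$, and observes that $\widetilde{\operatorname{Gr}}\to\operatorname{Gr}'$ is an $L^+\mathcal{G}_{x,f}$-torsor once surjectivity (étale-locally on $R$) is known; this is an equivalent packaging of your prestack map plus fully-faithfulness plus essential surjectivity. The paper's key surjectivity step is stated tersely as "Using smoothness of $\mathcal{G}_{x,f}$...," whereas you spell it out: restrict the torsor along $v=-p$ to get an $\overline{\mathcal{G}}_{x,f}$-torsor over $R$, trivialize it on a cover $R\to R'$, then lift the trivialization through the $(v+p)$-adic filtration by smoothness and assemble using completeness of $R'\llb v+p\rrb$ and the fact that $\mathcal{P}$ is finitely presented affine by descent. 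This fleshing out is correct and is presumably what the paper has in mind. One small caveat worth keeping in mind (implicit in both proofs): one must know that the assignment $R\mapsto R\llb v+p\rrb$ carries fppf (or étale) covers to fppf covers, so that $\mathcal{M}$ is genuinely an fppf sheaf and the torsor can be pulled back and descended along $R\llb v+p\rrb\to R'\llb v+p\rrb$; this is true but does not follow merely from "$R\to R'$ is fppf" without a short argument, and your parenthetical remark on this point is a bit circular as phrased.
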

\begin{proof}
    Let \(\operatorname{Gr}'\) denote the moduli of pairs \((\mathcal{P},\beta)\) as above and let \(\widetilde{\operatorname{Gr}}\) denote the moduli of triples \((\mathcal{P},\beta, \alpha)\), where \(\mathcal{P},\beta\) are as above and \(\alpha : \mathcal{E}^0 \isoto \mathcal{P}\)
	is a trivialization of the \(\mathcal{G}_{x,f}\)-torsor over \(R \llb v + p \rrb\). 
	Consider the map \(L \mathcal{G}_{x} \isoto \widetilde{\operatorname{Gr}}\) with \(A \mapsto (\mathcal{E}^0, L_A, 1)\), where \(L_{A}\) denotes left translation by \(A\). 
	This is indeed an isomorphism, since an inverse is given by \((\mathcal{P},\beta, \alpha) \mapsto \alpha^{-1} \circ \beta\).
	The left translation action of \(L^+\mathcal{G}_{x,f}\) on \(L \mathcal{G}_{x}\) corresponds to the action \(B \cdot (\mathcal{P},\beta,\alpha) = (\mathcal{P},\beta, \alpha \circ L_{B}^{-1})\). 
	With respect to the obvious map \(\widetilde{\operatorname{Gr}} \to \operatorname{Gr}'\), \((\mathcal{P},\beta,\alpha)\mapsto (\mathcal{P},\beta)\), 
    the group \(L^+\mathcal{G}_{x,f}\) acts freely and transitively on fibers. 
    Using smoothness of \(\mathcal{G}_{x,f}\), there exists for any \(\mathcal{O}\)-algebra \(R\) and \(\mathcal{G}_{x,f}\)-torsor \(\mathcal{P}\) over \(\mathfrak{S}_{R}\)
    an étale covering \(R \to R'\) such that \(\mathcal{P}\) becomes trivializable after base change to \(\mathfrak{S}_{R'}\), which implies that \(\widetilde{\operatorname{Gr}} \to \operatorname{Gr}'\)
    is an epimorphism.
    Hence \(\left[ L^+ \mathcal{G}_{x,f} \backslash L \mathcal{G} \right] \cong \left[ L^+ \mathcal{G}_{x,f} \backslash \widetilde{\operatorname{Gr}} \right] \cong \operatorname{Gr}'\). 
\end{proof}

\subsubsection{Moduli interpretation of \(\mathcal{Y}\)}
\label{sec:moduli-interpretation-of-Y}
Let \(R\) be a \(p\)-adically complete \(\mathcal{O}\)-algebra, and recall that \(\mathfrak{S}_{L,R} = R^\mathcal{J}\llb v \rrb = R^\mathcal{J}\llb v + p \rrb\) and \(\mathfrak{S}_{R} = R\llb v \rrb = R\llb v + p \rrb\). 
Let us first define a map of groupoids 
\[
    \varphi_{c}^* : \lbrace (\Gamma,\widetilde{\mathcal{G}}_{x}) \text{-torsors over }\mathfrak{S}_{L,R} \rbrace \to \lbrace (\Gamma,\widetilde{\mathcal{G}}_{x}) \text{-torsors over }\mathfrak{S}_{L,R} \rbrace
\]
which is natural in \(R\). 
Namely, given a \((\Gamma,\widetilde{\mathcal{G}}_{x})\)-torsor \(\widetilde{\mathcal{P}}\) over \(\mathfrak{S}_{L,R}\), pulling back along the \(\Gamma\)-equivariant map \(\varphi : \operatorname{Spec}\mathfrak{S}_{L,R} \to \operatorname{Spec}\mathfrak{S}_{L,R}\) yields a \((\Gamma,\varphi^* \widetilde{\mathcal{G}}_{x})\)-torsor \(\varphi^* \widetilde{\mathcal{P}}_{x}\). 
We can identify \(\varphi^* \widetilde{\mathcal{G}}_{x} = \widetilde{\mathcal{G}}_{\varphi(x)}\) by \eqref{eq:frobenius-pullback-of-G-tilde}, and recalling the choice of \(c \in G^*(\mathcal{O}[v^{\pm 1}])\) with \(c \cdot \varphi(x) = x\), we then have a \(\Gamma\)-equivariant isomorphism \(\operatorname{Ad}_{c} : \varphi^* \widetilde{\mathcal{G}}_{x} = \widetilde{\mathcal{G}}_{\varphi(x)} \isoto \widetilde{\mathcal{G}}_{x}\) of group schemes over \(\mathfrak{S}_{L,R}\). 
We then define \(\varphi_{c}^* \widetilde{\mathcal{P}} = \varphi^* \widetilde{\mathcal{P}} \times^{\widetilde{\mathcal{G}}_{\varphi(x)}, \operatorname{Ad}_{c}} \widetilde{\mathcal{G}}_{x}\) as the associated \(\widetilde{\mathcal{G}}_{x}\)-torsor.

Note that if the type of a \((\Gamma,\widetilde{\mathcal{G}}_{x})\)-torsor \(\widetilde{\mathcal{P}}\) over \(\mathfrak{S}_{L,R}\) is trivial, then the type of \(\varphi_{c}^* \widetilde{\mathcal{P}}\) is also trivial. 
By \cref{res:fundamental-result-about-equivariant-torsors} it follows that we have a commutative diagram of groupoids 
\begin{equation}
    \label{eq:diagram-defining-varphi-c}
    \begin{tikzcd}
        \lbrace (\Gamma,\widetilde{\mathcal{G}}_{x}) \text{-torsors over }\mathfrak{S}_{L,R}\text{ of trivial type} \rbrace \arrow[r,"\varphi_{c}^*"] \arrow[d,"\pi_{*}^\Gamma","\sim"'] & \lbrace (\Gamma,\widetilde{\mathcal{G}}_{x}) \text{-torsors over }\mathfrak{S}_{L,R}\text{ of trivial type} \rbrace \arrow[d,"\pi_{*}^\Gamma","\sim"'] \\
        \lbrace \mathcal{G}_{x} \text{-torsors over }\mathfrak{S}_{R} \rbrace \arrow[r,dashed,"\varphi_{c}^*"] & \lbrace \mathcal{G}_{x} \text{-torsors over }\mathfrak{S}_{R} \rbrace ,
    \end{tikzcd}
\end{equation}
where the vertical maps are equivalences and the horizontal dashed map by definition is one making the diagram commute.
This is moreover natural in \(R\). 

\begin{proposition}
    \label{res:moduli-interpretation-of-Y}
    The following moduli problems in \(p\)-adically complete \(\mathcal{O}\)-algebras \(R\) are equivalent:
    \begin{enumerate}
        \item Pairs \((\widetilde{\mathcal{P}},\widetilde{\phi})\), where \(\widetilde{\mathcal{P}}\) is a \((\Gamma,\widetilde{\mathcal{G}}_{x,f})\)-torsor over \(\operatorname{Spf}\mathfrak{S}_{L,R}\) of trivial type, \(\widetilde{\mathcal{P}}' = \widetilde{\mathcal{P}} \times^{\widetilde{\mathcal{G}}_{x,f}} \widetilde{\mathcal{G}}_{x}\) is the associated \(\widetilde{\mathcal{G}}_{x}\)-torsor, and \(\widetilde{\phi} : \varphi^*_{c}\widetilde{\mathcal{P}}' \dashrightarrow \widetilde{\mathcal{P}}'\) is a morphism of \((\Gamma,\widetilde{\mathcal{G}}_{x})\)-torsors, where the dashed arrow indicates that this morphism is defined after base change to \(\operatorname{Spf} \mathfrak{S}_{L,R} \left[ (u^e + p)^{-1} \right]\). (Here, \(\operatorname{Spf}\) is formed with respect to the ideals generated by \(\varpi\).)
        \item Pairs \((\mathcal{P},\phi)\), where \(\mathcal{P}\) is a \(\mathcal{G}_{x,f}\)-torsor over \(\operatorname{Spf}\mathfrak{S}_{R}\), \(\mathcal{P}' = \mathcal{P} \times^{\mathcal{G}_{x,f}} \mathcal{G}_{x}\) is the associated \(\mathcal{G}_{x}\)-torsor, and \(\phi : \varphi^*_{c} \mathcal{P}' \dashrightarrow \mathcal{P}'\) is a morphism of \(\mathcal{G}_{x}\)-torsors, where the dashed arrow indicates that this morphism is only defined after base change to \(\operatorname{Spf}\mathfrak{S}_{R}\left[ (v+p)^{-1} \right]\). 
    \end{enumerate}
    Moreover, \(\mathcal{Y}_{f}\) naturally represents the moduli problem (2) (hence also (1)). 
\end{proposition}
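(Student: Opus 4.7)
The plan is to establish the equivalence of moduli problems (1) and (2) by invoking the equivalence of \cref{res:G-as-invariant-pushforward-corollary} (specifically the analogue for $\mathfrak{S}_{L,R}$ in place of $\widetilde{\mathbb{A}}^1_R$ noted in the remark there) applied to both the $\widetilde{\mathcal{G}}_{x,f}$-torsor layer and the $\widetilde{\mathcal{G}}_x$-torsor layer, then enhance the resulting equivalence with the Frobenius datum through the diagram \eqref{eq:diagram-defining-varphi-c}. Concretely, given $(\widetilde{\mathcal{P}},\widetilde{\phi})$ as in (1), define $\mathcal{P} := \pi_*^\Gamma \widetilde{\mathcal{P}}$ as a $\mathcal{G}_{x,f}$-torsor on $\operatorname{Spf}\mathfrak{S}_R$, and let $\mathcal{P}'$ be its associated $\mathcal{G}_x$-torsor. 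Because invariant pushforward is functorial and compatible with associated-bundle constructions, we have a natural identification $\pi_*^\Gamma(\widetilde{\mathcal{P}}') \cong \mathcal{P}'$, and the dashed map $\widetilde{\phi}$ descends along the bottom row of \eqref{eq:diagram-defining-varphi-c} to the desired $\phi : \varphi_c^*\mathcal{P}' \dashrightarrow \mathcal{P}'$. In the reverse direction, we apply the quasi-inverse of $\pi_*^\Gamma$ (base change to $\widetilde{\mathbb{A}}^1_R$ followed by twisting with ${_\tau}\mathcal{E}^0$, as packaged in \cref{res:twisted-fundamental-result-about-equivariant-torsors}) and again use \eqref{eq:diagram-defining-varphi-c} to transport the Frobenius structure.

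Next I would show that $\mathcal{Y}_f$ represents the moduli problem (2), following the template of \cref{res:moduli-affine-grassmannian}. Let $R$ be an $\mathcal{O}$-algebra with $\varpi^a R = 0$ for some $a \geq 1$, so that $\mathfrak{S}_R[(v+p)^{-1}] = R\llp v+p\rrp$ and $L\mathcal{G}(R) = \mathcal{G}_x(R\llp v+p\rrp)$ by \cref{res:inverting-different-variables} and \cref{rem:loop-group-at-0-or-p}. Let $\widetilde{\mathcal{Y}}_f'$ denote the moduli of triples $(\mathcal{P},\phi,\alpha)$ where $(\mathcal{P},\phi)$ is as in (2) and $\alpha : \mathcal{E}^0 \isoto \mathcal{P}$ is a trivialization of the $\mathcal{G}_{x,f}$-torsor over $\mathfrak{S}_R$. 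The trivialization $\alpha$ induces a trivialization of the associated $\mathcal{G}_x$-torsor $\mathcal{P}'$, and via the associated-bundle construction defining $\varphi_c^*$ in \cref{sec:moduli-interpretation-of-Y}, a trivialization of $\varphi_c^*\mathcal{P}'$ as well. Composing $\phi$ with these trivializations defines an element $X \in \mathcal{G}_x(R\llp v+p\rrp) = L\mathcal{G}(R)$, and this yields an isomorphism $L\mathcal{G} \isoto \widetilde{\mathcal{Y}}_f'$.

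Finally, I would verify the group-theoretic compatibility: changing the trivialization $\alpha$ by an element $A \in L^+\mathcal{G}_{x,f}(R)$ changes the associated trivialization of $\varphi_c^*\mathcal{P}'$ by $\varphi_c(A)$ (this is where the associated-bundle definition of $\varphi_c^*$ and the identification $\varphi^*\widetilde{\mathcal{G}}_x \cong \widetilde{\mathcal{G}}_{\varphi(x)}$, conjugated to $\widetilde{\mathcal{G}}_x$ by $c$, enters directly), so that $X$ transforms to $A^{-1} X \varphi_c(A)$. This is exactly the $c$-twisted $\varphi$-conjugation action defining $\mathcal{Y}_f$. Smoothness of $\mathcal{G}_{x,f}$ ensures the forgetful map $\widetilde{\mathcal{Y}}_f' \to $ (moduli problem (2)) is an epimorphism in the fppf topology, yielding $\mathcal{Y}_f \cong [L\mathcal{G}/^{\varphi,c}L^+\mathcal{G}_{x,f}]$ over such $R$. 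Passing to $p$-adic formal algebras then gives the result over $\operatorname{Spf}\mathcal{O}$.

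The main obstacle I anticipate is the careful unraveling of how $\varphi_c^*$ interacts with trivializations: one must check that the isomorphism $\operatorname{Ad}_c : \varphi^*\widetilde{\mathcal{G}}_x = \widetilde{\mathcal{G}}_{\varphi(x)} \isoto \widetilde{\mathcal{G}}_x$ transports the pulled-back trivialization $\varphi^*\alpha$ into one whose comparison with $\alpha$ precisely produces the conjugation $A \mapsto c\varphi(A)c^{-1} = \varphi_c(A)$ under change of trivialization. Once this combinatorics is pinned down, the remaining ingredients (descent for smooth torsors, the equivalence \eqref{eq:diagram-defining-varphi-c}, and the identifications of \cref{res:inverting-different-variables}) assemble routinely.
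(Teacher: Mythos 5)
Your proposal is correct and follows essentially the same route as the paper's proof: the equivalence of (1) and (2) comes from the commutativity of \eqref{eq:diagram-defining-varphi-c} together with the invariant-pushforward equivalence, and representability by $\mathcal{Y}_f$ follows the template of \cref{res:moduli-affine-grassmannian}, with the decisive ingredient being exactly the compatibility you isolate, namely that a change of trivialization by $A$ on the $\mathcal{G}_{x,f}$-torsor side corresponds to the $c$-twisted $\varphi$-conjugation $X\mapsto A^{-1}X\varphi_c(A)$ on $L\mathcal{G}$ (the paper records this as $\varphi_c^*(L_A)=L_{\varphi_c(A)}$).
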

\begin{proof}
    The equivalence of the moduli problems (1) and (2) is clear from the construction of \(\varphi_{c}^*\), i. e. commutativity of \eqref{eq:diagram-defining-varphi-c}.
    To see that the moduli problem (2) is represented by \(\mathcal{Y}\), one may argue as in the proof of \cref{res:moduli-affine-grassmannian}. 
    We mention that if \(L_{A} : \mathcal{E}^0 \isoto \mathcal{E}^0\) denotes an automorphism of the trivial \(\mathcal{G}_{x}\)-torsor over \(\operatorname{Spf}\mathfrak{S}_{R}\) given by left translation by \(A \in \mathcal{G}_{x}(\mathfrak{S}_{R}) = L^+ \mathcal{G}_{x}(R)\), 
    then \(\varphi_{c}^*(L_{A}) = L_{\varphi_{c}(A)} : \mathcal{E}^0 \to \mathcal{E}^0\) is given by left translation by the element \(\varphi_{c}(A) \in L^+_{0}\mathcal{G}_{x}(R) = L^+ \mathcal{G}_{x}(R)\) (which was defined in \cref{sec:frobenius-endomorphism-of-loop-groups}).
\end{proof}

When \(f = 0\), we also have the following result, which provides the interpretation of \(\mathcal{Y}\) as a moduli stack of Breuil--Kisin \((\Gamma,\widehat{G})\)-torsors of type \(\tau\). 
\begin{proposition}
    \label{res:moduli-interpretation-as-BK-modules}
    The fppf stack \(\mathcal{Y}\) equivalent to the moduli of pairs \((\widetilde{\mathcal{P}},\widetilde{\phi})\), where \(\widetilde{\mathcal{P}}\) is a \((\Gamma,\widehat{G})\)-torsor over \(\operatorname{Spf}\mathfrak{S}_{L,R}\) of type \(\tau\), and \(\phi : \varphi^* \widetilde{\mathcal{P}} \dashrightarrow \widetilde{\mathcal{P}}\) is a morphism of \((\Gamma,\widehat{G})\)-torsors, where the dashed arrow indicates that this morphism is only defined after base change to \(\operatorname{Spf}\mathfrak{S}_{L,R}\left[ (u^e + p)^{-1} \right]\).
    (Consequently, this moduli problem is equivalent to those of \cref{res:moduli-interpretation-of-Y} in the case \(f = 0\).)
\end{proposition}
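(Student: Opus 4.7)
The plan is to assemble the equivalence claimed in \cref{res:moduli-interpretation-as-BK-modules} by combining two inputs already in hand: Proposition \ref{res:moduli-interpretation-of-Y} (in the case $f=0$), which identifies $\mathcal{Y}$ with pairs $(\widetilde{\mathcal{P}}_0,\widetilde{\phi}_0)$ where $\widetilde{\mathcal{P}}_0$ is a $(\Gamma,\widetilde{\mathcal{G}}_x)$-torsor of trivial type equipped with a $c$-twisted Frobenius $\widetilde{\phi}_0 : \varphi_c^*\widetilde{\mathcal{P}}_0 \dashrightarrow \widetilde{\mathcal{P}}_0$; and the $\mathfrak{S}_{L,R}$-variant of \cref{res:G-as-invariant-pushforward-corollary}, which gives a functorial equivalence between $(\Gamma,\widehat{G})$-torsors of type $\tau$ and $(\Gamma,\widetilde{\mathcal{G}}_x)$-torsors of trivial type, implemented (as noted just after the corollary) by the associated-bundle construction along the $\Gamma$-equivariant isomorphism $\operatorname{Ad}_n : {_\tau\widehat{G}} \isoto \widetilde{\mathcal{G}}_x$ of \eqref{eq:tau-twisted-equivariant-group-scheme}. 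Once these are in place, the only work remaining is to show that this equivalence exchanges ordinary Frobenius data on $\widehat{G}$-torsors with $c$-twisted Frobenius data on $\widetilde{\mathcal{G}}_x$-torsors.

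Concretely, write $\widetilde{\mathcal{P}}_0 := \widetilde{\mathcal{P}} \times^{{_\tau\widehat{G}},\operatorname{Ad}_n} \widetilde{\mathcal{G}}_x$. I will verify that the strict Frobenius invariance of $\tau$ implies that $\varphi^*\widetilde{\mathcal{P}}$ is again of type $\tau$ (in particular, if $g$ trivializes $\widetilde{\mathcal{P}}$ with cocycle $\tau$, then $\varphi(g)$ trivializes $\varphi^*\widetilde{\mathcal{P}}$ with cocycle $\varphi\tau = \tau$), so that both $\varphi^*\widetilde{\mathcal{P}}$ and $\widetilde{\mathcal{P}}$ lie in the same groupoid and it is meaningful to speak of a morphism between them. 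The key identity is
\[
\operatorname{Ad}_c \circ \operatorname{Ad}_{\varphi(n)} = \operatorname{Ad}_{c\,\varphi(n)} = \operatorname{Ad}_{n\varphi(n)^{-1}\varphi(n)} = \operatorname{Ad}_n,
\]
together with the facts that $\varphi^*\widetilde{\mathcal{G}}_x = \widetilde{\mathcal{G}}_{\varphi(x)}$ (by \eqref{eq:frobenius-pullback-of-G-tilde}) and that $\operatorname{Ad}_c:\widetilde{\mathcal{G}}_{\varphi(x)}\isoto\widetilde{\mathcal{G}}_x$ is $\Gamma$-equivariant because $c$ is $\Gamma$-fixed. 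Taken together, these give a canonical $\Gamma$-equivariant isomorphism
\[
\varphi_c^*\widetilde{\mathcal{P}}_0 \;=\; \varphi^*\widetilde{\mathcal{P}}_0 \times^{\widetilde{\mathcal{G}}_{\varphi(x)},\operatorname{Ad}_c}\widetilde{\mathcal{G}}_x \;\cong\; \varphi^*\widetilde{\mathcal{P}} \times^{{_\tau\widehat{G}},\operatorname{Ad}_n}\widetilde{\mathcal{G}}_x,
\]
so that $\varphi_c^*\widetilde{\mathcal{P}}_0$ is precisely the object attached by the equivalence to $\varphi^*\widetilde{\mathcal{P}}$. Functoriality of the associated-bundle construction therefore gives a bijection between Frobenius structures $\widetilde{\phi}:\varphi^*\widetilde{\mathcal{P}}\dashrightarrow\widetilde{\mathcal{P}}$ and structures $\widetilde{\phi}_0:\varphi_c^*\widetilde{\mathcal{P}}_0\dashrightarrow\widetilde{\mathcal{P}}_0$. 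Under $\pi:\widetilde{\mathbb{A}}^1_{\mathcal{O}}\to\mathbb{A}^1_{\mathcal{O}}$ we have $\pi^*(v+p)=u^e+p$, so the locus over which these morphisms are defined matches on both sides; the $\varpi$-adic/formal setting poses no issue because every construction commutes with base change.

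The main obstacle I expect is the careful bookkeeping of $\Gamma$-equivariance in the identity $\operatorname{Ad}_c\circ\operatorname{Ad}_{\varphi(n)}=\operatorname{Ad}_n$: one must check that this identity lives in the category with the correct $\Gamma$-actions (the $\tau$-twisted one on ${_\tau\widehat{G}}$ and the diagonal ones on $\widetilde{\mathcal{G}}_{\varphi(x)}$, $\widetilde{\mathcal{G}}_x$), and this is where strict Frobenius invariance of $\tau$ (which makes $c=n\varphi(n)^{-1}$ actually $\Gamma$-fixed) is indispensable. Once this compatibility is established, the claimed moduli description follows formally.
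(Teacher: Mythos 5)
Your proposal follows the same route as the paper's first proof: compose the twisting equivalence that converts \((\Gamma,\widehat{G})\)-torsors of type \(\tau\) into \((\Gamma,\widetilde{\mathcal{G}}_x)\)-torsors of trivial type with \cref{res:moduli-interpretation-of-Y} for \(f=0\), and then check that this equivalence matches ordinary Frobenius data with \(c\)-twisted Frobenius data; the identity \(c\,\varphi(n)=n\) (strict Frobenius invariance) is the correct engine, exactly as in the paper.

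One point in your write-up does not typecheck as stated and deserves correction. You define \(\widetilde{\mathcal{P}}_0 := \widetilde{\mathcal{P}} \times^{{_\tau\widehat{G}},\operatorname{Ad}_n} \widetilde{\mathcal{G}}_x\), but the object \(\widetilde{\mathcal{P}}\) you start from is a \((\Gamma,\widehat{G})\)-torsor of type \(\tau\), not a \((\Gamma,{_\tau\widehat{G}})\)-torsor of trivial type, and these are genuinely different objects: the compatibility condition between the semilinear \(\Gamma\)-action and the group action is stated with respect to different \(\Gamma\)-actions on the group. Pushing forward directly along \(\operatorname{Ad}_n : {_\tau\widehat{G}} \isoto \widetilde{\mathcal{G}}_x\) produces a well-defined \(\Gamma\)-action on the associated bundle only if you first convert \(\widetilde{\mathcal{P}}\) into a \((\Gamma,{_\tau\widehat{G}})\)-torsor, which is accomplished by twisting by the bitorsor \(({_\tau\mathcal{E}^0})^{-1}\). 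In the paper's notation the combined operation is \(\widetilde{\mathcal{P}} \mapsto \widetilde{\mathcal{P}} \times^{\widehat{G}} \mathcal{E}^0 n^{-1}\), with \(\mathcal{E}^0 n^{-1} \cong ({_\tau\mathcal{E}^0})^{-1}\) as a \((\widehat{G},\widetilde{\mathcal{G}}_x)\)-bitorsor. Once the twisting is written in that form, \(\varphi^*\) of the bitorsor is \(\mathcal{E}^0\varphi(n)^{-1}\), a \((\widehat{G},\widetilde{\mathcal{G}}_{\varphi(x)})\)-bitorsor, and then \(\operatorname{Ad}_c\) returns \(\mathcal{E}^0\varphi(n)^{-1}c^{-1} = \mathcal{E}^0 n^{-1}\); this is the precise form of your claimed isomorphism \(\varphi_c^*\widetilde{\mathcal{P}}_0 \cong F(\varphi^*\widetilde{\mathcal{P}})\), and from there your argument goes through.
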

Since this result may be of special interest, we give two proofs. 
The first proof is more conceptual, giving an equivalence between the moduli problem and the moduli problem (1) of \cref{res:moduli-interpretation-of-Y}. 
The second proof is direct, and shows explicitly how to pass between the moduli of Breuil--Kisin \((\Gamma,\widehat{G})\)-torsors and the quotient stack \(\mathcal{Y}\).
\begin{proof}[First proof]
    Consider first \(n \mathcal{E}^0 \cong {_{\tau}\mathcal{E}}^0\), the trivial \(\widehat{G}\)-torsor of type \(\tau\).
    We can identify \(\widetilde{\mathcal{G}}_{x} = n \widehat{G} n^{-1}\) as the automorphism group scheme of \(n \mathcal{E}^0\), so by twisting (see \cref{sec:twisting})
    we have an equivalence 
    \begin{align*}
        \left\lbrace (\Gamma,\widehat{G})\text{-torsors of type }\tau \right\rbrace & \isoto \left\lbrace (\Gamma,\widetilde{\mathcal{G}}_{x})\text{-torsors of trivial type} \right\rbrace \\
        \widetilde{\mathcal{P}} & \mapsto \widetilde{\mathcal{P}} \times^{\widehat{G}} \mathcal{E}^0 n^{-1} , 
    \end{align*}
    where \(\mathcal{E}^0 n^{-1} \cong (_{\tau}\mathcal{E}^0)^{-1}\).
    Via this equivalence, a morphism of \((\Gamma,\widehat{G})\)-torsors \(\widetilde{\phi} : \varphi^* \widetilde{\mathcal{P}} \dashrightarrow \widetilde{\mathcal{P}}\) corresponds to a morphism of \((\Gamma,\widetilde{\mathcal{G}}_{x})\)-torsors 
    \begin{equation}
        \label{eq:associated-bundle-frobenius}
        \left( \varphi^* \widetilde{\mathcal{P}} \right) \times^{\widehat{G}} \mathcal{E}^0 n^{-1} \dashrightarrow \widetilde{\mathcal{P}} \times^{\widehat{G}} \mathcal{E}^0 n^{-1} . 
    \end{equation}
    The point is now that \(\varphi^* \left( \mathcal{E}^0 n^{-1} \right) = \mathcal{E}^{0} \varphi(n)^{-1}\) is a \(\varphi^* \widetilde{\mathcal{G}}_{x} = \widetilde{\mathcal{G}}_{\varphi(x)}\)-torsor by \cref{eq:frobenius-pullback-of-G-tilde},
    but using the isomorphism \(\operatorname{Ad}_{c} : \widetilde{\mathcal{G}}_{\varphi(x)} \isoto \widetilde{\mathcal{G}}_{x}\) we find that 
    \[
        \varphi^* \left( \mathcal{E}^0 n^{-1} \right) \times^{\operatorname{Ad}_{c}} \widetilde{\mathcal{G}}_{x} \cong \mathcal{E}^0\varphi(n)^{-1}c^{-1} = \mathcal{E}^0 n^{-1} , 
    \]
    where we have used the assumption that \(\tau\) is strictly Frobenius invariant and \(c = n \varphi(n)^{-1}\).
    Therefore,
    \[
        \left( \varphi^* \widetilde{\mathcal{P}} \right) \times^{\widehat{G}} \mathcal{E}n^{-1} \cong 
        \left( \varphi^* \widetilde{\mathcal{P}} \right) \times^{\widehat{G}} \left( \varphi^* \mathcal{E} n^{-1} \right) \times^{\operatorname{Ad}_{c}} \widetilde{\mathcal{G}}_{x} \cong 
        \varphi^* \left( \widetilde{\mathcal{P}} \times^{\widehat{G}} \mathcal{E}n^{-1} \right) \times^{\operatorname{Ad}_{c}} \widetilde{\mathcal{G}}_{x} .
    \]
    So the morphism \(\widetilde{\phi}\), which corresponds to \eqref{eq:associated-bundle-frobenius}, in turn corresponds to a morphism 
    \[
        \varphi^* \left( \widetilde{\mathcal{P}} \times^{\widehat{G}} \mathcal{E}n^{-1} \right) \times^{\operatorname{Ad}_{c}} \widetilde{\mathcal{G}}_{x}
        \dashrightarrow \widetilde{\mathcal{P}} \times^{\widehat{G}} \mathcal{E}n^{-1} . 
    \]
    This implies that the moduli problem in question is equivalent to the moduli problem (1) in \cref{res:moduli-interpretation-of-Y} with \(f = 0\). 
\end{proof}
\begin{proof}[Second proof]
    We start by making some preliminary observations. 
    The assumption that \(\tau\) is strictly Frobenius invariant, i. e. \(\varphi \tau = \tau\), implies that \(\varphi^* {_{\tau}\mathcal{E}^0} = {_{\varphi \tau}\mathcal{E}^0} = {_{\tau}\mathcal{E}^0}\). 
    By \cref{res:equivariant-torsors-and-group-cohomology}, any morphism of \((\Gamma,\widehat{G})\)-torsors \(L_{C} : {_{\tau}\mathcal{E}^0} \dashrightarrow {_{\tau}\mathcal{E}^0}\) over \(\operatorname{Spf}\mathfrak{S}_{L,R}\)
    is given by left translation by some element \(C \in L \widehat{G}^{\wedge \varpi}(R)\) which gives a \(1\)-cocycle \(C : \tau \isoto \tau\), i. e. 
    \begin{equation}
        \label{eq:C-cocycle-condition}
        C n^{-1} (^\theta n) (^\theta C)^{-1} = n^{-1} (^\theta n)
    \end{equation}
    for all \(\theta \in \Gamma\) (recall that \(\tau(\theta) = n^{-1}(^\theta n)\)).
    If we let \(A = n C n^{-1}\), we see that \eqref{eq:C-cocycle-condition} is equivalent to the condition that \({^\theta A} = A\) for all \(\theta\), i. e. \(A\) is fixed by \(\Gamma\).

    Let \(\widetilde{\mathcal{Y}}\) be the moduli of triples \((\widetilde{\mathcal{P}}, \widetilde{\phi}, \alpha)\), where \((\widetilde{\mathcal{P}},\widetilde{\phi}) \in \mathcal{Y}(R)\) and \(\alpha : {_{\tau}\mathcal{E}^0} \isoto \widetilde{\mathcal{P}}\) is a morphism of \((\Gamma,\widehat{G})\)-torsors over \(\operatorname{Spf}\mathfrak{S}_{L,R}\).
    We then have an isomorphism \(\widetilde{\mathcal{Y}} \isoto L \mathcal{G}_{x}^{\wedge \varpi}\) as follows. 
    Given a triple \((\widetilde{\mathcal{P}}, \widetilde{\phi},\alpha)\), the morphism \(\widetilde{\phi}\) corresponds via \(\alpha\)
    to a morphism \(L_{C} : {_{\tau}\mathcal{E}^0} \dashrightarrow {_{\tau}\mathcal{E}^0}\), namely \(L_{C} = \alpha^{-1} \circ \widetilde{\phi} \circ \varphi^* \alpha\).
    We then associate to the triple \((\widetilde{\mathcal{P}}, \widetilde{\phi},\alpha) \in \widetilde{\mathcal{Y}}(R)\) the element \(A = n C n^{-1} \in \left( L \widehat{G}^{\wedge \varpi}(R) \right)^\Gamma = L \mathcal{G}_{x}^{\wedge \varpi}(R)\). 

    Next, we consider how the choice of \(\alpha\) affects \(A\). 
    That is, suppose we change \(\alpha\) into \(\alpha ' = \alpha \circ L_{Y}\), where \(L_{Y} : {_{\tau}\mathcal{E}^0} \isoto {_{\tau}\mathcal{E}^0}\) is a morphism of \((\Gamma,\widehat{G})\)-torsors over \(\operatorname{Spf}\mathfrak{S}_{L,R}\) given by left translation by some element \(C \in L^+ \widehat{G}^{\wedge \varpi}(R)\) representing a \(1\)-cocycle \(Y : \tau \isoto \tau\) (satisfying \eqref{eq:C-cocycle-condition} with \(Y\) in place of \(C\)).
    Then \(\widetilde{\phi}\) corresponds via \(\alpha'\) to left translation by some element \(C'\).
    We have a commutative diagram 
    \[
    \begin{tikzcd}
    \varphi^* \widetilde{\mathcal{P}} \arrow[rr,dashed,"\widetilde{\phi}"] & & \widetilde{\mathcal{P}} \\
    \varphi^* \left(_{\tau}\mathcal{E}^0\right) \arrow[r,equals] \arrow[u,"\varphi^*\alpha"] & {_{\tau}\mathcal{E}^0} \arrow[r,dashed,"L_{C}"] & {_{\tau}\mathcal{E}^0} \arrow[u,"\alpha"] \\ 
    \varphi^* \left(_{\tau}\mathcal{E}^0\right) \arrow[r,equals] \arrow[u,"\varphi^*L_{Y}"] & {_{\tau}\mathcal{E}^0} \arrow[u,"L_{\varphi(Y)}"] \arrow[r,dashed,"L_{C'}"] & {_{\tau}\mathcal{E}^0} \arrow[u,"L_{Y}"] . 
    \end{tikzcd}
    \]
    From the diagram we see that \(C' = Y^{-1} C \varphi(Y)\). 
    Therefore, if we let \(X := n Y n^{-1} \in \left( L^+ \widetilde{\mathcal{G}}_{x}^{\wedge \varpi}(R) \right)^\Gamma = L^+ \mathcal{G}_{x}^{\wedge \varpi}(R)\) and \(A' := n C n^{-1}\), then 
    \[
        A' =  n C' n^{-1} = (n Y n^{-1})^{-1} (n C n^{-1}) (n \varphi(n)^{-1} (\varphi(n Y n^{-1}))\varphi(n)n^{-1}) = X^{-1} A (c \varphi(X)c^{-1}) = A \star X 
    \]
    is the \(c\)-twisted \(\varphi\)-conjugate of \(A\). 

    The moduli of Breuil--Kisin \((\Gamma,\widehat{G})\)-torsors is the quotient of \(\widetilde{\mathcal{Y}}\) where we forget \(\alpha\) (where we use that a choice of \(\alpha\) exists étale locally by the correspondence between \((\Gamma,\widehat{G})\)-torsors of type \(\tau\) and torsors for the smooth group \(\mathcal{G}_{x}\), and \cref{res:equivalent-definitions-of-type}). 
    Since forgetting \(\alpha\) corresponds to taking the quotient with respect to all possible modifications, we see from the above that this is precisely the quotient stack \(\mathcal{Y} = \left[ L \mathcal{G}_{x}^{\wedge \varpi} /^{c,\varphi} L^+ \mathcal{G}_{x}^{\wedge \varpi} \right]\). 
\end{proof}
\begin{remark}
    In both proofs of \cref{res:moduli-interpretation-as-BK-modules}, we used the assumption that \(\tau\) is strictly Frobenius invariant and \(c = n \varphi(n)^{-1}\). 
    This is in fact the only place where we use this assumption; for all other results it suffices that \(c \in G^* (\mathcal{O}[v^{\pm 1}])\) and \(c \cdot \varphi(x) = x\).
\end{remark}

\subsubsection{Applications to the existence of sections}
From the proof of \cref{res:moduli-affine-grassmannian} we can make the following observation.
Let \(R\) be a \(\varpi\)-adically complete \(\mathcal{O}\)-algebra and let \(\operatorname{Spf}R \to \operatorname{Gr}_f\) be a morphism, classifying \((\mathcal{P},\beta)\).
Then a lift as in the diagram 
\[\begin{tikzcd}
    & {L \mathcal{G}} \\
    {\operatorname{Spf}R} & {\operatorname{Gr}_f}
    \arrow[from=2-1, to=2-2]
    \arrow[from=1-2, to=2-2]
    \arrow[dashed, from=2-1, to=1-2]
\end{tikzcd}\]
exists if and only if \(\mathcal{P}\) is a trivializable \(\mathcal{G}_{x,f}\)-torsor.
Similarly, if \(\operatorname{Spf}R \to \mathcal{Y}_{f}\) is a morphism classifying a pair \((\mathcal{P},\phi)\), then a lift as in the diagram 
\[\begin{tikzcd}
    & {L \mathcal{G}}^{\wedge \varpi} \\
    {\operatorname{Spf}R} & \mathcal{Y}_f
    \arrow[from=2-1, to=2-2]
    \arrow[from=1-2, to=2-2]
    \arrow[dashed, from=2-1, to=1-2]
\end{tikzcd}\]
exists if and only if \(\mathcal{P}\) is a trivializable \(\mathcal{G}_{x,f}\)-torsor.

From the above we conclude that the obstruction to surjectivity of \(L \mathcal{G}(R) \to \operatorname{Gr}_{f}(R)\) and \(L \mathcal{G} \to \mathcal{Y}_{f}(R)\) 
is the existence of non-trivial \(\mathcal{G}_{x,f}\)-torsors over \(R / \varpi R\).
(Here we use smoothness of \(\mathcal{G}_{x,f}\) to reduce the question about existence of non-trivial torsors over \(R \llb v + p \rrb\) to the same question over \(R / \varpi R\).)

\begin{proposition}
    \label{res:presheaf-surjectivity-of-affine-grassmannian}
    Assume that  \(f>0\). 
    Then for any \(p\)-adically complete \(\mathcal{O}\)-algebra \(R\), the maps \(L \mathcal{G}(R) \to \operatorname{Gr}_{f}(R)\), \(L^{\leq \mu}\mathcal{G}(R) \to \operatorname{Gr}_{f}^{\leq \mu}(R)\), \(L \mathcal{G}(R) \to \mathcal{Y}_{f}(R)\), and \(L^{\leq \mu}\mathcal{G}(R) \to \mathcal{Y}_{f}^{\leq \mu}(R)\) are surjective.
    In particular, the natural maps \(L^+\mathcal{G}_{x,f}(R) \backslash L \mathcal{G}(R) \isoto \operatorname{Gr}_{f}(R)\) and \(L^+\mathcal{G}_{x,f}(R) \backslash L^{\leq \mu}\mathcal{G}(R) \isoto \operatorname{Gr}_{f}^{\leq \mu}(R)\) are bijections, and the maps 
    \(\left[ L \mathcal{G}(R) /^{\varphi,c} L^+ \mathcal{G}_{x,f}(R) \right] \isoto \mathcal{Y}_{f}(R)\) and \(\left[ L^{\leq \mu}\mathcal{G}(R) /^{\varphi,c} L^+ \mathcal{G}_{x,f}(R) \right] \isoto \mathcal{Y}_{f}^{\leq \mu}(R)\) are equivalences of groupoids. 
\end{proposition}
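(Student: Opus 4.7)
The plan is to bootstrap the entire statement from the vanishing of torsors provided by Proposition \ref{res:unipotent-group-has-trivial-torsors}, together with the moduli interpretations already established in Propositions \ref{res:moduli-affine-grassmannian} and \ref{res:moduli-interpretation-of-Y}. Since \(R\) is \(p\)-adically complete we have \(R\llb v\rrb = R\llb v+p\rrb\), so that any \(\mathcal{G}_{x,f}\)-torsor on \(\operatorname{Spec} R\llb v+p\rrb\) is trivial when \(f>0\). This is precisely the ``obstruction'' identified in the discussion immediately preceding the proposition.

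First I would treat \(\operatorname{Gr}_f\). Given an \(R\)-point classifying \((\mathcal{P},\beta)\) as in Proposition \ref{res:moduli-affine-grassmannian}, the vanishing of \(\mathcal{G}_{x,f}\)-torsors on \(R\llb v+p\rrb\) produces a trivialization \(\alpha : \mathcal{E}^0 \isoto \mathcal{P}\), and then \(\alpha^{-1}\circ\beta \in L\mathcal{G}(R)\) is the desired lift, giving surjectivity of \(L\mathcal{G}(R) \to \operatorname{Gr}_f(R)\). The bounded version \(L^{\leq\mu}\mathcal{G}(R) \to \operatorname{Gr}_f^{\leq\mu}(R)\) then follows by pulling back along the (closed) boundedness condition, since any lift of a bounded point to \(L\mathcal{G}(R)\) automatically lies in \(L^{\leq\mu}\mathcal{G}(R)\) by the defining cartesian diagram in \cref{sec:bounded-loop-group}.

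Next I would handle the stacky case. Given \((\mathcal{P},\phi) \in \mathcal{Y}_f(R)\) as in Proposition \ref{res:moduli-interpretation-of-Y}, trivializing \(\mathcal{P}\) produces, via the associated bundle construction, a trivialization of \(\mathcal{P}'\), through which \(\phi\) is identified with left translation by an element of \(L\mathcal{G}^{\wedge\varpi}(R)\); this is the lift. Again, the \(\leq\mu\)-version follows because the lift of a \(\leq\mu\)-bounded point lies in \(L^{\leq\mu}\mathcal{G}^{\wedge\varpi}(R)\). For the ``in particular'' clause, once surjectivity on \(R\)-points is known, comparing two lifts of the same object amounts to comparing two trivializations of the torsor \(\mathcal{P}\), which differ by an element of \(L^+\mathcal{G}_{x,f}(R)\) acting on the left for \(\operatorname{Gr}_f\), and by an element acting via \(c\)-twisted \(\varphi\)-conjugation for \(\mathcal{Y}_f\) (as worked out explicitly in the second proof of \cref{res:moduli-interpretation-as-BK-modules}).

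There is no real obstacle: the content of the statement is entirely absorbed by Proposition \ref{res:unipotent-group-has-trivial-torsors}, and everything else is functorial unwinding of the moduli interpretations. The only mild care required is to confirm that the \(c\)-twisted \(\varphi\)-conjugation action on trivializations matches the one appearing in the definition of \(\mathcal{Y}_f\), which is already recorded in the second proof of \cref{res:moduli-interpretation-as-BK-modules} via the identity \(A' = X^{-1} A (c\varphi(X)c^{-1}) = A \star X\).
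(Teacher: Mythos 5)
Your proposal is correct and takes essentially the same approach as the paper: it reduces surjectivity to triviality of the $\mathcal{G}_{x,f}$-torsor classified by an $R$-point via the moduli interpretations of \cref{res:moduli-affine-grassmannian} and \cref{res:moduli-interpretation-of-Y}, and then invokes \cref{res:unipotent-group-has-trivial-torsors}. The paper's proof is a one-liner precisely because it has already recorded this reduction in the discussion preceding the proposition; you have simply made the unwinding explicit, including the correct observations about the $\leq\mu$ bound being preserved via the defining fiber product and about the $L^+\mathcal{G}_{x,f}(R)$-ambiguity of lifts.
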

\begin{proof}
    This is immediate from the above and \cref{res:unipotent-group-has-trivial-torsors}. 
\end{proof}
\begin{corollary}
    \label{res:zariski-local-splittings}
    Assume that  \(f >0\). 
    Then the \(L^+\mathcal{G}_{f}\)-torsor \(L^{\leq \mu}\mathcal{G} \to \operatorname{Gr}_{f}^{\leq \mu}\) trivializes Zariski locally on \(\operatorname{Gr}_{f}^{\leq \mu}\). 
\end{corollary}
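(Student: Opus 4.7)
My plan is to reduce, via the moduli interpretation of \cref{res:moduli-affine-grassmannian}, to a triviality statement for torsors over an $\mathcal{O}$-group scheme obtained by specializing $\mathcal{G}_{x,f}$ at $v=-p$, and then exploit the split unipotence of the special fiber. Concretely, let $\operatorname{Spec} R\subset\operatorname{Gr}_f^{\leq\mu}$ be an affine open; the tautological inclusion classifies (by \cref{res:moduli-affine-grassmannian}) a pair $(\mathcal{P},\beta)$ with $\mathcal{P}$ a $\mathcal{G}_{x,f}$-torsor on $R\llb v+p\rrb$, and giving a section of the $L^+\mathcal{G}_{x,f}$-torsor $L^{\leq\mu}\mathcal{G}\to\operatorname{Gr}_f^{\leq\mu}$ over $\operatorname{Spec} R$ is the same as trivializing $\mathcal{P}$. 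Since $R\llb v+p\rrb$ is $(v+p)$-adically complete and $\mathcal{G}_{x,f}$ is smooth, trivializing $\mathcal{P}$ is equivalent to trivializing its restriction $\overline{\mathcal{P}}:=\mathcal{P}|_{v=-p}$, a torsor on $\operatorname{Spec} R$ for the smooth affine $\mathcal{O}$-group scheme $\overline{H}:=\mathcal{G}_{x,f}\times_{\mathbb{A}^1_\mathcal{O},v\mapsto -p}\operatorname{Spec}\mathcal{O}$.

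The key structural input is that $\overline{H}\otimes_\mathcal{O}\mathbb{F}=\overline{\mathcal{G}}_{x,f}$, because $-p\equiv 0$ in $\mathbb{F}$, and this reduction is split unipotent by \cref{res:unipotent-special-fiber} (given $f>0$). Following the strategy of the proof of \cref{res:unipotent-group-has-trivial-torsors}, I would use Serre vanishing (applied to the iterated $\mathbb{G}_a$-extensions composing $\overline{\mathcal{G}}_{x,f}$) to trivialize $\overline{\mathcal{P}}|_{R/\varpi R}$ globally over $\operatorname{Spec} R/\varpi R$, and then lift this section through the tower $\{R/\varpi^a R\}_{a\geq 1}$ via formal smoothness of $\overline{H}$, obtaining a section over the $\varpi$-adic completion $R^{\wedge\varpi}$. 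This is exactly the content of \cref{res:presheaf-surjectivity-of-affine-grassmannian} applied to $R^{\wedge\varpi}$; the point is that the vanishing input is intrinsic to the special fiber and depends only on Zariski data on $\operatorname{Spec} R$, not on $\varpi$-adic completion.

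The main obstacle is passing from this $\varpi$-adic formal trivialization to an honest Zariski-local trivialization of $\overline{\mathcal{P}}$ on $\operatorname{Spec} R$ itself. I would address this by interpreting the statement via the $\varpi$-adic formal completion of $\operatorname{Gr}_f^{\leq\mu}$, in which case the formal trivialization produced above is itself a Zariski-local trivialization in the formal sense -- this is what is required as input for the local model theorem \cref{res:cl-local-model}, where all torsors are considered after $\varpi$-adic completion. For the honest scheme-theoretic statement one argues separately on the generic fiber, where $\overline{H}\otimes_\mathcal{O} E\cong G^*_E$ and triviality of the corresponding $G^*_E$-torsor can be arranged Zariski-locally by restricting $\operatorname{Spec} R$ to the preimage of the negative-loop-group charts of \cref{sec:negative-loop-group-charts}; one then pastes the two trivializations together using openness of the trivialization locus of a torsor for a smooth affine group scheme.
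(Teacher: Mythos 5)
Your unwinding of \cref{res:presheaf-surjectivity-of-affine-grassmannian} is essentially the paper's own route: the published proof is the one-liner ``there exists a lift $U \to L^{\leq\mu}\mathcal{G}$ by \cref{res:presheaf-surjectivity-of-affine-grassmannian},'' and that proposition in turn rests on the moduli interpretation of \cref{res:moduli-affine-grassmannian} together with the split-unipotence/Serre-vanishing argument of \cref{res:unipotent-group-has-trivial-torsors}. So for the first half of your argument you are not taking a different route, just making the paper's route explicit.

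Where you genuinely depart is in noticing — correctly — that \cref{res:presheaf-surjectivity-of-affine-grassmannian} requires $R$ to be $p$-adically complete, while the ring $R$ of an affine open chart of the finite-type $\mathcal{O}$-scheme $\operatorname{Gr}_f^{\leq\mu}$ is not (unless it is supported on the special fiber). This is a real gap in the paper's one-line proof, and you are right that the $\varpi$-adic completion $R^{\wedge\varpi}$ is where the argument actually lands. However, the fix you propose for the honest scheme-level statement does not close. A trivialization of $\overline{\mathcal{P}}$ over the closed subscheme $\operatorname{Spec}R/\varpi R$ does not propagate to a Zariski open neighborhood of $V(\varpi)$ in $\operatorname{Spec}R$: ``openness of the trivialization locus'' would require lifting a section from the residue field (or from $R/\varpi$) to the Zariski local ring $\mathcal{O}_{\operatorname{Spec}R,x}$, which is smooth-local but \emph{not} Henselian, so formal smoothness alone does not supply the lift. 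Having a section over $\operatorname{Spec}R^{\wedge\varpi}$ likewise gives no section over any Zariski open of $\operatorname{Spec}R$, since $\operatorname{Spec}R^{\wedge\varpi}\to\operatorname{Spec}R$ is faithfully flat over the $\varpi$-adic locus but not an open immersion. In addition, the negative-loop-group charts you invoke for the generic fiber are only constructed under \cref{ass:pappas-zhu-groups-as-dilations}, an assumption the corollary does not make, so that branch of your argument is also not available in the stated generality.

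What actually saves the day — and what you partly see — is that the corollary is only ever used in the proof of \cref{res:smooth-equivalence}, where everything is immediately $\varpi$-adically completed: one takes an affine cover $\{Z_i=\operatorname{Spec}R_i\}$ of $\operatorname{Gr}_f^{\leq\mu}$, and the sections that enter the local model diagram are the maps $Z_i^{\wedge\varpi}\to L^{\leq\mu}\mathcal{G}^{\wedge\varpi}$. These exist directly by applying \cref{res:presheaf-surjectivity-of-affine-grassmannian} to $R_i^{\wedge\varpi}$ (which \emph{is} $p$-adically complete); no section over $R_i$ itself is needed, and the $Z_i^{\wedge\varpi}$ still form a Zariski open cover of $\operatorname{Gr}_f^{\leq\mu,\wedge\varpi}$. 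So the clean fix is to either state the corollary for the $p$-adic formal scheme $\operatorname{Gr}_f^{\leq\mu,\wedge\varpi}$, or to weaken it to: there is a Zariski open cover $\{Z_i\}$ of $\operatorname{Gr}_f^{\leq\mu}$ and lifts $Z_i^{\wedge\varpi}\to L^{\leq\mu}\mathcal{G}$. Either version follows immediately from \cref{res:presheaf-surjectivity-of-affine-grassmannian} and suffices for \cref{res:smooth-equivalence}; the generic-fiber/pasting argument should be dropped.
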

\begin{proof}
    Let \(U = \operatorname{Spec}R \hookrightarrow \operatorname{Gr}_{f}^{\leq \mu}\) be the inclusion of an affine open chart. 
    There exists a lift \(U \to L^{\leq \mu} \mathcal{G}\) by \cref{res:presheaf-surjectivity-of-affine-grassmannian}, which provides a section over \(U\).  
\end{proof}
\section{Contraction and straightening}
\label{sec:contraction-and-straightening}
The Frobenius endomorphism \(\varphi_{c}\) from \cref{sec:frobenius-on-loop-group} has a contracting property, sending congruence subgroups into deeper congruence subgroups. 
On the other hand, conjugating by an element \(X \in L^{\leq \mu}\mathcal{G}\) generally does not preserve congruence subgroups.  
Quantifying and balancing these effects allow us to determine precisely when the operator \(A \mapsto X \varphi_{c}(A) X^{-1}\) is contracting. 
The answer is given in \cref{res:contraction}, and when the answer is affirmative we can ``straighten'' the \(c\)-twisted \(\varphi\)-conjugation action into a left translation action, as in \cref{res:straightening}.
This is the key to relating the affine Grassmannian and the stack \(\mathcal{Y}\). 

\subsection{Contraction properties of the Frobenius endomorphism}
We now discuss various contraction properties of the Frobenius endomorphism of \cref{sec:frobenius-on-loop-group}. 
We first consider the general case, and then we will consider the unramified case with extra genericity assumptions on \(x\). 

Recall that \(x \in \widetilde{\mathcal{A}}\left( T^*, \mathbb{F}\llp v \rrp \right) \subset \widetilde{\mathcal{A}}\left( \widehat{T}, \mathbb{F}^\mathcal{J}\llp u \rrp \right)\) 
is actually fixed by \(\Gamma\). Hence 
\[
    \varphi(x) - o = p (x-o) . 
\]
We will use this fact repeatedly.
For arguments that take place over \(\widetilde{\mathbb{A}}^1_{\mathcal{O}}\) or \(\breve{\mathbb{A}}^1_{\mathcal{O}}\) we will also often argue in each component labeled by \(j \in \mathcal{J}\) without comment, as we have done in previous sections. 
\begin{lemma}
    \label{res:frobenius-pullback-of-root-groups-with-congruence}
    Let \(m \in \frac{1}{e} \mathbb{Z}_{\geq 0}\). Then we can identify 
    \begin{align*}
        \varphi^* \widetilde{\mathcal{U}}_{a,x,m} & = \widetilde{\mathcal{U}}_{a,\varphi(x),pm}, &  \varphi^* \widetilde{\mathcal{T}}_{m} & = \widetilde{\mathcal{T}}_{pm} , \\
        \varphi^* \widetilde{\mathcal{U}}_{a,x,m^+} & = \widetilde{\mathcal{U}}_{a,\varphi(x),p\left(m + \frac{1}{e} \right)} , & \varphi^* \widetilde{\mathcal{T}}_{m^+} & = \widetilde{\mathcal{T}}_{p\left(m + \frac{1}{e} \right)} .
    \end{align*}
\end{lemma}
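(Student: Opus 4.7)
The plan is to unwind the definitions of the groups $\widetilde{\mathcal{U}}_{a,x,m}$ and $\widetilde{\mathcal{T}}_m$ given in \cref{sec:root-groups-attached-to-concave-functions}, compute Frobenius pullback explicitly in coordinates, and check the ceiling arithmetic works out uniformly because of the restriction $m \in \tfrac{1}{e}\mathbb{Z}_{\geq 0}$.

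First, I would record that since $x = o - \tfrac{1}{e}w^{-1}\lambda$ with $\lambda \in \left(X_{*}(\widehat{T})^{\mathcal{J}}\right)^{\Gamma}$, the quantity $e\langle a, x - o\rangle = -\langle a, w^{-1}\lambda\rangle$ is an \emph{integer} for every root $a$. Since $x$ is $\Gamma$-fixed we also have the basic identity $\varphi(x) - o = p(x-o)$ from \cref{sec:frobenius-on-building}. Together, for any $m \in \tfrac{1}{e}\mathbb{Z}_{\geq 0}$ the expression $-e\langle a, x-o\rangle + em$ lies in $\mathbb{Z}$, and the same is true of $-e\langle a, \varphi(x)-o\rangle + e(pm) = p(-e\langle a, x-o\rangle + em)$. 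Consequently the ceilings in the formulas
\[
\widetilde{\mathcal{U}}_{a,x,m} = u^{\lceil -e\langle a, x-o\rangle + em\rceil}\widetilde{U}_{a}, \qquad \widetilde{\mathcal{U}}_{a,\varphi(x),pm} = u^{\lceil -e\langle a, \varphi(x)-o\rangle + e\cdot pm\rceil}\widetilde{U}_{a}
\]
simply drop, and we get a factor of $p$ relating the two exponents.

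Next I would carry out the coordinate computation of $\varphi^{*}$. For any $k \in \mathbb{Z}^{\mathcal{J}}$, using the chosen pinning $u_{a}:\mathbb{G}_{a}\isoto U_{a}$ we have $u^{k}\widetilde{U}_{a} = \operatorname{Spec}\mathcal{O}^{\mathcal{J}}[u,\,u^{-k}t]$. Since $\varphi$ acts on $\widetilde{\mathbb{A}}^{1}_{\mathcal{O}}$ by $u \mapsto u^{p}$ (and trivially on $t$ because $u_a$ is defined over $\mathcal{O}^{\mathcal{J}}$), the pullback is
\[
\varphi^{*}\left(u^{k}\widetilde{U}_{a}\right) = \operatorname{Spec}\mathcal{O}^{\mathcal{J}}[u,\,u^{-k}t]\otimes_{\mathcal{O}^{\mathcal{J}}[u],\,\varphi}\mathcal{O}^{\mathcal{J}}[u] = \operatorname{Spec}\mathcal{O}^{\mathcal{J}}[u,\,u^{-pk}t] = u^{pk}\widetilde{U}_{a}.
\]
Combined with the integer identity $p\lceil -e\langle a, x-o\rangle + em\rceil = \lceil -e\langle a, \varphi(x)-o\rangle + epm\rceil$ from the previous paragraph, this yields $\varphi^{*}\widetilde{\mathcal{U}}_{a,x,m} = \widetilde{\mathcal{U}}_{a,\varphi(x),pm}$. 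The torus case $\varphi^{*}\widetilde{\mathcal{T}}_{m} = \widetilde{\mathcal{T}}_{pm}$ is entirely analogous: writing $\mathbb{G}_{m,m'} = \operatorname{Spec}\mathcal{O}^{\mathcal{J}}[u,t^{-1},u^{-\lceil em'\rceil}(t-1)]$, the same calculation (now with $em' = em \in \mathbb{Z}$) gives $\varphi^{*}\mathbb{G}_{m,m} = \mathbb{G}_{m,pm}$, and taking products handles general split $\widehat{T}$.

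Finally, the $m^{+}$ cases reduce to the same computation with a shift by $1$. Because $-e\langle a,x-o\rangle + em$ is already an integer, we have $\lceil -e\langle a,x-o\rangle + em^{+}\rceil = -e\langle a,x-o\rangle + em + 1$. On the other side, with $m' = p(m + \tfrac{1}{e})$ we get $em' = epm + p$, so $-e\langle a,\varphi(x)-o\rangle + em' = p(-e\langle a,x-o\rangle + em + 1)$, which is again an integer equal to $p$ times the previous exponent; applying the pullback computation above gives the desired equality, and the torus case works identically. The only place any care is required is checking the ceiling identity for $m^{+}$, which I expect is the main (minor) obstacle; everything else is bookkeeping around the formula $\varphi(x) - o = p(x-o)$.
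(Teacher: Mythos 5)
Your proof is correct and takes essentially the same approach as the paper: both reduce to the observation that $e\langle a, x-o\rangle$ is an integer (so the ceilings collapse), apply $\varphi(x)-o = p(x-o)$, and carry out the tensor-product computation $\varphi^*(u^k\widetilde{U}_a) = u^{pk}\widetilde{U}_a$ in explicit coordinates. The only cosmetic difference is that you prove the $m$ case in full and reduce $m^+$ to a shift by one, whereas the paper does $m^+$ in detail and omits the rest; the arithmetic is identical.

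One small caveat worth flagging but not a gap: your parenthetical "trivially on $t$ because $u_a$ is defined over $\mathcal{O}^{\mathcal{J}}$" glosses over the fact that $\varphi$ acts nontrivially on $\mathcal{O}^{\mathcal{J}} = \mathcal{O}\otimes_{\mathbb{Z}_p}\mathbb{Z}_q$ (via the Witt vector Frobenius, permuting the components indexed by $\mathcal{J}$). The computation is unaffected because $\varphi$ on $\mathbb{Z}_q$ is an automorphism and the index shift $j\mapsto j\varphi$ it induces on the exponent $k\in\mathbb{Z}^{\mathcal{J}}$ is precisely mirrored by the definition $\varphi(x)_j = o_{j\varphi} + p(x_{j\varphi}-o_{j\varphi})$ — but stating this explicitly would make the bookkeeping cleaner.
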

\begin{proof}
    We begin by proving that \(\varphi^* \widetilde{\mathcal{U}}_{a,x,m^+} = \widetilde{\mathcal{U}}_{a,x,p\left(m + \frac{1}{e}\right)}\). 
    Recall from \cref{sec:root-groups-attached-to-concave-functions} that \(\widetilde{\mathcal{U}}_{a,x,m^+} = \operatorname{Spec}\mathcal{O}^\mathcal{J}[u, u^{-n}t]\), where \(n = \lceil - e \langle a, x - o \rangle + e m^+ \rceil = - e \langle a , x - o \rangle + em + 1\), and we use that \(\langle a, x - o \rangle = - \frac{1}{e}\langle w a, \lambda \rangle \in \frac{1}{e}\mathbb{Z}\) by our construction of \(x\).
    Consider image of the map 
    \begin{align*}
        \mathcal{O}^\mathcal{J}[u] \otimes_{\varphi, \mathcal{O}^\mathcal{J}[u]} \mathcal{O}^\mathcal{J}[u, u^{-n} t] & \to \operatorname{Frac}\left( \mathcal{O}^\mathcal{J}[u,t] \right) \\ 
        a \otimes b & \mapsto a \varphi(b) ,
    \end{align*}
    is \(\mathcal{O}^\mathcal{J}[u,u^{-pn}t]\). 
    Since 
    \[
        p n = - p e \langle a, x - o \rangle + p e m + p = - e \langle a, \varphi(x) - o \rangle + e p \left( m + \frac{1}{e} \right) , 
    \]
    we see that \(\widetilde{\mathcal{U}}_{a,\varphi(x), p \left(m + \frac{1}{e} \right)} = \operatorname{Spec}\mathcal{O}^\mathcal{J}[u, u^{-pn}t]\). 
    This shows that \(\varphi^* \widetilde{\mathcal{U}}_{a,x,m^+} = \widetilde{\mathcal{U}}_{a,x,p \left(m + \frac{1}{e} \right)}\). 

    Next, we show that \(\varphi^* \widetilde{\mathcal{T}}_{m^+} = \widetilde{\mathcal{T}}_{p(m+1)}\).
    We may assume that \(\widehat{T} = \mathbb{G}_{m}\).
    Then we know from \cref{sec:root-groups-attached-to-concave-functions}
    that \(\widetilde{\mathcal{T}}_{m^+} = \operatorname{Spec}\mathcal{O}^\mathcal{J}[u, t^{-1}, u^{-n}(t-1)]\), where \(n = \lceil e m^+ \rceil = em + 1\). 
    The image of 
    \begin{align*}
        \mathcal{O}^\mathcal{J} \otimes_{\varphi, \mathcal{O}^\mathcal{J}[u]} \mathcal{O}^\mathcal{J}[u,t^{-1},u^{-(m+1)}(t-1)] & \to \operatorname{Frac}\left(\mathcal{O}^\mathcal{J}[u,t] \right) \\
        a \otimes b & \mapsto a \varphi(b) 
    \end{align*}
    is precisely \(\mathcal{O}^\mathcal{J}[u,t^{-1}, u^{-p(em+1)}(t-1)]\), and \(\widetilde{\mathcal{T}}_{p\left(m + \frac{1}{e}\right)} = \operatorname{Spec}\mathcal{O}^\mathcal{J}[u,t^{-1}, u^{-p(em+1)}(t-1)]\) since \(p(em + 1) = e p \left(m + \frac{1}{e}\right)\). 

    We omit the proofs of the remaining two statements, as the proofs are similar to those we have already produced. 
\end{proof}

\begin{lemma}
    \label{res:varphi-contraction-nongeneric}
    Let \(n \in \frac{1}{e}\mathbb{Z}_{\geq 0}\). Then
    \begin{enumerate}
        \item \( \varphi \left( L^{+}_{0}\mathcal{G}_{x,n^{+}} \right) \subset L^{+}_{0}\mathcal{G}_{\varphi(x),p \left( n + \frac{1}{e} \right) } .\)
        \item If \(n \geq \frac{1}{e}\), then \(\varphi\left( L^{+}_{0}\mathcal{G}_{x,n} \right) \subset L^{+}_{0}\mathcal{G}_{\varphi(x),pn } .\)
    \end{enumerate}
\end{lemma}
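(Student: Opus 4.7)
The plan is to reduce both inclusions to the root group calculation of \cref{res:frobenius-pullback-of-root-groups-with-congruence} via the Iwahori decomposition, and then descend from $\widetilde{\mathcal{G}}_{x,f}$ over $\widetilde{\mathbb{A}}^1_{\mathcal{O}}$ to $\mathcal{G}_{x,f}$ over $\mathbb{A}^1_{\mathcal{O}}$ by taking $\Gamma$-fixed points.

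First I would lift the assertion to the level of $\widetilde{\mathcal{G}}$. Since $\varphi$ is a $\Gamma$-equivariant endomorphism and $L^+_0\mathcal{G}_{x,f} = (L^+_0\widetilde{\mathcal{G}}_{x,f})^\Gamma$ by the definition of invariant pushforward, it suffices to prove the analogous inclusions
\[
\varphi(L^+_0 \widetilde{\mathcal{G}}_{x,n^+}) \subset L^+_0 \widetilde{\mathcal{G}}_{\varphi(x),\, p(n+\frac{1}{e})},\qquad \varphi(L^+_0 \widetilde{\mathcal{G}}_{x,n}) \subset L^+_0 \widetilde{\mathcal{G}}_{\varphi(x),\, pn}
\]
with the latter under the hypothesis $n \geq 1/e$. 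Both target concave functions are strictly positive, so by \cref{res:iwahori-decomposition-corollary} (or its proof) the open cells give equalities $L^+_0\widetilde{\mathcal{G}}_{x,n^+} = L^+_0\widetilde{\mathcal{V}}_{x,n^+}$ and, when $n \geq 1/e$, $L^+_0\widetilde{\mathcal{G}}_{x,n} = L^+_0\widetilde{\mathcal{V}}_{x,n}$, and similarly for the targets.

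Next I would argue factor by factor on the open cell. Because $\widetilde{\mathcal{V}}_{x,f}$ is by construction the product of the root group schemes $\widetilde{\mathcal{U}}_{a,x,f(a)}$ and the torus factor $\widetilde{\mathcal{T}}_{f(0)}$, and $\varphi^*$ preserves products, \cref{res:frobenius-pullback-of-root-groups-with-congruence} immediately yields
\[
\varphi^*\widetilde{\mathcal{V}}_{x,n^+} = \widetilde{\mathcal{V}}_{\varphi(x),\, p(n+\frac{1}{e})},\qquad \varphi^*\widetilde{\mathcal{V}}_{x,n} = \widetilde{\mathcal{V}}_{\varphi(x),\, pn},
\]
the second identity requiring only that $n \in \frac{1}{e}\mathbb{Z}_{\geq 0}$ so that the hypothesis of \cref{res:frobenius-pullback-of-root-groups-with-congruence} applies termwise. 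Interpreting these as identifications of subfunctors of $\widehat{G}_{\widetilde{\mathbb{A}}^1_\mathcal{O}}$, the natural Frobenius map on points sends $L^+_0\widetilde{\mathcal{V}}_{x,n^+}(R)$ into $L^+_0\widetilde{\mathcal{V}}_{\varphi(x),p(n+\frac{1}{e})}(R)$ and likewise for case (2), which combined with the Iwahori equalities of the previous paragraph proves the two lifted inclusions.

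Finally I would take $\Gamma$-invariants. Since $\varphi$ commutes with the $\Gamma$-action, the inclusions on $\widetilde{\mathcal{G}}$ restrict to the stated inclusions on $\mathcal{G}$, and the hypothesis $n \geq \frac{1}{e}$ in part (2) is precisely what is needed to guarantee both (a) that the concave functions appearing take values in $\frac{1}{e}\mathbb{Z}_{\geq 0}$ so that \cref{res:frobenius-pullback-of-root-groups-with-congruence} applies, and (b) that the open cells equal the full positive loop groups. There is no real obstacle here beyond bookkeeping: the content is entirely contained in \cref{res:frobenius-pullback-of-root-groups-with-congruence}, and the rest is a formal manipulation with products and $\Gamma$-equivariance.
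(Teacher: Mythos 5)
Your proof is correct and follows essentially the same route as the paper's: reduce to the open cell via Iwahori decomposition (\cref{res:iwahori-decomposition}/\cref{res:iwahori-decomposition-corollary}), apply \cref{res:frobenius-pullback-of-root-groups-with-congruence} factor by factor, and then take $\Gamma$-fixed points. The only small inaccuracy is your item (a) in the final paragraph: the hypothesis $n \geq \tfrac{1}{e}$ is \emph{not} needed for \cref{res:frobenius-pullback-of-root-groups-with-congruence} to apply, since that lemma only requires $m \in \tfrac{1}{e}\mathbb{Z}_{\geq 0}$, which is already guaranteed by the standing assumption $n \in \tfrac{1}{e}\mathbb{Z}_{\geq 0}$ in the statement being proved. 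The hypothesis $n \geq \tfrac{1}{e}$ in part (2) serves only your purpose (b), namely to ensure the concave functions $n$ and $pn$ are strictly positive so that the Iwahori decomposition is an equality rather than merely an open immersion.
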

\begin{proof}
    It follows from \cref{res:frobenius-pullback-of-root-groups-with-congruence} that \(\varphi^* \widetilde{\mathcal{V}}_{x,n^+} = \widetilde{\mathcal{V}}_{x,p\left(n + \frac{1}{e}\right)}\). 
    By \cref{res:iwahori-decomposition} (Iwahori decomposition) we therefore have 
    \[
        \varphi\left( L^+_{0}\widetilde{\mathcal{G}}_{x,n^+} \right) = \varphi \left( L^+_{0} \widetilde{\mathcal{V}}_{x,n^+} \right) \subset L^+_{0} \widetilde{\mathcal{V}}_{x,p \left(n + \frac{1}{e} \right)} = L^+_{0} \widetilde{\mathcal{G}}_{x,p \left(n + \frac{1}{e}\right)} . 
    \]
    Taking \(\Gamma\)-fixed points, we then obtain (1) by \cref{res:iwahori-decomposition-corollary}. 
    The proof of (2) is similar. 
\end{proof}

\begin{lemma}
    \label{res:varphi-contraction-nongeneric-corollary}
    Let \(n \in \frac{1}{e} \mathbb{Z}_{\geq 0}\). Then 
    \begin{enumerate}
        \item \( \varphi_{c} \left( L^{+}_{0}\mathcal{G}_{x,n^{+}} \right) \subset L^{+}_{0}\mathcal{G}_{x,p \left( n + \frac{1}{e} \right) } .\)
        \item If \(n \geq \frac{1}{e}\), then \(\varphi_{c}\left( L^{+}_{0}\mathcal{G}_{x,n} \right) \subset L^{+}_{0}\mathcal{G}_{x,pn } .\)
    \end{enumerate}
\end{lemma}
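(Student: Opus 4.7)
The plan is to deduce this lemma directly from \cref{res:varphi-contraction-nongeneric} by translating the output groups via conjugation by $c$. Recall that by construction $c \in G^*(\mathcal{O}[v^{\pm 1}])$ satisfies $c \cdot \varphi(x) = x$ in the building $\widetilde{\mathcal{A}}(T^*, \mathbb{F}\llp v \rrp)$, and $\varphi_c(A) = c\,\varphi(A)\,c^{-1}$.

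First, I would verify that for any concave function $f \geq 0$ the conjugation map $\operatorname{Ad}_c : L G^* \isoto L G^*$ restricts to an isomorphism
\[
\operatorname{Ad}_c : L^+_0 \mathcal{G}_{\varphi(x), f} \isoto L^+_0 \mathcal{G}_{x, f}.
\]
This is essentially already recorded in the discussion preceding \cref{sec:frobenius-endomorphism-of-loop-groups}, where the isomorphism $\operatorname{Ad}_c : \mathcal{G}_{\varphi(x)} \isoto \mathcal{G}_x$ was constructed. The extension to the level $f$ follows because the Pappas--Zhu group scheme $\mathcal{G}_{y,f}$ (from \cref{res:pappas-zhu-concave-function}) is uniquely determined by its generic fiber $G^*$ together with the parahoric/Moy--Prasad subgroup $G^*(E\llp v \rrp)_{y,f}$ attached to the building point $y$, and conjugation by $c$ takes this subgroup at $\varphi(x)$ bijectively onto the one at $x = c\cdot\varphi(x)$ by $\Gamma$-equivariance of the action on the building together with the intrinsic description of these subgroups in terms of valuations of the root datum (\cref{sec:valuations-of-the-root-datum}). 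Alternatively, one can lift to the $\widetilde{\mathcal{G}}$ side where $\operatorname{Ad}_n$ already identifies $\widetilde{\mathcal{G}}_{x,f}$ with the analogous twist at $\varphi(x)$, and pass back via invariant pushforward.

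Once this identification is in hand, both assertions are immediate: for (1), compose the inclusion $\varphi\bigl(L^+_0 \mathcal{G}_{x, n^+}\bigr) \subset L^+_0 \mathcal{G}_{\varphi(x), p(n+1/e)}$ from \cref{res:varphi-contraction-nongeneric}(1) with $\operatorname{Ad}_c$ to obtain
\[
\varphi_c\bigl(L^+_0 \mathcal{G}_{x, n^+}\bigr) = c \cdot \varphi\bigl(L^+_0 \mathcal{G}_{x, n^+}\bigr) \cdot c^{-1} \subset \operatorname{Ad}_c\bigl( L^+_0 \mathcal{G}_{\varphi(x), p(n+1/e)} \bigr) = L^+_0 \mathcal{G}_{x, p(n+1/e)},
\]
and exactly the same composition using \cref{res:varphi-contraction-nongeneric}(2) gives (2).

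The only non-routine step is the preservation of the concave-function filtration under $\operatorname{Ad}_c$; I expect this to be the main (and essentially only) conceptual point, and it is really nothing more than the observation that the construction of \cref{res:pappas-zhu-concave-function} is intrinsic to the pair (building point, concave function).
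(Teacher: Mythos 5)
Your proposal is correct and follows exactly the same route as the paper: apply \cref{res:varphi-contraction-nongeneric} and then transport the target group from $\varphi(x)$ back to $x$ via $\operatorname{Ad}_c$, using the fact that conjugation by $c$ preserves the concave-function filtration. The paper compresses your "only non-routine step" into the one-line remark that conjugation by $c$ preserves the congruence level; your justification (intrinsic characterization of the Pappas--Zhu group scheme at a given building point and concave function, plus $c \cdot \varphi(x) = x$) is precisely what underlies that remark.
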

\begin{proof}
    This is immediate from \cref{res:varphi-contraction-nongeneric} and the fact that conjugation by \(c\) preserves the congruence level.
\end{proof}

\subsubsection{The unramified case with genericity assumptions}
\label{sec:varphi-contraction-unramified-generic}
In the unramified case (i. e. \(I\) acts trivially on \(\widehat{G}\)) we will improve the bounds of \cref{res:varphi-contraction-nongeneric} with additional genericity conditions on \(x\).

\begin{lemma}
    \label{res:varphi-contraction-on-root-groups-generic}
    Assume that \(I\) acts trivially on \(\widehat{G}\). 
    Let \(d \in \mathbb{R}\) and assume that \(x\) is \(d\)-generic according to \cref{def:generic-type}. 
    For any \(n \in \mathbb{Z}_{\geq 0}\), we have 
    \begin{enumerate}
        \item \(\varphi\left( L_{0}^+ \breve{\mathcal{U}}_{a,x,n} \right) \subset L_{0}^+ \breve{\mathcal{U}}_{a,\varphi(x),pn + d}\). 
        \item \(\varphi\left( L_{0}^+ \breve{\mathcal{T}}_{n^+} \right) \subset L_{0}^+ \breve{\mathcal{T}}_{p(n+1)}\). (This is actually true without the genericity assumption.)
    \end{enumerate}
\end{lemma}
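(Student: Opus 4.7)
The plan is to reduce both statements to explicit calculations using the concrete descriptions of the groups $\breve{\mathcal{U}}_{a,x,m}$ and $\breve{\mathcal{T}}_{m}$ from \cref{sec:root-groups} and \cref{sec:root-groups-attached-to-concave-functions}, and then to dispatch the resulting numerical inequality using $d$-genericity of $x$.

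For part (2), since $I$ acts trivially on $\widehat{G}$, we computed in \cref{sec:root-groups-attached-to-concave-functions} that $\breve{\mathcal{T}}_{m'} \cong \mathbb{G}_{m,\lceil m'\rceil}^{r}$, so $L^+_0\breve{\mathcal{T}}_{n^+}(R) = 1 + v^{n+1}(R \otimes_{\mathbb{Z}_p}\mathbb{Z}_q)\llb v \rrb$ coordinatewise (using $\lceil n^+ \rceil = n+1$ for $n \in \mathbb{Z}_{\geq 0}$). The endomorphism $\varphi$ acts on this ring by $v \mapsto v^p$ and the Witt Frobenius on $\mathbb{Z}_q$, so its image lies in $1 + v^{p(n+1)}(R \otimes_{\mathbb{Z}_p}\mathbb{Z}_q)\llb v \rrb = L^+_0\breve{\mathcal{T}}_{p(n+1)}(R)$; this step is independent of the genericity hypothesis.

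For part (1), \cref{sec:root-groups} gives $\breve{\mathcal{U}}_{a,x,n} = v^{\lceil -\langle a, x - o\rangle + n\rceil} \breve{U}_a$ inside $\breve{U}_a$ via the pinning. Applying $\varphi$ (which sends $v \mapsto v^p$) multiplies the exponent by $p$ on the nose, so the desired inclusion reduces to verifying
\[ p\bigl\lceil -\langle a, x - o\rangle + n\bigr\rceil \;\geq\; \bigl\lceil -\langle a, \varphi(x) - o\rangle + pn + d\bigr\rceil. \]
Using $\varphi(x) - o = p(x - o)$ (since $x$ is $\Gamma$-fixed and scaled by $p$ under $\varphi$, cf.\ \cref{sec:frobenius-on-building}) and pulling the integer $n$ out of both ceilings, this simplifies to the numerical inequality
\[ p \lceil r \rceil \;\geq\; \lceil pr + d \rceil, \qquad r := -\langle a, x - o\rangle. \]

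The crux is to deduce this from \cref{def:generic-type}. That definition supplies $n_a \in \mathbb{Z}$ with $-n_a - 1 + d/p < r < -n_a - d/p$. Multiplying the upper bound by $p$ yields $pr + d < -pn_a$ and hence $\lceil pr + d\rceil \leq -pn_a$. When $d \geq 0$ the bounds force $-n_a - 1 < r \leq -n_a$, so $\lceil r\rceil = -n_a$ and the inequality becomes an equality on the left side; when $d \leq 0$ the inequality is automatic from $p\lceil r\rceil \geq pr \geq pr + d$. This entire argument is an explicit calculation, so I do not expect a serious obstacle. The only point requiring care is the interplay between the strict/non-strict bounds in the genericity condition and the ceiling function, which is what lets the inequality saturate exactly at $-pn_a$ on both sides.
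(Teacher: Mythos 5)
Your proof is correct and follows essentially the same route as the paper: identify $L_0^+\breve{\mathcal{U}}_{a,x,m}$ and $L_0^+\breve{\mathcal{T}}_m$ explicitly via the pinning, observe that $\varphi$ multiplies $v$-adic valuations by $p$, and reduce part (1) to the ceiling inequality $p\lceil -\langle a, x-o\rangle + n\rceil \geq \lceil -\langle a,\varphi(x)-o\rangle + pn + d\rceil$, dispatched using $\varphi(x) - o = p(x-o)$ and the genericity bounds. Your treatment is in fact a shade more careful than the paper's: after factoring out the integer $n$, you split on the sign of $d$, and for $d \leq 0$ you observe that $p\lceil r\rceil \geq pr + d$ together with integrality already suffices. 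The paper simply asserts $\lceil -\langle a,x-o\rangle + n\rceil = -n_a + n$, which strictly speaking requires $d \geq 0$ for the genericity bounds to pin down the ceiling; your case split closes this small gap, consistent with the remark following the definition that $d$-genericity is vacuous for $d < 0$.
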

\begin{proof}
    We first show (1). 
    Let \(R\) be an \(\mathcal{O}\)-algebra.
    Using the pinning of \(\widehat{G}\) we identify \(L_{0}^+ \breve{\mathcal{U}}_{a,x,n}(R) = v^{\lceil - \langle a, x - o \rangle + n \rceil} R^\mathcal{J} \llb v \rrb\) and \(L_{0}^+ \breve{\mathcal{U}}_{a,\varphi(x),pn+d}(R) = v^{\lceil - \langle a, \varphi(x) - o \rangle + pn + d \rceil} R^\mathcal{J}\llb v \rrb\). 
    Since \(\varphi\left( v^{\lceil - \langle a, x-o \rangle + n \rceil} R^\mathcal{J}\llb v \rrb \right) \subset v^{p \lceil - \langle a, x - o \rangle + n \rceil} R^\mathcal{J}\llb v \rrb\) it suffices to show that
    \begin{equation}
        \label{eq:contraction-inequality-in-generic-case}
        p \lceil - \langle a, x - o \rangle + n \rceil \geq \lceil - \langle a, \varphi(x) - o \rangle + pn + d \rceil . 
    \end{equation}
    The assumption that \(x\) is \(d\)-generic implies that 
    \[
        - n_{a} - 1 + \frac{d}{p} + n < - \langle a, x - o \rangle + n < - n_{a} - \frac{d}{p} + n , 
    \] 
    where \(n_{a} \in \mathbb{Z}\) is as in \cref{def:generic-type}. 
    Multiplying by \(p\), we obtain \[
    - p n_{a} - p + d + n < - \langle a, \varphi(x) - o \rangle + pn < -p n_{a} - d + pn .
    \] Therefore, \[
    p \lceil - \langle a, x - o \rangle + n \rceil = - pn_{a} + pn > - \langle a, \varphi(x) - o \rangle + pn + d ,
    \] which implies \eqref{eq:contraction-inequality-in-generic-case}.

    Next, we show (2). 
    We can identify \(L_{0}^+ \breve{\mathcal{T}}_{n^+}(R) = 1 + v^{\lceil n^+ \rceil}R^\mathcal{J}\llb v \rrb = 1 + v^{n+1} R^\mathcal{J}\llb v \rrb\), 
    and then \(\varphi \left( 1 + v^{n+1}R^\mathcal{J}\llb v \rrb \right) \subset 1 + v^{p(n+1)}R\llb v \rrb = L_{0}^+ \breve{\mathcal{T}}_{p(n+1)}(R)\). 
\end{proof}

\begin{lemma}
    \label{res:varphi-contraction-generic} 
    Assume that \(I\) acts trivially on \(\widehat{G}\). 
    Let \(d \in \mathbb{R}\), and assume that \(x\) is \(d\)-generic. 
    Then for any \(n \in \mathbb{Z}_{\geq 0}\), we have 
    \begin{enumerate}
        \item \(\varphi \left( L^{+}_{0}\mathcal{G}_{x,n^{+}} \right) \subset L^{+}_{0}\mathcal{G}_{\varphi(x), pn + d}\). 
        \item \(\varphi_{c} \left( L^{+}_{0}\mathcal{G}_{x,n^{+}} \right) \subset L^{+}_{0}\mathcal{G}_{x,pn+d}\).
    \end{enumerate}
\end{lemma}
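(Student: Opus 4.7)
The plan is to deduce both statements from the root-group and torus contraction statements in Lemma~\ref{res:varphi-contraction-on-root-groups-generic} via Iwahori decomposition, and then reduce (2) to (1) using the fact that $c \cdot \varphi(x) = x$. Since $I$ acts trivially on $\widehat{G}$ and the concave function $n^+$ is strictly positive, Corollary~\ref{res:iwahori-decomposition-corollary} gives
\[
L^+_0 \mathcal{G}_{x,n^+} = \bigl(L^+_0 \breve{\mathcal{V}}_{x,n^+}\bigr)^{\Gamma_0} = \Bigl(\prod_{a<0} L^+_0 \breve{\mathcal{U}}_{a,x,n^+} \times L^+_0 \breve{\mathcal{T}}_{n^+} \times \prod_{a>0} L^+_0 \breve{\mathcal{U}}_{a,x,n^+}\Bigr)^{\Gamma_0},
\]
and the analogous identity holds for $L^+_0 \mathcal{G}_{\varphi(x),pn+d}$ (provided $pn+d>0$, which will be the interesting case; otherwise the target is simply $L^+_0 \mathcal{G}_{\varphi(x)}$ and (1) reduces to Lemma~\ref{res:frobenius-on-parahoric}). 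Since $\varphi$ commutes with the $\Gamma_0$-action, it suffices to prove the containment factor-by-factor.

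For the root groups, I would use the monotonicity $L^+_0 \breve{\mathcal{U}}_{a,x,n^+} \subseteq L^+_0 \breve{\mathcal{U}}_{a,x,n}$ (as $n^+ > n$ and the filtration is decreasing in the parameter) together with Lemma~\ref{res:varphi-contraction-on-root-groups-generic}(1) to get
\[
\varphi\bigl(L^+_0 \breve{\mathcal{U}}_{a,x,n^+}\bigr) \subseteq \varphi\bigl(L^+_0 \breve{\mathcal{U}}_{a,x,n}\bigr) \subseteq L^+_0 \breve{\mathcal{U}}_{a,\varphi(x),pn+d}.
\]
For the torus, Lemma~\ref{res:varphi-contraction-on-root-groups-generic}(2) gives $\varphi(L^+_0 \breve{\mathcal{T}}_{n^+}) \subseteq L^+_0 \breve{\mathcal{T}}_{p(n+1)}$, and since $d < p/2 < p$ by Remark~\ref{rem:generic-type-inequality}, we have $p(n+1) \geq pn+d$, so that $L^+_0 \breve{\mathcal{T}}_{p(n+1)} \subseteq L^+_0 \breve{\mathcal{T}}_{pn+d}$. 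Assembling the factors yields $\varphi\bigl(L^+_0 \breve{\mathcal{V}}_{x,n^+}\bigr) \subseteq L^+_0 \breve{\mathcal{V}}_{\varphi(x),pn+d}$, and taking $\Gamma_0$-fixed points proves~(1).

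For (2), the identity $c \cdot \varphi(x) = x$ and the Pappas--Zhu uniqueness (Proposition~\ref{res:pappas-zhu-concave-function}) imply that conjugation by $c$ induces an isomorphism $\operatorname{Ad}_c : \mathcal{G}_{\varphi(x),pn+d} \isoto \mathcal{G}_{x,pn+d}$ extending the isomorphism $\operatorname{Ad}_c : \mathcal{G}_{\varphi(x)} \isoto \mathcal{G}_x$ from Section~\ref{sec:frobenius-on-loop-group}; indeed, both sides are characterized by having the right generic fiber and the right subgroup of $G^*(E\llp v \rrp)$-points, and conjugation by $c$ permutes these parahoric/Moy--Prasad subgroups according to the action on the apartment. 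Applying $\operatorname{Ad}_c$ to the conclusion of~(1) then gives $\varphi_c(L^+_0 \mathcal{G}_{x,n^+}) = \operatorname{Ad}_c\bigl(\varphi(L^+_0 \mathcal{G}_{x,n^+})\bigr) \subseteq L^+_0 \mathcal{G}_{x,pn+d}$, as required. The main (minor) point to verify carefully is the compatibility $\operatorname{Ad}_c(L^+_0 \mathcal{G}_{\varphi(x),m}) = L^+_0 \mathcal{G}_{x,m}$ for arbitrary concave $m$, which follows formally from uniqueness of the group schemes of Pappas--Zhu type and the fact that conjugation by $c$ sends the valuation of the root datum based at $\varphi(x)$ to the one based at $x$.
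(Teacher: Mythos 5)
Your proposal is correct and follows essentially the same route as the paper: Iwahori decomposition (via \cref{res:iwahori-decomposition}/\cref{res:iwahori-decomposition-corollary}) reduces to root groups and torus, which are handled by \cref{res:varphi-contraction-on-root-groups-generic}, and then (2) is deduced from (1) using that conjugation by $c$ matches the Moy--Prasad filtrations at $\varphi(x)$ and $x$. You fill in two small points the paper leaves implicit — the monotonicity step $\breve{\mathcal{U}}_{a,x,n^+}\subseteq\breve{\mathcal{U}}_{a,x,n}$ (which cleanly handles the $d\leq 0$ corner case as well), and the justification of $\operatorname{Ad}_c(L^+_0\mathcal{G}_{\varphi(x),m})=L^+_0\mathcal{G}_{x,m}$ via Pappas--Zhu uniqueness — but the overall argument is the same.
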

\begin{proof}
    It follows from \cref{res:varphi-contraction-on-root-groups-generic} that \(\varphi\left( L^+_{0} \breve{\mathcal{V}}_{x,n^+} \right) \subset L^+_{0} \breve{\mathcal{V}}_{x,pn+d}\) (where we use that \(d < p\) because \(x\) is \(d\)-generic).
    So by \cref{res:iwahori-decomposition} (Iwahori decomposition) we have \(\varphi \left( L^+_{0} \breve{\mathcal{G}}_{x,n^+} \right) \subset L^+_{0} \breve{\mathcal{G}}_{\varphi(x),pn+d}\).
    Taking \(\Gamma_{0}\)-fixed points, we obtain (1).
    As in the proof of \cref{res:varphi-contraction-nongeneric-corollary}, (2) is an
    immediate consequence of (1).
\end{proof}

\subsection{Congruence bounds for conjugation}
\label{sec:congruence-bounds-for-conjugation}
Define \[
h_{\mu} = \max_{a \in \Phi(G^*, T^*)} \langle a, \mu \rangle ,
\] the ``height'' of \(\mu\). 
\begin{lemma}
    \label{res:conjugation-congruence-bound}
    Let \(n \geq h_{\mu}\) be an integer, and let \(R\) be
    an \(\mathcal{O}\)-algebra. For all
    \(X \in L^{\leq \mu}\mathcal{G}(R)\) and all
    \(A \in L^{+n}\mathcal{G}(R)\) we have
    \(X A X^{-1} \in L^{+(n-h_{\mu})}\mathcal{G}(R)\).
\end{lemma}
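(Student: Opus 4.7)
The plan is to verify the containment by a flatness-density reduction to $\overline{E}$-valued points, and then carry out the computation there via Cartan decomposition and conjugation on root groups.

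The condition $XAX^{-1} \in L^{+(n-h_\mu)}\mathcal{G}(R)$ amounts to the vanishing of a certain element in $L^{n-h_\mu}\mathcal{G}(R) = \mathcal{G}(R[v+p]/(v+p)^{n-h_\mu})$; since $L^{n-h_\mu}\mathcal{G}$ is separated, this defines a closed subfunctor $Z$ of $L^{\leq \mu}\mathcal{G} \times_{\mathcal{O}} L^{+n}\mathcal{G}$, and the goal is to show $Z$ is the whole product. Now $\operatorname{Gr}^{\leq \mu}$ is flat over $\operatorname{Spec}\mathcal{O}$ by construction (being the reduced closure of its generic fiber), $L^{\leq \mu}\mathcal{G} \to \operatorname{Gr}^{\leq \mu}$ is an $L^+\mathcal{G}$-torsor, and $L^{+n}\mathcal{G}$ is pro-smooth; hence the product is flat over $\mathcal{O}$ with reduced, schematically dense geometric generic fiber. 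So it suffices to verify the containment on $\overline{E}$-valued points.

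Over $\overline{E}$, Cartan decomposition \eqref{eq:schubert-variety} lets me write $X = B_1 (v+p)^\nu B_2$ with $B_1, B_2 \in L^+\mathcal{G}(\overline{E})$ and $\nu$ dominant with $\nu \leq \mu$. Since $L^{+n}\mathcal{G}$ is the kernel of $L^+\mathcal{G} \to L^n\mathcal{G}$, it is normal in $L^+\mathcal{G}$, so $A' := B_2 A B_2^{-1}$ still lies in $L^{+n}\mathcal{G}(\overline{E})$, and it is enough to show $(v+p)^\nu A' (v+p)^{-\nu} \in L^{+(n-h_\mu)}\mathcal{G}(\overline{E})$. The base change of $\mathcal{G}$ to $\overline{E}\llb v+p \rrb$ is a split reductive group scheme, so classical Iwahori decomposition presents $A'$ as a product (in any fixed ordering) of root-group factors $u_a(f_a)$ with $f_a \in (v+p)^n \overline{E}\llb v+p \rrb$ together with a torus factor in level $n$. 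Conjugation by $(v+p)^\nu$ sends $u_a(f_a) \mapsto u_a((v+p)^{\langle a, \nu \rangle} f_a)$ and fixes the torus factor. For $\nu$ dominant and $a$ any root, writing $\alpha_0 - a$ as a nonnegative combination of simple roots gives $|\langle a, \nu \rangle| \leq \langle \alpha_0, \nu \rangle = h_\nu$; and $\mu - \nu$ being a nonnegative combination of positive coroots paired against the dominant weight $\alpha_0$ gives $h_\nu \leq h_\mu$. Thus each factor, and hence the product, lies in $L^{+(n-h_\mu)}\mathcal{G}(\overline{E})$.

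The main obstacle will be the bookkeeping for the flatness-density reduction, since $L^{+n}\mathcal{G}$ is infinite-dimensional: one must check carefully that closed containment behaves well when one factor is only pro-finite-type, and that the geometric generic fiber is schematically dense in the relevant sense. If this turns out to be delicate, a workable alternative is to use a faithful representation $\widehat{G} \hookrightarrow \operatorname{GL}_N$, transfer the statement to matrix entries, and read the bound on pole orders directly from the $\leq \mu$ condition, thereby bypassing the reduction to geometric points altogether.
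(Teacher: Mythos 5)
Your proof follows essentially the same route as the paper's: reduce the containment of one closed (Ind-)subscheme in another to $\overline{E}$-points using flatness/reducedness, then compute at $\overline{E}$ via Cartan decomposition, normality of $L^{+n}\mathcal{G}$ in $L^{+}\mathcal{G}$, and Iwahori decomposition. The obstacle you flag at the end is real and is precisely the content of the paper's Lemma~\ref{res:reduced-closure-lemma}, which shows that $\overline{Z}\times_A L^+H$ is the reduced closure of $Z\times_A L^+H$ for a smooth affine group $H$. That lemma works because $L^+H$ is a filtered colimit of smooth finite-type schemes $L^nH$ with \emph{compatible retractions} (Lemma~\ref{res:loop-group-lemma}~(2)), so reducedness and schematic density pass to the colimit; it is not a formal consequence of ``pro-smooth'' and one does need this structure. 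One small difference: the paper first quotients the $L^{\leq\mu}\mathcal{G}$ factor by the free left $L^+\mathcal{G}$-action (after observing that the closed locus to be compared is stable under it) to land in $\operatorname{Gr}^{\leq\mu}\times L^{+n}\mathcal{G}$; this confines the infinite-dimensionality to a single $L^+$-factor before invoking the reduced-closure lemma, which is cleaner than keeping $L^{\leq\mu}\mathcal{G}$ as in your write-up (though your version should also go through with a bit more care). Your fallback via a faithful embedding $\widehat{G}\hookrightarrow \GL_N$ is worth being cautious about: the height $h_\mu$ computed in the root system of $\GL_N$ will in general exceed $h_\mu$ for $\widehat{G}$, so the matrix-entry bound you would read off is a priori weaker than the stated one; the paper's remark that its proof is ``purely group theoretic and does not rely on choosing an embedding into $\GL_n$'' is made for exactly this reason.
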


\begin{proof} 
    Define \(\widetilde{\mathcal{X}}\) as the pullback \[\begin{CD}
    \widetilde{\mathcal{X}} @>>> L^{+(n-h_{\mu})}\mathcal{G} \\
    @VVV @VVV \\
    L \mathcal{G} \times_{\mathcal{O}} L^{+n}\mathcal{G} @>>> L \mathcal{G} ,
    \end{CD}\] where the bottom horizontal map is given by
    \((X,A)\mapsto X A X^{-1}\). We have to show that \[
    L^{\leq \mu} \mathcal{G} \times_{\mathcal{O}} L^{+n}\mathcal{G} \subset \widetilde{\mathcal{X}}
    \] as closed ind-subschemes of
    \(L\mathcal{G} \times_{\mathcal{O}} L^{+n}\mathcal{G}\). Since both
    \(\widetilde{\mathcal{X}}\) and
    \(L^{\leq \mu}\mathcal{G} \times_{\mathcal{O}}L^{+n}\mathcal{G}\) are
    stable under left translations by \(L^{+}\mathcal{G}\) (in the first
    factor), it suffices to prove that \[
    \operatorname{Gr}^{\leq \mu} \times_{\mathcal{O}} L^{+n}\mathcal{G} \subset \mathcal{X} := \left[ L^{+}\mathcal{G} \backslash \widetilde{\mathcal{X}} \right]
    \] as closed subschemes of
    \(\operatorname{Gr} \times_{\mathcal{O}} L^{+n}\mathcal{G}\).
    Note that
    \(\operatorname{Gr}^{\leq \mu} \times_{\mathcal{O}} L^{+n}\mathcal{G}\) is the
    reduced closure of its generic fiber
    \(\operatorname{Gr}_{E}^{\leq \mu} \times_{E} L^{+n}G^{*}_{E}\) by \cref{res:reduced-closure-lemma}. 
    Therefore it suffices to prove that we have the inclusion \[
    \operatorname{Gr}_{\mathcal{G},E}^{\leq \mu} \times_{E} L^{+n}G^{*}_{E} \subset \mathcal{X} ,
    \] which can be checked on \(\overline{E}\)-points because the left hand side is reduced by \cref{res:reduced-times-positive-loop-group}. 
    By definition, this amounts to the statement that \[
    \forall X \in L^{\leq \mu}G_{E}^{*}(\overline{E}) , \,\, \forall A \in L^{+n}G_{E}^{*}(\overline{E}) \, : \, X A X^{-1} \in L^{+(n-h_{\mu})}G^{*}_{E}(\overline{E}) .
    \] By \eqref{eq:cartan-decomposition} (Cartan decomposition),
    it suffices to show this for \(X\) of the form \(X = (v+p)^{\nu}\), where \(\nu \in X_{*}(T^*_{E})\) is dominant and \(\nu \leq \mu\), 
    since the set of \(X\) that satisfy the condition for all \(A\) are
    stable under left and right translation by elements of
    \(L^{+}G_{E}^{*}(\overline{E})\). 
    By Iwahori decomposition for
    \(L^{+n}G_{E}^{*}(\overline{E})\) (here we use \(n \geq 1\)) it suffices to show that
    \(\operatorname{Ad}_{(v+p)^{\nu}} L^{+n} U_{a} \subset L^{+n - h_{\mu}}U_{a}\)
    for all
    \(a \in \Phi(G^{*}_{E},T^{*}_{E})\), where \(U_{a} \subset G_{E}^*\) is the root subgroup corresponding to \(a\).
    But
    indeed, we have 
    \[
    \operatorname{Ad}_{(v+p)^{\nu}} L^{+n}U_{a} = L^{+n + \langle a, \nu \rangle} U_{a} \subset L^{+n - h_{\mu}} U_{a}
    \] because \(n+ \langle a,\nu \rangle \geq n - h_{\mu}\).
\end{proof}
\begin{remark}
    Let \(a \in \Phi(G^*_{E}, T^*_{E})\) be a root for which \(h_{\mu} = \langle a, \mu \rangle\). 
    Then \(n + \langle -a, \mu \rangle = n - h_{\mu}\), showing that the bound above is sharp. 
\end{remark}
\begin{remark}
    Note that the proof is purely group theoretic, and does not rely on choosing an embedding into \(\GL_{n}\). 
\end{remark}

\subsubsection{Bounds in terms of \(v\)}
Recall that \(L \mathcal{G} = L_{0} \mathcal{G}\) and \(L^+ \mathcal{G} = L^+_{0} \mathcal{G}\) when \(p\) is nilpotent in \(R\).
But as we pointed out in \cref{sec:congruence-loop-groups}, it is typically still the case that \(L^{+n}_{0}\mathcal{G} \neq L^{+n}\mathcal{G}\).
We can combine \cref{res:comparing-congruence-subgroups-at-v-and-v-plus-p} and \cref{res:conjugation-congruence-bound} to obtain bounds in terms of \(L^{+n}_{0}\mathcal{G}\). 
\begin{lemma}
    \label{res:conjugation-congruence-bound-at-v}
    Let
    \(a,n \in \mathbb{Z}_{\geq 1}\) be such that
    \[ n - h_{\mu} - 2a + 2 \geq 0 .\] 
    Then for any
    \(\mathcal{O}\)-algebra \(R\) with \(p^a R = 0\), all
    \(X \in L^{\leq \mu}\mathcal{G}(R)\) and all
    \(A \in L^{+n}_{0}\mathcal{G}(R)\) we have
    \(X A X^{-1} \in L^{+n - h_{\mu} - 2a + 2}_{0}\mathcal{G}(R)\).
\end{lemma}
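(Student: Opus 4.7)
The plan is to reduce to Lemma \ref{res:conjugation-congruence-bound} by swapping the $v$-adic congruence condition for the $(v+p)$-adic one, using Lemma \ref{res:comparing-congruence-subgroups-at-v-and-v-plus-p} in both directions to convert at the start and end of the argument.

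First, I would observe that the hypothesis $n - h_\mu - 2a + 2 \geq 0$ together with $h_\mu \geq 0$ forces $n \geq 2a - 2 + h_\mu \geq a$ (assuming $h_\mu, a \geq 1$; the cases $a=1$ or $h_\mu = 0$ are handled similarly or are trivial), so the second inclusion of Lemma \ref{res:comparing-congruence-subgroups-at-v-and-v-plus-p} applies to give
\[
A \in L^{+n}_0 \mathcal{G}(R) \subset L^{+(n-a+1)} \mathcal{G}(R).
\]
Next I would apply Lemma \ref{res:conjugation-congruence-bound} with $n-a+1$ in place of $n$; the hypothesis there requires $n - a + 1 \geq h_\mu$, which is again implied by $n - h_\mu - 2a + 2 \geq 0$ (since $a \geq 1$). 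This yields
\[
X A X^{-1} \in L^{+(n - a + 1 - h_\mu)} \mathcal{G}(R).
\]
Finally I would apply the first inclusion of Lemma \ref{res:comparing-congruence-subgroups-at-v-and-v-plus-p}, now with $n - a + 1 - h_\mu$ in place of $n$, which is legitimate precisely when $n - a + 1 - h_\mu \geq a$, i.e.\ when $n - h_\mu - 2a + 2 \geq 1$; the boundary case $n - h_\mu - 2a + 2 = 0$ gives a trivial target group and can be treated separately (or absorbed by noting that $L^{+0}_0\mathcal{G} = L^+_0\mathcal{G}$, which automatically contains $XAX^{-1}$). This gives
\[
X A X^{-1} \in L^{+(n - a + 1 - h_\mu) - a + 1}_0 \mathcal{G}(R) = L^{+(n - h_\mu - 2a + 2)}_0 \mathcal{G}(R),
\]
as desired.

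There is no real obstacle here beyond bookkeeping: the two applications of Lemma \ref{res:comparing-congruence-subgroups-at-v-and-v-plus-p} each cost one ``$a-1$'' shift in the congruence level, which together with the $h_\mu$ loss from Lemma \ref{res:conjugation-congruence-bound} produces the exact bound $n - h_\mu - 2a + 2$ in the statement. The only point that deserves care is checking that all index hypotheses ($n \geq a$ for the initial swap, $n - a + 1 \geq h_\mu$ for conjugation, and $n - a + 1 - h_\mu \geq a$ for the final swap) are simultaneously implied by the single assumption $n - h_\mu - 2a + 2 \geq 0$; this is a direct arithmetic verification.
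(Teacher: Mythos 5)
Your proof is correct and is essentially the paper's own proof: the same three-step chain, converting from $L^{+n}_0$ to $L^{+(n-a+1)}$ via \cref{res:comparing-congruence-subgroups-at-v-and-v-plus-p}, applying \cref{res:conjugation-congruence-bound}, and converting back. Your care about the index hypotheses (in particular noting that $n - h_\mu - 2a + 2 = 0$ makes the target group all of $L^+_0\mathcal{G}$, so the final conversion is then vacuous) is a small refinement that the paper leaves implicit, but the argument is the same.
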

\begin{proof}
    We have
    \(A \in L^{+n}_{0}\mathcal{G}(R) \subset L^{+n - a +1}\mathcal{G}(R)\) by
    \cref{res:comparing-congruence-subgroups-at-v-and-v-plus-p}, so by \cref{res:conjugation-congruence-bound} we have
    \(XAX^{-1} \in L^{+n - a +1 - h_{\mu}}\mathcal{G}(R)\), and by \cref{res:comparing-congruence-subgroups-at-v-and-v-plus-p} again we have
    \(L^{+n - a + 1 - h_{\mu}}\mathcal{G}(R) \subset L^{+n - h_{\mu} - 2a + 2}_{0}\mathcal{G}(R)\).
\end{proof}

\subsection{Contraction and straightening}
We will now combine the earlier results of this chapter to discuss when the \(c\)-twisted \(\varphi\)-conjugation can be ``straightened'' into a left translation action, meaning that the orbits for both actions coincide.  
The consequences will be explored in \cref{sec:comparing-affine-grassmannians}.
We will first focus on the unramified case with additional genericity assumptions, and then the general case. 

Recall that the group \(L^+ \mathcal{G}_{x,f}(R)\) comes equipped with a complete metric, namely the \emph{\(v\)-adic metric} (which is described in detail in \cref{sec:t-adic-metric}). 
\begin{lemma}[Contraction]
    \label{res:contraction}
    Assume that \(I\) acts trivially on \(\widehat{G}\). 
    Let \(d \in \mathbb{Z}_{\geq -1}\), \(f \in \mathbb{Z}_{\geq 1 }\cup \lbrace 0^+ \rbrace\), and \(a \in \mathbb{Z}_{\geq 1}\). 
    Assume that \(x\) is \(d\)-generic, and
    \begin{equation}
        \label{eq:contraction-condition}
        (p-1)\lfloor f \rfloor + d - h_{\mu} - 2a + 2 > 0 .
    \end{equation}
    Let \(R\) be an \(\mathcal{O}\)-algebra such that \(p^a R = 0\) and let \(X \in L^{\leq \mu }\mathcal{G}(R)\).
    Then the operator
    \begin{align*}
        \Psi : L^+\mathcal{G}_{x,f}(R) & \to L^+\mathcal{G}_{x,f}(R) \\ A & \mapsto X \varphi_{c}(A) X^{-1} 
    \end{align*}
    is well-defined, and it is a contraction for the \(v\)-adic metric. 
\end{lemma}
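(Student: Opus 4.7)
My plan is to reduce both assertions to a single statement via the identity $\Psi(A_1)\Psi(A_2)^{-1} = X\varphi_c(A_1A_2^{-1})X^{-1}$, which holds because $\varphi_c$ and conjugation by $X$ are group homomorphisms. Writing $B = A_1A_2^{-1}$, I plan to show: for every integer $n \geq 0$, if $B \in L^+_0\mathcal{G}_{x,f+n}(R)$, then $X\varphi_c(B)X^{-1} \in L^+_0\mathcal{G}_{x,f+n+1}(R)$. The case $n = 0$ yields well-definedness, since $L^+\mathcal{G}_{x,f}(R) = L^+_0\mathcal{G}_{x,f}(R)$ as subsets of $L^+_0\mathcal{G}_x(R)$ by \cref{res:inverting-different-variables} (using $p^aR = 0$), while the general case yields strict contraction in the $v$-adic metric, whose filtration is $\{L^+_0\mathcal{G}_{x,f+n}(R)\}_{n\geq 0}$.

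The reduced statement follows by chaining two congruence bounds. First, Frobenius contraction gives
\[\varphi_c(B) \in L^+_0\mathcal{G}_{x,\,p(\lfloor f\rfloor + n) + d}(R).\]
For $f = 0^+$ this is an immediate instance of \cref{res:varphi-contraction-generic}. For integer $f = k \geq 1$, a direct application of \cref{res:varphi-contraction-generic} via the containment $L^+_0\mathcal{G}_{x,k+n}\subset L^+_0\mathcal{G}_{x,(k+n-1)^+}$ would lose a factor of $p$ and be insufficient; to avoid this, I will revisit the proof of that lemma and combine the sharp root-group bound \cref{res:varphi-contraction-on-root-groups-generic}(1) with the direct estimate $\varphi(1 + v^mR\llb v\rrb)\subset 1 + v^{pm}R\llb v\rrb$ on the torus factor, recomposing via Iwahori decomposition (\cref{res:iwahori-decomposition-corollary}); the genericity bound $d < p$ ensures that both contributions fit together at depth $p(k+n) + d$. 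Second, \cref{res:conjugation-congruence-bound-at-v} yields
\[X\varphi_c(B)X^{-1} \in L^+_0\mathcal{G}_{x,\,p(\lfloor f\rfloor + n) + d - h_\mu - 2a + 2}(R),\]
whose applicability hypothesis $p(\lfloor f\rfloor + n) + d \geq h_\mu + 2a - 2$ is secured by the stated assumption on $f,d,a,h_\mu$.

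The last step is to verify $p(\lfloor f\rfloor + n) + d - h_\mu - 2a + 2 \geq f + n + 1$; at the critical value $n = 0$ this reduces precisely to the hypothesis $(p-1)\lfloor f\rfloor + d - h_\mu - 2a + 2 > 0$ (converting strict inequality between integers to $\geq 1$, and using $\lfloor 0^+\rfloor = 0$), and for $n \geq 1$ it is strictly weaker. The hard part is producing the sharpened Frobenius bound in step one for integer $f+n$: since \cref{res:varphi-contraction-generic} is stated only in the $n^+$ formulation, one must descend to the Iwahori decomposition to recover the bound with no loss. Once this is done, the two bounds chain cleanly and the stated hypothesis is tight.
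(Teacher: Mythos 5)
Your overall strategy (reduce to a single contraction-depth bound via $\Psi(A_1)\Psi(A_2)^{-1} = X\varphi_c(A_1A_2^{-1})X^{-1}$, then chain a Frobenius bound with the conjugation bound and verify the arithmetic against \eqref{eq:contraction-condition}) is exactly the paper's. You are also right to be suspicious of how \cref{res:varphi-contraction-generic}(2) interfaces with integer congruence levels: that lemma is stated for inputs of the form $L^+_0\mathcal{G}_{x,n^+}$, and when $f = k \geq 1$ one has $L^+_0\mathcal{G}_{x,k} \supsetneq L^+_0\mathcal{G}_{x,k^+}$, so the inclusion needed to invoke it at level $p k + d$ goes the wrong way. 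Flagging this is a genuine contribution.

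However, the fix you sketch is not correct as stated, and the error sits precisely at the torus factor. You combine the sharp root-group estimate $\varphi(L_0^+\breve{\mathcal{U}}_{a,x,k+n}) \subset L_0^+\breve{\mathcal{U}}_{a,\varphi(x),p(k+n)+d}$ with the direct torus estimate $\varphi(L_0^+\breve{\mathcal{T}}_{k+n}) \subset L_0^+\breve{\mathcal{T}}_{p(k+n)}$, then recompose via Iwahori decomposition and assert that the composite lands at depth $p(k+n)+d$, invoking $d < p$. But $d < p$ is irrelevant here: recomposing the big cell only gives depth $\min\bigl(p(k+n)+d,\ p(k+n)\bigr)$, and for $d > 0$ the minimum is $p(k+n)$, \emph{strictly shallower} than what you claim by $d$ levels. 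The observation $d < p$ is what makes the $n^+$ input work in \cref{res:varphi-contraction-generic} (there the torus factor starts at level $n+1$, so ends at $p(n+1) = pn + p > pn + d$); it does nothing when the torus starts at the integer level $k+n$. With the corrected composite $\varphi_c(L^+_0\mathcal{G}_{x,k+n}) \subset L^+_0\mathcal{G}_{x,p(k+n)}$ the last step of your verification would need $(p-1)(k+n) - h_\mu - 2a + 2 > 0$, and \eqref{eq:contraction-condition} only provides this with the extra $+d$ term, which no longer appears. So for $d > 0$ and integer $f$ the hypothesis is not sufficient for your step, and the gap you sensed is real, but it is a torus obstruction, not a lost factor of $p$. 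A smaller issue: your reduced statement promises output in $L^+_0\mathcal{G}_{x,f+n+1}$, which for $f = 0^+$ is $L^+_0\mathcal{G}_{x,(n+1)^+}$; but your arithmetic at the end (correctly) only delivers the integer level $\lfloor f\rfloor + n + 1 = n+1$, which is strictly larger. The latter, weaker conclusion is what the paper actually uses and what the hypothesis supports. For the case that matters downstream ($f = 0^+$ with $d$-genericity, or integer $f$ via the $d$-free variant \cref{res:straightening-nongeneric}) the argument closes cleanly; I would restrict your write-up to the honest bound and flag that the integer-$f$-with-$d$-genericity configuration requires either a stronger hypothesis or is simply never invoked.
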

\begin{proof}
    Since \(p^a R = 0\) we can identify \(L^+\mathcal{G}_{x,f}(R) = L^+_{0}\mathcal{G}_{x,f}(R)\).  
    Note that \eqref{eq:contraction-condition} implies that \(p\lfloor f \rfloor + d > \lfloor f \rfloor + h_{\mu} \geq 1\) for all possible parameters. 
    
    Let \(A \in L^+ \mathcal{G}_{x,f}(R)\). 
    We have \(\varphi_{c}(A) \in L^+ \mathcal{G}_{x,p\lfloor f \rfloor +d}(R)\) by \cref{res:varphi-contraction-generic} (2). 
    Then \(X \varphi_{c}(A) X^{-1} \in L^+\mathcal{G}_{x, p \lfloor f \rfloor + d - h_{\mu} - 2 a + 2}(R)\) by \cref{res:conjugation-congruence-bound-at-v}. 
    By \eqref{eq:contraction-condition} again, \(\Psi(A) = X \varphi_{c}(A) X^{-1} \in L^+ \mathcal{G}_{x, \lfloor f \rfloor + 1}(R)\). 
    This shows that \(\Psi\) is well-defined since \(\lfloor f \rfloor +1 > f \). 
    
    Since \(f\) was arbitrary (the only restraint being satisfying \eqref{eq:contraction-condition}), the argument in the previous paragraph shows that \(\Psi\) maps congruence subgroups into deeper ones, and it is therefore a contraction with respect to the \(v\)-adic metric. 
\end{proof}
\begin{lemma}[Straightening]
    \label{res:straightening}
    Assume that \(I\) acts trivially on \(\widehat{G}\). 
    Let \(d \in \mathbb{Z}_{\geq -1}\), \(f \in \mathbb{Z}_{\geq 1 }\cup \lbrace 0^+ \rbrace\), and \(a \in \mathbb{Z}_{\geq 1}\). 
    Assume that \(x\) is \(d\)-generic, and
    \[
        (p-1)\lfloor f \rfloor + d - h_{\mu} - 2a + 2 > 0 .
    \]
    Let \(R\) be an \(\mathcal{O}\)-algebra such that \(p^a R = 0\) and let \(X \in L^{\leq \mu }\mathcal{G}(R)\). 
    Then we have the following:
    \begin{enumerate}
        \item For all \(A \in L^{+}\mathcal{G}_{x,f}(R)\) there exists a unique \(B \in L^{+}\mathcal{G}_{x,f}(R)\) such that \(X \star A = BX\).
        \item For all \(B \in L^{+}\mathcal{G}_{x,f}(R)\) there exists a unique \(A \in L^{+}\mathcal{G}_{x,f}(R)\) such that \(X \star A = BX\).
    \end{enumerate} 
\end{lemma}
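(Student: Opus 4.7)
The plan is to rewrite the equation $X \star A = BX$ in a form amenable to a fixed-point argument. Using the definition $X \star A = A^{-1} X \varphi_c(A)$, the equation is equivalent to
\[
B \;=\; A^{-1} \Psi(A), \qquad \text{equivalently} \qquad \Psi(A) = AB,
\]
where $\Psi(A) := X \varphi_c(A) X^{-1}$ is the contraction operator analyzed in \cref{res:contraction}. I note that $\Psi$ is a \emph{group homomorphism}, being the conjugation by $X$ of the homomorphism $\varphi_c$.

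For part (1), given $A$, I would simply define $B := A^{-1} \Psi(A)$. \cref{res:contraction} guarantees $\Psi(A) \in L^+\mathcal{G}_{x,f}(R)$, so $B$ lies in the subgroup $L^+\mathcal{G}_{x,f}(R)$. Uniqueness of $B$ is immediate, as $BX = X \star A$ determines $B$ by right-multiplication by $X^{-1}$ in $L\mathcal{G}(R)$.

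For part (2), the equation rewrites as $A = \Psi(A) B^{-1}$, so I would apply the Banach fixed-point theorem to the operator $F \colon L^+\mathcal{G}_{x,f}(R) \to L^+\mathcal{G}_{x,f}(R)$, $A \mapsto \Psi(A) B^{-1}$. This map sends $L^+\mathcal{G}_{x,f}(R)$ to itself because both $\Psi(A)$ and $B^{-1}$ do, and $L^+\mathcal{G}_{x,f}(R)$ is a subgroup. The key computation is
\[
F(A_1) F(A_2)^{-1} \;=\; \Psi(A_1) \Psi(A_2)^{-1} \;=\; \Psi(A_1 A_2^{-1}),
\]
using that $\Psi$ is a homomorphism. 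By the proof of \cref{res:contraction} (the argument that $\Psi$ maps $L^+\mathcal{G}_{x,f}$ into $L^+\mathcal{G}_{x,\lfloor f \rfloor +1}$ applies verbatim with $f$ replaced by any larger $f+k$), $\Psi$ sends $L^+\mathcal{G}_{x,f+k}(R)$ into $L^+\mathcal{G}_{x,f+k+1}(R)$ for every integer $k \geq 0$. Thus $F$ is a strict contraction (of ratio at most $1/2$) for the $v$-adic ultrametric, which is complete by the pro-group structure. Banach then yields the unique fixed point $A$.

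The main obstacle I anticipate is essentially bookkeeping: verifying that the $v$-adic metric on $L^+\mathcal{G}_{x,f}(R)$ is bi-invariant, so that right-multiplication by $B^{-1}$ is an isometry and the identity $F(A_1) F(A_2)^{-1} = \Psi(A_1 A_2^{-1})$ translates cleanly into the required contraction estimate on $d(F(A_1), F(A_2))$. This bi-invariance is built into the construction of the metric from the normal filtration by the congruence subgroups $L^+\mathcal{G}_{x,f+k}(R)$ and should follow by direct reference to \cref{sec:t-adic-metric}.
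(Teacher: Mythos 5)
Your proof takes the same route as the paper: for (1) one sets $B = A^{-1}\Psi(A)$ and invokes the contraction lemma; for (2) one applies Banach's fixed-point theorem to $A \mapsto \Psi(A)B^{-1}$. The only difference is that you spell out explicitly why this latter operator is a contraction — namely, that $\Psi$ is a group homomorphism (conjugate of $\varphi_c$), that $\Psi$ shifts the congruence filtration deeper by at least one step, and that the $v$-adic metric is bi-invariant so right-translation by $B^{-1}$ is an isometry. The paper leaves this implicit in the blanket claim that $\Psi_B$ ``is a contraction by \cref{res:contraction}''; your elaboration is a harmless and indeed useful clarification of the same argument, not a different proof.
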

\begin{proof}
    Let us first show (1). Given \(A\) we are forced to let
    \[ B = A^{-1} X \varphi_{c}(A)X^{-1} ,\]
    and we have to show that \(B \in L^{+}\mathcal{G}_{x,f}\). This is
    immediate from \cref{res:contraction}.

    It remains to show (2). Consider the operator
    \begin{align*}
    \Psi_{B} : L^{+}\mathcal{G}_{x,f}(R) & \to L^{+}\mathcal{G}_{x,f}(R) \\
    A & \mapsto X \varphi_{c}(A) X^{-1} B^{-1} .
    \end{align*}
    This is well-defined and in fact a contraction in the \(v\)-adic
    topology by \cref{res:contraction}. By Banach's fixed point
    theorem it follows that \(\Psi_{B}\) has a unique fixed point and this
    is our \(A\).
\end{proof}
We also have the following variant, which works without the unramified assumption at the cost of giving a weaker bound. 
\begin{lemma}[Straightening]
    \label{res:straightening-nongeneric}
    Let \(f, a \in \mathbb{Z}_{\geq 1}\), where 
    \[
        (p-1)f - h_{\mu} - 2a + 2 > 0 . 
    \]
    Let \(R\) be an \(\mathcal{O}\)-algebra such that \(p^a R = 0\) and let \(X \in L^{\leq \mu }\mathcal{G}(R)\). 
    Then we have the following:
    \begin{enumerate}
        \item For all \(A \in L^{+}\mathcal{G}_{x,f}(R)\) there exists a unique \(B \in L^{+}\mathcal{G}_{x,f}(R)\) such that \(X \star A = BX\).
        \item For all \(B \in L^{+}\mathcal{G}_{x,f}(R)\) there exists a unique \(A \in L^{+}\mathcal{G}_{x,f}(R)\) such that \(X \star A = BX\).
    \end{enumerate} 
\end{lemma}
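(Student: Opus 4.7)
The plan is to mirror the structure of the proof of \cref{res:straightening}, replacing the generic contraction result \cref{res:varphi-contraction-generic} with the non-generic analogue \cref{res:varphi-contraction-nongeneric-corollary} (part (2), which requires no genericity hypothesis on $x$ but is weaker in the exponent). The whole argument is a Banach fixed point application once we have established a suitable contraction.

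First I would establish the contraction step. Let $g \in \mathbb{Z}_{\geq 1}$ and let $A \in L^+\mathcal{G}_{x,g}(R)$. Since $g \geq 1 \geq 1/e$, \cref{res:varphi-contraction-nongeneric-corollary}(2) gives $\varphi_c(A) \in L^+\mathcal{G}_{x,pg}(R)$. Because $p^a R = 0$, we identify $L^+ = L^+_0$ on $R$-points, and then \cref{res:conjugation-congruence-bound-at-v} yields $X\varphi_c(A)X^{-1} \in L^+\mathcal{G}_{x,pg - h_\mu - 2a + 2}(R)$. The hypothesis $(p-1)f - h_\mu - 2a + 2 > 0$ applied to $g = f$ shows in particular that $pf - h_\mu - 2a + 2 > f$, so the operator $\Psi: A \mapsto X\varphi_c(A)X^{-1}$ sends $L^+\mathcal{G}_{x,f}(R)$ into itself and, by running the same estimate at level $g \geq f$, sends each congruence subgroup $L^+\mathcal{G}_{x,g}(R)$ strictly deeper. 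This is exactly the statement that $\Psi$ is a contraction in the $v$-adic metric on $L^+\mathcal{G}_{x,f}(R)$ (which is complete, as recalled in \cref{sec:t-adic-metric}).

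For part (1), given $A$ the relation $X \star A = BX$ forces $B = A^{-1} X \varphi_c(A) X^{-1}$, and the computation above (applied to the factor $X\varphi_c(A)X^{-1}$) shows $B \in L^+\mathcal{G}_{x,f}(R)$, with uniqueness immediate from the formula. For part (2), given $B$ I would introduce
\[
\Psi_B : L^+\mathcal{G}_{x,f}(R) \to L^+\mathcal{G}_{x,f}(R), \qquad A \mapsto X \varphi_c(A) X^{-1} B^{-1},
\]
which is well-defined by the same estimate and remains a contraction because right translation by $B^{-1} \in L^+\mathcal{G}_{x,f}(R)$ is an isometry of the $v$-adic metric (equivalently, it shifts all congruence levels by the same amount). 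Banach's fixed point theorem then produces a unique fixed point $A$, and $X \star A = BX$ by construction.

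There is essentially no hard step here: the proof is a direct adaptation of \cref{res:straightening} with the stronger bound $(p-1)f$ (instead of $(p-1)\lfloor f\rfloor + d$) coming from the absence of any genericity input. The only point to verify carefully is that \cref{res:varphi-contraction-nongeneric-corollary}(2) applies, i.e.\ that $f \geq 1/e$, which is automatic from $f \in \mathbb{Z}_{\geq 1}$; and that $\Psi_B$ is still a contraction after multiplying by the fixed element $B^{-1}$ on the right, which is clear since the $v$-adic metric is bi-invariant.
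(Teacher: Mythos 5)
Your proposal is correct and follows the same route as the paper: the paper's own proof simply says the contraction estimate goes "as in the proof of \cref{res:contraction}, using \cref{res:varphi-contraction-nongeneric-corollary} in place of \cref{res:varphi-contraction-generic}," and you have spelled out exactly that substitution, including the verification that $f \geq 1/e$ is automatic and the observation that right translation by $B^{-1}$ preserves the $v$-adic metric so $\Psi_B$ inherits the contraction property.
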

\begin{proof}
    As with the proof of \cref{res:straightening}, the key is to show that the map
    \begin{align*}
        \Psi_{B} : L^{+}\mathcal{G}_{x,f}(R) & \to L^{+}\mathcal{G}_{x,f}(R) \\
        A & \mapsto X \varphi_{c}(A) X^{-1} B^{-1} 
    \end{align*}
    is a contraction. This can be shown as in the proof of \cref{res:contraction}, using \cref{res:varphi-contraction-nongeneric-corollary} in place of \cref{res:varphi-contraction-generic}. 
\end{proof}
\begin{remark}
    Note that \cref{res:straightening} with \(f = 0^+\) and \(a = 1\) is closely related to \cite[Lemma 5.2.2]{local-models}, 
    and \cref{res:straightening-nongeneric} is closely related to \cite[Proposition 2.2]{pappasPhiModulesCoefficient2009}.
\end{remark}

\subsection{Infinitesimal contraction and straightening}
We will also need an infinitesimal version of the straightening lemma,
for the two different actions of \(\operatorname{Lie}L^{+}\mathcal{G}(J)\)
on \(L \mathcal{G}(R)\), given a square zero extension \(J \to R \to \overline{R}\). 
This in turn will be based on an infinitesimal
version of the contraction lemma. A key feature of working with the Lie algebra 
is that we get the bounds for \cref{res:straightening} and \cref{res:straightening-nongeneric} 
with \(a = 1\) as long as \(pJ = 0\).

\subsubsection{Some remarks about the Lie algebra action}\label{remarks-about-the-lie-algebra-action}
First, we will say some words about the Lie algebra action. 
Let \(J \to R \twoheadrightarrow \overline{R}\) be a square zero
extension of \(\varpi\)-adically complete \(\mathcal{O}\)-algebras. By definition, we have an exact sequence
of groups \[
1 \to \operatorname{Lie}L^{+}\mathcal{G}(J) \to L^{+} \mathcal{G}(R) \to L^{+} G(\overline{R}) \to 1 . 
\]
The Lie algebra \(\operatorname{Lie}L^+ \mathcal{G}(J)\) has an \emph{abelian} group structure and a compatible \(\overline{R}\)-module structure \cite[Theorem II.4.3.5]{demazureIntroductionAlgebraicGeometry1980}. 
Note that \(\varphi\) as well as conjugation
\(\operatorname{Ad}_{B}\) by any element \(B \in L^{+}\mathcal{G}(R)\)
are \emph{natural} group homomorphisms and therefore restrict to an action on the Lie
algebra \(\operatorname{Lie}L^{+}\mathcal{G}(J)\), which we also denote
by \(\varphi\) and \(\operatorname{Ad}_{B}\), respectively. 

Suppose now that \(\mathcal{X}\) is any \(\mathcal{O}\)-scheme
with an \(L^{+}\mathcal{G}\)-action. Let
\(\overline{X} \in \mathcal{X}(\overline{R})\) and let
\(\mathcal{X}(R)_{\overline{X}}\) denote the fiber of \(X\) with respect
to the reduction map \(\mathcal{X}(R) \to \mathcal{X}(\overline{R})\).
Let \(L^{+}\mathcal{G}(R)_{\overline{X}}\) be the preimage in
\(L^{+}\mathcal{G}(R)\) of the stabilizer of \(\overline{X}\) in
\(L^{+}\mathcal{G}(\overline{R})\). Then
\(L^{+}\mathcal{G}(R)_{\overline{X}}\) is precisely the stabilizer of
\(\mathcal{X}(R)_{\overline{X}}\) in \(L^{+}\mathcal{G}(R)\) (but not
the pointwise stabilizer in general). In particular,
\(\operatorname{Lie}L^{+}\mathcal{G}(J)\) acts on
\(\mathcal{X}(R)_{\overline{X}}\) and in fact
\(\operatorname{Lie}L^{+}\mathcal{G}(J)\) is the stabilizer of
\(\mathcal{X}(R)_{\overline{X}}\) if and only if the stabilizer of
\(\overline{X}\) in \(L^{+}\mathcal{G}(\overline{R})\) is trivial.

We now specialize to the case \(\mathcal{X} = L \mathcal{G}\). Then for
any \(\overline{X} \in L \mathcal{G}(\overline{R})\) the left
translation action and \((\varphi,b)\)-twisted conjugation action give
two induced actions of \(\operatorname{Lie}L^{+}\mathcal{G}(J)\) on
\(L \mathcal{G}_{\overline{X}}\): 
\begin{align*}
B \cdot X & = B X , \\
X \star A & = A^{-1} X \varphi_{c}(A).
\end{align*}

\subsubsection{Infinitesimal contraction and straightening}
If \(J \to R \to \overline{R}\) is a square zero extension of
\(\mathcal{O}\)-algebras, we can equip
\(\operatorname{Lie}L^{+}\mathcal{G}(J)\) with the unique metric so that
\(\operatorname{Lie}L^{+}\mathcal{G}(J) \subset L^{+}\mathcal{G}(R)\) is
a metric subspace with respect to the \(v\)-adic metric. This is the
\(v\)-adic metric on \(\operatorname{Lie}L^{+}\mathcal{G}(J)\).

\begin{lemma}[Infinitesimal contraction]
    \label{res:infinitesimal-contraction-lemma}
    Assume that \(I\) acts trivially on \(\widehat{G}\). 
    Let \(d \in \mathbb{Z}_{\geq -1}\), and \(f \in \mathbb{Z}_{\geq 1} \cup \lbrace 0^+ \rbrace\). 
    Assume that \(x\) is \(d\)-generic, and 
    \[ (p-1)\lfloor f \rfloor + d - h_{\mu} > 0 . \]
    Let \(J \to R \twoheadrightarrow \overline{R}\) be a square
    zero extension of \(p\)-adically complete \(\mathcal{O}\)-algebras, where \(pJ = 0\).
    Let \(X \in L^{\leq \mu}\mathcal{G}(R)\), and consider the operator
    \begin{align*} 
        \Psi : L\mathcal{G}(R) & \to L\mathcal{G}(R) \\ A & \mapsto X \varphi_{c}(A) X^{-1} . 
    \end{align*}
    Then \(\Psi\) restricts to
    \(\operatorname{Lie}L^{+}\mathcal{G}_{f}(J) \to \operatorname{Lie}L^{+}\mathcal{G}_{f}(J)\),
    and this is a contraction with respect to the \(v\)-adic metric on
    \(\operatorname{Lie}L^{+}\mathcal{G}_{f}(J)\).
\end{lemma}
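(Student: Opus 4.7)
The plan is to transport the proof of \cref{res:contraction} to the Lie algebra setting, where the assumption $pJ = 0$ offers a crucial simplification: by \cref{res:congruence-subgroups-of-lie-algebras} we have equalities of subgroups $\operatorname{Lie}L^{+n}\mathcal{G}(J) = \operatorname{Lie}L^{+n}_{0}\mathcal{G}(J)$ inside $L^{+}\mathcal{G}(R)$ for every $n \geq 0$. This identifies the $v$-adic and $(v+p)$-adic filtrations on the Lie algebra, eliminating the ``$-2a+2$'' conversion loss that appeared via \cref{res:conjugation-congruence-bound-at-v} in the proof of \cref{res:straightening}; this is precisely what lets us reach the improved bound $(p-1)\lfloor f \rfloor + d - h_{\mu} > 0$, effectively the $a = 1$ case.

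First I would verify that $\Psi$ restricts to $\operatorname{Lie}L^{+}\mathcal{G}_{x,f}(J)$. Both $\varphi_{c}$ and $\operatorname{Ad}_{X}$ are natural group homomorphisms of $L^{+}\mathcal{G}(R)$, and hence preserve the normal subgroup $\operatorname{Lie}L^{+}\mathcal{G}(J)$ by functoriality. For the Frobenius step, apply \cref{res:varphi-contraction-generic} (using (1) with $n = 0$ when $f = 0^{+}$, and (2) combined with Iwahori decomposition exactly as in the proof of \cref{res:contraction} when $f \in \mathbb{Z}_{\geq 1}$) to conclude that $\varphi_{c}\bigl( \operatorname{Lie}L^{+}\mathcal{G}_{x,f}(J) \bigr) \subset \operatorname{Lie}L^{+}_{0}\mathcal{G}_{x, p\lfloor f \rfloor + d}(J)$. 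Next, apply \cref{res:conjugation-congruence-bound}, which says $\operatorname{Ad}_{X}\bigl( L^{+n}\mathcal{G}(R) \bigr) \subset L^{+(n-h_{\mu})}\mathcal{G}(R)$ for $n \geq h_{\mu}$, and restrict to Lie subalgebras; invoking \cref{res:congruence-subgroups-of-lie-algebras} on both sides to convert between $L^{+n}$ and $L^{+n}_{0}$ without loss, we obtain $\Psi(A) \in \operatorname{Lie}L^{+}_{0}\mathcal{G}_{x, p\lfloor f \rfloor + d - h_{\mu}}(J)$.

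Finally, the contraction property follows directly: the hypothesis $(p-1)\lfloor f \rfloor + d - h_{\mu} > 0$ rearranges to $p\lfloor f \rfloor + d - h_{\mu} > \lfloor f \rfloor$, so by integrality $p\lfloor f \rfloor + d - h_{\mu} \geq \lfloor f \rfloor + 1 > f$, and $\Psi$ lands in a strictly deeper congruence level than $f$. Re-running the estimate at each level $f' > f$ (the same inequality continues to hold since $(p-1)$ is non-negative and $\lfloor f' \rfloor \geq \lfloor f \rfloor$) shows that $\Psi$ sends $\operatorname{Lie}L^{+}\mathcal{G}_{x,f'}(J)$ into the next-deeper piece, which is the desired strict contraction with respect to the $v$-adic metric on $\operatorname{Lie}L^{+}\mathcal{G}_{x,f}(J)$.

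The main obstacle is really just the careful bookkeeping in the Frobenius step for $f \in \mathbb{Z}_{\geq 1}$, where the torus factor in the Iwahori decomposition contributes only level $pf$ rather than $pf + d$ (compare \cref{res:varphi-contraction-on-root-groups-generic}(2)); this forces one to argue factor-by-factor as in \cref{res:varphi-contraction-generic}, but it introduces no new ideas beyond what is already present in the proof of \cref{res:contraction}. The genuinely new ingredient over that proof is the recognition that \cref{res:congruence-subgroups-of-lie-algebras} collapses the two filtrations on $\operatorname{Lie}L^{+}\mathcal{G}(J)$, and once this is in hand the rest of the argument is a direct adaptation.
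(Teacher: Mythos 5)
Your proof follows essentially the same route as the paper's: use \cref{res:congruence-subgroups-of-lie-algebras} (valid because $pJ = 0$) to identify $\operatorname{Lie}L^{+n}\mathcal{G}(J)$ with $\operatorname{Lie}L^{+n}_{0}\mathcal{G}(J)$, then chain the Frobenius bound of \cref{res:varphi-contraction-generic} with the conjugation bound of \cref{res:conjugation-congruence-bound}, recognizing that the collapse of the two filtrations is exactly what achieves the ``$a = 1$'' improvement. The paper writes this out for $f = 0^{+}$ and then declares the $f = n$ case similar; you attempt a unified statement. (One small slip: for $f = 0^{+}$ you should cite \cref{res:varphi-contraction-generic}\,(2) with $n=0$, not (1), since it is $\varphi_{c}$ and not $\varphi$ that you need.)

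There is, however, a tension you should resolve rather than flag and dismiss. Your stated conclusion $\varphi_{c}\bigl(\operatorname{Lie}L^{+}\mathcal{G}_{x,f}(J)\bigr) \subset \operatorname{Lie}L^{+}_{0}\mathcal{G}_{x,\,p\lfloor f\rfloor + d}(J)$ is inconsistent with what you correctly observe in your last paragraph: for $f = n \in \mathbb{Z}_{\geq 1}$ the torus factor $\breve{\mathcal{T}}_{n}$ lands only at level $pn$, so for $d > 0$ it is the binding constraint, and the honest Frobenius bound is $\operatorname{Lie}L^{+pn}\mathcal{G}(J)$, not $\operatorname{Lie}L^{+(pn+d)}\mathcal{G}(J)$. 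After $\operatorname{Ad}_{X}$ this gives level $pn - h_{\mu}$, and the contraction needs $(p-1)n > h_{\mu}$ for all $n \geq \lfloor f\rfloor$, which the displayed hypothesis $(p-1)\lfloor f\rfloor + d - h_{\mu} > 0$ does \emph{not} deliver when $d > 0$. To close the gap in the case the lemma is actually used with $d > 0$, namely $f = 0^{+}$, one must invoke the genericity constraint $d < p/2$: then $d > h_{\mu}$ forces $h_{\mu} < p/2 < p-1 \leq (p-1)n$ for every $n \geq 1$, so the weaker torus bound $pn$ still suffices. Saying ``this introduces no new ideas'' is precisely where your write-up stops short — the paper's own phrase ``the $f = n$ version of the argument above'' hides the same seam, but since you surfaced it you should either restate the Frobenius bound as $p\lfloor f\rfloor + \min(d, 0)$ and rerun the inequality, or explicitly carry out the $d < p/2$ argument above.
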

\begin{proof}
    The proof follows the outline of the proof in the
    non-infinitesimal case. We give the proof for \(f = 0^+\) and note that the proof for \(f=n \in \mathbb{Z}_{\geq 1}\) is similar. 

    We first show that \(\operatorname{Lie}L^{+}\mathcal{G}_{0^{+}}(J)\) is
    stable under \(\Psi\). Let
    \(A \in \operatorname{Lie}L^{+}\mathcal{G}_{0^{+}}(J) \subset L^{+}\mathcal{G}_{0^{+}}(R)\).
    By \cref{res:varphi-contraction-generic} we have a morphism of
    group schemes \(\varphi_{c} : L^+ \mathcal{G}_{0^+} \to L^+ \mathcal{G}_{d}\) which therefore induces 
    \(\varphi_{c}:\operatorname{Lie}L^{+}\mathcal{G}_{0^{+}}(J) \to \operatorname{Lie}L^{+}\mathcal{G}_{d}(J)\).
    Since \(pJ = 0\) we have
    \(\operatorname{Lie}L^{+}\mathcal{G}_{d}(J) = \operatorname{Lie}L^{+d}\mathcal{G}(J)\)
    by \cref{res:congruence-subgroups-of-lie-algebras}. The morphism
    \(\operatorname{Ad}_{X} : L^{+d}\mathcal{G} \to L^{+d - h_\mu}\mathcal{G}\)
    from \cref{res:conjugation-congruence-bound}
    induces 
    \(\operatorname{Lie}L^{+d}\mathcal{G}(J) \to \operatorname{Lie}L^{+d-h_{\mu}}\mathcal{G}(J)\),
    and we have
    \[ \operatorname{Lie}L^{+d-h_{\mu}}\mathcal{G}(J) = \operatorname{Lie}L^{+}\mathcal{G}_{d-h_{\mu}}(J) \subset \operatorname{Lie}L^{+}\mathcal{G}_{0^{+}}(J)\]
    since \(pJ = 0\) and \(d - h_{\mu} > 0\).

    It remains to show that \(\Psi\) is a contraction. Towards this end it
    suffices to show that \(\Psi\) maps any Lie congruence subgroup
    \(\operatorname{Lie}L^{+}\mathcal{G}_{n}\), where
    \(n \in \mathbb{Z}_{\geq 1}\), into a deeper Lie congruence subgroup. This follows from the \(f = n\) version of 
    the argument above.
\end{proof}
\begin{lemma}[Infinitesimal straightening]
    \label{res:infinitesimal-straightening}
    Assume that \(I\) acts trivially on \(\widehat{G}\). 
    Let \(d \in \mathbb{Z}_{\geq -1}\), and \(f \in \mathbb{Z}_{\geq 1} \cup \lbrace 0^+ \rbrace\). 
    Assume that \(x\) is \(d\)-generic, and 
    \[ (p-1)\lfloor f \rfloor + d - h_{\mu} > 0 . \]
    Let \(J \to R \twoheadrightarrow \overline{R}\) be a square
    zero extension of \(p\)-adically complete \(\mathcal{O}\)-algebras, where \(pJ = 0\). 
    Let \(X \in L^{\leq \mu} \mathcal{G}(R)_{\overline{X}}\). Then:
    \begin{enumerate}
        \item For any \(A \in \operatorname{Lie}L^{+}\mathcal{G}_{x,f}(J)\) there exists a
            unique \(B \in \operatorname{Lie}L^{+}\mathcal{G}_{x,f}(J)\) such
            that \(B \cdot X = X \star A\). 
        \item For any \(B \in \operatorname{Lie}L^{+}\mathcal{G}_{x,f}(J)\) there exists a
            unique \(A \in \operatorname{Lie}L^{+}\mathcal{G}_{x,f}(J)\) such
            that \(B \cdot X = X \star A\).
    \end{enumerate}
\end{lemma}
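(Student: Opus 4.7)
The plan is to adapt the proof of the non-infinitesimal Straightening \cref{res:straightening} to the Lie algebra setting, with the Infinitesimal Contraction \cref{res:infinitesimal-contraction-lemma} replacing \cref{res:contraction}. The equation $B \cdot X = X \star A$ unpacks to $BX = A^{-1} X \varphi_{c}(A)$, and, after right-multiplying by $X^{-1}$, becomes
\[
    B = A^{-1} \Psi(A), \qquad \Psi(A) := X \varphi_{c}(A) X^{-1}.
\]
By \cref{res:infinitesimal-contraction-lemma}, $\Psi$ preserves the abelian subgroup $\operatorname{Lie} L^{+}\mathcal{G}_{x,f}(J) \subset L^{+}\mathcal{G}(R)$ and is a strict contraction for the $v$-adic metric.

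For (1), given $A \in \operatorname{Lie} L^{+}\mathcal{G}_{x,f}(J)$, I would simply define $B := A^{-1}\Psi(A)$. Uniqueness is obvious from the formula, and $B \in \operatorname{Lie} L^{+}\mathcal{G}_{x,f}(J)$ follows because that is a subgroup of $L^{+}\mathcal{G}(R)$ containing both $A$ and $\Psi(A)$.

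For (2), given $B$, I would solve $AB = \Psi(A)$, i.e.\ $A = \Psi(A) B^{-1}$, by Banach's fixed-point theorem applied to the operator $\Psi_{B}(A) := \Psi(A) B^{-1}$ on $\operatorname{Lie} L^{+}\mathcal{G}_{x,f}(J)$. Right multiplication by the fixed element $B^{-1}$ is an isometry of the $v$-adic metric (distances are measured by the congruence depth of $A_{1} A_{2}^{-1}$, which is unchanged by $A_{i} \mapsto A_{i} B^{-1}$), so $\Psi_{B}$ inherits the strict contraction property from $\Psi$, and its image again lies in $\operatorname{Lie} L^{+}\mathcal{G}_{x,f}(J)$ since $B^{-1}$ does. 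The ambient space is complete in the $v$-adic metric: because $R$ is $p$-adically complete, $R \llbracket v+p \rrbracket = R \llbracket v \rrbracket$ is $v$-adically complete, hence $L^{+}\mathcal{G}(R)$ is complete, and $\operatorname{Lie} L^{+}\mathcal{G}_{x,f}(J)$ sits inside as a closed subgroup (being the kernel of the continuous reduction map to $L^{+}\mathcal{G}(\overline{R})$ intersected with $L^{+}\mathcal{G}_{x,f}(R)$). Banach's theorem then produces a unique fixed point $A$.

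The hard work for both parts is concentrated in \cref{res:infinitesimal-contraction-lemma}, whose proof combines the sharp filtration estimates of \cref{sec:varphi-contraction-unramified-generic} with the conjugation bound of \cref{res:conjugation-congruence-bound} and the identification $\operatorname{Lie} L^{+n}\mathcal{G}(J) = \operatorname{Lie} L^{+n}_{0}\mathcal{G}(J)$ from \cref{res:congruence-subgroups-of-lie-algebras} (which needs $pJ = 0$ and is what lets us work with the same bound $a=1$ as in the infinitesimal statement). Given that lemma, the present proof is essentially formal, and the only points to verify with any care are the isometry of right translation on the Lie algebra and the completeness statement above.
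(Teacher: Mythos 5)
Your proof follows exactly the same route as the paper's: for (1) you solve for $B$ directly and invoke \cref{res:infinitesimal-contraction-lemma} to land in $\operatorname{Lie}L^{+}\mathcal{G}_{x,f}(J)$, and for (2) you set up the operator $\Psi_{B}(A) = X\varphi_{c}(A)X^{-1}B^{-1}$ and apply Banach's fixed point theorem. The only difference is that you fill in two details the paper leaves implicit (that right translation by $B^{-1}$ is a $v$-adic isometry, so $\Psi_{B}$ inherits the contraction property from $\Psi$, and that $\operatorname{Lie}L^{+}\mathcal{G}_{x,f}(J)$ is complete); these are correct and worth spelling out.
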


\begin{proof}
The proof is similar to the proof of \cref{res:straightening}.
Let us first prove (1). Given \(A\) we are forced to let \[
B = A^{-1} X \varphi_{c}(A)X^{-1} ,
\] and we have to show that
\(B \in\operatorname{Lie}L^{+}\mathcal{G}_{f}(J)\), which follows
from \cref{res:infinitesimal-contraction-lemma}.

It remains to show (2). Given \(B\), consider the operator 
\begin{align*}
\Psi_{B} : \operatorname{Lie} L^{+}\mathcal{G}_{x,f}(J) & \to \operatorname{Lie}L^{+}\mathcal{G}_{x,f}(J) \\
A & \mapsto X\varphi_{c}(A)X^{-1} B^{-1} .
\end{align*}
This is a contraction by \cref{res:infinitesimal-contraction-lemma}, so
\(\Psi_{B}\) has a unique fixed point by Banach's fixed point theorem.
\end{proof}

We also have the following version of infinitesimal straightening which holds without the unramified and genericity assumptions. 
\begin{lemma}[Infinitesimal straightening]
    \label{res:infinitesimal-straightening-nongeneric}
    Let \(f,a \in \mathbb{Z}_{\geq 1}\), where 
    \[ (p-1)f  - h_{\mu} > 0 . \]
    Let \(J \to R \twoheadrightarrow \overline{R}\) be a square
    zero extension of \(p\)-adically complete \(\mathcal{O}\)-algebras, where \(pJ = 0\). 
    Let \(X \in L^{\leq \mu} \mathcal{G}(R)_{\overline{X}}\). Then:
    \begin{enumerate}
        \item For any \(A \in \operatorname{Lie}L^{+}\mathcal{G}_{x,f}(J)\) there exists a
            unique \(B \in \operatorname{Lie}L^{+}\mathcal{G}_{x,f}(J)\) such
            that \(B \cdot X = X \star A\). 
        \item For any \(B \in \operatorname{Lie}L^{+}\mathcal{G}_{x,f}(J)\) there exists a
            unique \(A \in \operatorname{Lie}L^{+}\mathcal{G}_{x,f}(J)\) such
            that \(B \cdot X = X \star A\).
    \end{enumerate}
\end{lemma}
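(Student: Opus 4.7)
The plan is to mirror the proof of the generic infinitesimal straightening (\cref{res:infinitesimal-straightening}), substituting the non-generic contraction estimates from \cref{res:varphi-contraction-nongeneric-corollary} for the generic ones of \cref{res:varphi-contraction-generic}. So I would first prove a non-generic infinitesimal contraction statement: under the hypotheses of the lemma, the operator $\Psi : L\mathcal{G}(R) \to L\mathcal{G}(R)$ defined by $A \mapsto X\varphi_c(A)X^{-1}$ restricts to a contraction of $\operatorname{Lie}L^+\mathcal{G}_{x,f}(J)$ in the $v$-adic metric.

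To prove this contraction, I would start with $A \in \operatorname{Lie}L^+\mathcal{G}_{x,f}(J) \subset L^+\mathcal{G}_{x,f}(R)$ and track congruence levels: by \cref{res:varphi-contraction-nongeneric-corollary}(2) (applicable since $f \geq 1 \geq 1/e$), we have $\varphi_c(A) \in \operatorname{Lie}L^+\mathcal{G}_{x,pf}(J)$. The hypothesis $pJ = 0$ together with \cref{res:congruence-subgroups-of-lie-algebras} identifies this with $\operatorname{Lie}L^{+pf}\mathcal{G}(J)$, so \cref{res:conjugation-congruence-bound} (which yields a morphism of group schemes $L^{+pf}\mathcal{G} \to L^{+pf - h_\mu}\mathcal{G}$, hence an induced map on Lie algebras over $J$) gives $X\varphi_c(A)X^{-1} \in \operatorname{Lie}L^{+pf-h_\mu}\mathcal{G}(J) = \operatorname{Lie}L^+\mathcal{G}_{x,pf-h_\mu}(J)$. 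The assumption $(p-1)f - h_\mu > 0$, i.e. $pf - h_\mu > f$, shows that the output lies in a strictly deeper congruence level; the same argument applied with $f$ replaced by any larger integer shows $\Psi$ maps each congruence subgroup into a strictly deeper one, whence the contraction property.

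Granted this, the two assertions of the lemma follow from the standard Banach fixed-point argument. For (1), the element $B$ is forced to be $A^{-1}X\varphi_c(A)X^{-1}$, which lies in $\operatorname{Lie}L^+\mathcal{G}_{x,f}(J)$ by the contraction statement just established (together with the fact that $\operatorname{Lie}L^+\mathcal{G}_{x,f}(J)$ is an abelian subgroup of $L^+\mathcal{G}(R)$, cf.\ the discussion preceding \cref{res:infinitesimal-contraction-lemma}). For (2), given $B$, consider the operator
\[
\Psi_B : \operatorname{Lie}L^+\mathcal{G}_{x,f}(J) \to \operatorname{Lie}L^+\mathcal{G}_{x,f}(J), \qquad A \mapsto X\varphi_c(A)X^{-1}B^{-1};
\]
this is a contraction in the $v$-adic metric (translation by $B^{-1}$ is an isometry), so Banach's fixed point theorem produces the unique $A$ with $\Psi_B(A) = A$, equivalently $B \cdot X = X \star A$.

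The only real obstacle is verifying the congruence-level bookkeeping in the non-generic setting, since we lose the finer control that genericity of $x$ provides; but the weaker bound $\varphi_c(L^+\mathcal{G}_{x,n}) \subset L^+\mathcal{G}_{x,pn}$ of \cref{res:varphi-contraction-nongeneric-corollary} is exactly what makes the inequality $(p-1)f > h_\mu$ the natural one, so the numerics align cleanly.
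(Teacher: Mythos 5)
Your proposal is correct and matches the paper's proof verbatim in spirit: the paper's own proof consists of exactly the sentence "Just like the proof of \cref{res:infinitesimal-straightening} (including \cref{res:infinitesimal-contraction-lemma}), using \cref{res:varphi-contraction-nongeneric-corollary} in place of \cref{res:varphi-contraction-generic}." Your congruence-level bookkeeping ($f \to pf$ via \cref{res:varphi-contraction-nongeneric-corollary}(2), then $pf \to pf - h_\mu > f$ via \cref{res:conjugation-congruence-bound}, with \cref{res:congruence-subgroups-of-lie-algebras} letting the $pJ=0$ hypothesis identify the $v$- and $(v+p)$-Lie congruence filtrations) is precisely what that substitution unwinds to, and the Banach fixed-point conclusion is the same.
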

\begin{proof}
    Just like the proof of \cref{res:infinitesimal-straightening} (including \cref{res:infinitesimal-contraction-lemma}), using \cref{res:varphi-contraction-nongeneric-corollary} in place of \cref{res:varphi-contraction-generic}. 
\end{proof}
\section{The local model theorem}
\label{sec:comparing-affine-grassmannians}
In this section we apply the calculations of \cref{sec:contraction-and-straightening} towards comparing the stacks \(\mathcal{Y}^{\leq \mu}\) and \(\operatorname{Gr}^{\leq \mu}\).
In particular, we prove our main results \cref{res:smooth-equivalence} and \cref{res:explicit-smooth-equivalence}.

\subsection{Consequences of straightening}
\label{sec:consequences-of-straightening}
The upshot of straightening is that it enables us to relate the stacks \(\mathcal{Y}^{\leq \mu}\) and \(\operatorname{Gr}^{\leq \mu}\). 
We now point out some of the most immediate consequences.
In the situation of \cref{res:straightening} or \cref{res:straightening-nongeneric}, we have an isomorphism of stacks 
\begin{equation}
    \label{eq:general-equality}
    \mathcal{Y}_{f,\mathcal{O} / \varpi^a}^{\leq \mu} \cong \operatorname{Gr}_{f,\mathcal{O} / \varpi^a}^{\leq \mu} ,
\end{equation}
where the subscript \(\mathcal{O} / \varpi^a\) denotes base change to \(\operatorname{Spec} \mathcal{O} / \varpi^a\). 
We point out two special cases of \eqref{eq:general-equality}: 
\begin{itemize}
    \item If \(I\) acts trivially on \(\widehat{G}\) and \(x\) is \(d\)-generic, where \(d > h_{\mu}\) is an integer, then
        \begin{equation}
            \label{eq:generic-equality}
            \mathcal{Y}_{0^+,\mathbb{F}}^{\leq \mu} \cong \operatorname{Gr}_{0^+, \mathbb{F}}^{\leq \mu} . 
        \end{equation}
    \item As long as \(n > h_{\mu} / (p-1)\), we have 
        \begin{equation}
            \label{eq:nongeneric-equality}
            \mathcal{Y}_{n,\mathbb{F}}^{\leq \mu} \cong \operatorname{Gr}_{n,\mathbb{F}}^{\leq \mu} .
        \end{equation}
\end{itemize}
We have the following consequence of \eqref{eq:general-equality}. 
\begin{proposition}
    \label{res:algebraicity}
    Let \(f \in \mathbb{Z}_{\geq 0} \cup \lbrace 0^+ \rbrace\). 
    Then the stack \(\mathcal{Y}_{f}^{\leq \mu}\) is a \(p\)-adic formal algebraic stack over \(\operatorname{Spf}\mathcal{O}\).
\end{proposition}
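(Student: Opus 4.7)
The plan is to reduce the $p$-adic formal algebraicity of $\mathcal{Y}_f^{\leq \mu}$ to the algebraicity of the mod $\varpi^a$ reductions for each $a \in \mathbb{Z}_{\geq 1}$, and then to pass to a sufficiently deep congruence level where the straightening lemma identifies $\mathcal{Y}$ with (a cover of) the affine Grassmannian. So first I would unwind the definitions: $\mathcal{Y}_f^{\leq \mu}$ is a $p$-adic formal algebraic stack over $\operatorname{Spf}\mathcal{O}$ provided each base change $\mathcal{Y}_{f,\mathcal{O}/\varpi^a}^{\leq \mu}$ is an algebraic stack over $\operatorname{Spec}\mathcal{O}/\varpi^a$.

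Next, fix $a \geq 1$ and choose an integer $n \geq \max(\lceil f \rceil, 1)$ large enough that the straightening bound $(p-1)n - h_{\mu} - 2a + 2 > 0$ from \cref{res:straightening-nongeneric} is satisfied. The straightening lemma then yields, for every $\mathcal{O}/\varpi^a$-algebra $R$ and every $X \in L^{\leq\mu}\mathcal{G}(R)$, a canonical bijection between the right action of $L^+\mathcal{G}_{x,n}(R)$ on $X$ via $\star$ and the left translation action, implemented by the unique assignment $A \mapsto B = A^{-1} X \varphi_c(A) X^{-1}$. This gives a functorial isomorphism of the two action groupoids presenting the quotient stacks on $\mathcal{O}/\varpi^a$-algebras, and hence an isomorphism
\[
\mathcal{Y}_{n,\mathcal{O}/\varpi^a}^{\leq\mu} \;\cong\; \operatorname{Gr}_{n,\mathcal{O}/\varpi^a}^{\leq\mu}
\]
(this is the special case of \eqref{eq:general-equality} already pointed out in the text).

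Now I would verify that $\operatorname{Gr}_n^{\leq \mu}$ is representable by a scheme: the torsor $\operatorname{Gr}_n^{\leq\mu} \to \operatorname{Gr}^{\leq\mu}$ is an $L^n\mathcal{G}_x = \operatorname{Res}^{\mathcal{O}[v]/v^n}_{\mathcal{O}}\mathcal{G}_x$-torsor for a finite-type smooth affine group, and $\operatorname{Gr}^{\leq\mu}$ is projective over $\mathcal{O}$, so $\operatorname{Gr}_n^{\leq\mu}$ is affine over $\operatorname{Gr}^{\leq\mu}$ and hence a scheme. Thus $\mathcal{Y}_{n,\mathcal{O}/\varpi^a}^{\leq\mu}$ is a scheme, a fortiori an algebraic stack.

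Finally, the inclusion $L^+\mathcal{G}_{x,n} \subset L^+\mathcal{G}_{x,f}$ is normal (since $L^+\mathcal{G}_{x,n} = L^{+n}_0\mathcal{G}_x$ is the kernel of the reduction map $L^+\mathcal{G}_x \to L^n\mathcal{G}_x$), so the natural morphism $\mathcal{Y}_n^{\leq\mu} \to \mathcal{Y}_f^{\leq\mu}$ is a torsor for the finite-type smooth affine quotient group $L^+\mathcal{G}_{x,f}/L^+\mathcal{G}_{x,n}$; in each case $f \in \{0, 0^+\} \cup \mathbb{Z}_{\geq 1}$ this is an honest smooth affine algebraic group by the analysis of \cref{sec:quotient-loop-groups}. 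Base changing to $\mathcal{O}/\varpi^a$ then provides a smooth surjection from the scheme $\mathcal{Y}_{n,\mathcal{O}/\varpi^a}^{\leq\mu}$ onto $\mathcal{Y}_{f,\mathcal{O}/\varpi^a}^{\leq\mu}$, so the latter is algebraic. Since $a$ was arbitrary, $\mathcal{Y}_f^{\leq\mu}$ is a $p$-adic formal algebraic stack. The one place where care is needed—and what I would view as the main technical point—is confirming that straightening upgrades from a pointwise bijection of orbits to an isomorphism of fppf quotient stacks; the uniqueness parts of \cref{res:straightening-nongeneric} are exactly what makes the intertwiner $A \mapsto B$ functorial in $R$ and thus a morphism of groupoid presentations rather than a mere bijection of isomorphism classes.
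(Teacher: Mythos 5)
Your proof is correct and follows the same overall strategy as the paper: reduce to algebraicity of the $\mathcal{O}/\varpi^a$ reductions, pick $n$ large enough for straightening, and leverage the fact that $\mathcal{Y}_{n,\mathcal{O}/\varpi^a}^{\leq\mu}\cong\operatorname{Gr}_{n,\mathcal{O}/\varpi^a}^{\leq\mu}$ is a scheme. The two proofs diverge in how they pass from $\mathcal{Y}_n^{\leq\mu}$ to $\mathcal{Y}_f^{\leq\mu}$ for general $f$.

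The paper first treats $f=0$ via the groupoid criterion [Stacks, 06DC], then for $f\neq 0$ introduces the fiber product $\widetilde{Y}=\mathcal{Y}_n^{\leq\mu}\times_{\mathcal{Y}^{\leq\mu}}\mathcal{Y}_f^{\leq\mu}$, shows it is a scheme, and records the smooth surjection $\widetilde{Y}\to\mathcal{Y}_f^{\leq\mu}$; since this only produces a smooth cover rather than a groupoid presentation, the paper then separately verifies that the diagonal of $\mathcal{Y}_f^{\leq\mu}$ is representable (in fact affine). You instead exploit the normality of $L^+\mathcal{G}_{x,n}\subset L^+\mathcal{G}_{x,f}$ to present $\mathcal{Y}_f^{\leq\mu}$ directly as a quotient of the scheme $\mathcal{Y}_n^{\leq\mu}$ by the smooth affine group $L^+\mathcal{G}_{x,f}/L^+\mathcal{G}_{x,n}$. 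This is a cleaner route because the quotient-stack presentation itself gives the groupoid needed for [06DC], so no separate diagonal check is required. The paper's extra diagonal statement does carry dividends later (\cref{rem:Y-satisfies-fpqc-descent}), but is not needed for the proposition as stated.

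Two small imprecisions worth flagging. First, your appeal to \cref{sec:quotient-loop-groups} for the quotient $L^+\mathcal{G}_{x,f}/L^+\mathcal{G}_{x,n}$ is slightly off-target: that section computes $L^+\mathcal{G}_x/L^+\mathcal{G}_{x,f}$, not your quotient. What you actually need is that $L^+\mathcal{G}_{x,f}/L^+\mathcal{G}_{x,n}$ is a smooth affine group scheme of finite type, which follows from identifying it as the kernel of $L^n\mathcal{G}_x\to L^m\mathcal{G}_x$ when $f=m\in\mathbb{Z}_{\geq 0}$, or of $L^n\mathcal{G}_x\to\overline{\mathcal{T}}$ when $f=0^+$; an easy argument, but not what the cited section says. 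Second, your final sentence phrased as ``a smooth surjection from a scheme, so algebraic'' is, on its own, not a valid criterion — the diagonal must also be representable by algebraic spaces. Your argument is saved because the preceding torsor claim supplies the smooth groupoid presentation $[\mathcal{Y}_{n,\mathcal{O}/\varpi^a}^{\leq\mu}/(L^+\mathcal{G}_{x,f}/L^+\mathcal{G}_{x,n})]$ to which [06DC] applies directly; you should make this explicit rather than invoking only the smooth cover.
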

\begin{proof}
    Note that in order to prove that \(\mathcal{Y}^{\leq \mu}\) is a \(p\)-adic formal algebraic stack, it suffices by \cite[Proposition A.13]{emertonModuliStacksEtale2023}
    to show that \(\mathcal{Y}^{\leq \mu}_{\mathcal{O} / \varpi^a}\) is an algebraic stack over \(\operatorname{Spec} \mathcal{O} / \varpi^a\) for every \(a \in \mathbb{Z}_{\geq 1}\). 
    For \(n \gg 0\) we have by \cref{res:straightening-nongeneric} a diagram 
    \[
    \begin{tikzcd}
    & \mathcal{Y}_{n,\mathcal{O} / \varpi^a}^{\leq \mu} \cong \operatorname{Gr}_{n,\mathcal{O} / \varpi^a}^{\leq \mu} \arrow[dl] \arrow[dr] & \\
    \mathcal{Y}^{\leq \mu}_{\mathcal{O} / \varpi^a} & & \operatorname{Gr}_{\mathcal{O} / \varpi^a}^{\leq \mu} ,
    \end{tikzcd}
    \]
    where both arrows are torsors for the smooth affine group scheme \(L^n \mathcal{G}_{x}\) from \cref{sec:congruence-loop-groups}. 
    Since \(\operatorname{Gr}_{\mathcal{O} / \varpi^a}^{\leq \mu}\) is a projective scheme over \(\mathcal{O}\),
    the \(L^n \mathcal{G}\)-torsor \(\operatorname{Gr}_{n,\mathcal{O} / \varpi^a}\) is also a scheme. 
    The quotient stack \(\mathcal{Y}^{\leq \mu}_{\mathcal{O} / \varpi^a} = \left[ \mathcal{Y}_{n,\mathcal{O} / \varpi^a}^{\leq \mu}  / L^n \mathcal{G} \right]\)
    is then algebraic by \cite[\href{https://stacks.math.columbia.edu/tag/06DC}{Theorem 06DC}]{stacks-project}.

    The above completes the proof in the case \(f = 0\), so assume now that \(f \neq 0\). 
    Algebraicity of \(\mathcal{Y}_{f,\mathcal{O} / \varpi^a}^{\leq \mu}\) follows from algebraicity of \(\mathcal{Y}_{\mathcal{O} / \varpi^a}^{\leq \mu}\). 
    Indeed, note that \(\widetilde{Y} := \mathcal{Y}_{n,\mathcal{O} / \varpi^a}^{\leq \mu} \times_{\mathcal{Y}_{\mathcal{O} / \varpi^a}^{\leq \mu}} \mathcal{Y}_{f,\mathcal{O} / \varpi^a}^{\leq \mu} \to \mathcal{Y}_{n,\mathcal{O} / \varpi^a}^{\leq \mu}\)
    is an \(L^f \mathcal{G}\)-torsor if \(f \in \mathbb{Z}_{\geq 1}\), a \(\mathcal{T}_{\mathcal{O}/ \varpi^a}\)-torsor if \(f = 0^+\). 
    Either way it is a torsor for a smooth affine group scheme, so \(\widetilde{Y}\) is representable by a scheme since \(\mathcal{Y}_{n,\mathcal{O} / \varpi^a}^{\leq \mu} \cong \operatorname{Gr}_{n,\mathcal{O} / \varpi^a}^{\leq \mu}\) is. 
    And the map \(\widetilde{Y} \to \mathcal{Y}_{f,\mathcal{O} / \varpi^a}^{\leq \mu}\) is smooth surjective since \(\mathcal{Y}_{n,\mathcal{O} / \varpi^a}^{\leq \mu} \to \mathcal{Y}_{\mathcal{O} / \varpi^a}^{\leq \mu}\) is. 
    
    We have to show that the diagonal \(\Delta : \mathcal{Y}_{f}^{\leq \mu} \to \mathcal{Y}_{f}^{\leq \mu} \times \mathcal{Y}_{f}^{\leq \mu}\) is representable by by algebraic spaces, in fact it is representable by affine schemes. 
    This follows from the statement that if \(R\) is an \(\mathcal{O}\)-algebra and \(X,Y \in L^{\leq \mu}\mathcal{G}_{x,f}(R)\), then the sheaf over \(R\) 
    defined by \(R' \mapsto \lbrace A \in L^+\mathcal{G}_{x,f}(R') : A^{-1}X \varphi_{c}(A) = Y \rbrace\) is affine. 
    But this is clearly a closed subscheme of \(L^+\mathcal{G}_{x,f}\) which is known to be an affine scheme. 
\end{proof}
\begin{remark}
    \label{rem:smooth-local-properties}
    It follows from the proof that the morphism \(\mathcal{Y}_{f,\mathcal{O} / \varpi^a}^{\leq \mu} \to \operatorname{Spec}\mathcal{O}/\varpi^a\) satisfies any property satisfied by \(\operatorname{Gr}_{\mathcal{O} / \varpi^a}^{\leq \mu} \to \operatorname{Spec}\mathcal{O} / \varpi^a\) which 
    is smooth local on the source. In particular, it is locally of finite presentation \cite[\href{https://stacks.math.columbia.edu/tag/06FC}{Remark 06FC}]{stacks-project}. 
    Moreover \(\mathcal{Y}^{\leq \mu}_{f,\mathcal{O} / \varpi^a}\) also satisifes any property satisfied by \(\operatorname{Gr}_{\mathcal{O} / \varpi^a}^{\leq \mu}\) which is smooth local, some of which are listed in \cite[\href{https://stacks.math.columbia.edu/tag/04YH}{Remark 04YH}]{stacks-project}. 
    In particular, \(\mathcal{Y}^{\leq \mu}_{f,\mathcal{O} / \varpi^a}\) is locally noetherian, and \(\mathcal{Y}^{\leq \mu}_{f,\mathbb{F}}\) is reduced under the assumption that \(p \nmid \pi_{1}(G^*_{\text{der}})\) by \cite[Theorem 9.1]{pappas-zhu}.
\end{remark}
\begin{remark}
    \label{rem:Y-satisfies-fpqc-descent}
    Since \(\mathcal{Y}_{f}^{\leq \mu}\) has affine diagonal as shown in the proof, it satisfies descent for the fpqc topology by \cite[\href{https://stacks.math.columbia.edu/tag/0GRH}{Proposition 0GRH}]{stacks-project}.
\end{remark}

\subsection{Smooth equivalence}
We remind the reader that the current setup, including standing assumptions, are summarized in \cref{sec:setup}.
The following is one of our main results. 
\begin{theorem}
    \label{res:smooth-equivalence}
    There exists a \(p\)-adic formal scheme \(Z^{\wedge \varpi}\) and a diagram 
    \[
    \begin{tikzcd}
        & Z^{\wedge \varpi} \arrow[dl, twoheadrightarrow] \arrow[dr, twoheadrightarrow] & \\ 
        \mathcal{Y}^{\leq \mu} & & \operatorname{Gr}^{\leq \mu, \wedge \varpi}
    \end{tikzcd}
    \]
    where both morphisms are smooth covers. 
\end{theorem}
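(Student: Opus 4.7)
The plan is to build $Z^{\wedge\varpi}$ from the intermediate stacks of \cref{sec:twisted-affine-grassmannian,sec:congruence-loop-groups}. For any integer $n \geq 1$, we have two smooth surjections
\[
\mathcal{Y}^{\leq \mu}_n \twoheadrightarrow \mathcal{Y}^{\leq \mu} \text{ and } \operatorname{Gr}^{\leq \mu, \wedge\varpi}_n \twoheadrightarrow \operatorname{Gr}^{\leq \mu, \wedge\varpi},
\]
the former being an $L^n\mathcal{G}^{\wedge\varpi}$-torsor exhibiting the algebraicity of $\mathcal{Y}^{\leq \mu}$ as in the proof of \cref{res:algebraicity}, the latter an $L^n\mathcal{G}$-torsor over the projective $p$-adic formal scheme $\operatorname{Gr}^{\leq \mu, \wedge\varpi}$ (hence itself a $p$-adic formal scheme). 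The content of \cref{res:straightening-nongeneric} is that for each $a \geq 1$ and $n$ satisfying $(p-1)n - h_\mu - 2a + 2 > 0$, the bijection $A \leftrightarrow B$ with $X \star A = BX$ yields an identification
\[
\Phi_{n,a} \colon \operatorname{Gr}^{\leq \mu}_{n,\mathcal{O}/\varpi^a} \isoto \mathcal{Y}^{\leq \mu}_{n,\mathcal{O}/\varpi^a},
\]
natural in $R$ and in $n$ (in the sense that passing from $n$ to a larger $n'$ commutes with the torsor projections on both sides).

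The natural attempt is to take $Z^{\wedge\varpi} := \operatorname{Gr}^{\leq \mu, \wedge\varpi}_n$ for some fixed $n$ with $(p-1)n > h_\mu$. The projection to $\operatorname{Gr}^{\leq \mu, \wedge\varpi}$ is then tautologically a smooth cover, so the task reduces to constructing a smooth surjection $\operatorname{Gr}^{\leq \mu, \wedge\varpi}_n \to \mathcal{Y}^{\leq \mu}$. I would construct this map by choosing an increasing sequence $n_1 \leq n_2 \leq \cdots$ with $(p-1)n_a - h_\mu - 2a + 2 > 0$ for every $a$, and defining the desired map at the $a$-th level as the composite
\[
\operatorname{Gr}^{\leq \mu}_{n, \mathcal{O}/\varpi^a} \twoheadleftarrow \operatorname{Gr}^{\leq \mu}_{n_a, \mathcal{O}/\varpi^a} \xrightarrow{\Phi_{n_a, a}} \mathcal{Y}^{\leq \mu}_{n_a, \mathcal{O}/\varpi^a} \twoheadrightarrow \mathcal{Y}^{\leq \mu}_{\mathcal{O}/\varpi^a},
\]
descended along the left $L^{n_a - n}\mathcal{G}$-torsor projection using smoothness and the compatibility of $\Phi_{n,a}$ under changing the congruence level. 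The coherence of these $a$-truncated maps as $a$ varies follows because each $\Phi_{n_a, a}$ is canonically defined by the straightening bijection, which is itself natural in $R$.

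The main obstacle is verifying that the resulting map is smooth, not just formally smooth. Surjectivity is immediate from surjectivity of each $\mathcal{Y}^{\leq \mu}_{n_a, \mathcal{O}/\varpi^a} \twoheadrightarrow \mathcal{Y}^{\leq \mu}_{\mathcal{O}/\varpi^a}$. For formal smoothness, I would apply the infinitesimal straightening lemma \cref{res:infinitesimal-straightening-nongeneric}, whose hypothesis $(p-1)n > h_\mu$ is \emph{uniform} in $a$ and controls the two Lie algebra actions of $L^+\mathcal{G}_{x,n}$ on every fiber, upgrading the mod-$\varpi$ identification to a formal-smoothness statement through all nilpotent thickenings. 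To pass from formal smoothness to smoothness, I would invoke \cref{rem:smooth-local-properties} to observe that $\mathcal{Y}^{\leq \mu}_{\mathcal{O}/\varpi^a}$ is locally of finite presentation over $\mathcal{O}/\varpi^a$, combined with the fact that $Z^{\wedge\varpi} = \operatorname{Gr}^{\leq \mu, \wedge\varpi}_n$ is itself of finite type. The hard kernel of the proof will be checking that the descended map is well-defined as $a$ varies—i.e., that the choices of $n_a$ do not matter up to isomorphism and that the resulting compatible system of maps in $a$ actually descends to a morphism of $p$-adic formal stacks—for which the naturality of $\Phi_{n,a}$ in $n$ (together with the explicit uniqueness clause in \cref{res:straightening-nongeneric}) should be the essential input.
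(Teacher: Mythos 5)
Your proposal has a genuine gap in the descent step, and the underlying reason is that no morphism $\operatorname{Gr}_n^{\leq\mu,\wedge\varpi}\to\mathcal{Y}^{\leq\mu}$ factoring through $\operatorname{Gr}_n^{\leq\mu}\to\operatorname{Gr}^{\leq\mu}$ can exist unless $\operatorname{Gr}^{\leq\mu,\wedge\varpi}$ and $\mathcal{Y}^{\leq\mu}$ were actually isomorphic (they are only smoothly equivalent). Concretely: the map $\operatorname{Gr}_{n_a,\mathcal{O}/\varpi^a}^{\leq\mu}\twoheadrightarrow\operatorname{Gr}_{n,\mathcal{O}/\varpi^a}^{\leq\mu}$ is the quotient by the left-translation action of $L^+\mathcal{G}_{x,n}/L^+\mathcal{G}_{x,n_a}$, and for your composite $\operatorname{Gr}_{n_a,\mathcal{O}/\varpi^a}^{\leq\mu}\to\mathcal{Y}_{\mathcal{O}/\varpi^a}^{\leq\mu}$ to descend along it, you need invariance under that action. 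Unwinding, this asks that for every $X\in L^{\leq\mu}\mathcal{G}(R)$ and every $\alpha\in L^{+}\mathcal{G}_{x,n}(R)$ there exist $A\in L^{+}\mathcal{G}_{x}(R)$ with $\alpha X=A^{-1}X\varphi_c(A)$. That is exactly straightening at level $n$ over $\mathcal{O}/\varpi^a$, which is precisely what \cref{res:straightening-nongeneric} \emph{fails} to give once $a$ is so large that $(p-1)n-h_\mu-2a+2\le 0$. The straightening isomorphism $\Phi_{n_a,a}$ matches orbits only of the small subgroup $L^{+}\mathcal{G}_{x,n_a}$; it is not equivariant for the residual $L^{+}\mathcal{G}_{x,n}/L^{+}\mathcal{G}_{x,n_a}$-action (left translation on one side versus $(c,\varphi)$-conjugation on the other), and ``compatibility under changing the congruence level'' together with ``smoothness'' does not repair this. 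Your appeal to the infinitesimal lemma \cref{res:infinitesimal-straightening-nongeneric}, whose hypothesis is indeed uniform in $a$, is a correct instinct but cannot produce the needed group-level invariance; it only controls deformations.

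What the paper does instead avoids constructing a global map entirely. Fixing $f$ as in \cref{ass:straight}, it chooses Zariski local sections $g_i\colon Z_i\to L^{\leq\mu}\mathcal{G}$ of the torsor $L^{\leq\mu}\mathcal{G}\twoheadrightarrow\operatorname{Gr}_f^{\leq\mu}$ (possible by \cref{res:zariski-local-splittings}), and then proves the ``neighborhood lemma'' \cref{res:neighborhood-lemma}: for any $g\colon U^{\wedge\varpi}\to L^{\leq\mu}\mathcal{G}^{\wedge\varpi}$, the induced map to $\operatorname{Gr}_f^{\leq\mu,\wedge\varpi}$ is smooth (resp.\ \'etale, an open immersion) if and only if the induced map to $\mathcal{Y}_f^{\leq\mu}$ is. The smoothness direction is proved by a deformation-theoretic argument whose essential input is the infinitesimal straightening lemma --- this is where the uniformity in $a$ that you noticed actually gets used, but applied to lifting problems along small extensions rather than to gluing level-by-level isomorphisms. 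The open immersion and covering statements then reduce, via \cref{rem:smooth-local-properties}, to the special fiber where straightening literally identifies $\mathcal{Y}_{f,\mathbb{F}}^{\leq\mu}\cong\operatorname{Gr}_{f,\mathbb{F}}^{\leq\mu}$. Taking $Z=\bigsqcup_i Z_i$, the two projections $Z_i^{\wedge\varpi}\hookrightarrow\operatorname{Gr}_f^{\leq\mu,\wedge\varpi}\twoheadrightarrow\operatorname{Gr}^{\leq\mu,\wedge\varpi}$ and $Z_i^{\wedge\varpi}\hookrightarrow\mathcal{Y}_f^{\leq\mu}\twoheadrightarrow\mathcal{Y}^{\leq\mu}$ are compositions of smooth surjections, giving the desired local model diagram. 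So the correct architecture replaces your single morphism $Z^{\wedge\varpi}\to\mathcal{Y}^{\leq\mu}$ by a collection of Zariski charts, each of which lands as an open substack of \emph{both} sides.
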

The starting point for the proof is straightening over the special fiber, so we make the following assumption. 
\begin{AssumptionStraight}\refstepcounter{AssumptionStraight}
    \label{ass:straight}
    We fix \(f \in \mathbb{Z}_{\geq 1}\cup \lbrace 0^+ \rbrace\) and assume that one of the following holds:
    \begin{enumerate}[(I)]
        \item \(I\) acts trivially on \(\widehat{G}\), and \(x\) is \(d\)-generic, where \(d > h_{\mu}\) is an integer. 
        \item \(f > h_{\mu} / (p-1)\). 
    \end{enumerate}
\end{AssumptionStraight}
Then, as pointed out in \cref{sec:consequences-of-straightening}, the quotient stacks 
\begin{equation}
    \label{eq:equality-of-reduced-stacks}
    \mathcal{Y}_{f, \mathbb{F}}^{\leq \mu} \cong \operatorname{Gr}_{f, \mathbb{F}}^{\leq \mu}
\end{equation}
are isomorphic.
On the other hand, there is no obvious map between \(\mathcal{Y}_{f, \mathcal{O} / \varpi^a}^{\leq \mu}\) and \(\operatorname{Gr}_{f,\mathcal{O} / \varpi^a}^{\leq \mu}\)
for some arbitrary \(a \in \mathbb{Z}_{\geq 1}\).\footnote{Although straightening dictates that \(\mathcal{Y}_{n, \mathcal{O} / \varpi^a}^{\leq \mu} \cong \operatorname{Gr}_{n,\mathcal{O} / \varpi^a}^{\leq \mu}\) for \(n \gg 0\), we are now in the situation that \(f\) is fixed.}
But in some sense, the lack of an obvious map is the only issue. 
The idea is that if we are able to define a map \(\operatorname{Gr}_{f}^{\leq \mu, \wedge \varpi} \dashrightarrow \mathcal{Y}_{f}^{\leq \mu}\) Zariski locally,
then this map will automatically be an isomorphism between open neighborhoods, as we will show in \cref{res:neighborhood-lemma}. 

\subsubsection{Proof of \cref{res:smooth-equivalence}}
\label{sec:proof-of-smooth-equivalence}
Throughout the proof we keep \cref{ass:straight}.
Let \(U^{\wedge \varpi}\) be a formal scheme over \(\operatorname{Spf}\mathcal{O}\) 
and \(g : U^{\wedge \varpi} \to L^{\leq \mu} \mathcal{G}^{\wedge \varpi}\) a morphism over \(\operatorname{Spf}\mathcal{O}\). 
We have a commutative diagram 
\[
    \begin{tikzcd}
        & L^{\leq \mu} \mathcal{G}^{\wedge \varpi} \arrow[dl,"p_{Y}"] \arrow[dr,"p_{G}"'] & \\ 
        \mathcal{Y}_{f}^{\leq \mu} & & \operatorname{Gr}_{f}^{\leq \mu, \wedge \varpi} \\ 
        & U^{\wedge \varpi} \arrow[ur,"g_{G}"'] \arrow[uu, "g"] \arrow[ul, "g_{Y}"] & 
    \end{tikzcd}
\]
where by definition \(g_{G}\) and \(g_{Y}\) are the maps making the diagram commutative. 
\begin{lemma}
    \label{res:neighborhood-lemma}
    We have the following: 
    \begin{enumerate}
        \item \(g_{G}\) is smooth if and only if  \(g_{Y}\) is smooth. 
        \item \(g_{G}\) is étale if and only if \(g_{Y}\) is étale. 
        \item \(g_{G}\) is an open immersion if and only if \(g_{Y}\) is an open immersion. 
    \end{enumerate}
\end{lemma}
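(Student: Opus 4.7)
The plan is to reduce the three equivalences to comparing two morphisms $\phi_G$ and $\phi_Y$ with common source and target, and then to construct an automorphism of the source that identifies them. The main obstacle will be constructing this automorphism as a morphism of $p$-adic formal schemes, not merely on $R$-points over specific base rings.

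First I would observe that $p_G$ and $p_Y$ are both smooth epimorphisms, being $L^+ \mathcal{G}_{x,f}^{\wedge \varpi}$-torsors for the left-translation and $c$-twisted $\varphi$-conjugation actions respectively. Since smoothness, étaleness, and being an open immersion descend along smooth covers, $g_G$ has any such property iff its pullback along $p_G$ does. Trivializing the torsor via $g$ identifies the fibre product $U^{\wedge \varpi} \times_{\operatorname{Gr}_f^{\leq \mu, \wedge \varpi}} L^{\leq \mu} \mathcal{G}^{\wedge \varpi}$ with $L^+ \mathcal{G}_{x,f}^{\wedge \varpi} \times U^{\wedge \varpi}$, and the pullback of $g_G$ becomes the morphism
\[
    \phi_G : L^+\mathcal{G}_{x,f}^{\wedge\varpi} \times U^{\wedge\varpi} \to L^{\leq\mu}\mathcal{G}^{\wedge\varpi}, \quad (A, u) \mapsto A \cdot g(u).
\]
Similarly the pullback of $g_Y$ becomes $\phi_Y : (A, u) \mapsto A^{-1} g(u) \varphi_c(A)$. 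It then suffices to show that $\phi_G$ has the property in question iff $\phi_Y$ does.

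Next I would construct an automorphism $\Psi$ of $L^+ \mathcal{G}_{x,f}^{\wedge \varpi} \times U^{\wedge \varpi}$ satisfying $\phi_Y = \phi_G \circ \Psi$; with this in hand, smoothness, étaleness, or being an open immersion of $\phi_Y$ is equivalent to the same property of $\phi_G$. On $R$-points, $\Psi$ should send $(A, u)$ to $(B, u)$ where $B \in L^+\mathcal{G}_{x,f}(R)$ is uniquely determined by $B \cdot g(u) = A^{-1} g(u) \varphi_c(A)$. Since the source is a $p$-adic formal scheme, constructing $\Psi$ reduces to giving compatible maps on reductions modulo $\varpi^a$ for each $a \geq 1$, which I would do by induction on $a$. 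The base case $a = 1$ takes place over $\mathcal{O}/\varpi = \mathbb{F}$, where $p = 0$, so the ordinary straightening lemma (\cref{res:straightening} in case (I) of \cref{ass:straight}, \cref{res:straightening-nongeneric} in case (II)) with parameter $a=1$ applies. For the inductive step, the square-zero extension $\mathcal{O}/\varpi^{a+1} \twoheadrightarrow \mathcal{O}/\varpi^a$ has kernel ideal $J$ with $pJ = 0$ (since $p \in \varpi \mathcal{O}$), so I would lift an arbitrary representative $\tilde B \in L^+\mathcal{G}_{x,f}(R)$ of the previously constructed $B_a$ and measure the defect $\delta := A^{-1} g(u) \varphi_c(A) \cdot (\tilde B \cdot g(u))^{-1} \in \operatorname{Lie} L \mathcal{G}(J)$. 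The infinitesimal straightening lemma (\cref{res:infinitesimal-straightening} or \cref{res:infinitesimal-straightening-nongeneric}, whose bounds are independent of $a$ and hold under \cref{ass:straight}) then produces a unique element of $\operatorname{Lie} L^+ \mathcal{G}_{x,f}(J)$ by which to modify $\tilde B$ into the required lift $B_{a+1}$. The inverse $\Psi^{-1}$ is constructed symmetrically using the reverse direction of the same lemmas.

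The hard part will be the inductive construction of $\Psi$ in the second step. The subtlety is that the ordinary straightening lemmas have bounds depending on the $p$-torsion exponent $a$ of the test ring, so they do not by themselves furnish $\Psi$ over arbitrarily deep $p$-adic thickenings from \cref{ass:straight}. The crucial technical input is that the infinitesimal straightening lemmas have $a$-independent bounds matching exactly the hypotheses of \cref{ass:straight}, which is what allows the special-fibre identification to propagate through all orders by successive square-zero extensions.
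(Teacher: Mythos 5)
Your reduction to comparing $\phi_G$ and $\phi_Y$ with common source and target, by pulling back along the torsors $p_G$ and $p_Y$, is reasonable and similar in spirit to the paper's pullback to $\widetilde{Y}$. But the construction of the automorphism $\Psi$ has a genuine gap in the inductive step. You measure the defect $\delta := (g(u)\star A)\,(\tilde B\, g(u))^{-1}$ and correctly place it in $\operatorname{Lie}L\mathcal{G}(J)$, but nothing you have argued places it in the much smaller subgroup $\operatorname{Lie}L^{+}\mathcal{G}_{x,f}(J)$, which is what is needed to modify $\tilde B$ inside $L^{+}\mathcal{G}_{x,f}$. The infinitesimal straightening lemmas (\cref{res:infinitesimal-straightening}, \cref{res:infinitesimal-straightening-nongeneric}) take as input an element that is already known to lie in $\operatorname{Lie}L^{+}\mathcal{G}_{x,f}(J)$ and relate the two actions on such an element; they do not decompose or rectify an arbitrary element of $\operatorname{Lie}L\mathcal{G}(J)$. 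Requiring $\delta\in\operatorname{Lie}L^{+}\mathcal{G}_{x,f}(J)$ in your setup is equivalent to requiring that $A\mapsto A^{-1}g(u)\varphi_c(A)g(u)^{-1}$ map $L^{+}\mathcal{G}_{x,f}(R)$ into itself for all $R$ killed by $\varpi^{a+1}$ --- i.e.\ that the straightening lemma hold at level $a+1$ --- and the bounds underlying \cref{res:contraction} and \cref{res:conjugation-congruence-bound-at-v} degrade with $a$ and fail once $a$ is large under \cref{ass:straight} alone. This is consistent with the paper's explicit remark that there is no natural comparison map between $\mathcal{Y}^{\leq\mu}_{f,\mathcal{O}/\varpi^a}$ and $\operatorname{Gr}^{\leq\mu}_{f,\mathcal{O}/\varpi^a}$ for arbitrary $a$ when $f$ is fixed. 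So the automorphism $\Psi$ you want does not exist as a morphism of $p$-adic formal schemes, and the argument breaks.

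The paper avoids this issue by never attempting to identify the two torsor presentations via an automorphism of the common source. Instead, it verifies the infinitesimal lifting criterion for $g'_Y$ directly, and crucially feeds the \emph{assumed} smoothness of $g_G$ into the construction: the lifting property of the smooth map $g_G$ produces a point $u$ together with an element $B\in L^{+}\mathcal{G}_f(R)$ with $BY=g(u)$. Since $Y$ and $g(u)$ reduce to the same point modulo $J$, this $B$ is automatically in $\operatorname{Lie}L^{+}\mathcal{G}_f(J)$, so one is \emph{already} in the setting where infinitesimal straightening applies, and the lemma is used only to convert the left translation by that infinitesimal $B$ into a $c$-twisted conjugation by some $B'\in\operatorname{Lie}L^{+}\mathcal{G}_f(J)$. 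In short, one of the two smoothness hypotheses must be used as an input before infinitesimal straightening can be invoked; it cannot be obtained as an output of an unconditional identification of $\phi_G$ with $\phi_Y$.
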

\begin{proof}
    We prove the implications \(g_{G}\) is smooth/étale/an open immersion implies that so is \(g_{Y}\). 
    The arguements will be sufficiently symmetric that they can also be used prove the converse, but we leave it to the reader to check the details. 

    By definition it suffices to prove the statements after
    base change to \(\mathcal{O} / \varpi^{a}\), \(a \in \mathbb{Z}_{\geq 1}\), so
    from now on we assume that we are over \(\mathcal{O} / \varpi^a\) but we
    suppress the subcripts normally used to indicate this fact.
    Part (1) will take up the majority of the proof. 
    Throughout we make the assumption that \(g_{G}\) is smooth. 

    Let \(\widetilde{Y} = \mathcal{Y}_{n, \mathcal{O} / \varpi^a}^{\leq \mu} \cong \operatorname{Gr}_{n,\mathcal{O}/ \varpi^a}^{\leq \mu}\)
    be as in the proof of \cref{res:algebraicity}.
    Consider the pullback diagram 
    \[
    \begin{tikzcd}
        U' \arrow[r,"g_{Y}'"] \arrow[d] & \widetilde{Y} \arrow[d] \\
        U \arrow[r,"g_{Y}"] & \mathcal{Y}_{f}^{\leq \mu} . 
    \end{tikzcd}
    \]
    Then \(g_{Y}\) is smooth if and only if \(g_{Y}'\) is smooth. 
    It will be useful to observe that for \(R\) an \(\mathcal{O} / \varpi^a\)-algebra we have 
    \[
        \widetilde{Y}(R) \cong L^{\leq \mu}\mathcal{G}(R) /^{c,\varphi} L^+ \mathcal{G}_{n}(R) \cong L^+ \mathcal{G}_{n}(R) \backslash L^{\leq \mu} \mathcal{G}(R)
    \]
    by \cref{res:presheaf-surjectivity-of-affine-grassmannian}. In particular, we can identify 
    \[
    U'(R) = \left\lbrace \left( [X], u \right) \in \widetilde{Y}(R) \times U(R) : \exists A \in L^+ \mathcal{G}_{f}(R) \text{ such that } X \star A = g(u) \right\rbrace . 
    \]

    Note that \(\widetilde{Y}\) is a locally noetherian scheme by \cref{rem:smooth-local-properties}, and \(U' \to \widetilde{Y}\) is locally of finite type since \(U\) is locally of finite type over \(\mathcal{O} / \varpi^a\) by the smoothness assumption on \(g_{G}\).
    So in order to show that \(g_{Y}'\) is smooth, it suffices by \cite[\href{https://stacks.math.columbia.edu/tag/02HY}{Lemma 02HY}]{stacks-project}
    to show that for any small\footnote{The extension being \emph{small} implies that \(pJ = 0\).} extension
    \(J \to R \twoheadrightarrow \overline{R}\) of Artinian local
    \(\mathcal{O} / \varpi^a \)-algebras of finite type over
    \(\mathcal{O} / \varpi^a\), 
    the lifting problem 
    \[\begin{tikzcd}
        {\operatorname{Spec}\overline{R}} && {U'} \\
        \\
        {\operatorname{Spec}R} && \widetilde{Y}
        \arrow["{[X]}", from=3-1, to=3-3]
        \arrow["{([\overline{X}],\overline{u})}", from=1-1, to=1-3]
        \arrow["{g_Y'}", from=1-3, to=3-3]
        \arrow["{([X],u)}"{description}, dashed, from=3-1, to=1-3]
        \arrow[hook, from=1-1, to=3-1]
    \end{tikzcd}\]
    has a solution. 
    A solution to the lifting problem will be provided by a \(u \in U(R)\) and a \(C \in L^+\mathcal{G}_{f}(R)\) such that \(X \star C = g(u)\).

    Note that commutativity of the diagram means that there exists \(\overline{A} \in L^+\mathcal{G}_{f}(\overline{R})\) 
    such that \(\overline{X} \star \overline{A} = g(\overline{u})\). Choose \(A \in L^+\mathcal{G}_{f}(R)\) lifting \(\overline{A}\) (which exists by formal smoothness of \(L^+\mathcal{G}_{f}\)), 
    and let \(Y = X \star A\). 
    Then \(\overline{Y} = g(\overline{u})\), so by smoothness of \(g_{G}\) there exists \(u \in U(R)\) and \(B \in L^+\mathcal{G}_{f}(R)\) such that \(BY = g(u)\). 
    Since \(Y\) and \(g(u)\) reduce to the same point \(\overline{Y} = g(\overline{u})\) in \(L^{\leq \mu}\mathcal{G}(\overline{R})\), 
    in fact \(B \in \operatorname{Lie} L^+ \mathcal{G}_{f}(J)\).
    By \cref{ass:straight} and infinitesimal straightening\footnote{More precisely, by \cref{res:infinitesimal-straightening} if we are in situation (I) of \cref{ass:straight}, by \cref{res:infinitesimal-straightening-nongeneric} if we are in situation (II) of \cref{ass:straight}.} 
    there exists \(B' \in \operatorname{Lie}L^+\mathcal{G}_{f}(J)\) such that \(g(u) = BY = Y \star B' = X \star (AB')\), 
    so \(u\) and \(C = AB'\) provides a solution to the lifting problem.

    Now let us show (2), so assume that \(g_{G}\) is étale, or equivalently
    smooth of relative dimension 0 \cite[\href{https://stacks.math.columbia.edu/tag/039P}{Section 039P}]{stacks-project}. 
    By (1) it follows that \(g_{Y}\) is smooth so we only need to show that \(g_{Y}\) is of relative dimension
    0. This can be checked after base change to \(\mathbb{F}\), where it is a consequence of \eqref{eq:equality-of-reduced-stacks}
    and the assumption that \(g_{G}\) is étale.

    Finally, let us show (3). Being an open immersion is equivalent to being
    universally injective and étale \cite[\href{https://stacks.math.columbia.edu/tag/025F}{Section 025F}]{stacks-project},
    so by part (2) it suffices to show that \(g_{Y}\) is universally
    injective if \(g_{G}\) is an open immersion. Being universally injective can be checked after base change to \(\mathbb{F}\), 
    so \(g_{Y}\) is universally injective because of \eqref{eq:equality-of-reduced-stacks} and the assumption that \(g_{G}\) is an open immersion.
\end{proof}

Zariski local triviality of the \(L^+\mathcal{G}_{f}\)-torsor \(L^{\leq \mu}\mathcal{G} \to \operatorname{Gr}_{f}^{\leq \mu}\) follows from \cref{res:zariski-local-splittings}.
So let \(\lbrace Z_{i} \hookrightarrow \operatorname{Gr}_{f}^{\leq \mu} \rbrace_{i \in \mathcal{I}}\) be a Zariski open covering for which there are lifts \(g_{i} : Z_{i} \to L^{\leq \mu}\mathcal{G}\). 
By \cref{res:neighborhood-lemma} the associated maps \(Z_{i}^{\wedge \varpi} \hookrightarrow \mathcal{Y}_{f}^{\leq \mu}\) are also open immersions, and the collection 
\(\lbrace Z_{i}^{\wedge \varpi} \hookrightarrow \mathcal{Y}_{f}^{\leq \mu} \rbrace_{i \in \mathcal{I}}\) is an open covering by \eqref{eq:equality-of-reduced-stacks}.
The compositions \(Z_{i}^{\wedge \varpi} \hookrightarrow \mathcal{Y}_{f}^{\leq \mu} \twoheadrightarrow \mathcal{Y}^{\leq \mu}\) 
and \(Z_{i}^{\wedge \varpi} \hookrightarrow \operatorname{Gr}_{f}^{\leq \mu, \wedge \varpi} \twoheadrightarrow \operatorname{Gr}^{\leq \mu, \wedge \varpi}\)
are both smooth, being a composition of smooth maps (where we use that \(L^f \mathcal{G}\) is a smooth affine group scheme). 
So taking \(Z = \bigsqcup_{i \in \mathcal{I}} Z_{i}\), we obtain a diagram as in \cref{res:smooth-equivalence}.
This completes the proof of \cref{res:smooth-equivalence}.

\subsection{Explicit smooth equivalence}
\label{sec:explicit-smooth-equivalence}
Make \cref{ass:straight}.  
Let \(g : U^{\wedge \varpi} \to L^{\leq \mu} \mathcal{G}^{\wedge \varpi}\) be a morphism for which \(g_{G} : U^{\wedge \varpi} \hookrightarrow \operatorname{Gr}_{f}^{\leq \mu, \wedge \varpi}\)
and \(g_{Y} : U^{\wedge \varpi} \hookrightarrow \mathcal{Y}_{f}^{\leq \mu}\) are open immersions (or equivalently by \cref{res:neighborhood-lemma}, one of them is an open immersion). 
The composites \(U^{\wedge \varpi} \hookrightarrow \operatorname{Gr}_{f}^{\leq \mu, \wedge \varpi} \twoheadrightarrow \operatorname{Gr}^{\leq \mu, \wedge \varpi}\) and \(U^{\wedge \varpi} \hookrightarrow \mathcal{Y}_{f}^{\leq \mu} \twoheadrightarrow \mathcal{Y}^{\leq \mu}\)
are smooth, being the composite of smooth maps, hence universally open by \cite[\href{https://stacks.math.columbia.edu/tag/056G}{Lemma 056G}]{stacks-project}. 
So the images of these smooth maps are open substacks \(U_{G}^{\wedge \varpi} \hookrightarrow \operatorname{Gr}^{\leq \mu, \wedge \varpi}\) and \(U_{Y}^{\wedge \varpi} \hookrightarrow \mathcal{Y}^{\leq \mu}\). 
The situation is summarized by the diagram 
\[\begin{tikzcd}
	&& {U^{\wedge \varpi}} \\
	{\mathcal{Y}^{\leq \mu}} & {U_Y^{\wedge \varpi}} && {U_G^{\wedge \varpi}} & {\operatorname{Gr}^{\leq \mu, \wedge \varpi}}
	\arrow[hook', circled, from=2-2, to=2-1]
	\arrow[two heads, from=1-3, to=2-2]
	\arrow[two heads, from=1-3, to=2-4]
	\arrow[hook, circled, from=2-4, to=2-5]
\end{tikzcd}\]
Both diagonal arrows are smooth coverings. Note that:
\begin{enumerate}
    \item If the open subscheme \(g_{G} : U^{\wedge \varpi} \hookrightarrow \operatorname{Gr}_{f}^{\leq \mu, \wedge \varpi}\) is stable under action of \(L^f \mathcal{G}\) by left translations, then \(U^{\wedge \varpi } \twoheadrightarrow U^{\wedge \varpi}_{G}\) is an \(L^f \mathcal{G}\)-torsor for this action. Here, we use the convention \(L^{0^+}\mathcal{G} := \overline{\mathcal{T}}\).
    \item If the open subscheme \(g_{Y} : U^{\wedge \varpi} \hookrightarrow \mathcal{Y}_{f}^{\leq \mu}\) is stable under the action of \(L^f \mathcal{G}\) by \(c\)-twisted \(\varphi \)-conjugation, then \(U^{\wedge \varpi} \twoheadrightarrow U_{Y}^{\wedge \varpi} \) is an \(L^f \mathcal{G}\)-torsor for this action. 
\end{enumerate}
We deal with one special case where (1) and (2) above is satisfied, corresponding to situation (I) of \cref{ass:straight}. 
\begin{theorem}
    \label{res:explicit-smooth-equivalence}
    Assume that \(I\) acts trivially on \(\widehat{G}\), invoke \cref{ass:pappas-zhu-groups-as-dilations}, assume that \(p \nmid \pi_{1}(\widehat{G})\), and assume that \(x\) is \(d\)-generic, where \(d > h_{\mu}\) is an integer. 
    Then there are Zariski open covers \(U(z_{1})^{\wedge \varpi} \cup \cdots \cup U(z_{n})^{\wedge \varpi} \twoheadrightarrow \operatorname{Gr}^{\leq \mu, \wedge \varpi}\)
    and \(\mathcal{Y}^{\leq \mu}(z_{1}) \cup \cdots \cup \mathcal{Y}^{\leq \mu}(z_{n}) \twoheadrightarrow \mathcal{Y}^{\leq \mu}\)
    such that for each \(i \in \lbrace 1, \dots, n\rbrace\) there is a diagram 
    \[
    \begin{tikzcd}
        & \widetilde{U}(z_{i})^{\wedge \varpi} \arrow[dl] \arrow[dr] & \\ 
        \mathcal{Y}^{\leq \mu}(z_{i}) & & U(z_{i})^{\wedge \varpi} ,
    \end{tikzcd}
    \]
    where both maps are \(\widehat{T}\)-torsors.
    Moreover, the open charts \(U(z_{i}) \hookrightarrow \operatorname{Gr}^{\leq \mu}\) are the explicit charts from \cref{sec:negative-loop-group-charts} labeled by elements of the admissible set \(\operatorname{Adm}(\mu)\). 
\end{theorem}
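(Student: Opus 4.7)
The plan is to use the negative loop group charts $\{U(z)\}_{z \in \operatorname{Adm}(\mu)}$ from \cref{sec:negative-loop-group-charts} and, for each $z$, construct a thickening $\widetilde{U}(z)$ that simultaneously sits as a $\widehat{T}$-torsor over $U(z)$ and maps to $\mathcal{Y}^{\leq \mu}$ via an open immersion into the $0^+$-cover. The genericity hypothesis $d > h_{\mu}$ (with $d$ integer) places us in situation (I) of \cref{ass:straight} with $f = 0^+$, so \cref{res:neighborhood-lemma} applies: any morphism into $L^{\leq \mu}\mathcal{G}^{\wedge \varpi}$ that induces an open immersion into $\operatorname{Gr}_{0^+}^{\leq \mu, \wedge \varpi}$ automatically induces an open immersion into $\mathcal{Y}_{0^+}^{\leq \mu}$.

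For each $z \in \operatorname{Adm}(\mu)$, I would fix a representative in $L^{\leq \mu}\mathcal{G}(\mathcal{O})$ (after a finite unramified extension of $E$ if necessary) and set
\[
\widetilde{U}(z) := \widehat{T} \times L^{--,\leq \mu}\mathcal{G},
\]
with map $\widetilde{g}_z : \widetilde{U}(z) \to L^{\leq \mu}\mathcal{G}$, $(t, A) \mapsto tAz$, where $\widehat{T} = \overline{\mathcal{T}}$ is embedded in $L^+\mathcal{G}$ via the canonical section of \eqref{eq:quotient-torus}. Combining \cref{res:formally-etale-monomorphism} (which gives the open immersion $L^+\mathcal{G} \times L^{--}\mathcal{G} \hookrightarrow L\mathcal{G}$) with the observation that the composite $\widehat{T} \hookrightarrow L^+\mathcal{G} \twoheadrightarrow L^+\mathcal{G}/L^+\mathcal{G}_{0^+} = \widehat{T}$ is the identity, I expect that the induced map $\widetilde{g}_z^G : \widetilde{U}(z)^{\wedge \varpi} \to \operatorname{Gr}_{0^+}^{\leq \mu, \wedge \varpi}$ identifies $\widetilde{U}(z)^{\wedge \varpi}$ $\widehat{T}$-equivariantly with the preimage of $U(z)^{\wedge \varpi}$ under the $\widehat{T}$-torsor $\operatorname{Gr}_{0^+}^{\leq \mu, \wedge \varpi} \to \operatorname{Gr}^{\leq \mu, \wedge \varpi}$, and in particular $\widetilde{g}_z^G$ is an open immersion. \cref{res:neighborhood-lemma} then yields an open immersion $\widetilde{g}_z^Y : \widetilde{U}(z)^{\wedge \varpi} \hookrightarrow \mathcal{Y}_{0^+}^{\leq \mu}$.

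Defining $\mathcal{Y}^{\leq \mu}(z)$ as the image of $\widetilde{g}_z^Y(\widetilde{U}(z)^{\wedge \varpi})$ in $\mathcal{Y}^{\leq \mu}$ under the smooth morphism $\mathcal{Y}_{0^+}^{\leq \mu} \to \mathcal{Y}^{\leq \mu}$, I obtain an open substack. The right-hand map $\widetilde{U}(z)^{\wedge \varpi} \to U(z)^{\wedge \varpi}$ is a $\widehat{T}$-torsor by construction (it is the base change of $\operatorname{Gr}_{0^+}^{\leq \mu} \to \operatorname{Gr}^{\leq \mu}$). For the covering property: \cref{res:special-fiber-of-pappas-zhu-local-model} applies since the hypotheses $I$ acts trivially on $\widehat{G}$ and $p \nmid \pi_{1}(\widehat{G})$ give $p \nmid \pi_{1}(G^*)$, so $\{U(z)_{\mathbb{F}}\}_{z \in \operatorname{Adm}(\mu)}$ covers $\operatorname{Gr}_{\mathbb{F}}^{\leq \mu}$; properness of $\operatorname{Gr}^{\leq \mu}$ then extends this to a cover of $\operatorname{Gr}^{\leq \mu}$, and the smooth equivalence of \cref{res:smooth-equivalence} transfers the covering to $\mathcal{Y}^{\leq \mu}$.

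The main obstacle is verifying that the left-hand map $\widetilde{U}(z)^{\wedge \varpi} \to \mathcal{Y}^{\leq \mu}(z)$ is a $\widehat{T}$-torsor, which requires the open substack $\widetilde{g}_z^Y(\widetilde{U}(z)^{\wedge \varpi}) \subset \mathcal{Y}_{0^+}^{\leq \mu}$ to be stable under the residual $c$-twisted $\varphi$-conjugation action of $\widehat{T} = L^+\mathcal{G}/L^+\mathcal{G}_{0^+}$; once this is shown, the torsor structure is inherited from $\mathcal{Y}_{0^+}^{\leq \mu} \to \mathcal{Y}^{\leq \mu}$. Concretely, for $s \in \widehat{T}$ and $tAz \in \widetilde{U}(z)$ (with $t \in \widehat{T}$, $A \in L^{--,\leq \mu}\mathcal{G}$), one must exhibit $s^{-1}(tAz)\varphi_c(s)$ as an element of $\widehat{T} \cdot L^{--,\leq \mu}\mathcal{G} \cdot z \cdot L^+\mathcal{G}_{0^+}$. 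Since $c = n\varphi(n)^{-1}$ with $n \in \widehat{N}$ (so $c$ normalizes $\widehat{T}$ up to a Weyl element), the abelianness of $\widehat{T}$ reduces the check to controlling the commutator of $\varphi_c(s)$ with $A$ and $z$; the $d$-generic bound together with the contraction estimates of \cref{res:varphi-contraction-generic} should ensure that the resulting error terms lie in $L^+\mathcal{G}_{0^+}$, making the image $\widehat{T}$-stable in $\mathcal{Y}_{0^+}^{\leq \mu}$.
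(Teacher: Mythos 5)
Your overall strategy matches the paper's: use the negative-loop-group charts $U(z_i)$ indexed by $\operatorname{Adm}(\mu)$, pass to the $\widehat{T}$-torsor cover $\operatorname{Gr}_{0^+}^{\leq\mu}$ to get $\widehat{T}$-stable thickenings $\widetilde{U}(z_i)$, transfer open immersions from the $\operatorname{Gr}$ side to the $\mathcal{Y}$ side via \cref{res:neighborhood-lemma}, and then read off the two $\widehat{T}$-torsor structures by quotienting. That is the paper's proof.

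The one genuine gap is in your last paragraph. You flag the $\widehat{T}$-stability of $\widetilde{g}_z^Y(\widetilde{U}(z)^{\wedge\varpi}) \subset \mathcal{Y}_{0^+}^{\leq\mu}$ as the main obstacle (correctly), but you then reach for the contraction estimates of \cref{res:varphi-contraction-generic} to control ``error terms.'' There are no error terms and the contraction estimates play no role here. Writing an element of the chart as $tAz$ with $t \in \widehat{T}$, $A \in L^{--,\leq\mu}\mathcal{G}$, and acting by $s \in \widehat{T}$, one computes $s^{-1}tAz\varphi_c(s) = (s^{-1}t)\,A\,(z\varphi_c(s)z^{-1})\,z$. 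Here $\varphi_c(s) = c\varphi(s)c^{-1}$ lies in $\widehat{T}(\mathcal{O})$ on the nose, because $\varphi$ preserves $\widehat{T}(\mathcal{O})$ and $c = n\varphi(n)^{-1}$ lies in $\widehat{N}$, hence normalizes $\widehat{T}$ exactly (not merely ``up to a Weyl element''); and choosing the representatives $z_i$ in $\widehat{N}(\mathcal{O}[(v+p)^{\pm1}])$, the element $z\varphi_c(s)z^{-1}$ is again a constant torus element. The whole thing then reduces to the purely structural observation that $\widehat{T}\cdot L^{--}\mathcal{G}_x$ is stable under \emph{both} left and right translation by $\widehat{T}$, which follows immediately from the explicit description in \cref{res:explicit-negative-loop-group}: under \cref{ass:pappas-zhu-groups-as-dilations} with $x$ $0$-generic, $\widehat{T}\cdot L^{--}\mathcal{G}_x(R)$ is cut out by the conditions $A \bmod \tfrac{1}{v+p} \in \widehat{B}^{\mathrm{op}}(R)$ and $A \bmod \tfrac{v}{v+p} \in \widehat{B}(R[1/p])$, both of which are $\widehat{T}$-bi-invariant since $\widehat{T}$ normalizes $\widehat{B}$ and $\widehat{B}^{\mathrm{op}}$. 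This exact argument replaces your approximate one; the genericity hypothesis $d > h_\mu$ is used only to invoke \cref{res:neighborhood-lemma} (via straightening), not to bound any commutators.

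One further minor imprecision: you describe the target of the stability check as $\widehat{T}\cdot L^{--,\leq\mu}\mathcal{G}\cdot z\cdot L^+\mathcal{G}_{0^+}$. Since $\mathcal{Y}_{0^+}^{\leq\mu}$ is formed by the $c$-twisted $\varphi$-conjugation quotient, the preimage of the open chart in $L^{\leq\mu}\mathcal{G}$ is the $\star$-orbit of $\widehat{T}\cdot L^{--,\leq\mu}\mathcal{G}\cdot z$ under $L^+\mathcal{G}_{0^+}$, which is not a one-sided translate. This does not affect the argument---it suffices to land back inside $\widehat{T}\cdot L^{--,\leq\mu}\mathcal{G}\cdot z$ itself, which the structural argument gives exactly---but it is worth stating the target correctly.
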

\begin{proof}
    The assumptions ensure that \(\mathcal{Y}_{0^+,\mathbb{F}}^{\leq \mu} \cong \operatorname{Gr}_{0^+,\mathbb{F}}^{\leq \mu}\) by \eqref{eq:generic-equality}, as a consequence of \cref{res:straightening} (straightening).  
    The group scheme \(\mathcal{G}_{x}\) is the dilation of \(\widehat{G}\) in the Borel \(\widehat{B}\) at \(v=0\) by \cref{ass:pappas-zhu-groups-as-dilations}, 
    and \(\mathcal{G}_{x,0^+}\) is then the dilation of \(\widehat{G}\) in the unipotent radical \(\widehat{U} \subset \widehat{B}\) at \(v=0\) by \cref{ex:dilation-in-unipotent-radical}.
    Since \(p \nmid \pi_{1}(\widehat{G})\), we can take \(U(z_{1}), \dots, U(z_{n})\) as in \cref{sec:negative-loop-group-charts}, giving an open covering of \(\operatorname{Gr}^{\leq \mu}\).
    Let \(\widetilde{U}(z_{1}), \dots, \widetilde{U}(z_{n})\) denote the preimages of \(U(z_{1}), \dots, U(z_{n})\) in \(\operatorname{Gr}^{\leq \mu}_{0^+}\), which since 
    \(L^+ \mathcal{G}_{x} / L^+ \mathcal{G}_{x,0^+} = \overline{\mathcal{T}} = \widehat{T}\) by \cref{sec:quotient-loop-groups} is given by the orbits under the action of \(\widehat{T}\cdot L^{--} \mathcal{G}_{x}\). 
    From \cref{res:explicit-negative-loop-group} we see that 
    \begin{align*}
        \widehat{T}\cdot L^{--} \mathcal{G}_{x}(R) = \left\lbrace A \in \mathcal{G}\left( R \left[ \frac{1}{v+p} \right] \right) : A \mod \frac{1}{v+p} \in \widehat{B}^\text{op}(R)
        \text{ and } A \mod \frac{v}{v+p} \in \widehat{B}\left( R \left[ \frac{1}{p} \right] \right) \right\rbrace . 
    \end{align*}
    In particular, note that \(\widehat{T}\cdot L^{--} \mathcal{G}_{x}\) is stable under both left and right translation by \(\widehat{T}\), and consequently the open charts \(\widetilde{U}(z_{1}), \dots, \widetilde{U}(z_{n})\)
    are stable under both actions of \(\widehat{T}\). 
    If we define \(\mathcal{Y}^{\leq \mu}(z_{1}), \dots, \mathcal{Y}^{\leq \mu}(z_{n}) \subset \mathcal{Y}^{\leq \mu}\) as the images of \(\widetilde{U}(z_{1})^{\wedge \varpi}, \dots, \widetilde{U}(z_{n})^{\wedge \varpi}\) under the \(\widehat{T}\)-torsor \(\mathcal{Y}_{0^+}^{\leq \mu} \to \mathcal{Y}^{\leq \mu}\), 
    the conclusion of the theorem is satisfied. 
\end{proof}

\subsection{Odds and ends: The geometry of \(\mathcal{Y}^{\leq \mu}\)}
\label{sec:geometry-of-Y}
Make \cref{ass:straight}. 
Then we have a local model diagram over \(\mathbb{F}\)
\begin{equation}
    \label{diagram:special-fiber-local-model-diagram}
    \begin{tikzcd}
    & \mathcal{Y}_{f,\mathbb{F}}^{\leq \mu} \cong \operatorname{Gr}_{f,\mathbb{F}}^{\leq \mu} \arrow[dl] \arrow[dr] & \\
    \mathcal{Y}^{\leq \mu}_{\mathbb{F}} & & \operatorname{Gr}_{\mathbb{F}}^{\leq \mu} ,
    \end{tikzcd}
\end{equation}
where both arrows are \(L^f \mathcal{G}\)-torsors (and we use the convention \(L^{0^+}\mathcal{G} = \overline{\mathcal{T}}\)).
Let us now assume that we are in this situation. 

Assume that \(p \nmid \pi_{1}(G^*)\). 
Then the geometry of \(\operatorname{Gr}_{\mathbb{F}}^{\leq \mu}\) is well understood due to \cref{res:special-fiber-of-pappas-zhu-local-model}.
Indeed, for each \(\widetilde{w} \in \operatorname{Adm}^x (\mu)\) (we abuse notation and write \(\widetilde{w}\) instead of \(W_{x} \widetilde{w} W_{x}\)) we have an affine Schubert cell \(S^\circ (\widetilde{w}) = L^+\mathcal{G}_{\mathbb{F}} \widetilde{w} L^+\mathcal{G}_{\mathbb{F}} \subset \operatorname{Gr}_{\mathbb{F}}^{\leq \mu}\) whose closure is the affine Schubert variety \(S(\widetilde{w}) = \bigsqcup_{\widetilde{w}' \in \operatorname{Adm}^x(\mu), \widetilde{w}' \leq \widetilde{w}} S^{\circ}(\widetilde{w}')\) (see \cref{sec:admissible-set} for more details). 
For each \(\widetilde{w} \in \operatorname{Adm}^x(\mu)\) we can then define an ``affine Schubert cell''
\[
    S_{f}^\circ (\widetilde{w}) := L^+ \mathcal{G}_{\mathbb{F}} \widetilde{w} L^+ \mathcal{G}_{\mathbb{F}} \subset \operatorname{Gr}_{f,\mathbb{F}}^{\leq \mu} , 
\]
the preimage of \(S^{\circ}(\widetilde{w})\) in \(\operatorname{Gr}_{f,\mathbb{F}}^{\leq \mu}\). 
Then the natural map \(S_{f}^\circ(\widetilde{w}) \to S^\circ (\widetilde{w})\) is an \(L^f \mathcal{G}\)-torsor.
Since \(S_{f}^{\circ}(\widetilde{w})\) is stable under the \(c\)-twisted \(\varphi\)-conjugation action, it descends to a locally closed substack 
\[
    \mathcal{Y}^{\leq \mu}_{\mathbb{F},\widetilde{w}} := \left[ S_{f}^\circ(\widetilde{w}) /^{c,\varphi} L^+ \mathcal{G}_{\mathbb{F}} \right] \subset \mathcal{Y}^{\leq \mu}_{\mathbb{F}} . 
\]
The analogue of these locally closed substacks in \cite[Section 5.1]{caraiani-levin} are referred to as \emph{Kottwitz-Rapoport strata}. 
We have a stratification 
\[
    \mathcal{Y}^{\leq \mu}_{\mathbb{F}} = \bigsqcup_{\widetilde{w} \in \operatorname{Adm}^x(\mu)} \mathcal{Y}^{\leq \mu}_{\mathbb{F},\widetilde{w}} ,
\]
and the closure of \(\mathcal{Y}^{\leq \mu}_{\mathbb{F},\widetilde{w}}\) in \(\mathcal{Y}^{\leq \mu}_{\mathbb{F}}\) is precisely 
\[
    \mathcal{Y}^{\leq\mu}_{\mathbb{F},\leq \widetilde{w}} := \bigsqcup_{\widetilde{w}' \in \operatorname{Adm}^x(\mu), \widetilde{w}' \leq \widetilde{w}} \mathcal{Y}^{\leq \mu}_{\mathbb{F}, \widetilde{w}'} . 
\]
Moreover, the irreducible components of \(\mathcal{Y}^{\leq \mu}_{\mathbb{F}}\) are \(\lbrace \mathcal{Y}^{\leq \mu}_{\mathbb{F},\leq v^{w \mu}} \rbrace_{w \in W}\), where \(W\) denotes the finite Weyl group of \(G^*\). 

In the literature on Breuil--Kisin modules with descent data, the analogue of an element in \(\mathcal{Y}^{\leq \mu}(\mathbb{F})\) is said to have \emph{shape} (or \emph{genre}) \(\widetilde{w}\) if it belongs to \(\mathcal{Y}^{\leq \mu}_{\widetilde{w}}(\mathbb{F})\).
A defect with this notion of shape is that it is not an open condition for \(\widetilde{w} \in \operatorname{Adm}^x(\mu)\) which are not of maximal length (i. e. \(\widetilde{w} \neq v^{w \mu}\) for some \(w \in W\)), hence it is not stable under deformations. 
For this reason the open substacks \(\mathcal{Y}^{\leq \mu}(\widetilde{z}) \subset \mathcal{Y}^{\leq \mu}\) from \cref{res:explicit-smooth-equivalence} (when the conclusion of the theorem holds) are better suited for some purposes.
In \cite[Section 5.2]{local-models} an element of \(\mathcal{Y}^{\leq \mu}(\widetilde{z})(R)\) is said to have a \emph{\(\widetilde{z}\)-gauge basis}.
\section{Examples}
\label{sec:examples}
We have already given the following detailed examples:
\begin{enumerate}
    \item In \cref{ex:SL2} we worked out some Galois types for \(\widehat{G} = \operatorname{SL}_{2}\), and in particular which ones are Frobenius invariant and which ones are not. 
    \item In \cref{ex:PGL3-fibers} we considered the question of when \(\mathcal{G}_{x}\) has connected fibers in the case \(\widehat{G} = \operatorname{PGL}_{3}\). 
\end{enumerate}
In this section, we will give a few additional examples. 

In \cref{sec:weil-restriction-of-GL3} we study the case of \(\widehat{G}\) being the dual group of \(G = \operatorname{Res}^{\mathbb{Q}_{p^2}}_{\mathbb{Q}_{p}} \GL_{3}\).
This may be helpful for readers familiar with \cite{local-models}, since it corresponds to the case of \(\GL_{3}\) over \(K = \mathbb{Q}_{p^2}\) in loc. cit. 
In particular, we show how to compare inertial types as they appear in loc. cit. with our notion of Galois types and the corresponding points in the enlarged building of \(\GL_{3}\).
We also give a very explicit description of the admissible set in one example. 

In \cref{sec:tamely-ramified-unitary-group} we consider the case of a tamely ramified unitary group. 
This is the first example of a genuinely ramified group, where we have \(G^* \neq \widehat{G}\).
(The reader will recall from \cref{rem:G-star-is-G-hat} that we can undo any twisting in the unramified direction by taking a large coefficient field.)

In all examples, we assume that \(\mathbb{F}\) is large, so that we can identify \(\mathcal{J}\) with the set of embeddings \(\mathbb{F}_{q} \hookrightarrow \mathbb{F}\) as in \cref{ex:large-coefficient-field}. 
We fix some embedding \(\iota_{0} : \mathbb{F}_{q} \hookrightarrow \mathbb{F}\) and let \(\iota_{j} = \iota_{0} \varphi^{-j}\) for \(j = 0, 1, \dots, r-1\). 
Then \(\iota_{0},\iota_{1}, \dots, \iota_{r-1}\) is a complete enumeration of the embeddings \(\mathbb{F}_{q} \hookrightarrow \mathbb{F}\). 

\subsection{Weil restriction of \(\GL_{3}\) over \(\mathbb{Q}_{p^2}\)}
\label{sec:weil-restriction-of-GL3}
Let \(G = \operatorname{Res}_{\mathbb{Q}_{p}}^{\mathbb{Q}_{p^2}} \GL_{3}\) and \(L = \mathbb{Q}_{p^4}((-p)^{1/(p^4 - 1)})\), and assume that \(E\) contains the Galois closure of \(L\). 
Then \(\widehat{G} = \GL_{3} \times \GL_{3}\). 
The group \(\Gamma' = \operatorname{Gal}(\mathbb{Q}_{p^2} / \mathbb{Q}_{p})\) acts on \(\widehat{G}\) by permuting the factors cyclically, and \(\Gamma\) acts via the quotient map \(\Gamma \twoheadrightarrow \Gamma'\).
In particular, \(\sigma\) acts by the ``swap'' automorphism of \(\widehat{G}\) which we denote by \(\psi\). 
As a \(\Gamma\)-group we can identify \(\widehat{G} = \operatorname{Ind}_{\Gamma_{\mathbb{Q}_{p^2}}}^\Gamma \GL_{n}\), where \(\Gamma_{\mathbb{Q}_{p^2}} := \operatorname{Gal}(L / \mathbb{Q}_{p^2})\) is viewed as acting trivially.

\subsubsection{Galois type}
We will make a concrete choice of a Galois type \(\tau : \Gamma \to \widehat{G}(\mathbb{F}^\mathcal{J}\llb u \rrb)\). 
First, we will describe \(\tau\) using the parametrization of \cite[Section 9.2]{GHS}, as is done in \cite{local-models}.
This is primarily done to orient the reader and serves as a helpful example in comparing the notation of \cite{local-models} to ours. 
We then describe how to view this same \(\tau\) as a point in the building \(\widetilde{\mathcal{B}}(\widehat{G}, \mathbb{F}\llp v \rrp)\), following the classification from \cref{res:building-classification-of-types}. 

\subsubsection{Galois type in the manner of \cite{local-models}} 
\label{sec:galois-type-in-the-manner-of-lm}
Let \(s = \left( (123), (12) \right) \in W\) and \(\mu = \left( (16, 11, 7), (4, 2, 1) \right) \in X_{*}(\widehat{T})\).
Recall that \(\eta = \left( (2,1,0), (2,1,0) \right)\) in this case, so \(\mu + \eta = \left( (18, 12, 7), (6, 3, 1) \right)\). 
We assume that \(p \geq 19\) so that all entries of \(\mu+ \eta\) are in the range \([0, p-1]\). 
According to the parametrization of \cite[Section 9.2]{GHS} we obtain a 1-cocycle \(\tau(s,\mu+\eta) : I \to \widehat{G}(\mathbb{F})\) given by 
\begin{align*}
    \tau(s,\mu+\eta) =
     \bigg( &
        {\begin{pmatrix}
            \iota_{0}(\omega)^{18 + 3p + 7p^2 + p^3} & & \\ 
            & \iota_{0}(\omega)^{12 + 6p + 12p^2 + 6 p^3} & \\ 
            & & \iota_{0}(\omega)^{7 + p + 18p^2 + 3p^3} 
        \end{pmatrix}} , \\ 
        & {\begin{pmatrix}
            \iota_{0}(\omega)^{6 + 12p + 6p^2 + 12p^3} & & \\ 
            & \iota_{0}(\omega)^{3 + 7p + p^2 + 18p^3} & \\
            & & \iota_{0}(\omega)^{1 + 18p + 3p^2 + 7p^3} 
        \end{pmatrix}}
        \bigg) . 
\end{align*}
Note that we have
\begin{equation}
    \label{eq:cocycle-condition-in-explicit-example}
    \tau(s,\mu+\eta)^p = s \psi(\tau(s,\mu+\eta))s^{-1},
\end{equation}
so \(\tau(s,\mu+\eta)\) is an inertial type. 

From \(\tau(s,\mu+\eta)\) we obtain a 1-cocycle \(\tau : \Gamma \to \widehat{G}(\mathbb{F} \otimes_{\mathbb{F}_{p}} \mathbb{F}_{p^4}) = \widehat{G}(\mathbb{F}_{0}) \times \widehat{G}(\mathbb{F}_{1}) \times \widehat{G}(\mathbb{F}_{2}) \times \widehat{G}(\mathbb{F}_{3})\). 
This is given by \(\tau_{j}(\sigma) = s\) for every \(j \in \mathcal{J} = \lbrace 0,1,2,3 \rbrace\), and \(\tau|_{I} = \omega^\lambda\),\footnote{
    The expression \(\tau|_{I} = \omega^\lambda\) means that the projection onto factor \(\widehat{G}(\mathbb{F}_{j})\) is given by \(\iota_{j}(\omega)^{\lambda_{j}}\). 
}
where 
\begin{align*} 
\lambda_{0} = \big(
    & (18 + 3p + 7p^2 + p^3, 12 + 6p + 12 p^2 + 6 p^3, 7 + p + 18p^2 + 3p^3), \\
    & (6 + 12 p + 6p^2 + 12p^3, 3 + 7p + p^2 + 18p^3, 1 + 18p + 3p^2 + 7p^3), \big) = s \psi(\lambda_{3}),  \\
\lambda_{1} = \big(
    & (1 + 18p + 3p^2 + 7p^3, 6 + 12p + 6p^2 + 12p^3, 3 + 7p + p^2 + 18p^3), \\
    & (12 + 6p + 12p^2 + 6p^3, 18 + 3p + 7p^2 + p^3, 7 + p + 18p^2 + 3p^3) \big) = s \psi(\lambda_{0}), \\
\lambda_{2} = \big(
    & (7 + p + 18p^2 + 3p^3, 12 + 6p + 12p^2 + 6p^3, 18 + 3p + 7p^2 + p^3) , \\
    & (6 + 12p + 6p^2 + 12p^3, 1 + 18p + 3p^2 + 7p^3, 3 + 7p + p^2 + 18p^3) \big) = s \psi(\lambda_{1}), \\
\lambda_{3} = \big(
    & (3 + 7p + p^2 + 18p^3, 6 + 12p + 6p^2 + 12p^3, 1 + 18p + 3p^2 + 7p^3) , \\
    & (12 + 6p + 12p^2 + 6p^3, 7 + p + 18p^2 + 3p^3, 18 + 3p + 7p^2 + p^3) \big) = s \psi(\lambda_{2}). 
\end{align*}
Note that for every \(\iota_{j} \in \mathcal{J}\), we have \(\tau_{0}|_{I} = \tau_{j}|_{I} : I \to \widehat{T}(\mathbb{F})\) because \(\tau_{j}|_{I} = \iota_{j}(\omega)^{\lambda_{j}} = \iota_{0}(\varphi^{-j}(\omega))^{p^j \lambda_{0}} = \iota_{0}(\omega)^{\lambda_{0}}\). 
Note that \(\varphi \tau = \tau\), so \(\tau\) is strictly Frobenius invariant.
Since \(\sigma\) acts on \(\widehat{G}(\mathbb{F} \otimes_{\mathbb{F}_{p}} \mathbb{F}_{p^4})\) as \(\psi \circ \varphi = \varphi \circ \psi\), we see that \eqref{eq:cocycle-condition-in-explicit-example} implies
the cocycle condition \(\tau(\gamma)^p = \tau(\sigma) (^\sigma \tau(\gamma)) \tau(\sigma)^{-1}\). 
Note that we also have \(\tau(\gamma)^e = 1\) since \(\omega^e = 1\) and that
\[
    \left( \tau(\sigma) (^\sigma \tau(\sigma)) (^{\sigma^2} \tau(\sigma)) (^{\sigma^3}\tau(\sigma)) \right)_{0} = s' s'' s' s'' = (13)^2 = 1 ,
\] 
where \(s = (s',s'')\), and similarly the entry corresponding to another \(j \in \mathcal{J}\) is equal to 1.

Define \(w \in W^\mathcal{J}\) by 
\begin{align*}
    w_{0} & = \left( (123), (12) \right) = s  , \\
    w_{1} & = \left( (13), (23) \right) = s \psi(w_{0}) , \\
    w_{2} & = \left( (12), (321) \right) = s \psi(w_{1}) , \\ 
    w_{3} & = \left( 1, 1 \right) = s \psi(w_{2}) . 
\end{align*}
In fact, \(w\) is characterized by the property that \(w^{-1}\lambda\) is dominant, or even just making the leading \(p^3\)-coefficients dominant. 
Let \(n = w^{-1} u^\lambda . \)
Then it is straightforward to check that \(\tau(\theta) = n^{-1}(^\theta n)\):
\begin{align*}
    n^{-1}(^\gamma n) & = u^{-\lambda}(^\gamma u^\lambda) = \omega^{\lambda} = \tau(\gamma) , \\
    (n^{-1}(^\sigma n))_{j} & = u^{-\lambda_{j}} w_{j} \psi(w_{j-1})^{-1} u^{\psi(\lambda_{j-1})} = u^{-\lambda_{j} + s \psi(\lambda_{j-1})}s = s .  
\end{align*}

We have the point \(x = o - \frac{1}{p^4 - 1}w^{-1}\lambda \in \mathcal{A}\left( \widehat{G}, \mathbb{F}^\mathcal{J}\llp u \rrp \right) ,\) and for each \(j \in \mathcal{J}\), 
\(\varphi(x)_{j} = o_{j} - \frac{p}{p^4 -1} w^{-1}_{j-1}\lambda_{j-1} \in \mathcal{A}\left(\widehat{G}, \mathbb{F}_{j}\llp u \rrp \right) \). 
Now consider \(c \in \widehat{N}(\mathbb{F}^\mathcal{J}\llp u \rrp)^{\Gamma}\) with \(c_{j} = \psi^j \left( s^{-1} v^{-\mu - \eta} \right)\).
Explicitly, 
\begin{align*}
    c_{0} = c_{2} & = ((321),(12))v^{- ((18,12,7),(6,3,1))} , \\ 
    c_{1} = c_{3} & = ((12),(321))v^{- ((6,3,1),(18,12,7))} . 
\end{align*}
Then we have \(c \cdot \varphi(x) = x\). Indeed, for any \(j \in \mathcal{J}\) we have 
\[
    c_{j} = w_{j}^{-1}w_{j-1} u^{w_{j-1}^{-1}(p\lambda_{j-1} - \lambda_{j})} = w_{j}^{-1}u^{p\lambda_{j-1} - \lambda_{j}} w_{j-1} . 
\]
Hence 
\begin{align*}
    c_{j} \cdot \varphi(x)_{j} & = o_{j} - w_{j}^{-1} \left( w_{j-1}\left(\frac{p}{p^4-1} w_{j-1}^{-1} \lambda_{j-1}\right) - \frac{p}{p^4-1} \lambda_{j-1} + \frac{1}{p^4-1}\lambda_{j} \right) = x_{j} . 
\end{align*}

\begin{remark}
    \label{rem:local-model-notation-in-weil-restriction-of-GL3}
    Let us compare how our notation relates to that of \cite{local-models}, in particular \cite[Section 5.1]{local-models}. 
    Note that loc. cit. works with the group \(\GL_{3}\) but only descent data with respect to \(\Gamma_{\mathbb{Q}_{p^2}}\). 
    By Shapiro's Lemma we have \(H^1 \left( \Gamma, \widehat{G}\left( \mathbb{F} \otimes_{\mathbb{F}_{p}} \mathbb{F}_{q} \right) \right) \cong H^1 \left( \Gamma_{\mathbb{Q}_{p^2}}, \GL_{3}\left( \mathbb{F} \otimes_{\mathbb{F}_{p}} \mathbb{F}_{q} \right) \right)\),
    so we can specialize to their situation by projecting to a chosen factor of \(\widehat{G} = \GL_{3} \times \GL_{3}\) and restricting the Galois action to \(\Gamma_{\mathbb{Q}_{p^2}}\). 
    What we call \(\tau(s,\mu+\eta)\) is precisely what is denoted by the same symbol in \cite[Section 2.4]{local-models}. 
    What is called \(\mathbf{a}'^{(j)}\) and \(s_{\text{or},j}'\) in \cite[Section 5.1]{local-models} corresponds to the chosen \(\GL_{3}\)-coordinate of our \(\lambda_{j}\) and \(w_{j}\), respectively. 
    The chosen \(\GL_{3}\)-coordinate of \(c_{j}\) makes an appearance in \cite[Proposition 5.1.8]{local-models}. 
\end{remark}

\subsubsection{Galois type as a point in the building}
We keep the notation from the previous subsection. 
With \(\lambda\) and \(w\) as there, recall that we have \(n = w^{-1}u^\lambda\) and \(x = n \cdot o = o - \frac{1}{e} w^{-1}\lambda \in \widetilde{\mathcal{A}}\left( \widehat{T}, \mathbb{F}^\mathcal{J}\llp u \rrp \right)\), where \(e = p^{4}-1\).
Recall from \cref{sec:buildings} that \(\widetilde{\mathcal{A}}\left( \widehat{T}, \mathbb{F}^\mathcal{J} \llp u \rrp \right) = \prod_{j=0}^3 \widetilde{\mathcal{A}}\left( \widehat{T}, \mathbb{F}_{j}\llp u \rrp \right)\). 
So \(x = (x_{0}, x_{1}, x_{2}, x_{3})\), where \(x_{j} = o_{j} - \frac{1}{e}w_{j}^{-1}\lambda_{j} \in \widetilde{\mathcal{A}}\left( \widehat{T}, \mathbb{F}_{j}\llp u \rrp \right)\). 
We have
\begin{align*}
    w_{2}^{-1} \lambda_{2} = w_{0}^{-1}\lambda_{0} = \big(
        & ( 12 + 6p + 12p^2 + 6p^3, 7 + p + 18p^2 + 3p^3, 18 + 3p + 7p^2 + p^3 ) , \\
        & ( 3 + 7p + p^2 + 18p^3 , 6 + 12p + 6p^2 + 12p^3 , 1 + 18p + 3p^2 + 7p^3) \big) , \\
    w_{3}^{-1} \lambda_{3} = w_{1}^{-1}\lambda_{1} = \big( 
        & ( 3 + 7p + p^2 + 18p^3, 6 + 12p + 6p^2 + 12p^3, 1 + 18p + 3p^2 + 7p^3) , \\ 
        & ( 12 + 6p + 12p^2 + 6p^3, 7 + p + 18p^2 + 3p^3, 18 + 3p + 7p^2 + p^3) \big) . 
\end{align*}
Note that \(x\) is fixed by \(\Gamma\) since \(\psi(x_{j-1}) = x_{j}\) for every \(j \in \mathcal{J}\).
We will identify \(x \in \widetilde{\mathcal{A}}\left( \widehat{T}, \mathbb{F}\llp v \rrp \right) = \widetilde{\mathcal{A}}\left( \widehat{T}, \mathbb{F}^\mathcal{J}\llp u \rrp \right)^\Gamma\) with \(x_{0} \in \widetilde{\mathcal{A}}\left( \widehat{T}, \mathbb{F}_{0} \llp u \rrp \right)\), 
via the projection 
\(\widetilde{\mathcal{A}}\left( \widehat{T}, \mathbb{F}^\mathcal{J}\llp u \rrp \right) \to \widetilde{\mathcal{A}}\left( \widehat{T}, \mathbb{F}_{0}\llp u \rrp \right)\),
which yields an isomorphism 
\[\widetilde{\mathcal{A}}\left( \widehat{T}, \mathbb{F}\llp v \rrp \right) = \widetilde{\mathcal{A}}\left( \widehat{T}, \mathbb{F}^\mathcal{J}\llp u \rrp \right)^\Gamma \isoto \widetilde{\mathcal{A}}\left( \widehat{T}, \mathbb{F}_{0}\llp u \rrp \right)^I . \]
We may moreover identify \(\widetilde{\mathcal{A}}\left( \widehat{T}, \mathbb{F}\llp v \rrp \right) = \widetilde{\mathcal{A}}\left( S , \mathbb{F}\llp v \rrp \right) \times \widetilde{\mathcal{A}}\left( S, \mathbb{F}\llp v \rrp \right)\), where \(S \subset \GL_{3}\) is the diagonal torus. 
Then \(x = (x^{(0)}, x^{(1)})\), where each \(x^{(i)} \in \widetilde{\mathcal{A}}\left( S , \mathbb{F}\llp v \rrp \right)\), and in fact 
\begin{align*}
    x^{(0)} & = o - \frac{1}{e} \left( 12 + 6p + 12p^2 + 6p^3, 7 + p + 18p^2 + 3p^3, 18 + 3p + 7p^2 + p^3 \right) , \\
    x^{(1)} & = o - \frac{1}{e} \left( 3 + 7p + p^2 + 18p^3 , 6 + 12p + 6p^2 + 12p^3 , 1 + 18p + 3p^2 + 7p^3 \right) . 
\end{align*}
See \cref{fig:GL3-alcove} on page \pageref{fig:GL3-alcove} for an overview of the situation in the case \(p = 19\).
From \cref{fig:GL3-alcove} we see that \(x\) is \(2\)-generic when \(p = 19\), something that can also easily be checked by hand.

\begin{figure}[h!]
    \centering
    \resizebox{\textwidth}{!}{
    \begin{tikzpicture}[scale = 12]
        \pgfmathsetmacro{\numSegments}{36}
        \pgfmathsetmacro{\p}{19}
        \pgfmathsetmacro{\kbound}{floor(\p/3)}

        \newcommand{\dividedTriangle}{
            \coordinate (A) at (0, 0);
            \coordinate (B) at (1, 0);
            \coordinate (C) at (0.5, 0.866); 

            \foreach \k in {0,...,\kbound} {
                \pgfmathparse{1.5*\k/\p}
                \let\xi\pgfmathresult
                \pgfmathparse{0.866*\k/\p}
                \let\yi\pgfmathresult
                \pgfmathparse{1 - (1.5*\k/\p)}
                \let\xii\pgfmathresult
                \pgfmathparse{0.866*(1 - (2*\k/\p))}
                \let\yiii\pgfmathresult
                \fill[color=red,fill opacity=0.15] (\xi,\yi) -- (\xii,\yi) -- (0.5,\yiii) -- cycle;
            }

            \foreach \i in {0,...,\numSegments} {
                \coordinate (P) at ($(A)!\i/\numSegments!(B)$);
                \coordinate (Q) at ($(A)!\i/\numSegments!(C)$);
                \coordinate (R) at ($(B)!\i/\numSegments!(C)$);
                \coordinate (S) at ($(C)!\i/\numSegments!(B)$);
                
                \draw[line width=0.05pt,color=gray] (P) -- (Q);
                \draw[line width=0.05pt,color=gray] (P) -- (S);
                \draw[line width=0.05pt,color=gray] (Q) -- (R);
            }

            \draw[thick] (A) -- (B) -- (C) -- cycle;
        }

        \clip (-0.2,-0.173) rectangle (1.2,1.039);

        \foreach \x / \y in {-0.5/-0.866, 0.5/-0.866, -1/0, 0/0, 1/0, -0.5/0.866, 0.5/0.866} {
            \begin{scope}[shift={(\x,\y)}]
                \dividedTriangle
            \end{scope}
        }

        \foreach \x / \y in {-0.5/-0.866, 0.5/-0.866, -1/0, 0/0, 1/0, 0/-1.732} {
            \begin{scope}[yscale=-1,shift={(\x,\y)}]
                \dividedTriangle
            \end{scope}
        }

        \node[circle, fill=black, inner sep=1pt, label={[fill=white, fill opacity=0.5, text opacity=1, rounded corners] above:{$x^{(0)}$}}] at (0.497, 0.626) {};
        \node[circle, fill=black, inner sep=1pt, label={[fill=white, fill opacity=0.5, text opacity=1, rounded corners] above:{$x^{(1)}$}}] at (0.485, 0.371) {};

        \node[circle, fill=black, inner sep=1.5pt, label={[fill=white, fill opacity=0.7, text opacity=1, rounded corners] above:{$o$}}] at (0.5,0.866) {};
        \node[circle, fill=black, inner sep=1.5pt, label={[fill=white, fill opacity=0.7, text opacity=1, rounded corners] above:{$o - (1,0,0)$}}] at (0,0) {};
        \node[circle, fill=black, inner sep=1.5pt, label={[fill=white, fill opacity=0.7, text opacity=1, rounded corners] above:{$o - (1,1,0)$}}] at (1,0) {};
    \end{tikzpicture}}
    \caption{
        An alcove in the apartment \(\mathcal{A}\left( S, \mathbb{F}\llp v \rrp \right)\), where \(S \subset \GL_{3}\) is the diagonal torus, when \(p = 19\). 
        The large triangles bounded by the thick black lines are the walls of the apartment \(\mathcal{A}\left( S, \mathbb{F}\llp v \rrp \right)\). 
        The smaller triangles bounded by gray lines represent the walls of the apartment \(\mathcal{A}\left( S,\mathbb{F}\llp u \rrp \right)\) (recall that \(\mathcal{A}\left( S, \mathbb{F}\llp v \rrp \right) = \mathcal{A}\left( S,\mathbb{F}\llp u \rrp \right)\) as a set, but with a different simplicial structure). 
        But in fact, we have not drawn all the small triangles since this is not feasible. 
        Because \(e = p^4 - 1 = 130320\), each large triangle should contain \(130320^2\) small triangles, of which we have drawn \(36^2\), 
        meaning that each of the shown small triangles actually bound \(3620^2\) alcoves of \(\mathcal{A}\left( S,\mathbb{F}\llp u \rrp \right)\). 
        The shade of red represents the genericity, where darker red means more generic.
        We have labeled the points \(x^{(0)}\) and \(x^{(1)}\), or more precisely the projection of these points to the reduced apartment. 
        From the picture we can see that \(x\) is \(2\)-generic, since \(x^{(0)}\) is \(2\)-generic and \(x^{(1)}\) is more than \(2\)-generic. 
    }
    \label{fig:GL3-alcove}
\end{figure}
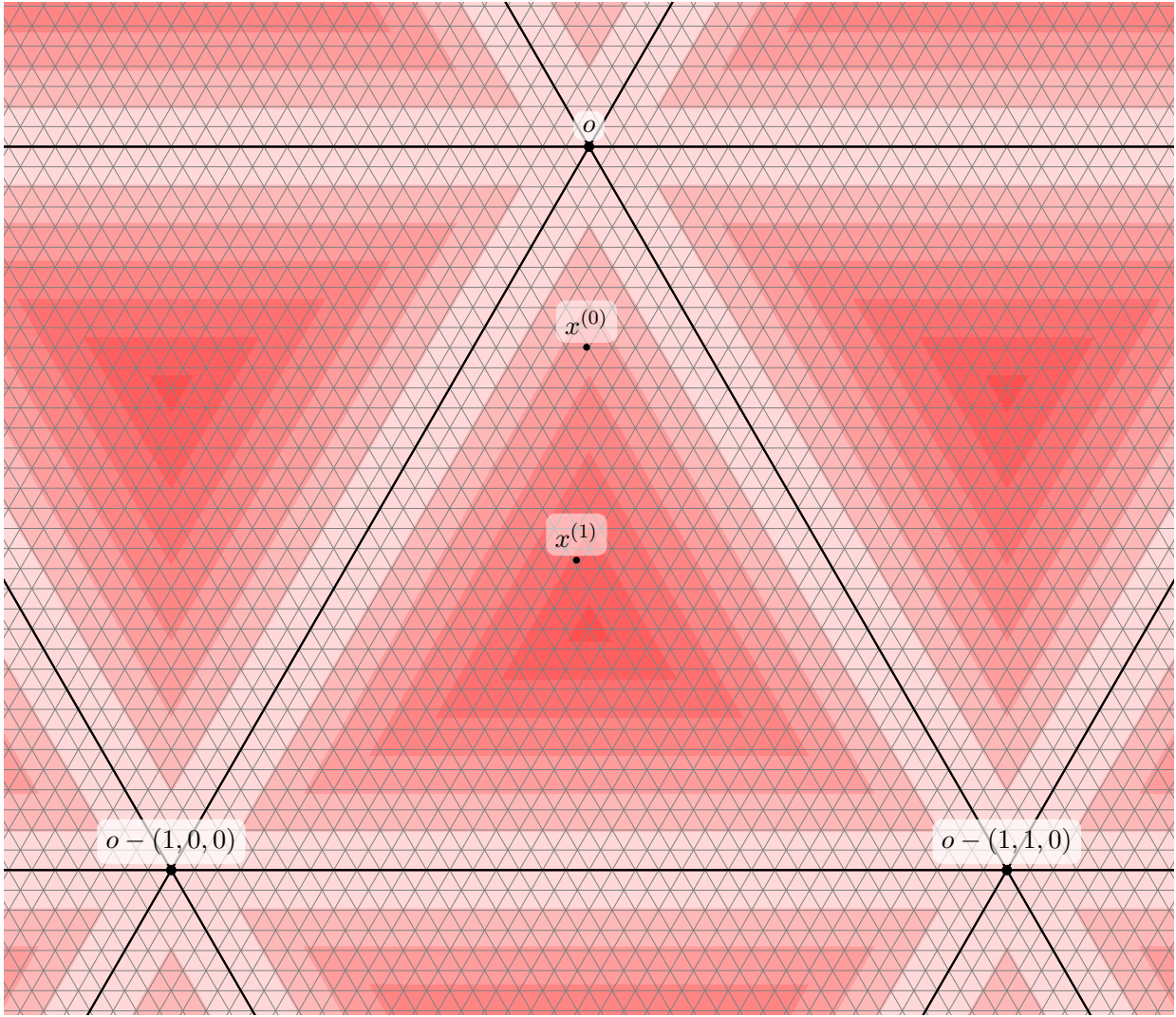

\subsubsection{Bruhat--Tits group scheme}
Let \(\widehat{B} \subset \widehat{G} = \GL_{3} \times \GL_{3}\) be the Borel which is \textit{lower} triangular in each \(\GL_{3}\)-factor. 
Then the Bruhat--Tits group scheme \(\mathcal{G}_{x}\) is the dilation of \(\widehat{G}\), viewed as a group scheme over \(\mathbb{A}^1_{\mathcal{O}} = \operatorname{Spec}\mathcal{O}[v]\), in the Borel \(\widehat{B}\) along the origin \(v = 0\).
This is a consequence of \cref{rem:pappas-zhu-dilation} (since \(x\) is lowest alcove according to \cref{def:lowest-alcove} for the choice of positive roots corresponding to \(\widehat{B}\)). 
It follows that for any \(\mathcal{O}[v]\)-algebra \(R\) in which \(v\) is a non-zerodivisor, we can identify 
\[
    \mathcal{G}_{x}(R) = \lbrace A \in \widehat{G}(R) | A \mod v \in \widehat{B}\left( R / v R \right) \rbrace . 
\]
In particular, we have 
\begin{align*}
    L_{0}^+ \mathcal{G}_{x}(R) & = \lbrace A \in \widehat{G}(R\llb v \rrb) | A \mod v \in \widehat{B}(R) \rbrace , \\
    L_{0} \mathcal{G}_{x}(R) & = \widehat{G}(R\llp v \rrp) . 
\end{align*}
Similarly, we have for a noetherian \(\mathcal{O}\)-algebra \(R\)
\begin{align*}
    L^+ \mathcal{G}_{x}(R) & = \lbrace A \in \widehat{G}(R\llb v + p \rrb) | A \mod v \in \widehat{B}(R\llb v + p \rrb / v R \llb v + p \rrb) \rbrace , \\ 
    L \mathcal{G}_{x}(R) & = \lbrace A \in \widehat{G}(R \llp v + p \rrp) | A \mod v \in \widehat{B}(R\llp v + p \rrp / v R \llp v + p \rrp) \rbrace . 
\end{align*}
The reason we impose the noetherian condition here is to ensure that \(v\) is a non-zerodivisor in \(R\llb v + p \rrb\) and \(R\llp v + p \rrp\). 

\begin{remark} 
    In \cite{local-models}, \(\mathcal{G}_{x}\) is the dilation in the \textit{upper} triangular Borel for the same data as in \cref{sec:galois-type-in-the-manner-of-lm}. 
    The reason is the appearance of the dual in \cite[Definition 5.1.3]{local-models}, which is adapted to the \emph{contravariant} anti-equivalence between étale \(\varphi\)-modules and representations of \(\operatorname{Gal}(\mathbb{Q}_{p,\infty})\). 
    Our notion of \emph{type} from \cref{sec:specialized-equivariant-torsors}, which is better suited to the \emph{covariant} functor, is therefore not compatible with \cite[Definition 5.1.3]{local-models}.
\end{remark}

\subsubsection{The admissible set and the moduli stack \(\mathcal{Y}^{\leq \mu}\)}
\label{sec:weil-res-example-admissible-set}
Let us take
\[
    \mu = \left( (1,0,0),(1,0,0) \right) \in X_{*}(\widehat{T}) . 
\]
Then \(h_{\mu} = 1\). Since \(x\) is \(2\)-generic, \cref{res:explicit-smooth-equivalence} applies and we have a local model diagrams
\[
\begin{tikzcd}
    & \widetilde{U}(\widetilde{z})^{\wedge \varpi} \arrow[dl] \arrow[dr] & \\ 
    \mathcal{Y}^{\leq \mu}(\widetilde{z}) & & U(\widetilde{z})^{\wedge \varpi} , 
\end{tikzcd}
\]
where the elements \(\widetilde{z} \in L^{\leq \mu} \mathcal{G}_{x}(\mathcal{O})\) are representatives of elements in the \emph{admissible set} \(\operatorname{Adm}(\mu)\) that we described in \cref{sec:admissible-set}.

As outlined in \cref{sec:geometry-of-Y} we can understand the underlying geometry of \(\mathcal{Y}^{\leq \mu}\) in terms of Kottwitz-Rapoport strata parametrized by elements of \(\operatorname{Adm}(\mu)\). 
To make this more concrete, we give an explicit description of \(\operatorname{Adm}(\mu)\). 

In the case of \(\widehat{G}\), the Iwahori--Weyl group \(\widetilde{W}^* = \widehat{N}(\mathbb{F}\llp v \rrp) / \widehat{T}(\mathbb{F}\llb v \rrb) \cong X_{*}(\widehat{T}) \rtimes W\), where \(W = \widehat{N}(\mathbb{F}\llp v \rrp) / \widehat{T}(\mathbb{F}\llp v \rrp)\) is the finite Weyl group, coincides with the extended affine Weyl group \(W^\text{ext}\), because the same is true for \(\GL_{3}\).
In fact, \(\widetilde{W}^* = \widetilde{W}' \times \widetilde{W}'\), where \(\widetilde{W}'\) is the Iwahori--Weyl group for \(\GL_{3}\). 
Moreover, the admissible set \(\operatorname{Adm}(\mu) = \operatorname{Adm}(\mu') \times \operatorname{Adm}(\mu') \subset \widetilde{W}' \times \widetilde{W}'\), where \(\mu' = (1,0,0)\). 
So let us just focus on the admissible set for \(\mu'\) in \(\widetilde{W}'\). 

Let \(S \subset \GL_{3}\) be the diagonal torus and let \(\mathcal{C} \subset \mathcal{A}\left( S, \mathbb{F}\llp v \rrp \right)\) be the base alcove containing \(x^{(0)}\) and \(x^{(1)}\). 
Simple reflections across the walls of \(\mathcal{C}\) are given by \(\widetilde{s}_{1} = s_{(12)}\), \(\widetilde{s}_{2} = s_{(23)}\), \(\widetilde{s}_{3} = v^{(1,0,-1)}s_{(13)} = s_{(13)}v^{(-1,0,1)}\), where \(s_{(ij)}\) denotes the permutation that switches the \(i\)'th and \(j\)'th coordinates.
Note that \(\widetilde{s}_{1}, \widetilde{s}_{2}, \widetilde{s}_{3}\) are elements of the affine Weyl group \(W^{\text{aff}}\) (which corresponds to the Iwahori--Weyl group of \(\operatorname{SL}_{3}\)). 
The admissible set of \(\mu' = (1,0,0)\) is by definition the elements of \(\widetilde{W}'\) that are \(\leq\) the elements \(v^{\mu'} = v^{(1,0,0)}, v^{s_{(13)}\mu'} = v^{(0,0,1)}, v^{s_{(12)} \mu'} = v^{(0,1,0)}\) in the Bruhat order (whose definition is given in \cref{sec:admissible-set}). 
Note that \(\widetilde{t} = v^{(1,0,0)}s_{(123)}\) is an element of \(\widetilde{W}'_{\mathcal{C}}\), the stabilizer of \(\mathcal{C}\) in \(\widetilde{W}'\), acting as a rotation centered at the barycenter of \(\mathcal{C}\). 
We have 
\begin{align*}
    v^{\mu'} & = \widetilde{s}_{3} \widetilde{s}_{2} \widetilde{t} , \\ 
    v^{s_{(13)}\mu'} & = \widetilde{s}_{2} \widetilde{s}_{1} \widetilde{t} , \\
    v^{s_{(12)}\mu'} & = \widetilde{s}_{1} \widetilde{s}_{3} \widetilde{t} . 
\end{align*}
From this, we see that the admissible set of \(\mu'\) is precisely 
\begin{align*}
    \operatorname{Adm}(\mu') = \lbrace v^{\mu'}, \,\, v^{s_{(13)}\mu'}, \,\, v^{s_{(12)}\mu'}, \,\, \widetilde{s}_{1}\widetilde{t}, \,\, \widetilde{s}_{2}\widetilde{t}, \,\,\widetilde{s}_{3}\widetilde{t}, \,\, \widetilde{t} \rbrace .
\end{align*}
Since the Bruhat ordering on \(\widetilde{W}'\) is induced by the Bruhat ordering on the affine Weyl group for \(\GL_{3}\), the elements of \(\operatorname{Adm}(\mu')\) correspond to alcoves. 
This is illustrated in \cref{fig:admissible-set} on page \pageref{fig:admissible-set}. 
In fact, an efficient way to find the factorization \(v^{\mu'} = \widetilde{s}_{3} \widetilde{s}_{2}\widetilde{t}\) by hand is to 
first draw the picture of the alcoves as shown in \cref{fig:admissible-set}. 
By doing a ``gallery walk'' we can then easily see that \(v^{\mu'}\mathcal{C} = \widetilde{s}_{3}\widetilde{s}_{2}\mathcal{C}\), and then \(\widetilde{t} = (\widetilde{s}_{3}\widetilde{s}_{2})^{-1}v^{\mu'}\).

\begin{figure}[h!]
    \centering
    \resizebox{\textwidth}{!}{
    \begin{tikzpicture}[scale=2.3]

        \definecolor{s1}{RGB}{0,0,240}      
        \definecolor{s2}{RGB}{240,0,240}  
        \definecolor{s3}{RGB}{240,0,0}       

        \pgfmathsetmacro{\h}{1.732}

        \newcommand{\shadedAlcove}[6]{
            \coordinate (A) at (#1, #2);
            \coordinate (B) at (#1 + 1, #2 - \h);
            \coordinate (C) at (#1 - 1, #2 - \h);

            \fill[opacity=0.09] (A) -- (B) -- (C) -- cycle;

            \draw[thick,color=#3] (A) -- (B);
            \draw[thick,color=#4] (B) -- (C);
            \draw[thick,color=#5] (C) -- (A);

            \fill (A) circle (1pt);
            \fill (B) circle (1pt);
            \fill (C) circle (1pt);

            \node at (barycentric cs:A=1,B=1,C=1) {#6};
        }

        \newcommand{\upsideShadedAlcove}[6]{
            \coordinate (A) at (#1, #2);
            \coordinate (B) at (#1 - 1, #2 + \h);
            \coordinate (C) at (#1 + 1, #2 + \h);

            \fill[opacity=0.09] (A) -- (B) -- (C) -- cycle;

            \draw[thick,color=#3] (A) -- (B);
            \draw[thick,color=#4] (B) -- (C);
            \draw[thick,color=#5] (C) -- (A);

            \fill (A) circle (1pt);
            \fill (B) circle (1pt);
            \fill (C) circle (1pt);

            \node at (barycentric cs:A=1,B=1,C=1) {#6};
        }

        \newcommand{\alcove}[5]{
            \coordinate (A) at (#1, #2);
            \coordinate (B) at (#1 + 1, #2 - \h);
            \coordinate (C) at (#1 - 1, #2 - \h);

            \draw[thick,color=#3] (A) -- (B);
            \draw[thick,color=#4] (B) -- (C);
            \draw[thick,color=#5] (C) -- (A);

            \fill (A) circle (1pt);
            \fill (B) circle (1pt);
            \fill (C) circle (1pt);
        }

        \clip (-3,-2.5*\h) rectangle (4,1.5*\h);

        \alcove{-2}{2*\h}{s3}{s2}{s1}; 
        \alcove{0}{2*\h}{s1}{s3}{s2};
        \alcove{2}{2*\h}{s2}{s1}{s3};
        \alcove{4}{2*\h}{s3}{s2}{s1};

        \alcove{-3}{\h}{s1}{s3}{s2};
        \shadedAlcove{-1}{\h}{s2}{s1}{s3}{$v^{s_{(13)}\mu'}\mathcal{C} = \textcolor{s2}{\widetilde{s}_{2}}\textcolor{s1}{\widetilde{s}_{1}}\mathcal{C}$};
        \alcove{1}{\h}{s3}{s2}{s1};
        \alcove{3}{\h}{s1}{s3}{s2};

        \alcove{-2}{0}{s3}{s2}{s1};
        \upsideShadedAlcove{-1}{-\h}{s3}{s1}{s2}{$\textcolor{s2}{\widetilde{s}_{2}}\widetilde{t}\mathcal{C}=\textcolor{s2}{\widetilde{s}_{2}}\mathcal{C}$};
        \shadedAlcove{0}{0}{s1}{s3}{s2}{$\widetilde{t}\mathcal{C} = \mathcal{C}$};
        \upsideShadedAlcove{1}{-\h}{s1}{s2}{s3}{$\textcolor{s1}{\widetilde{s}_{1}}\widetilde{t}\mathcal{C} = \textcolor{s1}{\widetilde{s}_{1}} \mathcal{C}$};
        \shadedAlcove{2}{0}{s2}{s1}{s3}{$v^{s_{(12)}\mu'}\mathcal{C} = \textcolor{s1}{\widetilde{s}_{1}}\textcolor{s3}{\widetilde{s}_{3}}\mathcal{C}$};
        \alcove{4}{0}{s3}{s2}{s1};

        \alcove{-3}{-\h}{s1}{s3}{s2};
        \shadedAlcove{-1}{-\h}{s2}{s1}{s3}{$v^{\mu'}\mathcal{C} = \textcolor{s3}{\widetilde{s}_{3}}\textcolor{s2}{\widetilde{s}_{2}}\mathcal{C}$};
        \upsideShadedAlcove{0}{-2*\h}{s2}{s3}{s1}{$\textcolor{s3}{\widetilde{s}_{3}}\widetilde{t}\mathcal{C} = \textcolor{s3}{\widetilde{s}_{3}}\mathcal{C}$};
        \alcove{1}{-\h}{s3}{s2}{s1};
        \alcove{3}{-\h}{s1}{s3}{s2};

        \alcove{-2}{-2*\h}{s3}{s2}{s1};
        \alcove{0}{-2*\h}{s1}{s3}{s2};
        \alcove{2}{-2*\h}{s2}{s1}{s3};
        \alcove{4}{-2*\h}{s3}{s2}{s1};

        \node[circle, fill=black, inner sep=1pt, label={[fill=white, fill opacity=0.7, text opacity=1, rounded corners] above:{$o$}}] at (0,0) {};
        \node[circle, fill=black, inner sep=1pt, label={[fill=white, fill opacity=0.7, text opacity=1, rounded corners] above:{$o - (1,0,0)$}}] at (-1,-\h) {};
        \node[circle, fill=black, inner sep=1pt, label={[fill=white, fill opacity=0.7, text opacity=1, rounded corners] above:{$o - (1,1,0)$}}] at (1,-\h) {};
    \end{tikzpicture}}
    \caption{
        A snapshot of \(\mathcal{A}\left( S, \mathbb{F}\llp v \rrp \right)\), where \(S \subset \GL_{3}\) is the diagonal torus. 
        Shaded alcoves correspond to elements of \(\operatorname{Adm}(\mu')\).
        The simple reflections \(\textcolor[RGB]{0,0,240}{\widetilde{s}_{1}}\), \(\textcolor[RGB]{240,0,240}{\widetilde{s}_{2}}\) and \(\textcolor[RGB]{240,0,0}{\widetilde{s}_{3}}\) correspond to the reflections across the walls of \(\mathcal{C}\) of the same colors, 
        and we can factorize any \(\widetilde{w}\) in the affine Weyl group by doing a ``gallery walk''.
    }
    \label{fig:admissible-set}
\end{figure}

\subsection{Tamely ramified unitary group}
\label{sec:tamely-ramified-unitary-group}
Let \(L = \mathbb{Q}_{p}\left( (-p)^{1/e} \right)\), where \(2 | e\) and \(e | p-1\), in which case \(\Gamma = I = \langle \gamma \rangle = \operatorname{Gal}(L / \mathbb{Q}_{p})\). 
Let \(G = U_{3}\) be the (quasi-split) tamely ramified unitary group over \(\mathbb{Q}_{p}\) with \(\mathbb{Q}_{p}\)-points 
\[
    G(\mathbb{Q}_{p}) = \left\lbrace g \in \GL_{3}\left( \mathbb{Q}_{p}\left( (-p)^{1/2} \right) \right) | J {^\gamma g}^{- t} J^{-1} = g \right\rbrace , 
\]
where \(J = \begin{pmatrix} 0 & 0 & 1 \\ 0 & 1 & 0 \\ 1 & 0 & 0 \end{pmatrix}\) and \(\gamma\) acts via its image in \(\operatorname{Gal}\left( \mathbb{Q}_{p}\left( (-p)^{1/2} \right) / \mathbb{Q}_{p} \right)\).
Let \(\widehat{G} = \GL_{3}\) be the dual group of \(G\), on which \(\gamma\) acts as the pinned automorphism \(g \mapsto J g^{-t}J^{-1}\). 

In this case, the group \(G^*\) over \(\mathcal{O}[v^{\pm 1}]\) is not \(\widehat{G}\), and it is a genuinely non-split group.
For example,
\[
    G^* (E\llp v \rrp) = \left\lbrace g \in \GL_{3}\left( E \llp v^{1/2} \rrp \right) | J {^\gamma g}^{- t} J^{-1} = g \right\rbrace . 
\]

\subsubsection{Galois type}
The action of \(\gamma\) on \(X_{*}(\widehat{T}) \cong \mathbb{Z}^3\), where \(\widehat{T} \subset \widehat{G}\) is the diagonal torus, is precisely 
\begin{equation}
    \label{eq:gamma-action-on-cocharacters-of-unitary-group}
    \gamma(a,b,c) = (-c, -b , -a) . 
\end{equation}
Therefore we can identify \(X_{*}(\widehat{T})^I = \lbrace (a, 0, -a) \in X_{*}(\widehat{T}) | a \in \mathbb{Z} \rbrace \cong \mathbb{Z}\). 

By \cref{res:types-via-tate-cohomology} we have a surjective map \(\mathbb{Z} \twoheadrightarrow H^1 \left( I, \widehat{G}(\mathcal{O}\llb u \rrb) \right)\)
which maps \(a \in \mathbb{Z}\) to the \(1\)-cocycle \(\tau : I \to \widehat{G}(\mathcal{O})\) with 
\[
    \tau(\gamma) = u^{- (a, 0, -a)} (^\gamma u^{(a,0,-a)}) = u^{- (a,0,-a) + \gamma(a,0,-a)} \omega(\gamma)^{\gamma(a,0,-a)} = \omega(\gamma)^{(a,0,-a)} . 
\]
In fact, \cref{res:types-via-tate-cohomology} implies that this surjective map factors through \(\widehat{H}^0(I,X_{*}(\widehat{T})) \cong \mathbb{Z} / (e/2)\mathbb{Z}\).

As a concrete example, suppose \(e = 6\) and \(a = - \frac{1}{6}\). Then \(\tau(\gamma) = \omega(\gamma)^{(-1/6,0,1/6)}\). The corresponding point in the apartment \(\widetilde{\mathcal{A}}\left( U_{3}, \mathbb{F}\llp v \rrp \right)\)
is \(x = u^{(-1/6,0,1/6)} \cdot o = o + (1/6,0,-1/6)\). This is illustrated in \cref{fig:unitary-group-apartment}. 

\begin{figure}[h!]
    \centering
    \begin{tikzpicture}[scale = 5]
        \pgfmathsetmacro{\numSegments}{6}
        \pgfmathsetmacro{\p}{13}
        \pgfmathsetmacro{\kbound}{floor(\p/3)}

        \newcommand{\dividedTriangle}{
            \coordinate (A) at (0, 0);
            \coordinate (B) at (1, 0);
            \coordinate (C) at (0.5, 0.866); 

            \foreach \i in {0,...,\numSegments} {
                \coordinate (P) at ($(A)!\i/\numSegments!(B)$);
                \coordinate (Q) at ($(A)!\i/\numSegments!(C)$);
                \coordinate (R) at ($(B)!\i/\numSegments!(C)$);
                \coordinate (S) at ($(C)!\i/\numSegments!(B)$);
                
                \draw[line width=0.05pt,color=blue] (P) -- (Q);
                \draw[line width=0.05pt,color=blue] (P) -- (S);
                \draw[line width=0.05pt,color=blue] (Q) -- (R);
            }
        }

        \clip (-0.2,-1.039) rectangle (1.2,0.346);

        \foreach \x / \y in {-0.5/0.866,0.5/0.866,-1/0,0/0,1/0,-0.5/-0.866,0.5/-0.866,0/-1.732} {
            \begin{scope}[shift={(\x,\y)}]
                \dividedTriangle
            \end{scope}
        }

        \foreach \x / \y in {0/-1.732,-0.5/-0.866,0.5/-0.866,-1/0,0/0,1/0,-0.5/0.866,0.5/0.866} {
            \begin{scope}[yscale=-1,shift={(\x,\y)}]
                \dividedTriangle
            \end{scope}
        }

        \fill[fill=white,fill opacity=0.8] (-2,-2) rectangle (2,2);

        \draw[thick] (0.5,-2) -- (0.5,2);

        \node[circle, fill=black, inner sep=1.5pt, label={[fill=white, fill opacity=0.7, text opacity=1, rounded corners] right:{$o$}}] at (0.5,-0.866) {};
        \node[circle, fill=black, inner sep=1.5pt, label={[fill=white, fill opacity=0.7, text opacity=1, rounded corners] right:{$o + (1,0,-1)$}}] at (0.5,0.866) {};
        \node[draw, circle, fill=black, inner sep=1.5pt, label={[fill=white, fill opacity=0.7, text opacity=1, rounded corners] right:{$o + (\frac{1}{2},0,-\frac{1}{2})$}}] at (0.5,0) {};
        \node[draw, circle, fill=white, inner sep=1.5pt, label={[fill=white, fill opacity=0.7, text opacity=1, rounded corners] right:{$o + (\frac{1}{4},0,-\frac{1}{4})$}}] at (0.5,-0.433) {};

        \foreach \y in {-0.577,-0.289,0.289,0.577} {
            \node[circle,fill=black,inner sep=1pt] at (0.5,\y) {};
        }
        \node[circle, fill=black, inner sep=1pt, label={[fill=white, fill opacity=0.7, text opacity=1, rounded corners] right:{$x$}}] at (0.5,-0.577) {};
    \end{tikzpicture}
    \caption{
        A snapshot of the apartment \(\widetilde{\mathcal{A}}\left( U_{3}, \mathbb{F}\llp v \rrp \right) = \widetilde{\mathcal{A}}\left( \GL_{3}, \mathbb{F}\llp u \rrp \right)^I\) (the solid black line) viewed inside \(\widetilde{\mathcal{A}}\left( \GL_{3}, \mathbb{F}\llp u \rrp \right)\) (the faint blue lines), for \(e = 6\). 
        A point classifying a Galois type corresponds to one of the black dots, for example \(x = o + (1/6,0,-1/6)\).
        The three large dots \(o\), \(o + \left( 1/4, 0 -1/4 \right)\) and \(o + \left(1/2,0,-1/2\right)\) are the vertices of \(\mathcal{A}\left( U_{3}, \mathbb{F}\llp v \rrp \right)\) that can be seen. 
        There are two full alcoves visible.
    }
    \label{fig:unitary-group-apartment}
\end{figure}

\subsubsection{Bruhat--Tits group scheme} 
Via \(X_{*}(\widehat{T}) \cong \mathbb{Z}^3\), we can identify \(\Phi^\vee(\widehat{G},\widehat{T})\mathbb{Z} \cong \lbrace (a,b,c) \in \mathbb{Z}^3 | a + b + c = 0 \rbrace\). 
It follows that 
\begin{align*}
    \pi_{1}(G^*) = \pi_{1}(\widehat{G}) = X_{*}(\widehat{T}) / \Phi^\vee(\widehat{G},\widehat{T})\mathbb{Z}
    & \isoto \mathbb{Z} \\ 
    (a,b,c) & \mapsto a + b + c . 
\end{align*}
Since \(\gamma(a,b,c) = (-c, -b, -a)\) by \eqref{eq:gamma-action-on-cocharacters-of-unitary-group}, the corresponding action of \(\gamma\) on \(\mathbb{Z}\) is multiplication by \(-1\). 
It follows that 
\[
    \pi_{1}(G^*)_{I} \cong \mathbb{Z} / 2 . 
\]
As such, \(\mathcal{G}_{x}\) does not automatically have connected fibers, since \cref{res:connected-fibers} doesn't apply directly.
In fact, \(\mathcal{G}_{x}\) does \emph{not} have connected fibers for any \(0\)-generic \(x\). 
This is a consequence of the short exact sequence \eqref{eq:connected-fibers-SES} and the fact that the stabilizer in \(G^*(\mathbb{F}\llp v \rrp)\)
of an alcove \(\mathcal{C} \subset \mathcal{A}(T^*,\mathbb{F}\llp v \rrp)\) must fix every point of \(\mathcal{C}\) (which we can see by noting that \(\mathcal{C}\) has two bounding vertices of different specialities, see \cite[Example 7.11.6, Figure 7.11.1]{kaletha-prasad}).
\begin{remark}
    If we had considered a slightly different example in which \(G^*_{\mathbb{F}\llp v \rrp}\) was a \emph{special} unitary group instead, then the fibers of \(\mathcal{G}_{x}\) would be connected for any \(x\).
\end{remark}

\printbibliography

\appendix 

\section{Invariant pushforward and \((\Gamma,H)\)-torsors}
\label{sec:equivariant-torsors}
In this section, \(\Gamma\) denotes any finite group.
And \(H\) denotes a group object with a \(\Gamma\)-action (where we will consider different categories). 
Roughly speaking, a \((\Gamma,H)\) is an \(H\)-torsor with a \(\Gamma\)-action which is semilinear over the base and makes the action map equivariant. 

The notion of \((\Gamma,H)\)-torsors goes back to Grothendieck's Bourbaki talk \cite{grothendieckMemoireWeilGeneralisation1956} in the context of analytic complex spaces, 
where he considers the problem of descending \((\Gamma,H)\)-torsors along a quotient map \(X \to \Gamma \backslash X\).
If \(\Gamma\) acts freely on \(X\), then \((\Gamma,H)\)-torsors over \(X\) correspond to \(H\)-torsors on \(X\), but this is not true in general. 
As observed in \cite{balajiModuliParahoricMathcal2014}, if \(X \to Y = \Gamma \backslash X\) is a ramified covering of Riemann surfaces, then a \((\Gamma,H)\)-torsor over \(X\)
corresponds to a \(\mathcal{H}\)-torsor over \(Y\) where \(\mathcal{H}\) is a Bruhat--Tits group over \(Y\) which is equal to \(H\) over the part where \(X \to Y\) is unramified, 
and admits parahoric structure over the branching points of \(Y\). It is important to note that which parahoric structure on \(\mathcal{H}\) we take in this correspondence 
is dictated by local invariants of \((\Gamma,H)\)-torsors called the type.

These ideas were developed further in \cite{damioliniLocalTypesGamma2023} and \cite{pappas-rapoport-tamely-ramified-bundles} for tamely ramified covers of curves over algebraically 
closed fields (of characteristic zero in the former case).
In particular, \cite{pappas-rapoport-tamely-ramified-bundles} shows that if \(X\) is a curve over an algebraically closed field and \(\mathcal{G}\) is a Bruhat--Tits group scheme over \(X\), 
then (under suitable additional assumptions) there exists a tamely ramified covering \(X' \to X\) such that \(\mathcal{G}' = \mathcal{G} \times_{X} X'\) is a connected reductive 
group scheme. 

In this section, we will explain a generalization of \cite[Theorem 4.1.6]{balajiModuliParahoricMathcal2014} which holds in any topos (and in particular in the big étale or fppf topos over a scheme), 
which we specialize to the algebro geometric setting in \cref{sec:equivariant-torsors-in-algebraic-geometry}. 
We will apply this in \cref{sec:specialized-equivariant-torsors} to describe \((\Gamma,\widehat{G})\)-torsors 
over \(\mathfrak{S}_{L,R}\) in terms of torsors for a certain group \(\mathcal{G}\) over \(\mathfrak{S}_{R}\), 
and in \cref{sec:bruhat-tits-group-schemes} we will identify the group scheme \(\mathcal{G}\) with a Bruhat--Tits group scheme in the sense of Pappas--Zhu \cite[Theorem 4.1]{pappas-zhu}.

\begin{remark}
    \label{rem:balaji-seshadri-error}
    There is an error in the statment of \cite[Theorem 4.1.6]{balajiModuliParahoricMathcal2014}, since it does not account for the local type, as pointed out in Damiolini's PhD thesis \cite[Appendix A]{damioliniConformalBlocksAttached2017}.\footnote{We thank Vikraman Balaji for pointing us towards Damiolini's thesis.}
    However, note that this does not affect other results of \cite{balajiModuliParahoricMathcal2014}, which implicitly use the corrected version of the theorem. 
\end{remark}

\subsection{\((\Gamma,H)\)-torsors in general}
\label{sec:equivariant-torsors-general-case}
Let \(\mathcal{C}\) be a topos and \(\Gamma\) be a finite group.
If \(X \in \mathcal{C}\) then a \(\Gamma\)-action on \(X\) is the same as a homomorphism \(\Gamma \to \operatorname{Aut}_{\mathcal{C}}(X)\).
Assume that \(\widetilde{A} \in \mathcal{C}\) is an object with \(\Gamma\)-action and that the action of \(\theta \in \Gamma\) on \(\widetilde{A}\) is given by \(a_{\theta}\). 

A \(\Gamma\)-equivariant object over \(\widetilde{A}\) is an object \(X \to \widetilde{A}\) which is equipped with a \(\Gamma\)-action making this morphism \(\Gamma\)-equivariant.
In this situation, we will also say the the \(\Gamma\)-action on \(X\) is \emph{semilinear} over \(\widetilde{A}\). 
The data of a \(\Gamma\)-equivariant structure on \(X\) is equivalent to any of the following: 
\begin{enumerate}
    \item A collection of isomorphisms \(X \cong (a_{\theta})^* X\) over \(\widetilde{A}\) satisfying a cocycle condition (by the adjunction \((a_{\theta})_{!} \dashv (a_{\theta})^*\)).
    \item A collection of isomorphisms \((a_{\theta})_{*} X \cong X\) over \(\widetilde{A}\) satisfying a cocycle condition (by the adjunction \((a_{\theta})^* \dashv (a_{\theta})_{*} \) applied to the inverses of the isomorphisms from (1)).
    \item A collection of isomorphisms \((a_{\theta}^{-1})^* X \cong X\) over \(\widetilde{A}\) satisfying a cocycle condition (since \((a_{\theta})_{*} \cong (a_{\theta}^{-1})^*\)).
\end{enumerate}
In algebraic geometry it is customary to use (3), taking into account the fact that if \(\theta \in \Gamma\) acts on a ring \(A\) by \(f_{\theta} : A \to A\), 
then the corresponding left action on \(\operatorname{Spec}A\) is given by \(a_{\theta} = \operatorname{Spec}f_{\theta}^{-1} : \operatorname{Spec}A \to \operatorname{Spec}A\). 

Note that if \(Y \to \widetilde{A}\) is another \(\Gamma\)-equivariant object over \(\widetilde{A}\), then \(X \times_{\widetilde{A}}Y \to Y\) is a \(\Gamma\)-equivariant object over \(Y\). 

Let \(H \to \widetilde{A}\) be a \(\Gamma\)-equivariant group, that is, a \(\Gamma\)-equivariant object over \(\widetilde{A}\) which is also a group object over \(\widetilde{A}\), and for which the multiplication, identity section, and inversion maps are \(\Gamma\)-equivariant. 
A \((\Gamma,H)\)-torsor over \(\widetilde{A}\) is an \(H\)-torsor \(P \to \widetilde{A}\) which is equipped with a \(\Gamma\)-action which is semilinear over \(\widetilde{A}\) and for which the action map \(P \times_{\widetilde{A}} H \to P\) is \(\Gamma\)-equivariant. 
A morphism of \((\Gamma,H)\) torsors \(P \to Q\) is a morphism of objects over \(\widetilde{A}\) which is both \(\Gamma\)-equivariant and a morphism of \(H\)-torsors (i. e. compatiable with the \(H\)-action).
It will also be important to consider \((\Gamma,H)\)-torsors over \(Y\), where \(Y \to \widetilde{A}\) is a \(\Gamma\)-equivariant object, and they are defined in the obvious way. 

\begin{remark}
    By default, we consider torsors as being \emph{right} torsors, with the group acting on the right.
    We will also consider left torsors as well as bitorsors, but will specify this whenever we do. 
\end{remark}

\subsubsection{Group cohomology and \((\Gamma,H)\)-torsors}
\label{sec:group-cohomology-and-equivariant-torsors}
\((\Gamma,H)\)-torsors whose underlying \(H\)-torsor is trivial are classified by group cohomology \(H^1\left( \Gamma, H(\widetilde{A}) \right)\), as we now explain.
If \(H(\widetilde{A})\) denotes the group of sections \(\widetilde{A} \to H\) over \(\widetilde{A}\), then the group \(\Gamma\) acts on \(H(\widetilde{A})\) by ``conjugation''. 
That is, given such a section \(h \in H(\widetilde{A})\) and \(\theta \in \Gamma\), we define \({^\theta h} \in H(\widetilde{A})\) as the map fitting into the commutative diagram 
\[
\begin{tikzcd}
    \widetilde{A} \arrow[r,"h"] \arrow[d,"\theta"] & H \arrow[d,"\theta"] \\
    \widetilde{A} \arrow[r,"{^\theta h}"] & H , 
\end{tikzcd}
\]
where both maps labeled \(\theta\) are given by the action map describing \(\theta\) (called \(a_{\theta}\) above).
Given a \(1\)-cocycle \(\tau : \Gamma \to H(\widetilde{A})\), we define a \((\Gamma,H)\)-torsor \({_{\tau}\mathcal{E}^0 }\) as the \((\Gamma,H)\)-torsor whose underlying \(H\)-torsor is the trivial \(H\)-torsor \(\mathcal{E}^0\), and on which \(\theta \in \Gamma\) acts as the composition 
\[
    \mathcal{E}^0 \stackrel{\theta}{\longrightarrow} \mathcal{E}^0 \stackrel{L_{\tau(\theta)}}{\longrightarrow} \mathcal{E}^0 , 
\]
where the map labeled \(\theta\) gives the action of \(\theta\) on \(H = \mathcal{E}^0\), and \(L_{\tau(\theta)}\) denotes left translation by \(\tau(\theta)\). 
\begin{lemma}
    \label{res:equivariant-torsors-and-group-cohomology}
    The map 
    \begin{align*}
        Z^1 \left( \Gamma, H(\widetilde{A}) \right) & \isoto \lbrace (\Gamma,H)\text{-torsors over }\widetilde{A}\text{ whose underlying }H\text{-torsor is }\mathcal{E}^0 \rbrace \\ 
        \tau & \mapsto {_{\tau}\mathcal{E}^0}
    \end{align*}
    is an equivalence of groupoids. Consequently, \(H^1 \left( \Gamma, H(\widetilde{A}) \right)\) is in bijection with isomorphism classes of 
    \((\Gamma,H)\)-torsors on \(\widetilde{A}\) whose underlying \(H\)-torsor is trivializable.
\end{lemma}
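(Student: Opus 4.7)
The plan is to construct a quasi-inverse functor and check that both compositions yield the identity up to natural isomorphism.

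First I would set up the basic dictionary for the trivial torsor $\mathcal{E}^0$. Since $\mathcal{E}^0 = H$ as an $H$-torsor with the right regular action, $H$-torsor automorphisms of $\mathcal{E}^0$ are precisely left translations $L_h$ by elements $h \in H(\widetilde{A})$, giving a group isomorphism $H(\widetilde{A}) \isoto \operatorname{Aut}_{H\text{-tors}}(\mathcal{E}^0)$. The $\Gamma$-action on $\mathcal{E}^0$ coming from its $\Gamma$-equivariant structure as an object over $\widetilde{A}$ will be denoted simply $\theta$, and under the above bijection the convention ${^\theta h}$ for $h \in H(\widetilde{A})$ corresponds to $\theta \circ L_h \circ \theta^{-1} = L_{{}^\theta h}$, which is the key identity used throughout.

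Next I would construct the quasi-inverse. Given a $(\Gamma,H)$-torsor $P$ with underlying torsor $\mathcal{E}^0$, the datum of its semilinear $\Gamma$-structure is a collection of isomorphisms $\varrho_\theta : \mathcal{E}^0 \to \mathcal{E}^0$ covering $a_\theta : \widetilde{A}\to\widetilde{A}$ and compatible with the $H$-action, satisfying a cocycle condition $\varrho_{\theta\theta'} = \varrho_\theta \circ \varrho_{\theta'}$. Writing $\varrho_\theta = L_{\tau(\theta)} \circ \theta$ defines $\tau(\theta) \in H(\widetilde{A})$. The cocycle condition $\varrho_\theta \circ \varrho_{\theta'} = \varrho_{\theta\theta'}$ unwinds using the identity from the previous paragraph to $L_{\tau(\theta) \cdot {}^\theta \tau(\theta')} \circ \theta\theta' = L_{\tau(\theta\theta')} \circ \theta\theta'$, i.e.\ $\tau(\theta\theta') = \tau(\theta)\,{}^\theta\tau(\theta')$, which is precisely the $1$-cocycle condition. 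One also checks that $\tau$ is the identity at $\theta = e$. This gives a functor in the reverse direction on objects, and I would verify it is manifestly inverse to $\tau \mapsto {}_\tau\mathcal{E}^0$ at the level of objects.

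For fully faithfulness I would check the map on morphism sets. A morphism ${}_\tau\mathcal{E}^0 \to {}_{\tau'}\mathcal{E}^0$ is an $H$-torsor isomorphism $\mathcal{E}^0 \to \mathcal{E}^0$, hence $L_h$ for a unique $h \in H(\widetilde{A})$, such that it intertwines the two twisted $\Gamma$-actions, i.e.\ $L_h \circ (L_{\tau(\theta)}\circ\theta) = (L_{\tau'(\theta)}\circ\theta) \circ L_h$ for every $\theta$. Using $\theta \circ L_h = L_{{}^\theta h} \circ \theta$, this rearranges to $h\,\tau(\theta) = \tau'(\theta)\,{}^\theta h$, which is exactly the coboundary relation. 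In particular this shows the functor is faithful (uniqueness of $h$), full (any coboundary gives a morphism), and recovers the fact that passing to isomorphism classes yields the asserted bijection with $H^1(\Gamma, H(\widetilde{A}))$.

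The main obstacle, modest as it is, is bookkeeping the conventions consistently: whether the $\Gamma$-action is translated to a cocycle with values in $H(\widetilde{A})$ via left or right translation, and the placement of $\tau(\theta)$ versus ${}^\theta\tau(\theta')$ in the cocycle identity. Once a choice is fixed (as above, $\varrho_\theta = L_{\tau(\theta)} \circ \theta$), every other identity is forced, and the two compositions with the functor $\tau \mapsto {}_\tau\mathcal{E}^0$ are visibly the identity on objects, with the morphism-level check above completing the proof.
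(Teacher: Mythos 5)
Your proposal is correct and takes the same approach the paper has in mind: the paper's proof is deliberately terse (``The key point is that associativity of the $\Gamma$-action on ${_\tau\mathcal{E}^0}$ corresponds to the $1$-cocycle condition for $\tau$''), and you have simply filled in the routine computations, including the morphism-level check showing that coboundaries correspond exactly to $(\Gamma,H)$-torsor isomorphisms. Your conventions ($\varrho_\theta = L_{\tau(\theta)}\circ\theta$, the identity $\theta\circ L_h\circ\theta^{-1}=L_{{}^\theta h}$) match the paper's definition of ${_\tau\mathcal{E}^0}$ exactly, and the cocycle and coboundary identities you derive are the correct nonabelian ones.
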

\begin{proof}
    The proof is straightforward. The key point is that associativity of the \(\Gamma\)-action on \({_{\tau}\mathcal{E}^0}\) corresponds to the 
    \(1\)-cocycle condition for \(\tau\). 
\end{proof}

\subsubsection{Invariant pushforward and \((\Gamma,H)\)-torsor of trivial type}
Let \(p : \widetilde{A} \to A\) be a \(\Gamma\)-equivariant morphism where \(\Gamma\) acts trivially on \(A\). 
Given a \(\Gamma\)-equivariant object \(X \to \widetilde{A}\) we define the \emph{invariant pushforward} \(p_{*}^\Gamma(X)\) as follows. 
First, \(p_{*}X \to A\) is defined in the usual way.
For any object \(V \to A\) we have
\[
 \operatorname{Hom}_{A}\left( V, p_{*}(X) \right) \cong \operatorname{Hom}_{\widetilde{A}}\left( V \times_{A} \widetilde{A}, X \right) , 
\]
and \(\Gamma\) acts on the right hand side by conjugation (using the action on \(V \times_{A} \widetilde{A}\) where \(\theta \in \Gamma\) acts as \(1 \times a_{\theta}\)).
By Yoneda's lemma, this gives rise to a \(\Gamma\)-action on \(p_{*} X\) over \(A\), and we define \(p_{*}^\Gamma X = (p_{*} X)^\Gamma\) as the obvious limit.

Note that \(p_{*}^\Gamma H\) is a group since \(p_{*}^\Gamma\) preserves limits and in particular products. 
Similarly, if \(P\) is a \((\Gamma,H)\)-torsor over \(\widetilde{A}\), then the action map \(P \times_{\widetilde{A}} H \to P\) gives rise to an action map 
\(p_{*}^\Gamma P \times_{A} p_{*}^\Gamma H \to p_{*}^\Gamma P\). 
Therefore, \(p_{*}^\Gamma P\) is a \(p_{*}^\Gamma H\)-pseudotorsor. 
But it is not always a \(p_{*}^\Gamma H\)-torsor, because the map \(p_{*}^\Gamma P \to A\) is not always surjective.
This failure is accounted for by the type of the torsor. 

\begin{definition}
    \label{def:torsor-type}
    Two \((\Gamma,H)\)-torsors \(P, Q \) over \(\widetilde{A}\) are said to have the same \emph{type} if there exists a covering \(\lbrace U_{i} \to A \rbrace_{i \in \mathcal{I}}\)
    such that for every \(i \in \mathcal{I}\) the base change of \(P\) and \(Q\) to \(U_{i} \times_{A} \widetilde{A}\) are isomorphic \((\Gamma,H)\)-torsors. 
    A \((\Gamma,H)\)-torsor \(P \to \widetilde{A}\) is said to have \emph{trivial type} if there \(P\) has the same type as the trivial \((\Gamma,H)\)-torsor.
    If \([\tau] \in H^1 \left( \Gamma , H(\widetilde{A})\right)\), then a \((\Gamma,H)\)-torsor \(P \to \widetilde{A}\) is said to have type \([\tau]\) (or just \(\tau\)) if 
    it has the same type as the \((\Gamma,H)\)-torsor corresponding to \([\tau]\) via \cref{res:equivariant-torsors-and-group-cohomology}. 
\end{definition}

\begin{proposition}
    \label{res:fundamental-result-about-equivariant-torsors}
    We have an equivalence of groupoids 
    \begin{align*}
        p_{*}^\Gamma : \lbrace (\Gamma,H)\text{-torsors over }\widetilde{A}\text{ of trivial type} \rbrace & \isoto \lbrace p_{*}^\Gamma H \text{-torsors over }A \rbrace \\
        P & \mapsto p_{*}^\Gamma P . 
    \end{align*}
\end{proposition}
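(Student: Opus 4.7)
The plan is to construct an explicit quasi-inverse to $p_*^\Gamma$ and then verify the two natural transformations via a descent/local-triviality argument, using the trivial-type hypothesis in an essential way.

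First I would define the candidate inverse functor $q^*$. Given a $p_*^\Gamma H$-torsor $\mathcal{Q}$ over $A$, the pullback $p^*\mathcal{Q}$ is a $p^* p_*^\Gamma H$-torsor over $\widetilde{A}$ carrying a canonical semilinear $\Gamma$-action induced from that of $\mathcal{Q}$ and the $\Gamma$-equivariance of $p$. The adjunction counit supplies a $\Gamma$-equivariant homomorphism $p^* p_*^\Gamma H \to H$ of groups over $\widetilde{A}$, so I can form the associated bundle
\[
q^*\mathcal{Q} \;:=\; p^*\mathcal{Q}\times^{p^*p_*^\Gamma H} H,
\]
which is an $H$-torsor over $\widetilde{A}$ with a semilinear $\Gamma$-action compatible with multiplication, i.e.\ a $(\Gamma,H)$-torsor. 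One checks directly from the construction (and the fact that $p^* p_*^\Gamma H$ is locally on $A$ isomorphic to $p^*$ applied to the trivial torsor) that $q^*\mathcal{Q}$ has trivial type.

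Next I would verify that $p_*^\Gamma$ actually lands in $p_*^\Gamma H$-torsors. By definition of ``trivial type,'' for $P$ of trivial type there exists a covering $\{U_i\to A\}_{i\in\mathcal I}$ such that $P\times_{\widetilde A}(U_i\times_A\widetilde A)$ is isomorphic to ${}_1\mathcal E^0$ as a $(\Gamma,H)$-torsor. Since $\Gamma$ acts trivially on $A$, base change along $U_i\to A$ commutes with $p_*^\Gamma$, and consequently $p_*^\Gamma P\times_A U_i \cong p_*^\Gamma(H) = p_*^\Gamma H\times_A U_i$, so $p_*^\Gamma P$ is locally a trivial $p_*^\Gamma H$-torsor over $A$ and hence a torsor. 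The same local calculation shows that $p_*^\Gamma$ takes morphisms to isomorphisms of $p_*^\Gamma H$-torsors, and that it is essentially surjective when composed with $q^*$.

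The two compositions then produce the unit and counit. For the counit $p_*^\Gamma q^* \mathcal{Q}\to \mathcal{Q}$, trivialize $\mathcal{Q}$ on a cover of $A$ and reduce to the tautological identity $p_*^\Gamma(p^*p_*^\Gamma H\times^{p^*p_*^\Gamma H}H)=p_*^\Gamma H$; this is functorial and globalizes by descent along the cover (which is legitimate because both sides form torsors for $p_*^\Gamma H$, so the morphism built locally is forced to glue uniquely). For the unit $P\to q^*p_*^\Gamma P$, the trivial-type assumption lets us work on a cover $\{U_i\to A\}$ over which $P$ becomes ${}_1\mathcal E^0$, and there the unit is the identity after unwinding definitions.

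The main obstacle I anticipate is the fussiness around the unit map: one must keep track that ``trivial type'' is witnessed by a cover of $A$ (not merely of $\widetilde A$), since arbitrary $(\Gamma,H)$-torsors become trivial after pullback to $\widetilde A$ and this weaker notion would be insufficient. Relatedly, one must check that the constructions $P\mapsto p_*^\Gamma P$ and $\mathcal{Q}\mapsto q^*\mathcal{Q}$ commute with base change along the chosen cover and that the local trivializations can be glued via the (pseudo-)torsor structure — this is where the topos-theoretic formulation pays off, as it allows the descent step to be performed uniformly without invoking representability hypotheses. Everything else reduces to bookkeeping with adjunctions and the explicit description of associated bundles.
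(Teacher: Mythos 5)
Your candidate quasi-inverse $q^*$ is exactly the functor $F$ that the paper constructs (pull back along $p$, then push the structure group forward along the adjunction counit $p^*p_*^\Gamma H\to H$), and your verification that $p_*^\Gamma P$ is actually a torsor when $P$ has trivial type mirrors the paper's first step. The place where your argument has a real gap is in the construction and verification of the unit and counit. You propose to build the morphism $p_*^\Gamma q^*\mathcal{Q}\to\mathcal{Q}$ by trivializing $\mathcal{Q}$ over a cover $\{U_i\to A\}$, writing down the identity map over each $U_i$, and claiming the pieces ``glue uniquely because both sides are $p_*^\Gamma H$-torsors.'' That justification does not hold: two isomorphisms of $G$-torsors over $U_i\cap U_j$ need not agree --- they differ by a section of the (inner-twisted) automorphism group. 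Being a torsor guarantees that local trivializations exist, not that morphisms built from different trivializations coincide on overlaps. To descend, you must either exhibit a construction that manifestly does not depend on the chosen trivialization (i.e., actually prove the naturality you invoke) or produce the map globally from the start.

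The paper avoids this issue entirely by observing that $F$ is \emph{left adjoint} to $p_*^\Gamma$ already at the level of pseudotorsors, which gives canonical \emph{global} natural transformations $Fp_*^\Gamma(P)\to P$ and $P\to p_*^\Gamma F(P)$ with no gluing required. It then finishes with the one-line observation that every morphism of torsors (hence of $(\Gamma,H)$-torsors) is automatically an isomorphism, so these natural maps are isomorphisms and $F$ is a quasi-inverse. You already use the adjunction counit $p^*p_*^\Gamma H\to H$ to define $q^*$; pushing that one step further to recognize $q^*\dashv p_*^\Gamma$ on pseudotorsors would repair the gap and shorten the argument. Your check that $q^*\mathcal{Q}$ has trivial type is correct and is a point the paper leaves implicit, so that part is a small net gain.
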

\begin{proof}
    It follows from unraveling the definitions that if \(P \to \widetilde{A}\) is a \((\Gamma,H)\)-torsor of trivial type, then \(p_{*}^\Gamma P \to A\) is an epimorphism. 
    Hence the functor is well-defined. 

    Define an inverse \(F\) to \(p_{*}^\Gamma\) as follows. Given a \(p_{*}^\Gamma H\)-torsor \(P \to A\), 
    we obtain a \(p^* p_{*}^\Gamma H\)-torsor \(p^* P\) over \(\widetilde{A}\). 
    Via the natural map \(p^* p_{*}^\Gamma H \to H\) we can let \(F(P) := p^* P \times^{p^* p_{*}^\Gamma H} H\) be the associated \(H\)-torsor.\footnote{
        If \(P'\) is a right \(H'\)-torsor, \(Q'\) is a \((H',H'')\)-bitorsor, then the associated bundle \(P' \times^{H'} Q' := P' \times_{\widetilde{A}} Q' / (ph,q) \sim (p,hq)\)
        is a right \(H''\)-torsor. More precisely, \(P' \times^{H'} Q'\) is defined as the reflexive coequalizer \(P' \times H' \times Q' \rightrightarrows P' \times Q' \to P' \times^{H'} Q'\). 
    }
    Then one checks as in \cite[A.0.3]{damioliniConformalBlocksAttached2017} that there are natural maps \(F p_{*}^\Gamma (P) \to P\) and \(P \to p_{*}^\Gamma F(P)\)
    (in fact \(F\) is left adjoint to \(p_{*}^\Gamma\) on the level of pseudotorsors). 
    Since all morphisms of torsors (and consequently, \((\Gamma,H)\)-torsors) are isomorphisms, it follows that \(F\) is inverse to \(p_{*}^\Gamma\). 
\end{proof}
 
\begin{remark}
    One can alternatively see that above equivalence as follows. 
    Let \(BH\) denote the classifying stack of \(H\)-torsors over \(\widetilde{A}\). 
    Then \(p_{*}^\Gamma BH\) is a stack over \(A\) which for an object \(V \to A\) classifies \((\Gamma,H)\)-torsors over \(V \times_{A} \widetilde{A}\). 
    The trivial \((\Gamma,H)\)-torsor viewed as an object in this stack has automorphisms \(p_{*}^\Gamma H\).
    If \(B(p_{*}^\Gamma H)\) denotes the classifying stack for \(p_{*}^\Gamma H\)-torsors, 
    we can therefore view \(B(p_{*}^\Gamma H) \subset p_{*}^\Gamma B(\Gamma,H)\) as the gerbe (i. e. 1-truncated substack) containing the trivial \((\Gamma,H)\)-torsor, 
    and one can check that being contained in this gerbe corresponds to having trivial type. 
    By taking \(A\)-points, one recovers \cref{res:fundamental-result-about-equivariant-torsors}. 
\end{remark}

\subsection{Twisting}
\label{sec:twisting}
We remain in the situation of \cref{sec:equivariant-torsors-general-case}, so \(H \to \widetilde{A}\) is a \(\Gamma\)-equivariant group. 
Let \(\mathcal{P} \to \widetilde{A}\) be a \((\Gamma,H)\)-torsor. 
Then we can define a \(\Gamma\)-equivariant group 
\[
    {_{\mathcal{P}}H} := \operatorname{Aut}_{H}(\mathcal{P})
\] 
over \(\widetilde{A}\), where the \(\Gamma\)-action is by conjugation.
\begin{remark}
    \label{rem:classical-twisting}
    Let \(\tau : \Gamma \to H(\widetilde{A})\) be a 1-cocycle, and let \(\mathcal{P} = _{\tau}\mathcal{E}^0\) be the \((\Gamma,H)\)-torsor whose underlying \(H\)-torsor is trivial, 
    and with the \(\Gamma\)-action twisted by \(\tau\) (as in \cref{sec:group-cohomology-and-equivariant-torsors}). Then we define  \({_{\tau}}H := {_{\mathcal{P}}H}\) as a \(\Gamma\)-group.
    This is compatible with \cite[Section I.5.3]{galois-cohomology}, in that we have the following formulas on \(\widetilde{A}\)-points: 
    \begin{align*}
        \theta \cdot p & = \tau(\theta)(^\theta p), & p \in {_{\tau}\mathcal{E}^0}(\widetilde{A}) , \\ 
        \theta \cdot h & = \tau(\theta)(^\theta h) \tau(\theta)^{-1} , & h \in {_{\tau}H}(\widetilde{A}) ,
    \end{align*}
    where \(\theta \cdot p\) and \(\theta \cdot h\) denotes the ``new'' \(\tau\)-twisted action and \({^\theta p}\) and \({^\theta h}\) denotes the ``old'' action. 
\end{remark}

Note that \({_{\mathcal{P}}H}\) acts on \(\mathcal{P}\) on the left, and this makes \(\mathcal{P}\) into a \((_{\mathcal{P}}H,H)\)-bitorsor (since locally, \(\mathcal{P} \simeq \mathcal{E}^0\) and \({_{\mathcal{P}}H} \simeq H\)). 
We have an equivalence of groupoids 
\begin{align}
    \label{eq:twisted-groupoid-of-equivariant-torsors}
    \left\lbrace (\Gamma,{_{\mathcal{P}}H})\text{-torsors over }\widetilde{A} \right\rbrace & \isoto \left\lbrace (\Gamma,H)\text{-torsors over }\widetilde{A} \right\rbrace \\
    \mathcal{Q} & \mapsto \mathcal{Q} \times^{_{\mathcal{P}}H} \mathcal{P} , \nonumber 
\end{align}
where \(\mathcal{Q} \times^{_{\mathcal{P}}H} \mathcal{P}\) is the associated bundle construction.
The inverse is given by \(\mathcal{R} \mapsto \mathcal{R} \times^H \mathcal{P}^{-1}\), where \(\mathcal{P}^{-1}\) is the \((H,_{\mathcal{P}}H)\)-bitorsor 
whose underlying object is \(\mathcal{P}\) on which \(H\) and \({_{\mathcal{P}}H}\) act through the inverse on the opposite side. 
\begin{remark}
    One can view \eqref{eq:twisted-groupoid-of-equivariant-torsors} as a consequence of the fact that the classifying stack of torsors is a ``delooping'' of the group, as explained for example in \cite{NSS}. 
\end{remark}

Observe that a \((\Gamma,_{\mathcal{P}}H)\)-torsor \(\mathcal{Q}\) has trivial type if and only if the associated \((\Gamma,H)\)-torsor \({\mathcal{Q} \times^{_\mathcal{P}H}\mathcal{P}}\) has the same type as \(\mathcal{P}\). 
Combining \cref{res:fundamental-result-about-equivariant-torsors} and \eqref{eq:twisted-groupoid-of-equivariant-torsors}, we obtain the following. 
\begin{corollary}
    \label{res:twisted-fundamental-result-about-equivariant-torsors}
    We have equivalences of groupoids 
    \begin{align*}
        & \lbrace (\Gamma,H)\text{-torsors over }\widetilde{A}\text{ of type } \mathcal{P} \rbrace \\
        \isoto & \lbrace (\Gamma,_{\mathcal{P}}H)\text{-torsors over }\widetilde{A}\text{ of trivial type}\rbrace \\ 
        \isoto & \lbrace p_{*}^\Gamma (_{\mathcal{P}}H) \text{-torsors over }A \rbrace . 
    \end{align*}
\end{corollary}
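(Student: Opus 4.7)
The plan is to combine the twisting equivalence \eqref{eq:twisted-groupoid-of-equivariant-torsors} for the first arrow with \cref{res:fundamental-result-about-equivariant-torsors} applied to the twisted group \(_{\mathcal{P}}H\) for the second arrow. Both ingredients are already available, so the task reduces to verifying that the type conditions on either side correspond correctly under the twisting functor.

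First, I would recall that \eqref{eq:twisted-groupoid-of-equivariant-torsors} gives an equivalence
\[
    \mathcal{Q} \mapsto \mathcal{Q} \times^{_{\mathcal{P}}H} \mathcal{P} : \lbrace (\Gamma, {_{\mathcal{P}}H})\text{-torsors over }\widetilde{A} \rbrace \isoto \lbrace (\Gamma, H)\text{-torsors over }\widetilde{A} \rbrace,
\]
with inverse \(\mathcal{R} \mapsto \mathcal{R} \times^H \mathcal{P}^{-1}\). Since associated bundles commute with base change along coverings of \(A\), one checks directly from \cref{def:torsor-type} that a \((\Gamma,{_{\mathcal{P}}H})\)-torsor \(\mathcal{Q}\) has trivial type if and only if there is a covering \(\lbrace U_i \to A \rbrace\) trivializing \(\mathcal{Q}\) on each \(U_i \times_A \widetilde{A}\), and under the twisting this is precisely the condition that \(\mathcal{Q} \times^{_{\mathcal{P}}H} \mathcal{P}\) becomes isomorphic to \(\mathcal{P}\) locally on each \(U_i \times_A \widetilde{A}\), i.e., has the same type as \(\mathcal{P}\). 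This establishes the first equivalence as the restriction of \eqref{eq:twisted-groupoid-of-equivariant-torsors} to the indicated subgroupoids.

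The second equivalence is immediate from \cref{res:fundamental-result-about-equivariant-torsors} applied with the \(\Gamma\)-equivariant group \(H\) replaced by \({_{\mathcal{P}}H} = \operatorname{Aut}_H(\mathcal{P})\), which is a \(\Gamma\)-equivariant group over \(\widetilde{A}\) by construction (with \(\Gamma\) acting by conjugation using the semilinear \(\Gamma\)-action on \(\mathcal{P}\)). Composing the two equivalences yields the statement.

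The only point that requires a modicum of care is the compatibility of the ``type'' notion with twisting, since \cref{def:torsor-type} is formulated at the level of coverings of \(A\) rather than intrinsic cohomological data; however, because twisting by the bitorsor \(\mathcal{P}\) is a pointwise construction that commutes with base change along \(U_i \times_A \widetilde{A} \to \widetilde{A}\), the correspondence of local trivializations is formal. No genuine obstacle is expected; the content of the corollary lies entirely in the two results it cites, and the proof is essentially a bookkeeping combination of them.
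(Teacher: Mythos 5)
Your proposal matches the paper's argument exactly: the first equivalence is the restriction of the twisting equivalence \eqref{eq:twisted-groupoid-of-equivariant-torsors} to the subgroupoids cut out by the type condition (using the observation that \(\mathcal{Q}\) has trivial type iff \(\mathcal{Q}\times^{_{\mathcal{P}}H}\mathcal{P}\) has type \(\mathcal{P}\)), and the second is \cref{res:fundamental-result-about-equivariant-torsors} applied to \(_{\mathcal{P}}H\). No discrepancies.
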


\subsection{\((\Gamma,H)\)-torsors in algebraic geometry}
\label{sec:equivariant-torsors-in-algebraic-geometry}
We now specialize the above results to the setting of algebraic geometry. 
This will recover some results of \cite{balajiModuliParahoricMathcal2014} and \cite{damioliniConformalBlocksAttached2017}, at least when over a base field.

By embedding schemes into the big étale or fppf topos via Yoneda's lemma, we define \(\Gamma\)-equivariant schemes etc. as the corresponding notion for sheaves.
Because our group schemes will be smooth affine, torsors for such a group are representable by schemes, hence sheaves in the fpqc topology, yet étale locally trivializable.
Therefore it makes little difference which big topology we choose to work with.

Let \(A\) be a scheme, \(\widetilde{A}\) a scheme with \(\Gamma\)-action and \(p : \widetilde{A} \to A\) a morphism of schemes which is \(\Gamma\)-equivariant for the 
trivial \(\Gamma\)-action on \(A\). 
\begin{proposition}
    \label{res:smooth-invariant-pushforward-in-general}
    Assume that \(A\) is locally noetherian and \(p : \widetilde{A} \to A\) is finite flat. 
    Let \(X \to \widetilde{A}\) be a \(\Gamma\)-equivariant and quasi-projective morphism of schemes. 
    \begin{enumerate}
        \item \(p_{*}^\Gamma X\) is representable by a scheme over \(A\).
        \item If \(X \to \widetilde{A}\) is affine, then so is \(p_{*}^\Gamma X \to A\). 
        \item If \(X \to \widetilde{A}\) is smooth and \(\# \Gamma\) is invertible over \(A\), then \(p_{*}^\Gamma X \to A\) is smooth.
    \end{enumerate}
\end{proposition}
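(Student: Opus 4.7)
The plan is to realize $p_{*}^\Gamma X$ as the $\Gamma$-fixed subfunctor of the Weil restriction $p_{*} X$, and then read off (1), (2), (3) from standard properties of Weil restriction combined with properties of fixed-point subschemes. First I would invoke the classical representability theorem for Weil restriction along a finite locally free morphism (which applies since $A$ is locally noetherian and $p$ is finite flat, so $p$ is finite and locally free) with quasi-projective fibers: this shows $p_{*} X$ is representable by a (locally) quasi-projective $A$-scheme, hence in particular separated over $A$. Moreover, Weil restriction along a finite locally free map preserves affineness and smoothness of the structural morphism.

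For (1) and (2), the $\Gamma$-action on $X \to \widetilde{A}$ induces a $\Gamma$-action on $p_{*} X \to A$ (trivial on the base $A$), and as observed just before the proposition, $p_{*}^\Gamma X$ is nothing other than $(p_{*} X)^\Gamma$. Since $p_{*} X \to A$ is separated, the equalizer of any $\theta \in \Gamma$ with the identity is a closed subscheme of $p_{*} X$; intersecting over a generating set for $\Gamma$ exhibits $(p_{*} X)^\Gamma$ as a closed subscheme of $p_{*} X$. This gives (1), and since a closed subscheme of an affine scheme is affine, combining with the fact that $p_{*} X$ is affine when $X \to \widetilde{A}$ is affine yields (2).

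For (3), I would verify smoothness via the functorial criterion together with an averaging argument. Given an $A$-algebra square-zero extension $R' \twoheadrightarrow R$ with ideal $J$ and a $\Gamma$-equivariant section $s_{R} \in p_{*}^\Gamma X(R)$, smoothness of $p_{*} X \to A$ produces \emph{some} (not necessarily equivariant) lift $\tilde{s}_{R'} \in p_{*} X(R')$. Standard deformation theory identifies the set of lifts as a torsor under an $R$-module $M$ built from $s_{R}^{*}\Omega^1_{X/\widetilde{A}}$ and $J$, and both the torsor and $M$ inherit compatible $\Gamma$-actions from those on $X$ and $\widetilde{A}$. Because $\#\Gamma$ is invertible in $\mathcal{O}_A$, the averaging operator $\tfrac{1}{\#\Gamma}\sum_{\theta \in \Gamma} \theta$ is a projector onto $\Gamma$-invariants and kills $H^1(\Gamma, M)$, so the obstruction to adjusting $\tilde{s}_{R'}$ to a $\Gamma$-equivariant lift vanishes. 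Concretely, $\bar{s}_{R'} := \tfrac{1}{\#\Gamma}\sum_{\theta} \theta \cdot \tilde{s}_{R'}$ (interpreted via the $M$-torsor structure) is an equivariant lift of $s_{R}$.

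The main obstacle I anticipate is making the $\Gamma$-action on the torsor of lifts precise, since both the source $\widetilde{A}_{R'}$ and the target $X$ carry $\Gamma$-actions, and one must check that the resulting action on the set of lifts is compatible with the affine torsor structure under $M$ in the sense needed to apply averaging. Once that compatibility is established, the remainder of (3) is a formal consequence of $\#\Gamma$ being a unit; the representability and closed-immersion steps in (1), (2) are routine given separatedness of $p_{*} X \to A$.
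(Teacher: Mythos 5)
Your approach coincides with the paper's: realize $p_*^\Gamma X$ as the $\Gamma$-fixed locus of the Weil restriction $p_*X$, then deduce (1)--(3). The difference is presentational rather than mathematical. The paper discharges each step by citation — Grothendieck/Bosch–Lütkebohmert–Raynaud for representability of $p_*X$, \cite[Proposition~A.5.2]{CGP} for preservation of affineness and smoothness under Weil restriction, and \cite[Proposition~3.1]{edixhovenNeronModelsTame1992} or \cite[Proposition~A.8.10]{CGP} for the fixed locus being a closed subscheme, with \cite[Proposition~3.4]{edixhovenNeronModelsTame1992} supplying the smoothness of fixed points. You instead sketch the proofs of the cited ingredients: the equalizer/separatedness argument to exhibit $(p_*X)^\Gamma$ as a closed subscheme, and the deformation-theoretic averaging argument for (3). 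Both routes are valid; the averaging argument you describe is indeed exactly what is inside the Edixhoven/CGP references, and your worry about the $\Gamma$-action on the torsor of lifts is a real but routine verification: the set of lifts of a fixed $s_R$ is an affine space under $M = \operatorname{Hom}_{\mathcal{O}_{\widetilde{A}_R}}(s_R^*\Omega^1_{X/\widetilde{A}}, J\otimes_R \mathcal{O}_{\widetilde{A}_R})$ and the conjugation $\Gamma$-action is affine over the induced linear action on $M$, so when $\#\Gamma$ is invertible the barycenter $\tilde s_0 + \tfrac{1}{\#\Gamma}\sum_\theta(\theta\cdot\tilde s_0 - \tilde s_0)$ is a $\Gamma$-fixed lift.

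One small gap you should close: for (3) you verify only the infinitesimal lifting criterion, i.e.\ formal smoothness. To conclude smoothness you also need $p_*^\Gamma X \to A$ to be locally of finite presentation; this follows because $X \to \widetilde{A}$ is smooth (hence locally of finite presentation), Weil restriction along the finite flat $p$ preserves this, $p_*^\Gamma X$ is a closed subscheme of $p_*X$, and $A$ is locally noetherian — but it should be said.

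One remark on the paper's \cite[Proposition~3.4]{edixhovenNeronModelsTame1992}: as noted after the proposition in the paper, that reference only requires $\#\Gamma$ invertible on $p_*X$ rather than on $A$, so the hypothesis in (3) is slightly stronger than strictly needed; your averaging argument, as you have set it up over $A$-algebras, proves the version stated in the paper, which is what is used downstream.
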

\begin{proof}
    Since all statements are Zariski local on \(A\) and \(\pi\) is a finite (hence affine) morphism, we may assume that \(A\) and \(\widetilde{A}\) are the spectra of noetherian rings. 
    Representability of the Weil restriction \(p_{*}X\) is due to Grothendieck \cite[Exposé 221, Section 4.c]{grothendieckFondementsGeometrieAlgebrique1956}; see also \cite[Section 7.6]{boschNeronModels1990}. 
    As shown in \cite[Proposition 3.1]{edixhovenNeronModelsTame1992} or \cite[Proposition A.8.10]{CGP}, \(p_{*}^\Gamma X = (p_{*}X)^\Gamma\) is then represented by a closed subscheme of \(p_{*}X\). 
    If \(X \to \widetilde{A}\) is affine/smooth/étale/an open immersion, then so is \(p_{*}X \to A\) by \cite[Proposition A.5.2]{CGP}, implying (2) since closed subschemes of affine schemes are affine.
    The smoothness part of (3) is shown in \cite[Proposition 3.4]{edixhovenNeronModelsTame1992}.
\end{proof}
\begin{remark}
    In \cite[Proposition 3.4]{edixhovenNeronModelsTame1992} only the weaker assumption that \(\# \Gamma\) is invertible over \(p_{*}X\) is needed, so our assumption in (3) is stronger than necessary.
    From the point of view of \cite[Proposition A.8.10]{CGP}, the key is that the constant group determined by \(\Gamma\) is linearly reductive when \(\# \Gamma\) is invertible. 
\end{remark}
\begin{remark}
    Instead of requiring that \(A\) is locally noetherian and \(p : \widetilde{A} \to A\) is finite flat, we could have simply required that \(p : \widetilde{A} \to A\) is finite locally free. 
    For a comparison of these conditions, see \cite[\href{https://stacks.math.columbia.edu/tag/02KB}{Lemma 02KB}]{stacks-project}. 
    The latter condition is the one which is used in \cite[Section 7.6]{boschNeronModels1990}. 
\end{remark}
Let \(H \to \widetilde{A}\) be a \(\Gamma\)-equivariant smooth affine group scheme. 
If \(A\) is locally noetherian and \(p : \widetilde{A} \to A\) is finite flat, then in light of \cref{res:smooth-invariant-pushforward-in-general} the invariant pushforward 
\(p_{*}^\Gamma\) is well-behaved on scheme-theoretic \((\Gamma,H)\)-torsors.
In particular, \cref{res:fundamental-result-about-equivariant-torsors} and \cref{res:twisted-fundamental-result-about-equivariant-torsors} apply to this situation.
The assumptions on \(H\) ensure that it doesn't matter whether we use the étale, fppf, or fpqc topology in the definition of type, and it is even a pointwise local invariant, as the following result shows. 
\begin{proposition}
    \label{res:equivalent-definitions-of-type-in-general}
    Assume that \(A\) is locally noetherian and \(p : \widetilde{A} \to A\) is finite flat, and let \(H \to \widetilde{A}\) be a \(\Gamma\)-equivariant smooth affine group scheme.
    Let \(P\) and \(Q\) be \((\Gamma,H)\)-torsors on \(\widetilde{A}\). 
    Then the following are equivalent: 
    \begin{enumerate}
        \item There exists an fpqc covering \(\lbrace U_{i} \to \widetilde{A} \rbrace_{i \in \mathcal{I}}\) and an isomorphism of \((\Gamma,H)\)-torsors \(P_{U_{i} \times_{A} \widetilde{A}} \isoto Q_{U_{i} \times_{A} \widetilde{A}}\) for every \(i \in \mathcal{I}\), 
        \item There exists an étale covering \(\lbrace U_{i} \to \widetilde{A} \rbrace_{i \in \mathcal{I}}\) and an isomorphism of \((\Gamma,H)\)-torsors \(P_{U_{i} \times_{A} \widetilde{A}} \isoto Q_{U_{i} \times_{A} \widetilde{A}}\) for every \(i \in \mathcal{I}\), 
        \item For every geometric point \(\overline{x} : \operatorname{Spec} \overline{\kappa} \to A\) there exists an isomorphism of \((\Gamma,H)\)-torsors \(P_{\overline{\kappa} \times_{\overline{x}, A} \widetilde{A}} \isoto P_{\overline{\kappa} \times_{\overline{x}, A} \widetilde{A}}\).
        \item For every geometric point \(\overline{x} : \operatorname{Spec} \overline{\kappa} \to A\) and every point \(y \in \overline{\kappa} \times_{\overline{x}, A} \widetilde{A}\) there exists an isomorphism of \((\Gamma_{y}, H)\)-torsors \(P_{y} \isoto Q_{y}\), where \(\Gamma_{y} \leq \Gamma\) is the stabilizer of \(y\). 
    \end{enumerate}
\end{proposition}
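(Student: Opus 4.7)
The plan is to establish the cyclic chain of implications $(2) \Rightarrow (1) \Rightarrow (3) \Leftrightarrow (4) \Rightarrow (2)$, reducing the bulk of the content to an analysis of an Isom scheme and its invariant pushforward. The implication $(2) \Rightarrow (1)$ is immediate since every étale cover is fpqc. For $(1) \Rightarrow (3)$, given an fpqc cover $\{U_i \to A\}$ witnessing local isomorphism and a geometric point $\overline{x} : \operatorname{Spec}\overline{\kappa} \to A$, the base change $\operatorname{Spec}\overline{\kappa} \times_A \coprod U_i \to \operatorname{Spec}\overline{\kappa}$ is fpqc and therefore has an $\overline{\kappa}$-point (since $\overline{\kappa}$ is algebraically closed), producing a factorization of $\overline{x}$ through some $U_i$; pulling back the local isomorphism of $(\Gamma,H)$-torsors along this factorization produces the required isomorphism on $\overline{\kappa} \times_{\overline{x}, A} \widetilde{A}$.

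For $(3) \Leftrightarrow (4)$ I would argue via orbit decomposition. Over a geometric point $\overline{x}$, the fiber $\widetilde{A}_{\overline{x}} := \overline{\kappa} \times_{\overline{x}, A} \widetilde{A}$ is a finite $\overline{\kappa}$-scheme equipped with a $\Gamma$-action, hence decomposes as a disjoint union of $\Gamma$-orbits, and each orbit is $\Gamma$-equivariantly isomorphic to $(\Gamma/\Gamma_y) \times \operatorname{Spec}\overline{\kappa}$ for any representative $y$ with stabilizer $\Gamma_y$. The groupoid of $(\Gamma, H)$-torsors on a single such orbit is equivalent, via restriction to $y$ and induction back to the orbit, to the groupoid of $(\Gamma_y, H|_y)$-torsors on $\operatorname{Spec}\overline{\kappa}$ (this is the non-abelian Shapiro's lemma for torsors; compare the argument behind \cref{res:shapiros-lemma}). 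Consequently, conditions (3) and (4) translate into each other orbit-by-orbit.

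The main obstacle is $(3) \Rightarrow (2)$. I would attack it via the Isom scheme $I := \operatorname{Isom}_H(P, Q)$ over $\widetilde{A}$: since $H$ is smooth affine and $P, Q$ are $H$-torsors, $I$ is either empty or a torsor under the smooth affine twisted group $_P H := \operatorname{Aut}_H(P)$, and in either case is representable by a smooth affine scheme over $\widetilde{A}$. By \cref{res:smooth-invariant-pushforward-in-general}(2), the Weil restriction $p_* I$ is affine (and in fact smooth, being the Weil restriction of a smooth scheme along a finite flat morphism) over $A$, and $p_*^\Gamma I \subset p_* I$ is a closed subscheme which represents the Isom sheaf in the $(\Gamma, H)$-category. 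It is a pseudo-torsor under the $A$-group scheme $p_*^\Gamma(_P H)$. Condition (3) is precisely the statement that $p_*^\Gamma I$ has a point above every geometric point of $A$; in view of the twisting equivalence \cref{res:twisted-fundamental-result-about-equivariant-torsors} this forces the pseudo-torsor to be an actual torsor.

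The final step is to upgrade this from ``trivial at each geometric point'' to ``trivial étale-locally on $A$''. The cleanest route is via smoothness of $p_*^\Gamma(_P H)$: when $\#\Gamma$ is invertible on $A$ (as always holds in the tame setting of the main text), \cref{res:smooth-invariant-pushforward-in-general}(3) guarantees that $p_*^\Gamma(_P H)$ is itself smooth affine, and a torsor under a smooth affine group is automatically étale-locally trivial by standard torsor theory, yielding (2). In general one argues by exploiting smoothness of the ambient $p_* I$: any section of $p_*^\Gamma I$ at a geometric point extends to a section of $p_* I$ over some étale neighborhood of $\overline{x}$ by smoothness of $p_* I$, and the closed condition of $\Gamma$-invariance can be achieved after further étale localization because such a $\Gamma$-invariant section exists in the strict Henselization at $\overline{x}$ by hypothesis.
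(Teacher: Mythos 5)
Your proposal is, in its load-bearing steps, the same as the paper's: realize $\underline{\operatorname{Iso}}_H(P,Q)$ as a smooth affine scheme over $\widetilde{A}$ via fpqc descent, take the invariant pushforward so that $V$-points of $p_*^\Gamma\underline{\operatorname{Iso}}_H(P,Q)$ are exactly the $(\Gamma,H)$-torsor isomorphisms over $V\times_A\widetilde{A}$, and reduce the equivalence of (1)--(3) to ``a smooth morphism admits sections fpqc-locally iff it admits sections \'etale-locally iff it is surjective''; the orbit decomposition together with Shapiro's lemma for $(3)\Leftrightarrow(4)$ is likewise the same. The only organizational difference is that you route the argument through a cycle of implications, whereas the paper handles (1)--(3) simultaneously and then treats $(3)\Leftrightarrow(4)$ separately.

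You do identify a genuine subtlety that the paper elides: smoothness of $p_*^\Gamma\underline{\operatorname{Iso}}_H(P,Q)\to A$ comes from part (3) of \cref{res:smooth-invariant-pushforward-in-general}, which requires $\#\Gamma$ to be invertible over $A$ --- satisfied throughout the paper (tame ramification) but not part of the proposition's hypotheses as stated. However, your attempted fallback for the general case does not work as written. Smoothness of the ambient Weil restriction $p_*I$ produces an \'etale-local section of $p_*I$, but there is no reason that section lands in the closed subscheme $p_*^\Gamma I$; and the sentence ``such a $\Gamma$-invariant section exists in the strict Henselization at $\overline{x}$ by hypothesis'' overstates what (3) gives you --- it provides a $\Gamma$-invariant section over the geometric fiber $\overline{\kappa}\times_A\widetilde{A}$, not over a henselization, and lifting across the henselization is exactly where smoothness of $p_*^\Gamma I$ would have been used. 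Since the paper's own proof carries the same implicit hypothesis, this is best treated as a correction to the proposition's statement rather than a defect of your argument, but the fallback paragraph should be deleted or replaced by an honest appeal to tameness.
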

\begin{proof}
    Consider the sheaf \(\underline{\operatorname{Iso}}_{H}\left( P,Q \right)\) of isomorphisms of \(H\)-torsors \(P \isoto Q\).
    By fpqc descent, this is representable by a smooth affine scheme over \(\widetilde{A}\) and it has a \(\Gamma\)-conjugation action.
    Hence \(p_{*}^\Gamma \underline{\operatorname{Iso}}_{H}\left( P,Q \right)\) is a smooth affine scheme over \(A\) by \cref{res:smooth-invariant-pushforward-in-general}. 
    Note that if \(V \to A\) is a scheme, then 
    \[
        p_{*}^\Gamma \underline{\operatorname{Iso}}_{H}\left( P,Q \right)(V) = \operatorname{Iso}_{H}\left( P_{V \times_{A} \widetilde{A}} , Q_{V \times_{A} \widetilde{A}} \right)^\Gamma = \operatorname{Iso}_{(\Gamma,H)} \left( P_{V \times_{A} \widetilde{A}} , Q_{V \times_{A} \widetilde{A}} \right)
    \]
    is precisely the collection of isomorphisms of \((\Gamma,H)\)-torsors from \(P\) to \(Q\) after base change to \(V \times_{A} \widetilde{A}\).
    The equivalence of (1), (2), and (3) now follows from the well-known fact that a smooth morphism of schemes admits sections fpqc locally if and only if it admits sections étale locally if and only if it is surjective.

    It is obvious that (3) implies (4) so it remains to see that (4) implies (3). This may be thought of as a geometric version of Shapiro's Lemma. 
    By partitioning \(\overline{\kappa} \times_{\overline{x}, A} \widetilde{A}\) into \(\Gamma\)-orbits we may assume that \(\Gamma\) acts transitively on \(\overline{\kappa} \times_{\overline{x}, A} \widetilde{A}\).
    Note that 
    \[\operatorname{Iso}_{H}\left( P_{\overline{\kappa} \times_{\overline{x}, A} \widetilde{A}}, Q_{\overline{\kappa} \times_{\overline{x}, A} \widetilde{A}} \right) = \prod_{y \in \overline{\kappa} \times_{\overline{x}, A} \widetilde{A}} \operatorname{Iso}_{H} \left( P_{y}, Q_{y} \right) .  \]
    If \(\theta \in \Gamma\), then multiplication by \(\theta\) yields for any \(y \in \overline{\kappa} \times_{\overline{x}, A} \widetilde{A}\) a bijection \(\operatorname{Iso}_{H}\left( P_{y}, Q_{y} \right)^{\Gamma_{y}} \isoto \operatorname{Iso}_{H}\left( P_{\theta y}, Q_{\theta y} \right)^{\Gamma_{\theta y}}\). 
    From this observation it's not hard to see that projection onto some factor \(y\) gives a bijection \(\operatorname{Iso}_{H}\left( P_{\overline{\kappa} \times_{\overline{x}, A} \widetilde{A}}, Q_{\overline{\kappa} \times_{\overline{x}, A} \widetilde{A}} \right)^{\Gamma} \cong \operatorname{Iso}_{H} \left( P_{y}, Q_{y} \right)^{\Gamma_{y}}\). 
\end{proof}
In the context of \((\Gamma,H)\)-torsors on curves, as in \cite{balajiModuliParahoricMathcal2014}, \cite{damioliniLocalTypesGamma2023}, or \cite{pappas-rapoport-tamely-ramified-bundles}, 
it is condition (4) above that is used to define the local type.
Since every \(H\)-torsor over an algebraically closed field is trivializable, it follows from \cref{res:equivariant-torsors-and-group-cohomology} that the types in this case are even determined by cohomology 
classes of \(H^1\left( \Gamma_{y}, H(\kappa(y)) \right)\), and consequently all \((\Gamma,H)\)-torsors are classified by such data. 
This classification is used extensively in the cited works. 

\section{Generalities on positive loop groups}
\label{sec:loop-groups-generalities}
In this section we state some general results about positive loop groups that are difficult to track down in the literature. 

Let \(A\) be a base ring, and let \(X\) be an fpqc sheaf over \(A\llb t \rrb\).
We define a presheaf on \(A\)-algebras
\[
L^{+}X(R) = X\left( R\llb t \rrb \right) ,
\] the positive loop ``group'' for \(X\). We also define the \(n\)-jet
\[
L^{n}X(R) = X \left( R\llb t \rrb/t^{n} \right)
\] for any integer \(n \geq 1\).
Note that \(L^n\) the pushforward from fpqc sheaves over \(A[t]/t^n\) to fpqc sheaves over \(A\). 

\begin{proposition}
    \label{res:loop-group-lemma}
    Assume that \(A\) is noetherian and let \(X\) be an affine scheme over \(A\llb t \rrb\). 
    \begin{enumerate}
        \item For each \(n\), the fpqc sheaf \(L^n X\) is represented by an affine scheme over \(A\), say \(B_{n}\). Then \(L^+ X\) is represented by \(B := \colim B_{n}\), corresponding to the isomorphism of fpqc sheaves \(L^+ X \isoto \clim L^n X\). 
        \item If \(X\) is formally smooth, then for each \(n \in \mathbb{Z}_{\geq 1}\) the inclusions \(B_{n} \hookrightarrow B_{n+1}\) and \(B_{n} \hookrightarrow B\) admit retractions.
            Therefore we can identify each \(B_{n}\) as a subring of \(B\), and \(B = \bigcup_{n = 1}^\infty B_{n}\). 
        \item If \(X\) is smooth, then so is \(L^n X\). In addition, \(L^+X\) is formally smooth, flat, and reduced. 
        \item If \(Z \hookrightarrow X\) is a closed immersion, then so is \(L^n Z \hookrightarrow L^n X\) and \(L^+ Z \hookrightarrow L^+ X\).
        \item If \(U \hookrightarrow X\) is an open immersion, then so is \(L^n U \hookrightarrow L^n X\) and \(L^+ U \hookrightarrow L^+ X\). 
    \end{enumerate}
\end{proposition}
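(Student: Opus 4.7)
For (1) I plan to use the identification $R\llb t \rrb = \clim_n R[t]/t^n$. Writing $X = \operatorname{Spec}\Lambda$ with $\Lambda$ an $A\llb t \rrb$-algebra, the functor $L^n X$ is the Weil restriction of $X \times_{A\llb t \rrb} \operatorname{Spec}A[t]/t^n$ along the finite free map $A \hookrightarrow A[t]/t^n$ and is represented by an affine scheme $\operatorname{Spec} B_n$ by standard Weil restriction theory \cite[Section 7.6]{boschNeronModels1990}. Combining the universal property with the limit identity yields $L^+X(R) = \Hom_{A\llb t \rrb}(\Lambda, \clim_n R[t]/t^n) = \clim_n L^n X(R)$, which is represented by $B := \colim_n B_n$. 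For (2), formal smoothness of $X$ applied to the square-zero extension $R[t]/t^{n+1} \twoheadrightarrow R[t]/t^n$ (whose kernel $t^n R[t]/t^{n+1}$ squares to zero for $n \geq 1$) gives surjectivity of $L^{n+1}X(R) \to L^n X(R)$ for every $R$. Applying this to the identity element of $\Hom(B_n, B_n) = L^n X(B_n)$ produces a lift $B_{n+1} \to B_n$ that restricts to the identity on $B_n$, which is the desired retraction of $B_n \hookrightarrow B_{n+1}$. Composing these retractions yields a compatible system $B \twoheadrightarrow B_n$, identifying $B = \bigcup_n B_n$ as a genuine filtered union.

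For (3), smoothness of $L^n X$ follows from Weil restriction preserving smoothness along finite locally free extensions \cite[Proposition A.5.2]{CGP}. Formal smoothness of $L^+ X$ is direct: a square-zero extension $R' \twoheadrightarrow R$ of $A$-algebras with kernel $J$ yields $R'\llb t \rrb \twoheadrightarrow R\llb t \rrb$ with kernel $J\llb t \rrb$, which squares to zero, so formal smoothness of $X$ provides lifts termwise. Flatness of $B$ over $A$ follows from writing $B = \bigcup_n B_n$ as a filtered union of smooth (hence flat) $A$-algebras, and a filtered colimit of flat modules is flat. Reducedness is similar when $A$ is reduced (which is the case in the applications, such as $A = \mathcal{O}$ or $\mathbb{F}$): each smooth $A$-algebra $B_n$ is then reduced, and a filtered union of reduced rings is reduced.

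For (4), Weil restriction along $A \to A[t]/t^n$ sends closed immersions to closed immersions by \cite[Proposition A.5.2]{CGP}, so $L^n Z \hookrightarrow L^n X$ corresponds to a surjection $B_n^X \twoheadrightarrow B_n^Z$; since filtered colimits preserve surjections, $B^X \twoheadrightarrow B^Z$ and hence $L^+Z \hookrightarrow L^+X$ is a closed immersion. For (5), the jet-level statement is again by the same Weil restriction result. The essential new input at the infinite level is the Cartesian identification $L^+U = L^+X \times_{L^1 X} L^1 U$, from which openness follows by base change. To establish it, I will show that if $g : \operatorname{Spec}R\llb t \rrb \to X$ satisfies $g|_{t=0} \in U(R)$ and $I \subset R\llb t \rrb$ is the ideal cutting out $g^{-1}(X \setminus U)$, then the hypothesis forces $I + tR\llb t \rrb = R\llb t \rrb$, and hence $I = R\llb t \rrb$ via the classical power-series unit criterion: writing $1 = a + tb$ with $a \in I$, the element $a = 1 - tb$ has constant term $1 \in R^\times$ and is therefore invertible in $R\llb t \rrb$. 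The main subtlety I expect is the passage from finite to infinite level in (3): smoothness is not preserved by filtered colimits, so properties like flatness, formal smoothness, and reducedness of $L^+X$ must be argued directly from the filtered union structure rather than deduced from a general preservation theorem, which is why the retractions of (2) are so essential.
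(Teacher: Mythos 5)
Your proposal is essentially correct and follows the paper's approach for parts (1)--(4) almost exactly: Weil restriction along the finite free extensions $A \to A[t]/t^n$, the retraction trick via formal smoothness, and the filtered-union structure to transfer flatness/reducedness to the colimit. Two points worth flagging. First, you correctly observe that reducedness of $L^+X$ in part (3) requires $A$ itself to be reduced; the paper's own argument cites a result about smooth morphisms over a reduced base, so this hypothesis is implicitly used there too even though the proposition as stated only assumes $A$ noetherian. Second, your argument for part (5) takes a genuinely different and in fact cleaner route. The paper writes $U = \bigcup_i D(f_i)$, claims the presheaf identity $U(R\llb t\rrb) = \{x : f_i(x) \in R\llb t\rrb^\times \text{ for some } i\}$, and then assembles from the base case $L^+\mathbb G_m \subset L^+\mathbb G_a$. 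That intermediate presheaf identity actually fails for disconnected test objects (take $R = A \times A$ and $U = D(x) \cup D(x-1)$ with $g = (0,1)$, which lies in $U(R\llb t\rrb)$ yet neither $f_i(g)$ is a unit), though the ultimate openness conclusion is still right since the two open subschemes agree on field-valued points. Your Cartesian identification $L^+U = L^+X \times_{L^1 X} L^1 U$ sidesteps this entirely: it immediately yields openness by base change from the Weil-restriction level, and its verification only needs that $t$ lies in the Jacobson radical of $R\llb t\rrb$ (your ``power-series unit criterion''), which is the same engine driving the paper's $\mathbb G_m \subset \mathbb G_a$ case. Both approaches cost about the same, but yours packages the key input more transparently and avoids the presheaf subtlety.
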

\begin{proof}
    \begin{enumerate}
        \item Note that \(L^n X = \operatorname{Res}^{A[t]/t^n}_{A} X\). Since \(A[t]/t^n\) is finite flat over \(A\), the first statement follows from \cite[Proposition A.5.2]{CGP}.
            Since \(X = \operatorname{Spec} \Lambda\) is affine, we have 
            \[
            L^+ X (R) = \operatorname{Hom}_{A\llb t \rrb}\left( \Lambda, R\llb t \rrb\right) 
            \cong \operatorname{Hom}_{A\llb t \rrb}\left( \Lambda, \clim R[t]/t^n \right)
            \cong \clim \operatorname{Hom}_{A\llb t \rrb}\left( \Lambda, R[t] / t^n \right) 
            \cong \clim L^n X (R) . 
            \]
            On the other hand, if \(B = \colim B_{n}\), then \(\operatorname{Hom}_{A}\left(B, R\right) \cong \clim \operatorname{Hom}_{A}\left(B_{n}, R\right) = \clim L^n X (R)\), so we see that \(L^+X\)
            is representable by \(B\).
        \item Assuming \(X\) is formally smooth, the map \(L^{n+1} X(R) = X(R[t]/t^{n+1}) \to  X(R[t]/t^{n}) = L^{n}X(R)\) is surjective for any \(A\)-algebra \(R\)
            because \(R[t]/t^{n+1} \to R[t]/t^{n}\) is a square zero extension. 
            Taking \(R = B_{n}\) we see that the identity map \(B_{n} = B_{n}\) lifts to a retraction \(B_{n+1} \to B_{n}\) of the map \(B_{n} \to B_{n+1}\).
        \item Smoothness of \(L^n X\) follows from \cite[Proposition A.5.2]{CGP}. 
            Formal smoothness of \(L^+ X\) is immediate from \(X\) being smooth (and in
            particular formally smooth). Being flat follows from
            \(B = \colim_{n}B_{n}\) being a colimit of smooth (and in particular
            flat) \(A\)-algebras, the fact that \(M\otimes_{A}\) preserves all
            colimits for any \(A\)-module \(M\), and the fact that taking filtered
            colimits of \(A\)-modules is exact. 

            Note that each \(L^n X\) is reduced by \cite[\href{https://stacks.math.columbia.edu/tag/034E}{Lemma 034E}]{stacks-project}.
            If \(f \in B\) is a (nilpotent) element, then \(f \in B_{n}\) for some \(n\), so reducedness of \(B\) follows from reducedness of each \(B_{n}\). 
        \item Let \(f : Z \hookrightarrow X\) be a closed immersion of affine \(A\llb t \rrb\)-schemes. 
            For each \(n\) the map \(L^n Z \hookrightarrow L^n Z\) is a closed immersion by \cite[Proposition A.5.5]{CGP}. 
            So if \(C_{n}\) and \(C\) are to \(Z\) what \(B_{n}\) and \(B\) are to \(X\), then the corresponding map \(B_{n} \to C_{n}\) identifies \(C_{n} = B_{n} / I_{n}\),
            where \(I_{n} \subset B_{n}\) is an ideal. If we let \(I = \colim I_{n}\), then by exactness of filtered colimits it follows that the map \(B \to C\) identifies \(C = B / I\),
            meaning that \(L^+ Z \hookrightarrow L^+ X\) is a closed immersion. 
        \item The statement about \(L^n\) follows from \cite[Proposition A.5.2]{CGP}, so it remains to prove the statement about \(L^+\).
            
            As an intermediate step, let us first prove that \(L^+ \mathbb{G}_{m} \hookrightarrow L^+ \mathbb{G}_{a}\) is open. 
            If we write \(\mathbb{G}_{a} = \operatorname{Spec} A\llb t \rrb[x]\) 
            then we can identify \(L^+ \mathbb{G}_{a} = \operatorname{Spec}A[a_{0},a_{1}, \dots]\) where \(a_{i}\) corresponds to the \(i\)'th coefficient of a power series. 
            Similarly, if we write \(\mathbb{G}_{m} = \operatorname{Spec} A \llb t \rrb [x]\), then we can identify \(L^+ \mathbb{G}_{m} = \operatorname{Spec}A[a_{0}^{\pm 1}, a_{1}, a_{2}, \dots]\), 
            using the fact that a power series \(a_{0} + a_{1}t + \cdots \in R \llb t \rrb\) is invertible if and only if \(a_{0}\) is invertible.
            So \(L^+ \mathbb{G}_{m} = D(a_{0}) \subset L^+ \mathbb{G}_{a}\) is the affine open on which \(a_{0}\) doesn't vanish. 
            
            We now consider the general case, so let \(U \subset X \) be an open immersion of \(A\llb t \rrb\)-schemes. 
            Let \(\lbrace f_{i} \rbrace_{i \in \mathcal{I}}\) be a collection of functions on \(X\) such that
            \(U = \bigcup_{i\in \mathcal{I}} D(f_{i})\).
            Viewing each \(f_{i}\) as a function \(X \to \mathbb{G}_{a}\), there are induced maps \(L^+ f_{i} : L^+ X \to L^+ \mathbb{G}_{a}\), and we can identify 
            \begin{align*}
            L^+ U(R) & = U(R\llb t \rrb) =  \lbrace x \in X(R\llb t \rrb) : f_{i}(x) \in R\llb t \rrb^\times \text{ for some }i\in \mathcal{I} \rbrace \\
            & = \lbrace x \in L^+ X(R) : L^+f_{i}(x) \in R\llb t \rrb^\times \text{ for some }i \in \mathcal{I} \rbrace \\
            & = \bigcup_{i\in \mathcal{I}} (L^+ f_{i})^{-1} \left( L^+ \mathbb{G}_{m} \right)(R) . 
            \end{align*}
            Since \(L^+ \mathbb{G}_{m} \subset L^+ \mathbb{G}_{a}\) is open, it follows that \(L^+U \subset L^+X\) is open.
    \end{enumerate}
\end{proof}
\begin{remark}
    In the case that \(A\) is a field, and \(X\) is of finite type over \(A \llb t \rrb\),
    some parts of the above result are stated without proof in \cite[Proposition 1.3.2]{zhu-affine-grassmannians}
    and proved in \cite[Section 1.a]{pappasTwistedLoopGroups2008}.
    Some general results on various loop groups can also be found in \cite{cesnaviciusAffineGrassmannianPresheaf2024}.
\end{remark}

\subsection{Congruence loop groups}
Let \(H\) be a smooth affine group scheme over \(A \llb t \rrb\). 
Define
\(L^{+n}H\) as the kernel in the exact sequence of fpqc sheaves 
\[
1 \to L^{+n}H \to L^{+}H \to L^{n}H \to 1 .
\]
Note that right exactness of this sequence is a consequence of formal smoothness of \(H\). 

\begin{lemma}
    \label{res:congruence-loop-group-lemma}
    Let
    \(H_{n}(R) := \ker \left( H(R) \to H \left( R / t^{n}R \right) \right)\)
    for any \(A\llb t \rrb\)-algebra \(R\). Then \(H_{n}\) is a smooth
    affine group scheme over \(A\llb t \rrb\), and \(L^{+n}H = L^+ H_{n}\). 
\end{lemma}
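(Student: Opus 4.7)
The plan is to realize $H_n$ as a Néron blowup (dilation) of $H$ and then invoke standard results on dilations. More precisely, let $e: \operatorname{Spec} A\llb t \rrb \to H$ denote the identity section, and consider the closed subscheme $Z \subset H$ defined as the base change of $e$ along the closed immersion $V(t^n) \hookrightarrow \operatorname{Spec} A\llb t \rrb$ cut out by $t^n$. Since $t$ is a non-zerodivisor in $A\llb t \rrb$, the ideal $(t^n)$ defines an effective Cartier divisor on $\operatorname{Spec} A\llb t \rrb$, and I would define $H_n'$ to be the dilation of $H$ in $Z$ along $V(t^n)$.

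First I would verify, via the universal property of dilations (as recorded in \cite[Proposition 2.6]{mayeuxNeronBlowupsLowdegree2023}), that $H_n'$ represents the same functor as $H_n$: for any $A\llb t \rrb$-algebra $R$ in which $t$ is a non-zerodivisor, the $R$-points of $H_n'$ are those $R$-points of $H$ whose reduction modulo $t^n$ factors through $Z$, i.e.\ through the identity section, which is exactly the kernel $\ker(H(R) \to H(R/t^n R))$. In general (dropping the non-zerodivisor hypothesis) the universal property still gives the correct functorial description, so $H_n' = H_n$ as functors. In particular, $H_n$ inherits from $H_n'$ the structure of an affine scheme over $A\llb t \rrb$, and the group structure follows from the description as a kernel of a homomorphism of $A\llb t \rrb$-group functors.

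Next, smoothness of $H_n$ follows from general properties of Néron blowups: since $H$ is smooth over $A\llb t \rrb$ and the center $Z$ is itself smooth over $V(t^n)$ (being a base change of the identity section, which is smooth because $H \to \operatorname{Spec} A\llb t \rrb$ is smooth), the dilation $H_n'$ is smooth over $A\llb t \rrb$ by \cite[Theorem 3.2]{mayeuxNeronBlowupsLowdegree2023}.

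Finally, for the identification $L^{+n}H = L^+ H_n$, I would unwind the definitions on $R$-points for an $A$-algebra $R$. The group $L^+ H_n(R)$ equals $H_n(R\llb t \rrb)$, which by definition of $H_n$ is $\ker(H(R\llb t \rrb) \to H(R\llb t \rrb / t^n R\llb t \rrb))$. Using the evident isomorphism $R\llb t \rrb / t^n R\llb t \rrb \cong R[t]/t^n$, this kernel equals $\ker(L^+H(R) \to L^n H(R)) = L^{+n} H(R)$, and this identification is clearly functorial in $R$. The main subtlety in this proof is the first step, namely checking that the dilation correctly represents the kernel functor on all $A\llb t \rrb$-algebras (not just those in which $t$ is a non-zerodivisor); everything else is formal once this is granted.
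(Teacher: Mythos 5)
Your proposal is essentially the same route the paper takes: identify $H_n$ with the dilation of $H$ in the identity section along $V(t^n)$, then quote \cite[Theorem 3.2]{mayeuxNeronBlowupsLowdegree2023} for smoothness and affineness, and finally unwind $L^+H_n(R) = H_n(R\llb t\rrb)$ to get $L^{+n}H = L^+H_n$. The paper simply cites \cite[Lemma 3.7]{mayeuxNeronBlowupsLowdegree2023} for the identification with the dilation rather than redoing the universal-property verification.

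One small slip worth flagging: your assertion that ``in general (dropping the non-zerodivisor hypothesis) the universal property still gives the correct functorial description'' is not right. The universal property of a dilation along a divisor is only available for test schemes on which the divisor pulls back to a divisor, i.e.\ for $R$ in which $t^n$ remains a non-zerodivisor. Already for $H = \mathbb{G}_a$ over $A\llb t\rrb$, the dilation is $\operatorname{Spec}A\llb t\rrb[t^{-n}x]$, which has $R$-points equal to $R$ for every $R$, whereas the kernel functor $\ker(\mathbb{G}_a(R) \to \mathbb{G}_a(R/t^n R)) = t^n R$ is strictly smaller whenever $t^n$ is a zero-divisor in $R$. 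But this discrepancy is harmless for the lemma's conclusion: the identity $L^{+n}H = L^+H_n$ only ever probes $H_n$ on rings of the form $R\llb t\rrb$, where $t$ is always a non-zerodivisor, so the universal property applies exactly as you describe for the case that matters.
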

\begin{proof}
    It is obvious that \(L^{+n}H = L^+ H_{n}\). 
    By \cite[Lemma 3.7]{mayeuxNeronBlowupsLowdegree2023} we can identify \(H_{n}\) with the dilation of \(H\) in 
    the identity section over \(j : \operatorname{Spec} A [t]/t^{n} \hookrightarrow \operatorname{Spec}A\llb t \rrb\) 
    along this closed subscheme. The result is then immediate from \cite[Theorem 3.2]{mayeuxNeronBlowupsLowdegree2023}. 
\end{proof}

\subsection{The \(t\)-adic metric}
\label{sec:t-adic-metric}
Let \(X\) be a smooth affine scheme over \(A \llb t \rrb\). 
By \cref{res:loop-group-lemma} we can write 
\[
    L^+ X(R) = \clim L^n X (R)
\]
for any \(A\)-algebra \(R\). 
A point \(x \in L^+ X(R)\) can therefore be identified with a compatible sequence of points \((x_{1},x_{2},\dots)\), where \(x_{n} \in L^n X(R)\). 
We can equip \(L^+ X (R)\) with a complete ultrametric \(d\) by setting 
\[
    d(x,y) = 2^{- \inf \lbrace n | x_{n} \neq y_{n} \rbrace} . 
\]
We call \(d\) the \emph{\(t\)-adic metric} on \(L^+ X(R)\).

It is clear that for any map \(R \to S\) of \(A\)-algebras, the induced map \(L^+ X(R) \to L^+ X(S)\) is continuous. 
If \(f : X \to Y\) is a morphism of smooth affine group schemes over \(A \llb t \rrb\), then so is \(L^+ f : L^+ X(R) \to L^+ Y(R)\) for any \(R\). 

Suppose now that \(X = H\) is a smooth affine group scheme over \(A \llb t \rrb\). 
Then we can identify 
\[
    L^{+n} H (R) = \lbrace g \in L^+ H(R) | d(1, g) < 2^{-n} \rbrace . 
\]
In other words, the principal congruence subgroups \(\lbrace L^{+n}H(R) \rbrace_{n \in \mathbb{Z}_{\geq 1}}\) form a neighborhood basis of the identity. 

\subsection{Questions about reducedness}
We take care of some reducedness questions that are needed in the proof of \cref{res:conjugation-congruence-bound}. 
\begin{lemma}
    \label{res:reduced-times-positive-loop-group}
    Let \(X\) be a smooth affine scheme over \(A\llb t \rrb\), 
    and let \(Y\) be a reduced scheme over \(A\). 
    The scheme \(Y \times_{A} L^+ X\) is reduced. 
\end{lemma}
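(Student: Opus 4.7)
The plan is to reduce to an affine statement via the observation that reducedness is Zariski-local, and then exploit the presentation $L^+X \cong \operatorname{Spec} B$ with $B = \colim_n B_n$ from \cref{res:loop-group-lemma}(1), where each $B_n$ represents $L^n X$. Since reducedness is local on $Y$, we can cover $Y$ by affine opens $\operatorname{Spec} C$ with $C$ a reduced $A$-algebra, and then it suffices to show that the ring $C \otimes_A B$ is reduced for any such $C$.

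Using that tensor product commutes with filtered colimits, I would write
\[
C \otimes_A B \;=\; C \otimes_A \colim_n B_n \;\cong\; \colim_n \bigl( C \otimes_A B_n \bigr).
\]
For each individual $n$, the scheme $\operatorname{Spec}(C \otimes_A B_n) = \operatorname{Spec} C \times_A \operatorname{Spec} B_n$ is smooth over $\operatorname{Spec} C$ by base change, using that $B_n$ is smooth over $A$ (which is part (3) of \cref{res:loop-group-lemma}, since $X$ is smooth). A smooth morphism is flat with geometrically regular (hence geometrically reduced) fibers, and such a morphism carries reduced base to reduced total space; since $C$ is reduced by assumption, this gives that $C \otimes_A B_n$ is reduced.

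It then remains to observe that a filtered colimit of reduced rings is reduced: if $x \in \colim_n (C \otimes_A B_n)$ satisfies $x^k = 0$, lift $x$ to an element $x_n$ of some $C \otimes_A B_n$; then $x_n^k = 0$ already in $C \otimes_A B_m$ for some $m \geq n$, and reducedness of $C \otimes_A B_m$ forces the image of $x_n$ there to be zero, whence $x = 0$ in the colimit. Combining these steps gives that $C \otimes_A B$ is reduced, completing the proof.

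I do not anticipate a serious obstacle here; the only step one has to be slightly careful about is the preservation of reducedness under a smooth morphism, which is a standard consequence of flatness and reduced fibers, and the colimit argument is elementary once one uses the directedness of the system $\{B_n\}$ supplied by \cref{res:loop-group-lemma}(1).
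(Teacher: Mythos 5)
Your proof is correct and follows essentially the same route as the paper: reduce to an affine chart, write $L^+X$ as $\operatorname{Spec}$ of the filtered colimit $B = \colim_n B_n$, note that each $C \otimes_A B_n$ is reduced because $B_n$ is smooth over $A$ and $C$ is reduced, and pass to the colimit. The only slight difference is that the paper passes to the colimit by invoking the split injections $B_n \hookrightarrow B$ (from the retractions supplied by formal smoothness), whereas you use the more elementary and slightly more general fact that any filtered colimit of reduced rings is reduced, which avoids the need for injectivity entirely.
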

\begin{proof}
    According to \cref{res:loop-group-lemma}, we can write \(L^n X = \operatorname{Spec}B_{n}\) and \(L^+ X = \operatorname{Spec}B\), where \(B_{n} \subset B_{n+1}\subset \cdots \subset \bigcup_{n=1}^\infty B_{n} = B\). 
    Since the claim is Zariski local we may assume that \(Y = \operatorname{Spec}C\). 
    We have 
    \(C \otimes_{A} B \cong \colim \left( C \otimes_{A} B_{i} \right)\)
    and each
    \(C \otimes_{A} B_{i} \hookrightarrow C \otimes_{A} B\)
    is injective since \(B_{i} \hookrightarrow B\) has a retraction.
    Since any (nilpotent) element
    \(f \in C \otimes_{A} B\) is contained in some
    \(C \otimes_{A} B_{i}\), the result follows from the fact that \(C \otimes_{A} B_{i}\) is reduced by 
    \cref{res:loop-group-lemma} (3) and \cite[\href{https://stacks.math.columbia.edu/tag/034E}{Lemma 034E}]{stacks-project}. 
\end{proof}

\begin{lemma}
    \label{res:reduced-closure-lemma}
    Let \(H\) be a smooth affine group scheme over \(A \llb t \rrb\), 
    amd let \(Y\) be a scheme over \(A\). 
    Let \(Z \subset Y\) be a subscheme, and let \(\overline{Z} \subset Y\) be the reduced closure of \(Z\) in \(Y\). 
    Then \(\overline{Z} \times_{A} L^+ H\) is the reduced closure of \(Z \times_{A} L^+ H\) in \(Y \times_{A} L^+ H\). 
\end{lemma}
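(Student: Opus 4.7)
The plan is to derive the statement from two facts about $L^+ H$: it is flat over $A$, and reducedness is preserved by taking products with it. The first fact allows one to control scheme-theoretic closures under the base change $- \times_A L^+ H$, while the second, already proved as Lemma \ref{res:reduced-times-positive-loop-group}, converts this into the desired statement about \emph{reduced} closures.

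First, I would let $\tilde{Z} \subset Y$ denote the scheme-theoretic closure of $Z$ in $Y$ (i.e.\ the scheme-theoretic image of $Z \hookrightarrow Y$), so that by definition $\overline{Z} = (\tilde{Z})_{\mathrm{red}}$. Working Zariski-locally on $Y$ one may assume $Z \hookrightarrow Y$ is quasi-compact. By flat base change for scheme-theoretic images (\cite[\href{https://stacks.math.columbia.edu/tag/081I}{Tag 081I}]{stacks-project}) combined with flatness of $L^+ H \to \Spec A$ from Lemma \ref{res:loop-group-lemma}(3), the scheme-theoretic closure of $Z \times_A L^+ H$ inside $Y \times_A L^+ H$ is precisely $\tilde{Z} \times_A L^+ H$. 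Consequently the reduced closure of $Z \times_A L^+ H$ in $Y \times_A L^+ H$ is $(\tilde{Z} \times_A L^+ H)_{\mathrm{red}}$, and the target identity becomes $\overline{Z} \times_A L^+ H = (\tilde{Z} \times_A L^+ H)_{\mathrm{red}}$.

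To finish, I would observe that both $\overline{Z} \times_A L^+ H = (\tilde{Z})_{\mathrm{red}} \times_A L^+ H$ and $\tilde{Z} \times_A L^+ H$ are closed subschemes of $Y \times_A L^+ H$ with the same underlying topological space: via the projection $\pi : Y \times_A L^+ H \to Y$ both are set-theoretically $\pi^{-1}(|\tilde{Z}|)$, since taking $\mathrm{red}$ does not change the underlying space and closed immersions with equal image remain so after base change. On the other hand, $\overline{Z}$ is reduced, so Lemma \ref{res:reduced-times-positive-loop-group} applied with $X = H$ (smooth affine over $A\llb t \rrb$) and with $Y$ there being $\overline{Z}$ shows that $\overline{Z} \times_A L^+ H$ is reduced. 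Since a reduced closed subscheme is determined by its underlying topological space, the two sides agree.

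The only mild obstacle is to set up the flat base change for scheme-theoretic images correctly — this requires quasi-compactness of $Z \hookrightarrow Y$, which is harmless after a Zariski-local reduction on $Y$. Once that is in place, the argument is essentially formal, resting on Lemma \ref{res:loop-group-lemma}(3) for flatness and Lemma \ref{res:reduced-times-positive-loop-group} for reducedness.
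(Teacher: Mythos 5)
Your proof is correct but takes a genuinely different route from the paper. The paper argues by fpqc descent: the reduced closure $V$ of $Z \times_A L^+ H$ is shown to be stable under the right translation action of $L^+ H$, hence descends along the torsor $Y \times_A L^+ H \to Y$ to a closed subscheme $V' \subset Y$ squeezed between $Z$ and $\overline{Z}$, forcing $V' = \overline{Z}$ and so $V = \overline{Z} \times_A L^+ H$. You instead base-change scheme-theoretic images along the flat morphism $Y \times_A L^+ H \to Y$ and then compare reduced structures on the same underlying space via Lemma~\ref{res:reduced-times-positive-loop-group}. Both arguments rest on the same two pillars — flatness of $L^+ H$ (Lemma~\ref{res:loop-group-lemma}) and reducedness of $Y \times_A L^+ H$ (Lemma~\ref{res:reduced-times-positive-loop-group}) — but yours replaces the descent bookkeeping with a direct topological comparison. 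Two minor remarks. First, the passage through \cite[\href{https://stacks.math.columbia.edu/tag/081I}{Tag 081I}]{stacks-project} is actually avoidable: flatness gives going-down, which already shows that the topological closure of $Z \times_A L^+ H$ is the full preimage $\pi^{-1}(|\overline{Z}|)$, and then Lemma~\ref{res:reduced-times-positive-loop-group} identifies the reduced structure; scheme-theoretic images never need to enter. Second, the ``Zariski-locally one may assume $Z \hookrightarrow Y$ is quasi-compact'' step is not automatic for an arbitrary subscheme $Z$ (the open part of a locally closed immersion need not be retrocompact even after restricting to an affine chart); in the paper's applications $Z \hookrightarrow Y$ is quasi-compact anyway, and the going-down variant of your argument sidesteps the hypothesis entirely, so this is a cosmetic rather than substantive gap.
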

\begin{proof}
    Let \(V \subset Y \times_{A} L^+ H\) denote the reduced closure of \(Z \times_{A} L^+ H\).
    The inclusion \(V \hookrightarrow Y \times_{A} L^+ H\) factors through \(\overline{Z} \times_{A} L^+ H\) by \cref{res:reduced-times-positive-loop-group}, 
    and we have to show that \(V \hookrightarrow \overline{Z} \times_{A} L^+ H\) is an isomorphism.
    By continuity of the right \(L^+ H\)-action via translations, \(V\) is stable under this action, and therefore descends to 
    a scheme \(V' = \left[ V / L^+ H \right]\) by fpqc descent \cite[\href{https://stacks.math.columbia.edu/tag/0244}{Section 0244}]{stacks-project}. 
    We have \(Z \hookrightarrow V' \hookrightarrow \overline{Z} \hookrightarrow Y\), and since \(V' \hookrightarrow Y\) is a closed immersion \cite[\href{https://stacks.math.columbia.edu/tag/0420}{Lemma 0420}]{stacks-project},
    it follows that \(V' = \overline{Z}\). So \(V= V' \times_{A} L^+H = \overline{Z} \times_{A} L^+ H\).
\end{proof}

\end{document}